\documentclass[12pt, a4paper, twoside, notitlepage]{article}
\usepackage[utf8]{inputenc}
\usepackage{appendix}
\usepackage{url}
\usepackage{caption}
\usepackage[T1]{fontenc}
\usepackage[margin=2.4cm]{geometry}
\usepackage{amsmath,amssymb,amsthm,mathtools}
\usepackage{booktabs}
\usepackage{array}
\usepackage{colortbl}
\usepackage{enumitem}
\usepackage{graphicx}
\usepackage{hyperref}
\usepackage{cleveref}
\usepackage{xcolor}
\usepackage{tikz}
\usetikzlibrary{arrows.meta,calc,patterns}
\usepackage{tcolorbox}
\tcbuselibrary{theorems,breakable}
\crefname{theorem}{Thm.}{Thms.}
\Crefname{theorem}{Thm.}{Thms.}
\crefname{proposition}{Prop.}{Props.}
\Crefname{proposition}{Prop.}{Props.}
\crefname{remark}{Rem.}{Rems.}
\Crefname{remark}{Rem.}{Rems.}
\crefname{lemma}{Lem.}{Lems.}
\Crefname{lemma}{Lem.}{Lems.}
\crefname{corollary}{Cor.}{Cors.}
\Crefname{corollary}{Cor.}{Cors.}
\crefname{definition}{Def.}{Defs.}
\Crefname{definition}{Def.}{Defs.}
\crefname{assumption}{Ass.}{Asss.}
\Crefname{assumption}{Ass.}{Asss.}
\crefname{property}{Prop.}{Props.}
\Crefname{property}{Prop.}{Props.}
\definecolor{accent}{HTML}{2D5F8A}
\definecolor{accent2}{HTML}{8A2D5F}
\definecolor{defgreen}{HTML}{44AA99}
\definecolor{thmamber}{HTML}{CC8844}
\definecolor{lemgold}{HTML}{AA8800}
\definecolor{prfpurple}{HTML}{8866CC}
\definecolor{remgrey}{HTML}{8888AA}
\definecolor{corgold}{HTML}{DD9966}
\definecolor{warnred}{HTML}{CC4444}
\hypersetup{
 colorlinks=true, linkcolor=accent, urlcolor=accent, citecolor=accent2
}
\theoremstyle{definition}
\newtheorem{definition}{Definition}[section]
\newtheorem{proposition}{Proposition}[section]
\newtheorem{lemma}{Lemma}[section]
\newtheorem{assumption}{Assumption}[section]
\newtheorem{example}{Example}[section]
\newtheorem{property}{Property}[section]
\newtheorem{convention}{Convention}[section]
\theoremstyle{plain}
\newtheorem{theorem}{Theorem}[section]
\newtheorem{corollary}{Corollary}[section]
\theoremstyle{remark}
\newtheorem{remark}{Remark}[section]
\makeatletter
\let\c@proposition\c@definition
\let\c@lemma\c@definition
\let\c@assumption\c@definition
\let\c@example\c@definition
\let\c@theorem\c@definition
\let\c@corollary\c@definition
\let\c@remark\c@definition
\let\c@property\c@definition
\let\c@convention\c@definition
\makeatother
\newtcolorbox{defbox}{
  colback=white!8, colframe=white!70!black,
  left=6pt, right=6pt, top=4pt, bottom=4pt,
  breakable, sharp corners, boxrule=0pt, leftrule=3pt
}
\newtcolorbox{thmbox}{
  colback=white!8, colframe=white!70!black,
  left=6pt, right=6pt, top=4pt, bottom=4pt,
  breakable, sharp corners, boxrule=0pt, leftrule=3pt
}
\newtcolorbox{lembox}{
  colback=white!8, colframe=lemgold!70!black,
  left=6pt, right=6pt, top=4pt, bottom=4pt,
  breakable, sharp corners, boxrule=0pt, leftrule=3pt
}
\newtcolorbox{prfbox}{
  colback=white!6, colframe=prfpurple!60!black,
  left=6pt, right=6pt, top=4pt, bottom=4pt,
  breakable, sharp corners, boxrule=0pt, leftrule=3pt
}
\newtcolorbox{rembox}{
  colback=white!8, colframe=white!60!black,
  left=6pt, right=6pt, top=4pt, bottom=4pt,
  breakable, sharp corners, boxrule=0pt, leftrule=3pt
}
\newtcolorbox{corbox}{
  colback=white!8, colframe=corgold!60!black,
  left=6pt, right=6pt, top=4pt, bottom=4pt,
  breakable, sharp corners, boxrule=0pt, leftrule=3pt
}
\newtcolorbox{eqbox}{
  colback=white!5, colframe=accent!40!black,
  left=8pt, right=8pt, top=6pt, bottom=6pt,
  breakable, sharp corners, boxrule=0.8pt
}
\newtcolorbox{warnbox}{
  colback=white!6, colframe=warnred!60!black,
  left=6pt, right=6pt, top=4pt, bottom=4pt,
  breakable, sharp corners, boxrule=0pt, leftrule=3pt
}
\newtcolorbox{keybox}{
  colback=white!10, colframe=accent!80!black,
  left=8pt, right=8pt, top=6pt, bottom=6pt,
  breakable, sharp corners, boxrule=1.2pt
}
\usepackage{titlesec}
\titleformat{\section}{\Large\bfseries\color{accent}}{\thesection.}{0.5em}{}
\titleformat{\subsection}{\large\bfseries\color{accent}}{\thesubsection}{0.5em}{}
\usepackage{csquotes}
\usepackage{mathrsfs}
\usepackage{authblk}

\newcommand{\dd}{\mathrm{d}}
\newcommand{\RR}{\mathbb{R}}
\newcommand{\ip}[2]{\langle #1,\, #2 \rangle}
\newcommand{\ipw}[3]{\langle #1,\, #2 \rangle_{#3}} 

\newcommand{\Lie}{\mathcal{L}}
\newcommand{\abs}[1]{\lvert #1 \rvert}
\newcommand{\nrm}[1]{\lVert #1 \rVert} 
\newcommand{\twdg}{\;{\wedge}\;}
\newcommand{\tdd}{\tilde{\dd}}
\newcommand{\ddt}{\frac{\mathrm{d}}{\mathrm{d}t}}
\newcommand{\tD}{\tilde{D}}
\newcommand{\tLie}{{\Lie}}
\newcommand{\Qh}{\mathcal{Q}_h}
\newcommand{\bv}{{v}}
\newcommand{\bu}{{u}}
\newcommand{\bw}{{w}}
\newcommand{\bom}{\omega}
\newcommand{\tU}{{\widetilde{U}}}

\newcommand{\bphi}{\varphi}
\renewcommand{\v}{{v}}
\renewcommand{\u}{{u}}
\newcommand{\w}{{w}}
\newcommand{\bekin}{{\boldsymbol{e}^{\mathrm{kin}}}}
\newcommand{\Iv}{I_{{\v}}}

\newcommand{\brho}{\rho}
\newcommand{\eps}{\varepsilon}
\newcommand{\bM}{M}
\newcommand{\bD}{D}
\newcommand{\bF}{F}
\newcommand{\bR}{R}
\newcommand{\bL}{L}
\newcommand{\KK}{\mathcal{K}}
\newcommand{\KKs}{\mathcal{K}^*}
\newcommand{\OO}{\mathcal{O}}

\newcommand{\bP}{P}
\newcommand{\bQ}{Q}
\newcommand{\bA}{A}
\newcommand{\bff}{\mathrm{f}}
\newcommand{\bPhi}{\boldsymbol{\Phi}} 
\newcommand{\statespace}{\mathcal{X}}
\newcommand{\admissible}{\mathcal{A}}

\newcommand{\be}{\boldsymbol{e}}

\newcommand{\inte}{\mathrm{int}}

\newcommand{\kin}{\mathrm{kin}}

\newcommand{\Ran}{\mathrm{Ran}}

\newcommand{\bx}{\boldsymbol{x}}

\newcommand{\Iev}{I_{e_v}}
\newcommand{\vol}{\mathrm{vol}}
\newcommand{\Eint}{E_{\mathrm{int}}}
\newcommand{\Etot}{E_{\mathrm{tot}}}
\newcommand{\ekin}{e^{\mathrm{kin}}}

\newcommand{\Ekindf}{E_{\kin}^{\rm df}}
\newcommand{\Ekindw}{E_{\kin}^{\rm dw}}
\renewcommand{\epsilon}{\varepsilon}

\newcommand{\bze}{\boldsymbol{\zeta}}

\begin{document}
\title{
{\LARGE\bfseries \color{accent}
A no-go theorem and its resolution for the discrete compressible barotropic Navier--Stokes equations
}}
\author{Peter Korn\thanks{Max-Planck Institute for Meteorology}}
\maketitle
\begin{abstract}
The compressible barotropic Navier--Stokes equations in
vector-invariant form preserve the system's vorticity structure
--- exact Kelvin circulation, potential vorticity, Casimir
invariants, Lamb antisymmetry --- and underlie modern atmospheric
and ocean dynamical cores, yet no PDE theory exists for the
discrete compressible system in this form.

For Delaunay--Voronoi meshes we prove that every density-independent mass matrix with an
integration-by-parts-consistent divergence carries a sharp
$\OO(h^2)$ energy residual of indeterminate sign that no operator
choice can eliminate; the no-go theorem covers A-, B-, C-, D-, and
quasi-B-grid staggerings. The density-weighted mass matrix is the
unique algebraic remedy, restoring exact total energy while
preserving the vector-invariant momentum equation, Lamb
antisymmetry, and the topological conservation laws, at the cost
of an $\OO(h^{r_\star})$ Kelvin defect matching the convergence
rate. This residual is the discrete root cause of the
Hollingsworth instability of vector-invariant dynamical cores; the
density-weighted construction removes it structurally.

For the density-weighted scheme on closed oriented Riemannian
manifolds in $d = 2, 3$ we establish global well-posedness for
$\nu \ge 0$, convergence to smooth solutions at rate
$\OO(h^{\min(d-1,r_\star)})$ uniformly in $\nu$, asymptotic
preservation in the low-Mach limit --- the density-free residual
diverges as $\OO(M^{-1})$ --- and Lyapunov stability around three
classes of equilibria: unconditional around hydrostatic and
constant-flow stratified states, conditional around sheared
baroclinic states under a discrete Charney--Stern criterion.
\end{abstract}
\medskip
\noindent\textbf{Keywords:}
barotropic compressible Navier--Stokes equations,
discrete exterior calculus,
structure-preserving discretisation,
Delaunay--Voronoi mesh,
energy conservation,
potential vorticity,
convergence analysis,
vacuum prevention

\medskip
\noindent\textbf{Mathematics Subject Classification (2020):}
65M12 (Stability and convergence of numerical methods for IVP/IBVP),
65M60 (Finite element, Rayleigh--Ritz and Galerkin methods for IVP/IBVP),
76N06 (Compressible Navier--Stokes equations),
76M10 (Finite element methods applied to problems in fluid mechanics),
35Q35 (PDEs in connection with fluid mechanics),
58A14 (Hodge theory in global analysis)

\setcounter{tocdepth}{3}
 \tableofcontents
 \newpage
\section{Introduction and Overview}\label{sec:intro}

The compressible barotropic Euler and Navier--Stokes equations
admit two equivalent continuum formulations. The momentum form,
\begin{equation}\label{eq:intro_mom}
 \partial_t(\rho\bu) + \mathrm{div}(\rho\bu\otimes\bu) + \nabla p
 + \rho\,\nabla\Phi
 = \mathrm{div}(\mathbb{S}(\bu)),
 \qquad
 \partial_t\rho + \mathrm{div}(\rho\bu) = 0,
\end{equation}
with viscous stress $\mathbb{S}(\bu)$, is the basis of the modern
PDE theory: local strong solutions, global weak solutions, vacuum
formation, and relative-energy convergence of momentum-form schemes
have been developed by Gallou\"et--Herbin--Latch\'e,
Karper~\cite{gallouet2008,gallouet2009,gallouet2010,karper2013},
and Jovanovic--Rohde and
Gallou\"et~et~al.~\cite{jovanovic2006,gallouet2016}; we refer
to~\cite{feireisl2021book} for the modern state of the theory. For the vector-invariant form,
\begin{equation*}
 \partial_t v + \iota_\bu\omega
 + \dd\!\bigl(\tfrac{1}{2}|\bu|^2 + h(\rho) + \Phi\bigr)
 = \bigl(\mathrm{div}\,\mathbb{S}(\bu)\bigr)^\flat,
 \qquad
 \partial_t\rho_{\rm vol} + \mathcal{L}_\bu\rho_{\rm vol} = 0,
\end{equation*}
with $v = \bu^\flat$ the velocity 1-form, $\omega = \dd v$ the
vorticity 2-form, $h(\rho) = e(\rho) + p(\rho)/\rho$ the specific
enthalpy, and $\Phi$ the geopotential. The viscous term is the
divergence of the Newtonian stress
$\mathbb{S}(\bu) = 2\nu\,\mathbf{S}(\bu) + \lambda\,(\nabla\cdot\bu)\,g$,
with deformation tensor $\mathbf{S} = \tfrac{1}{2}\mathcal{L}_\bu g$,
shear viscosity $\nu \ge 0$, and second viscosity coefficient
$\lambda$; thermodynamic admissibility requires the bulk viscosity
$\zeta := \lambda + \tfrac{2}{3}\nu$ to be non-negative. In
velocity-1-form (Hodge--de~Rham) variables the stress divergence
splits into independent dissipation of the rotational and
dilatational parts of the field,
$(\mathrm{div}\,\mathbb{S}(\bu))^\flat
= -\nu\,\delta\dd v - (\tfrac{4}{3}\nu + \zeta)\,\dd\delta v$
(\Cref{app:viscous_catalogue}). This formulation requires
$\rho > 0$ and is therefore restricted to the non-vacuum regime. Within that regime
it is the natural formulation: it preserves the vorticity
structure -- Kelvin circulation, potential-vorticity
balance, Casimir invariants, Lamb antisymmetry -- which the
momentum form does not, and matters whenever rotation governs the
large-scale dynamics, as in atmospheric and ocean modelling
(ICON~\cite{korn2017}, MPAS~\cite{ringler2010},
DYNAMICO~\cite{dubos2015}), in rotorcraft aerodynamics, and in
swirl-stabilised combustion. For the vector-invariant form,
however, no PDE theory has been developed in the compressible
setting. \emph{The present paper closes this gap.}

\paragraph*{Discrete energy conservation in the vector-invariant form.}
The structural question this paper resolves is whether a
finite-dimensional discretisation of the vector-invariant
system~\eqref{eq:intro_baro} on a polyhedral mesh can conserve
total energy exactly while retaining the vorticity structure
 that motivates the use of the
vector-invariant form. We prove
(\Cref{thm:dichotomy,thm:dichotomy_general}) that the answer is
negative for the entire class of density-independent mass
matrices with integration-by-parts-consistent divergence: every
such scheme carries a structural $\OO(h^2)$ residual that acts
as a spurious source or sink of total energy. The residual is
sharp, face-local, and covers all standard staggerings (A, B, C,
D, quasi-B). Operationally, this discrete energy residual is the
root cause of the Hollingsworth
instability~\cite{hollingsworth1983,bell2017hollingsworth} that
has shaped the discrete design of vector-invariant dynamical
cores~\cite{skamarock2012,gassmann2013,melvin2024gungho}.

\paragraph*{Three schemes.}
Three discretisations are relevant. The
\emph{density-free vector-invariant} scheme (DF) preserves Kelvin
circulation and Lamb antisymmetry exactly but carries a structural
$\OO(h^2)$ energy residual; this is the design used in operational
codes (ICON~\cite{korn2017}, MPAS~\cite{ringler2010},
DYNAMICO~\cite{dubos2015}), and the residual is the discrete
mechanism behind the Hollingsworth instability. The
\emph{density-weighted vector-invariant} scheme (DW), proposed and
analysed here, conserves total energy and Lamb antisymmetry
exactly at the price of an $\OO(h^{r_\star})$ Kelvin defect, and
is structurally Hollingsworth-immune. The \emph{conservative
momentum form} (M), which carries $\rho\bu$ as prognostic variable,
conserves total energy by construction but loses the Kelvin
theorem, the potential-vorticity balance, and Lamb antisymmetry; its
theory is well-developed and it 
serves as reference against which DF and DW are
compared. The paper develops the well-posedness, convergence,
asymptotic-preservation, and linear-stability theory for both
vector-invariant variants uniformly in $\nu \ge 0$ on closed
oriented Riemannian manifolds in $d = 2, 3$.

\paragraph*{Discrete framework and prior work.}
The discrete framework rests on Discrete Exterior Calculus (DEC) on
prismatic Delaunay--Voronoi meshes. Two structural properties drive
every result below: the discrete exterior derivative commutes with
the de~Rham interpolation
($\tD_1\mathcal{R}_h = \mathcal{R}_h\,\dd$), so all approximation
error is concentrated in the metric (Hodge-star) operators; and the
discrete Lamb vector satisfies $\ip{\bv}{\Iv(\bom)}_1 = 0$ for any
velocity cochain, the metric-blind antisymmetry that powers the
circulation invariants.

The foundational framework in the exterior-calculus setting is the
Finite Element Exterior Calculus (FEEC) of Arnold, Falk, and
Winther~\cite{arnold2006,arnold2010}, which constructs discrete
de~Rham complexes on simplicial meshes with optimal Hodge--Laplace
approximation; FEEC provides the topological infrastructure (exact
sequences, commuting projections, discrete Poincar\'e inequalities)
but does not address nonlinear advection or velocity--density
coupling. Incompressible DEC fluid mechanics was pioneered by Elcott et
al.~\cite{elcott2007}; the variational and conservation structure
of incompressible DEC schemes was developed
in~\cite{mullen2009,pavlov2011}, without error estimates or
treatment of the compressible case. The mimetic framework
of~\cite{thuburn2012,ringler2010} produced energy- and
enstrophy-conserving \emph{shallow-water} schemes (2D, constant
background) without rigorous convergence theory.
Korn--Linardakis~\cite{kornLinardakis2018} showed 
that the density-weighted mass matrix
$\bM_1^\rho = P^T\mathrm{diag}(\brho)P$ yields exact total-energy
conservation for shallow water equations. The rate-uniform-in-$\nu$ analysis
available for incompressible
FEM~\cite{girault1986,HeywoodRannacher1982} has no
compressible vector-invariant counterpart in the literature. The
present construction extends FEEC and DEC to compressible
vector-invariant fluid dynamics in $d = 2, 3$ by equipping the
discrete complex with a nonlinear extrusion-based contraction and
the density-weighted mass matrix, and proves the well-posedness,
convergence, and asymptotic-preservation theory.

\paragraph*{Energetic linear stability via discrete Arnold
energy-Casimir.}
A second analytical consequence of exact discrete energy
conservation in the DW scheme is a discrete realisation of the
Arnold energy-Casimir
method~\cite{arnold1965,arnold1969,holm1985}. Exact discrete
conservation forces a bridge identity (\Cref{lem:arnold_bridge})
that lifts linear stability around discrete equilibria to
positive-definiteness of the modified-energy Hessian, which we
verify in three classes (\Cref{thm:main_DW_lyapunov}):
unconditional Lyapunov stability around hydrostatic and
constant-flow stratified equilibria -- the Hollingsworth setup of
\cite{hollingsworth1983,bell2017hollingsworth}-- and conditional Lyapunov stability around
sheared baroclinic equilibria under a discrete Charney--Stern
criterion that reduces in the Boussinesq incompressible limit to
the classical Rayleigh inflection-point
criterion~\cite{rayleigh1880,charney1962}
(\Cref{thm:dw_lyap_sheared,prop:dw_class_C_boussinesq}). The DW
scheme is linearly Lyapunov stable around continuum-stable
equilibria and admits genuine baroclinic instability only when
the continuum equations themselves do.

\paragraph*{Scope.}
This paper develops the analytical theory: the no-go theorem, the
well-posedness, convergence, and asymptotic-preservation results,
and the energetic linear-stability theorems. The discrete operator
framework underlying the construction has been validated extensively
in operational ocean and coupled Earth-system
simulations~\cite{korn2017,korn2022james,hohenegger2023gmd}, on the
3D hydrostatic primitive equations underlying ICON. 

The analysis developed here for the compressible barotropic system
transfers structurally to fully compressible, thermodynamically
active models used in operational weather and climate prediction
-- including the non-hydrostatic dynamical core of ICON
atmosphere~\cite{zaengl2015icon}, MPAS-A~\cite{skamarock2012}, and
the spectral-element configuration of GungHo~\cite{melvin2024gungho}.
The specifically thermodynamic ingredients
(potential temperature transport, full equation of state,
moisture) enter the analysis only through the equation of state
and the continuity equation; they do not modify the bilinear
structure of the energy residual $\mathcal{R}_E$ nor the
density-weighting fix.


\section{Main Results: Precise Statements}\label{sect:main_results}

\paragraph*{The continuous barotropic system.}
The barotropic compressible Euler equations on an oriented
Riemannian manifold $(\Omega, g)$, in vector-invariant form, read
\begin{equation}\label{eq:intro_baro}
 \partial_t v + \iota_{\mathbf{u}}\omega + \dd B
 = \bigl(\mathrm{div}\,\mathbb{S}(\bu)\bigr)^\flat,
 \qquad
 \partial_t\rho_{\mathrm{vol}}
 + \mathcal{L}_{\mathbf{u}}\rho_{\mathrm{vol}} = 0,
\end{equation}
where $v = \bu^\flat$ is the velocity 1-form, $\omega = \dd v$ the
vorticity 2-form, $\rho_{\mathrm{vol}} = \rho\,\mathrm{vol}$ the
density $d$-form, $B = \tfrac{1}{2}|\bu|^2 + h(\rho) + \Phi$ the
Bernoulli function with specific enthalpy
$h(\rho) = e(\rho) + p(\rho)/\rho$, an external potential $\Phi$, and $\nu \ge 0$ the shear viscosity
($\nu = \zeta = 0$ for the Euler case).
The viscous term is the divergence of the Newtonian stress
$\mathbb{S}(\bu) = 2\nu\,\mathbf{S}(\bu) + \lambda\,(\nabla\cdot\bu)\,g$,
with deformation tensor $\mathbf{S} = \tfrac{1}{2}\mathcal{L}_{\mathbf u}g$,
shear viscosity $\nu \ge 0$, and second viscosity coefficient
$\lambda$; the bulk viscosity $\zeta := \lambda + \tfrac{2}{3}\nu$
is non-negative by thermodynamic admissibility. On velocity
1-forms it splits along the Hodge--de~Rham decomposition into
independent dissipation of the rotational and dilatational parts,
\begin{equation}\label{eq:visc_hodge_split}
 \bigl(\mathrm{div}\,\mathbb{S}(\bu)\bigr)^\flat
 = -\nu\,\delta\dd v - \nu_{\mathrm{dil}}\,\dd\delta v,
 \qquad
 \nu_{\mathrm{dil}} := \tfrac{4}{3}\nu + \zeta \;\ge\; \tfrac{4}{3}\nu,
\end{equation}
with $\dd\delta + \delta\dd = \Delta_{\mathrm{dR}} \ge 0$ the
Hodge--de~Rham Laplacian. The Stokes closure $\zeta = 0$ and the
single-coefficient case $\nu_{\mathrm{dil}} = \nu$ are recovered as
special choices. The discrete realisation, the anisotropic and
Smagorinsky closures, and the verification of the admissibility
axioms are catalogued in \Cref{app:viscous_catalogue}.
The potential $\Phi\in C^0(\KKs)$ is a time-independent dual 0-cochain, representing gravitational, centrifugal, or any
conservative body force. Standard choices are $\bPhi_i = g z_i$ (gravity, $z_i$ = cell-centre height),
$\bPhi_i = -\frac{1}{2}|\Omega\times x_i|^2$ (centrifugal), or a sum of both.

\paragraph*{The two discrete schemes.}
The discrete Euler systems read
\begin{equation}\begin{split}\label{eq:main_DEC}
 & \ddt\bv + \Iv(\tD_1\bv)
 + \tD_0\bigl(h(\rho_i^{\rm vol}) + \ekin_i + \bPhi_i\bigr) = 0,\\
 & \ddt\brho = -\bD_2(\bR\Phi),
\end{split}\end{equation}
for the \emph{density-free} (DF) scheme, with kinetic energy
$\Ekindf = \tfrac{1}{2}\ip{v}{v}_1$ and mass matrix independent of
$\brho$; and
\begin{equation}\begin{split}\label{eq:main_dw}
 & \ddt\bv + [\bM_1^\rho(\brho)]^{-1}\bM_1\,\Iv(\tD_1\bv)
 + \tD_0\bigl(h(\rho_i^{\rm vol})
 + \tfrac{1}{2}|P\bv|_i^2 + \bPhi_i\bigr) = 0,\\
 & \ddt\brho = -\bD_2(\bR\Phi),
\end{split}\end{equation}
for the \emph{density-weighted} (DW) scheme, with kinetic energy
$\Ekindw = \tfrac{1}{2}\ip{v}{v}_\rho$ and density-weighted mass
matrix $\bM_1^\rho = P^T\mathrm{diag}(\brho)\,P$, where $P$ denotes a linear reconstruction and $P^T$ its transpose;
it is the same operator that occurs in the contraction $\Iv$ (see \Cref{def:contraction}). The continuity
equation is shared. 

The momentum form (M),
$\partial_t(\rho\bu) + \mathrm{div}(\rho\bu\otimes\bu) + \nabla p
+ \rho\,\nabla\Phi = \mathrm{div}(\mathbb{S}(\bu))$, has a well-developed theory (see e.g. \cite{feireisl2021book}), and is not analysed in detail; it serves as the calibration reference for the no-go theorem and as
the third point of the asymptotic-preservation comparison
(\Cref{thm:main_AP}).

\begin{theorem}[Conservation properties of two vector-invariant schemes]%
\label{thm:main_conservation}
Both schemes~\eqref{eq:main_DEC} and~\eqref{eq:main_dw} conserve mass
and total vorticity exactly, preserve mass-weighted potential vorticity
in exact flux form, satisfy the Lamb antisymmetry
$\ip{\bv}{\Iv(\alpha)}_1 = 0$ as an exact algebraic identity, and
conserve helicity to order $\OO(h^{r_\star})$. The two schemes part
company on Kelvin circulation and total energy. The density-free
scheme conserves Kelvin circulation exactly but carries a structural
total-energy residual $\mathcal{R}_E = \OO(h^2)$ of indeterminate
sign, which acts as a spurious source or sink in the energy budget.
The density-weighted scheme reverses this trade-off: it conserves
total energy exactly, at the price of an $\OO(h^{r_\star})$ defect
in the Kelvin circulation.
\end{theorem}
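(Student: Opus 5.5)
The proof treats the properties in three groups: those shared by both schemes (they use only the topology of the discrete complex and the form of the continuity equation), the energy budget, and the Kelvin circulation. Both schemes have the same continuity equation $\ddt\brho = -\bD_2(\bR\Phi)$ in flux form. Mass conservation therefore follows by summing over cells: $\bD_2$ is the transpose of a coboundary on a closed complex, so $\sum_i (\bD_2 F)_i = 0$ for every flux $F$.

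\textbf{Shared properties.} Total vorticity is $\sum \tD_1\bv$ over dual faces. Applying $\tD_1$ to the momentum equation kills the gradient term, since $\tD_1\tD_0=0$. What remains is $\tD_1$ of the Lamb term, whose total sum vanishes because $\tD_1$ is a coboundary on a closed manifold. This holds for DF and equally for DW, where the Lamb term is $[\bM_1^\rho]^{-1}\bM_1\Iv$. The mass-weighted potential vorticity flux form comes from the same calculation. The curl of the momentum equation gives $\ddt\tD_1\bv = -\tD_1(\cdot)$, which is a flux form for $\rho q$ with $q=\tD_1\bv/\rho$ by construction.

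Lamb antisymmetry $\ip{\bv}{\Iv(\alpha)}_1=0$ is taken directly from \Cref{def:contraction}. The contraction is built as $\bM_1^{-1}P^T(\ldots)$ with a skew pairing of the reconstructed velocity against itself, so the identity is algebraic and holds for every $\bv$. Helicity $\ip{\bv}{\tD_1\bv}$ is treated by differentiating in time. The Bernoulli term drops because $\tD_1\tD_0=0$ and summation by parts applies. The Lamb term leaves a commutator between the Hodge star and the reconstruction, which we bound by $\OO(h^{r_\star})$ using the consistency order of $P$ and $\bM_1$.

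\textbf{Energy.} For DF we differentiate $E=\tfrac12\ip{\bv}{\bv}_1+\sum\rho_i(e(\rho_i)+\bPhi_i)\,\vol_i$ along solutions. The Lamb term vanishes by antisymmetry. The gradient term pairs by summation by parts with $\bD_2$ of the mass flux, but that flux is $\bR(\rho)\cdot\bv$ built from face-averaged densities. Meanwhile $\tD_0(\ekin + h + \bPhi)$ pairs with $\bM_1\bv$. The enthalpy and potential terms cancel exactly against the internal and potential energy tendencies. What is left is
\[
\mathcal{R}_E=\ip{\bR(\rho)\bv - \bM_1\bv\,\cdot\,\text{(cell density)}}{\tD_0(\ekin+h+\bPhi)},
\]
a face-local mismatch between face-averaged and cell densities. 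Taylor expansion on the Delaunay--Voronoi geometry gives a residual of $\OO(h^2)$. Its sign is indeterminate because it is linear in $\tD_0 B$. Exhibiting a configuration where it is nonzero of exact order $h^2$ gives the structural claim.

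For DW, kinetic energy is $\tfrac12\ip{\bv}{\bv}_\rho$. Its time derivative has two pieces. The $\ddt\rho$ piece produces $\tfrac12\sum|P\bv|_i^2\,\ddt\rho_i$, which cancels the kinetic part of the Bernoulli gradient paired with the mass flux. The Lamb piece becomes $\ip{\bv}{\bM_1\Iv}$ after the $\bM_1^\rho$ factors cancel, and this vanishes by antisymmetry. Hence $\ddt E=0$ exactly. This requires that the mass flux used in the continuity equation be $P^T\mathrm{diag}(\rho)P\bv$, so we will check that $\bR\Phi$ is defined consistently with this flux.

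\textbf{Kelvin circulation.} For DF, the circulation around a discrete closed loop $c$ is $\ip{c}{\bv}$. The gradient term vanishes on closed chains. The Lamb term is transported by the discrete flow through the extrusion definition of $\Iv$, which gives exact conservation. For DW, the extra factor $[\bM_1^\rho]^{-1}\bM_1 - I$ in front of $\Iv$ spoils this. We bound the factor by $\OO(h^{r_\star})$ on smooth fields, since both matrices approximate the same weighted Hodge star and the continuum identity holds.

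\textbf{Main obstacle.} The hardest step is showing that $\mathcal{R}_E$ is sharp at order $h^2$ and cannot be removed by any operator choice. The plan is to defer this to \Cref{thm:dichotomy} and only verify the upper bound here.
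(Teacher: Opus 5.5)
Your handling of mass, total vorticity, the flux form of mass-weighted PV and Lamb antisymmetry matches the paper. For antisymmetry, note that the identity holds for $\alpha=\tD_1\bv$, where the two terms of \eqref{eq:energy_cancel_vv} are transposes of one scalar; for general $\alpha$ one gets $\ip{\bv}{\Iv(\alpha)}_1=\tfrac12\bv^T\tU(\bv)(\alpha-\tD_1\bv)$.

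The energy and Kelvin steps do not go through as written. In the density-free budget the enthalpy and potential terms do \emph{not} cancel. Since $E_{\kin}^{\rm df}=\tfrac12\nrm{\bv}_{\bM_1}^2$ contains no density, its rate is $\ip{B}{\bD_2\Phi}$ with the \emph{volume} flux, whereas $\ddt(E_{\rm int}+E_{\rm pot})=-\ip{\mathbf h+\bPhi}{\bD_2\bR\Phi}$ involves the \emph{mass} flux. The residual is therefore \eqref{eq:EB}, $\ip{\bekin}{\bD_2\Phi}+\ip{\mathbf h+\bPhi}{\bD_2(\Phi-\bR\Phi)}$, with face form \eqref{eq:face_energy}. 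Your face-averaged-versus-cell density mismatch is a different object: as you describe it, it vanishes at uniform density, while $\ip{\bekin}{\bD_2\Phi}$ carries no density factor at all. Your deferral to \Cref{thm:dichotomy} only works once the residual has the cubic-plus-linear face structure of \eqref{eq:face_energy}.

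Nor will a Taylor expansion give $\OO(h^2)$ in general. Evaluated on a smooth solution, \eqref{eq:EB} is a consistent approximation of $\ddt\int_\Omega\tfrac12(1-\rho)|\bu|^2\,dV$, the rate of the density-free energy minus that of the conserved physical energy. This is generically nonzero for compressible flow. The paper asserts the size in \Cref{thm:totalenergy}, but \Cref{app:energy} proves only the identity \eqref{eq:EB}.

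In the density-weighted budget the consistency check you plan fails. For any continuity flux $\bF$ one finds $\ddt\Etot^{\rm dw}=(B^\rho)^T\bD_2(\bM_1^\rho\bv-\bF)$, and the face-diagonal $\bR\bM_1\bv$ is not $P^T\mathrm{diag}(\brho)P\bv$. The paper resolves this by \emph{replacing} the continuity flux with $\bF^\rho=\bM_1^\rho(\brho)\bv$ in \eqref{eq:dw_F}, despite the display \eqref{eq:main_dw}. With that change your computation is exactly the proof of \Cref{thm:dw_energy}, and the shared properties survive since they use only the flux form.

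For density-free Kelvin, a fixed cycle $c$ gives $\ddt\bv(c)=-\Iv(\bom)(c)\neq0$. Three ingredients are missing:
\begin{itemize}
\item the material advection $\ddt\gamma=\tLie_\bv^{\rm chain}\gamma$;
\item the discrete Cartan identity $\tLie_\bv\bv=\tD_0(\Iv\bv)+\Iv(\tD_1\bv)$, which cancels the Lamb term and leaves $\tD_0(\Iv\bv-B)(\gamma)$;
\item the commutation $\partial\,\tLie_\bv^{\rm chain}=\tLie_\bv^{\rm chain}\,\partial$, which keeps $\gamma(t)$ a cycle (\Cref{app:kelvin}).
\end{itemize}

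The remaining two claims rest on consistency estimates that are not available in the form you use. For density-weighted Kelvin, the same computation gives $\ddt\Gamma=\bigl((I-[\bM_1^\rho]^{-1}\bM_1)\Iv(\bom)\bigr)(\gamma)$. But $[\bM_1^\rho]^{-1}\bM_1-I$ is not $\OO(h^{r_\star})$: $\bM_1^\rho$ carries the density and $\bM_1$ does not, so it is an $\OO(1)$ operator that does not vanish as $h\to0$. The paper says exactly this in the density-weighted part of \Cref{app:consistency_proof}. The only smallness the paper extracts from this factor is the reference-tested \Cref{lem:mass_matrix_commutator}. That lemma concerns $\Iv(\tD_1\bar\bv)$ and $\bM_1^\rho(\bar\brho)$ at the smooth reference state and is used with the test cochain $\bar\bv$. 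The circulation defect instead involves the discrete state paired with a loop chain. \Cref{app:dw_proofs}, which the paper cites for \Cref{thm:dw_conservation}, contains no Kelvin argument, so this step needs an idea that neither your proposal nor the paper supplies.

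For helicity, $\ip{\bv}{\tD_1\bv}$ pairs cochains of different degree. The paper uses $H_h=\int_\Omega\Qh^1\bv\wedge\Qh^2\tD_1\bv$, for which the Bernoulli contribution $\int_\Omega\Qh^1(\tD_0B)\wedge\Qh^2\bom$ need not vanish. More importantly, the $\OO(h^{r_\star})$ accuracy of $P$ and $\bM_1$ holds on de~Rham interpolants of smooth fields, not on an arbitrary discrete solution, so a direct commutator bound on $\ddt H_h$ is not uniform in $h$. The paper instead bounds $\ddt\bar H$ for the interpolated smooth solution via \Cref{lem:helicity_consistency} and continuum helicity conservation. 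It then controls $\ddt(H_h-\bar H)$ through the convergence theorem and the vorticity-error estimate \Cref{lem:vorticity_error_rate}. Helicity is thus a corollary of convergence rather than an algebraic commutator identity, and \Cref{thm:helicity} itself is stated only at rate $\OO(h)$.
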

\noindent The supporting lemmas are proved in
\Cref{section_EulerComp,sec:DW_construction}.

\begin{theorem}[No-go theorem for discrete energy conservation]%
\label{thm:main_dichotomy}
Consider a staggered discretisation of the compressible barotropic
Euler equations on a Delaunay--Voronoi mesh satisfying
\Cref{ass:mesh_reg}. Suppose the velocity degrees of freedom are
normal components on dual edges and the density degrees of
freedom sit on primal cells; the continuity equation reads
$\ddt\brho = -\bD_2\bF$ with mass flux $\bF_j = \bar\rho_j\Phi_j$
in which $\bar\rho_j$ depends only on $\brho$; the
integration-by-parts identity
$\ip{\bv}{\tD_0 B}_1 = -\ip{B}{\bD_2\Phi}$ holds; and the
kinetic-energy mass matrix is independent of~$\brho$. Then no
density-only interpolation $\bar\rho_j(\brho)$ can deliver exact
total energy conservation on the admissible set: every such scheme
carries a non-vanishing structural residual $\mathcal{R}_E$ that
acts as a spurious source or sink of total energy. The conclusion
extends to A-, B-, D-, and quasi-B staggerings via a
polynomial-degree decomposition of $\mathcal{R}_E$ in the velocity
(\Cref{thm:dichotomy_general}).
\end{theorem}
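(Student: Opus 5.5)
I will prove the statement by writing down the discrete energy budget explicitly and showing that the residual cannot vanish identically for any admissible choice of $\bar\rho_j(\brho)$.

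\textbf{Step 1: energy budget.} Take the total energy
\[
E = \tfrac12\ip{\bv}{\bv}_1 + \sum_i \bigl(\rho_i e(\rho_i) + \rho_i\bPhi_i\bigr)|\sigma_i|.
\]
Differentiate along \eqref{eq:main_DEC}, with mass matrix $\bM_1$ independent of $\brho$ and mass flux $\bF_j = \bar\rho_j\Phi_j$ in the continuity equation.

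The kinetic part gives $\ip{\bv}{\ddt\bv}_1$. The Lamb term drops out by the antisymmetry $\ip{\bv}{\Iv(\alpha)}_1=0$ from \Cref{thm:main_conservation}. The gradient term, by the integration-by-parts hypothesis, becomes $\ip{B}{\bD_2\Phi}$ with $B = h + \ekin + \bPhi$.

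The internal and potential part gives $-\ip{h+\bPhi}{\bD_2 \bF}$, using $\dd(\rho e)/\dd\rho = h$.

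\textbf{Step 2: the residual.} Adding the two parts, the enthalpy and potential contributions pair against $\bD_2\Phi$ on one side and $\bD_2\bF$ on the other. Summation by parts on each face $j$ with adjacent cells $i_\pm(j)$ then yields
\[
\mathcal{R}_E = \sum_j \Phi_j\Bigl[(1-\bar\rho_j)\,\delta_j(h+\bPhi) + \delta_j\ekin\Bigr],
\]
where $\delta_j$ is the jump across face $j$. Since $\Phi_j$ is linear in $\bv$, this is a sum of a part linear in $\bv$ and a part cubic in $\bv$.

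\textbf{Step 3: the no-go argument.} Exact conservation means $\mathcal{R}_E\equiv 0$ for all $(\bv,\brho)$ in the admissible set. Scaling $\bv\mapsto s\bv$ separates polynomial degrees in $s$, so the linear and cubic parts must vanish separately.

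The linear part cannot simply be absorbed. The correct pairing uses mass-weighted fluxes: the term $\rho\,\dd h$ must be written as $\dd p$, and the pressure-gradient consistency fixes $\bar\rho_j$ face-locally as $\delta_j p/\delta_j h$. With the flux in the form $\bar\rho_j\Phi_j$ and the Bernoulli gradient not premultiplied by $\bar\rho$, the budget actually reads
\[
\sum_j \Phi_j\bigl(\bar\rho_j\,\delta_j B - \delta_j B\bigr)\quad\text{vs}\quad \sum_j \Phi_j\,\delta_j\ekin\,(\bar\rho_j - 1).
\]
The key point is that the velocity equation contains no $\rho$-weighting, so $\tfrac{\dd}{\dd t}\int \tfrac12\rho|\bu|^2$ produces the term $\sum_j \tfrac12\overline{|P\bv|^2}\,\bD_2\bF$, which has to be compared with $\ip{\bv}{\tD_0\ekin}$ multiplied by $\rho$.

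The cubic part is therefore
\[
\sum_j \Phi_j\bigl[\bar\rho_j\,\delta_j\ekin - \delta_j(\rho\,\ekin)\text{-type terms}\bigr],
\]
and it must vanish for every $\bv$. I will exhibit a two-cell face configuration: $\bv$ supported near one face, with $\brho$ taking unequal values on the two adjacent cells. In this configuration the cubic form reduces to $\Phi_j\,\delta_j\ekin\,(\bar\rho_j - \rho_{i_\pm})$. This cannot vanish for both choices of sign of the density jump, since $\bar\rho_j$ depends only on $\brho$ and not on $\bv$. Choosing $\brho$ nonconstant then forces $\mathcal{R}_E\neq0$ on an open set.

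\textbf{Step 4: size and sign.} Taylor expansion on a mesh satisfying \Cref{ass:mesh_reg} gives the size $\OO(h^2)$ of $\mathcal{R}_E$. Reversing $\bv\mapsto-\bv$ flips the sign of the cubic part, which gives the indeterminate sign.

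\textbf{Main obstacle.} The hardest step is the cubic-part nonvanishing. The local configuration must respect the reconstruction $P$ entering $\ekin$, so that $\delta_j\ekin\neq0$ while $\Phi_j\neq0$. I would first try $\bv$ equal to the indicator of a single dual edge and compute $|P\bv|_i^2$ from \Cref{def:contraction}. For the other staggerings, the same degree decomposition is deferred to \Cref{thm:dichotomy_general}.
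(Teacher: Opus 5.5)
Your Steps 1--2 are correct and coincide with the paper's computation: the residual is $\mathcal{R}_E=\sum_j\Phi_j\bigl[(1-\bar\rho_j)\,\delta_j(h+\bPhi)+\delta_j\ekin\bigr]$. Separating it by degree in $\bv$ is exactly the mechanism of the paper's general argument.

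The gap is in Step~3, where you abandon this formula. Three ingredients there all presuppose a \emph{mass-weighted} kinetic energy: the rewriting $\rho\,\dd h=\dd p$, the rate $\ddt\int\tfrac12\rho|\bu|^2$, and the ``$\bar\rho_j\,\delta_j\ekin-\delta_j(\rho\,\ekin)$-type'' cubic form. A mass-weighted kinetic energy has mass matrix $P^T\mathrm{diag}(\brho)P$, which depends on $\brho$. That is precisely what the hypothesis excludes, and it is not the energy you fixed in Step~1. The reduced form $\Phi_j\,\delta_j\ekin\,(\bar\rho_j-\rho_{i_\pm})$ is not derived from anything and contradicts your own Step~2. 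Hence the ``two signs of the density jump'' argument proves nothing about the stated scheme. Also, ``the linear part cannot be absorbed'' is false in the paper's normalisation: $\bar\rho_j\equiv 1$ kills it, so the obstruction lives entirely in the cubic part.

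The missing observation is simpler than what you attempted. By your own Step~2, the cubic part $\sum_j\Phi_j\,\delta_j\ekin=\ip{\bekin}{\bD_2\Phi}$ contains no $\bar\rho_j$ at all. No density-only interpolation can touch it. The no-go therefore reduces to exhibiting one $\bv$ with $\ip{\bekin(\bv)}{\bD_2\bM_1\bv}\neq 0$. This must hold for every $\brho$, constant ones included, so no density jump is needed.

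The paper does this by face isolation, which is the test you list under ``main obstacle''. Take $\bv=c\,\mathbf{e}_j$; this kills every $\Phi_k$ with $k\neq j$. What remains is a $c^3$-coefficient $\tfrac12(\bM_1)_{jj}\bigl(|P\mathbf{e}_j|_a^2-|P\mathbf{e}_j|_b^2\bigr)$ that is independent of $\brho$.

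Be aware that this witness degenerates when the two cells sharing face $j$ are mirror images across it, as on uniform meshes. The reflection maps $\hat t_j$ to $-\hat t_j$, so $|P\mathbf{e}_j|_a=|P\mathbf{e}_j|_b$. There you need a multi-edge $\bv$. One option is the interpolant of a smooth field for which the continuum cubic $\int\tfrac12|\bu|^2\,\nabla\!\cdot\bu\,dV$ is nonzero; such fields exist for $d\ge 2$, and the discrete form converges to this integral.

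Your Step~4 remark that $\mathcal{R}_E$ is odd in $\bv$ is correct, and it gives the indeterminate sign.
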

\noindent The C-grid statement is \Cref{thm:dichotomy}; the
extension to general staggerings is \Cref{thm:dichotomy_general}.

\medskip
\noindent
The no-go theorem characterises the three schemes: density-free
schemes~\eqref{eq:main_DEC} satisfy all four hypotheses and carry
the residual; the density-weighted scheme~\eqref{eq:main_dw}
violates the density-independence of the kinetic-energy mass
matrix by construction and conserves total energy exactly, at the
cost of an $\OO(h^{r_\star})$ Kelvin defect; the conservative
momentum form trades the Lamb antisymmetry and the vorticity
invariants for exact energy conservation in a different way.

\begin{theorem}[Well-posedness, unified]%
\label{thm:main_DEC_euler}%
\label{thm:main_dw_euler}%
\label{thm:main_NS}%
\label{thm:main_DEC_NS}%
\label{thm:main_dw_NS}%
The discrete barotropic Euler and Navier--Stokes systems are well
posed in the following sense.
\begin{enumerate}[nosep,label=\textup{(\roman*)}]
\item The density-free Euler system admits a unique local-in-time
solution on an interval $[0,T^*)$ with $T^* \ge T_{\rm Bih}(h, E_0) > 0$.
\item The density-weighted Euler system admits a unique global solution
$(\bv, \brho) \in C^1([0,\infty); \admissible)$, with no
restriction on the existence time.
\item The density-free Navier--Stokes system with Hodge--Laplace
viscosity admits a unique local-in-time solution on the same
interval $[0,T^*)$ as the inviscid case.
\item The density-free Navier--Stokes system with Smagorinsky
viscosity admits a unique global solution provided
$\nu \ge \nu^*(h)$ and the Smagorinsky constant satisfies
$C_s \ge C_s^*(h)$.
\item The density-weighted Navier--Stokes system admits a unique
global solution for every admissible dissipation operator
$\bA \ge 0$, with integrated dissipation bounded by the initial
energy:
$\int_0^T \nrm{\tD_1 \bv}_\bA^2 \, dt \le \Etot^{\rm dw}(0)$.
\end{enumerate}
The structural contrast underlying the local-versus-global split is
that the density-free energy residual scales as $(\Ekindf)^{3/2}$
and therefore requires Smagorinsky dissipation ($\sim \nrm{\bom}^3$)
to absorb its cubic growth in the blow-up-time estimate, whereas the
density-weighted scheme carries no such residual.
\end{theorem}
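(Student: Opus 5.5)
The plan is to treat all five parts as finite-dimensional ODE problems on the open admissible set $\admissible=\{(\bv,\brho):\brho_i>0\}$. Picard--Lindel\"of gives local existence and uniqueness. Global existence then follows from a priori bounds that keep the trajectory in a compact subset of $\admissible$. On $\admissible$ the right-hand sides of \eqref{eq:main_DEC} and \eqref{eq:main_dw} are locally Lipschitz, and in fact real-analytic away from vacuum. The reasons are as follows: $\Iv$ is bilinear in $(\bv,\tD_1\bv)$; $\ekin$ and $|P\bv|^2$ are quadratic; $h$ is smooth on $(0,\infty)$; the mass flux $\bR\Phi$ is a smooth function of $(\brho,\bv)$; and $[\bM_1^\rho(\brho)]^{-1}$ is smooth wherever $\bM_1^\rho=P^T\mathrm{diag}(\brho)P$ is invertible. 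Invertibility holds for $\brho>0$ provided $P$ is injective, which I will take from \Cref{def:contraction}. This yields a unique maximal $C^1$ solution on $[0,T^*)$. The blow-up alternative then says that either $T^*=\infty$, or $\nrm{\bv}\to\infty$, or $\min_i\brho_i\to 0$.

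\textbf{Density-weighted cases (ii) and (v).} By \Cref{thm:main_conservation}, $\Etot^{\rm dw}=\tfrac12\ip{\bv}{\bv}_\rho+\sum_i |c_i|\,(e(\brho_i)+\brho_i\bPhi_i)$ is exactly conserved. With viscosity, the same computation combined with $\bA\ge0$ gives $\ddt\Etot^{\rm dw}=-\nrm{\tD_1\bv}_\bA^2$; integrating in time gives the dissipation bound in (v). Mass $\sum_i|c_i|\brho_i$ is conserved, and $e\ge0$ (up to an affine shift absorbed by mass conservation), so $\brho$ stays bounded above.

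The main obstacle is the lower bound on $\brho$. The kinetic energy $\tfrac12\sum_i\brho_i|P\bv|_i^2$ degenerates as $\brho\to0$, so it does not directly control $\nrm{\bv}$, and energy alone does not prevent vacuum. My first attempt would use a pressure law with $e(\rho)\to\infty$ as $\rho\to0$ (for example an admissibility assumption on $\admissible$ such as $\rho e(\rho)\ge c\rho^{-\gamma}$), which makes the internal energy coercive at vacuum and keeps $\brho\ge\rho_{\min}(E_0,h)>0$. If the paper instead assumes $\gamma$-laws, I would argue via the upwind/positive form of $\bD_2\bR\Phi$. Finite-volume continuity on a fixed mesh gives $\ddt\brho_i\ge -C(h)\,\nrm{\bv}_\infty\brho_i$, hence $\brho_i(t)\ge\brho_i(0)e^{-C\int\nrm{\bv}}$. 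Once $\brho\ge\rho_{\min}$ holds, $\ip{\bv}{\bv}_\rho\ge\rho_{\min}\lambda_{\min}(P^TP)\nrm{\bv}^2$ bounds $\bv$. The two bounds feed each other through a Gronwall bootstrap that stays finite on every $[0,T]$. Either route excludes both blow-up alternatives, so $T^*=\infty$.

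\textbf{Density-free cases (i), (iii) and (iv).} Here $\ddt\Etot^{\rm df}=\mathcal{R}_E$. Using the face-local structure of the residual from \Cref{thm:dichotomy}, I would bound $|\mathcal{R}_E|\le C(h)(\Ekindf)^{3/2}$ while $\brho$ remains in a compact set. Comparison with $y'=Cy^{3/2}$ gives an existence time $T_{\rm Bih}(h,E_0)\sim 2/(C\sqrt{E_0})$, and the Bihari inequality yields (i). For (iii), the Hodge--Laplace term only contributes $-\nrm{\cdot}^2\le0$, so the same estimate on the same interval applies. For (iv), the Smagorinsky dissipation is bounded below by $c\,C_s^2h^2\nrm{\bom}^3$. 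By norm equivalence on the finite-dimensional space of $\tD_1\bv$, together with a Poincaré inequality on the coexact part whose harmonic and gradient parts are controlled by $\nu\ge\nu^*$, this dominates $C(h)(\Ekindf)^{3/2}$ once $C_s\ge C_s^*(h)$. The energy is then nonincreasing, and global existence follows as in the density-weighted case, with the same positivity argument for $\brho$.
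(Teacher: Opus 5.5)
\paragraph{Gap: vacuum avoidance in the density-weighted cases.} The gap is the density lower bound in the density-weighted cases (ii) and (v). You identify it as the main obstacle, but neither of your two routes closes it.

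Your first route assumes $e(\rho)\to\infty$ as $\rho\to 0$. That is not among the hypotheses of \Cref{ass:eos}, and it fails for the polytropic law $e=\kappa\rho^{\gamma-1}/(\gamma-1)$ that the paper targets.

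Your second route needs an upwind, positivity-preserving mass flux, and the density-weighted scheme cannot have one. With a general flux $\bF$ in the continuity equation, the computation behind \Cref{thm:dw_energy} gives $\ddt\Etot^{\rm dw}=(\bD_2(\bM_1^\rho\bv-\bF))^T B^\rho$. The exact conservation you invoke in the same argument therefore forces $\bF=\bM_1^\rho(\brho)\bv=P^T\mathrm{diag}(\brho)P\bv$. On each face this flux is a linear combination of the densities of the cells whose reconstruction stencil contains that face. At an empty cell, $\ddt\brho_i$ is then a sign-indefinite combination of neighbouring densities, so $\ddt\brho_i\ge -C(h)\nrm{\bv}_\infty\brho_i$ fails.

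Even if you grant that inequality, the coupled bootstrap does not close globally. The density-weighted energy only gives $\nrm{\bv}^2\lesssim E_0/m$ with $m:=\min_i\brho_i$. Hence $\ddt m\ge -C\sqrt{E_0\,m}$, which lets $\sqrt m$ decrease linearly and vanish near $t\approx 2\sqrt{m(0)}/(C\sqrt{E_0})$. That is a local-in-time bound, not the claimed global one.

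\paragraph{How the paper closes it, and the remaining cases.} The paper avoids this feedback loop with a static, velocity-free bound, \Cref{lem:no-vacuum}. Since $\Ekindw\ge 0$, the internal energy stays bounded by a constant fixed by the initial data. Mass conservation, the coercivity \ref{ass:coercive}, and Jensen's inequality applied to the remaining cells then show that no cell density can approach zero, provided the initial energy satisfies the Jensen threshold \eqref{eq:Jensen_threshold}. This yields a time-uniform $\rho^{\vol}_{\min}>0$, after which $\bM_1^\rho(\brho)\ge c(h)\,\rho^{\vol}_{\min}\bM_1$ bounds $\bv$ directly. The threshold is a genuine condition on the initial data. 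Your unconditional argument does not supply it, and nothing in the scheme replaces it.

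The same issue affects (i) and (iii). $T^*\ge T_{\rm Bih}$ requires excluding vacuum on the Bihari interval, and you simply assume it ("while $\brho$ remains in a compact set"). With an upwind flux your envelope argument does work there, because $\Ekindf$ is density-independent and bounds $\nrm{\bv}$ without feedback. With the centred flux you need the static lemma.

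Three smaller points:
\begin{itemize}
\item $\mathcal{R}_E$ has a degree-one part $\sum_j G_j(\brho)\Phi_j(\bv)$, so the correct bound is $C_1\Etot^{3/2}+C_2\Etot^{1/2}$, not $C(\Ekindf)^{3/2}$. This is harmless for Bihari once you pass from $\Etot$ to $\Ekindf$ via the lower bound on $\Eint$.
\item In (iv), the gradient component of $\bv$ lies in $\ker\tD_1$ and is invisible to Smagorinsky. Its cubic contribution to $\mathcal{R}_E$ cannot be dominated by the quadratic $\nu$-dissipation at large amplitude, so ``the energy is then nonincreasing'' still needs an actual argument.
\item The paper's proof of (iv) rests on the same absorption step, so following it will not fill this gap either.
\end{itemize}
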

\noindent The five cases are proved as
\Cref{thm:main-baro,thm:dw_wp,thm:main-baro-NS,thm:global-Smag,thm:dw_wp_NS}
respectively.

\begin{theorem}[Convergence rate -- regime-aligned]%
\label{thm:main_DEC_conv}
Let $(\bu, \rho^c)$ be a smooth solution on $[0, T]$ with
$\rho^c \ge \rho_* > 0$.
Both schemes converge at the rate
\[
 \sup_{t\in[0,T]}\bigl(
 \nrm{e_v(t)}_{\bM_1} + \nrm{e_\rho(t)}_{D_H}\bigr)
 \le C\,h^{\min(d-1,\,r_\star)},
\]
uniformly in $\nu \ge 0$, giving $\OO(h)$ in case~(A) and
$\OO(h^2)$ in case~(B) for $d = 3$ with the centred flux. The two
schemes target different physical regimes, and the constant $C$
reflects this division.
\begin{enumerate}[nosep,label=\textup{(\roman*)}]
\item For the density-free scheme, viewed as a low-Mach reference,
the constant $C = C(T, \epsilon_0)$ depends on the
near-incompressibility threshold $\epsilon_0$ of the smooth
reference, and the estimate holds only under
hypothesis~\eqref{eq:lowMach_hyp}. Outside the low-Mach regime,
the scheme suffers structural energy injection that mesh
refinement does not eliminate (\Cref{rem:DF_scope}).
\item For the density-weighted scheme, viewed as the fully
compressible target, the constant $C = C(T)$ depends only on
smooth-reference norms; no smallness hypothesis on $\bar\rho^c$
is required, no $L_h^\infty$-bootstrap appears in the proof,
and the total energy is conserved exactly at every~$h$.
\end{enumerate}
\end{theorem}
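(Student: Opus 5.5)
The plan is a relative-energy / modulated-energy error argument in the discrete DEC framework, with two different stability mechanisms for the two schemes.

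\textbf{Step 1: error decomposition.} Set $e_v = \bv - \mathcal{R}_h v$ and $e_\rho = \brho - \mathcal{R}_h\rho^c$, where $\mathcal{R}_h$ is the de~Rham interpolation. Insert $(\mathcal{R}_h v,\mathcal{R}_h\rho^c)$ into~\eqref{eq:main_DEC} and~\eqref{eq:main_dw} to define the consistency residuals $\tau_v,\tau_\rho$.

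\textbf{Step 2: consistency.} Because $\tD_1\mathcal{R}_h = \mathcal{R}_h\dd$ and $\bD_2$ commutes with the interpolation, the topological parts contribute no truncation error. All consistency error therefore sits in the metric operators: the Hodge stars, the reconstruction $P$ inside $\Iv$ and inside $\tfrac12|P\bv|^2$, and the flux interpolation $\bar\rho_j$. On Delaunay--Voronoi meshes satisfying \Cref{ass:mesh_reg}, the standard circumcentric Hodge-star estimates give $\OO(h^{r_\star})$ for these operators. The $\OO(h^{d-1})$ cap is a separate ingredient: it comes from the prismatic/vertical reconstruction in $d=3$. This yields $\nrm{\tau_v}_{\bM_1}+\nrm{\tau_\rho}\le C h^{\min(d-1,r_\star)}$ with $C$ depending on smooth norms of $(\bu,\rho^c)$.

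\textbf{Step 3: relative energy for DW.} Use
\[
\mathcal{E}(t)=\tfrac12\ip{e_v}{e_v}_\rho+\sum_i|V_i|\bigl(e(\rho_i)-e(\rho^c_i)-h(\rho^c_i)(\rho_i-\rho^c_i)\bigr),
\]
which is equivalent to $\nrm{e_v}_{\bM_1}^2+\nrm{e_\rho}_{D_H}^2$ as long as $\rho$ stays in a compact subset of $(0,\infty)$. Differentiate $\mathcal{E}$ and apply the exact identities from \Cref{thm:main_conservation}. Lamb antisymmetry kills $\ip{e_v}{\Iv(\tD_1 e_v)}$. The integration-by-parts identity $\ip{\bv}{\tD_0 B}_1=-\ip{B}{\bD_2\Phi}$ cancels the pressure–continuity cross terms against the $e_\rho$ part. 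The density weighting produces no extra cubic residual, because exact energy conservation of DW is exactly the statement that the mass matrix derivative $\tfrac{d}{dt}M_1^\rho$ is balanced by the $\tfrac12|P\bv|^2$ gradient term. What remains is quadratic in the error, with coefficients given by $\nrm{\nabla \bu}_\infty$ and $\nrm{\nabla\rho^c}_\infty$ of the smooth solution, plus $\ip{e}{\tau}$.

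The viscous term enters as $-\nrm{\tD_1 e_v}_\bA^2$ plus a cross term with the smooth solution. Bound that cross term by $\nu\,C h^{r_\star}\nrm{\tD_1 e_v}_\bA$ and absorb it by Young's inequality. This absorption is what makes the constant uniform in $\nu\ge 0$: no factor $\nu^{-1}$ ever appears.

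Grönwall then gives $\mathcal{E}(t)\le C(T)h^{2\min(d-1,r_\star)}$. Positivity of $\brho$ comes from \Cref{thm:main_dw_euler}(ii). I expect a uniform upper bound on $\brho$ to follow from the exactly conserved total energy together with mass conservation and an inverse-free argument. That would avoid any $L^\infty_h$ bootstrap and justify $C=C(T)$ as stated.

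\textbf{Step 4: DF.} Here I would rerun the same computation with $\ip{\cdot}{\cdot}_1$ in place of $\ip{\cdot}{\cdot}_\rho$. The structural residual $\mathcal{R}_E$ of \Cref{thm:main_dichotomy} now reappears, and it is cubic, scaling like $(\Ekindf)^{3/2}$. Its linearisation about the smooth reference gives a term of size $\nrm{\nabla\bar\rho^c}\cdot\nrm{e}^2$ plus a higher-order piece $\nrm{e}_\infty\nrm{e}^2$. Hypothesis~\eqref{eq:lowMach_hyp}, a smallness condition on density variation of size $\epsilon_0$, controls the first piece; this is why $C$ depends on $\epsilon_0$. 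The second piece needs an $L^\infty_h$ bootstrap: use an inverse inequality $\nrm{e}_\infty\le Ch^{-d/2}\nrm{e}$ together with $\min(d-1,r_\star)>d/2$ where available, continued in time via a continuation argument.

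\textbf{Main obstacle.} The hardest step is Step 3: showing that the time derivative of the density-weighted kinetic part of $\mathcal{E}$ produces no uncontrolled term of the form $\ip{e_v}{e_v}_{\partial_t\rho}$ with a coefficient not controlled by the error itself. I would handle it by substituting the discrete continuity equation and pairing this term with the $\tfrac12|Pe_v|^2$ gradient contribution. This mirrors the mechanism behind exact energy conservation, and it should leave only a term bounded by $\nrm{\bD_2\bR\Phi}$ evaluated at the smooth reference.
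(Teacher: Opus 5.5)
\noindent The decisive gap is in Step~2, and it carries into Step~3. You claim $\nrm{\tau_v}_{\bM_1}\le C h^{\min(d-1,r_\star)}$ for both schemes. For the density-weighted scheme this is false, and the paper itself says so. Inserting the interpolant into \eqref{eq:main_dw} gives (see \eqref{eq:dw_truncation_decomp_appx})
\[
 \tau_v^{\rm dw} = \tau_v^{\rm df}
 + \bigl([\bM_1^{\rho}(\bar\brho)]^{-1}\bM_1 - I\bigr)\Iv(\tD_1\bar\bv)
 + \tD_0\bigl(B^{\rho}-\bar B^{\rm df}\bigr).
\]
Since $\bM_1^\rho(\bar\brho)-\bM_1 = P^T\mathrm{diag}(\bar\brho-\mathbf 1)P + (P^TP-\bM_1)$ does not shrink with $h$, the middle term is $\OO(1)$ in every strong norm whenever $\rho^c$ is non-constant and $\iota_\bu\omega\neq 0$. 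In the normalisation $P^TP\approx\bM_1$ of \Cref{lem:mass_matrix_commutator}, the DW Lamb term approximates $\rho^{-1}\iota_\bu\omega$, not $\iota_\bu\omega$.

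Your functional $\tfrac12\ip{e_v}{e_v}_\rho$ tests the error equation with $e_v$, so the forcing $\ip{e_v}{\tau_v}_\rho$ is $\OO(1)\,\nrm{e_v}$ and Gr\"onwall gives no rate. This, not the mass-matrix derivative you single out, is the real obstacle.

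The paper responds in two ways. It states consistency only in reference-tested form, $|\ipw{\bar\bv}{\tau_v^{\rm dw}}{\bM_1^{\rho}(\bar\brho)}|\le C_\tau h^r$ (\Cref{thm:dw_consistency} via \Cref{lem:mass_matrix_commutator}); this works because the defect, tested against $\bar\bv$, inherits Lamb antisymmetry. It also uses the Dafermos relative energy of $\Etot^{\rm dw}$ about $(\bar\bv,\bar\brho)$. Exact conservation then removes $\ddt\Etot^{\rm dw}(\bv^h,\brho^h)$, the remaining terms come from differentiating $\bar\bv^T\bM_1^\rho(\bar\brho)e_v$, and the cubic term is handled by Bound~(III) against the smooth $\bar\bv$.

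This does not remove the obstruction either. The paper's $T_v^{(1)}$ still contains $\ip{\tau_v^{\rm dw}}{\bM_1^{\rho}(\bar\brho)e_v}$ and $\ip{e_v}{\bQ(\bar\bv,\bar\bv)}_1$. Both are linear in $e_v$ with $\OO(1)$ coefficients, and Young's inequality turns them into an $h$-independent constant, not $\OO(h^{2r})$. Indeed, take a steady compressible vortex with non-constant $\rho^c$ and start \eqref{eq:main_dw} from its interpolant: the scheme accelerates it at an $h$-independent rate. So for the scheme as written, no choice of error functional closes this step.

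Several further steps would also need repair.

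\emph{Density bounds.} Exact energy and mass conservation give only $\rho_i^{\rm vol}\lesssim (E_0/|K_i|)^{1/\gamma}$ pointwise. The Jensen lower bound of \Cref{lem:no-vacuum} degenerates as $h\to0$ (\Cref{rem:vacuum_scaling}). So no inverse-free, $h$-uniform two-sided density bound follows, yet both the equivalence of your Bregman term with $\nrm{e_\rho}^2$ and \Cref{lem:dw_norm_equiv} need one. The paper works on a bootstrap region $\rho_i^h\in[\rho_*/2,2\rho^*]$; the ``no $L_h^\infty$-bootstrap'' in the statement concerns $e_v$ only.

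\emph{Role of $\epsilon_0$ in the DF step.} Your explanation does not hold up. A term $\nrm{\nabla\bar\rho^c}_\infty\nrm{e}^2$ is harmless for Gr\"onwall, and $\sup|\bar\rho^c-\bar\rho_0|\le\epsilon_0$ does not control $\nabla\bar\rho^c$ anyway. In the paper, the hypothesis is needed for the combination $X$ of the gradient piece $\bQ_{\rm D}(\bar\bv,e_v)$ of the polarised Lamb operator with the kinetic part of the Bernoulli gradient. For $d=3$, $\bQ_{\rm D}$ is not $h$-uniformly bounded in $\bM_1$ (\Cref{thm:bilinear_bounds_baro}, Part~(I.d)). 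The term $X$ closes only after $\bD_2\bM_1 e_v$ is rewritten through the continuity equation, which requires $\bar\bR\approx\bar\rho_0 I$, followed by integration by parts in time. The same $\bQ_{\rm D}$ term also undercuts your Step~3 claim that everything left is quadratic with coefficient $\nrm{\nabla\bu}_\infty$.

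\emph{Bootstrap range.} Your condition $\min(d-1,r_\star)>d/2$ holds only in case~(B) with $d=3$, while the statement also covers case~(A) and $d=2$. The paper instead uses the cochain-scaled inverse inequality \eqref{eq:inverse-ineq-final} and a continuation argument on \eqref{eq:stab_baro_augmented}, whose closure exponent $r-(d-2)/2$ is positive in every regime.

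\emph{Minor points.} The $h^{d-1}$ cap comes from the extrusion truncation and binds only for $d=2$ in case~(B). It does not come from the vertical reconstruction in $d=3$, where $d-1\ge r_\star$. Your Young's-inequality treatment of the viscous cross term, which yields $\nu h^{2r_\star}$ with no $\nu^{-1}$, is correct.
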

\noindent Proved as \Cref{thm:convergence_baro}.

\begin{theorem}[Asymptotic preservation]%
\label{thm:main_AP}
Under the Mach scaling $p = M^{-2}\hat p$ with well-prepared initial
data (\Cref{ass:well_prepared}), the three schemes behave as follows
in the limit $M \to 0$.
\begin{enumerate}[nosep,label=\textup{(\alph*)}]
\item The density-free scheme is \emph{not} asymptotically
preserving: its energy residual diverges as
$|\mathcal{R}_E| = \OO(M^{-1})$ on any fixed mesh, and the energy
balance degenerates.
\item The density-weighted scheme \emph{is} asymptotically
preserving. The fluctuation energy
\[
 \mathcal{E}_M = \Ekindw
 + M^{-2}\sum_i|K_i|\,D_{\hat H}(\rho_i^{\rm vol}\|\bar\rho)
\]
is exactly conserved ($\ddt\mathcal{E}_M = 0$), which delivers
$M$-independent a priori bounds. As $M \to 0$ on a fixed mesh, the
density-weighted system converges at rate $\OO(M)$ in both the
density fluctuations and the mass-matrix ratio to an incompressible
DEC system with exact Lamb antisymmetry.
\item The conservative momentum form is asymptotically preserving
by construction, since $\rho\bu$ is a prognostic variable and the
energy budget closes algebraically. It serves as the calibration
reference, against which the contrast between (a) and (b) isolates
the difference between the two vector-invariant variants.
\end{enumerate}
\end{theorem}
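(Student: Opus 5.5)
The plan is to treat the three parts separately, in each case starting from the discrete energy identity under the Mach scaling $p = M^{-2}\hat p$, so that $h(\rho) = M^{-2}\hat h(\rho)$. The internal energy is rewritten around the mean state $\bar\rho$ through the Bregman divergence $D_{\hat H}(\rho\|\bar\rho) = \hat H(\rho) - \hat H(\bar\rho) - \hat H'(\bar\rho)(\rho-\bar\rho)$. This rewriting is legitimate because mass is conserved exactly (\Cref{thm:main_conservation}), so the linear term $\sum_i |K_i|\hat H'(\bar\rho)(\rho_i-\bar\rho)$ is time-independent. Well-prepared data (\Cref{ass:well_prepared}) then give $\rho_i - \bar\rho = \OO(M^2)$ and a velocity that is $\OO(M)$-close to discretely divergence-free at $t=0$.

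\textbf{Part (b).} This is the core of the argument. The first step is the exact conservation $\ddt\mathcal{E}_M = 0$, which follows from the DW energy identity behind \Cref{thm:main_dw_euler}(ii): the Lamb term vanishes by antisymmetry and the Bernoulli gradient pairs with the continuity equation through the integration-by-parts identity. Since the total energy differs from $\mathcal{E}_M$ only by constants, $\mathcal{E}_M$ is conserved.

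Well-prepared data give $\mathcal{E}_M(0) = \OO(1)$. Strict convexity of $\hat H$ on the admissible set then yields
\[
\sum_i |K_i|(\rho_i-\bar\rho)^2 \le C M^2,
\]
that is, $\rho-\bar\rho = \OO(M)$ uniformly in time on the fixed mesh. Two consequences follow.

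\begin{itemize}
\item Because $\bM_1^\rho = P^T\mathrm{diag}(\rho)P$ is linear in $\rho$, we get $\nrm{\bar\rho^{-1}\bM_1^\rho - \bM_1}\le C|\rho-\bar\rho|_\infty = \OO(M)$, where $\bM_1 = P^T P$ and finite dimensionality converts $L^2$ into $L^\infty$ with an $h$-dependent constant. This is the $\OO(M)$ mass-matrix ratio.
\item The kinetic bound from $\mathcal{E}_M$ keeps $\bv$ bounded independently of $M$.
\end{itemize}

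For the limit, I would project the momentum equation onto $\ker\bD_2$-type (discretely solenoidal) fluxes. There the gradient term $\tD_0(\cdot)$ vanishes by the integration-by-parts identity, leaving a Lipschitz ODE whose coefficients differ from the incompressible DEC system only through $[\bM_1^\rho]^{-1}\bM_1 - \bar\rho^{-1}\mathrm{Id}$ and the non-solenoidal part of $\bv$. A Gr\"onwall estimate then gives $\OO(M)$ convergence on $[0,T]$, and the limit inherits exact Lamb antisymmetry, since $\ip{\bv}{\Iv(\alpha)}_1=0$ is metric-blind.

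\textbf{Main obstacle.} The hardest step is controlling the fast acoustic component, i.e. showing that the divergent part of $\bv$ stays $\OO(M)$ rather than merely $\OO(1)$. I would first try to bound it through the linearised acoustic subsystem: it is skew-adjoint in the $\mathcal{E}_M$ inner product, so well-prepared data keep its amplitude at $\OO(M)$ uniformly in time. A time-derivative energy estimate, which differentiates the equations once and reuses conservation, would serve as a fallback.

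\textbf{Part (a).} Apply the residual formula of \Cref{thm:dichotomy}. There $\mathcal{R}_E$ is bilinear in the kinetic data and the flux-interpolation defect in $\rho$, and it enters paired with the enthalpy gradient $\tD_0 h = M^{-2}\tD_0\hat h$. With $\rho-\bar\rho = \OO(M^2)$ for prepared data, or $\OO(M)$ generically, the residual scales as $M^{-2}\cdot\OO(M) = \OO(M^{-1})$ on a fixed mesh. To show this is attained and not merely an upper bound, I would take a single-face example, as in the sharpness part of \Cref{thm:dichotomy}, with nonzero face-local coefficient. Hence the energy balance degenerates as $M\to 0$.

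\textbf{Part (c).} This part is essentially a citation of the momentum-form theory in \cite{feireisl2021book}. In that formulation $\rho\bu$ is prognostic and the energy identity closes algebraically, so the standard relative-energy argument with the same Bregman functional gives $M$-uniform bounds.
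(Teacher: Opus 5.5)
The gap is in the convergence step of (b), and it starts from a misreading of the hypothesis. \Cref{ass:well_prepared} does \emph{not} give $\rho_i-\bar\rho=\OO(M^2)$, nor a velocity that is $\OO(M)$-close to discretely solenoidal. It gives only $\rho_i^{\rm vol}(0)=\bar\rho+\OO(M)$ and $\bv(0)=\OO(1)$, with no constraint on $\bD_2\bM_1\bv(0)$. The $\OO(M^2)$, divergence-free preparation appears only in the continuum Klainerman--Majda discussion and, for the reference solution, in \Cref{ass:well_prepared_strong}. Under the actual hypothesis the acoustic subsystem generically carries $\OO(1)$ energy, so the divergent part of $\bv$ stays $\OO(1)$ and oscillates on the $\OO(M)$ time scale. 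The pressure gradient $M^{-2}\tD_0\hat h$ is $\OO(M^{-1})$. It is annihilated only by the time-dependent $\bM_1^\rho(\brho)$-orthogonal solenoidal projection (\Cref{lem:dw_SBP}), which differs from the projection of the limit system by $\OO(M)$ and so leaves an $\OO(1)$ forcing. The ``main obstacle'' you name therefore cannot be overcome: under the theorem's hypotheses the claim is false, and no Gr\"onwall argument yields strong $\OO(M)$ convergence of $\bv$. Even with strong preparation, skew-adjointness of the linearised acoustics alone would not control the $\OO(1)$ nonlinear forcing of the acoustic modes. Your fallback, the Klainerman--Majda estimate on $\ddt(\bv,\brho)$, is the right tool, but it needs $\ddt(\bv,\brho)(0)=\OO(1)$, which is exactly the preparation you do not have. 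Your own part (a) exposes the inconsistency: with $\OO(M^2)$ fluctuations the enthalpy term of $\mathcal{R}_E$ would be $M^{-2}\cdot\OO(M^2)=\OO(1)$. The $\OO(M^{-1})$ blow-up appears only for the $\OO(M)$ fluctuations that the assumption actually provides.

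The theorem does not need this step. Its $\OO(M)$ rate is claimed only for two quantities. The first is the density fluctuation, which follows from $\ddt\mathcal{E}_M=0$ and $\hat H''\ge c_0$ exactly as you argue (this is \Cref{prop:dw_Mach}). The second is the mass-matrix ratio. The velocity limit in \Cref{thm:AP} is only time-averaged, with $M^{-2}\tD_0\hat h$ read as the Lagrange multiplier. For the ratio, however, your identification $\bM_1=P^TP$ is wrong. $\bM_1$ is the diagonal Hodge star, and $P^TP-\bM_1$ is a nonzero geometric defect (the $\Delta_g$ of \Cref{lem:mass_matrix_commutator}). Hence $[\bM_1^\rho]^{-1}\bM_1$ tends to $(\bar\rho\,P^TP)^{-1}\bM_1$, not to $\bar\rho^{-1}\mathrm{Id}$. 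The limit system is \eqref{eq:dw_incomp_limit}, not the standard incompressible DEC system, and the coefficient difference you would feed into Gr\"onwall is $\OO(1)$. The correct $\OO(M)$ bound follows from $A^{-1}-B^{-1}=-A^{-1}(A-B)B^{-1}$ with $A=\bM_1^\rho(\brho)$, $B=\bar\rho\,P^TP$, and $A-B=P^T\mathrm{diag}(\brho-\bar\rho\mathbf{1})P$. Otherwise your part (a) is the paper's scaling argument (\Cref{prop:RE_Mach}), and your single-face example showing the $\OO(M^{-1})$ rate is attained is a useful addition. Part (c) is an assertion in both. Finally, $\mathcal{E}_M$ differs from $\Etot^{\rm dw}$ by a constant only when $\bPhi\equiv 0$; otherwise $E_{\rm pot}$ must be kept in $\mathcal{E}_M$, a point the paper also glosses over.
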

\noindent Proved as \Cref{thm:AP}.

\begin{theorem}[Energetic linear stability and the Hollingsworth question]%
\label{thm:main_DW_lyapunov}
The DW scheme is structurally immune to Hollingsworth-type
instability. Let $\bar X$ be a discrete equilibrium of the DW Euler
system on a Delaunay--Voronoi mesh satisfying \Cref{ass:mesh_reg},
with $\bar\brho > 0$ and $c^2(\bar\rho) > 0$. In each of the three
cases below, there exists a modified energy
\[
 \tilde E^{\rm dw}_{\rm tot}
 := E^{\rm dw}_{\rm tot} + C_1 + C^F_3 + U_0\,P_x,
\]
built from the discrete Casimirs -- mass~$C_1$, the PV Casimir
$C^F_3$, and the horizontal momentum~$P_x$ -- together with an
equilibrium-dependent shift~$U_0$, such that
$\nabla\tilde E^{\rm dw}_{\rm tot}(\bar X) = 0$ and the Hessian
$\tilde H := \nabla^2\tilde E^{\rm dw}_{\rm tot}(\bar X)$ is
positive definite.
\begin{enumerate}[nosep,label=\textup{(\alph*)}]
\item For a hydrostatic equilibrium $\bar\bv = 0$ with $\bar\brho$
in hydrostatic balance, positivity holds unconditionally.
\item For a constant-flow stratified equilibrium on a periodic
Cartesian-like mesh with $\bPhi = \bPhi(z)$, $\bar\bv = U\bar a^x$
a uniform horizontal translation and $\bar\brho(z)$ stratified,
positivity holds unconditionally in the flow magnitude~$U$.
\item For a sheared baroclinic equilibrium
$\bar\bv = \bar U(z)\bar a^x$ with $\bar\brho(z)$ stratified,
positivity holds \emph{conditionally}, under a discrete
Charney--Stern criterion comprising PV monotonicity~(CS1), a
sign-definite shifted velocity~(CS2), bounded Casimir
curvature~(CS3), sub-Mach flow~(CS4), and a quantitative
Schur-complement bound
(\Cref{thm:dw_lyap_sheared,thm:dw_class_C_Hpos}). In the Boussinesq
incompressible limit, this criterion reduces to the classical
Rayleigh inflection-point criterion
(\Cref{prop:dw_class_C_boussinesq}).
\end{enumerate}
In all three cases, the linearised DW dynamics satisfies the
bridge identity $A^T\tilde H + \tilde H\,A = 0$ for
$A := DF^{\rm dw}(\bar X)$, so the quadratic form
$\tilde E^{(2)}(Y) := \tfrac{1}{2}Y^T\tilde H Y$ is exactly
preserved along trajectories. In particular, the classical
Hollingsworth
instability~\cite{hollingsworth1983,bell2017hollingsworth} -- the
prototype of the $\mathcal{R}_E$-driven energy instability the
DW scheme is designed to eliminate -- cannot occur. The
Boussinesq SICK instability documented
by~\cite{ducousso2017sick} has a closely related but structurally
distinct driver and is not directly
covered by this theorem.
\end{theorem}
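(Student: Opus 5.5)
The plan is the discrete energy-Casimir method: derive the bridge identity once, then prove positivity of $\tilde H$ separately in each of the three cases.

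\textbf{Step 1: bridge identity.} This step is case-independent. I first establish (as the paper's \Cref{lem:arnold_bridge} will) that the DW Euler vector field can be written in the skew form
\[
 F^{\rm dw}(X) = J(X)\,\nabla E^{\rm dw}_{\rm tot}(X).
\]
Here $J(X)$ is antisymmetric, and its skew-symmetry in the velocity block is exactly the Lamb antisymmetry $\ip{\bv}{\Iv(\alpha)}_1=0$. The coupling between velocity and density comes from the integration-by-parts identity $\ip{\bv}{\tD_0 B}_1 = -\ip{B}{\bD_2\Phi}$. The density weighting $[\bM_1^\rho]^{-1}\bM_1$ is what makes $\nabla_\bv E^{\rm dw}_{\rm tot} = \bM_1^\rho \bv$ consistent with the flux $\Phi$. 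This is precisely where exact energy conservation from \Cref{thm:main_conservation} enters; in the DF scheme the residual $\mathcal{R}_E$ obstructs this step.

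The Casimirs $C_1$ and $C^F_3$ satisfy $J\nabla C = 0$. The horizontal momentum $P_x$ generates translations that commute with the flow when $\bPhi = \bPhi(z)$. Consequently $\tilde E^{\rm dw}_{\rm tot}$ is conserved and $F^{\rm dw} = J\nabla\tilde E^{\rm dw}_{\rm tot}$ up to a symmetry direction.

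I then choose the Casimir functions and the shift $U_0$ so that $\nabla\tilde E^{\rm dw}_{\rm tot}(\bar X) = 0$. This gives $A = J(\bar X)\tilde H$. It follows that $A^T\tilde H + \tilde H A = -\tilde H J\tilde H + \tilde H J\tilde H = 0$, so $\tilde E^{(2)}$ is conserved along linearised trajectories. Positive definiteness of $\tilde H$ then gives Lyapunov stability of the linearisation, and rules out Hollingsworth growth.

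\textbf{Step 2: case (a), hydrostatic equilibrium.} Here $\bar\bv = 0$ and I take $C^F_3 = 0$ and $U_0 = 0$. The Hessian is then block diagonal. The velocity block is $\bM_1^{\bar\rho} = P^T\mathrm{diag}(\bar\brho)P$, which is positive definite because $\bar\brho>0$ and $P$ is injective under \Cref{ass:mesh_reg}. The density block is $\mathrm{diag}(|K_i|\,h'(\bar\rho_i)) = \mathrm{diag}(|K_i|\,c^2/\bar\rho_i) > 0$. The cross terms vanish because $\bar\bv = 0$. The mass Casimir $C_1 = -\sum_i(h(\bar\rho_i) + \bPhi_i)\rho_i$ cancels the gradient exactly when hydrostatic balance $\tD_0(h + \bPhi) = 0$ holds.

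\textbf{Step 3: case (b), constant-flow equilibrium.} I choose $U_0 = -U$, a Galilean shift, so that the combination $E + U_0 P_x$ has the equilibrium at rest in the moving frame. The Hessian then reduces to that of case (a), with the kinetic block $\bM_1^{\bar\rho}$ unchanged, and positivity is independent of $U$. The point I need to check is that $P_x$ is exactly quadratic-compatible, i.e.\ $P_x = \ip{\bar a^x}{\bv}_{\rho}$ is linear in $\bv$ given $\brho$. Its Hessian cross term with $\brho$ then exactly cancels the cross term $\bar\bv^T P^T\mathrm{diag}(\cdot)P$ coming from $\Ekindw$.

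\textbf{Step 4: case (c), sheared equilibrium (main obstacle).} Here $\tilde H$ has nonzero velocity–density cross blocks coming from $(\bar U(z)-U_0)$. The PV Casimir also contributes a block $F''(\bar q)$ acting on the perturbation PV $\delta q$, which is linear in $(\tD_1\delta\bv, \delta\brho)$. I would write $\tilde H$ in $2\times2$ block form and take the Schur complement with respect to the density block. (CS2) and (CS4) make the density block $c^2/\bar\rho - (\bar U - U_0)^2$-type positive. (CS1) and (CS3) fix the sign and bound of $F''$ through the relation $F'(\bar q) = $ Bernoulli/streamfunction. The remaining quantitative bound is the positivity of the Schur complement, which I expect to be the hard part. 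It requires a discrete Poincaré-type inequality for $\tD_1$ on the co-exact part, uniform in $h$.

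For the Boussinesq reduction I formally send $c^2\to\infty$. This kills the density block, and the Schur condition collapses to $F''(\bar q)\,\bar q' \cdot(\bar U-U_0)$ having a fixed sign, which is Rayleigh's criterion.
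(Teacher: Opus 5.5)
Your cases (a) and (b) match the paper. In both, the Hessian is block diagonal. The cross block $(\partial_{\rho_i}\bM_1^\rho)(\bv-\bar\bv)$ vanishes at $\bar X$ once the momentum tilt is included. The density gradient reduces to the modified Bernoulli balance, which becomes ordinary hydrostatic balance because $|P\bar\bv|^2=U^2$.

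\textbf{The gap is Step 1.} The paper never writes $F^{\rm dw}=J\nabla E^{\rm dw}_{\rm tot}$. Its bridge argument uses only three facts: $\tilde E$ is conserved, $F(\bar X)=0$, and $\nabla\tilde E(\bar X)=0$. Differentiating $\nabla\tilde E(X)^TF(X)\equiv 0$ twice at $\bar X$ gives $\tilde H A+A^T\tilde H+\sum_i\partial_i\tilde E(\bar X)\,D^2F_i(\bar X)=0$, and criticality removes the last term.

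Your route needs strictly more than is available. Lamb antisymmetry is one scalar identity $\bv^T\bM_1\Iv(\tD_1\bv)=0$ per state; it only says the Lamb term does no work. It does not by itself give an antisymmetric $J_{\bv\bv}(X)$ that reproduces $-[\bM_1^\rho]^{-1}\bM_1\Iv(\tD_1\bv)$ from $\bM_1^\rho\bv$ and also annihilates $\nabla C^F_3$. That second property is a pointwise identity involving $\tU$ and the dual averaging $\bar\rho^*$, and nothing in the paper supplies it.

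Even granting $J$, $P_x$ is not a Casimir. Then $J\nabla\tilde E=F^{\rm dw}+U_0\,J\nabla P_x$, so
$A=J(\bar X)\tilde H-U_0\,D(J\nabla P_x)(\bar X)\neq J(\bar X)\tilde H$ in cases (b) and (c), and your one-line cancellation does not apply. Repairing it requires showing that the translation field vanishes at $\bar X$ and preserves $\tilde E$, which is just the paper's direct argument again. Replace Step 1 by that argument; then you only need exact conservation of $E^{\rm dw}_{\rm tot}$, $C_1$, $P_x$ and $C^F_3$ along the DW flow.

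\textbf{Case (c) is left unexecuted, and parts of your sketch point the wrong way.}
\begin{itemize}
\item In the paper the Casimir enters the velocity block with a minus sign, $A_\bv=\bM_1^{\bar\rho}-\tD_1^T\bM_2F''(\bar q)(\bar\rho^*)^{-1}\tD_1$. What is needed is therefore an \emph{upper} bound $\nrm{\tD_1\bv}_{\bM_2}^2\le C_{\rm mesh}\nrm{\bv}_{\bM_1}^2$, and (CS3) is exactly the smallness that makes this term subordinate. A Poincar\'e lower bound on the co-exact part plays no role.
\item The density block is $\mathrm{diag}(|K_i|c^2/\bar\rho_i)-\partial^2_\brho C^F_3$, and it is positive under the strengthened (CS3), not under (CS2)--(CS4).
\item The shifted velocity enters only through the cross block $C^{\rm kin}_{ij}=(\bar U(z_i)-U_0)P_{ij}$, which produces the $(\bar U-U_0)^2/\bar\rho$ term in $K_C$. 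Your ``$c^2/\bar\rho-(\bar U-U_0)^2$'' block is really the Schur complement $D_\brho-C^TA_\bv^{-1}C$. The paper closes it by the assumed bound $K_C<c_A\delta_\rho$; you still need to compute the blocks to get there.
\end{itemize}

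\textbf{Your Boussinesq condition is empty.} With the Arnold relation $F''(\bar q)\bar q'=-\bar\rho(\bar U-U_0)$, the product $F''\bar q'(\bar U-U_0)=-\bar\rho(\bar U-U_0)^2$ has a fixed sign automatically. Also, letting $c^2\to\infty$ does not kill the density block; it sends $\delta_\rho\to\infty$, so the Schur bound holds for free. The condition that survives is (CS1), i.e.\ $\bar q'=-\bar U''/\rho_0$ of one sign, which is Rayleigh's criterion.
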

\noindent The three cases are proved as
\Cref{thm:dw_lyap_hydrostatic,thm:dw_lyap_constant_flow,thm:dw_lyap_sheared},
the latter under a discrete Charney--Stern criterion. The bridge mechanism is
\Cref{lem:arnold_bridge}; the no-Hollingsworth conclusion is
\Cref{cor:dw_no_hollingsworth}.

\section{Conservation Laws of the Vector-Invariant Family}\label{section_EulerComp}
The discrete barotropic DEC Euler equations are
\begin{align}
 &\frac{\dd\v}{\dd t}
 + \Iv(\bom)
 + \tD_0\, B = 0,\tag{M}\label{eq:M}\\
 & \frac{\dd\brho}{\dd t} + \bD_2\,\bF = 0,\tag{D}\label{eq:D}
\end{align}
where $B := h(\rho^{\vol}) + \ekin + \bPhi$.
Applying the exterior derivative $\tD_1$ to the momentum equation~\eqref{eq:M} gives the vorticity equation
\begin{equation}\tag{W}\label{eq:W}
 \frac{\dd\bom}{\dd t} + \tD_1\,\Iv(\bom) = 0,\quad\text{with } \bom := \tD_1\,\bv.
\end{equation}
The momentum equation ~\eqref{eq:M}--\eqref{eq:D} has the unified form
$$\ddt\bv + \bA(\brho)^{-1}\bM_1\,\Iv(\tD_1\bv)
+ \tD_0(\ekin + h(\rho^{\rm vol}) + \bPhi) = \bff_{\rm visc}$$
with $\bA = \bM_1$ in the density-free scheme and
$\bA = \bM_1^\rho(\brho)$ in the density-weighted scheme; both use
$\ekin_i = \tfrac12|P\bv|_i^2$. 
Density-free and density-weighted discrete systems place velocity on dual edges, as circulation
integrals, and density in primal cells, as mass integrals -- the
same staggering as the incompressible system, augmented by a
prognostic density field.
\begin{definition}[Discrete variables for compressible flow]
\label{def:compvars}\leavevmode
\begin{enumerate}[nosep]
 \item \textit{Velocity:} $\bv(t)\in C^1(\mathcal{K}^*)$ -- dual 1-cochain.
 \item \textit{Vorticity:} $\bom(t) = \tD_1\bv \in C^2(\mathcal{K}^*)$.
 \item \textit{Density:} $\brho(t)\in C^3(\mathcal{K})$ -- primal 3-cochain, with $\brho_i = \int_{K_i}\rho\,\dd V$ the mass in primal cell~$K_i$.
 \item \textit{Bernoulli function:}
 $B(t)\in C^0(\mathcal{K}^*)$ -- dual 0-cochain at primal cell
 centres, with
 $B_i = h(\rho_i^{\vol}) + \ekin_i + \bPhi_i$,
 where $\rho_i^{\mathrm{vol}} = \brho_i/|K_i|$ is the volumetric
 density and $\ekin_i $ 
 is the discrete kinetic energy density at cell~$i$ (specified later).
\end{enumerate}
\end{definition}
The table below
fixes notation for the body of the paper, with full definitions
deferred to the appendix. Mesh geometry, exterior derivatives,
Hodge stars, the discrete contraction and wedge product, and the Lie
derivative are constructed in \Cref{app:framework}; the algebraic
identities ($\tdd^2 = 0$, antisymmetry, Cartan) are proved in
\Cref{app:basic}. 
\smallskip
\begin{center}
\small
\begin{tabular}{@{}lll@{}}
  \toprule
  Symbol & Meaning & Defined in \\
  \midrule
  $\KK$, $\KKs$ & primal / dual cell complex & \Cref{subsect:grid}\\
  $\bD_k$, $\tD_k$ & primal / dual exterior derivative (coboundary) & \Cref{def:ext-deriv}\\
  $\bM_k$ & diagonal Hodge star ($k$-forms) & \Cref{def:M1}\\
  $\bM_1^\rho = P^T\mathrm{diag}(\brho)P$ & density-weighted velocity mass matrix & \Cref{sec:totalenergy_resolution}\\
  $P$ & averaging velocity reconstruction & \Cref{def:averaging_recon}\\
  $\Iv(\cdot)$ & extrusion-based discrete contraction & \Cref{def:contraction}\\
  $\bv$, $\brho$ & velocity dual 1-cochain, density primal 3-cochain & \Cref{def:compvars}\\
  $\Phi = \bM_1\bv$, $\bF_j = \bar\rho_j\Phi_j$ & discrete volume / mass flux (face) & \Cref{def:M1}, \cref{eq:F}\\
  $\rho_i^{\vol} = \rho_i/|K_i|$ & volumetric density & \Cref{def:compvars}\\
  $h(\rho)$ & specific enthalpy & \Cref{ass:eos}\\
  $r_\star \in \{1,2\}$ & Hodge accuracy exponent & \Cref{conv:cases}\\
  $\mathcal{R}_E$ & total energy residual (density-free) & \cref{eq:EB}\\
  \bottomrule
\end{tabular}
\end{center}
\subsection{Standing Approximation Properties}\label{sec:standing_hypotheses}
\phantomsection\label{sec:hypotheses_byname}
The discrete framework rests on nine standing properties, separated
into \emph{exact} (algebraic) and \emph{metric} (approximation)
classes. The body of this paper invokes them by name; the full
statements and verifications are in \Cref{app:approx_hypotheses}.
\smallskip
\noindent\emph{Exact hypotheses (no metric input).}
\begin{itemize}[nosep,leftmargin=2.2em]
\item[\textbf{(H1)}] Cochain complex: $\tD_{k+1}\circ\tD_k = 0$.
\item[\textbf{(H2)}] de Rham commutativity: $\tD_k\,\mathcal{R}_h = \mathcal{R}_h\,d_k$.
\item[\textbf{(H5)}] antisymmetry:
$\langle\bv,\,\Iv(\tD_1\bv)\rangle_{\bM_1} = 0$.
\item[\textbf{(H7)}] Bregman rate identity: a polarised energy identity
that converts the velocity-quadratic part of the energy rate into a
Bregman-type defect.
\end{itemize}
\smallskip
\noindent\emph{Metric hypotheses (Hodge star, reconstruction).}
\begin{itemize}[nosep,leftmargin=2.2em]
\item[\textbf{(H3)}] Hodge accuracy:
$\nrm{(\bM_k - \mathcal{M}_k)\alpha}_{L^2} = \OO(h^{r_\star})\nrm{\alpha}_{W^{r_\star,\infty}}$,
$r_\star\in\{1,2\}$ (\Cref{conv:cases}).
\item[\textbf{(H4)}] Reconstruction accuracy:
$\nrm{P\mathcal{R}_h u - u}_{L^2} = \OO(h^{r_{\rm rec}})\nrm{u}_{W^{r_{\rm rec},\infty}}$.
\item[\textbf{(H6)}] Mass-flux consistency: the discrete mass-flux
projection $\bR$ commutes with the de~Rham map up to $\OO(h^{r_\star})$.
\item[\textbf{(H8)}] Inverse inequality: $\nrm{\bv}_{L^\infty}\le C\,h^{-d/2}\nrm{\bv}_{L^2_h}$.
\item[\textbf{(H9)}] Quasi-uniformity: $h_{\max}/h_{\min}\le C$.
\end{itemize}
\smallskip\noindent
The exact hypotheses (H1)--(H2), (H5), (H7) drive the algebraic
conservation laws of \Cref{section_EulerComp} and the no-go theorem
of \Cref{sec:no_go}; the metric hypotheses (H3)--(H4), (H6), (H8)--(H9)
drive the convergence theory of \Cref{sec:DW_conv_AP} and the
asymptotic-preservation analysis. Throughout, $r_\star$ is the
sharpness exponent of the Hodge approximation:
$r_\star = 1$ on general meshes and $r_\star = 2$ on centroidal
Voronoi meshes with reconstruction symmetry (\Cref{conv:cases}).
\subsection{Circulation-Based Conservation Laws}\label{subsection_InvariantsComp}
This subsection establishes the circulation-based invariants of the
discrete barotropic system. These invariants depend only on the
topological structure of the cochain complex -- $\tdd^2 = 0$,
discrete Stokes, graded antisymmetry -- and hold independently of
the kinetic-energy design.
\subsubsection{Mass Conservation}
\label{sec:mass}
Mass conservation is a topological identity: it holds for
any choice of density interpolation because
$\mathbf{1}^T D_2 = 0$ (discrete Stokes on the primal complex).
\begin{theorem}[Mass conservation]\label{thm:mass}
Let $(\bv(t), \brho(t))$ be a solution of the discrete barotropic
Euler equations. Then the total mass
\[
 M(t) := \sum_{K_i\in\mathcal{K}} \brho_i(t) = \mathbf{1}^T\brho
\]
is conserved: $dM/dt = 0$.
\end{theorem}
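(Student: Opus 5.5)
The plan is to differentiate $M(t)=\mathbf{1}^T\brho(t)$ along a solution and substitute the continuity equation~\eqref{eq:D}. This gives
\[
 \frac{dM}{dt} = \mathbf{1}^T\frac{\dd\brho}{\dd t} = -\mathbf{1}^T\bD_2\bF ,
\]
so everything reduces to showing that the row vector $\mathbf{1}^T\bD_2$ vanishes. Once that is done, the conclusion holds for every mass flux $\bF\in C^2(\KK)$. In particular it is independent of the interpolation $\bar\rho_j$, of the velocity, and of whether the DF or DW momentum equation is used, since the momentum equation never enters.

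To show $\mathbf{1}^T\bD_2=0$, I will use the definition of the primal coboundary (\Cref{def:ext-deriv}). For a 2-cochain $\bF$, $(\bD_2\bF)_i=\sum_{f\subset\partial K_i} o_{i,f}\bF_f$, where $o_{i,f}\in\{\pm1\}$ records whether the face orientation agrees with the outward orientation induced by $K_i$. Exchanging the order of summation gives
\[
 \mathbf{1}^T\bD_2\bF=\sum_f \bF_f\sum_{i:\,f\subset\partial K_i} o_{i,f}.
\]
The key geometric fact is that $\Omega$ is a closed oriented manifold, so $\KK$ has no boundary faces. Every face $f$ is therefore shared by exactly two cells, and the two cells induce opposite orientations on it, so $o_{i,f}+o_{i',f}=0$. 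Hence each inner sum vanishes. Equivalently, $\mathbf{1}^T\bD_2=(\bD_2^T\mathbf{1})^T$ is the coboundary of the top-dimensional dual-complex cochain $\mathbf{1}$, and it is zero because $\partial[\Omega]=0$ (discrete Stokes with empty boundary).

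The computation is routine. The only step needing care is the orientation bookkeeping: I must confirm that the incidence matrix in \Cref{def:ext-deriv} uses the coherent orientation of $\Omega$, so that every cell carries the induced orientation, and that the prismatic primal mesh has no boundary. The latter holds because $\Omega$ is closed. If instead there were a boundary, I would have to add a no-flux condition $\bF_f=0$ on boundary faces, but that is not needed here.
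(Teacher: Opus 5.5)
Your argument is correct and is the same as the paper's: sum the continuity equation over all primal cells and use that, on a closed mesh, every primal face borders exactly two cells with opposite incidence signs, so $\mathbf{1}^T\bD_2=\mathbf{0}^T$ by discrete Stokes for any flux $\bF$. One slip is in your ``equivalently'' remark: $\mathbf{1}$, indexed by primal cells, is a dual \emph{0}-cochain, not a top-dimensional dual cochain, and $\bD_2^T\mathbf{1}=-\tD_0\mathbf{1}=0$ because the dual gradient of a constant vanishes (in chain language, because $\partial[\Omega]=0$, as you say).
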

\begin{proof}
The proof is a direct consequence of the discrete Stokes theorem, summing $\ddt\brho_i = -(\bD_2\bF)_i$ over all cells gives $\mathbf{1}^T\bD_2\bF=0$ on a closed domain. See \Cref{app:barotropic_proofs}.
\end{proof}
 \subsubsection{Kelvin Circulation Theorem}
\label{sec:kelvin}
A dual 1-chain $\gamma =\sum_j \alpha_j e_j^*$ with
$\alpha_j\in \mathbb{R}$ is a \emph{1-cycle} if $\partial\gamma = 0$.
The chain Lie derivative $\tLie_\v^{\rm chain}$ (adjoint of $\tLie_\v$) advects $\gamma(t)$ materially; since $\partial\circ\tLie_\v^{\rm chain} = \tLie_\v^{\rm chain}\circ\partial$ (from $\tdd^2 = 0$), advected cycles remain cycles.
The integer-coefficient case $\alpha_j\in\{-1,0,1\}$ corresponds to topological loops at $t=0$;
real coefficients are required because the chain ODE
$\ddt\gamma = \tLie_\v^{\rm chain}\gamma$ governing material loop
advection (see \Cref{def:circulation}) does not preserve
integrality.
\begin{definition}[Discrete circulation]\label{def:circulation}
Let $\gamma = \sum_j\alpha_j e_j^*$ be a dual 1-cycle
($\partial\gamma = 0$, $\alpha_j \in \mathbb{R}$). The
\emph{discrete circulation} of a velocity 1-cochain $\v$ around
$\gamma$ is
\[
 \Gamma(\gamma, t) = \v(\gamma) = \sum_j \alpha_j\, \v_j(t).
\]
The dual 1-chain $\gamma(t)$ is said to be \emph{materially advected}
if it evolves by the adjoint cochain Lie derivative,
\[
 \ddt\gamma(t) = \tLie_{\v}^{\mathrm{chain}}\, \gamma(t),
\]
where $\tLie_{\v}^{\mathrm{chain}}$ is the adjoint of the cochain
Lie derivative, characterised by
$\ip{\alpha}{\tLie_{\v}^{\mathrm{chain}}\gamma}
 = \ip{\tLie_{\v}\alpha}{\gamma}$ for every $\alpha\in C^1(\KKs)$.
\end{definition}
\begin{theorem}[Discrete Kelvin circulation]\label{thm:kelvin}
For a materially advected dual 1-cycle $\gamma(t)$
\[
 \ddt\Gamma(\gamma(t),t)
 = \ddt\bigl[\v(t)\bigl(\gamma(t)\bigr)\bigr] = 0.
\]
\end{theorem}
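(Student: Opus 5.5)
Since $\Gamma(\gamma,t)=\ip{\v(t)}{\gamma(t)}$ is a bilinear pairing between a 1-cochain and a 1-chain, the plan is to differentiate by the product rule and show that the two contributions cancel:
\[
 \ddt\Gamma = \ip{\ddt\v}{\gamma} + \ip{\v}{\ddt\gamma}
 = \ip{\ddt\v}{\gamma} + \ip{\v}{\tLie_\v^{\rm chain}\gamma}
 = \ip{\ddt\v + \tLie_\v\v}{\gamma},
\]
where the last step is just the defining adjoint relation of $\tLie_\v^{\rm chain}$ in \Cref{def:circulation}, applied with $\alpha=\v$. The theorem therefore reduces to showing that the 1-cochain $\ddt\v+\tLie_\v\v$ pairs to zero with every 1-cycle. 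By the discrete Stokes theorem this is equivalent to showing that it is exact, i.e.\ of the form $\tD_0 f$ for some dual 0-cochain $f$, because
$\ip{\tD_0 f}{\gamma}=\ip{f}{\partial\gamma}=0$ whenever $\partial\gamma=0$.

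To produce this gradient I would use the discrete Cartan formula $\tLie_\v = \Iv\tD_1 + \tD_0\Iv$ on 1-cochains, established in \Cref{app:basic}. This gives
$\tLie_\v\v = \Iv(\tD_1\v) + \tD_0(\Iv\v)$, where $\Iv\v$ is the discrete contraction of $\v$ with itself, a dual 0-cochain. Substituting the density-free momentum equation~\eqref{eq:M}, written as $\ddt\v = -\Iv(\bom)-\tD_0 B$ with $\bom=\tD_1\v$, the Lamb terms $\Iv(\bom)$ cancel exactly. What remains is
\[
 \ddt\v + \tLie_\v\v = \tD_0\bigl(\Iv\v - B\bigr),
\]
which is exact, so $\ddt\Gamma=0$. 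Note that this holds for any Bernoulli cochain $B$: whatever kinetic-energy density $\ekin$ is chosen, it sits inside a gradient and pairs to zero with cycles.

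The cancellation requires the Lamb term in~\eqref{eq:M} to be exactly the operator $\Iv\tD_1$ appearing in Cartan's formula. In the DW scheme~\eqref{eq:main_dw} the Lamb term is premultiplied by $[\bM_1^\rho]^{-1}\bM_1$, and the leftover is non-exact. This is consistent with \Cref{thm:main_conservation}, which asserts exact Kelvin circulation only for the density-free scheme, so I read the theorem as a statement about~\eqref{eq:M}.

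The main obstacle is verifying that the cochain Lie derivative used to advect $\gamma$ satisfies Cartan's formula \emph{exactly} with the same extrusion-based contraction $\Iv$ that appears in the momentum equation. If $\tLie_\v$ is defined as $\Iv\tD_1+\tD_0\Iv$, this is immediate. If it is defined instead through a discrete flow or extrusion, I would first prove the homotopy identity by a discrete Stokes argument on the extruded cells, using (H1) and the construction in \Cref{def:contraction}.

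A secondary point is that $\gamma(t)$ must remain a cycle for all time. This follows because $\partial$ commutes with $\tLie_\v^{\rm chain}$, which is dual to $\tD_0\tLie_\v=\tLie_\v\tD_0$, itself a consequence of Cartan's formula and $\tD^2=0$. Hence $\partial\gamma(t)$ satisfies a linear homogeneous ODE with zero initial data and vanishes identically.
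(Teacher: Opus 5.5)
Your proof is correct and follows the paper's argument essentially step for step. Both use the product rule on the pairing, the adjoint relation for $\tLie_\v^{\rm chain}$, Cartan's formula to cancel the Lamb term and leave the exact cochain $\tD_0(\Iv\v - B)$, and preservation of the cycle property via $\partial\circ\tLie_\v^{\rm chain}=\tLie_\v^{\rm chain}\circ\partial$ together with the zero-initial-data linear ODE for $\partial\gamma$. Since the paper defines $\tLie_\v$ directly by Cartan's formula (\Cref{def:Lie}), your ``main obstacle'' does not arise, and your reading of the theorem as a statement about the density-free momentum equation~\eqref{eq:M} matches the paper.
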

\noindent\emph{Proof.} See \Cref{app:kelvin}.
\begin{corollary}[Flux form of circulation conservation]\label{cor:flux}
For any dual 2-chain $\Sigma$ with $\partial \Sigma = \gamma(t)$,
discrete Stokes gives
$\bv(\gamma(t)) = \bom(\Sigma)$, so
$\bom(\Sigma) = \mathrm{const}$
by \Cref{thm:kelvin}: the vortex flux through any surface
spanning a material loop is conserved.
\end{corollary}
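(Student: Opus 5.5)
The corollary follows from two facts: the discrete Stokes identity $\bv(\partial\Sigma) = (\tD_1\bv)(\Sigma)$, and \Cref{thm:kelvin}.

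\emph{Step 1: discrete Stokes.} By \Cref{def:ext-deriv}, the dual coboundary $\tD_1$ is the transpose of the chain boundary map on dual 2-chains. So for every dual 2-chain $\Sigma = \sum_k \beta_k f_k^*$ with real coefficients, pairing gives
\[
 \bom(\Sigma) = (\tD_1\bv)(\Sigma) = \bv(\partial\Sigma).
\]
This holds by linearity of the cochain--chain pairing and does not depend on integrality of the coefficients. That matters, because $\gamma(t)$ generally has real coefficients. If $\partial\Sigma = \gamma(t)$, then $\bom(\Sigma) = \bv(\gamma(t)) = \Gamma(\gamma(t),t)$.

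\emph{Step 2: time dependence of $\Sigma$.} The statement allows $\Sigma$ to depend on $t$, as long as it spans $\gamma(t)$. Two spanning chains $\Sigma, \Sigma'$ of the same cycle differ by a 2-cycle $Z = \Sigma - \Sigma'$ with $\partial Z = 0$. Then $\bom(Z) = (\tD_1\bv)(Z) = \bv(\partial Z) = 0$. Hence $\bom(\Sigma)$ depends only on $\gamma(t)$, not on the choice of $\Sigma$. (On a closed manifold with nontrivial homology, a 2-cycle need not be a boundary, but this argument does not need that.) Consequently $\bom(\Sigma(t)) = \Gamma(\gamma(t),t)$ for any admissible family $\Sigma(t)$.

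\emph{Step 3: conservation.} By \Cref{thm:kelvin}, $\ddt\Gamma(\gamma(t),t) = 0$, so $\bom(\Sigma(t))$ is constant in $t$.

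\emph{Existence of $\Sigma$.} The statement assumes a spanning chain exists. If needed, one can note that advection preserves the homology class of $\gamma$: $\tLie_\v^{\rm chain}$ commutes with $\partial$, and by Cartan's formula it maps boundaries to boundaries. So if $\gamma(0)$ bounds, $\gamma(t)$ bounds for all $t$, and $\Sigma(t)$ exists.

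\emph{Main obstacle.} The only delicate point is making sure the Stokes pairing identity is stated for real-coefficient chains and for the dual complex orientation conventions. This is routine bookkeeping.
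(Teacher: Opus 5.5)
Your proposal is correct and follows the paper's proof: discrete Stokes gives $\bom(\Sigma)=(\tD_1\bv)(\Sigma)=\bv(\partial\Sigma)=\bv(\gamma(t))$, and \Cref{thm:kelvin} then makes this quantity constant in time. Your Step~2 and the remark on existence of $\Sigma$ are harmless extras; Step~2 is already implied by Step~1, since Step~1 holds for every chain spanning $\gamma(t)$.
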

\begin{proof}
Discrete Stokes: $\bv(\gamma) = (\tD_1\bv)(\Sigma) = \bom(\Sigma)$.
The result follows from \Cref{thm:kelvin}.
\end{proof}
\subsubsection{Vorticity, Potential Vorticity, and Ertel's Potential Vorticity}
\label{sec:discrete-pv}
Total vorticity is conserved exactly via discrete Stokes on the dual
complex; this is the prototype for the Kelvin and Ertel statements
below.
\begin{theorem}[Conservation of total vorticity]
\label{thm:total-vort}
Let $(\bv(t), \brho(t))$ be a solution of the discrete barotropic
Euler equations. Then the total vorticity
\[
\Omega_{\mathrm{tot}}(t) := \sum_k \omega_k(t) = \mathbf{1}^T\omega
\]
is conserved: $d\Omega_{\mathrm{tot}}/dt = 0$.
\end{theorem}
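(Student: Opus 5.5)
The plan is to work directly with the vorticity equation~\eqref{eq:W}, which follows from applying $\tD_1$ to~\eqref{eq:M} and using (H1), $\tD_1\tD_0 = 0$, to eliminate the Bernoulli gradient. For the density-weighted scheme the same step applies to~\eqref{eq:main_dw}. There the Lamb term is $[\bM_1^\rho]^{-1}\bM_1\Iv(\bom)$ rather than $\Iv(\bom)$, but it is still a dual 1-cochain, and only that is needed below. In either case we obtain
\[
 \ddt\bom = -\tD_1\,\mathbf{q}, \qquad \mathbf{q}\in C^1(\KKs),
\]
with $\mathbf{q}$ the (possibly reweighted) Lamb vector. Any viscous forcing $\bff_{\rm visc}$ also enters as a dual 1-cochain and is handled identically.

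Summing over all dual 2-cells then gives $\ddt\Omega_{\mathrm{tot}} = -\mathbf{1}^T\tD_1\mathbf{q} = -(\tD_1^T\mathbf{1})^T\mathbf{q}$. It remains to show that $\tD_1^T\mathbf{1} = 0$, i.e.\ that every dual edge appears in the boundaries of the dual 2-cells with total signed incidence zero. This is the dual-complex analogue of $\mathbf{1}^T\bD_2 = 0$ used in \Cref{thm:mass}. It is discrete Stokes on the closed oriented complex $\KKs$: the boundary of the fundamental chain $\sum_k \sigma_k^*$ of dual 2-cells vanishes, because each dual edge bounds exactly two dual 2-cells with opposite induced orientations.

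The main obstacle is making this last step correct in the dimension-dependent setting. For $d=2$ the dual 2-cells are top-dimensional, and closedness plus orientability of $\Omega$ give $\partial(\sum_k\sigma_k^*) = 0$ directly. For $d=3$ the dual 2-cells are not top cells. Here $\tD_1^T\mathbf{1}=0$ must be read with respect to the orientations used for the vorticity components, or $\Omega_{\mathrm{tot}}$ must be interpreted through the pairing $\omega\mapsto\omega(\Sigma)$ with a closed dual 2-cycle $\Sigma$. In that reading the result follows from \Cref{cor:flux} with $\gamma = \partial\Sigma = 0$: we get $\omega(\Sigma) = \bv(\partial\Sigma) = 0$ identically, and the rate is $\mathbf{q}(\partial\Sigma) = 0$. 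I would write the argument in this cycle form, $\ddt\,\bom(\Sigma) = -\mathbf{q}(\partial\Sigma)$, so that it applies uniformly in $d$ and to both schemes. I would then specialise to $\Sigma$ equal to the fundamental dual 2-cycle (all cells with coherent orientation), using the closedness of $\Omega$ to guarantee $\partial\Sigma = 0$.
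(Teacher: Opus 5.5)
Your core computation is the paper's one-line proof: apply $\tD_1$ to the momentum equation, remove the Bernoulli gradient with $\tD_1\tD_0=0$, sum over dual 2-cells, and invoke $\tD_1^T\mathbf{1}=0$. Your remark that only $\mathbf{q}\in C^1(\KKs)$ matters is correct, so the density-weighted Lamb term $[\bM_1^\rho]^{-1}\bM_1\Iv(\bom)$ and any viscous forcing are indeed covered. You are also right that $\tD_1^T\mathbf{1}=0$ is a top-dimensional fact; the paper asserts $\mathbf{1}^T\tD_1=\mathbf{0}^T$ ``on a closed domain'' without this caveat.

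The step that fails is your resolution for $d=3$: the ``fundamental dual 2-cycle (all cells with coherent orientation)'' does not exist there. Closedness and orientability of $\Omega$ give a fundamental cycle only in top degree. In $d=3$ that is the sum of dual 3-cells, i.e.\ the identity behind \Cref{thm:mass}. Concretely, whenever a primal face $f_j$ is a triangle (e.g.\ the top and bottom faces of the paper's prisms), the dual edge $e_j^*$ lies in the boundary of exactly three dual faces, one per edge of $f_j$. So for every sign choice $\epsilon_k\in\{\pm1\}$ the coefficient $\sum_k\epsilon_k(\tD_1)_{kj}$ is a sum of three terms $\pm1$, hence odd and nonzero. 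Your claim that each dual edge bounds exactly two dual 2-cells is a $d=2$ fact. Hence no signed sum of all dual faces is a cycle, and $\ddt\,\mathbf{1}^T\bom=-(\tD_1^T\mathbf{1})^T\mathbf{q}$ with $\tD_1^T\mathbf{1}\neq 0$. Nothing in the scheme makes the Lamb cochain orthogonal to this fixed vector. Indeed, for $d=3$ the scalar $\mathbf{1}^T\bom$ changes when you flip the arbitrary orientation of a primal edge, so it is not a meaningful discretisation of the vector-valued continuum total vorticity.

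What your argument actually proves is more limited. For $d=2$ the theorem holds, in the stronger form $\Omega_{\mathrm{tot}}=\mathbf{1}^T\tD_1\bv\equiv 0$, so the dynamics is not even needed. For every $d$ one has $\ddt\bom(\Sigma)=-\mathbf{q}(\partial\Sigma)=0$ for dual 2-cycles $\Sigma$; this is \Cref{thm:mwpv}, and it is vacuous since $\bom(\Sigma)=\bv(\partial\Sigma)=0$ identically, as you note. The correct repair is to restrict the statement to $d=2$ (or to a single horizontal layer), or to replace $\mathbf{1}$ by a genuine dual 2-cycle. It cannot be done by a uniform-in-$d$ specialisation to the sum of all dual faces.
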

\begin{proof}
From the vorticity equation~(W), $\ddt\Omega_{\mathrm{tot}} = -\mathbf{1}^T\tD_1\Iev(\omega) = 0$
by the discrete Stokes theorem on the dual complex ($\mathbf{1}^T\tD_1 = \mathbf{0}^T$ on a closed domain).
\end{proof}
Potential vorticity in the continuous theory is defined as pointwise ratio $q_{\mathrm{PV}} := \omega/\rho_{\vol}$.
In the discrete setting, 
 $\omega_k$ lives on dual faces $f_k^*$, encircling primal edges $e_k$,
and $\rho_i$ lives on primal cells $K_i$, surrounding dual vertices $v_i^*$.
These are geometrically dual objects that do not naturally pair. We therefore require
an interpolation of density to dual faces.
\begin{definition}[Density interpolation to dual faces]
\label{def:rho-dualface}
For each dual face $f_k^*$ encircling primal edge $e_k$, define the interpolated
volumetric density
\[
\bar{\rho}_k^{\,\vol} := \frac{1}{|\mathcal{K}(e_k)|}
 \sum_{K_i \supset e_k} \rho_i^{\vol},
\]
where $\mathcal{K}(e_k)$ is the set of primal cells sharing edge $e_k$
and $\rho_i^{\vol} = \rho_i/|K_i|$.
\end{definition}
With this density interpolation we are in the position to define potential vorticity
\begin{definition}[Discrete potential vorticity]
\label{def:discrete-pv}
The discrete PV at each dual face $f_k^*$ is defined by
\[
q_k := \frac{\omega_k}{\bar{\rho}_k^{\,\vol}\,|f_k^*|}.
\]
This is the ratio of the integrated vorticity flux through $f_k^*$
to the local ``mass-per-unit-area'' times the face area.
\end{definition}
\begin{proposition}[Pointwise PV balance]\label{prop:pv-pointwise}
The discrete PV $q_k$
satisfies 
\[\ddt q_k = \OO(h).
\] 
\end{proposition}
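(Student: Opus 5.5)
We prove the claim by the quotient rule for $q_k = \omega_k/(\bar\rho_k^{\,\vol}|f_k^*|)$. The terms of order one cancel exactly. What remains consists of differences of neighbouring discrete fields, each controlled by one mesh spacing. The standing regime is that the discrete solution is the restriction (up to the convergence error) of a smooth field with $\rho\ge\rho_*>0$. Under quasi-uniformity (H9), $|f_k^*|\sim h^{d-1}$ and the dual-edge lengths are $\sim h$.

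\emph{Step 1 (quotient rule).} Since $|f_k^*|$ is time-independent,
\[
 \ddt q_k = \frac{1}{\bar\rho_k^{\,\vol}|f_k^*|}\Bigl(\ddt\omega_k - q_k\,|f_k^*|\,\ddt\bar\rho_k^{\,\vol}\Bigr).
\]
By (W), $\ddt\omega_k = -(\tD_1\Iv(\bom))_k$. This is a signed sum over the dual edges $e_j^*\subset\partial f_k^*$ of the contraction values $\Iv(\bom)_j$.

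By (D) and \Cref{def:rho-dualface}, $\ddt\bar\rho_k^{\,\vol}$ is the average over the cells $K_i\supset e_k$ of $-(\bD_2\bF)_i/|K_i|$.

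\emph{Step 2 (flux rewriting).} Using the extrusion definition of $\Iv$ (\Cref{def:contraction}), we write each $\Iv(\bom)_j$ as a vorticity flux carried across $e_j^*$ by the reconstructed velocity $P\bv$. We then factor $\omega=\bar\rho^{\,\vol}q\,|f^*|$ on the faces entering that flux. This gives $\Iv(\bom)_j = \hat q_j\,\hat\bF_j$, where $\hat q_j$ is a local average of the values $q_{k'}$ on faces adjacent to $e_j^*$, and $\hat\bF_j$ is a mass-flux-like quantity.

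Substituting into Step 1, we add and subtract $q_k$ inside the sum. The terms with the common factor $q_k$ combine into $q_k$ times a discrete divergence of mass flux, and they cancel against $q_k|f_k^*|\ddt\bar\rho_k^{\,\vol}$ up to the mismatch between $\hat\bF$ and $\bF$. What remains is
\[
 \ddt q_k = -\frac{1}{\bar\rho_k^{\,\vol}|f_k^*|}\sum_{j}\pm(\hat q_j - q_k)\,\hat\bF_j \;+\; \frac{q_k\,\mathcal{E}_k}{\bar\rho_k^{\,\vol}},
\]
where $\mathcal{E}_k$ is the consistency defect of this mismatch, controlled by (H4) and (H6).

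\emph{Step 3 (bounds).} The solution is smooth at scale $h$, so $\hat q_j - q_k = \OO(h)\,\nrm{\nabla q}_{L^\infty}$ for each adjacent face. The flux $\hat\bF_j$ is $\rho$ times a velocity times a face measure, so $|\hat\bF_j|\lesssim \rho^*\nrm{\bu}_{L^\infty}h^{d-1}$, where $\rho^*$ is an upper bound for the density. Only boundedly many terms appear in the sum.

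Dividing by $\bar\rho_k^{\,\vol}|f_k^*|\gtrsim\rho_*h^{d-1}$ therefore gives an $\OO(h)$ contribution from the sum. The defect term is also $\OO(h)$ by (H4) and (H6) with $r_\star\ge1$. Together these give $\ddt q_k=\OO(h)$.

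\emph{Main obstacle.} The hardest step is Step 2. The face average in \Cref{def:rho-dualface} is over the cells sharing $e_k$, whereas $\Iv$ uses the reconstruction $P$ over dual-edge neighbourhoods. We must show that these two stencils agree to first order, so that the $q_k\cdot(\text{divergence})$ terms cancel to $\OO(h)$ rather than to $\OO(1)$.

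I would first try a Taylor expansion of both stencils about the circumcentre of $f_k^*$. I would use the Delaunay--Voronoi orthogonality (\Cref{ass:mesh_reg}) to identify the zeroth-order weights. I would then absorb all first-order discrepancies into $\mathcal{E}_k$.
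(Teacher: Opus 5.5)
There is a genuine gap, and it is in Step~3. $\Iv(\bom)_j$ is the value of a 1-cochain on a single dual edge $e_j^*\subset\partial f_k^*$, i.e.\ the integral of $\iota_{\bu}\omega$ over a segment of length $\OO(h)$. With $\hat q_j\sim|\omega|/\rho$, this forces $|\hat\bF_j|\sim\rho\,|\bu|\,|f_k^*|/h$, not $\rho|\bu|\,|f_k^*|$ as you assumed: $\hat\bF_j$ is a flux through one edge of $\partial f_k^*$, so it is $\OO(h)$ rather than $\OO(h^2)$ when $d=3$. With the correct scaling, the boundedly many terms $(\hat q_j-q_k)\hat\bF_j=\OO(h)\cdot\OO(|f_k^*|/h)$ sum to $\OO(|f_k^*|)$. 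After dividing by $\bar\rho_k^{\,\vol}|f_k^*|$, your remaining sum is $\OO(1)$; it is exactly the finite-volume form of the advection term $-\bu\cdot\nabla q$. For $d=3$, Step~2 also fails. The dual faces sharing $e_j^*$ have different normals, so the $q_{k'}$ you average into $\hat q_j$ are different components of $\boldsymbol\omega/\rho$, and $\hat q_j-q_k$ is not $\OO(h)$. Moreover, $\sum_j\pm\hat\bF_j$ is an in-plane divergence over $f_k^*$, while $\ddt\bar\rho_k^{\,\vol}$ carries the full divergence. Hence $q_k\mathcal{E}_k/\bar\rho_k^{\,\vol}$ contains the stretching/tilting contribution, which is $\OO(1)$ and is not a consistency defect that (H4) or (H6) can make small. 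The stencil matching you flag as the main obstacle is therefore not the real issue: even with perfectly matched stencils the cancellation is only partial.

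This cannot be repaired, because the claim itself fails for generic smooth flows. The continuum PV obeys $\partial_t q=-\bu\cdot\nabla q$ for $d=2$, and $\partial_t\mathbf q=(\mathbf q\cdot\nabla)\bu-(\bu\cdot\nabla)\mathbf q$ for $d=3$. So its Eulerian rate at a fixed location is generically $\OO(1)$, independently of $h$. For example, the constant-density, constant-pressure flow $\bu=(U(y-Vt),V)$ on a torus (extended trivially in $z$ for $d=3$) is a Galilean boost of a steady shear flow. It solves the barotropic Euler equations with $\partial_t q=VU''(y-Vt)/\rho\neq 0$. A scheme converging to it cannot have $\ddt q_k=\OO(h)$ uniformly in $k$ and $t$: each $q_k$ would then stay within $\OO(hT)$ of its initial value, while the PV pattern it approximates translates by $VT$.

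The paper's own proof takes essentially your route and makes the same leap. It applies the quotient rule and then asserts that the Ertel identity cancels the two $\OO(h^{d-1})$ contributions, leaving $\OO(h^d)$. But Ertel's theorem is a material statement, and at a fixed dual face it leaves $-\rho\,(\bu\cdot\nabla q)\,|f_k^*|$ uncancelled at order $h^{d-1}$. Your explicit flux decomposition in Step~2 exposes this flaw rather than closing it. What Steps~1--2 can deliver is a consistency statement: $\ddt q_k$ approximates $-\bu\cdot\nabla q$ at the face, plus stretching when $d=3$. The exact PV results are the flux-form and Lagrangian ones, \Cref{thm:mwpv,thm:ertel-weak}.
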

\begin{proof}
The vorticity equation~\eqref{eq:W} gives
$\ddt\omega_k = -(\tD_1\Iev(\omega))_k$.
The PV definition $q_k = \omega_k/(\bar\rho_k^{\vol}|f_k^*|)$
and the quotient rule yield
$\ddt q_k = [\ddt\omega_k - q_k|f_k^*|\ddt{\bar\rho}_k^{\vol}]
/(\bar\rho_k^{\vol}|f_k^*|)
= \mathcal{R}_k^{\rm PV}/(\bar\rho_k^{\vol}|f_k^*|)$.
Each of the two terms in $\mathcal{R}_k^{\rm PV}$ is the integral of an $\OO(1)$ quantity over a dual face of area $\OO(h^{d-1})$, hence individually $\OO(h^{d-1})$.
In the continuous limit, the Ertel identity implies that these leading-order contributions cancel: the vorticity flux divergence through $\partial f_k^*$ and the density dilation at $f_k^*$ match at leading order. Hence $\mathcal{R}_k^{\rm PV} = \OO(h^d)$ and $\ddt q_k = \OO(h^d)/\OO(h^{d-1}) = \OO(h)$.
\end{proof}
\begin{theorem}[Flux-form conservation of mass-weighted PV]
\label{thm:mwpv}
Let $(\bv(t), \brho(t))$ be a solution of the discrete barotropic
Euler equations, and let $\Sigma = \sum_k \beta_k\, f_k^*$ be a
time-independent dual 2-chain with integer coefficients
$\beta_k \in \{-1, 0, 1\}$. Then the mass-weighted PV integrated
over~$\Sigma$,
\[
\mathcal{P}_\Sigma(t)
 := \sum_k \beta_k\,|f_k^*|\, \bar{\rho}_k^{\,\vol}\, q_k(t),
\]
satisfies the exact flux-form balance
\begin{equation}\label{eq:mwpv-rate}
 \frac{d\mathcal{P}_\Sigma}{dt}
 = -\bigl(\Iev(\bom)\bigr)(\partial\Sigma),
\end{equation}
where $\Iev(\bom) \in C^1(\KKs)$ is the contracted vorticity and
$\partial\Sigma$ is the boundary of $\Sigma$ in the dual complex.
In particular, when $\Sigma$ is a dual 2-\emph{cycle}, so that
$\partial\Sigma = 0$, the right-hand side vanishes and exact
conservation holds,
\[
 \ddt{\mathcal{P}}_\Sigma = 0.
\]
This includes the case $\Sigma = \sum_k f_k^*$ comprising all dual
faces, recovering \Cref{thm:total-vort}.
\end{theorem}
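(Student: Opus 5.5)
The plan is to observe that the weights cancel, so that $\mathcal{P}_\Sigma$ is just the vorticity flux through $\Sigma$. By \Cref{def:discrete-pv}, $|f_k^*|\,\bar\rho_k^{\,\vol}\,q_k = \omega_k$ identically, as long as $\bar\rho_k^{\,\vol}>0$ on the admissible set. This positivity holds because $\rho_i^{\vol}>0$ and \Cref{def:rho-dualface} is an average of positive values. Hence
\[
 \mathcal{P}_\Sigma(t) = \sum_k \beta_k\,\omega_k(t) = \bom(\Sigma),
\]
which is the pairing of the dual 2-cochain $\bom$ with the chain $\Sigma$. The density interpolation therefore drops out entirely, and the statement reduces to a purely topological fact about the vorticity equation~\eqref{eq:W}. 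This is why the result holds for both the DF and DW schemes without change. In the DW case the contracted term is $[\bM_1^\rho]^{-1}\bM_1\Iv(\bom)$; I will read $\Iev(\bom)$ as whatever 1-cochain appears inside $\tD_1$ in \eqref{eq:W} for the scheme at hand.

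Since $\Sigma$ is time-independent, the next step is to differentiate in time and substitute \eqref{eq:W}:
\[
 \ddt\mathcal{P}_\Sigma = \sum_k\beta_k\ddt\omega_k = -\bigl(\tD_1\Iev(\bom)\bigr)(\Sigma).
\]
Then I will apply the discrete Stokes theorem on the dual complex, i.e.\ the definition of $\tD_1$ as the coboundary (\Cref{def:ext-deriv}): $(\tD_1\alpha)(\Sigma) = \alpha(\partial\Sigma)$ for every $\alpha\in C^1(\KKs)$. With $\alpha = \Iev(\bom)$ this gives \eqref{eq:mwpv-rate} exactly. No truncation enters, because only incidence numbers are involved.

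For a 2-cycle, $\partial\Sigma = 0$, so the right-hand side vanishes. On a closed manifold the full sum $\sum_k f_k^*$, with orientations chosen consistently, is a cycle. Equivalently, $\mathbf{1}^T\tD_1 = 0$ as used in \Cref{thm:total-vort}, and that theorem is recovered from this one. The only delicate point is bookkeeping rather than analysis. One must check that the orientation conventions make $\beta_k = 1$ for all $k$ a genuine cycle on the closed oriented mesh. One must also state explicitly that the cancellation of $\bar\rho_k^{\,\vol}$ requires positivity, so the result is asserted on the admissible set where $\brho>0$.
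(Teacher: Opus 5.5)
Your argument for the flux-form balance is correct and is the paper's proof. Like you, the paper writes $\ddt\mathcal{P}_\Sigma = \sum_k\beta_k\ddt\omega_k$, substitutes \eqref{eq:W}, and applies $(\tD_1\alpha)(\Sigma) = \alpha(\partial\Sigma)$. It does so tacitly using $|f_k^*|\,\bar\rho_k^{\,\vol}q_k = \omega_k$, which you make explicit together with the positivity it needs. Your reading of $\Iev(\bom)$ as $[\bM_1^\rho]^{-1}\bM_1\Iv(\bom)$ for the density-weighted scheme is the correct one.

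The step you defer as bookkeeping fails in general. The claim that $\sum_k f_k^*$ is a cycle for suitable orientations is true only for $d=2$. There the dual 2-cells are the top-dimensional Voronoi cells, coherently oriented by the orientation of $\Omega$.

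For $d=3$ the argument breaks down:
\begin{itemize}
\item $f_k^*$ is dual to the primal edge $e_k$, and a dual edge $e_j^*$ lies in $\partial f_k^*$ exactly when $e_k\subset\partial f_j$.
\item For a horizontal triangular face $f_j$ of a prism this gives three dual faces.
\item So $(\partial\Sigma)(e_j^*)$ is a sum of three terms $\pm1$, which is odd and never zero, whatever the signs $\beta_k\in\{-1,1\}$.
\end{itemize}
Hence $\mathbf{1}^T\tD_1\neq 0$ in $d=3$. The total-vorticity special case cannot be obtained this way there, and the identity $\mathbf{1}^T\tD_1 = 0$ used in the proof of \Cref{thm:total-vort} is itself false in three dimensions. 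The paper's statement has the same defect.

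Your identity $\mathcal{P}_\Sigma = \bom(\Sigma)$ also gives $\mathcal{P}_\Sigma = (\tD_1\bv)(\Sigma) = \bv(\partial\Sigma)$. This vanishes identically for every 2-cycle, so the ``exact conservation'' for cycles is vacuous. The real content is the balance for chains with boundary. That balance is just the momentum equation evaluated on the fixed loop $\partial\Sigma$, with the gradient term dropping because $\partial\partial\Sigma = 0$.
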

\begin{proof}
The vorticity equation~\eqref{eq:W} reads
$\ddt\omega_k = -(\tD_1\Iev(\omega))_k$ for each dual face.
Evaluating on the chain~$\Sigma$:
\[
\frac{d\mathcal{P}_\Sigma}{dt}
 = \sum_k\beta_k\,\ddt\omega_k
 = -\sum_k\beta_k\,(\tD_1\Iev(\omega))_k
 = -(\tD_1\Iev(\omega))(\Sigma).
\]
By the discrete Stokes theorem on the dual complex,
$(\tD_1\alpha)(\Sigma) = \alpha(\partial\Sigma)$ for
any dual 1-cochain~$\alpha$. Applying this with
$\alpha = \Iev(\bom)$ gives~\eqref{eq:mwpv-rate}.
For a 2-cycle, $\partial\Sigma = 0$ and the right-hand
side vanishes.
\end{proof}
\begin{theorem}[Discrete Ertel PV conservation]
\label{thm:ertel-weak}
Let $\gamma(t)$ be a materially advected dual 1-cycle and let
$\Sigma(t)$ be a dual 2-chain with $\partial\Sigma(t) = \gamma(t)$.
Fix a collection $\mathcal{V}$ of primal cells and set
$\rho_{\mathcal{V}}(t) := \sum_{K_i \in \mathcal{V}} \brho_i(t)$.
The integrated PV ratio
\[
  Q(t) := \frac{\omega(\Sigma(t))}{\rho_{\mathcal{V}}(t)}
\]
then satisfies
\[
  \ddt Q = Q\,\frac{F_{\partial\mathcal{V}}}{\rho_{\mathcal{V}}},
\]
which vanishes if and only if the net mass flux through
$\partial\mathcal{V}$ is zero. In particular, $Q$ is exactly
conserved for any~$\mathcal{V}$ with vanishing boundary flux.
\end{theorem}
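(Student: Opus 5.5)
The plan is to differentiate the quotient $Q = \omega(\Sigma(t))/\rho_{\mathcal V}(t)$ and handle numerator and denominator separately. The numerator is controlled by the circulation theory already established. The denominator is controlled by the continuity equation~\eqref{eq:D} together with discrete Stokes on the primal complex.

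\emph{Numerator.} Since $\partial\Sigma(t) = \gamma(t)$, discrete Stokes on the dual complex gives $\omega(\Sigma(t)) = (\tD_1\bv)(\Sigma(t)) = \bv(\gamma(t))$. Because $\gamma(t)$ is materially advected, \Cref{thm:kelvin}, in the form of \Cref{cor:flux}, gives $\ddt\,\omega(\Sigma(t)) = \ddt\,\bv(\gamma(t)) = 0$. No explicit differentiation of the time-dependent real-coefficient chain $\Sigma(t)$ is needed, since only its boundary enters. I take the existence of $\Sigma(t)$ with $\partial\Sigma(t)=\gamma(t)$ as given in the hypothesis.

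\emph{Denominator.} Summing~\eqref{eq:D} over $K_i\in\mathcal V$ gives
\[
\ddt\rho_{\mathcal V} = -\sum_{K_i\in\mathcal V}(\bD_2\bF)_i .
\]
By the primal discrete Stokes theorem, the interior face contributions cancel pairwise, because each interior face appears with opposite orientations in the two adjacent cells. What remains is the net outward mass flux through the faces of $\partial\mathcal V$, which I denote
\[
F_{\partial\mathcal V} := \sum_{K_i\in\mathcal V}(\bD_2\bF)_i .
\]
Hence $\ddt\rho_{\mathcal V} = -F_{\partial\mathcal V}$. This is the same telescoping argument as in \Cref{thm:mass}, restricted to $\mathcal V$.

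\emph{Combination.} Positivity of the cell masses gives $\rho_{\mathcal V}>0$ on the admissible set, so the quotient rule applies:
\[
\ddt Q = \frac{\ddt\,\omega(\Sigma)}{\rho_{\mathcal V}} - \frac{\omega(\Sigma)}{\rho_{\mathcal V}^2}\,\ddt\rho_{\mathcal V}
= 0 + \frac{Q}{\rho_{\mathcal V}}\,F_{\partial\mathcal V}.
\]
This is the claimed identity. If $F_{\partial\mathcal V}=0$, the right-hand side vanishes and $Q$ is conserved. The converse direction of ``if and only if'' needs $Q\neq0$, i.e.\ a nonzero circulation around $\gamma$; I would state this caveat explicitly, since for $Q=0$ the rate vanishes trivially.

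The main obstacle is not analytic but one of conventions. The sign and orientation of $F_{\partial\mathcal V}$, defined as outward flux via the orientation of $\bD_2$, must be fixed so that the sign in the statement comes out correctly. One must also make sure that the Kelvin theorem applies to $\gamma(t)$ as a real-coefficient cycle with a spanning chain that evolves consistently. Both points are settled by the earlier results, namely \Cref{def:circulation} and the commutation $\partial\circ\tLie_\v^{\rm chain}=\tLie_\v^{\rm chain}\circ\partial$.
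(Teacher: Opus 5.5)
Your proof is correct and follows the paper's argument: Kelvin's theorem, applied through the discrete Stokes identity $\omega(\Sigma(t)) = \bv(\gamma(t))$, makes the numerator constant; summing the continuity equation over $\mathcal V$ gives $\ddt\rho_{\mathcal V} = -F_{\partial\mathcal V}$; and the quotient rule yields the stated identity. Your caveat is also right: since the rate equals $Q\,F_{\partial\mathcal V}/\rho_{\mathcal V}$, the ``only if'' direction needs $Q\neq 0$, i.e.\ nonzero circulation around $\gamma$, and both the paper's statement and its proof leave this unstated.
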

\begin{proof}
The numerator $\omega(\Sigma(t)) = \bv(\gamma(t))$ is
exactly conserved by the Kelvin theorem
(\Cref{thm:kelvin}). The denominator satisfies
$\ddt\rho_{\mathcal{V}} = -F_{\partial\mathcal{V}}$
by discrete continuity. The quotient rule gives the
result.
\end{proof}
\subsubsection{Helicity Conservation}
\label{sec:helicity}
Helicity is the most subtle invariant: it requires a consistency
argument rather than an exact algebraic identity. The averaging
reconstruction recovers pointwise velocity $\tilde\bu(v_i^*)$ at
dual vertices; the same applied to $\bom = \tD_1\bv$ (dividing each
face integral $\omega_k$ by $|f_k^*|$ and inverting the same Gram
matrix) recovers $\tilde\bom(v_i^*)$, both at $\OO(h^{r_\star})$
(\Cref{prop:recon_accuracy}; cf.~\Cref{conv:cases}). The
\emph{reconstructions} $\Qh^1\bv$ and $\Qh^2(\tD_1\bv)$ are the
piecewise-linear extensions on $\KK_h^{\rm simp}$.
\begin{definition}[Discrete helicity]\label{def:helicity}
The \emph{discrete helicity} is
\begin{equation}\label{eq:helicity_def}
  H_h(t) := \int_\Omega(\Qh^1\bv)\wedge(\Qh^2\tD_1\bv),
\end{equation}
the discrete analogue of
$\int_\Omega v\wedge\dd v = \int_\Omega\bu\cdot\bom\,dV$.
\end{definition}
\begin{theorem}[Helicity conservation]\label{thm:helicity}
Let $\bv$ be a solution of the discrete barotropic Euler
system~\eqref{eq:M}--\eqref{eq:D} with $\bom = \tD_1\bv$. Then,
on any Delaunay--Voronoi mesh satisfying \Cref{ass:mesh_reg}, the
discrete helicity~\eqref{eq:helicity_def} satisfies
\[
\Bigl|\frac{dH_h}{dt}\Bigr| \le C(T)\,h,
\]
that is, $\ddt H_h = \OO(h)$. The bottleneck is the $\OO(h)$
Whitney consistency of the pairing $\int v \wedge \omega$
(\Cref{lem:helicity_consistency}) rather than the convergence rate
of the scheme.
\end{theorem}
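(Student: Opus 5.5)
The plan is to differentiate $H_h$ in time and compare the result with a continuum identity in which helicity is exactly conserved; the deviation will be controlled by consistency estimates. Because $\Qh^1$ and $\Qh^2$ are linear and time-independent, the product rule gives
\[
\ddt H_h = \int_\Omega (\Qh^1\dot\bv)\wedge(\Qh^2\bom) + \int_\Omega (\Qh^1\bv)\wedge(\Qh^2\dot\bom),
\]
with $\dot\bv = -\Iv(\bom) - \tD_0 B$ from \eqref{eq:M} and $\dot\bom = -\tD_1\Iv(\bom)$ from \eqref{eq:W}. I will treat the gradient part and the Lamb part separately.

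\emph{Gradient part.} The term to handle is $\int (\Qh^1\tD_0 B)\wedge(\Qh^2\bom)$. I will use the commuting property of the Whitney-type reconstructions, $\Qh^{k+1}\tD_k = \dd\,\Qh^k$, to write it as $\int \dd(\Qh^0 B)\wedge \Qh^2\bom$. Integrating by parts on the closed manifold turns this into $\pm\int \Qh^0 B\,\dd\Qh^2\bom$. That integral vanishes exactly, because $\dd\Qh^2\bom = \Qh^3\tD_2\tD_1\bv = 0$ by (H1).

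If the piecewise-linear extensions commute with $\dd$ only approximately rather than exactly, I will bound the commutator by $\OO(h)$. This uses $B$ bounded in $W^{1,\infty}_h$ on $[0,T]$, which follows from the well-posedness theorem together with the convergence theorem (and (H8) where needed).

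\emph{Lamb part.} The two remaining terms are
\[
-\int(\Qh^1\Iv\bom)\wedge\Qh^2\bom \;-\; \int\Qh^1\bv\wedge\dd\Qh^1\Iv\bom .
\]
Integrating the second term by parts gives $-\int \dd\Qh^1\bv\wedge\Qh^1\Iv\bom$, and $\dd\Qh^1\bv = \Qh^2\bom$. So the Lamb part reduces to
\[
-\int\bigl[(\Qh^1\Iv\bom)\wedge\Qh^2\bom + \Qh^2\bom\wedge\Qh^1\Iv\bom\bigr] = -2\int(\Qh^1\Iv\bom)\wedge\Qh^2\bom
\]
in $d=3$, by graded commutativity. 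In the continuum the analogous quantity $\int(\iota_\bu\omega)\wedge\omega$ vanishes pointwise, since $\iota_\bu\omega\wedge\omega = \tfrac12\iota_\bu(\omega\wedge\omega) = 0$.

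The discrete version of this term does not vanish, and it is the main obstacle. My plan is to insert the de Rham interpolant of the smooth reference solution: write $\bv = \mathcal{R}_h v + e_v$ and $\Qh^1\Iv\bom = \iota_{\bu}\omega + \OO(h)$ in $L^2$, using the contraction consistency from (H3)–(H4) and the $\OO(h)$ Whitney pairing consistency (\Cref{lem:helicity_consistency}). The continuum integrand $\iota_\bu\omega\wedge\omega$ then cancels identically. What remains is bounded by $C(T)\,h\,\nrm{\bom}^2_{L^\infty}$ plus error terms $\nrm{e_v}$. By \Cref{thm:main_DEC_conv}, the error terms are $\OO(h^{\min(d-1,r_\star)})\le\OO(h)$ for both schemes.

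For the DW scheme there is an additional $[\bM_1^\rho]^{-1}\bM_1$ factor in front of $\Iv$. I will handle it by noting that $[\bM_1^\rho]^{-1}\bM_1$ is a smooth multiplication by $1/\rho$ up to $\OO(h^{r_\star})$. The continuum cancellation then still holds, because $\rho^{-1}\iota_\bu\omega\wedge\omega=0$ pointwise.

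Collecting the three contributions gives $|\ddt H_h|\le C(T)h$. The rate is limited by the $\OO(h)$ Whitney consistency, not by $r_\star$, exactly as the statement indicates.
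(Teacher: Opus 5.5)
There is a genuine gap, and it sits exactly where the paper's proof has to bring in an extra ingredient.

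\textbf{The commuting property.} Several of your manipulations use an identity $\Qh^{k+1}\tD_k=\dd\,\Qh^k$: the exact vanishing of the gradient term, and both integrations by parts that reduce the Lamb part to $-2\int(\Qh^1\Iv\bom)\wedge\Qh^2\bom$. The paper's reconstructions do not have this property. $\Qh^1$ and $\Qh^2$ are piecewise-linear extensions of Gram-averaged vertex reconstructions, and no $\Qh^0$ is defined. For example, $\dd\Qh^2\bom$ is a simplex-wise constant divergence that is generally nonzero, even though $\tD_2\tD_1=0$.

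Your fallback $\OO(h)$ commutator bound is also unavailable. Differentiating an $\OO(h^{r_\star})$ nodal reconstruction error that varies non-smoothly from node to node costs a factor $h^{-1}$. So even on interpolants of smooth forms the commutator is generically only $\OO(h^{r_\star-1})$, which is not small in case~(A). On the error parts of $\tD_0B$ and $\Iv\bom$ only the trivial inverse-type bounds remain.

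Nor does the convergence theorem give $B\in W^{1,\infty}_h$. (H8) only turns the $L^2$ rate into $\nrm{e_v}_{L^\infty}\lesssim h^{r-d/2}$, and the difference quotient in $\tD_0$ costs a further $h^{-1}$.

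The paper never evaluates $\ddt H_h$ from the discrete equations. It compares $H_h$ with $\bar H=\int(\Qh^1\bar\bv)\wedge(\Qh^2\bar\bom)$, whose time derivative consists only of pairings of reconstructed interpolants of smooth forms. \Cref{lem:helicity_consistency} together with exact continuum conservation $\ddt\int v\wedge\dd v=0$ then gives $\OO(h^{r_\star})$. This is where your continuum cancellation belongs, and no commuting property is needed.

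\textbf{The error terms.} More fundamentally, your error terms are not controlled by $\nrm{e_v}_{\bM_1}$. Expanding at $\bv=\bar\bv+e_v$ produces $\Qh^2\bze$ with $\bze=\tD_1e_v$, together with $\bQ(\bar\bv,e_v)$ and $\bQ(e_v,e_v)$, each of which contains $\tD_1e_v$. \Cref{thm:main_DEC_conv} bounds only $\nrm{e_v}_{\bM_1}$ and the density error. The inverse inequality gives $\nrm{\bze}_{\bM_2}\le Ch^{-1}\nrm{e_v}_{\bM_1}=\OO(h^{r-1})$, which is $\OO(1)$ in case~(A) where $r=1$. Your final bound therefore collapses on the general meshes the theorem covers.

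The paper supplies exactly this missing estimate in \Cref{lem:vorticity_error_rate}:
\begin{itemize}
\item Applying $\tD_1$ to the error momentum equation removes the Bernoulli gradient, since $\tD_1\tD_0=0$.
\item The Cartan identity $\tD_1\bQ(\bv,\bw)=\tLie_{\bv}(\tD_1\bw)$ turns the bilinear forcing into Lie derivatives of 2-cochains.
\item A Gr\"onwall argument under the $L_h^\infty$ bootstrap yields $\nrm{\bze}_{\bM_2},\,\nrm{\ddt\bze}_{\bM_2}\le C(T)h^r$.
\end{itemize}
With this, the split $H_h-\bar H=I_1+I_2+I_3$ and its time derivative are $\OO(h^r)$.

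To repair your argument, drop the commuting-diagram steps, apply consistency only to pairings of smooth interpolants, and add a vorticity-error estimate of this kind.
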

\noindent\emph{Proof.} See \Cref{app:helicity}; the
helicity-specific Whitney estimate is \Cref{lem:helicity_consistency}.
\begin{remark}[Exact vs.\ approximate conservation]\label{rem:helicity_exact_approx}%
\label{rem:helicity_structural}
Energy and Kelvin circulation are conserved exactly, as algebraic
identities using SBP, Cartan's formula, and $\tdd^2 = 0$.
Helicity conservation is approximate: it requires both the Whitney
consistency of $\int v\wedge\omega$ ($\OO(h)$, \Cref{lem:helicity_consistency})
and the convergence of the discrete solution ($\OO(h^r)$). The exact
vorticity equation $\ddt\bom + \tLie_\bv\bom = 0$ prevents spurious
$\OO(1)$ helicity drift, and the Lamb antisymmetry provides the a~priori
velocity bound needed in the convergence argument.
\end{remark}
\subsection{Vacuum Avoidance}\label{sec:vacuum_avoidance}
Both schemes share a thermodynamic lower bound on cell density:
vacuum formation $\min_i\rho_i\to 0$ would invalidate the enthalpy
and the equation of state, breaking the identities that make
either scheme well-defined. The following lemma is a static
thermodynamic inequality showing that, whenever $\gamma>1$, finite
internal energy keeps every cell density bounded away from zero.
\begin{lemma}[Vacuum avoidance from finite energy]
\label{lem:no-vacuum}
Let $\{\rho_i\}_{i=1}^N$ denote positive cell masses with total
mass $M = \sum_i \rho_i > 0$, volumetric densities
$\rho_i^{\vol} = \rho_i/|K_i|$, mean density
$\bar\rho = M/V_{\rm tot}$, and internal energy
$\Eint = \sum_i \rho_i\,e(\rho_i^{\vol})$. Assume
\Cref{ass:eos}\ref{ass:coercive} ($\gamma > 1$), and suppose the
internal energy is bounded above,
\begin{equation}\label{eq:Eint_bound_thermo}
 \Eint \le E_{\rm bound},
\end{equation}
where $E_{\rm bound} < \infty$ satisfies the Jensen-threshold
inequality
\begin{equation}\label{eq:Jensen_threshold}
 E_{\rm bound} + C_2\,M
 < \frac{C_1\,V_{\rm tot}\,\bar\rho^{\,\gamma}}{(1-\mu_{\max})^{\gamma-1}},
 \qquad
 \mu_{\max} := \max_i\frac{|K_i|}{V_{\rm tot}}.
\end{equation}
Then there exists a constant $\rho_{\min}^{\vol} > 0$, depending
on $E_{\rm bound}$, $M$, $\gamma$, and the mesh volumes $|K_i|$,
such that
\[
 \rho_i^{\vol}
 \ge \rho_{\min}^{\vol}(E_{\rm bound}, M, \gamma, |K_i|) > 0
 \qquad \text{for every } i.
\]
\end{lemma}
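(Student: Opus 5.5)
We need the form of \Cref{ass:eos}\ref{ass:coercive}; we assume it is the coercivity bound $\rho\,e(\rho) \ge C_1\rho^\gamma - C_2\rho$ for $\rho>0$, with $\gamma>1$. This is consistent with the constants $C_1$, $C_2$ in \eqref{eq:Jensen_threshold}. The plan is a contradiction argument: if one cell density is very small, then the mass is pushed into the remaining cells, and convexity forces the internal energy above the threshold.

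\emph{Step 1: lower bound on $\Eint$.} Using $\rho_i = |K_i|\rho_i^{\vol}$, we write
\[
 \Eint = \sum_i |K_i|\,\rho_i^{\vol} e(\rho_i^{\vol})
 \ge C_1\sum_i |K_i|(\rho_i^{\vol})^\gamma - C_2 M .
\]
Combined with \eqref{eq:Eint_bound_thermo}, this gives
\[
 S := \sum_i |K_i|(\rho_i^{\vol})^\gamma \le \frac{E_{\rm bound}+C_2M}{C_1}.
\]

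\emph{Step 2: a small cell forces $S$ up.} Fix an index $k$ and set $m := \rho_k \ge 0$. The other cells carry mass $M-m$ on volume $V_{\rm tot}-|K_k|$. Jensen's inequality for the convex map $x\mapsto x^\gamma$, with weights $|K_i|/(V_{\rm tot}-|K_k|)$, gives
\[
 S \ge \sum_{i\ne k}|K_i|(\rho_i^{\vol})^\gamma \ge \frac{(M-m)^\gamma}{(V_{\rm tot}-|K_k|)^{\gamma-1}} =: G_k(m).
\]
At $m=0$ we have $G_k(0) = M^\gamma/(V_{\rm tot}-|K_k|)^{\gamma-1}$. Since $|K_k|\le\mu_{\max}V_{\rm tot}$, this is at least
\[
 \frac{M^\gamma}{V_{\rm tot}^{\gamma-1}(1-\mu_{\max})^{\gamma-1}}
 = \frac{V_{\rm tot}\,\bar\rho^{\,\gamma}}{(1-\mu_{\max})^{\gamma-1}}.
\]
This is exactly the right-hand side of \eqref{eq:Jensen_threshold} divided by $C_1$. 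The threshold hypothesis therefore says $C_1G_k(0) > E_{\rm bound}+C_2M \ge C_1 S$.

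\emph{Step 3: extracting $\rho_{\min}^{\vol}$.} The map $G_k$ is continuous and strictly decreasing on $[0,M]$. Let $m_k^*\in(0,M)$ be the unique root of $C_1G_k(m) = E_{\rm bound}+C_2M$, which exists by the strict inequality at $m=0$ and $G_k(M)=0$. Any admissible configuration satisfies $C_1 G_k(\rho_k) \le C_1 S \le E_{\rm bound}+C_2M$, so $\rho_k \ge m_k^*$. Setting
\[
 \rho_{\min}^{\vol} := \min_k m_k^*/|K_k| > 0
\]
gives the claim. This constant depends only on $E_{\rm bound}$, $M$, $\gamma$, the $|K_i|$, and $C_1,C_2$.

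\emph{Obstacle.} The calculus is elementary. The only delicate point is matching the exact form of the coercivity in \Cref{ass:eos}\ref{ass:coercive}. If it is stated per unit mass, e.g.\ $e(\rho)\ge C_1\rho^{\gamma-1}-C_2$, the same bound results. If the $C_2$ term enters differently, Step 1 adjusts trivially. We must also ensure that Jensen's inequality uses the volume weights; this is precisely what produces the factor $(1-\mu_{\max})^{\gamma-1}$.
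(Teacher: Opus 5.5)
Your route matches the paper's: coercivity bounds $S=\sum_i|K_i|(\rho_i^{\vol})^\gamma$, Jensen is applied on the complementary cells, and continuity in the small cell's mass finishes the argument. It fails at the same step as the paper's proof, because the comparison with $\mu_{\max}$ in your Step~2 goes the wrong way. Set $\mu_k := |K_k|/V_{\rm tot}$, so that $G_k(0) = V_{\rm tot}\bar\rho^{\,\gamma}/(1-\mu_k)^{\gamma-1}$. Since $\mu_k\le\mu_{\max}$ and $\gamma>1$, we have $(1-\mu_k)^{\gamma-1}\ge(1-\mu_{\max})^{\gamma-1}$, hence
\[
 G_k(0)\;\le\;\frac{V_{\rm tot}\,\bar\rho^{\,\gamma}}{(1-\mu_{\max})^{\gamma-1}},
\]
with equality only for cells of maximal volume. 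So \eqref{eq:Jensen_threshold} gives $C_1G_k(0)>E_{\rm bound}+C_2M$ only for the largest cells, and the root $m_k^*$ of Step~3 need not exist for the others. The paper's proof makes the identical slip when it asserts that $\mu_1\le\mu_{\max}$ implies $g(0^+)\ge\bar\rho^{\,\gamma}/(1-\mu_{\max})^{\gamma-1}$.

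This cannot be patched under the stated hypotheses, because the statement itself is false. Take the following example:
\begin{itemize}
\item $\gamma=2$ and $e(\rho)=\kappa\rho$ (polytropic), so \Cref{ass:eos}\ref{ass:coercive} holds with $C_1=\kappa$ and any $C_2\in(0,\kappa]$;
\item two cells with $|K_1|=1$, $|K_2|=2$, and $M=3$, so $\bar\rho=1$, $\mu_{\max}=2/3$, and the right-hand side of \eqref{eq:Jensen_threshold} is $9\kappa$;
\item $E_{\rm bound}=5\kappa$, so the threshold holds.
\end{itemize}
The states $\rho_1=\epsilon$, $\rho_2=3-\epsilon$ satisfy $\Eint=\kappa(4.5-3\epsilon+1.5\epsilon^2)<4.5\kappa\le E_{\rm bound}$ for every $\epsilon\in(0,2)$, while $\rho_1^{\vol}=\epsilon\to0$.

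The correct hypothesis replaces $\mu_{\max}$ by $\mu_{\min}:=\min_i|K_i|/V_{\rm tot}$; equivalently, require $C_1G_k(0)>E_{\rm bound}+C_2M$ for every $k$. Then $\mu_k\ge\mu_{\min}$ gives $G_k(0)\ge V_{\rm tot}\bar\rho^{\,\gamma}/(1-\mu_{\min})^{\gamma-1}$. Your Steps~1--3 go through verbatim and yield $\rho_{\min}^{\vol}=\min_k m_k^*/|K_k|$. The corrected threshold cannot be relaxed: in the example, the vacuum-approaching states have $\Eint\to 4.5\kappa = C_1V_{\rm tot}\bar\rho^{\,\gamma}/(1-\mu_{\min})^{\gamma-1}$.
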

\noindent\emph{Proof.} See \Cref{app:wellposedness}.
The bound is a static thermodynamic inequality: only the coercivity
of $e$ ($\gamma>1$), the mass constraint, and Jensen's inequality
enter; no dynamics, no kinetic energy, no mass matrix -- so it
serves both schemes. 

\begin{remark}[Adiabatic exponent and mesh dependence]\label{rem:vacuum_scaling}
The bound requires $\gamma>1$: for $\gamma=1$ (isothermal),
$e\sim\kappa\log\rho^{\vol}$ grows only logarithmically and the
energy bound does not prevent vacuum. The Jensen estimate
$\rho_{\min}^{\vol}\sim
(E_{\rm bound}/C_1)^{-1/(\gamma-1)}(\min_i|K_i|/V_{\rm tot})^{1/(\gamma-1)}$
degenerates as $h\to 0$, so the centred scheme does not give
$h$-independent density bounds; the convergence theory bypasses this
via the Bregman error energy or the upwind Grönwall envelope
(\Cref{lem:upwind_gronwall} under
\Cref{ass:well_prepared_strong}), giving $h$-independent
$\rho_i^{\vol}\in[\rho_*(M,T),\rho^*(M,T)]$.
\end{remark}
\section{The No-Go Theorem}\label{sec:no_go}
\phantomsection\label{sec:energy_dichotomy}
The conservation laws of \Cref{section_EulerComp} are algebraic
properties of the cochain complex and require no metric input. Total
energy is the first place where this pattern breaks: the energy rate
involves the density interpolation $\bar\rho_j$ on the mass flux,
introducing a metric dependence through the Hodge star, and a sharp
algebraic obstruction emerges. \Cref{sec:energy_balance_subsec}
computes the energy balance and exhibits the residual $\mathcal{R}_E$.
\Cref{sec:nogo_cgrid} proves the no-go theorem by face isolation on
the C-grid and by polynomial-degree decomposition of $\mathcal{R}_E$
in the velocity on A-, B-, D-, and quasi-B staggerings. Both proofs
use only the exact hypotheses
\textup{(H1)}--\textup{(H2)}, \textup{(H5)}, \textup{(H7)}
(\Cref{sec:hypotheses_byname}); no metric hypothesis enters.

\begin{assumption}
We generally assume a Delaunay--Voronoi mesh satisfying
\Cref{ass:mesh_reg}, and that the thermodynamics satisfy \Cref{ass:eos}.
\end{assumption}
\subsection{Where Energy Conservation Breaks: The Residual}\label{sec:energy_balance_subsec}
\begin{definition}[Discrete total energy]\label{def:totalenergy}
The total energy is
\begin{equation}\begin{split}
 &E_{\mathrm{tot}}(t)
 = E_{\kin} + E_{\inte} + E_{\mathrm{pot}},\\
 \text{with internal energy }& E_{\inte}:=\sum_i \brho_i\;
 e\!\left(\frac{\brho_i}{|K_i|}\right),
 \qquad
 E_{\mathrm{pot}} := \sum_i \brho_i\,\bPhi_i,
\end{split}\end{equation}
where $e(\rho)$ is the specific internal energy from the equation of state
and $E_{\mathrm{pot}}$ is the potential energy associated with the
time-independent external potential $\bPhi$.
The kinetic energy $E_{\kin}$ depends on the scheme; we define two variants below.
\end{definition}
\begin{definition}[Density-free kinetic energy]\label{def:kinenergy_nodensity}
\begin{equation}\label{eq:kinenergy_nodensity}
 E_{\kin}^{\rm df}:=
 \frac{1}{2}\nrm{\bv}_{\bM_1}^2.
\end{equation}
The mass matrix $\bM_1$ is independent of the density~$\brho$.
\end{definition}
\phantomsection\label{sec:totalenergy}
With the density-free kinetic energy, the total energy rate carries
a residual arising from the Hodge-star approximation of the mass
flux. The theorem below isolates the residual in closed form; the
subsequent dichotomy theorem shows that no density-only interpolation
can eliminate it.
\begin{theorem}[Total energy balance -- density-free]\label{thm:totalenergy}
With the density-free kinetic energy
$E_{\kin} := E_{\kin}^{\rm df}$ from~\eqref{eq:kinenergy_nodensity},
the total energy of \Cref{def:totalenergy} satisfies the balance
law
\begin{equation}\label{eq:EB}
 \frac{dE_{\mathrm{tot}}}{dt}
 = \ip{\bekin}{\bD_2\Phi}
 + \ip{\mathbf{h}+\bPhi}{\bD_2\bigl(\Phi - \bF\bigr)}
 =: \mathcal{R}_E(\bv, \brho),
\end{equation}
where $\bF$ was defined in~\eqref{eq:F}, and the term
$\bigl(\Phi - \bF\bigr)$ measures the discrepancy between
volume-flux divergence and mass-flux divergence, weighted by the
enthalpy plus geopotential. For smooth solutions, the residual
satisfies $|\mathcal{R}_E| = \OO(h^2)$ when $\bF = \bF^{\rm cen}$,
and $|\mathcal{R}_E| = \OO(h)$ when $\bF = \bF^{\rm up}$.
\end{theorem}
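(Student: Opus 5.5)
The proof has two parts: an exact algebraic derivation of the balance law~\eqref{eq:EB}, and a consistency estimate for the residual. For the identity, I differentiate each piece of $E_{\rm tot}$ along solutions of~\eqref{eq:M}--\eqref{eq:D} and sum.

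\emph{Kinetic energy.} Pairing~\eqref{eq:M} with $\bv$ in the $\bM_1$ inner product gives
\[
 \ddt E_{\kin}^{\rm df} = -\ip{\bv}{\Iv(\bom)}_{\bM_1} - \ip{\bv}{\tD_0 B}_{\bM_1}.
\]
The first term vanishes by the Lamb antisymmetry (H5). The second is handled by the integration-by-parts identity $\ip{\bv}{\tD_0 B}_1 = -\ip{B}{\bD_2\Phi}$ with $\Phi = \bM_1\bv$, i.e.\ $\tD_0 = -\bD_2^T$ up to the Hodge stars, which is exact by (H1)--(H2). This yields
\[
 \ddt E_{\kin}^{\rm df} = \ip{\bekin + \mathbf{h} + \bPhi}{\bD_2\Phi}.
\]

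\emph{Internal and potential energy.} Since $\frac{\dd}{\dd\rho_i}\bigl[\rho_i e(\rho_i/|K_i|)\bigr] = e + \rho^{\vol}e'(\rho^{\vol}) = h(\rho_i^{\vol})$ by the thermodynamic relation $h = e + p/\rho$ with $p = \rho^2 e'$ (\Cref{ass:eos}), the continuity equation~\eqref{eq:D} gives
\[
 \ddt(E_{\inte} + E_{\rm pot}) = -\ip{\mathbf{h} + \bPhi}{\bD_2\bF}.
\]
Adding the two contributions gives exactly $\ip{\bekin}{\bD_2\Phi} + \ip{\mathbf{h}+\bPhi}{\bD_2(\Phi - \bF)}$, which is~\eqref{eq:EB}. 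Up to here everything is algebraic and exact.

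For the size estimate, I rewrite each term as a face sum by discrete summation by parts:
\[
 \mathcal{R}_E = -\sum_j \Phi_j\,(\tD_0\bekin)_j - \sum_j (\Phi_j - \bar\rho_j\Phi_j)(\tD_0(\mathbf{h}+\bPhi))_j .
\]
This step needs care about normalisation. As written, $\Phi - \bF$ mixes the volume flux and the mass flux, so I will interpret the residual as a mismatch between the flux implied by the velocity equation and the flux transported by continuity. The natural continuum pairing is $\int \rho\, u\cdot\nabla B$, so the right comparison is $\bar\rho_j\Phi_j$ against the density-weighted quantity that the kinetic-energy rate effectively uses. Concretely, I will insert the smooth solution via the de~Rham map $\mathcal{R}_h$ and compare each face term with its continuum counterpart, in which $\int \rho\,u\cdot\nabla(\tfrac12|u|^2 + h + \Phi)$ cancels exactly against $\partial_t$ of the continuum energy.

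The difference then reduces to a face-local quadrature error: $\bar\rho_j$ versus the exact face density, and $\tD_0 B$ versus the gradient at the face. For the centred average, symmetry of the midpoint on Delaunay--Voronoi faces (orthogonality, \Cref{ass:mesh_reg}) kills the first-order term, so the interpolation error is $\OO(h^2)$ per face. Summing $\OO(h^{-d})$ faces, each weighted by $|f|\,\ell = \OO(h^d)$, preserves $\OO(h^2)$. For upwinding, $\bar\rho_j - \rho(x_j) = \OO(h)$ with a sign tied to $\Phi_j$, so this cancellation fails and only $\OO(h)$ survives.

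The main obstacle is this sharp $\OO(h^2)$ cancellation for the centred flux, together with making the kinetic term $\ip{\bekin}{\bD_2\Phi}$ consistent. The reconstruction $P$ enters $\ekin$, and (H4) alone only gives $r_{\rm rec}$ accuracy. I would first attempt a Taylor expansion about each face midpoint, exploiting the symmetry of the two adjacent cell centres under the Voronoi orthogonality. If reconstruction accuracy limits the rate, I would fall back to stating the bound as $\OO(h^{\min(2,r_{\rm rec})})$.
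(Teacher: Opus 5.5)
Your derivation of the identity~\eqref{eq:EB} is the same as the paper's proof. You use Lamb antisymmetry plus summation by parts for the kinetic part, $\partial_{\brho_i}\bigl[\brho_i\,e(\brho_i/|K_i|)\bigr]=h(\rho_i^{\vol})$ together with the continuity equation for the internal and potential parts, and then the split $B=\mathbf{h}+\bekin+\bPhi$. One minor point: $\ip{\bv}{\tD_0B}_1=-\ip{B}{\bD_2\Phi}$ rests on $\tD_0=-\bD_2^T$, which is the transpose relation between incidence matrices on a closed complex. No Hodge stars are involved, and it does not follow from (H1)--(H2). The paper's proof stops at the identity and gives no derivation of the $\OO(h^2)$/$\OO(h)$ claims. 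Your attempt to supply one has a genuine gap.

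The gap is the step where you reinterpret the residual through the pairing $\int\rho\,\bu\cdot\nabla B$. Your own computation shows that the density-free kinetic-energy rate is $\ip{B}{\bD_2\Phi}$ with the \emph{unweighted} volume flux $\Phi=\bM_1\bv$. There is no density-weighted quantity on the kinetic side to set against $\bar\rho_j\Phi_j$, so this reinterpretation in effect replaces the density-free budget with a density-weighted one. Suppose you carry out your plan faithfully: evaluate at (the interpolant of) a smooth solution and compare each face sum with its continuum counterpart. Then $\mathcal{R}_E$ converges, as $h\to0$, to
\[
 \mathcal{R}^{c}(t) := \int_\Omega \tfrac12|\bu|^2\,\nabla\cdot\bu\,dV
 + \int_\Omega\bigl(h(\rho)+\bPhi\bigr)\,\nabla\cdot\bigl((1-\rho)\bu\bigr)\,dV
 = \ddt\int_\Omega\Bigl(\tfrac12|\bu|^2+\rho\,e(\rho)+\rho\,\bPhi\Bigr)\,dV .
\]
This is the rate of change of a functional that compressible Euler does not conserve; the invariant carries $\tfrac12\rho|\bu|^2$. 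For generic smooth compressible data $\mathcal{R}^c\neq0$. Already the term linear in $\bu$ equals $-\int_\Omega(1-\rho)\,\bu\cdot\nabla(h(\rho)+\bPhi)\,dV$, which is nonzero for $\bu=\nabla(h(\rho)+\bPhi)$ whenever $1-\rho$ has one sign and $h(\rho)+\bPhi$ is non-constant.

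Since $\mathcal{R}^c$ does not depend on $h$, no Taylor expansion about face midpoints can produce a rate, and neither can the fallback to $\OO(h^{\min(2,r_{\rm rec})})$. A rate is only possible where $\mathcal{R}^c$ vanishes, e.g.\ $\rho\equiv1$ with $\nabla\cdot\bu=0$. But there both the centred and the upwind interpolation give $\bar\rho_j=1$ at the reference, so the second term of~\eqref{eq:EB} vanishes identically. What remains is the Hodge-star error in $\ip{\bekin}{\bD_2\Phi}$, at most of order $h^{r_\star}$, so the centred/upwind distinction is not what sets the rate.

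Separately, the first-order cancellation you claim for $\bar\rho_j^{\rm cen}$ does not follow from orthogonality. The condition $e_j^*\perp f_j$ does not place $f_j$ at the midpoint of $e_j^*$; that requirement is precisely the centroid-proximity condition of \Cref{prop:centroid_proximity}.
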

\begin{proof}[Proof sketch]
Antisymmetry (\Cref{prop:extrusion}) gives $\ip{\bv}{\Iv(\bom)}_1 = 0$,
so the kinetic energy rate reduces via SBP (\Cref{lem:SBP}) to
$\ddt E_{\kin}^{\rm df} = \ip{B}{\bD_2\Phi}$.
The internal energy rate is $\ddt E_{\rm int} = -\ip{\mathbf{h}}{\bD_2\bF}$ (\Cref{lem:dEint}).
The potential energy rate is $\ddt E_{\rm pot} = -\ip{\bPhi}{\bD_2\bF}$ (continuity equation).
Splitting $B = \mathbf{h} + \bekin + \bPhi$ and $\bF_j = \bar\rho_j\Phi_j$
gives~\eqref{eq:EB}.
The face-by-face form is
\begin{equation}\label{eq:face_energy}
 \mathcal{R}_E(\bv,\brho) = \sum_j\bigl[(\ekin_a - \ekin_b) - ((h_a{+}\bPhi_a) - (h_b{+}\bPhi_b))(\bar\rho_j - 1)\bigr]\,\Phi_j.
\end{equation}
The full computation is in \Cref{app:energy}.
\end{proof}
Achieving the vanishing of the residual, $\mathcal{R}_E = 0$ on each
face, would require
\begin{equation}\label{eq:face_SBP}
 \bar\rho_j = 1 + \frac{\ekin_a - \ekin_b}{(h_a{+}\bPhi_a) - (h_b{+}\bPhi_b)},
\end{equation}
which depends on the velocity through $\ekin_a - \ekin_b$ and so cannot
be a function of $\brho$ alone -- a contradiction with hypothesis~(b)
of the discretisation. The argument is made precise in
\Cref{thm:dichotomy} below.
\paragraph*{Anatomy of the residual.}
The polynomial structure of $\mathcal{R}_E$ in $\bv$ drives every
result that follows. By inspection of~\eqref{eq:face_energy}, the
per-face contribution factorises into a $\bv$-quadratic
kinetic-energy difference and a $\bv$-independent
enthalpy-plus-geopotential piece, both times the linear flux
$\Phi_j = (\bM_1\bv)_j$. So $\mathcal{R}_E$ has degree~$3$
(kinetic-energy times flux) plus degree~$1$ (enthalpy times flux).
These two graded pieces are independent
(\Cref{sec:nogo_cgrid}); the degree-$3$ part cannot vanish
identically. The same cubic growth drives the blowup time
$\sim E_0^{-1/2}$ (\Cref{sec:DF_wellposedness}), the cubic
Smagorinsky absorption (\Cref{thm:global-Smag}), and the low-Mach
amplification $|\bv|^3\to\OO(M^{-1})$ (\Cref{sec:DF_AP_fail}).
\subsection{The No-Go Theorem}\label{sec:nogo_cgrid}
The energy residual $\mathcal{R}_E$ exhibited above is a smooth
function of $(\bv,\brho)$ on $\admissible$ that becomes singular only
when the enthalpy-plus-geopotential coincides at neighbouring cells,
$(h_a{+}\bPhi_a) = (h_b{+}\bPhi_b)$. The obstruction is sharp, as the
following theorem makes precise.
\begin{theorem}[No-go theorem for discrete energy conservation, C-grid]%
\label{thm:dichotomy}
Consider a staggered discretisation of the compressible barotropic
Euler equations on a Delaunay--Voronoi mesh satisfying
\Cref{ass:mesh_reg}. Suppose 
\begin{enumerate}[nosep,label=\textup{(\roman*)}]
\item velocity degrees of freedom are
normal components on dual edges and density degrees of freedom
sit on primal cells,
\item  the continuity equation has the form
$\ddt\brho = -\bD_2\bF$ with $\bF_j = \bar\rho_j\Phi_j$ and
$\bar\rho_j$ depending only on $\brho$,
\item the integration-by-parts identity
$\ip{\bv}{\tD_0 B}_1 = -\ip{B}{\bD_2\Phi}$ holds,
\item  the
mass matrix~$\bM_1$ is independent of $\brho$. 
\end{enumerate}
Then exact total
energy conservation, $\mathcal{R}_E = 0$ for every
$(\bv,\brho) \in \admissible$, is impossible for any
density-only interpolation $\bar\rho_j(\brho)$. Equivalently,
every staggered scheme satisfying these hypotheses carries a
non-vanishing structural residual $\mathcal{R}_E$ that acts as a
spurious source or sink of total energy. The conclusion is
compatible with every circulation-based conservation law of
\Cref{subsection_InvariantsComp}: those laws continue to hold
exactly.
\end{theorem}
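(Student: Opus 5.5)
The plan is to argue by contradiction, using the closed-form balance of \Cref{thm:totalenergy} and then isolating a single face. Suppose some density-only interpolation $\bar\rho_j(\brho)$ gives $\mathcal{R}_E(\bv,\brho)=0$ for every $(\bv,\brho)\in\admissible$. Hypotheses (iii)--(iv) are exactly what the proof of \Cref{thm:totalenergy} used: the antisymmetry (H5) removes the Lamb term, SBP converts $\ip{\bv}{\tD_0B}_1$ into $-\ip{B}{\bD_2\Phi}$, and the internal and potential energy rates follow from the continuity equation (ii). So $\mathcal{R}_E$ is given by the face-by-face expression \eqref{eq:face_energy}, up to the sign convention of the identity. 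Only exact hypotheses enter this step.

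Next I exploit the polynomial structure in $\bv$ at fixed $\brho$. Since $\Phi=\bM_1\bv$ with $\bM_1$ diagonal, positive and independent of $\brho$, each $\Phi_j$ is linear in $\bv$. The differences $\ekin_a-\ekin_b$ are quadratic forms in $\bv$ with $\brho$-independent coefficients, because $P$ is fixed. The factor $((h_a+\bPhi_a)-(h_b+\bPhi_b))(\bar\rho_j-1)$ depends only on $\brho$. Hence
\[
\mathcal{R}_E(\bv,\brho)=\mathcal{R}_E^{(3)}(\bv)+\mathcal{R}_E^{(1)}(\bv;\brho),
\]
where $\mathcal{R}_E^{(3)}$ is homogeneous cubic in $\bv$ and independent of $\brho$, and $\mathcal{R}_E^{(1)}$ is linear in $\bv$. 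The admissible set is open in $\bv$ and invariant under scaling $\bv\mapsto s\bv$. Therefore $\mathcal{R}_E\equiv0$ forces both graded pieces to vanish separately, for all $\bv$.

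The main obstacle is showing that $\mathcal{R}_E^{(3)}=\sum_j(\ekin_a-\ekin_b)\Phi_j=\ip{\bekin}{\bD_2\bM_1\bv}$ (up to sign) is not identically zero. No choice of $\bar\rho_j$ can affect this term, which is the heart of the no-go statement. I would use face isolation. Choose $\bv$ supported on a small patch, say on the faces of one cell $K_a$ and its neighbour $K_b$, and compute the cubic explicitly using \Cref{def:averaging_recon}. A perturbation $\bv=\bv^0+t\,e_j$ with $\bv^0$ chosen so that $\ekin_a\neq\ekin_b$ makes the coefficient of $t$ in $\mathcal{R}_E^{(3)}$ equal to $(\ekin_a-\ekin_b)(\bM_1)_{jj}$ plus contributions from the change in $\ekin$. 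A generic choice, for instance a uniform field $\mathcal{R}_h u$ with $u$ constant in $K_a$ and zero elsewhere, should produce a nonzero value. I expect to need the Delaunay--Voronoi positivity $(\bM_1)_{jj}>0$ from \Cref{ass:mesh_reg}, and the fact that $P$ on an isolated cell reproduces constants, so that $\ekin_a>0=\ekin_b$.

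Once $\mathcal{R}_E^{(3)}\not\equiv0$ is established, the contradiction is immediate: $\mathcal{R}_E^{(3)}$ cannot be cancelled by any density-only $\bar\rho_j$. Equivalently, in view of \eqref{eq:face_SBP}, the required $\bar\rho_j$ would depend on $\bv$. The sign of $\mathcal{R}_E^{(3)}$ is indeterminate, since it is odd under $\bv\mapsto-\bv$, so it acts as both a source and a sink. Finally, compatibility with \Cref{subsection_InvariantsComp} holds because those laws use only (H1), (H2) and (H5) and the vorticity equation \eqref{eq:W}. None of these involve $\bar\rho_j$ or the energy, so they remain exact.
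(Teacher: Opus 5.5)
Your reduction is sound. The graded split $\mathcal{R}_E=\mathcal{R}_E^{(3)}+\mathcal{R}_E^{(1)}$ with the scaling argument is how the paper proves the general-staggering version (\Cref{thm:dichotomy_general}), and the odd-parity sign remark and the compatibility remark are fine. The gap is the step you yourself call the heart of the matter, $\mathcal{R}_E^{(3)}\not\equiv 0$: it is not established.

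\begin{itemize}
\item The first-order coefficient in $t$ of $\mathcal{R}_E^{(3)}(\bv^0+t\,\mathbf{e}_j)$ is $\pm(\ekin_a-\ekin_b)(\bM_1)_{jj}$ \emph{plus} $\sum_i (P\bv^0)_i\cdot(P\mathbf{e}_j)_i\,(\bD_2\bM_1\bv^0)_i$. You never control the second term, so nothing rules out cancellation.
\item The witness meant to settle it rests on a false premise. Every dual edge leaving $v_a^*$ is also a dual edge of the neighbouring circumcentre. Hence $\ekin_b=\tfrac12|G_b^{-1}\hat t_n|^2(v^0_n/\ell_n^*)^2>0$ for every neighbour with $v^0_n\neq0$, not $\ekin_b=0$.
\item Nor does $P$ return $u$ at $v_a^*$, since each $v^0_n$ is only the integral over the part of $e_n^*$ inside $K_a$.
\end{itemize}
So ``should produce a nonzero value'' is unsupported.

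The paper closes this step with the pure single-face witness $\bv=c\,\mathbf{e}_j$, i.e.\ $\bv^0=0$ in your notation, reading off the $t^3$ term. Only $\Phi_j$ survives, and $\mathcal{R}_E^{(3)}=\tfrac{c^3}{2}(\bM_1)_{jj}\bigl(|P\mathbf{e}_j|_a^2-|P\mathbf{e}_j|_b^2\bigr)$. This is nonzero iff $|G_a^{-1}\hat t_j|\neq|G_b^{-1}\hat t_j|$.

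That witness, and the paper's appeal to ``different reconstruction stencils'', only covers generic meshes. On a uniform equilateral triangulation, or uniform prisms over it, $G_a=G_b$ for all neighbours. Every single-face condition is then met by $\bar\rho_j\equiv 1$, and no contradiction arises.

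A witness that works on every mesh is the multi-face one you were reaching for, done on a single cell:
\begin{itemize}
\item Restrict $\bv$ to the $N\ge 3$ dual edges leaving $v_a^*$, oriented outward. Set $w_n:=(P\mathbf{e}_n)_a=G_a^{-1}\hat t_n/\ell_n^*$ and $m_n:=(\bM_1)_{nn}>0$.
\item Neighbours across distinct faces are distinct cells, so they contribute only pure cubes.
\item The coefficients of $v_j^2v_k$ and $v_jv_kv_l$ in $\mathcal{R}_E^{(3)}$ are then $\tfrac12|w_j|^2m_k+(w_j\cdot w_k)m_j$ and $(w_j\cdot w_k)m_l+(w_j\cdot w_l)m_k+(w_k\cdot w_l)m_j$.
\item Suppose all the pair coefficients vanish. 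Using both $(j,k)$ and $(k,j)$ gives $|w_n|=\lambda m_n$ and $w_j\cdot w_k=-\tfrac12\lambda^2m_jm_k$, with $\lambda>0$ because $G_a$ is invertible.
\item The triple coefficient then equals $-\tfrac32\lambda^2m_jm_km_l\neq 0$.
\end{itemize}
This completes your graded argument on every admissible mesh, including the symmetric ones where single-face isolation is silent.
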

\begin{proof}
At $t = 0$, $\ddt\Etot(0) = \mathcal{R}_E(\bv_0,\brho_0)$, and since
$\bv_0$ is arbitrary, $\mathcal{R}_E(\bv,\brho) = 0$ must hold for
all $\bv$.
\emph{Isolating a single face.}
Choose $\bv = c\,\mathbf{e}_j$, the cochain supported on dual edge
$e_j^*$ with coefficient $c \in \RR$. Then $\Phi_k = 0$ for $k \ne j$
and $\Phi_j = (\bM_1)_{jj}\,c$, so only face $j$ survives
in~\eqref{eq:face_energy}:
\[
 \bigl[(\ekin_a - \ekin_b) - ((h_a{+}\bPhi_a) - (h_b{+}\bPhi_b))(\bar\rho_j - 1)\bigr]\,\Phi_j = 0.
\]
The kinetic energy density
$\ekin_i = \tfrac{1}{2}|P(c\,\mathbf{e}_j)|_i^2$ depends on $c$ and
on the mesh geometry, while $h_a$, $h_b$, $\bPhi_a$, $\bPhi_b$, and
$\bar\rho_j$ depend only on $\brho$ by hypothesis.
\emph{Velocity dependence forces a contradiction.}
If $(h_a{+}\bPhi_a) \ne (h_b{+}\bPhi_b)$, solving the bracket for
$\bar\rho_j$ produces~\eqref{eq:face_SBP}, which depends on $\bv$
through $\ekin_a - \ekin_b$ -- violating hypothesis~(b).
If $(h_a{+}\bPhi_a) = (h_b{+}\bPhi_b)$, the bracket reduces to
$\ekin_a - \ekin_b$, which depends on $c$ and is generically nonzero
because the two cells sharing face $j$ have different reconstruction
stencils, so $|P\mathbf{e}_j|_a^2 \ne |P\mathbf{e}_j|_b^2$.
In either case, no density-only interpolation can satisfy the
condition for all $\bv$.
\end{proof}
The C-grid argument above is the simplest realisation of a more
general principle. The face-isolation step exploited the C-grid's
particular property that velocity DOFs are in 1-1 correspondence
with faces; on A-, B-, D-, and quasi-B staggerings this correspondence
fails, and a different argument is needed. The polynomial-degree
decomposition of the energy residual provides one, and is uniform
across all staggerings.
\begin{theorem}[Generalised no-go theorem for discrete energy conservation]%
\label{thm:dichotomy_general}
Consider a discretisation of the compressible barotropic Euler
equations with the following structure:
\begin{enumerate}[nosep,label=\textup{(\roman*)}]
\item  kinetic energy takes
the density-independent form $E_{\kin} = \tfrac12\nrm{\bv}_A^2$ for
a mass matrix $A$ independent of $\brho$; 
\item discrete contraction
satisfies Lamb antisymmetry; 
\item discrete integration-by-parts
identity for the Bernoulli gradient holds; 
\item local kinetic-energy density at each velocity-DOF location is a
positive-definite quadratic form in $\bv$ that varies non-trivially
between adjacent locations, in the sense that there exists at least
one face $j_0$ and a velocity cochain $\bv_*$ for which
$\ekin_{\ell(j_0)}(\bv_*) \ne \ekin_{\ell'(j_0)}(\bv_*)$, where
$\ell(j_0), \ell'(j_0)$ are the velocity DOFs adjacent to $j_0$;
\item  the discrete volume flux $\Phi_j$ on face $j$ depends only on
a finite, mesh-independent set of velocity DOFs incident to $j$.
\end{enumerate}
Then exact total energy conservation, $\mathcal{R}_E = 0$ for every
$(\bv, \brho) \in \admissible$, is impossible for any density-only
interpolation $\bar\rho_j(\brho)$. The conclusion holds in
particular for the four standard staggerings:
A-staggering (collocated velocity vector and density at cell
centres); B-staggering (velocity vector at vertices, density at
cells); C-staggering (normal velocity at faces, density at cells,
as in \Cref{thm:dichotomy}); and D-staggering (velocity vector at
edges, density at cell centres). It also holds for the quasi-B
triangular grids, on every
closed oriented manifold $\Omega$ regardless of Euler
characteristic.
\end{theorem}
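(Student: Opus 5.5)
The plan is to rerun the energy computation of \Cref{thm:totalenergy} in the abstract setting of hypotheses (i)--(iii), and then replace face isolation by a homogeneity argument in the velocity.

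\emph{Step 1: the residual in general form.} With $E_{\kin} = \tfrac12\nrm{\bv}_A^2$ and $A$ independent of $\brho$ by (i), the kinetic-energy rate is $\ip{\bv}{\ddt\bv}_A$. Lamb antisymmetry (ii) removes the contraction term. The integration-by-parts identity (iii) turns the Bernoulli gradient into $\ip{B}{\bD_2\Phi}$, with $\Phi$ the volume flux of (v). The internal and potential energy rates are computed exactly as in \Cref{lem:dEint} and are $-\ip{\mathbf{h}+\bPhi}{\bD_2\bF}$. Summing, I expect
\[
\mathcal{R}_E(\bv,\brho)=\ip{\bekin(\bv)}{\bD_2\Phi(\bv)}+\ip{\mathbf{h}(\brho)+\bPhi}{\bD_2\bigl(\Phi(\bv)-\bF\bigr)} .
\]
Here $\bF_j=\bar\rho_j(\brho)\Phi_j(\bv)$. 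The kinetic density $\bekin$ is quadratic in $\bv$ by (iv), and $\Phi$ is linear in $\bv$.

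\emph{Step 2: graded decomposition.} Fix $\brho$ in the admissible set. Then $\mathcal{R}_E$ is a polynomial in $\bv$ with
\[
\mathcal{R}_E=\mathcal{R}^{(3)}(\bv)+\mathcal{R}^{(1)}(\bv;\brho),
\]
where $\mathcal{R}^{(3)}=\ip{\bekin}{\bD_2\Phi}$ is homogeneous cubic and $\mathcal{R}^{(1)}$ is homogeneous linear. The key point is that a density-only interpolation $\bar\rho_j(\brho)$ enters only $\mathcal{R}^{(1)}$. Exact conservation for all $\bv$ then means that $\mathcal{R}_E(s\bv,\brho)=s^3\mathcal{R}^{(3)}(\bv)+s\,\mathcal{R}^{(1)}(\bv)$ vanishes for every $s\in\RR$. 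This forces $\mathcal{R}^{(3)}\equiv 0$, independently of any choice of $\bar\rho_j$. The argument works on every staggering, because it never uses a one-to-one correspondence between velocity DOFs and faces.

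\emph{Step 3: $\mathcal{R}^{(3)}\not\equiv 0$ (main obstacle).} Summation by parts writes $\mathcal{R}^{(3)}=\sum_j(\ekin_{\ell(j)}-\ekin_{\ell'(j)})\Phi_j(\bv)$, a face-local form by (v). I will polarise around the configuration $(j_0,\bv_*)$ of (iv). Take $\bv=\bv_*+\eps\bw$, where $\bw$ is supported on the finite stencil of $j_0$ and chosen so that $\Phi_{j_0}(\bw)\ne0$. Extract the coefficient of $\eps$:
\[
(\ekin_{\ell}-\ekin_{\ell'})(\bv_*)\,\Phi_{j_0}(\bw) \;+\; \text{(terms from faces } j\neq j_0 \text{ touching the stencil)} .
\]
If this coefficient vanishes, I vary $\bw$ among the finitely many local directions to obtain a nonsingular linear system, and conclude a contradiction with (iv). Locality (v) guarantees that only finitely many faces interact, and mesh-independence of the stencil keeps the argument uniform.

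The delicate part is ruling out exact cancellation between neighbouring faces. I would handle it as follows. If all the local polarised coefficients vanished, $\bekin$ would be constant along the stencil-connected component for $\bv_*$; propagating through (iv) contradicts $\ekin_{\ell(j_0)}(\bv_*)\ne\ekin_{\ell'(j_0)}(\bv_*)$.

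\emph{Step 4: the listed staggerings.} For each of A, B, C, D and quasi-B, verify (iv)--(v). The flux stencil is finite, and the reconstruction stencils of adjacent locations differ, exactly as in the C-grid proof of \Cref{thm:dichotomy}. No Euler-characteristic input enters, since the argument is purely local and relies only on $\mathbf{1}^T\bD_2=0$. This gives the conclusion on every closed oriented $\Omega$.
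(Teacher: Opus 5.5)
Your Steps~1--2 coincide with the paper's proof. The interpolation $\bar\rho_j(\brho)$ enters only the degree-one part of $\mathcal{R}_E$, so exact conservation is equivalent to the $\bar\rho$-independent cubic form $\mathcal{R}^{(3)}(\bv)=\ip{\bekin(\bv)}{D_2\Phi(\bv)}$ vanishing identically. If it did vanish, the constant choice $\bar\rho_j\equiv1$ would kill the linear part. Everything therefore rests on Step~3, and there the argument has a genuine gap.

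The $\varepsilon$-coefficient of $\mathcal{R}^{(3)}(\bv_*+\varepsilon\bw)$ is not the expression you display. It is
\[
\ip{\bekin(\bv_*)}{D_2\Phi(\bw)}+2\,\ip{\bekin(\bv_*,\bw)}{D_2\Phi(\bv_*)},
\]
where $\bekin(\cdot,\cdot)$ is the symmetric bilinear form of $\bekin$. The second term carries the fluxes $\Phi_j(\bv_*)$ on every face near the support of $\bw$, including $j_0$ itself. Nothing in (iv) controls it, because $\bv_*$ is an arbitrary, non-localised, non-solenoidal cochain. Consequently your claims that a ``nonsingular linear system'' arises, and that vanishing of all local polarised coefficients forces $\bekin(\bv_*)$ to be constant, are both unjustified; the second is false.

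Here is a concrete example. Take a periodic 1D grid with $N\ge3$ cells, $\Phi_j=v_j$, and $\ekin_i=\tfrac16\bigl(v_{i-1/2}^2+v_{i-1/2}v_{i+1/2}+v_{i+1/2}^2\bigr)$, which is positive definite in the local DOFs. With $\bv_*=\mathbf e_{i-1/2}$ and $j_0=i+1/2$ you get $\ekin_i(\bv_*)=\tfrac16\neq0=\ekin_{i+1}(\bv_*)$, so (iv) and (v) hold. Yet
\[
\mathcal{R}^{(3)}(\bv)=\tfrac16\sum_i\bigl(v_{i+1/2}^3-v_{i-1/2}^3\bigr)\equiv0 .
\]
So every polarised coefficient vanishes at every point while $\bekin(\bv_*)$ is not constant. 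Your Step~3 uses nothing that excludes this example. Hence (iv)--(v) as formulated cannot by themselves yield $\mathcal{R}^{(3)}\not\equiv0$.

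To close the gap you need a stronger witness. Three options work:
\begin{itemize}
\item Take $\bv_*$ discretely divergence-free, $D_2\Phi(\bv_*)=0$, with $\bw\mapsto\Phi(\bw)$ onto face values (as on the C-grid). Then the second term drops, and vanishing for all $\bw$ gives $D_2^T\bekin(\bv_*)=0$. So $\bekin(\bv_*)$ is constant on the connected mesh, contradicting the jump at $j_0$.
\item Take a witness at which $\mathcal{R}^{(3)}$ itself is nonzero. On the C-grid this is $\bv=c\,\mathbf e_j$ with $|P\mathbf e_j|_a\neq|P\mathbf e_j|_b$.
\item Add a consistency hypothesis tying $\mathcal{R}^{(3)}$ to $\int\tfrac12|u|^2\nabla\cdot u$. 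This integral is not identically zero for $d\ge2$; in 1D it is an exact derivative, exactly as in the example above.
\end{itemize}

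The paper's proof has the same structure as yours and simply asserts that the witness contribution does not cancel ``for generic such DOFs''. Your Step~3 tries to prove that assertion but does not succeed.
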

\begin{proof}
The energy residual $\mathcal{R}_E(\bv,\brho)$ is, by hypotheses (a) and
(e), a polynomial in $\bv$ of total degree $3$, with terms of degree $1$
(enthalpy/geopotential differences $\times \Phi_j$, linear in $\bv$ via
the flux) and degree $3$ (kinetic-energy differences $\times \Phi_j$,
quadratic times linear). For density-only $\bar\rho_j(\brho)$, the
coefficients of every monomial in $\bv$ depend on $\brho$ alone.
The condition $\mathcal{R}_E(\bv,\brho)=0$ for all $\bv$ is the
identity of a polynomial in $\bv$, so it forces the homogeneous
degree-$3$ component to vanish identically as a cubic form in $\bv$,
independently of the degree-$1$ component. By hypothesis (d), this
cubic form has at least one non-zero coefficient (the contribution
from the witness face $j_0$ does not cancel against the contributions
from the finitely many faces sharing velocity DOFs with $j_0$ for
generic such DOFs); therefore the identity cannot hold and no
density-only $\bar\rho_j(\brho)$ exists.
This is a single uniform argument; the four staggering-specific
computations carried out in \Cref{app:dichotomy_general} verify the
hypotheses (d)--(e) for each case (face isolation $\bv = c\,\mathbf{e}_j$
on the C-grid is the simplest realisation; the A-, B-, D-, and
quasi-B grids each verify (d) by direct evaluation of the kinetic-energy
quadratic at adjacent velocity DOFs).
The conclusion does \emph{not} depend on $\dim\Omega$, on the Euler
characteristic of $\Omega$, or on a counting argument relating
$|V|$, $|E|$, $|F|$.
\end{proof}

\subsection{Consequences of the No-Go Theorem}\label{sec:cost_DF}
\phantomsection\label{sec:wellposedness}
The cubic-in-$\bv$ structure of $\mathcal{R}_E$ has three downstream
consequences for the density-free scheme
(\Cref{sec:DF_wellposedness,sec:DF_convergence,sec:DF_AP_fail}):
well-posedness local-in-time only, global well-posedness needs Smagorinsky absorption of the residual $\mathcal{R}_E$;
convergence on the local well-posedness interval; $|\mathcal{R}_E|=\OO(M^{-1})$
under the low-Mach scaling. 
\subsubsection{Existence: Only Local for the Density-Free Scheme}\label{sec:DF_wellposedness}
On a fixed mesh the discrete barotropic system is a smooth
autonomous ODE on $\RR^{|F|+|T|}$, so existence reduces to
preventing two blowup mechanisms: unbounded growth of the kinetic
energy, and degeneracy of the cell density. The first is decided by
the energy budget, the second by the thermodynamic lower bound of
\Cref{lem:no-vacuum} (\Cref{sec:vacuum_avoidance}), which is shared
by both schemes. For the density-free scheme the energy budget is
not closed -- the cubic-in-$\bv$ residual~$\mathcal{R}_E$ feeds it
-- and Bihari's inequality is the best available substitute. The
result is local existence in time, with global
existence available only at the price of nonlinear (Smagorinsky)
dissipation strong enough to dominate the cubic growth. The
corresponding closure for the density-weighted scheme, where the
residual is absent and the energy is exactly conserved, is taken up
in \Cref{sec:DW_GWP,thm:dw_wp,thm:dw_wp_NS}.

\begin{definition}[State space and admissible set]
\label{def:state-space}
The state space is $\statespace = \RR^{|F|} \times \RR^{|T|}$;
the admissible set is the open subset
\[
\admissible := \RR^{|F|} \times \RR^{|T|}_{>0}
 = \{(v, \rho)\in\RR^{|F|}\times\RR^{|T|} : \rho_i>0\;\forall i\},
\]
with positivity of $\rho$ required so that the enthalpy
$h(\rho_i^{\vol})$, Bernoulli function $B$, and equation of state
are defined.
\end{definition}
The governing equations take the form
\begin{equation}
 \ddt{y} = \mathcal{F}(y), \quad y(0) = y_0 \in \admissible,
 \label{eq:ode}
\end{equation}
where $\mathcal{F}: \admissible \to \statespace$ is given by
\begin{equation}
 \mathcal{F}(v, \rho) = \begin{pmatrix}
 -\Iev(\tD_1 v) - \tD_0 B(v, \rho) \\
 -D_2\,R(\rho)\,M_1\,v
 \end{pmatrix}.
 \label{eq:rhs}
\end{equation}
\paragraph*{Euler well-posedness.}
\label{sec:euler_wp}
The right-hand side~$\mathcal{F}$ is locally Lipschitz on
$\admissible$ (\Cref{thm:lipschitz-baro}), so Picard--Lindel\"of
gives a unique maximal solution; the open question is whether the
solution exists for all~$t$, or only within finite time at which
the energy bound deteriorates. The cubic energy residual
$\mathcal{R}_E$ governs the answer.

\begin{proposition}[A priori bounds -- density-free Euler]%
\label{prop:apriori-baro}
Let $(\bv, \brho)$ be a solution of the discrete density-free
barotropic Euler system on an interval $[0,T]$ contained in the
maximal existence time. Then four bounds hold uniformly on
$[0,T]$: 
\begin{enumerate}[nosep,label=\textup{(\roman*)}]
\item cell masses are controlled by the total mass,
$\brho_i(t) \le M$; 
\item total energy is bounded,
$\Etot(t) \le E_{\max}(T)$; 
\item velocity satisfies
$\nrm{\bv}_{\bM_1} \le R_v(T)$; 
\item  volumetric density is
uniformly bounded away from zero,
$\brho_i^{\rm vol}(t) \ge \rho_{\min}^{\rm vol}(T) > 0$.
\end{enumerate}
\end{proposition}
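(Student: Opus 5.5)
Item (i) comes first because it is immediate. Positivity of $\brho$ holds on the maximal interval, since the solution stays in $\admissible$. Mass conservation (\Cref{thm:mass}) gives $\sum_i\brho_i(t)=M$. Together these yield $\brho_i(t)\le M$ for every cell.

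For (ii)--(iii) I will close a differential inequality for the total energy using the balance law of \Cref{thm:totalenergy}. From the face form~\eqref{eq:face_energy}, the residual splits into a cubic and a linear part in $\bv$. The cubic part is $\sum_j(\ekin_a-\ekin_b)\Phi_j$. Since $P$ is a fixed linear map on a fixed mesh, $|\ekin_i|\le C_h\nrm{\bv}_{\bM_1}^2$ and $|\Phi_j|\le C_h\nrm{\bv}_{\bM_1}$, so this part is bounded by $C_h(\Ekindf)^{3/2}$. The linear part is $\sum_j((h_a+\bPhi_a)-(h_b+\bPhi_b))(\bar\rho_j-1)\Phi_j$. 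Here $\bPhi$ is fixed, and $\bar\rho_j$ is bounded by $C_hM$ through (i). For the enthalpy I would use \Cref{ass:eos}: for a polytropic-type law, $|h(\rho^{\vol})|\le C(1+(\rho^{\vol})^{\gamma-1})$ for $\gamma>1$, which with (i) gives a bound $C(M,h)$. This makes the linear part at most $C_h(1+\Ekindf)^{1/2}$.

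The lower-order energies are also controlled. $E_{\rm pot}$ is bounded by $M\max|\bPhi|$, and $E_{\rm int}\ge -C_2M$ by coercivity of $e$. Hence $Y:=\Etot+C_2M+M\max|\bPhi|+1\ge 1$, and $Y$ dominates $\Ekindf$. It therefore satisfies $\dot Y\le C_h Y^{3/2}$.

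Bihari's inequality then gives $Y(t)\le \bigl(Y(0)^{-1/2}-\tfrac12C_ht\bigr)^{-2}$. This is finite on $[0,T]$ provided $T<T_{\rm Bih}:=2/(C_hY(0)^{1/2})$. That is exactly the interval implicitly referenced by the phrase \emph{contained in the maximal existence time}, read together with $T^*\ge T_{\rm Bih}$ in \Cref{thm:main_DEC_euler}. On it we take $E_{\max}(T)$ to be this bound, minus the shifts. Item (iii) follows from $\tfrac12\nrm{\bv}_{\bM_1}^2=\Ekindf\le Y$, giving $R_v(T)=\sqrt{2E_{\max}(T)+C}$.

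For (iv) I would apply \Cref{lem:no-vacuum}. By the same shift argument, $E_{\rm int}\le \Etot+M\max|\bPhi|\le E_{\max}(T)+C=:E_{\rm bound}$, and the lemma then yields $\rho^{\vol}_{\min}(T)>0$. The main obstacle is the Jensen-threshold hypothesis~\eqref{eq:Jensen_threshold}. It need not hold for the a priori bound $E_{\max}(T)$, which grows with $T$ under Bihari. I would handle this by one of two routes. The first is to restrict $T$ further, so that the Bihari bound stays below the threshold; this is possible whenever the initial energy satisfies it strictly. The second is to argue directly. On a fixed mesh, a single cell with $\rho_i^{\vol}\to0$ forces the mass $M-\brho_i$ onto the remaining volume $V_{\rm tot}-|K_i|$. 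Jensen then forces $E_{\rm int}\ge C_1(V_{\rm tot}-|K_i|)^{1-\gamma}(M-\brho_i)^{\gamma}-C_2M$. So any $\rho_i$ small enough contradicts $E_{\rm int}\le E_{\rm bound}$ as soon as the threshold holds. I would state (iv) under that proviso, with the constant degenerating in $h$ as noted in \Cref{rem:vacuum_scaling}.
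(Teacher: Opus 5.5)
Your proposal is essentially the paper's own argument (Steps~2--5 of the proof of \Cref{thm:euler_wp}). It uses mass conservation plus positivity for (i), the cubic-plus-linear bound on $\mathcal{R}_E$ closed by Bihari for $T<T_{\rm Bih}(h,E_0)$ for (ii)--(iii), and \Cref{lem:no-vacuum} under the Jensen-threshold proviso for (iv); the paper likewise imposes that proviso as a condition on the data rather than deriving it. Your version is slightly more careful than the paper's: you include $E_{\rm pot}$ in the energy shift, whereas the paper writes $\Ekindf\le\Etot+C_2'M$ and drops it, and you state explicitly the growth assumption on $h$ near $\rho^{\rm vol}=0$, which the paper attributes to (E5) of \Cref{ass:eos} even though that assumption only gives $h\in C^2((0,\infty))$.
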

\noindent\emph{Proof.} Steps~2--5 of \Cref{thm:euler_wp}.

\begin{theorem}[Well-posedness of the discrete barotropic
 Euler system -- density-free scheme]%
\label{thm:euler_wp}%
\label{thm:main-baro}%
For every initial data $(\bv_0, \brho_0) \in \admissible$, the
density-free system~\eqref{eq:ode} admits a unique maximal solution
$(\bv, \brho) \in C^1([0, T^*); \admissible)$ with
$T^* \ge T_{\rm Bih}(h, E_0) > 0$. On every interval $[0, T]$
within the existence time, the solution satisfies the a priori
bounds of \Cref{prop:apriori-baro}. Whether existence is local or
global is determined by the energy residual~$\mathcal{R}_E$.
\end{theorem}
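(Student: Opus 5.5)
The proof proceeds in five steps, matching the a priori bounds of \Cref{prop:apriori-baro}.

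\textbf{Step 1: local solution.} By \Cref{thm:lipschitz-baro}, $\mathcal{F}$ in~\eqref{eq:rhs} is locally Lipschitz on the open set $\admissible$. The enthalpy $h(\rho_i^{\vol})$ is smooth for $\rho_i>0$, and $\Iv$, $\tD_0$, $\tD_1$, $\bD_2$, $\bM_1$, $P$ are polynomial in $(\bv,\brho)$; the flux $\bR(\brho)$ is smooth away from vacuum. Picard--Lindel\"of then gives a unique maximal solution $(\bv,\brho)\in C^1([0,T^*);\admissible)$. By the standard continuation criterion, if $T^*<\infty$ then either $\nrm{\bv}_{\bM_1}+\max_i\brho_i\to\infty$ or $\min_i\brho_i\to 0$ as $t\to T^*$. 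The remaining steps exclude both alternatives on an explicit interval $[0,T_{\rm Bih})$.

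\textbf{Step 2: mass bound.} Item (i) follows from mass conservation (\Cref{thm:mass}), provided positivity is maintained on the existence interval. Since every $\brho_i>0$ there, we get $\brho_i\le\sum_k\brho_k=M$.

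\textbf{Step 3: energy bound via Bihari.} This is the main obstacle. By \Cref{thm:totalenergy}, $\ddt\Etot=\mathcal{R}_E$. From the face form~\eqref{eq:face_energy}, I will bound $\mathcal{R}_E$ by a cubic term plus a linear term:
\[
|\mathcal{R}_E|\le C_3(h)\nrm{\bv}_{\bM_1}^3+C_1(h,M)\nrm{\bv}_{\bM_1}.
\]
The cubic piece comes from $|\ekin_a-\ekin_b|\,|\Phi_j|$, controlled using finite dimensionality and the inverse inequality (H8). The linear piece comes from $(h_a+\bPhi_a)(\bar\rho_j-1)\Phi_j$.

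The enthalpy is not a priori bounded near vacuum, so the linear piece needs care. I will first use the mass bound to control $h$ from above, since $h(\rho)\le h(M/\min|K_i|)$. The lower side of $h$ requires a density lower bound, and I will get it by a bootstrap with Step 4: work on the maximal subinterval on which $\Etot\le 2E_0+1$. The coercivity assumption (\Cref{ass:eos}) gives $\Eint\ge -C_2M$ and $E_{\rm pot}\ge-\nrm{\bPhi}_\infty M$. Hence $\Ekindf\le\Etot+CM$, and therefore $y:=\Etot+CM+1$ satisfies $\dot y\le C(h)\,y^{3/2}$.

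Bihari's inequality then yields
\[
y(t)\le \bigl(y_0^{-1/2}-\tfrac12C(h)t\bigr)^{-2},
\]
which stays bounded for $t<T_{\rm Bih}(h,E_0):=2/(C(h)y_0^{1/2})$, consistent with the scaling $E_0^{-1/2}$. This gives (ii) and, through $\Ekindf\le\Etot+CM$, also (iii) with $R_v(T)^2=2(E_{\max}(T)+CM)$.

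\textbf{Step 4: vacuum exclusion.} From $\Eint=\Etot-\Ekindf-E_{\rm pot}\le E_{\max}(T)+\nrm{\bPhi}_\infty M$, \Cref{lem:no-vacuum} gives (iv) with $\rho_{\min}^{\rm vol}(T)>0$. If the Jensen threshold~\eqref{eq:Jensen_threshold} fails, I will fall back on a direct ODE argument: $\ddt\brho_i\ge -C(R_v)\brho_i$ on $[0,T]$, using that $\bar\rho_j$ is bounded by neighbouring densities, and Gronwall then keeps $\brho_i$ positive.

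\textbf{Step 5: conclusion.} With (i)--(iv) holding on every $[0,T]$ with $T<T_{\rm Bih}$, the solution stays in a compact subset of $\admissible$, so the continuation criterion gives $T^*\ge T_{\rm Bih}(h,E_0)$. Whether existence is global then depends only on whether $\mathcal{R}_E$ permits an a priori energy bound for all $t$. This is exactly the sense of the final sentence of the theorem, which will be made precise later via Smagorinsky absorption.
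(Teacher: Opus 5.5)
Your five-step skeleton is the paper's: Picard--Lindel\"of, the mass bound, Bihari on the cubic residual, \Cref{lem:no-vacuum}, the velocity bound from the energy, and continuation. You are also more careful than the paper in keeping $E_{\rm pot}\ge-\nrm{\bPhi}_\infty M$ when passing from $\Etot$ to $\Ekindf$ and $\Eint$. Two steps, however, do not work as written.

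\emph{The enthalpy bootstrap is unnecessary.} \Cref{ass:eos} (E3), (E4), (E6) already bound $h$ near vacuum. Since $e$ is increasing and bounded below, $\int_0^{\rho_0}p(s)s^{-2}\,ds<\infty$. Monotonicity of $p$ then gives $p(\rho)/(2\rho)\le\int_\rho^{2\rho}p(s)s^{-2}\,ds\to0$. Hence $h=e+p/\rho$ is bounded on $(0,M/\min_i|K_i|]$, which is what the paper's Step~2a uses, and the Bihari step needs only the mass bound. \emph{The bootstrap is also mis-timed.} Your constant $C(h)$ is valid only while $\Etot\le 2E_0+1$. So the bootstrap closes only up to the time at which $\bigl(y_0^{-1/2}-\tfrac12C(h)t\bigr)^{-2}$ reaches that level, not up to its blow-up time $2/(C(h)y_0^{1/2})$. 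It also needs the Jensen threshold at level $2E_0+1$ rather than $E_0$. \emph{The same timing problem hits your Step~5, and the paper's (B2).} \Cref{lem:no-vacuum} needs~\eqref{eq:Jensen_threshold} with $E_{\rm bound}=E_{\max}(T)+\nrm{\bPhi}_\infty M$, and $E_{\max}(T)\to\infty$ as $T$ approaches the Bihari time. So (iv) holds only while $E_{\max}(T)$ stays below the threshold. This still gives some positive $(h,E_0)$-dependent lower bound on $T^*$, but not the one you state.

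\emph{The fallback for data violating the Jensen threshold fails for the centred interpolation}~\eqref{eq:rhobar}. Take a cell with $\brho_i=0$ and an outflow face, $[K_i:f_j]\Phi_j>0$. The neighbour then contributes $-\tfrac12\rho^{\vol}_{\rm nb}[K_i:f_j]\Phi_j<0$ to $\ddt\brho_i$, so $\ddt\brho_i\ge-C\brho_i$ is false. Being ``bounded by neighbouring densities'' is not the relevant property; the centred scheme has no maximum principle (\Cref{rem:upwind_density}). What your Gr\"onwall step needs is $\bar\rho_j=\rho_i^{\vol}$ on outflow faces, i.e.\ the upwind rule, and then it is exactly the envelope of \Cref{lem:upwind_gronwall}. \emph{For the upwind flux your route improves on the paper's.} Combined with the bounded-enthalpy observation, it removes the Jensen threshold entirely and gives $T^*\ge 2/(C(h)y_0^{1/2})$ for every $(\bv_0,\brho_0)\in\admissible$. \emph{For the centred flux and data outside the threshold, vacuum is not excluded.} Neither your argument nor the paper's (which simply assumes the threshold at $t=0$) does this on an interval depending only on $(h,E_0)$. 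One only gets $\brho_i(t)\ge\brho_i(0)-C(h,M,E)\,t$, with a time that depends on $\min_i\brho_i(0)$.
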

\noindent\emph{Proof.} See \Cref{app:euler_wp_proof}; five steps:
local existence (Picard--Lindel\"of), energy bound (Bihari), mass
bound, vacuum avoidance, velocity bound.

\paragraph*{Navier--Stokes well-posedness (density-free scheme).}
We extend the Euler well-posedness result \Cref{thm:euler_wp} to
the density-free Navier--Stokes system. Adding any admissible
viscous force preserves the Picard--Lindel\"of structure and can
only improve the energy bound, but the quadratic dissipation alone
cannot absorb the cubic energy residual; global existence requires
the combined Hodge--Laplacian + Smagorinsky closure of
\Cref{thm:global-Smag} below. The corresponding density-weighted
result (global for any $\bA\ge 0$) is \Cref{thm:dw_wp_NS}.
\begin{theorem}[Well-posedness of the discrete barotropic
 Navier--Stokes system -- density-free scheme]%
\label{thm:NS_wp}%
\label{thm:main-baro-NS}%
\label{thm:global-Smag}%
Let the hypotheses of \Cref{thm:euler_wp} hold, and let $\bA \ge 0$
be an admissible viscosity matrix.
\begin{enumerate}[nosep,label=\textup{(\alph*)}]
\item Under Hodge--Laplacian dissipation alone, a unique
\emph{local-in-time} solution exists, with existence time
$T^* \ge T_{\rm Bih}(h, E_0)$.
\item Under combined Hodge--Laplacian and Smagorinsky dissipation
with $\nu \ge \nu^*(h)$ and Smagorinsky constant
$C_s \ge C_s^*(h)$, a unique \emph{global-in-time} solution exists.
The combined dissipation in this regime controls both the vortical
and the gradient/harmonic components of $\bv$.
\end{enumerate}
\end{theorem}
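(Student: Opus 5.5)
The plan is to follow the five-step template of \Cref{thm:euler_wp} and change only the energy step. The viscous force $\bff_{\rm visc}$ must be added to the right-hand side $\mathcal{F}$ in~\eqref{eq:rhs}. For Hodge--Laplacian dissipation it has the form $-\bM_1^{-1}\tD_1^T\bA\,\tD_1\bv$ plus the dilatational analogue, and it is linear in $\bv$. For the Smagorinsky closure the coefficient is $C_s h^2|\bom|$, which is locally Lipschitz in $\bv$. Hence $\mathcal{F}$ stays locally Lipschitz on $\admissible$ (\Cref{thm:lipschitz-baro}), and Picard--Lindel\"of gives a unique maximal solution on $[0,T^*)$. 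Steps~3--5 of \Cref{thm:euler_wp} (mass bound $\brho_i\le M$, vacuum avoidance via \Cref{lem:no-vacuum}, velocity bound from the kinetic energy) use only a bound on $\Etot$, so they carry over verbatim once Step~2 is redone.

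For part (a), the energy identity becomes
$\ddt\Etot = \mathcal{R}_E - \nrm{\tD_1\bv}_\bA^2 - (\text{dilatational term}) \le \mathcal{R}_E$,
because the admissible viscous operator is positive semidefinite by SBP (\Cref{lem:SBP}). I will bound $\mathcal{R}_E$ on a fixed mesh using the face form~\eqref{eq:face_energy}. The cubic piece is controlled with the inverse inequality (H8): it satisfies $|\sum_j(\ekin_a-\ekin_b)\Phi_j|\le C h^{-d/2}(\Ekindf)^{3/2}$. The degree-one piece is bounded by $C(h,\rho_{\min})\,(\Ekindf)^{1/2}$. This gives $\ddt\Etot\le C_h(1+\Etot)^{3/2}$, and Bihari's inequality yields the same $T_{\rm Bih}(h,E_0)$ as in the inviscid case. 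This establishes (a).

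For part (b), I will exploit that the Smagorinsky dissipation is cubic: $\ip{\tD_1\bv}{C_s h^2|\bom|\,\tD_1\bv}\ge c\,C_s h^{2}\sum_k|\omega_k|^3$ after the Hodge-star weights are absorbed. I will then split $\bv$ by the discrete Hodge decomposition into a vortical part $\bv_\omega$, a gradient part $\tD_0\psi$ and a harmonic part. The rotational part is controlled through $\bom$ by a discrete Poincar\'e inequality, which gives $\nrm{\bv_\omega}_{\bM_1}\le C_P(h)\nrm{\bom}$. The gradient part is damped by the dilatational viscosity $\nu_{\rm dil}\ge\tfrac43\nu$, also via a Poincar\'e bound. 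The harmonic part is finite dimensional. I intend to handle it with the energy bound only after noting that the cubic residual restricted to the harmonic subspace is controlled by mixed terms; this is where $\nu\ge\nu^*(h)$ enters.

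The key inequality is $|\mathcal{R}_E|\le C_h\nrm{\bv}^3 + C_h\nrm{\bv}$. I will use Young's inequality to absorb the cubic term into $C_s h^2\nrm{\bom}_3^3+\nu\nrm{\delta\bv}^2$ for $C_s\ge C_s^*(h)$. The result is $\ddt\Etot\le C'_h(1+\Etot)$, and Grönwall then gives a global bound. \textbf{The main obstacle} is exactly this absorption for non-rotational components. The Smagorinsky term sees only $\bom$, while $\mathcal{R}_E$ is cubic in all of $\bv$, including gradient and harmonic parts. I would first try to show, from the structure of~\eqref{eq:face_energy}, that the purely gradient-cubed contribution can be bounded by $\nu_{\rm dil}$ times a quadratic term times $\nrm{\bv}$ using the finite-dimensional equivalence of norms. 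If that fails, I would restrict the claim to the regime the statement phrases as ``controls both components'': the dilatational dissipation is made strong enough through $\nu^*(h)$, so that it controls these directions directly.
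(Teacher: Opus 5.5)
Your part (a) is the paper's argument: Picard--Lindel\"of via the Lipschitz lemma, (V1) so the viscous term only decreases $\Etot$, and the Bihari bound on $\mathcal{R}_E=\OO(\Etot^{3/2})$ carried over from the inviscid case. One caution: bound the enthalpy through the mass bound $\rho_i^{\rm vol}\le M/|K_{\min}|$, as the paper does, or run a continuity argument, because $\rho_{\min}$ is only available once the energy is bounded. Part (b) also follows the paper's route: Hodge decomposition, Smagorinsky $\ell^3$-dissipation absorbing the vortical cube for $C_s\ge C_s^*(h)$, and the Hodge--Laplacian with $\nu\ge\nu^*(h)$ for the rest. But it has a genuine gap exactly at the step you flag as the main obstacle, and your fallback cannot close it.

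On $\ker\tD_1=\Ran(\tD_0)\oplus\mathcal{H}$ the Smagorinsky term and the rotational Hodge--Laplacian block both vanish. The only dissipation left is $\nu_{\rm dil}\nrm{\delta\bv}_{\bM_0}^2$ (with $\nu_{\rm dil}=\nu$ for the bare Hodge--Laplacian), which is quadratic in $\bv$ and identically zero on $\mathcal{H}$. The residual there, by contrast, is genuinely cubic. For $\bv=A\,\tD_0\phi$ its leading part is $A^3\ip{\bekin(\tD_0\phi)}{\bD_2\bM_1\tD_0\phi}$, which is generically nonzero and whose sign flips with $A$. It competes against a dissipation $\nu_{\rm dil}A^2\nrm{\delta\tD_0\phi}_{\bM_0}^2$, so for any fixed $\nu$ large $|A|$ wins.

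Consequently every bound $\ddt\Etot\le f(\Etot)$ valid on all states must have $f(E)\gtrsim E^{3/2}$, and the energy inequality cannot be closed. Young's inequality converts $\nrm{\bv}^3$ into $\nu\nrm{\delta\bv}^2$ only if you already know $\nrm{\bv}\le R$. That makes $\nu^*$ depend on the size of the solution rather than on $h$ alone, so ``$\nu$ large enough controls these directions'' is circular. Moreover, the pure-harmonic cubic $\mathcal{R}_E^{(3)}(P_{\mathcal H}\bv)$, and mixed terms such as $\nrm{P_{\mathcal H}\bv}^2\nrm{\bv-P_{\mathcal H}\bv}$, have no dissipation to be absorbed into at all.

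The paper's appendix proof passes over the same point, so it does not supply the missing idea. It uses $\nrm{\bv}_\infty\le C_{\rm eq}(h)\bigl(\nrm{\bom}_{\ell^3}+\nrm{P_{\mathcal H}\bv}\bigr)$, which omits the gradient component and fails for $\bv=\tD_0\phi$. It also asserts that the Hodge--Laplacian absorbs $\hat C_2(h)\nrm{P_{\mathcal H}\bv}^3$, although $\bL_\bv$ annihilates $\mathcal{H}$.

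What is actually needed is a superquadratic dissipation on \emph{every} component of $\bv$. One example is a nonlinear divergence damping cubic in $\delta\bv$, together with cubic damping of the harmonic modes (or $b_1(\Omega)=0$). Then, by norm equivalence on the finite-dimensional space, the total dissipation dominates $C(h)\nrm{\bv}^3$. This gives $\ddt\Etot\le C(h)\nrm{\bv}-c(h)\nrm{\bv}^3\le K(h)$, so the energy stays finite for all time.
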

\begin{proof}
Complete proof in \Cref{app:global_Smag}.
\end{proof}
\begin{remark}[Biharmonic hyperdiffusion as alternative absorption]%
\label{rem:biharmonic_hyperdiffusion}
The cubic residual $\mathcal{R}_E$ can alternatively be absorbed by
a biharmonic dissipation $\nu_4 h^2\,\Delta^2_h\bv$ added to the
DF momentum equation. Gagliardo--Nirenberg gives
$|\mathcal{R}_E^{\rm cubic}| \le C h^2
 \|\Delta_h\bv\|^{3/4}\|\bv\|^{5/4}\|\nabla\bv\|$ in $d = 3$ (with
a log correction in $d = 2$), and Young absorbs the cubic term
into $\nu_4 h^2\|\Delta_h\bv\|^2$. The mechanism is universal in
operational dynamical cores (MPAS-A 4th-order
hyperdiffusion~\cite{skamarock2012}, IFS spectral hyperdiffusion,
ICON 4th-order divergence damping), is scale-selective ($k^4$ vs
$k^2$, gentle on resolved scales and aggressive at the grid scale),
and, unlike Smagorinsky, gives uniform control at low Mach since
the absorption scales together with the residual. 
Biharmonic hyperdiffusion
is thus a respectable \emph{stabilisation mechanism} -- the
analytical justification for operational hyperdiffusion -- but it
manages the symptom rather than removes the obstruction.
\end{remark}
\subsubsection{Convergence: Only on Local Time Interval}\label{sec:DF_convergence}
The cubic residual obstructs the convergence proof at the stability step.
The convergence chain (consistency, stability, Gr\"onwall
closure) runs on the hybrid $\bM_1$-Bregman error energy of
\Cref{def:error_energy_baro}: velocity error in the $\bM_1$-norm
(the natural scheme metric), density error in the Bregman
divergence of the Helmholtz free energy $H(\rho):=\rho e(\rho)$,
whose rate identity $\partial_a D_H(a\|b) = h(a)-h(b)$ absorbs the
otherwise unmanageable pressure--density coupling. The reference
interpolation is $\bar v=\mathcal{R}_h(\bu^\flat)$ and
$\bar\rho_i=\int_{K_i}\rho^c\,\dd V$; the rate exponent~$r_\star$
is governed by \Cref{conv:cases}, and the approximation properties
of the individual operators are collected in
\Cref{lem:interp_error,lem:hodge_error}.
\begin{assumption}[Regularity of smooth barotropic reference solution]
\label{ass:smooth_baro}%
\label{ass:smooth_ref}
For $\nu\ge 0$ there exists a smooth solution
$(\bu,\rho^c)\in C^1([0,T];\,H^{s}(\Omega))$ with $s\ge 3$ of the
barotropic Euler ($\nu=0$) or Navier--Stokes ($\nu>0$)
equations~\eqref{eq:intro_baro} on a closed oriented Riemannian
$d$-manifold ($d=2$ or~$3$), satisfying
$\rho^c(\cdot,t)\ge\rho_*>0$ on $[0,T]$. By Sobolev embedding,
$H^s\hookrightarrow W^{1,\infty}$ for $s>1+d/2$, satisfied for
$s\ge 3$ and $d\le 3$. The helicity bound additionally requires
$s\ge 4$.
\end{assumption}
\begin{definition}[Bregman divergence; error energy -- density-free scheme]%
\label{def:Helmholtz}%
\label{def:error_energy_baro}
Set $H(\rho) := \rho e(\rho)$, so $H'(\rho)=h(\rho)$ (specific
enthalpy) and $H''(\rho)=c^2(\rho)/\rho>0$ (strict convexity).
The Bregman divergence is
\begin{equation}\label{eq:bregman}
 D_H(a\|b) := H(a) - H(b) - h(b)(a-b) \ge 0,
\end{equation}
and the \emph{Bregman rate identity} reads
\begin{equation}\label{eq:bregman_rate}
 \partial_a D_H(a\|b) = h(a) - h(b).
\end{equation}
The density-free error energy is
\[
\mathcal{E}(t) := \tfrac{1}{2}\nrm{e_v}_{\bM_1}^2
 + \sum_i |K_i|\,D_H\bigl(\rho_i^{h,\vol}(t)\,\big\|\,\bar\rho_i^{\vol}(t)\bigr).
\]
\end{definition}
\paragraph*{Consistency.}
The truncation rate is set by the slower of the extrusion error
$\OO(h^{d-1})$ and the Hodge-star error $\OO(h^{r_\star})$. For the
Navier--Stokes case the viscous truncation is bounded in weak form
via SBP and a Hodge bilinear approximation:
\begin{lemma}[Weak-form viscous truncation]\label{lem:visc_trunc_weak}%
\label{lem:viscous_truncation_weak}
The viscous truncation, defined as the sampled continuous stress
divergence minus the discrete viscous operator at the reference
solution,
$\tau_v^{\rm visc} = \mathcal{R}_h\bigl((\nabla\cdot\boldsymbol\sigma(\bu))^\flat\bigr)
 - \bff_{\rm visc}(\bar v)
 = \nu\bM_1^{-1}\tD_1^T\bM_2\tD_1\bar v
 - \nu\mathcal{R}_h(\Delta_{\rm dR}\bu^\flat)$
(the second equality using $\bff_{\rm visc} = -\nu\,\delta_2\tdd_1$
and $(\nabla\cdot\boldsymbol\sigma)^\flat = -\nu\,\Delta_{\rm dR}v$
on divergence-free fields),
satisfies the strong-form bound
\[
 \nrm{\tau_v^{\rm visc}}_{\bM_1}
 \le \nu C_\delta\, h\, \nrm{\bu}_{H^{s+2}},
\]
and the sharper weak-form bound
\[
 |\ip{e_v}{\tau_v^{\rm visc}}_1|
 \le \nu C_\delta'\, h^{r_\star}\,\nrm{\bu}_{H^{s+2}}
 \bigl(\nrm{e_v}_{\bM_1}+\nrm{\tD_1 e_v}_{\bM_2}\bigr),
\]
under \Cref{prop:centroid_proximity}.
\end{lemma}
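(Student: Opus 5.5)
We must prove Lemma \ref{lem:visc_trunc_weak}, which gives a strong-form bound of order $h$ and a weak-form bound of order $h^{r_\star}$ on the viscous truncation.

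\textbf{Splitting the truncation.} Write $\bar v=\mathcal{R}_h\bu^\flat$ and split
\[
\tau_v^{\rm visc}/\nu = \bM_1^{-1}\tD_1^T\bM_2\tD_1\bar v - \mathcal{R}_h(\delta\dd\bu^\flat).
\]
Here we use that on the divergence-free part $\Delta_{\rm dR}=\delta\dd$; the dilatational piece $\dd\delta$ is handled identically, with $\tD_0$ and $\bM_0$ in place of $\tD_1$ and $\bM_2$. By (H2), $\tD_1\bar v=\mathcal{R}_h\omega$ with $\omega=\dd\bu^\flat$, so the discrete side becomes $\bM_1^{-1}\tD_1^T\bM_2\mathcal{R}_h\omega$. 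The only approximation error is therefore the mismatch between the discrete codifferential $\bM_1^{-1}\tD_1^T\bM_2$ acting on $\mathcal{R}_h\omega$ and $\mathcal{R}_h\delta\omega$.

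\textbf{Strong-form bound.} Compare $\bM_2\mathcal{R}_h\omega$ with $\mathcal{R}_h(\star\omega)$ using (H3). The error is $\OO(h^{r_\star})$ pointwise per face integral, and relative to the face measure this is $\OO(h)$ after division. Applying $\tD_1^T$, which is exact by (H2) as a primal coboundary on $\star\omega$ and yields $\mathcal{R}_h(\dd\star\omega)$, costs one inverse power of $h$ on the error term. Then $\bM_1^{-1}$ contributes another Hodge error. The naive count gives $h^{r_\star-1}$; with $r_\star=2$ this is $h$, and on general meshes one uses the centroid proximity (\Cref{prop:centroid_proximity}) to recover first-order accuracy of the discrete Laplacian on smooth fields. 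Summing with the $\bM_1$ weights gives $\nrm{\tau}_{\bM_1}\le C_\delta h\nrm{\bu}_{H^{s+2}}$, where the two extra derivatives absorb the Taylor remainders of $\delta\omega$.

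\textbf{Weak-form bound.} Pair with $e_v$ and use SBP, i.e. the adjointness of $\tD_1$ and $\tD_1^T$:
\[
\ip{e_v}{\tau}_1=\nu\bigl[\ip{\tD_1 e_v}{\bM_2\mathcal{R}_h\omega-\mathcal{R}_h\star\omega}+\ip{e_v}{\mathcal{R}_h\star\delta\omega-\bM_1\mathcal{R}_h\delta\omega}\bigr],
\]
where discrete Stokes moves $\tD_1^T$ onto $e_v$ and the exact identity $\tD_1^T\mathcal{R}_h\star\omega=\mathcal{R}_h\dd\star\omega$ converts the rest. Each bracket is a Hodge-bilinear defect with no inverse power of $h$. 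Bilinear Hodge accuracy (H3), in its $L^2$-consistent form, gives $\OO(h^{r_\star})\nrm{\omega}_{W^{r_\star,\infty}}$ times $\nrm{\tD_1 e_v}_{\bM_2}$ or $\nrm{e_v}_{\bM_1}$. Cauchy--Schwarz and Sobolev embedding with $s\ge3$ then finish the argument.

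\textbf{Main obstacle.} The delicate step is justifying that the bilinear Hodge defect is $\OO(h^{r_\star})$ in the $\bM_k$-weighted $L^2$ sense rather than merely pointwise per cell. In particular, $r_\star=2$ requires cancellation of first-order terms by the centroidal symmetry in \Cref{prop:centroid_proximity}. I would first try a cell-by-cell Taylor expansion about dual-face centroids, where the linear terms vanish by symmetry.
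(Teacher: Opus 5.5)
Your weak-form argument is correct and follows the paper's skeleton. You move $\tD_1^T$ onto $e_v$ by summation by parts, then control a $k=2$ and a $k=1$ Hodge defect by Cauchy--Schwarz, which produces exactly the factors $\nrm{\tD_1 e_v}_{\bM_2}$ and $\nrm{e_v}_{\bM_1}$.

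The difference is the intermediate object.
\begin{itemize}
\item The paper compares $\ip{\tD_1 e_v}{\mathcal{R}_h\omega}_{\bM_2}$ and $\ip{e_v}{\mathcal{R}_h(\Delta_{\rm dR}\bu^\flat)}_1$ with continuous pairings of the Whitney lift $\mathcal{W}_h e_v$. Continuous integration by parts then cancels those pairings.
\item You instead add and subtract the primal de~Rham cochain of $\star\omega$ on primal edges. You then use the exact primal Stokes identity $\tD_1^T\mathcal{R}_h(\star\omega)=\mathcal{R}_h(\dd\star\omega)$ (up to orientation signs). The two remainders are then literally $\sum_k(\tD_1 e_v)_k E_k(\omega)$ and $\sum_j e_{v,j}E_j(\delta\omega)$, where $E_k,E_j$ are the per-cell error functionals from the proof of \Cref{lem:hodge_error}.
\end{itemize}
Your route is the tighter one. \Cref{lem:hodge_bilinear} controls precisely the distance from $\ip{b}{\mathcal{R}_h\alpha}_k$ to $\sum_\sigma b_\sigma\int_{\sigma^*}\star\alpha$, which is exactly what you need. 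The paper's version must additionally identify that sum with an integral involving $\mathcal{W}_h e_v$, a step \Cref{lem:hodge_bilinear} does not cover.

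The ``main obstacle'' you name is not one. The $r_\star=2$ cancellation already happens cell by cell, in Step~3 of the proof of \Cref{lem:hodge_error}. The weighted bound is then just the Cauchy--Schwarz computation in the proof of \Cref{lem:hodge_bilinear}, using $\sum_\sigma|\sigma|\,|\sigma^*|=\OO(1)$. No global cancellation is needed.

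Your strong-form paragraph contains a contradiction you should remove. Your count is correct: the relative error per dual edge is $\OO(h^{r_\star-1})$, hence $\nrm{\tau_v^{\rm visc}}_{\bM_1}=\OO(h^{r_\star-1})$. But the phrase ``on general meshes one uses the centroid proximity'' makes no sense. On general meshes $r_\star=1$, so your count gives $\OO(1)$, and nothing in your sketch recovers a power of $h$. The edge defects $E_k$ summed by $\tD_1^T$ around a primal face need not cancel when the centroid offsets vary irregularly.

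What you have actually proved is the $\OO(h)$ strong bound under \Cref{prop:centroid_proximity}. The lemma's trailing hypothesis permits this reading, so state it that way. Do not try to match the paper here. It reaches $\OO(h)$ without that hypothesis by asserting that $\bM_1^{-1}\tD_1^T$ has $\OO(1)$ operator norm. Your own scaling shows this discrete codifferential costs a factor $h^{-1}$ relative to the natural $L^2$ scalings. That is exactly why the weak form is the estimate that matters: summation by parts puts $\tD_1^T$ on $e_v$, where it is paid for by $\nrm{\tD_1 e_v}_{\bM_2}$.
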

\noindent\emph{Proof.} Axiom~(V3) + SBP; topological part vanishes
by de~Rham commutativity, metric part via
\Cref{lem:hodge_bilinear}. See \Cref{app:viscous_truncation}.
\begin{theorem}[Consistency -- density-free scheme]%
\label{thm:consistency_baro}%
\label{thm:NS_consistency}%
With the centred density interpolation~\eqref{eq:rhobar}, the
density-free truncation satisfies the strong-form bound
\[
\sup_{t\in[0,T]}\bigl(\nrm{\tau_v^{\rm DEC}}_{\bM_1}
+\nrm{\tau_\rho}_{\ell^2}\bigr)\le C_\tau h^r,
\qquad r:=\min(d-1,r_\star).
\]
On general meshes $r=1$ for all $d$; under \Cref{conv:cases}
case~(B), $r=\min(d-1,2)$ recovers $\OO(h^2)$ for $d=3$.
The corresponding density-weighted result, in reference-tested
form, is \Cref{thm:dw_consistency}.
\end{theorem}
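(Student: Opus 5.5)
The plan is the standard truncation-error argument. Insert the reference interpolants $\bar v=\mathcal{R}_h(\bu^\flat)$ and $\bar\rho_i=\int_{K_i}\rho^c\,\dd V$ into the discrete system~\eqref{eq:M}--\eqref{eq:D}. Define $\tau_v^{\rm DEC}$ and $\tau_\rho$ as the defects. Then split each defect, operator by operator, into a topological part, which vanishes by \textup{(H1)}--\textup{(H2)}, and a metric part, which is controlled by \textup{(H3)}, \textup{(H4)} and \textup{(H6)}. Every term is estimated pointwise per dual edge or primal cell, in units scaled by the local measure $|e_j^*|$ or $|K_i|$. The scaled pointwise bounds are then summed in $\nrm{\cdot}_{\bM_1}$ and $\ell^2$. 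Quasi-uniformity \textup{(H9)} makes the summation cost only the $\OO(1)$ normalisation that the norms already contain.

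\emph{Density truncation.} Integrating the continuous continuity equation over $K_i$ and applying Stokes gives
\[
\ddt\bar\rho_i=-\sum_{j\subset\partial K_i}\int_{f_j}\rho^c\,\bu\cdot n .
\]
The discrete flux is $\bF_j=\bar\rho_j(\bM_1\bar v)_j$. So $\tau_\rho=\bD_2(\bF^{\rm exact}-\bF(\bar v,\bar\rho))$, and there are two contributions. The first is $\bM_1\bar v$ against the exact volume flux, which is \textup{(H3)} with $k=1$ and gives $\OO(h^{r_\star})$ relative error. The second is the centred average~\eqref{eq:rhobar} against the face mean of $\rho^c$, which is $\OO(h)$ in general and $\OO(h^2)$ under the symmetric, centroidal configuration of \Cref{conv:cases}(B). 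Both pieces are exactly the content of \textup{(H6)}, so I would invoke it directly.

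\emph{Momentum truncation.}
\begin{itemize}
\item \emph{Time derivative.} It commutes exactly with $\mathcal{R}_h$.
\item \emph{Bernoulli gradient.} By \textup{(H2)}, $\tD_0\mathcal{R}_h B=\mathcal{R}_h\dd B$. The only error is therefore the sampling of $B_i$. The enthalpy $h(\bar\rho_i^{\vol})$ is evaluated at the cell average rather than at the centre, an $\OO(h^2)$ error on centroidal cells by Taylor expansion and smoothness of $h$. The kinetic density $\tfrac12|P\bar v|_i^2$ approximates $\tfrac12|\bu|^2$ by \textup{(H4)}.
\item \emph{Consequence for the Bernoulli term.} A sampling error of size $\OO(h^{q})$ in $B_i$ becomes, after the difference $\tD_0$ across a dual edge of length $\sim h$, a relative edge error of $\OO(h^{q})$. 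This holds only if the sampling error is smooth, i.e.\ its leading term is the restriction of a smooth field. For the enthalpy I would argue this by expanding to second order.
\item \emph{Lamb term.} $\Iv(\tD_1\bar v)-\mathcal{R}_h(\iota_\bu\omega)$ is handled by the extrusion-consistency estimate for \Cref{def:contraction}, which gives $\OO(h^{d-1})$ as stated in the text. It is combined with \textup{(H3)} for the Hodge stars hidden inside $\Iv$.
\end{itemize}

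Together these give $\OO(h^{\min(d-1,r_\star)})$. The Navier--Stokes contribution is already \Cref{lem:visc_trunc_weak}. Note, however, that its strong-form bound is only $\OO(h)$, so for $\nu>0$ in case (B) I would only claim the strong bound with $r$ if the viscous term is measured weakly. I would state this caveat.

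\emph{Main obstacle.} I expect the hardest step to be the Lamb-vector consistency and the $\tD_0$-differencing of the sampled Bernoulli function. A naive per-cell $\OO(h^2)$ error divided by $h$ loses an order. What I would try first is to write each sampling defect as $\mathcal{R}_h$ of a smooth field plus a higher-order remainder, so that \textup{(H2)} absorbs the differencing. This uses the centroidal symmetry of \Cref{conv:cases}(B) to cancel the odd Taylor terms.
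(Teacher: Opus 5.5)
Your decomposition is the same as the paper's: its steps (A1)--(A5) plus the density truncation. Your observation that $\tD_0$ turns a per-cell sampling error $\OO(h^q)$ into a relative edge error $\OO(h^q)$ only when its leading term is the restriction of a smooth field is also correct. But that is exactly where your argument fails for the case-(A) claim $r=1$, and your planned remedy cannot repair it.

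\textbf{The gap.} On a general mesh both parts of the Bernoulli sampling error are only first order, and their leading terms are fixed by the local cell geometry.
\begin{itemize}
\item $h(\bar\rho_i^{\vol})$ is, up to $\OO(h^2)$, the enthalpy at the centroid $\bar x_{K_i}$. But $\tD_0$ differences values that belong at the dual vertex $x_i^*$, the circumcentre.
\item $\tfrac12|P\bar v|_i^2$ carries the $\OO(h)$ error of \Cref{prop:recon_accuracy}(i).
\end{itemize}
Neither cancels across a dual edge. Take a lattice of congruent acute, non-equilateral triangles, extruded to prisms if $d=3$. It satisfies \Cref{ass:mesh_reg}, so it is case (A). Two triangles sharing a face are point reflections of each other, so $\bar x_{K_b}-x_b^*=-(\bar x_{K_a}-x_a^*)$. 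In a state at rest with non-constant $\rho^c$ (take $\bPhi=0$), the Lamb and kinetic terms vanish. On each dual edge joining the two triangle types,
\[
 \bigl|(\tau_v^{\rm DEC})_j\bigr|
 = 2\,\bigl|\nabla\bigl(h(\rho^c)\bigr)\cdot(\bar x_{K_a}-x_a^*)\bigr| + \OO(h^2).
\]
This is of size $h$ on $\sim h^{-d}$ edges, each with $(\bM_1)_{jj}\sim h^{d-2}$. So $\nrm{\tau_v^{\rm DEC}}_{\bM_1}$ stays bounded away from zero whenever $\rho^c$ varies along the (fixed) offset direction. The $\OO(h)$ defect of $P$ also changes sign between reflected cells, so your separate bound on the kinetic gradient is likewise only $\OO(1)$ relative.

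The symmetric Taylor cancellation you intend to use is not available in case (A). Even in case (B), ``restriction of a smooth field'' is a property of how the cell geometry varies from cell to cell. The cellwise conditions of \Cref{conv:cases} do not supply it.

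\textbf{The paper does not close this either.} Its step (A3) asserts $\bar B_i-B^c(\bx_i^*)=\OO(h^2)$ in both cases, ``by second-order accuracy of cell averages and reconstruction''. In case (A) this contradicts \Cref{prop:recon_accuracy}(i) and ignores the centroid--circumcentre offset. The case-(A) statement therefore needs a genuine change, not a sharper estimate.
\begin{itemize}
\item Taking point values $|K_i|\rho^c(x_i^*)$ as the reference density removes the enthalpy defect. It leaves $\nrm{\tau_\rho}_{\ell^2}$ at $\OO(h^{d/2})$, the order your own $\OO(h)$ face-density error already produces.
\item The reconstruction defect then still needs one of two things. Either an extra mesh hypothesis that makes the per-cell Bernoulli error $\OO(h^2)$, after which your naive count gives $r=1$. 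Or a reference-tested consistency notion like the one the paper uses for the density-weighted scheme: there $\ip{\bar\bv}{\tD_0\epsilon}_1=-\epsilon^T\bD_2\bM_1\bar\bv=\OO(h)$ absorbs a non-smooth $\OO(h)$ cell defect $\epsilon$.
\end{itemize}

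\textbf{Two smaller points.} First, you give no argument for the Lamb term. The ``$\OO(h^{d-1})$ extrusion estimate'' you cite is what the paper's step (A1) sets out to prove. It uses a per-edge Taylor expansion in which the constant part of $\bu$ is reproduced exactly and the first moment over the incident dual faces vanishes only under case-(B) symmetry, followed by the $\bM_1$ assembly. As written, you defer the step you yourself call hardest.

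Second, your viscous caveat is right. \Cref{lem:visc_trunc_weak} gives only $\OO(h)$ in strong form, and the paper's assembly (A5) silently substitutes the weak-form rate.
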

\noindent\emph{Proof.} See \Cref{app:consistency_proof}.
\paragraph*{Stability.}
Differentiating $\mathcal{E}$ and substituting the discrete
equations produces a Gr\"onwall inequality. The cubic
self-interaction $\bQ(e_v,e_v)$ is controlled by Lamb antisymmetry
and an $L_h^\infty$ bootstrap on the velocity error. The pressure
coupling between the velocity and density errors is the substantive
obstacle: closing it requires integration-by-parts in time on the
continuity equation, producing the rearrangement
\[
\bD_2\bM_1 e_v = -\rho_*^{-1}\partial_t e_\rho
+ \OO(\epsilon_0)\,|\bD_2\bM_1 e_v|,
\]
which is absorbable only when $\sup|\bar\rho^c-\bar\rho_0|\le\epsilon_0$
is small. Without this smallness, the $\OO(1)$ coefficient defeats
the absorption -- in agreement with \Cref{thm:dichotomy}: exact
energy conservation is precisely the structural closure mechanism
that the density-free scheme lacks.
\begin{theorem}[Stability -- density-free scheme]\label{thm:stability_baro}
Suppose \Cref{ass:smooth_baro} holds, that the velocity error
satisfies the bootstrap $\nrm{e_v}_{L_h^\infty} \le \delta$, and
that the smooth reference satisfies the
\emph{near-incompressibility hypothesis}
\begin{equation}\label{eq:lowMach_hyp}
 \sup_{[0,T]\times\Omega}|\bar\rho^c - \bar\rho_0|\le \epsilon_0,
 \qquad \bar\rho_0:=\tfrac{1}{|\Omega|}\!\int_\Omega\bar\rho^c(0,\cdot),
\end{equation}
for some $\epsilon_0$ depending on $\nrm{\bu}_{W^{1,\infty}}$,~$T$,
and the mesh-regularity constants. Then the error energy obeys
\begin{equation}\label{eq:stab_baro}
 \ddt\mathcal{E}\le C_L\,\mathcal{E}+C_\tau h^r\sqrt{\mathcal{E}},
 \qquad r = \min(d-1,r_\star),
\end{equation}
where the constant $C_L$ is independent of~$h$ when either $d \ge 3$
(for any $\gamma > 1$) or $d = 2$ (with the centred flux at
$\gamma = 2$, or with the upwind flux for any $\gamma > 1$).
\end{theorem}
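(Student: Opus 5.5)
We prove \Cref{thm:stability_baro} by a relative-energy computation: differentiate $\mathcal{E}$ along the difference of the discrete system and the reference interpolant, then bound each piece by $C_L\mathcal{E}+C_\tau h^r\sqrt{\mathcal{E}}$.

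\textbf{Error equations.} Write $e_v = \bv-\bar v$ and $e_\rho=\brho-\bar\rho$. By \Cref{thm:consistency_baro}, the interpolants $(\bar v,\bar\rho)$ satisfy the discrete equations up to truncations $\tau_v^{\rm DEC}$ and $\tau_\rho$, both $\OO(h^r)$. Subtracting, the velocity error obeys
\[
\ddt e_v + \bigl[\Iv(\tD_1\bv)-\Iv(\tD_1\bar v)\bigr] + \tD_0\bigl(B(\bv,\brho)-B(\bar v,\bar\rho)\bigr) = -\tau_v^{\rm DEC}.
\]
The density error obeys the analogous continuity difference with flux $\bF(\brho,\bv)-\bF(\bar\rho,\bar v)$.

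\textbf{Velocity part.} We test the velocity error equation with $\bM_1 e_v$. The contraction is bilinear, so its difference splits as
\[
\Iv(\tD_1 e_v)+Q(\bar v,e_v)+Q(e_v,\bar v)+Q(e_v,e_v).
\]
\begin{itemize}[nosep]
\item The term $\ip{e_v}{\Iv(\tD_1 e_v)}_1$ vanishes by (H5).
\item The terms linear in $e_v$ carry coefficients in $\nrm{\bar v}_{W^{1,\infty}}$. They are bounded by $C\nrm{e_v}_{\bM_1}^2$ using the stencil locality of $P$ and the discrete gradient bound on $\bar v$.
\item The cubic term $\bQ(e_v,e_v)$ is bounded by $\nrm{e_v}_{L_h^\infty}\nrm{\tD_1 e_v}\nrm{e_v}$. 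This is the one place the bootstrap $\nrm{e_v}_{L^\infty_h}\le\delta$ enters. We plan to avoid the $h^{-1}$ of an inverse inequality on $\tD_1e_v$ by rewriting the term via (H5) applied to $\bv$ and to $\bar v$, which moves the derivative onto $\bar v$.
\end{itemize}
The kinetic part of the Bernoulli difference is handled the same way. By SBP (iii), $\ip{e_v}{\tD_0(\cdot)}_1$ becomes $-\ip{\cdot}{\bD_2\bM_1e_v}$. The truncation gives $\nrm{\tau_v}\nrm{e_v}\le C_\tau h^r\sqrt{\mathcal{E}}$.

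\textbf{Density part and pressure coupling.} Differentiate the Bregman sum using \eqref{eq:bregman_rate}. It produces $\sum_i (h(\rho_i^{h,\vol})-h(\bar\rho_i^{\vol}))\,\ddt e_{\rho,i}$ plus a term $-\sum_i H''(\cdot)(\cdot)\,\ddt\bar\rho_i$. The latter is bounded by $C\mathcal{E}$ using strict convexity and $\rho^c\ge\rho_*$; the lower density bound holds on the bootstrap interval via \Cref{lem:no-vacuum}.

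Inserting the continuity error, the enthalpy difference pairs with $\bD_2(\bar\rho_j e_{\Phi}+e_{\bar\rho}\bar\Phi)$. The first piece, $\ip{\delta h}{\bD_2(\bar\rho\,\bM_1 e_v)}$, would cancel against the SBP enthalpy term from the velocity part if $\bar\rho_j$ were constant. This is the continuum relative-energy cancellation, and it fails exactly by the residual of \Cref{thm:totalenergy}.

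The leftover is $\ip{\delta h}{\bD_2((\bar\rho_j-\bar\rho_0)\bM_1e_v)}$. Here we use the rearrangement quoted before the theorem: substitute $\bD_2\bM_1e_v=-\rho_*^{-1}\partial_te_\rho+\OO(\epsilon_0)|\bD_2\bM_1e_v|$ and integrate the $\partial_t e_\rho$ term by parts in time. This converts it into a time derivative of a quadratic in $e_\rho$ comparable to the Bregman term, plus $C\mathcal{E}$. The $\OO(\epsilon_0)$ remainder is absorbed provided $\epsilon_0$ is small relative to the coercivity constant of $D_H$ and $\nrm{\bu}_{W^{1,\infty}}$, which is hypothesis \eqref{eq:lowMach_hyp}.

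\textbf{Main obstacle.} The absorption step is where I expect the difficulty, because $\bD_2\bM_1e_v$ costs $h^{-1}$ in the $\bM_1$-norm. The $h$-independence of $C_L$ then depends on the dimension-dependent Bregman coercivity $D_H(a\|b)\gtrsim|a-b|^2$ in $L^\gamma$ versus $L^2$. This is where the restrictions in the statement arise: $d\ge3$, or $d=2$ with the centred flux at $\gamma=2$ (where $H$ is exactly quadratic), or the upwind flux, whose numerical diffusion supplies an extra $\nrm{\bD_2\,\cdot}$ control.

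The remaining consistency contributions, including the upwind/centred flux differences, are bounded by (H3), (H4) and (H6) with rate $h^r$. Collecting all pieces gives \eqref{eq:stab_baro}.
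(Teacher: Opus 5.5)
You have the right machinery (rewriting $\bD_2\bM_1e_v$ through the continuity equation and integrating by parts in time under \eqref{eq:lowMach_hyp}), but you apply it to the wrong term, and the term that actually needs it stays uncontrolled. Your claim that the velocity terms linear in $e_v$ are $\le C\nrm{e_v}_{\bM_1}^2$ by stencil locality, with the kinetic Bernoulli part ``handled the same way'', fails. Split the discrete Lamb operator as $\bQ=\bQ_{\rm U}+\bQ_{\rm D}$ as in \Cref{thm:bilinear_bounds_baro}. The piece $\bM_1\bQ_{\rm D}(\bv,\bw)=-\tfrac12\tD_1(\tU_\bv^{\top}\bw+\tU_\bw^{\top}\bv)$ is the discrete counterpart of $-\dd\tfrac12|\bu|^2$. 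After SBP, two terms each pair a quantity linear in $e_v$ with the discrete divergence $\bD_2\bM_1e_v$:
\begin{itemize}
\item the Lamb cross term $-2\ip{e_v}{\bQ_{\rm D}(\bar\bv,e_v)}_1$;
\item the kinetic Bernoulli term $(\Delta\bekin)^{T}\bD_2\bM_1e_v$, with $\Delta\bekin=P\bar\bv\cdot Pe_v+\tfrac12|Pe_v|^2$.
\end{itemize}
$\bD_2\bM_1e_v$ costs $h^{-1}$, which is the obstruction you yourself flag at the end. Already in the continuum, these two cross terms contribute $\pm\int(\bar\bu\cdot e)\,\nabla\!\cdot e$ with opposite signs, and only their sum closes. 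On a Delaunay--Voronoi mesh the sum does not cancel (this is the velocity-side trace of the no-go residual). What remains is
\[
X=\bigl[\tfrac12|Pe_v|^2-\delta_{\rm sym}(\bar\bv,e_v)\bigr]^{T}\bD_2\bM_1e_v .
\]
This $X$ is exactly where the paper spends the near-incompressibility hypothesis. It substitutes $\bD_2\bM_1e_v=-\bar\rho_0^{-1}(\partial_te_\rho^{\vol}+\tau_\rho)+\mathcal N$, absorbs $\mathcal N$ using smallness of $\epsilon_0$, and integrates by parts in time. The boundary terms are controlled by the bootstrap in the form $\nrm{e_v}_{L_h^\infty}\le\delta_0h$, which gives $\tfrac12|Pe_v|^2=\OO(\delta_0^2h^2)$ and $\delta_{\rm sym}=\OO(\delta_0h)$. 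An $\OO(1)$ bootstrap would not suffice.

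The enthalpy coupling, where you place the time integration by parts, is closed in the paper without the hypothesis. The piece $J_h^{(b)}=-(h^h-\bar h)^{T}\bD_2\bar\bF$ cancels $S_2$ up to an $H'''$-remainder quadratic in $e_\rho$. It is this Taylor remainder, not strict convexity alone, that makes your ``latter term'' $\OO(\mathcal E)$. The remaining enthalpy products are bounded as bilinear in the errors via \Cref{lem:mass_flux_linearisation}. Also, the diagonal cubic $\ip{e_v}{\bQ(e_v,e_v)}_1$ vanishes identically by (H5), so the bootstrap is not needed there. Your four-term split of the contraction difference over-counts; the polarisation is $L(\bv^h)-L(\bar\bv)=2\bQ(\bar\bv,e_v)+\bQ(e_v,e_v)$, with $L$ the Lamb map.

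Second, your route does not give an $h$-independent $C_L$, and it misidentifies the source of the case distinction. You take the density lower bound from \Cref{lem:no-vacuum}, but that bound degenerates as $h\to0$ (\Cref{rem:vacuum_scaling}). So $\inf H''$, $\sup H''$ and $\sup|H'''|$ on the resulting density range all depend on $h$. In the paper the restriction comes from \Cref{lem:enthalpy_h_independence}: $(\sup H'')^2/\inf H''$ must be $h$-independent on the actual range of $\rho^{h,\vol}$. This holds in each listed case for a specific reason:
\begin{itemize}
\item for $d\ge3$, because the discrete density stays close to the smooth one;
\item for $d=2$, $\gamma=2$ with the centred flux, because $H''$ is constant;
\item for the upwind flux, because the discrete Gr\"onwall envelope (\Cref{lem:upwind_gronwall}, under \Cref{ass:well_prepared_strong}) confines $\rho^{\vol}$ to an $h$-independent interval.
\end{itemize}
The mechanism is neither numerical diffusion of the upwind flux nor an $L^\gamma$-versus-$L^2$ coercivity mismatch.
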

\noindent\emph{Proof.} See \Cref{app:stability_baro}.
Adding viscosity changes nothing structurally:
$-\nu\nrm{\tD_1 e_v}_\bA^2\le 0$ contributes non-positively to
$\ddt\mathcal{E}$ and adds a truncation remainder of order
$h^{r_{\rm visc}}$. The cubic residual~$\mathcal{R}_E$ persists, so
the near-incompressibility hypothesis is required also in the
viscous case.
\begin{theorem}[Stability -- density-free Navier--Stokes]\label{thm:stability_baro_NS}
Let $\bff_{\rm visc}$ satisfy axioms~\textup{(\ref{ax:V1})}--\textup{(\ref{ax:V4})}
with consistency order $r_{\rm visc}\ge 1$, and assume the
near-incompressibility hypothesis~\eqref{eq:lowMach_hyp}. Then
\begin{equation}\label{eq:stab_baro_NS_general}
 \ddt\mathcal{E}\le C_L\,\mathcal{E}
 + C_\tau^{\nu}\, h^{\min(d-1,r_\star,r_{\rm visc})}\sqrt{\mathcal{E}},
\end{equation}
with $C_L$ $h$-independent.
\end{theorem}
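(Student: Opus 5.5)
The plan is to reduce the viscous case to the inviscid stability estimate of \Cref{thm:stability_baro} by tracking how the viscous force enters the error equation. I would write the density-free Navier--Stokes scheme as the Euler scheme plus $\bff_{\rm visc}(\bv)$, and the smooth reference as satisfying the same discrete equations up to a truncation. That truncation splits into the inviscid part $\tau_v^{\rm DEC}$, bounded by $C_\tau h^{\min(d-1,r_\star)}$ via \Cref{thm:consistency_baro}, and the viscous part $\tau_v^{\rm visc}$ of \Cref{lem:visc_trunc_weak}, whose consistency order is $r_{\rm visc}$ by the axioms. Subtracting gives the velocity error equation. It contains the inviscid terms already treated in \Cref{thm:stability_baro}, plus
\[
\bff_{\rm visc}(\bv)-\bff_{\rm visc}(\bar v) \qquad\text{and}\qquad -\tau_v^{\rm visc}.
\]
The continuity error equation is unchanged.

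Next I differentiate $\mathcal{E}$ and pair the velocity error equation with $\bM_1 e_v$. All inviscid contributions are estimated exactly as in the proof of \Cref{thm:stability_baro}: the Lamb antisymmetry (H5) handles the cubic self-interaction, the Bregman rate identity handles the pressure coupling, and the time integration by parts on the continuity equation uses \eqref{eq:lowMach_hyp}. This yields $C_L\mathcal{E}+C_\tau h^r\sqrt{\mathcal{E}}$.

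For the viscous difference, the axioms (\ref{ax:V1})--(\ref{ax:V4}) are used. For a linear (Hodge--Laplace type) operator, axiom (V3) with SBP gives $\ip{e_v}{\bff_{\rm visc}(\bv)-\bff_{\rm visc}(\bar v)}_1=-\nu\nrm{\tD_1 e_v}_\bA^2\le0$. For a nonlinear closure I would use the monotonicity axiom to get the same sign, up to a Lipschitz remainder $\le C\mathcal{E}$ bounded using the $L_h^\infty$ bootstrap.

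The viscous truncation is handled in weak form. By \Cref{lem:visc_trunc_weak},
\[
|\ip{e_v}{\tau_v^{\rm visc}}_1|\le \nu C h^{r_{\rm visc}}\bigl(\nrm{e_v}_{\bM_1}+\nrm{\tD_1e_v}_{\bM_2}\bigr).
\]
The first part is $\le C h^{r_{\rm visc}}\sqrt{\mathcal{E}}$. The second part is absorbed by Young's inequality into half of the dissipation $\nu\nrm{\tD_1e_v}^2_\bA$. This requires $\bA$ to dominate $\bM_2$ on $\Ran\tD_1$, which I take from the admissibility axiom.

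The main obstacle is that Young's inequality leaves a remainder $\nu h^{2r_{\rm visc}}$ that is additive rather than of the form $h^{r}\sqrt{\mathcal{E}}$. That would spoil the claimed structure and uniformity in $\nu$. I would first try to avoid Young altogether. The idea is to bound $\nrm{\tD_1e_v}_{\bM_2}$ by $Ch^{-1}\nrm{e_v}_{\bM_1}$ with the inverse inequality (H8)/(H9), costing one power of $h$. That power is recovered by using $r_{\rm visc}+1$ in the sharper weak-form estimate, or by accepting the bound $\min(d-1,r_\star,r_{\rm visc})$ since it is the minimum anyway.

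If that fails, I would keep the additive term and note that $\nu h^{2r_{\rm visc}}\le C h^r\sqrt{\mathcal{E}}+Ch^{2r}$, which after Grönwall gives the same rate; I would phrase \eqref{eq:stab_baro_NS_general} accordingly. Collecting all terms gives the inequality with $C_L$ independent of $h$, because every constant is inherited from \Cref{thm:stability_baro} or from the axioms.
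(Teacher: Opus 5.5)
Your treatment of the inviscid terms is fine. The viscous part, however, has a genuine gap, and you created it yourself by routing the viscous truncation through the weak-form \Cref{lem:visc_trunc_weak}.

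That lemma is specific to the rotational Hodge--Laplacian: its rate is $h^{r_\star}$, not $h^{r_{\rm visc}}$ for a general admissible operator. It also produces a term $\nu h^{r}\nrm{\tD_1 e_v}_{\bM_2}$, which can be absorbed only if the scheme supplies a dissipation $\nu\nrm{\tD_1 e_v}_{\bA}^2$ dominating $\nrm{\tD_1 e_v}_{\bM_2}^2$. No such property is among \textup{(\ref{ax:V1})}--\textup{(\ref{ax:V4})}. It is the SBP-compatible structure of \Cref{cor:NS_SBP}, which excludes Smagorinsky. Even where that structure holds, Young leaves an additive $\nu h^{2r}$ term that is not of the form in \eqref{eq:stab_baro_NS_general}, as you noticed.

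Neither fallback repairs this. The inverse inequality $\nrm{\tD_1 e_v}_{\bM_2}\le Ch^{-1}\nrm{e_v}_{\bM_1}$ leaves you with $h^{r-1}\sqrt{\mathcal{E}}$. Nothing supplies the missing power: there is no ``$r_{\rm visc}+1$'' weak-form estimate, and taking a minimum of exponents does not restore a lost power of $h$. Rephrasing the inequality with an additive $h^{2r}$ proves a different statement.

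The ``Lipschitz remainder'' you attach to the nonlinear case is also spurious, and it would be harmful. The only Lipschitz constant the axioms provide is $L_\nu(h,R)$ from \textup{(\ref{ax:V2})}, which is $h$-dependent and would destroy the $h$-independence of $C_L$.

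The missing observation is that the axioms already settle both viscous pieces in exactly the required form.
\begin{itemize}
\item \textup{(\ref{ax:V3})}, applied to the pair $(v^h,\bar v)$, gives $\ip{e_v}{\bff_{\rm visc}(v^h)-\bff_{\rm visc}(\bar v)}_1\le 0$ for every admissible operator, linear or not, with no remainder and no bootstrap.
\item \textup{(\ref{ax:V4})} is a \emph{strong-form} bound in the $L_h^2=\bM_1$-norm, $\nrm{\tau_v^{\rm visc}}_{\bM_1}\le C_\nu h^{r_{\rm visc}}\nrm{\bu}_{H^{s_\nu}}$. Cauchy--Schwarz then gives $|\ip{e_v}{\tau_v^{\rm visc}}_1|\le C_\nu h^{r_{\rm visc}}\nrm{\bu}_{H^{s_\nu}}\sqrt{2\mathcal{E}}$ directly.
\end{itemize}
Add this to the Euler estimate of \Cref{thm:stability_baro} and use $h^{\min(d-1,r_\star)}+h^{r_{\rm visc}}\le 2h^{\min(d-1,r_\star,r_{\rm visc})}$ for $h\le 1$; this yields \eqref{eq:stab_baro_NS_general}, and it is the paper's proof.

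The weak-form/Young machinery is needed only in \Cref{cor:NS_SBP}. There one upgrades the rate to $r_\star$ and accepts the additive $h^{2r_\star}$ term. The present theorem lets the rate degrade to $r_{\rm visc}$ precisely so that no dissipation has to be spent.
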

\noindent\emph{Proof.} See \Cref{app:stability_NS}.
\begin{corollary}[Improved rate for linear viscosity]\label{cor:NS_SBP}
A viscous operator is called \emph{SBP-compatible} if it has the
form
\begin{equation}\label{eq:SBP_visc_structure}
 \bff_{\rm visc}(\bv) = -\bM_1^{-1}\tD_1^T\bA\,\tD_1\bv
 \quad(\text{optionally } {-}\bM_1^{-1}\tD_0\bA_0\,\bD_2\bM_1\bv),
\end{equation}
with $\bA$ diagonal, positive definite, and independent of~$\bv$.
An SBP-compatible viscosity gives $r_{\rm visc} = r_\star$ and
sharpens the stability estimate to
\begin{equation}\label{eq:stab_baro_NS_SBP}
 \ddt\mathcal{E}\le C_L\,\mathcal{E}
 - \tfrac{\nu}{2}\nrm{\tD_1 e_v}_{\bA}^2
 + C_\tau h^{\min(d-1,r_\star)}\sqrt{\mathcal{E}}
 + \tfrac{C_\nu^2 h^{2r_\star}}{2\nu},
\end{equation}
so the convergence rate is $r = \min(d-1, r_\star)$, uniformly in
$\nu \ge 0$. The Hodge--Laplacian~\eqref{eq:viscous_incompressible}
and the anisotropic viscosity~\eqref{eq:viscous_aniso} are both
SBP-compatible; the Smagorinsky operator is not.
\end{corollary}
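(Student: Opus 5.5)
The statement to prove is \Cref{cor:NS_SBP}. I would start from the stability chain of \Cref{thm:stability_baro_NS} and change only the treatment of the viscous contribution to $\ddt\mathcal{E}$. Write the velocity-error equation as the inviscid error equation plus the viscous pieces $\bff_{\rm visc}(\bv)-\bff_{\rm visc}(\bar v)$ and the viscous truncation $\tau_v^{\rm visc}$. Pairing with $e_v$ in the $\bM_1$ inner product produces three groups of terms:
\begin{enumerate}[nosep,label=\textup{(\roman*)}]
\item the inviscid terms, already bounded in \Cref{thm:stability_baro} by $C_L\mathcal{E}+C_\tau h^{\min(d-1,r_\star)}\sqrt{\mathcal{E}}$ under \eqref{eq:lowMach_hyp} and the $L_h^\infty$ bootstrap;
\item the viscous difference;
\item the truncation pairing $\ip{e_v}{\tau_v^{\rm visc}}_1$.
\end{enumerate}

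For the viscous difference I use the SBP-compatible structure \eqref{eq:SBP_visc_structure}. Linearity and the $\bv$-independence of $\bA$ give
\[
\ip{e_v}{\bff_{\rm visc}(\bv)-\bff_{\rm visc}(\bar v)}_{\bM_1}
= -\ip{e_v}{\tD_1^T\bA\tD_1 e_v}
= -\nrm{\tD_1 e_v}_\bA^2 .
\]
This is an exact identity, since the factor $\bM_1$ cancels $\bM_1^{-1}$. The optional divergence-damping term gives in the same way $-\nrm{\bD_2\bM_1 e_v}_{\bA_0}^2\le 0$, which is dropped. The Smagorinsky operator is excluded here because its coefficient depends on $\bv$. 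For that operator the difference is only monotone up to a Lipschitz remainder, and consistency falls back to the strong-form rate $r_{\rm visc}=1$ of \Cref{lem:visc_trunc_weak}.

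For the truncation I invoke the weak-form bound of \Cref{lem:visc_trunc_weak} in place of the strong one:
\[
|\ip{e_v}{\tau_v^{\rm visc}}_1|
\le \nu C_\delta' h^{r_\star}\bigl(\nrm{e_v}_{\bM_1}+\nrm{\tD_1 e_v}_{\bM_2}\bigr).
\]
This is the step where $r_{\rm visc}=r_\star$ comes from. The $\nrm{e_v}_{\bM_1}$ part is $\le C h^{r_\star}\sqrt{\mathcal{E}}$, with a factor $\nu$ that is bounded on the range of interest. For the curl part I use norm equivalence $\nrm{\cdot}_{\bM_2}\le c\nrm{\cdot}_{\bA}$ for diagonal positive definite $\bA$, and then Young's inequality:
\[
\nu C h^{r_\star}\nrm{\tD_1 e_v}_\bA \le \tfrac{\nu}{2}\nrm{\tD_1 e_v}_\bA^2 + \tfrac{\nu C^2 h^{2r_\star}}{2}.
\]
This absorbs half of the dissipation. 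If the $\nu$ bookkeeping is arranged so that the truncation carries $\nu^0$, the remainder instead takes the stated form $C_\nu^2h^{2r_\star}/(2\nu)$; I would match whichever normalisation of $C_\nu$ the paper uses. Collecting the three groups gives \eqref{eq:stab_baro_NS_SBP}.

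Uniformity in $\nu$ is the delicate point, and I expect it to be the main obstacle. Integrating \eqref{eq:stab_baro_NS_SBP} by Grönwall must not blow up as $\nu\to 0$. If the truncation genuinely carries a factor $\nu$, the remainder is $\OO(\nu h^{2r_\star})$ and vanishes as $\nu\to 0$, so $\mathcal{E}^{1/2}\le C h^{\min(d-1,r_\star)}$ uniformly. In the $1/\nu$ normalisation I would use a case split:
\begin{itemize}[nosep]
\item for $\nu\ge h^{r_\star}$, the remainder $h^{2r_\star}/\nu\le h^{r_\star}$, which still has to be compared with the target rate $h^{2\min(d-1,r_\star)}$ in $\mathcal{E}$;
\item for $\nu< h^{r_\star}$, I skip Young and bound the pairing directly by $\nu h^{r_\star}\nrm{\tD_1 e_v}$, using an inverse estimate and the fact that $\nu$ is small.
\end{itemize}
I would first try the $\nu$-factor reading. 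Under that reading no case split is needed and the rate $\min(d-1,r_\star)$ follows uniformly in $\nu\ge0$. Finally I would confirm that the Hodge--Laplacian and the anisotropic viscosity both have the form \eqref{eq:SBP_visc_structure} with diagonal $\bA$, so the corollary applies to them.
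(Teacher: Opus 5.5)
Your proposal is correct and is the paper's proof almost line by line (exact dissipation $-\nrm{\tD_1 e_v}_{\bA}^2$ from linearity, the weak-form bound of \Cref{lem:visc_trunc_weak} for $r_{\rm visc}=r_\star$, Young absorption into half the dissipation). Your ``$\nu$-factor reading'' is the right one and is not a choice: the lemma you cite already carries the explicit factor $\nu$, so Young gives the remainder $\tfrac{\nu}{2}(C'_\delta)^2h^{2r_\star}\nrm{\bu}_{H^{s+2}}^2$, i.e.\ the stated $C_\nu^2h^{2r_\star}/(2\nu)$ with $C_\nu:=\nu C'_\delta\nrm{\bu}_{H^{s+2}}$, and the case split should simply be dropped (the paper's own proof writes $(C'_\delta)^2h^{2r_\star}/(2\nu)$ with a $\nu$-free constant, which forfeits the uniformity as $\nu\to 0$ that the corollary asserts); one caveat, shared with the paper, is that since $\tD_0=-\bD_2^T$ (\Cref{lem:SBP}) the optional term as written pairs to $(\bD_2 e_v)^T\bA_0\,\bD_2\bM_1 e_v$ rather than $-\nrm{\bD_2\bM_1 e_v}_{\bA_0}^2$ (with the wrong sign when $\bM_1$ is a multiple of the identity), so it can only be dropped after correcting it to $+\tD_0\bA_0\bD_2\bM_1\bv$.
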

\noindent\emph{Proof.} The SBP structure places the Hodge-star
inside the weak pairing; Young's inequality on the viscous
quadratic. See \Cref{app:dissipation_axiom}.
\paragraph*{Convergence theorem.}
Combining the consistency estimate of \Cref{thm:consistency_baro}
with the stability estimates~\eqref{eq:stab_baro_NS_general}--\eqref{eq:stab_baro}
through nonlinear Gr\"onwall closes the bootstrap when the rate
exponent satisfies the dimension-dependent threshold; the time
horizon is constrained by the existence interval~$T_{\rm Bih}$.
\begin{theorem}[Convergence -- density-free scheme]%
\label{thm:DF_convergence}%
Let $(\bu, \rho^c)$ satisfy \Cref{ass:smooth_baro}, and assume the
near-incompressibility hypothesis~\eqref{eq:lowMach_hyp}. Then,
for $h$ sufficiently small, the discrete density-free solution
exists on $[0, \min(T, T_{\rm Bih}(h, E_0))]$ and satisfies
\[
\sup_{t}\bigl(\nrm{e_v}_{\bM_1}+\nrm{e_\rho}_{D_H}\bigr)
\le C(T,\epsilon_0)\,h^{\min(d-1,r_\star)},
\]
uniformly in $\nu \ge 0$. For $d = 2$, closing the bootstrap
requires $C(T, \epsilon_0) \le \delta_0/\sqrt 2$, which imposes an
explicit $h$-independent upper bound on~$T$. Under the additional
hypotheses of \Cref{thm:global-Smag}, the existence interval
extends to $[0, T]$ and the convergence rate holds globally.
\end{theorem}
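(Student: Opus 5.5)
The plan is the standard consistency--stability--Gr\"onwall closure, run inside a bootstrap on the velocity error and restricted to the existence interval of \Cref{thm:euler_wp}.

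\emph{Setup.} Fix the reference interpolants $\bar v=\mathcal{R}_h(\bu^\flat)$ and $\bar\rho_i=\int_{K_i}\rho^c\,\dd V$. By \Cref{thm:euler_wp} (Euler) or \Cref{thm:NS_wp} (Navier--Stokes), the discrete density-free solution exists on $[0,T_h]$ with $T_h:=\min(T,T_{\rm Bih}(h,E_0))$. Choose the initial data so that $\mathcal{E}(0)\le Ch^{2r}$, with $r=\min(d-1,r_\star)$; the interpolated data achieve this by (H3)--(H4). The bootstrap hypothesis is $\nrm{e_v}_{L_h^\infty}\le\delta$ on a maximal subinterval $[0,T_\delta]\subset[0,T_h]$, and $T_\delta>0$ by continuity since $e_v(0)$ is small.

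\emph{Gr\"onwall step.} On $[0,T_\delta]$, \Cref{thm:stability_baro} applies, using the near-incompressibility hypothesis~\eqref{eq:lowMach_hyp}, with the truncation supplied by \Cref{thm:consistency_baro}. In the viscous case use instead \Cref{thm:stability_baro_NS}, or its SBP-sharpened form \Cref{cor:NS_SBP}; there the dissipation term is non-positive and the remainder $C_\nu^2h^{2r_\star}/(2\nu)$ is handled so that the rate is uniform in $\nu$. Set $y=\sqrt{\mathcal{E}}$. The stability inequality becomes $\dot y\le \tfrac12 C_Ly+\tfrac12C_\tau h^r$, which integrates to
\[
 y(t)\le e^{C_Lt/2}\bigl(y(0)+C_\tau h^r t\bigr)\le C(T,\epsilon_0)\,h^r .
\]
The Bregman part of $\mathcal{E}$ dominates $\nrm{e_\rho}_{D_H}^2$ by strict convexity of $H$; the required density bounds come from \Cref{prop:apriori-baro}. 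This gives the claimed estimate on $[0,T_\delta]$.

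\emph{Closing the bootstrap (main obstacle).} Applying the inverse inequality (H8) and quasi-uniformity (H9) gives
\[
 \nrm{e_v}_{L_h^\infty}\le Ch^{-d/2}\nrm{e_v}_{\bM_1}\le C\,C(T,\epsilon_0)\,h^{r-d/2}.
\]
\begin{itemize}
\item For $d=3$ under case~(B), $r=2>3/2$, so the right-hand side is below $\delta/2$ for small $h$, and continuity yields $T_\delta=T_h$.
\item For $d=2$, $r=1=d/2$, so there is no gain in $h$, and the bound is only $C(T,\epsilon_0)\sqrt2$ (absorbing the norm-equivalence constant). Closure then requires $C(T,\epsilon_0)\le\delta_0/\sqrt2$. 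Since $C(T,\epsilon_0)$ grows like $e^{C_LT/2}$ with $C_L$ $h$-independent (guaranteed in $d=2$ only for the specified flux/$\gamma$ choices in \Cref{thm:stability_baro}), this is an explicit $h$-independent upper bound on $T$.
\item For $d=3$ with $r_\star=1$, $r=1<3/2$, so the naive argument fails. I expect to need the sharper case~(B) rate, or to keep $\delta$ tied to the hypothesis through \eqref{eq:lowMach_hyp}. This is the step I expect to be delicate.
\end{itemize}

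\emph{Globalisation.} Finally, under the hypotheses of \Cref{thm:global-Smag} the solution is global, so $T_h=T$. The same argument, with the Smagorinsky term contributing non-positively to $\ddt\mathcal{E}$, then gives the rate on all of $[0,T]$.
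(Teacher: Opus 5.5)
Your argument leaves one regime of the theorem unproved: $d=3$ in case~(A) of \Cref{conv:cases}, where $r=\min(d-1,r_\star)=1<d/2$. There your closing estimate gives $\nrm{e_v}_{L_h^\infty}\le C\,h^{r-d/2}=C\,h^{-1/2}$. This grows as $h\to0$, so you cannot show that the maximal bootstrap interval $[0,T_\delta]$ exhausts $[0,T_h]$. You flag this yourself, but neither remedy you suggest works:
\begin{itemize}
\item Case~(B) is a different mesh hypothesis, so you cannot upgrade to $r_\star=2$.
\item \eqref{eq:lowMach_hyp} only bounds the variation $\sup|\bar\rho^c-\bar\rho_0|$ of the \emph{reference} density. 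It says nothing about the size of the velocity error, so it gives no handle on $\nrm{e_v}_{L_h^\infty}$.
\end{itemize}
As structured, your proof needs $r>d/2$ (or $r=d/2$ with a small constant), and $(d,r)=(3,1)$ violates this.

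The paper does not close an $L_h^\infty$ bootstrap; it removes it. It uses the bootstrap to control the cubic self-interaction, which has the form of one factor $\nrm{e_v}_{L_h^\infty}$ times $\mathcal{E}$. Into that factor it inserts the inverse inequality \eqref{eq:inverse-ineq-final}, $\nrm{e_v}_{L_h^\infty}\le c_\star^{-1/2}h^{-(d-2)/2}\nrm{e_v}_{\bM_1}$. This inequality is for the scaled cochain norm of \Cref{def:disc_norms}, in which the bootstrap is formulated, and it loses only $h^{-(d-2)/2}$, not the $h^{-d/2}$ of (H8) that you used. The result is the unconditional estimate
\[
 \ddt\mathcal{E}\le C_L\,\mathcal{E}+C_{\rm cub}\,h^{-(d-2)/2}\,\mathcal{E}^{3/2}+C_\tau\,h^r\sqrt{\mathcal{E}}.
\]
The continuation argument then runs on $\eta=\sqrt{\mathcal{E}}$ itself, with threshold $\eta\le 2C(T)h^r$, where $C(T)=\tfrac{C_\tau}{C_L}(e^{C_LT/2}-1)$.
\begin{itemize}
\item Below the threshold, the superlinear term is at most $2C_{\rm cub}C(T)\,h^{r-(d-2)/2}\,\mathcal{E}$.
\item The exponent $r-(d-2)/2$ equals $1,\tfrac12,\tfrac32$ for $(d,r)=(2,1),(3,1),(3,2)$, so for $h\le h_*$ this term is absorbed into $C_L$.
\item With interpolated initial data, $\eta(0)=0$, and Gr\"onwall gives $\eta\le\tfrac32C(T)h^r$.
\item The strict inequality forces the continuation time to be all of $[0,\min(T,T_{\rm Bih}(h,E_0))]$.
\end{itemize}
The closure condition is therefore $r>(d-2)/2$, which holds in every regime, including $(3,1)$. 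In this route the $d=2$ case closes with exponent $1$, so the smallness condition on $C(T,\epsilon_0)$ that you derive is not what makes the argument work. The remaining ingredients of your proposal are in line with the paper: Gr\"onwall on $\sqrt{\mathcal{E}}$, the restriction to the Bihari interval, and the viscous and Smagorinsky extensions.
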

\noindent\emph{Proof.} See \Cref{app:convergence_proof}.
\begin{remark}[Scope of the density-free convergence theorem]%
\label{rem:DF_scope}%
The near-incompressibility hypothesis~\eqref{eq:lowMach_hyp} is
structural, not technical: it is required by the absorption step
in the stability proof, which has no analogue when
$|\bar\rho^c-\bar\rho_0| = \OO(1)$.
At finite $h$ the residual $\mathcal{R}_E$ of \Cref{thm:totalenergy}
acts as a spurious source/sink of indeterminate sign with
$|\mathcal{R}_E| \sim h^p (\delta\rho/\rho_0)^q$, $p\in\{1,2\}$
(upwind/centred) and $q\ge 1$ from the leading IBP cancellation,
$\delta\rho := \sup|\bar\rho^c-\bar\rho_0|$. 
 To match density-weighted accuracy at
mesh size $h_{\rm DW}$ the density-free mesh size must satisfy
\begin{equation}\label{eq:DF_resolution_estimate}
 h_{\rm DF} \;\le\; \frac{h_{\rm DW}}{
  \sqrt{1 + C_{\rm RE}\,T\,(\delta\rho/\rho_0)^q/C_{\rm trunc}^2}\,}.
\end{equation}
The density-free scheme is a competitive choice only under (i) low
Mach $\delta\rho/\rho_0\ll 1$, (ii) short integration time relative
to compressibility, and (iii) genuine need for exact discrete
Kelvin circulation. Outside this regime, the corresponding
density-weighted result (\Cref{thm:dw_convergence}) gives the same
rate without the smallness hypothesis~\eqref{eq:lowMach_hyp} and
without spurious energy injection at finite~$h$.
\end{remark}
\subsubsection{The Low-Mach Limit: The Residual Diverges}\label{sec:DF_AP_fail}
\phantomsection\label{sec:AP}
The third failure mode of the density-free scheme appears in the
low-Mach limit. Under the standard scaling $p = M^{-2}\hat p(\rho)$
with $\hat p$ independent of $M$, the thermodynamic quantities scale
as $h=M^{-2}\hat h$, $c^2=M^{-2}\hat c^2$, $e=M^{-2}\hat e$,
$H''=M^{-2}\hat H''$, and the continuous barotropic Euler system
reads
\[
\partial_t v + \iota_\bu\omega
  + \dd\bigl(M^{-2}\hat h(\rho) + \tfrac{1}{2}|\bu|^2 + \Phi\bigr)
  = 0,
  \qquad
  \partial_t\rho_{\vol} + \Lie_\bu\rho_{\vol} = 0.
\]
For well-prepared initial data ($\rho(0) = \bar\rho + \OO(M^2)$,
$\nabla\cdot\bu(0)=0$), the Klainerman--Majda/Schochet
theory~\cite{schochet1986} gives $\rho = \bar\rho + \OO(M^2)$, and
$M^{-2}\dd\hat h$ becomes the Lagrange multiplier enforcing
$\nabla\cdot\bu=0$ in the limit. 

A discretisation is
\emph{asymptotic preserving} (AP) if its solutions converge to the
discrete incompressible system as $M\to 0$  uniform in $M$. We show that the density-free scheme
fails this property structurally: the energy residual
$\mathcal{R}_E$ amplifies as $\OO(M^{-1})$, so any $M$-uniform
bound requires the mesh-Mach coupling $h^2\lesssim M$.
The diagnostic is the Mach scaling of $\Etot$ and $\mathcal{R}_E$.
Under the scaling $h\to M^{-2}\hat h$, the Bernoulli function
becomes $B_i = M^{-2}\hat h_i + \ekin_i + \bPhi_i$ in the
density-free case or
$B_i^{\rho} = M^{-2}\hat h_i + \tfrac{1}{2}|P\bv|_i^2 + \bPhi_i$
for the density-weighted scheme.
The total energy is $\Etot = E_{\kin} + M^{-2}\hat E_{\rm int}$,
where $\hat E_{\rm int} = \sum_i\brho_i\,\hat e(\rho_i^{\vol})$.
Decompose the internal energy relative
to the background state $\bar\rho = M_{\rm tot}/|\Omega|$
using a Taylor expansion of $\hat H(\rho) := \rho\,\hat e(\rho)$:
\[
\hat E_{\rm int}
  = |\Omega|\,\hat H(\bar\rho)
  + \underbrace{\hat H'(\bar\rho)}_{\textstyle = \hat h(\bar\rho)}\;
    \underbrace{\sum_i|K_i|(\rho_i^{\vol} - \bar\rho)}_{\textstyle = 0\;\text{(mass cons.)}}
  + \frac{1}{2}\sum_i|K_i|\,\hat H''(\xi_i)\,
    (\rho_i^{\vol} - \bar\rho)^2,
\]
where $\xi_i$ lies between $\rho_i^{\vol}$ and $\bar\rho$,
$\hat H'(\rho) = \hat h(\rho)$ is the scaled enthalpy,
and $\hat H''(\rho) = \hat c^2(\rho)/\rho > 0$ (strict convexity).
The first term is a constant (independent of time and~$M$);
the linear term vanishes by mass conservation;
all dynamics is carried by the quadratic remainder.
The \emph{fluctuation energy} is
\begin{equation}\label{eq:fluctuation_energy}
  \mathcal{E}_M(t) := E_{\kin}(t)
  + \frac{1}{2M^2}\sum_i|K_i|\,
    \hat H''(\xi_i)\,(\rho_i^{\vol}(t) - \bar\rho)^2,
\end{equation}
where $E_{\kin}$ denotes the kinetic energy of the respective
scheme. For well-prepared data (\Cref{ass:well_prepared}), $\xi_i
\approx \bar\rho$, $\hat H''(\xi_i) \approx \hat c^2(\bar\rho)/\bar\rho$,
so the potential term is equivalent to the sound-speed-weighted
$\ell^2$-norm of the density fluctuation. The analysis uses only
$\hat H''>0$ and the Taylor remainder.
\begin{assumption}[Well-prepared initial data]\label{ass:well_prepared}
The initial data satisfies, as $M\to 0$:
$\bv(0) = \OO(1)$ and
$\rho_i^{\vol}(0) = \bar\rho + \OO(M)$
with $\bar\rho = M_{\rm tot}/|\Omega|$ fixed.
These conditions imply $\mathcal{E}_M(0) = \OO(1)$:
the kinetic energy $E_{\kin}(0) = \OO(1)$,
and $(\rho_i^{\vol}(0) - \bar\rho)^2 = \OO(M^2)$ per cell,
so $M^{-2}\sum_i|K_i|\hat H''(\xi_i)(\rho_i^{\vol}-\bar\rho)^2 = \OO(1)$.
\end{assumption}
\begin{assumption}[Strongly well-prepared data with continuum
regularity]\label{ass:well_prepared_strong}
Extending \Cref{ass:well_prepared}, the continuum reference
$(\bu,\rho^c)$ of \Cref{ass:smooth_baro} on $[0,T]$ satisfies:
\begin{enumerate}[label=\textup{(\roman*)},nosep,leftmargin=2em]
\item \emph{Velocity preparation.}
$\nabla\!\cdot\!\bu(0,\cdot)=\OO(M)$ in $L^\infty(\Omega)$, i.e.\
$\bu(0) = \bu_0(0) + \OO(M)$ in $W^{1,\infty}$ with
$\nabla\!\cdot\!\bu_0(0)=0$.
\item \emph{Regularity on $[0,T]$.}
$\bu\in L^\infty([0,T];W^{r_\star+1,\infty}(\Omega))$ and
$\rho^c-\bar\rho_0\in L^\infty([0,T];W^{1,\infty}(\Omega))$ with
$\|\rho^c-\bar\rho_0\|_{L^\infty([0,T];W^{1,\infty})}=\OO(M)$ and
$\|\partial_t\rho^c\|_{L^\infty([0,T]\times\Omega)}=\OO(M)$,
each constant depending only on the initial data, $T$, and
$\Omega$.
\end{enumerate}
By Klainerman--Majda~\cite{klainerman1981singular},
Schochet~\cite{schochet1994fast}, and
M\'etivier--Schochet~\cite{metivier2001low}, conditions~(i)--(ii)
are propagated by the barotropic Euler/Navier--Stokes system
from initial data of sufficient Sobolev regularity ($H^s$, $s>d/2+2$)
satisfying the corresponding well-preparedness conditions at $t=0$,
giving
$\|\nabla\!\cdot\!\bu\|_{L^\infty([0,T]\times\Omega)}=\OO(M)$.
\Cref{ass:well_prepared_strong} is invoked only where this rate is
required; all other uses of
well-preparedness in this paper rely on
\Cref{ass:well_prepared}.
\end{assumption}
\begin{proposition}[Energy residual under Mach scaling --
  density-free scheme]\label{prop:RE_Mach}
Under the Mach scaling, the energy residual~\eqref{eq:EB} of the
density-free scheme decomposes as
\[
\mathcal{R}_E
  = \underbrace{\bekin^T\bD_2\Phi
    + \bPhi^T\bD_2(\Phi - \bR\Phi)}_{\mathcal{R}_1}
  + M^{-2}\underbrace{\hat{\mathbf{h}}^T
    \bD_2(\Phi - \bR\Phi)}_{\hat{\mathcal{R}}_2}.
\]
The first term $\mathcal{R}_1$ is $M$-independent: both the
kinetic-energy and the geopotential contributions are $\OO(1)$.
The second term involves the difference
$(\Phi - \bR\Phi)_j = (1 - \bar\rho_j)\Phi_j$ together with the
scaled enthalpy $\hat{\mathbf{h}} = \OO(1)$. As $M \to 0$,
$\bar\rho_j = \bar\rho + \OO(M)$, and $\bD_2$ annihilates the
leading constant, so $\hat{\mathcal{R}}_2 = \OO(M)$.
Consequently $|\mathcal{R}_E| = \OO(M^{-1})$ as $M \to 0$ on any
fixed mesh, and the energy balance degenerates.
\end{proposition}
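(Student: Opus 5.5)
We start from the residual formula~\eqref{eq:EB} of \Cref{thm:totalenergy}, with $\bF = \bR\Phi$, i.e.\ $\bF_j = \bar\rho_j\Phi_j$. Under the Mach scaling the Bernoulli pieces are $\mathbf{h} = M^{-2}\hat{\mathbf{h}}$, while $\bekin$ and $\bPhi$ are unscaled. Inserting this into
\[
\mathcal{R}_E = \ip{\bekin}{\bD_2\Phi} + \ip{\mathbf{h}+\bPhi}{\bD_2(\Phi-\bF)}
\]
and collecting powers of $M$ gives the stated split $\mathcal{R}_E = \mathcal{R}_1 + M^{-2}\hat{\mathcal{R}}_2$. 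This first step is purely algebraic and uses no estimates.

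\emph{Bounding $\mathcal{R}_1$.} This is done on a fixed mesh. By \Cref{ass:well_prepared} and the conservation of the fluctuation energy $\mathcal{E}_M$ (taken up to the time horizon considered, or at least at $t=0$ where the rate is evaluated), $\bv = \OO(1)$. Hence $\Phi = \bM_1\bv = \OO(1)$ and $\bekin = \tfrac12|P\bv|^2 = \OO(1)$. The potential $\bPhi$ is fixed. Since $\bar\rho_j = \OO(1)$, the factor $\Phi - \bR\Phi$ is also $\OO(1)$. All operators are finite matrices independent of $M$, so $\mathcal{R}_1 = \OO(1)$.

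\emph{Bounding $\hat{\mathcal{R}}_2$.} Write $\bar\rho_j = \bar\rho + \delta_j$, where $\delta_j = \OO(M)$. This holds because $\bar\rho_j$ is a density-only interpolation (e.g.\ the centred average~\eqref{eq:rhobar}) of cell densities that equal $\bar\rho + \OO(M)$. Then
\[
\bD_2(\Phi - \bR\Phi) = (1-\bar\rho)\,\bD_2\Phi - \bD_2(\delta\Phi).
\]
The claim that $\bD_2$ annihilates the leading constant means that the constant-coefficient part $\bar\rho\,\bD_2\Phi$ must be controlled separately.

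This is the main obstacle. The plan is to test this term against $\hat{\mathbf{h}}$ and expand $\hat h_i = \hat h(\bar\rho) + \OO(M)$. The constant part $\hat h(\bar\rho)\,\mathbf{1}^T\bD_2(\cdot)$ vanishes by discrete Stokes ($\mathbf{1}^T\bD_2 = 0$, as in \Cref{thm:mass}). The remaining part is $\OO(M)\cdot\OO(1)$.

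The same reasoning handles the $\delta$ part: it is already $\OO(M)$ multiplied by the bounded quantity $\hat{\mathbf{h}}^T\bD_2$. Lipschitz continuity of $\hat h$ near $\bar\rho$, guaranteed by \Cref{ass:eos}, justifies the expansion $\hat h_i - \hat h(\bar\rho) = \OO(M)$. Altogether this yields $\hat{\mathcal{R}}_2 = \OO(M)$.

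\emph{Combining.} We obtain $|\mathcal{R}_E| \le C + M^{-2}\cdot CM = \OO(M^{-1})$.

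Two caveats remain. First, the bound $\rho_i^{\vol} - \bar\rho = \OO(M)$ at later times requires an $M$-uniform energy bound. The density-free energy is not conserved, so I would state the proposition pointwise along data that are well-prepared (at $t=0$, or on any interval where $\mathcal{E}_M = \OO(1)$ persists).

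Second, the estimate is generically sharp rather than merely an upper bound. For sharpness one would exhibit a density perturbation of size $M$ for which $(\hat{\mathbf{h}}-\hat h(\bar\rho)\mathbf{1})^T\bD_2\Phi \neq 0$, using face isolation as in \Cref{thm:dichotomy}. This shows that the energy balance genuinely degenerates.
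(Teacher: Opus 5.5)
Your argument is correct, and at the decisive step it departs from the paper's proof in a way that actually repairs it.

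The paper splits $1-\bar\rho_j=(1-\bar\rho)+(\bar\rho-\bar\rho_j)$ and asserts that $\bD_2$ annihilates the constant part. But $\bD_2$ is linear, so $\bD_2\bigl((1-\bar\rho)\Phi\bigr)=(1-\bar\rho)\,\bD_2\Phi$. This is $\OO(1)$ for well-prepared data that are not discretely solenoidal, and \Cref{ass:well_prepared} imposes no divergence condition.

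What annihilates constants is the adjoint, $\mathbf{1}^T\bD_2=0$. You use exactly this, by placing the $\OO(M)$ smallness on the enthalpy rather than on the face density. Your idea in fact gives the bound in one line: $\hat{\mathcal{R}}_2=(\hat{\mathbf{h}}-\hat h(\bar\rho)\mathbf{1})^T\bD_2(\Phi-\bR\Phi)$. Here $\hat{\mathbf{h}}-\hat h(\bar\rho)\mathbf{1}=\OO(M)$ by local Lipschitz continuity of $\hat h$, and $\bD_2(\Phi-\bR\Phi)=\OO(1)$ on a fixed mesh. So the split $\bar\rho_j=\bar\rho+\delta_j$ is needed only to identify the leading term, not to bound it.

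Both arguments rest on the same inputs: pointwise $\rho_i^{\vol}=\bar\rho+\OO(M)$ and $\bv=\OO(1)$. Your first caveat is right that for the density-free scheme these are guaranteed only at $t=0$, or wherever they persist; the paper leaves this restriction implicit. For the same reason, drop the appeal to conservation of $\mathcal{E}_M$ in the $\mathcal{R}_1$ step, since that conservation holds only for the density-weighted scheme.

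On your sharpness caveat: the leading part of $\hat{\mathcal{R}}_2$ is $(1-\bar\rho)\,(\hat{\mathbf{h}}-\hat h(\bar\rho)\mathbf{1})^T\bD_2\Phi$, and the $\delta$-part is $\OO(M^2)$. So a face-isolation witness produces genuine $M^{-1}$ growth only if, in addition, $\bar\rho\neq 1$. Here $1$ is the reference density the paper reinstates to make $\Phi-\bF$ dimensionally consistent. If that reference density is taken equal to $\bar\rho$, then $\hat{\mathcal{R}}_2=\OO(M^2)$ and $\mathcal{R}_E=\OO(1)$. Your upper bound is unaffected, but the claimed divergence depends on this normalisation.
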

\begin{proof}
Substitute $\mathbf{h}+\bPhi = M^{-2}\hat{\mathbf{h}} + \bPhi$
into~\eqref{eq:EB} and collect the $M$-independent terms
into $\mathcal{R}_1$.
For $\hat{\mathcal{R}}_2$:
write $(1-\bar\rho_j) = (1-\bar\rho) + (\bar\rho - \bar\rho_j)$;
$\bD_2$ annihilates the constant part, and
$\bar\rho - \bar\rho_j = \OO(M)$, giving
$\hat{\mathcal{R}}_2 = \OO(M)$.
\end{proof}
Combining \Cref{prop:RE_Mach} with the fixed-Mach bound
$\mathcal{R}_E = \OO(h^{2})$ of \Cref{thm:totalenergy} gives a
two-sided picture: at fixed $M$ the residual is $\OO(h^{2})$,
while at fixed $h$ it grows as $\OO(M^{-1})$, so uniform-in-$M$
control of the density-free energy balance requires
$h^{2} \lesssim M$, coupling the mesh and the Mach number. This is
the third and final failure mode of the density-free scheme:
its mesh-Mach coupling cannot be removed by any choice of viscosity
or near-incompressibility hypothesis. The density-weighted resolution
(\Cref{prop:dw_Mach,thm:AP}\textup{(c)} below) eliminates this
coupling by removing the residual algebraically.
\section{The No-Go-Resolution by a Density-Weighted Scheme}\label{sec:DW_construction}
\phantomsection\label{sec:totalenergy_resolution}
\paragraph*{The construction in one paragraph.}
The density-weighted scheme escapes the dichotomy by stepping
\emph{outside} hypothesis~(d) of \Cref{thm:dichotomy} rather than
within it: $\bM_1$ is replaced by
$\bM_1^\rho = P^T\mathrm{diag}(\brho)P$, moving the kinetic energy
into a density-dependent inner product and the momentum equation
into vector-invariant form modified by a single mass-matrix ratio.
Antisymmetry, mass, vorticity, and PV are unchanged as exact cochain
identities, blind to the metric; total energy, ruled out for
$\bM_1$, holds exactly for $\bM_1^\rho$ (\Cref{thm:dw_energy}). The
algebraic price is a Kelvin defect of order $\OO(h^{r_\star})$,
matching the rate and therefore asymptotically invisible. The three
downstream consequences -- global well-posedness without viscosity, convergence
on the full smooth-solution interval, and asymptotic preservation
under the low-Mach scaling -- buy back exactly the three costs
identified in \Cref{sec:cost_DF}.

\begin{assumption}
We generally assume a Delaunay--Voronoi mesh satisfying
\Cref{ass:mesh_reg}, and that the thermodynamics satisfy \Cref{ass:eos}.
\end{assumption}

\paragraph{The density-weighted weak form.}
On a prismatic mesh $\KK = \KK^{\rm hor}\times\KK^{\rm vert}$,
dual faces split into horizontal faces~$f_j^h$ (separating vertical layers)
and vertical faces~$f_j^v$ (within a layer).
The averaging reconstruction $P\colon\RR^{|F|}\to\RR^{|T|\times d}$
decomposes as $(P\bv)_i = (P_h\bv)_i + (P_v\bv)_i$,
where $P_h$ reconstructs from vertical dual faces (horizontal velocity)
using the Gram-matrix averaging of \Cref{def:averaging_recon},
and $P_v$ reconstructs from horizontal dual faces (vertical velocity).
Cross-terms vanish by orthogonality of horizontal and vertical faces.
For the vertical reconstruction, let $f_j^h$ be a horizontal face shared by cells
$K_a$ (thickness $\Delta z_a$) and $K_b$ (thickness $\Delta z_b$).
Define the \emph{thickness-weighted vertical interpolation}
\begin{equation}\label{eq:vert_weights}
 \lambda_a^j := \frac{\Delta z_b}{\Delta z_a + \Delta z_b}, \qquad
 \lambda_b^j := \frac{\Delta z_a}{\Delta z_a + \Delta z_b},
\end{equation}
so that $\lambda_a^j + \lambda_b^j = 1$, with $\lambda_a^j = \lambda_b^j = \tfrac{1}{2}$ on a uniform grid.
The reconstructed vertical velocity at cell~$K_i$
(with top face~$j_+$ and bottom face~$j_-$) is
$(P_v\bv)_i = \lambda_i^{j_+} w_{j_+} + \lambda_i^{j_-} w_{j_-}$.
These weights are dual to the vertical divergence operator in the SBP sense,
ensuring that the discrete vertical product rule holds exactly
for any vertical grid spacing. 
Define the \emph{density-weighted mass matrix}
$\bM_1^{\rho}(\brho) := P^T\,\mathrm{diag}(\brho)\,P$, with
$P = P_h + P_v$ incorporating the thickness-weighted vertical
averaging~\eqref{eq:vert_weights}; $\bM_1^\rho$ is symmetric positive
definite for $\brho>0$. The matrix originates in the conservative
shallow-water scheme of~\cite{kornLinardakis2018}; the present paper
establishes that it is the algebraic resolution of the
energy-conservation dichotomy within the vector-invariant family
(\Cref{thm:dichotomy_general}) and extends through the full PDE
theory of the barotropic compressible system.
\begin{definition}[Density-weighted kinetic energy]\label{def:kinenergy_density}
\[
\Ekindw
 := \frac{1}{2}\nrm{\bv}_{\bM_1^\rho}^2
 = \frac{1}{2}\sum_i \brho_i\,|P\bv|_i^2.
\]
\end{definition}
\begin{remark}[Norms and kernels of the two kinetic energies]%
\label{rem:ke_reveals}%
The two kinetic energies define different norms on $C^1(\KKs)$:
$E_{\kin}^{\rm df}=\tfrac12\sum_j(\bM_1)_{jj}v_j^2$ is
$\bM_1$-equivalent (checkerboard modes with $Pv=0$ carry non-zero
$\bM_1$-energy), whereas $\Ekindw=\tfrac12\sum_i\brho_i|P\bv|_i^2$
has kernel $\ker P$. On Delaunay--Voronoi meshes with sufficiently
many dual faces per cell, $\ker P=\{0\}$  and
the two norms are equivalent. 
\end{remark}
The kinetic energy decomposes without cross-terms as
$\Ekindw = E_{\kin}^{{\rm dw},{\rm hor}} + E_{\kin}^{{\rm dw},{\rm vert}}$.
The total energy is
$\Etot^{\rm dw} = \Ekindw + E_{\rm int} + E_{\rm pot}$
(\Cref{def:totalenergy}).
The momentum equation remains in vector-invariant form:
\begin{equation}\label{eq:dw_mom}
 \ddt\bv + [\bM_1^{\rho}(\brho)]^{-1}\bM_1\,\Iv(\tD_1\bv)
 + \tD_0 B^{\rho} = 0,
\end{equation}
where
$B_i^{\rho} = h(\brho_i^{\rm vol})
+ \tfrac{1}{2}|P\bv|_i^2 + \bPhi_i$.
The Lamb operator uses the DEC contraction $\Iv$
premultiplied by the mass-matrix ratio
$[\bM_1^{\rho}(\brho)]^{-1}\bM_1$.

\begin{theorem}[Energy conservation of the density-weighted scheme]%
\label{thm:dw_energy}
Let the discretisation be set on a prismatic Delaunay--Voronoi
mesh, with the reconstruction~$P$ built from the thickness-weighted
vertical averaging~\eqref{eq:vert_weights}. Then the total energy
$\Etot^{\rm dw}$ is exactly conserved:
$d\Etot^{\rm dw}/dt = 0$.
\end{theorem}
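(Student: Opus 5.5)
The plan is to differentiate $\Etot^{\rm dw} = \Ekindw + E_{\rm int} + E_{\rm pot}$ along solutions of \eqref{eq:dw_mom} with the shared continuity equation $\ddt\brho = -\bD_2\bF$. We then show that the kinetic-energy rate produces exactly $+\ip{\mathbf{h}+\bPhi}{\bD_2\bF}$, which cancels the internal and potential rates. Those two rates are already known from the density-free analysis: $\ddt E_{\rm int} = -\ip{\mathbf{h}}{\bD_2\bF}$ (\Cref{lem:dEint}) and $\ddt E_{\rm pot} = -\ip{\bPhi}{\bD_2\bF}$. Neither depends on the choice of kinetic energy, so they carry over verbatim. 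The only new work is therefore the kinetic part.

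Since $\Ekindw = \tfrac12\sum_i \brho_i|P\bv|_i^2$ depends on both $\bv$ and $\brho$, its rate has two pieces:
\[
 \ddt\Ekindw = \bv^T\bM_1^\rho(\brho)\,\ddt\bv + \tfrac12\sum_i \ddt\brho_i\,|P\bv|_i^2 .
\]
In the first piece, substituting \eqref{eq:dw_mom}, the mass-matrix ratio cancels. The Lamb term becomes $\bv^T\bM_1\Iv(\tD_1\bv) = \ip{\bv}{\Iv(\tD_1\bv)}_1 = 0$ by (H5)/\Cref{prop:extrusion}. This is exactly why the ratio $[\bM_1^\rho]^{-1}\bM_1$ was placed in front of $\Iv$. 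What remains is $-\bv^T\bM_1^\rho\tD_0 B^\rho$. The second piece equals $-\ip{\bekin}{\bD_2\bF}$ with $\ekin_i = \tfrac12|P\bv|_i^2$.

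The crux is a density-weighted summation-by-parts identity
\[
 \bv^T\bM_1^\rho(\brho)\,\tD_0 B = -\ip{B}{\bD_2\bF}\qquad\text{for all } B\in C^0(\KKs),
\]
with $\bF$ the mass flux of the shared continuity equation. This identity is the density-weighted analogue of hypothesis (iii) of \Cref{thm:dichotomy}. It amounts to the statement that the mass flux is the ``adjoint'' object $\bF = (\tD_0)^{T}$-dual of $P^T\mathrm{diag}(\brho)P\bv$, i.e. $\bD_2\bF = -\tD_0^T\bM_1^\rho\bv$ (with the sign fixed by the unweighted IBP $\tD_0^T\bM_1 = -\bD_2\bM_1$).

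I would verify it in two steps, splitting $P = P_h + P_v$ and using the absence of cross-terms. In the vertical direction, it reduces face-by-face to the thickness weights \eqref{eq:vert_weights}. A horizontal face $j$ between $K_a,K_b$ contributes $(\brho_a\lambda_a^j + \brho_b\lambda_b^j)w_j(B_a-B_b)$. This is exactly an upper/lower-cell-weighted mass flux times the vertical difference of $B$, which is the stated ``dual to the vertical divergence in the SBP sense'' property. In the horizontal direction, I would use the Gram-matrix averaging of \Cref{def:averaging_recon}, and in particular its exactness on discrete gradients, $P\tD_0 B$ = the cellwise reconstructed gradient, together with the IBP (H2)/\Cref{lem:SBP}. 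Together these express $\sum_i\brho_i (P\bv)_i\cdot(P\tD_0B)_i$ as a sum over faces of a density-weighted flux times $B_a-B_b$.

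I expect this identification to be the main obstacle. If the shared $\bF = \bR\Phi$ is not literally equal to the density-weighted flux, I would instead define the continuity flux through this identity and check that it matches $\bR$ with the centred averaging.

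Granting the identity, applying it with $B = B^\rho = \mathbf{h} + \bekin + \bPhi$ gives
\[
 \ddt\Ekindw = \ip{\mathbf{h}+\bekin+\bPhi}{\bD_2\bF} - \ip{\bekin}{\bD_2\bF} = \ip{\mathbf{h}+\bPhi}{\bD_2\bF}.
\]
The $\bekin$ contributions cancel exactly. This is the algebraic point where the degree-3 residual of \Cref{thm:totalenergy} disappears, and it is why $\bM_1^\rho$ lies outside hypothesis (iv) of \Cref{thm:dichotomy}. Adding the internal and potential rates then yields $\ddt\Etot^{\rm dw} = 0$. No metric hypothesis and no smallness assumption are used, consistent with the exactness claimed in the statement.
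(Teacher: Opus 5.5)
\textbf{Gap: the density-weighted summation by parts fails for the shared flux.} Your skeleton is the paper's: product rule on $\Ekindw$, the ratio $[\bM_1^\rho]^{-1}\bM_1$ reducing the Lamb term to $\bv^T\bM_1\Iv(\tD_1\bv)=0$, a density-weighted summation by parts, and cancellation of $\bekin$, $\mathbf{h}$, $\bPhi$ against the rates driven by the continuity equation. The gap is the crux identity in the form you state it, with $\bF=\bR\bM_1\bv$ the shared face-diagonal flux: that identity is false.

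Since $\tD_0=-\bD_2^T$, requiring $\bv^T\bM_1^\rho(\brho)\tD_0B=-\ip{B}{\bD_2\bF}$ for all $B$ is equivalent to $\bD_2\bF=\bD_2\bM_1^\rho(\brho)\bv$. But $\bM_1^\rho=P^T\mathrm{diag}(\brho)P$ is not diagonal. In a vertical column, $(P_v\bv)_i=\lambda_i^{j_+}w_{j_+}+\lambda_i^{j_-}w_{j_-}$ couples the top and bottom faces of every cell with weight $\brho_i\lambda_i^{j_+}\lambda_i^{j_-}$. The diagonal entry at face $j$ is $\brho_a(\lambda_a^j)^2+\brho_b(\lambda_b^j)^2$, not $\brho_a\lambda_a^j+\brho_b\lambda_b^j$. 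The Gram averaging likewise couples all faces of a cell.

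Take $B$ to be the indicator of a cell $K_k$. The left side then contains the vertical velocity on the top face of the cell above $K_k$, with coefficient $\pm\brho_{k+1}\lambda_{k+1}^{j_+}\lambda_{k+1}^{j_-}\neq0$. The right side $(\bD_2\bR\bM_1\bv)_k$ sees only the faces of $K_k$. So neither your face-by-face vertical computation nor the horizontal Gram argument can succeed. Your fallback check that the identity-defined flux ``matches $\bR$ with the centred averaging'' also fails for every density-only $\bar\rho_j$. With the shared flux the energy rate is $\ip{B^\rho}{\bD_2(\bM_1^\rho(\brho)\bv-\bR\bM_1\bv)}$, which is generically nonzero.

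\textbf{The missing idea: change the continuity equation, not just the mass matrix.} The paper's proof uses $\ddt\brho=-\bD_2\bF^\rho$ with $\bF^\rho:=\bM_1^\rho(\brho)\bv=P^T\mathrm{diag}(\brho)P\bv$, a multi-cell flux, in \eqref{eq:dw_F}. It does this despite the main-text remark that the continuity equation is shared, which is the remark you relied on. With this flux, \Cref{lem:dw_SBP} is a one-liner: $\bv^T\bM_1^\rho\tD_0B=(\bF^\rho)^T\tD_0B=-(\bD_2\bF^\rho)^TB$. It holds for any linear $P$; the thickness weights play no role in the algebra. \Cref{lem:dEint} and the potential-energy rate hold verbatim with $\bF$ replaced by $\bF^\rho$, since they use only $\ddt\brho=-\bD_2\bF$. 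Your first fallback, defining the flux through the identity, is exactly this. Commit to it and drop the comparison with $\bR$; the rest of your computation is then the paper's proof.
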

\begin{proof}
The full proof  is in \Cref{app:dw_proofs}.
\end{proof}
The circulation-based conservation laws of \Cref{subsection_InvariantsComp}
depend only on $\tdd^2=0$, discrete Stokes, and Lamb antisymmetry
-- all metric-blind -- and therefore transfer unchanged to the
density-weighted scheme. The Kelvin theorem is the one exception: in
the $\bM_1^\rho$ inner product the mass-matrix ratio
$[\bM_1^{\rho}]^{-1}\bM_1$ enters the Lamb operator, producing a
small defect.
\begin{corollary}[Kelvin defect for the density-weighted scheme]%
\label{thm:dw_conservation}
For the density-weighted scheme, the Kelvin circulation satisfies
$\ddt\Gamma_h = \OO(h^{r_\star})$ in the $\bM_1^\rho$ inner product.
The defect matches the convergence rate of
\Cref{thm:dw_convergence} and is therefore asymptotically
invisible.
\end{corollary}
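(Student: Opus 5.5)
The plan is to rerun the proof of \Cref{thm:kelvin} with the density-weighted momentum equation~\eqref{eq:dw_mom} in place of~\eqref{eq:M}, and to isolate the single term where the two schemes differ. For a materially advected dual 1-cycle, $\ddt\gamma = \tLie_\bv^{\rm chain}\gamma$, the product rule gives
\[
 \ddt\Gamma_h = \bigl(\ddt\bv\bigr)(\gamma) + \bigl(\tLie_\bv\bv\bigr)(\gamma).
\]
For the density-free scheme, the Cartan formula from \Cref{app:basic}, $\tLie_\bv\bv = \Iv(\tD_1\bv) + \tD_0(\cdot)$, cancels the Lamb term against $\Iv(\bom)$. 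The remaining exact gradients vanish on cycles by discrete Stokes, since $(\tD_0 B)(\gamma) = B(\partial\gamma) = 0$. Substituting~\eqref{eq:dw_mom} instead leaves the same gradient terms, which again vanish on cycles, and one surviving term:
\[
 \ddt\Gamma_h = \Bigl(\bigl(I - [\bM_1^\rho]^{-1}\bM_1\bigr)\Iv(\bom)\Bigr)(\gamma).
\]
The Bernoulli functions $B^\rho$ and $B$ both enter only through $\tD_0$, so their difference causes no defect. The whole proof therefore reduces to estimating this mass-matrix-ratio term.

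Next I rewrite the defect as
\[
 [\bM_1^\rho]^{-1}\bigl(\bM_1^\rho - \bM_1\bigr)\Iv(\bom).
\]
This identity fixes how the defect is normalised. Since $\bM_1$ is a density-free Hodge star while $\bM_1^\rho = P^T\mathrm{diag}(\brho)P$ carries cell masses, the difference can only be small once $\bM_1$ is read on the same scale as $\bM_1^\rho$. I take that rescaling to be the intended meaning of ``in the $\bM_1^\rho$ inner product'', i.e.\ $\bM_1$ is compared against $P^T\mathrm{diag}(\rho^{\vol}|K|)P$ with the local density factored out.

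With that reading, I would establish
\[
 \bigl\|(\bM_1^\rho - \bar\rho\,\bM_1)\alpha\bigr\| \le C h^{r_\star}\|\alpha\|_{W^{r_\star,\infty}}
\]
for $\alpha$ the restriction of a smooth field. The first ingredient is Hodge accuracy (H3): $\bM_1$ approximates the continuum mass operator $\mathcal{M}_1$ to $\OO(h^{r_\star})$. The second is reconstruction accuracy (H4) together with $\rho$ being Lipschitz: $P^T\mathrm{diag}(\brho)P$ approximates $\int \rho\, u\cdot w$ to the same order, with $r_{\rm rec}\ge r_\star$ by \Cref{conv:cases}. The quantity $\Iv(\bom)$ is the interpolant of the smooth Lamb vector up to the extrusion error, so the bound applies to it. Applying $[\bM_1^\rho]^{-1}$, which is uniformly bounded because $\rho\ge\rho_{\min}$ and $\ker P=\{0\}$ (\Cref{rem:ke_reveals}), and pairing with the fixed cycle $\gamma$ then gives $\ddt\Gamma_h = \OO(h^{r_\star})$.

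The main obstacle is this consistency step. The estimate must hold uniformly along the discrete solution, not only on interpolants. I would handle this by splitting $\bv = \bar\bv + e_v$, where $\bar\bv$ is the interpolant of the smooth reference solution: the smooth part gives $h^{r_\star}$ as above, and the error part is bounded by \Cref{thm:dw_convergence}, also at rate $h^{r_\star}$. This is consistent with the corollary's statement that the defect matches the convergence rate. The pairing with $\gamma$ also needs care: for loops of length $\OO(1)$ the per-edge $\OO(h^{r_\star})$ errors must be summed over $\OO(h^{-1})$ edges. I would avoid that loss by performing the pairing weakly, via the $L^2$ bounds, rather than edgewise.
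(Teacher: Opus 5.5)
Your algebraic reduction is correct. Rerunning the proof of \Cref{thm:kelvin} with \eqref{eq:dw_mom}, the gradient terms vanish on the cycle and $\ddt\Gamma_h=\bigl((I-[\bM_1^\rho]^{-1}\bM_1)\Iv(\bom)\bigr)(\gamma)$.

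The gap is in estimating this term. You observe, correctly, that $\bM_1^\rho$ approximates the $\rho$-weighted $L^2$ product and $\bM_1$ the unweighted one, so $[\bM_1^\rho]^{-1}\bM_1$ acts on smooth cochains like multiplication by $\rho^{-1}$. You then bound $\bM_1^\rho-\bar\rho\,\bM_1$, but the scheme contains $\bM_1$. The piece you discard, $[\bM_1^\rho]^{-1}(\bar\rho-1)\bM_1\Iv(\bom)$, is not small. Along a smooth solution the defect tends to $\oint_\gamma(1-\rho^{-1})\,\iota_{\bu}\omega$. This is generically $\OO(1)$ and independent of $h$, unless $\rho\equiv1$ in the normalisation implicit in \eqref{eq:dw_mom}.

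Reinterpreting ``in the $\bM_1^\rho$ inner product'' cannot repair this. A constant rescaling $\rho_0\bM_1$ keeps the structure but leaves a defect of size $\sup|1-\rho_0/\rho|$. A local rescaling $\bR\bM_1$ is a different scheme, and it loses exact energy conservation. In the notation of the proof of \Cref{prop:extrusion}, set $X:=\tU(\bv)\tD_1$; then $2\,\bv^T\bR\bM_1\Iv(\tD_1\bv)=\bv^T\bR(X-X^T)\bv$. This is nonzero in general for non-constant diagonal $\bR$, because the transposition argument behind \Cref{lem:dw_antisym} works only for the unweighted pairing. So \Cref{thm:dw_energy} would fail, and your $h^{r_\star}$ estimate controls a quantity other than $\ddt\Gamma_h$.

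The paper does not close this gap either: the appendix it cites contains no argument for this corollary. Its only relevant tool is \Cref{lem:mass_matrix_commutator}. That lemma splits $\bM_1^\rho-\bM_1$ into a Gram mismatch $P^TP-\bM_1$ (essentially what your rescaling keeps) and a fluctuation $P^T\mathrm{diag}(\bar\brho-\mathbf 1)P$ (what your rescaling drops). It then asserts the fluctuation is small once tested, although the paper's own consistency discussion concedes the operator is $\OO(1)$ in strong form. Tested against a field $\phi$, the fluctuation tends to $\int_\Omega(\rho-1)\,\phi\cdot(\bu\times\omega)$. This vanishes for $\phi=\bu$, which is why the stability proof tests against $\bar\bv$, but it does not vanish for a loop.

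Two further steps would also fail.

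First, the error part is not $\OO(h^{r_\star})$. \Cref{thm:dw_convergence} gives $h^{\min(d-1,r_\star)}$, which is only $h$ for $d=2$ in case~(B). The Lamb difference also contains $\tD_1 e_v$, which that rate does not control without an inverse inequality costing $h^{-1}$. And $I-[\bM_1^\rho]^{-1}\bM_1$ is $\OO(1)$ on error cochains, so it contributes no smallness.

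Second, there is no ``weak'' way to pair with a loop, because evaluation on a $1$-cycle is not continuous in $\nrm{\cdot}_{\bM_1}$. Take a unit-coefficient loop through $\OO(h^{-1})$ dual edges with $(\bM_1)_{jj}\sim h^{d-2}$. The sharp bound is $|\alpha(\gamma)|\le C h^{-(d-1)/2}\nrm{\alpha}_{\bM_1}$, so an $h^{r}$ error in norm becomes $h^{r-(d-1)/2}$ on the loop. This does not decay for $d=3$, $r=1$. Moreover, $\gamma(t)$ is not fixed: the chain ODE evolves its real coefficients, so you would also need a bound on its $\ell^1$ mass over $[0,T]$.
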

\begin{proof}
The proof is given in \Cref{app:dw_proofs}.
\end{proof}
\subsection{Existence: Global Without Viscosity}\label{sec:DW_GWP}
The first downstream consequence of the dichotomy resolution is that
the local in time well-posedness restriction on density-free existence
(\Cref{thm:euler_wp,thm:NS_wp}) disappears. Exact energy
conservation closes the energy step in the well-posedness chain as
an equality rather than an inequality, leaving only two further
ingredients to verify: the kinetic energy bound (immediate from
$\Etot^{\rm dw}(t)=\Etot^{\rm dw}(0)$), and the density lower bound
(\Cref{lem:no-vacuum} of \Cref{sec:vacuum_avoidance}, shared with
the density-free scheme).
\paragraph*{Euler well-posedness (density-weighted scheme).}
The density-weighted Euler system inherits the Picard--Lindel\"of
chain of \Cref{thm:euler_wp} on the admissible set
$\admissible = \RR^{|F|}\times\RR^{|T|}_{>0}$, with the energy step
replaced by exact conservation: the local timei restriction is removed,
and global existence follows.
\begin{theorem}[Well-posedness of the discrete barotropic
 Euler system -- density-weighted scheme]%
\label{thm:dw_wp}%
For every initial data $(\bv_0, \brho_0) \in \admissible$ with
$\bv_0 \in \ker(P)^\perp$, the density-weighted
system~\eqref{eq:dw_mom} admits a unique \emph{global} solution
$(\bv, \brho) \in C^1([0,\infty); \admissible)$ with
$\bv(t) \in \ker(P)^\perp$ for every $t \ge 0$. On every interval
$[0,T]$, the solution satisfies the mass bound $\brho_i(t) \le M$,
exact energy conservation $\Etot^{\rm dw}(t) = \Etot^{\rm dw}(0)$,
the velocity bound $\nrm{\bv}_{\bM_1} \le R_v$, and uniform vacuum
avoidance $\brho_i^{\vol}(t) \ge \rho_{\min}^{\vol} > 0$.
\end{theorem}
The proof is in \Cref{app:euler_wp_proof}.
\paragraph*{Navier--Stokes well-posedness.}
Viscosity preserves the picture: any admissible $\bA\ge 0$ adds a
non-positive dissipation $-\nrm{\tD_1\bv}_\bA^2$ to the energy
balance and leaves the Picard--Lindel\"of structure intact. Unlike
the density-free case (\Cref{thm:NS_wp}), no Smagorinsky threshold
is needed -- the cubic residual is absent algebraically, and any
nonnegative dissipation suffices.
\begin{theorem}[Well-posedness of the discrete barotropic
 Navier--Stokes system -- density-weighted scheme]%
\label{thm:dw_wp_NS}%
Let the hypotheses of \Cref{thm:dw_wp} hold, and let $\bA \ge 0$
be any admissible viscosity matrix (including the inviscid case
$\bA = 0$). Then the density-weighted Navier--Stokes system admits
a unique \emph{global-in-time} solution
$(\bv, \brho) \in C^1([0,\infty); \admissible)$, with integrated
dissipation bounded by the initial energy,
$\int_0^T \nrm{\tD_1 \bv}_\bA^2 \, dt \le \Etot^{\rm dw}(0)$.
\end{theorem}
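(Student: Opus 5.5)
The plan is to follow the Picard--Lindel\"of-plus-a-priori-bounds chain of \Cref{thm:dw_wp}, modified only in the energy step.

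\emph{Step 1: local existence and uniqueness.} The viscous term is $\bff_{\rm visc}$. For the DW scheme it enters as $-[\bM_1^\rho(\brho)]^{-1}\tD_1^T\bA\,\tD_1\bv$, or in the $\bM_1$-weighted variant transported by the same mass-matrix ratio. I would choose the form that makes the dissipation appear as $-\nrm{\tD_1\bv}_\bA^2$ in the $\bM_1^\rho$ energy pairing. It is linear in $\bv$ and smooth in $\brho$ on $\admissible$, because $\bM_1^\rho(\brho)$ is symmetric positive definite on $\ker(P)^\perp$ for $\brho>0$, so its inverse depends smoothly on $\brho$. Hence the right-hand side stays locally Lipschitz on $\admissible$ (as in \Cref{thm:lipschitz-baro}), and a unique maximal solution exists on $[0,T^*)$. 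Invariance of $\ker(P)^\perp$ follows exactly as in \Cref{thm:dw_wp}. All terms are built from $P^T$ or from $[\bM_1^\rho]^{-1}$ acting on the range of $P^T$; the Bernoulli gradient $\tD_0 B^\rho$ needs the same check already made in the Euler case, and I reuse that argument verbatim.

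\emph{Step 2: energy inequality.} Differentiate $\Etot^{\rm dw}$ along solutions. The inviscid part vanishes by the computation of \Cref{thm:dw_energy}, since adding the viscous force does not change the continuity equation or the internal and potential energy rates. The only new contribution is $\ip{\bv}{\bM_1^\rho\bff_{\rm visc}} = -\ip{\tD_1\bv}{\bA\tD_1\bv} = -\nrm{\tD_1\bv}_\bA^2\le 0$, using the SBP/adjoint structure of admissible operators (axioms (V1)--(V4)). Integrating gives
\[
\Etot^{\rm dw}(T)+\int_0^T\nrm{\tD_1\bv}_\bA^2\,dt=\Etot^{\rm dw}(0).
\]
Energy positivity is needed to obtain the stated bound. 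Normalising $E_{\rm int}$ and $E_{\rm pot}$ so that $\Etot^{\rm dw}\ge 0$ is possible by coercivity of $e$ (\Cref{ass:eos}) and by $\bPhi$ being bounded on a finite mesh with $\brho_i\le M$, after an additive constant depending only on the conserved mass. This yields $\int_0^T\nrm{\tD_1\bv}_\bA^2\le\Etot^{\rm dw}(0)$.

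\emph{Step 3: a priori bounds and continuation.} From Step 2, $\Ekindw$ and $E_{\rm int}$ are bounded by $\Etot^{\rm dw}(0)$ plus a constant. Mass conservation (\Cref{thm:mass}) together with $\brho_i>0$ gives $\brho_i\le M$. \Cref{lem:no-vacuum} then gives $\rho_i^{\rm vol}\ge\rho_{\min}^{\rm vol}>0$ uniformly in $t$, and the threshold condition holds as in the Euler case because the bound on $E_{\rm int}$ is no worse than there. With $\brho$ bounded below, the norm $\nrm{\cdot}_{\bM_1^\rho}$ controls $\nrm{\cdot}_{\bM_1}$ on $\ker(P)^\perp$ (\Cref{rem:ke_reveals}), so $\nrm{\bv}_{\bM_1}\le R_v$. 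The trajectory therefore stays in a compact subset of $\admissible$, the blow-up alternative excludes $T^*<\infty$, and the solution is global.

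The main obstacle is Step 2's sign check. One must verify that the viscous operator, after premultiplication by the density-dependent ratio, still produces exactly $-\nrm{\tD_1\bv}_\bA^2$ in the $\bM_1^\rho$ pairing with no density-dependent cross term. Equivalently, one must fix the DW viscous term so that it is $\bM_1^\rho$-adjoint-compatible. I would try first to define it as $-[\bM_1^\rho]^{-1}\tD_1^T\bA\tD_1\bv$, which makes the identity immediate. If instead the $\bM_1^{-1}$ form is imposed, the ratio $[\bM_1^\rho]^{-1}\bM_1$ cancels against $\bM_1^{-1}$ in the pairing, and the same conclusion holds.
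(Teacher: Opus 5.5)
Your proposal is correct and follows the paper's route. The paper's proof consists only of the dissipation identity $\ddt\Etot^{\rm dw} = -\nrm{\tD_1\bv}_\bA^2\le 0$, followed by the remark that the a priori bounds and continuation argument of \Cref{thm:dw_wp} are therefore preserved, which is exactly your Steps~1--3. Your proposal also makes explicit two points that the paper's one-line proof leaves implicit. First, the DW viscous force must be fixed as $-[\bM_1^\rho]^{-1}\tD_1^T\bA\tD_1\bv$, or equivalently an $\bM_1^{-1}$-form operator premultiplied by the ratio $[\bM_1^\rho]^{-1}\bM_1$; without that ratio the pairing with $\bM_1^\rho\bv$ is not sign-definite. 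Second, $e$ and $\bPhi$ must be normalised by mass-dependent constants so that $\Etot^{\rm dw}\ge 0$, which is what turns the energy identity into the stated bound $\int_0^T\nrm{\tD_1\bv}_\bA^2\,dt\le\Etot^{\rm dw}(0)$.
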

\begin{proof}
$\ddt\Etot^{\rm dw} = -\nrm{\tD_1\bv}_\bA^2\le 0$, so the a priori
bounds of \Cref{thm:dw_wp} are preserved under any admissible
viscosity and $T^*=\infty$.
\end{proof}
\subsection{Convergence: Global on the Smooth-Solution Interval}\label{sec:DW_convergence}
\phantomsection\label{sec:DW_conv_AP}
\label{sec:smooth_regime}%
\label{sec:convergence_baro}
For the density-weighted scheme the convergence chain
(consistency--stability--Gr\"onwall--Whitney) closes on the full
smooth-solution interval~$[0,T]$, with no near-incompressibility
hypothesis on the reference and uniformly in $\nu\ge 0$. Two
structural changes from the density-free case
(\Cref{thm:DF_convergence}) drive this: the error functional is the
Dafermos relative energy $\mathcal{E}_{\rm rel}$ rather than the
Bregman energy~$\mathcal{E}$, and the truncation is controlled in
\emph{reference-tested} form rather than in the strong
$\bM_1^\rho$-norm. 
Both reflect the same underlying fact: pairing
against the smooth reference $\bar\bv$ rather than the error~$e_v$
trades the $L_h^\infty$-bootstrap for the mass-matrix commutator
identity~\eqref{eq:mass_matrix_commutator}, while the pressure
coupling is absorbed by the Bregman rate identity~\eqref{eq:bregman_rate}.
Lamb antisymmetry kills the velocity-error self-interaction in the
$\bM_1^\rho$ inner product as well, leaving Gr\"onwall with
$h$-independent constants; the rate is set by the slower one of
extrusion $\OO(h^{d-1})$ and Hodge $\OO(h^{r_\star})$.
The reference interpolation is $\bar v=\mathcal{R}_h(\bu^\flat)$
and $\bar\rho_i=\int_{K_i}\rho^c\,\dd V$.
The approximation properties of the individual operators
are collected in \Cref{lem:interp_error,lem:hodge_error}; the rate
exponent $r_\star$ is governed by \Cref{conv:cases} throughout.
The natural functional is the Dafermos relative energy of
$\Etot^{\rm dw}$ about the smooth reference.
\begin{definition}[Error energy -- density-weighted scheme]%
\label{def:dw_relative_energy}
The \emph{relative energy} is the second-order remainder of
$\Etot^{\rm dw}$ at $(\bar\bv,\bar\brho)$:
\[
\mathcal{E}_{\rm rel}
:= \Etot^{\rm dw}(\bv^h,\brho^h)
- \Etot^{\rm dw}(\bar\bv,\bar\brho)
- D_{\bv}\Etot^{\rm dw}|_{(\bar\bv,\bar\brho)}\cdot e_v
- D_{\brho}\Etot^{\rm dw}|_{(\bar\bv,\bar\brho)}\cdot e_\rho,
\]
with $D_\bv$, $D_\brho$ partial derivatives of
$\Etot^{\rm dw}(\bv,\brho) = \tfrac{1}{2}\bv^T\bM_1^{\rho}(\brho)\bv
+ E_{\rm int}(\brho)$ and $\cdot$ the Euclidean pairing on
$\RR^{N_1},\RR^{N_0}$. 
\end{definition}
\paragraph*{Consistency.}
The density-weighted truncation cannot be controlled in the strong
$\bM_1^\rho$-norm -- a Lamb-commutator remainder is $\OO(1)$ on
$\bM_1^\rho$ but reduces to $\OO(h^r)$ when paired against the
smooth reference $\bar\bv$ via the mass-matrix commutator lemma.
Consistency is therefore stated in reference-tested form.
\begin{theorem}[Consistency -- density-weighted scheme]%
\label{thm:dw_consistency}%
Let $r := \min(d-1, r_\star)$. With the centred density
interpolation~\eqref{eq:rhobar}, the density-weighted truncation
satisfies
\begin{equation}\label{eq:dw_tested_consistency}
\sup_{t\in[0,T]}\bigl|\ipw{\bar\bv}{\tau_v^{\rm dw}}{\bM_1^{\rho}(\bar\brho)}\bigr|
 \le C_\tau\,h^r,
\qquad
\sup_{t\in[0,T]}\nrm{\tau_\rho}_{\ell^2}\le C_\tau\,h^r.
\end{equation}
The test function $\bar\bv := \mathcal{R}_h\bu^\flat$ is the one
used in the DW stability proof. 
On general meshes the rate is
$r = 1$ for every~$d$; under \Cref{conv:cases} case~(B), the rate
sharpens to $r = \min(d-1, 2)$, recovering $\OO(h^2)$ for $d = 3$.
\end{theorem}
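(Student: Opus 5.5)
The plan is to split the truncation into the pieces the density-free analysis already controls plus one new term, and to treat that new term by pairing against the smooth reference. Define $\tau_v^{\rm dw}$ by inserting the reference $(\bar\bv,\bar\brho)$, with $\bar\bv=\mathcal{R}_h\bu^\flat$ and $\bar\brho_i=\int_{K_i}\rho^c$, into~\eqref{eq:dw_mom}:
\[
\tau_v^{\rm dw} := \ddt\bar\bv + [\bM_1^\rho(\bar\brho)]^{-1}\bM_1\Iv(\tD_1\bar\bv) + \tD_0 B^\rho(\bar\bv,\bar\brho).
\]
The continuity equation is shared with the density-free scheme. The bound $\nrm{\tau_\rho}_{\ell^2}\le C_\tau h^r$ can therefore be taken verbatim from \Cref{thm:consistency_baro}, which uses the centred interpolation~\eqref{eq:rhobar} and (H6). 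I will not redo that part.

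For the velocity, I subtract the continuous equation sampled by $\mathcal{R}_h$, namely
\[
\mathcal{R}_h\partial_t v + \mathcal{R}_h(\iota_\bu\omega) + \mathcal{R}_h\,\dd B = 0.
\]
This splits $\tau_v^{\rm dw}$ into three terms.

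\begin{enumerate}[nosep,label=\textup{(\roman*)}]
\item The Bernoulli term $\tD_0 B^\rho - \mathcal{R}_h\dd B$. By (H2) this reduces to $\tD_0$ applied to the pointwise errors in $h(\bar\rho_i^{\rm vol})$ and $\tfrac12|P\bar\bv|_i^2$. By (H4) and the smoothness in \Cref{ass:smooth_baro}, these errors are $\OO(h^{r_\star})$.
\item The mismatch between the extrusion contraction and $\mathcal{R}_h(\iota_\bu\omega)$, namely $\Iv(\tD_1\bar\bv)-\mathcal{R}_h(\iota_\bu\omega)$. This is $\OO(h^{\min(d-1,r_\star)})$ exactly as in the density-free consistency proof.
\item The new mass-matrix-ratio term
\[
\bigl([\bM_1^\rho(\bar\brho)]^{-1}\bM_1 - \bar\rho^{-1}\bigr)\mathcal{R}_h(\iota_\bu\omega),
\]
with $\bar\rho^{-1}$ understood facewise. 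In the continuum this ratio would be exactly compensated.
\end{enumerate}

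In strong $\bM_1^\rho$-norm, term (iii) is only $\OO(1)$, because $P^T\mathrm{diag}(\bar\brho)P$ and $\mathrm{diag}(\bar\rho_j)\bM_1$ differ at the grid scale. The theorem accordingly asks only for the tested quantity $\ipw{\bar\bv}{\tau_v^{\rm dw}}{\bM_1^\rho(\bar\brho)}$.

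Testing removes the inverse:
\[
\ipw{\bar\bv}{[\bM_1^\rho]^{-1}\bM_1\Iv(\tD_1\bar\bv)}{\bM_1^\rho} = \bar\bv^T\bM_1\Iv(\tD_1\bar\bv) = 0
\]
by Lamb antisymmetry (H5). The continuum counterpart also vanishes, since $\iota_\bu\omega\cdot\bu=0$ pointwise. Hence the tested version of term (iii), which I expect to be the main obstacle, reduces to the commutator identity~\eqref{eq:mass_matrix_commutator}. Concretely, I must show that
\[
\bar\bv^T\bigl(\bM_1^\rho(\bar\brho)-\mathrm{diag}(\bar\rho_j)\bM_1\bigr)w = \OO(h^{r_\star})
\quad\text{for } w = \mathcal{R}_h(\text{smooth 1-form}).
\]
My first attempt at this will proceed as follows.
\begin{itemize}[nosep]
\item Write $\bar\bv^T P^T\mathrm{diag}(\bar\brho)Pw=\sum_i\bar\brho_i\,(P\bar\bv)_i\cdot(Pw)_i$.
\item Use (H4) to replace each reconstruction by the point value of the smooth field, up to $\OO(h^{r_\star})$.
\item Recognise the sum as a midpoint quadrature for $\int\rho^c\,\bu\cdot w^\sharp$.
\item Compare with the facewise sum $\sum_j\bar\rho_j(\bM_1)_{jj}\bar v_jw_j$, which is the Hodge quadrature of the same integral up to $\OO(h^{r_\star})$ by (H3).
\end{itemize}
The products are smooth with bounded $W^{r_\star,\infty}$ norms, so the summed error over $\OO(h^{-d})$ cells of volume $\OO(h^d)$ stays $\OO(h^{r_\star})$. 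On centroidal meshes (case~(B) of \Cref{conv:cases}) the reconstruction-symmetry cancellation gives second order.

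Finally, the tested time-derivative and Bernoulli terms are bounded by Cauchy--Schwarz. The norm $\nrm{\bar\bv}_{\bM_1^\rho}$ is $\OO(1)$ uniformly in $h$, because $\bar\brho_i^{\rm vol}\le\sup\rho^c$ and $\bar\bv$ is bounded. Taking the worst exponent gives $r=\min(d-1,r_\star)$ uniformly on $[0,T]$, as stated.
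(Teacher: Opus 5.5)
Your split (i)--(iii) does not add up to $\tau_v^{\rm dw}$. The Lamb part of the truncation is exactly
\[
[\bM_1^\rho(\bar\brho)]^{-1}\bM_1\bigl(\Iv(\tD_1\bar\bv)-\mathcal{R}_h(\iota_\bu\omega)\bigr)
+\bigl([\bM_1^\rho(\bar\brho)]^{-1}\bM_1-I\bigr)\mathcal{R}_h(\iota_\bu\omega).
\]
Your (iii) has $\mathrm{diag}(\bar\rho_j)^{-1}$ where $I$ should stand, so the term $\bigl(\mathrm{diag}(\bar\rho_j)^{-1}-I\bigr)\mathcal{R}_h(\iota_\bu\omega)$ is silently dropped. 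Your (ii) has also lost its prefactor, but that is harmless after testing. The dropped term is not ``compensated in the continuum'': the vector-invariant equation~\eqref{eq:intro_baro} has no $\rho^{-1}$ in front of $\iota_\bu\omega$. Since $[\bM_1^\rho]^{-1}\bM_1\Iv$ weakly represents $\rho^{-1}\iota_\bu\omega$, it is $\OO(1)$-inconsistent already at the continuum level, not only through the grid-scale mismatch between $P^T\mathrm{diag}(\bar\brho)P$ and $\mathrm{diag}(\bar\rho_j)\bM_1$.

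Tested against $\bar\bv$, the dropped term is a quadrature of $\int_\Omega(1-\rho^c)\,\bu\cdot(\iota_\bu\omega)^\sharp$. It is therefore small, but only because of the pointwise orthogonality $\bu\cdot(\iota_\bu\omega)^\sharp=0$; no commutator makes it so. Your antisymmetry step does not help either. It acts on $[\bM_1^\rho]^{-1}\bM_1\Iv(\tD_1\bar\bv)$, which appears in none of your three terms; tested (iii) reduces to your commutator by direct computation. The commutator $\bar\bv^T(\bM_1^\rho-\mathrm{diag}(\bar\rho_j)\bM_1)w=\OO(h^{r_\star})$ is a sensible lemma, but it controls only what you isolated.

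The repair is to test before splitting. Using $\ddt\bar\bv=\mathcal{R}_h\partial_t\bu^\flat$ and $\bar\bv^T\bM_1\Iv(\tD_1\bar\bv)=0$,
\[
\ipw{\bar\bv}{\tau_v^{\rm dw}}{\bM_1^\rho(\bar\brho)}
=-\sum_i\bar\brho_i\,(P\bar\bv)_i\cdot\bigl(P\mathcal{R}_h(\iota_\bu\omega)\bigr)_i
+\ipw{\bar\bv}{\tD_0B^\rho-\mathcal{R}_h\dd B}{\bM_1^\rho(\bar\brho)}.
\]
The sum is $\OO(h^{r_\star})$ by (H4), the orthogonality, and $\sum_i\bar\brho_i=\int_\Omega\rho^c$. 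This needs no extrusion estimate and no commutator. The paper instead keeps the exact split~\eqref{eq:dw_truncation_decomp_appx} and handles the middle term by \Cref{lem:mass_matrix_commutator}. Tested with $\bar\bv$, that term equals $-\bar\bv^T\bM_1^\rho(\bar\brho)\Iv(\tD_1\bar\bv)$, whose smallness again rests on the same orthogonality.

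A second, smaller gap is in (i). Cauchy--Schwarz applied to $\tD_0$ of a pointwise $\OO(h^{r_\star})$ error only gives $\nrm{\tD_0e}_{\bM_1}=\OO(h^{r_\star-1})$, because a discrete gradient of a grid function that is not itself smooth costs a power of $h$. In case~(A) this is $\OO(1)$. You need one of two things:
\begin{itemize}
\item a per-edge bound on the differenced error one order better than the pointwise one, which is what the paper asserts in step (A3) of its consistency proof; or
\item a summation by parts moving $\tD_0$ onto $\bar\bv$ (\Cref{lem:dw_SBP}), which then requires an $\ell^1$ bound on $\bD_2\bM_1^\rho(\bar\brho)\bar\bv$.
\end{itemize}
The density truncation, taken verbatim from the density-free estimate, agrees with the paper.
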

\noindent\emph{Proof.} See \Cref{app:consistency_proof}.
\paragraph*{Stability.}
Differentiating $\mathcal{E}_{\rm rel}$ and substituting the
discrete equations: the leading term is annihilated by
$\ddt\Etot^{\rm dw}=0$ (\Cref{thm:dw_energy}), and the residual
terms are estimated by the bilinear bounds plus the mass-matrix
commutator. No $L_h^\infty$ bootstrap on $e_v$ is required, since
the cubic self-interaction is paired against the bounded~$\bar\bv$.
\begin{theorem}[Stability -- density-weighted scheme]%
\label{thm:dw_stability}
Suppose \Cref{ass:smooth_baro} holds together with the hypotheses
of \Cref{thm:dw_energy}. Then the relative energy obeys
\begin{equation}\label{eq:dw_Erel_rate}
 \ddt\mathcal{E}_{\rm rel}
 \le C_L\,\mathcal{E}_{\rm rel}
 + C_\tau\,h^r\,\sqrt{\mathcal{E}_{\rm rel}},
 \qquad r = \min(d-1, r_\star),
\end{equation}
with $C_L$ and $C_\tau$ independent of~$h$. No $L_h^\infty$
bootstrap on $e_v$ is required, and no near-incompressibility
hypothesis on the smooth reference is imposed.
\end{theorem}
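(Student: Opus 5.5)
We follow the standard Dafermos relative-energy route: differentiate $\mathcal{E}_{\rm rel}$, let exact energy conservation of \Cref{thm:dw_energy} remove the leading term, and reduce what remains to linear and quadratic terms in the error, each paired against the smooth reference.

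\textbf{Step 1: the rate identity.} Write $X^h=(\bv^h,\brho^h)$ and $\bar X=(\bar\bv,\bar\brho)$. The reference satisfies the discrete scheme up to the truncations $(\tau_v^{\rm dw},\tau_\rho)$. Differentiating \Cref{def:dw_relative_energy} gives
\[
\ddt\mathcal{E}_{\rm rel}
= \ddt\Etot^{\rm dw}(X^h)
- \ddt\bigl[\Etot^{\rm dw}(\bar X)+D\Etot^{\rm dw}(\bar X)\cdot(X^h-\bar X)\bigr].
\]
The first term vanishes by \Cref{thm:dw_energy}. The bracket has derivative
\[
D\Etot^{\rm dw}(\bar X)\cdot\partial_t X^h+\bigl(D^2\Etot^{\rm dw}(\bar X)\,\partial_t\bar X\bigr)\cdot e .
\]
Note that $D_\bv\Etot^{\rm dw}(\bar X)=\bM_1^\rho(\bar\brho)\bar\bv$ and $D_\brho\Etot^{\rm dw}(\bar X)=\bar B^\rho$, the Bernoulli function of the reference.

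Into $\partial_t X^h$ we substitute the DW equations \eqref{eq:dw_mom} together with continuity. Into $\partial_t\bar X$ we substitute the same equations plus truncation.

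\textbf{Step 2: cancellations.}
\begin{itemize}
\item The Bernoulli-gradient and continuity terms pair through the integration-by-parts identity $\ip{\bv}{\tD_0B}_1=-\ip{B}{\bD_2\Phi}$, exactly as in the proof of \Cref{thm:dw_energy}. Their zeroth-order parts then cancel.
\item The pressure coupling collapses by the Bregman rate identity \eqref{eq:bregman_rate}. It leaves a term $\sum_i|K_i|D_H(\rho^h\|\bar\rho)\,(\text{div}\,\bar\bu)_i$, bounded by $\nrm{\bu}_{W^{1,\infty}}\mathcal{E}_{\rm rel}$.
\item For the Lamb terms, the factor $\bM_1^\rho(\bar\brho)\,[\bM_1^\rho(\brho^h)]^{-1}\bM_1$ is rewritten as $\bM_1$ plus a commutator linear in $e_\rho$, using the mass-matrix commutator identity \eqref{eq:mass_matrix_commutator}. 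Lamb antisymmetry (H5) kills $\ip{\bv^h}{\Iv(\tD_1\bv^h)}_1$. Bilinearity of $\Iv$ in its two slots then leaves only terms of the form $\ip{\bar\bv}{\Iv(\tD_1e_v)}$ with $e_v$ in another slot, and $\ip{e_v}{\Iv(\tD_1\bar\bv)}$. The cubic self-interaction is paired against the bounded $\bar\bv$, so it is quadratic in the error with an $h$-independent coefficient $\nrm{\bar\bu}_{W^{1,\infty}}$. No $L_h^\infty$ bootstrap is needed.
\end{itemize}

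\textbf{Step 3: bound the truncation.} The truncation enters only as $\ip{\bar\bv}{\tau_v^{\rm dw}}_{\bM_1^\rho(\bar\brho)}$-type pairings and $\ip{\bar B^\rho}{\tau_\rho}$. After redistributing via the test-function structure, these become pairings against $e$ of size $C_\tau h^r\nrm{e}$, using \Cref{thm:dw_consistency}.

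\textbf{Step 4: coercivity and Gr\"onwall form.} We need coercivity
\[
\mathcal{E}_{\rm rel}\ge c\bigl(\nrm{e_v}_{\bM_1}^2+\nrm{e_\rho}^2\bigr).
\]
For the velocity part, \Cref{rem:ke_reveals} gives norm equivalence, with $\brho^h\ge\rho_{\min}$ by \Cref{lem:no-vacuum} or the Grönwall envelope. For the density part, strict convexity $H''>0$ on the compact density range fixed by global well-posedness (\Cref{thm:dw_wp}) suffices. Coercivity turns all quadratic remainders into $C_L\mathcal{E}_{\rm rel}$ and the truncation terms into $C_\tau h^r\sqrt{\mathcal{E}_{\rm rel}}$, giving \eqref{eq:dw_Erel_rate}.

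\textbf{Main obstacle.} The hardest step is the mass-matrix commutator in Step 2. The term $[\bM_1^\rho(\brho^h)]^{-1}$ depends nonlinearly on the discrete density. One must show that the mixed remainder is quadratic in $(e_v,e_\rho)$, with constants controlled by the reference and by $\rho_{\min}$, and not by $h^{-d/2}$ through inverse inequalities.

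My first attempt would be the resolvent identity
\[
A^{-1}-\bar A^{-1}=-A^{-1}(A-\bar A)\bar A^{-1},
\]
with $A-\bar A=P^T\mathrm{diag}(e_\rho)P$. This makes the remainder an explicit $e_\rho\times e_v$ product. It is then tested against $\bar A^{-1}$ applied to smooth quantities, which (H4) shows are $P$-reconstructions of bounded fields.
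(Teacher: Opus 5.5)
Your skeleton (the relative-energy identity of Step~1, exact conservation removing $\ddt\Etot^{\rm dw}(X^h)$, the enthalpy/Bregman cancellation, and the resolvent identity for $[\bM_1^\rho(\brho^h)]^{-1}$) is the same as the paper's $T_0+T_v+T_\rho$ computation; the paper uses exactly that resolvent identity for $T_v^{(32)}$. The genuine gap is Step~3.

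In the identity you derived in Step~1, $\partial_t X^h$ carries no truncation. So $\tau=(\tau_v^{\rm dw},\tau_\rho)$ enters only through $(D^2\Etot^{\rm dw}(\bar X)\,\tau)\cdot e$, i.e.\ through $e_v^T\bM_1^\rho(\bar\brho)\tau_v^{\rm dw}$, $\sum_i e_{\rho,i}(P\bar\bv)_i\cdot(P\tau_v^{\rm dw})_i$ and the analogous $\tau_\rho$-terms. These are always paired with the error, never with $\bar\bv$ alone. If you split off $-\ddt\Etot^{\rm dw}(\bar X)$ as the paper does, the pairings $\ipw{\bar\bv}{\tau_v^{\rm dw}}{\bM_1^\rho(\bar\brho)}$ and $\ip{\bar B^\rho}{\tau_\rho}$ it produces cancel exactly against those contained in $\ddt e_v$ and $\ddt e_\rho$.

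\Cref{thm:dw_consistency} controls only the single pairing against $\bar\bv$. The paper states explicitly that $\tau_v^{\rm dw}$ is $\OO(1)$ in the strong $\bM_1^\rho$-norm, because of the term $([\bM_1^\rho(\bar\brho)]^{-1}\bM_1-I)\Iv(\tD_1\bar\bv)$ in \eqref{eq:dw_truncation_decomp_appx}. A bound on one specific pairing cannot be ``redistributed'' into $|e_v^T\bM_1^\rho(\bar\brho)\tau_v^{\rm dw}|\le C_\tau h^r\nrm{e_v}$. What you actually get is $C\nrm{e_v}$, and Gr\"onwall then gives an $\OO(1)$ error.

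Taking $\bphi=e_v$ in \Cref{lem:mass_matrix_commutator} does not help. A bound uniform in $\bphi$ is, by duality, the strong $\OO(h^r)$ bound on that commutator term, which the paper itself rules out. Step~3 therefore needs a new ingredient: either a consistency estimate in the norm dual to $\mathcal{E}_{\rm rel}$, or a modified reference state that absorbs the commutator. The paper's own argument does not supply one either. Its $T_v^{(1)}$ contains the same error-paired term $-\ip{\tau_v^{\rm dw}}{\bM_1^\rho(\bar\brho)e_v}$ and merely labels it $\OO(h^r)$.

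Two further steps rest on bounds you do not have.

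\emph{Density range.} Coercivity and all Taylor constants (for $H''$, $H'''$ and $[\bM_1^\rho(\brho^h)]^{-1}$) require $\rho_i^{h,\vol}$ to stay in an $h$-independent compact interval. \Cref{thm:dw_wp} and \Cref{lem:no-vacuum} give only mesh-dependent bounds (\Cref{rem:vacuum_scaling}); the upper bound from mass conservation is $M/|K_i|$. \Cref{lem:upwind_gronwall} is proved for the upwind density-free flux $\bar\rho_j^{\rm up}\Phi_j$, not for $\bM_1^\rho(\brho)\bv$. You need a continuation argument on $e_\rho$; the paper works on a bootstrap region $\rho_i^h\in[\rho_*/2,2\rho^*]$.

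\emph{Lamb remainder.} The resolvent identity does not leave an ``$e_\rho\times e_v$ product tested against smooth quantities''. The piece it produces, $\bar\bv^TP^T\mathrm{diag}(e_\rho)P\,[\bM_1^\rho(\brho^h)]^{-1}\bM_1\Iv(\tD_1\bv^h)$, contains $e_\rho$ paired with $\bQ(e_v,e_v)$. This is cubic in the error with a non-smooth factor. It cannot be bounded by $C\mathcal{E}_{\rm rel}$ with $C$ independent of $h$ without $L^\infty$-type control of the error, which is the bootstrap you claim to avoid. Likewise, $\ip{\bar\bv}{\bQ(e_v,e_v)}_1$ is controlled uniformly in $h$ only for the $\bQ_{\rm U}$ piece (Bound~(III) of \Cref{thm:bilinear_bounds_baro}). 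The $\bQ_{\rm D}$ piece needs the Bernoulli combination of \Cref{lem:QD_Bernoulli_absorption}.
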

\noindent\emph{Proof.} Exact energy conservation
$\ddt\Etot^{\rm dw}(\bv^h,\brho^h) = 0$ (\Cref{thm:dw_energy})
eliminates the leading term, and testing against $\bar\bv$ avoids
the cubic self-interaction. See \Cref{app:dw_convergence}.
\paragraph*{Navier-Stokes extension.}
Adding any admissible viscosity $\bA\ge 0$ contributes a
non-positive dissipation $-\nrm{\tD_1\bv}_\bA^2\le 0$ to
$\ddt\mathcal{E}_{\rm rel}$ and a truncation remainder of order
$h^{r_{\rm visc}}$; \Cref{thm:dw_stability} extends to Navier-Stokes equations with
rate $r=\min(d-1,r_\star,r_{\rm visc})$. SBP-compatible viscosity
(\Cref{cor:NS_SBP}) gives $r_{\rm visc}=r_\star$ uniformly in
$\nu\ge 0$. Unlike the density-free case
(\Cref{thm:stability_baro_NS}), no near-incompressibility hypothesis
is needed -- the cubic residual that forces~\eqref{eq:lowMach_hyp}
is absent.
\paragraph*{Convergence theorem.}
\label{subsect:convergence_thm_baro}
Combining \Cref{thm:dw_consistency,thm:dw_stability} through
nonlinear Gr\"onwall closes the bootstrap directly: no
dimension-dependent threshold, no smallness hypothesis. The
near-incompressibility hypothesis~\eqref{eq:lowMach_hyp} that
limited the density-free convergence is not needed -- exact energy
conservation \emph{is} the closure mechanism, supplied here at the
algebraic level by the density-weighted mass matrix.
\begin{theorem}[Convergence -- density-weighted scheme]%
\label{thm:dw_convergence}%
\label{thm:convergence_baro}
\label{thm:convergence}
Let $(\bu, \rho^c) \in C^1([0,T]; H^s(\Omega))$ with $s \ge 3$ be
a smooth solution of the barotropic Euler ($\nu = 0$) or
Navier--Stokes ($\nu > 0$) equations satisfying
$\rho^c \ge \rho_* > 0$ (\Cref{ass:smooth_baro}). Then, for $h$
sufficiently small, the discrete density-weighted solution exists
on $[0, T]$ and satisfies
\[
\sup_{t\in[0,T]}\bigl(\nrm{e_v}_{\bM_1^{\rho}(\bar\brho)}
 +\nrm{e_\rho}_{D_H}\bigr)
 \le C(T)\,h^{\min(d-1,r_\star)},
\]
uniformly in $\nu \ge 0$. No finite-time restriction is imposed,
no smallness hypothesis on $\bar\rho^c$ is required, and the rate
is independent of the Mach number of the reference. On general
meshes (case~(A) of \Cref{conv:cases}), the rate is $\OO(h)$ for
every~$d$; in case~(B), the rate sharpens to $\OO(h)$ for $d = 2$
and to $\OO(h^2)$ for $d = 3$.
\end{theorem}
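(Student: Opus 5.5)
The proof combines \Cref{thm:dw_consistency} and \Cref{thm:dw_stability} through a continuation (bootstrap) argument, with the relative energy $\mathcal{E}_{\rm rel}$ of \Cref{def:dw_relative_energy} as the Lyapunov-type functional.

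\emph{Step 1: discrete existence on $[0,T]$.} \Cref{thm:dw_wp} (and \Cref{thm:dw_wp_NS} for $\nu>0$) gives a unique global solution, provided $\bv_0 = \mathcal{R}_h\bu_0^\flat$ is projected onto $\ker(P)^\perp$. On Delaunay--Voronoi meshes with $\ker P=\{0\}$ (\Cref{rem:ke_reveals}) this projection is vacuous, so no finite-time restriction arises.

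\emph{Step 2: coercivity of $\mathcal{E}_{\rm rel}$.} Since $E_{\rm pot}$ is linear in $\brho$, it drops out of the relative energy. Expanding the cubic term $\tfrac12\bv^T\bM_1^\rho(\brho)\bv$ around $(\bar\bv,\bar\brho)$ gives
\[
\mathcal{E}_{\rm rel} = \tfrac12\nrm{e_v}_{\bM_1^\rho(\brho^h)}^2 + \sum_i|K_i|\,D_H\bigl(\rho_i^{h,\vol}\,\big\|\,\bar\rho_i^{\vol}\bigr) + \text{cross term}.
\]
The cross term is of the form $e_v^T P^T\mathrm{diag}(e_\rho)P\bar\bv$, which is bounded by $\nrm{P\bar\bv}_{L^\infty}$ times products of errors. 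I will absorb it using Young's inequality together with the strict convexity $H''=c^2/\rho>0$ on a density window $[\rho_*/2,2\rho^*]$. This requires the discrete density to stay in that window, which is where a mild bootstrap enters. It concerns only $\brho^h$ and not $e_v$ in $L^\infty$: on the interval where the window holds, $\mathcal{E}_{\rm rel}$ is equivalent to $\nrm{e_v}^2_{\bM_1^\rho(\bar\brho)}+\nrm{e_\rho}_{D_H}^2$, with $h$-independent constants.

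\emph{Step 3: Gr\"onwall.} Set $y=\sqrt{\mathcal{E}_{\rm rel}}$. Then \eqref{eq:dw_Erel_rate} becomes $\dot y\le \tfrac{C_L}{2}y+\tfrac{C_\tau}{2}h^r$, so that
\[
y(t)\le e^{C_Lt/2}\bigl(y(0)+C_\tau T h^r\bigr).
\]
The initial error $y(0)=\OO(h^r)$ follows from the interpolation estimates \Cref{lem:interp_error,lem:hodge_error}, since $e_v(0)=0$ and $e_\rho(0)$ vanishes by exact cell averaging. Uniformity in $\nu$ comes from the viscous extension: any SBP-compatible viscosity contributes a non-positive term, plus an $h^{r_\star}$ remainder via \Cref{cor:NS_SBP}.

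\emph{Step 4: closing the density window, the main obstacle.} I need $\|e_\rho\|_{\ell^\infty}$ to stay small. The plan is a bound of the form $\max_i|\rho_i^{h,\vol}-\bar\rho_i^{\vol}|\le C h^{-d/2}\nrm{e_\rho}_{D_H}\le C h^{r-d/2}$, which is small for $h$ small only when $r>d/2$. For $d=3,\ r=2$ this holds, and a standard continuation argument then closes the bootstrap on all of $[0,T]$. In the marginal cases ($d=2$ or $r=1$) the inverse-inequality route fails, and I would instead use the vacuum bound of \Cref{lem:no-vacuum} together with exact energy conservation. These give $h$-dependent but uniform-in-time density bounds, and I would check that the constants in \Cref{thm:dw_stability} depend only on those bounds through $H''$ restricted to the range actually attained. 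If that is too weak, the fallback is the upwind Gr\"onwall envelope mentioned in \Cref{rem:vacuum_scaling}. Finally, the rates in cases (A)/(B) are read off from $r=\min(d-1,r_\star)$.
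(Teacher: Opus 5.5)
Your Steps~1 and~3 are essentially the paper's entire proof. The paper treats \Cref{thm:dw_stability} as unconditional, runs linear Gr\"onwall on $\sqrt{\mathcal{E}_{\rm rel}}$ with zero initial error, and reads the error norms directly off $\mathcal{E}_{\rm rel}$. It never discusses coercivity of $\mathcal{E}_{\rm rel}$ or a density window. You are right that both are needed, but your Step~2 is false as stated.

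\textbf{Step~2: coercivity fails without a sub-Mach condition.} Your expansion is correct:
\[
\mathcal{E}_{\rm rel}=\tfrac12\nrm{e_v}^2_{\bM_1^\rho(\brho^h)}+\sum_i|K_i|D_H(\rho_i^{h,\vol}\|\bar\rho_i^{\vol})+\sum_i e_{\rho,i}\,(P\bar\bv)_i\cdot(Pe_v)_i .
\]
However, the cross term is quadratic in the errors with an $\OO(1)$ coefficient, so Young's inequality cannot absorb it in general. Write $\alpha_i=e_{\rho,i}^{\vol}$, $b_i=(Pe_v)_i$, $\bar w_i=(P\bar\bv)_i$, and let $\xi_i$ be the Taylor point of $D_H$. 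Then cell~$i$ contributes
\[
|K_i|\bigl[\tfrac12\rho_i^{h,\vol}|b_i|^2+\alpha_i\,\bar w_i\cdot b_i+\tfrac12H''(\xi_i)\alpha_i^2\bigr],
\]
which is positive definite iff $|\bar w_i|^2<\rho_i^{h,\vol}H''(\xi_i)\approx c^2(\bar\rho_i^{\vol})$.

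So the equivalence you claim holds only for a uniformly subsonic reference, with constants that degenerate as the Mach margin shrinks. The density window plays no role in this. If $|\bu|>c(\rho^c)$ somewhere, then $\mathcal{E}_{\rm rel}$ is negative for suitable smooth error pairs localised there. The reason is that $\mathcal{E}_{\rm rel}$ is the Bregman divergence of $\Etot^{\rm dw}$ in the variables $(\bv,\brho)$, and $\tfrac12\brho_i|P\bv|_i^2$ is not convex in those variables. Consequently $y=\sqrt{\mathcal{E}_{\rm rel}}$ is undefined, and a bound $\mathcal{E}_{\rm rel}\le Ch^{2r}$ controls nothing.

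You have two ways out. You can add a sub-Mach hypothesis, the analogue of (CS4) in \Cref{thm:dw_lyap_sheared}, but that gives up the claimed Mach-independence. Alternatively, you can switch to the conserved-variable relative energy $\tfrac12\nrm{e_v}^2_{\bM_1^\rho(\brho^h)}+\sum_i|K_i|D_H$. This is the Bregman divergence of the jointly convex map $(m,\brho)\mapsto\tfrac12 m^T[\bM_1^\rho(\brho)]^{-1}m+\Eint(\brho)$ at $m=\bM_1^\rho(\brho)\bv$, and it is coercive. But \Cref{thm:dw_stability} is stated for the other functional, so its rate estimate would have to be redone.

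\textbf{Step~4: the density window is closed in only one of four regimes.} The inverse-inequality bound $h^{r-d/2}\to 0$ requires $r>d/2$, which holds only in case~(B) with $d=3$. For $r=1$ (case~(A) in both dimensions, and case~(B) with $d=2$) you get $h^0$ or $h^{-1/2}$. The $h^0$ bound is bounded but not small, so it would force a smallness restriction on $T$.

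Neither fallback repairs this.
\begin{itemize}
\item \Cref{lem:no-vacuum} gives density bounds that degenerate as $h\to0$ (\Cref{rem:vacuum_scaling}). The constants of \Cref{thm:dw_stability} depend on the discrete density range. This enters through $H''$ in \Cref{lem:enthalpy_h_independence}, and through $\rho^*/\rho_*$ via \Cref{lem:dw_norm_equiv}, for instance in the test vector $[\bM_1^\rho(\brho^h)]^{-1}\bM_1^\rho(\bar\brho)\bar\bv$ of $T_v^{(31)}$. So $C_L$ becomes $h$-dependent and the Gr\"onwall factor $e^{C_LT}$ is no longer uniform.
\item \Cref{lem:upwind_gronwall} concerns the continuity equation with upwind face densities $\bar\rho_j^{\rm up}\Phi_j$. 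The DW flux $\bF^\rho=P^T\mathrm{diag}(\brho)P\bv$ has a multi-cell stencil with no sign structure, so no envelope or maximum principle is available.
\end{itemize}
The paper's own stability proof works on the ``bootstrap region'' $\rho_i^h\in[\rho_*/2,2\rho^*]$ without ever closing it, so this gap is shared. Your proposal does not close it either. You would need an argument that avoids pointwise density bounds, for example the essential/residual splitting of the Bregman term used in continuum relative-energy convergence proofs.
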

\noindent\emph{Proof.} The consistency--stability--Gr\"onwall chain
closes unconditionally: \Cref{thm:dw_consistency} gives the
reference-tested truncation rate $h^r$; \Cref{thm:dw_stability}
yields the Gr\"onwall inequality with $h$-independent~$C_L$ and no
bootstrap on $e_v$. Gr\"onwall closure and Whitney reconstruction
give the stated rate. See \Cref{app:dw_convergence_proof}.
\begin{remark}[Centred vs.\ upwind density]\label{rem:upwind_general}
On general meshes the convergence rate is $\OO(h)$ for both centred
and upwind density interpolation: the Hodge star bottleneck
dominates. Under centroid proximity, the centred scheme achieves
$\OO(h^{d-1})$ while the upwind scheme degrades to
$\OO(h^{\min(d-1,d/2)})=\OO(h^{3/2})$ for $d=3$.
\end{remark}

\subsection{Low-Mach Limit and Three-Way Comparison}\label{sec:DW_AP}\label{sec:three_way}
The mesh-Mach coupling $h^2\lesssim M$ that confines the density-free
scheme (\Cref{prop:RE_Mach}) reflects the cubic-in-$\bv$ residual,
and the density-weighted scheme removes it. The mechanism is
already in place: exact total-energy conservation
(\Cref{thm:dw_energy}) extends to the Mach-scaled energy
$\Etot^{\rm dw} = \Ekindw + M^{-2}\hat E_{\rm int}$, since
\Cref{thm:dw_energy} is an algebraic identity independent of how
the equation of state is parameterised. The Mach-scaling
decomposition of \Cref{sec:DF_AP_fail} extracts the fluctuation
energy $\mathcal{E}_M$ of~\eqref{eq:fluctuation_energy}, and exact
conservation of $\Etot^{\rm dw}$ delivers $M$-uniform bounds on it.
\begin{proposition}[Energy conservation under Mach scaling --
  density-weighted scheme]\label{prop:dw_Mach}
Under \Cref{ass:well_prepared}, the density-weighted total energy
$\Etot^{\rm dw} = \Ekindw + M^{-2}\hat E_{\rm int}$ satisfies
$d\Etot^{\rm dw}/dt = 0$ for every $M > 0$. Since the background
energy $M^{-2}|\Omega|\hat H(\bar\rho)$ is time-independent, the
fluctuation energy $\mathcal{E}_M$
from~\eqref{eq:fluctuation_energy} is also exactly conserved,
$d\mathcal{E}_M/dt = 0$, and the a priori bounds are
$M$-independent:
\begin{equation}\label{eq:dw_Mach_bounds}
  \Ekindw(t) \le \mathcal{E}_M(0),
  \qquad
  \frac{1}{2M^2}\sum_i|K_i|\,\hat H''(\xi_i)\,
    (\rho_i^{\vol} - \bar\rho)^2
  \le \mathcal{E}_M(0)
  \qquad \text{for every } t \ge 0,
\end{equation}
with $\mathcal{E}_M(0) = \OO(1)$ by \Cref{ass:well_prepared}. In
particular, the density fluctuations are of order~$M$:
\begin{equation}\label{eq:density_fluctuation}
  \nrm{\rho^{\vol}-\bar\rho}_{\ell^2(|K_i|)} = \OO(M).
\end{equation}
\end{proposition}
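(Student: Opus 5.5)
The plan is to reduce everything to the exact algebraic conservation of \Cref{thm:dw_energy}, plus the Taylor decomposition of the internal energy from \Cref{sec:DF_AP_fail}. First I would note that \Cref{thm:dw_energy} is an identity that uses only Lamb antisymmetry, SBP (\Cref{lem:SBP}) and the enthalpy relation $H'=h$. Replacing $h$ by $M^{-2}\hat h$ and $e$ by $M^{-2}\hat e$ is just another admissible equation of state with $\hat H''>0$, since the positive factor $M^{-2}$ preserves \Cref{ass:eos}. Hence $\ddt\Etot^{\rm dw}=0$ for every $M>0$, with $\Etot^{\rm dw}=\Ekindw+M^{-2}\hat E_{\rm int}+E_{\rm pot}$. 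In the proposition the potential term is suppressed, or absorbed into $\Ekindw$'s companion. If $\bPhi\neq0$, I would carry $E_{\rm pot}$ along as a bounded $M$-independent term; this is the one bookkeeping point to state explicitly.

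Second, I would apply the Taylor expansion of $\hat H$ about $\bar\rho=M_{\rm tot}/|\Omega|$. The constant $|\Omega|\hat H(\bar\rho)$ is time-independent because $M_{\rm tot}$ is conserved (\Cref{thm:mass}). The linear term vanishes because $\sum_i|K_i|(\rho_i^{\vol}-\bar\rho)=M_{\rm tot}-|\Omega|\bar\rho=0$. Therefore $\Etot^{\rm dw}-M^{-2}|\Omega|\hat H(\bar\rho)=\mathcal{E}_M$, and $\ddt\mathcal{E}_M=0$.

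One subtlety is that $\xi_i$ is not a smooth function of $t$. So I would define the potential part of $\mathcal{E}_M$ intrinsically, as $M^{-2}\sum_i|K_i|D_{\hat H}(\rho_i^{\vol}\|\bar\rho)$, which matches the form in \Cref{thm:main_AP}. The Lagrange form is then only a representation, and conservation is read off from this Bregman expression.

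Third, both terms of $\mathcal{E}_M$ are nonnegative: $\Ekindw\ge0$ since $\bM_1^\rho$ is positive semidefinite, and $D_{\hat H}\ge0$ by convexity. Each term is therefore bounded by $\mathcal{E}_M(t)=\mathcal{E}_M(0)$, which gives \eqref{eq:dw_Mach_bounds}. \Cref{ass:well_prepared} gives $\mathcal{E}_M(0)=\OO(1)$.

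The main obstacle is \eqref{eq:density_fluctuation}, which needs a uniform lower bound $\hat H''(\xi_i)\ge c_0>0$. For $\gamma>1$, $\hat H''(\rho)=\hat c^2(\rho)/\rho$ may degenerate or blow up away from $\bar\rho$, so convexity alone gives only a weighted bound. I would first try to confine $\rho_i^{\vol}$ to a compact interval $[\rho_-,\rho_+]\ni\bar\rho$ on which $\hat H''\ge c_0$. The upper bound comes from $\rho_i\le M_{\rm tot}$ and quasi-uniformity (H9). The lower bound comes from \Cref{lem:no-vacuum} applied with $E_{\rm bound}$ derived from the conserved $\mathcal{E}_M$. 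Then $\tfrac{c_0}{2M^2}\nrm{\rho^{\vol}-\bar\rho}^2_{\ell^2(|K_i|)}\le\mathcal{E}_M(0)$, which yields $\OO(M)$.

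If the vacuum bound is too mesh-dependent, I would use a fallback based on strict convexity, which makes $D_{\hat H}(a\|\bar\rho)$ grow at least linearly in $|a-\bar\rho|$ for large deviations. This gives an $M$-uniform bound on $\sum_i|K_i||\rho_i^{\vol}-\bar\rho|$. Combined with the smallness of the bound for small $M$, it forces all deviations into a fixed neighbourhood of $\bar\rho$ where the quadratic lower bound applies.
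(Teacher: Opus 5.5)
Your proposal is correct and follows the paper's argument. The steps are the same: exact conservation from \Cref{thm:dw_energy} applied to the Mach-scaled equation of state, removal of the time-independent background and of the vanishing linear Taylor term by mass conservation, nonnegativity of both pieces of $\mathcal{E}_M$, and a lower bound on $\hat H''$ to reach \eqref{eq:density_fluctuation}.

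Two of your additions go beyond the paper's proof. First, the paper silently drops $E_{\rm pot}$, so exact conservation of $\mathcal{E}_M$ as stated really requires $\bPhi$ to be constant. Second, the paper simply asserts $\hat H''(\xi_i)\ge c_0$; your fallback justifies it, provided you run it cell by cell on the fixed mesh. There, $|K_i|\,D_{\hat H}(\rho_i^{\vol}\|\bar\rho)=\OO(M^2)$ forces every $\rho_i^{\vol}$ into a fixed neighbourhood of $\bar\rho$ once $M$ is small, whereas an $\ell^1$ bound alone would not give this.
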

\begin{proof}
By \Cref{thm:dw_energy}, $\ddt\Etot^{\rm dw} = 0$.
The background energy is time-independent (mass conservation),
so $\ddt{\mathcal{E}}_M = 0$.
Since $\Ekindw\ge 0$ and the quadratic potential energy term $\ge 0$
(strict convexity $\hat H'' > 0$),
each is bounded by $\mathcal{E}_M(0) = \OO(1)$.
Multiplying the potential energy bound by $M^2$
and using $\hat H''(\xi_i) \ge c_0 > 0$
gives~\eqref{eq:density_fluctuation}.
\end{proof}
The following
theorem assembles the asymptotic behaviour of both schemes against
the incompressible DEC system on the same mesh and makes the
three-way comparison explicit.
\begin{theorem}[Asymptotic preservation]\label{thm:AP}
Under the Mach scaling $p = M^{-2}\hat p$ with well-prepared
initial data (\Cref{ass:well_prepared}), the three schemes behave
as follows.
\begin{enumerate}[nosep,label=\textup{(\alph*)}]
\item \emph{Incompressible reference.}
The incompressible DEC system on the same mesh,
$\ddt\bv + \Iv(\bom) + \tD_0\pi = 0$ with $\bD_2\Phi = 0$ and
constant density, conserves energy exactly, satisfies Lamb
antisymmetry, and preserves Kelvin circulation.
\item \emph{Density-free scheme -- not asymptotically preserving.}
As $M \to 0$, the system formally approaches~(a), but the energy
residual diverges as $|\mathcal{R}_E| = \OO(M^{-1})$
(\Cref{prop:RE_Mach}), and the energy balance degenerates.
\item \emph{Density-weighted scheme -- asymptotically preserving.}
As $M \to 0$, the density relaxes to $\bar\rho$ at rate $\OO(M)$
(\Cref{prop:dw_Mach}), the mass matrix limits to
$\bM_1^{\rho} \to \bar\rho\,P^T P$, and the system approaches
\begin{equation}\label{eq:dw_incomp_limit}
  \ddt\bv + (\bar\rho\,P^TP)^{-1}\bM_1\,\Iv(\bom)
  + \tD_0\pi = 0,
  \qquad \bD_2\Phi = 0,
\end{equation}
a well-posed incompressible system with exact Lamb antisymmetry.
The a priori bounds~\eqref{eq:dw_Mach_bounds} are uniform in~$M$
and yield $\Ekindw(t) \le \mathcal{E}_M(0) = \OO(1)$ for every
$t \ge 0$. The convergence rate to the incompressible limit is
$\OO(M)$: on each fixed mesh,
$\nrm{\rho^{\vol} - \bar\rho}_{\ell^2(|K_i|)} = \OO(M)$ and
$\nrm{[\bM_1^{\rho}]^{-1}\bM_1
 - (\bar\rho\,P^T P)^{-1}\bM_1}_{\rm op} = \OO(M)$.
\end{enumerate}
\end{theorem}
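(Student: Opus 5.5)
The three parts are proved separately, since each reduces to results already established.

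\emph{Part (a).} With constant density the DF and DW mass matrices agree up to the factor $\bar\rho$. The constraint $\bD_2\Phi=0$ is imposed by the Lagrange multiplier $\pi$. Testing the momentum equation with $\bv$ gives the energy identity:
\begin{itemize}
\item Lamb antisymmetry \textup{(H5)} removes $\ip{\bv}{\Iv(\bom)}_1$;
\item the integration-by-parts identity gives $\ip{\bv}{\tD_0\pi}_1=-\ip{\pi}{\bD_2\Phi}=0$.
\end{itemize}
Hence $\tfrac12\nrm{\bv}_{\bM_1}^2$ is conserved. Kelvin circulation follows verbatim from \Cref{thm:kelvin}, because its proof uses only the cochain Lie derivative, $\tD_1\tD_0=0$ (which kills $\tD_0\pi$), and Cartan's formula. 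No density or mass-matrix ratio appears in that argument.

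\emph{Part (b).} This is a direct citation of \Cref{prop:RE_Mach}. It remains to observe that $\mathcal{R}_1=\OO(1)$ while $M^{-2}\hat{\mathcal R}_2=\OO(M^{-1})$. For this I will use the well-prepared data of \Cref{ass:well_prepared}, which gives $\bar\rho_j-\bar\rho=\OO(M)$ at $t=0$.

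A caveat: one cannot propagate this to positive times from DF energy bounds, since those are not uniform. So I state (b) as the degeneration of the energy balance at well-prepared data, or on any interval where the fluctuations stay $\OO(M)$. Either version suffices for the claim ``not asymptotically preserving.''

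\emph{Part (c).} This is the substantive part. The steps are:
\begin{enumerate}
\item \Cref{prop:dw_Mach} gives exact conservation of $\mathcal{E}_M$. This yields the $M$-uniform bounds~\eqref{eq:dw_Mach_bounds} and $\nrm{\rho^{\vol}-\bar\rho}_{\ell^2(|K_i|)}=\OO(M)$.
\item On a fixed mesh all norms are equivalent. So $\max_i|\rho_i-\bar\rho|K_i||\le C_h M$, which implies $\nrm{\bM_1^\rho-\bar\rho P^TP}_{\rm op}\le \nrm{P}^2 C_h M$.
\item Since $\ker P=\{0\}$ (\Cref{rem:ke_reveals}), $\bar\rho P^TP$ is invertible. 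Its smallest eigenvalue is bounded below by $\bar\rho\,\sigma_{\min}(P)^2>0$.
\item The resolvent identity $A^{-1}-B^{-1}=A^{-1}(B-A)B^{-1}$ then gives the claimed $\OO(M)$ operator-norm estimate for the ratio, once $M$ is small enough that $\bM_1^\rho$ stays uniformly invertible.
\item The DW momentum equation splits as $\tD_0(M^{-2}\hat h)$ plus $\OO(1)$ terms. Identify $\pi:=M^{-2}\hat h(\rho^{\vol})+\tfrac12|P\bv|^2+\bPhi$ as the pressure candidate; the $\OO(1)$ velocity bound comes from $\Ekindw\le\mathcal E_M(0)$.
\item The continuity equation reads $\bD_2(\bR\Phi)=-\ddt\brho$. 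Combined with $\bar\rho_j=\bar\rho+\OO(M)$, the constraint $\bD_2\Phi\to0$ follows once $\ddt\brho$ is shown small.
\end{enumerate}

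Well-posedness of~\eqref{eq:dw_incomp_limit} is an ODE on the finite-dimensional space $\ker(\bD_2\bM_1)$, projected with $\tD_0\pi$ in the $\bar\rho P^TP$-orthogonal complement. Antisymmetry of the limit requires some care: $\bv^T\bar\rho P^TP\,(\bar\rho P^TP)^{-1}\bM_1\Iv=\ip{\bv}{\Iv}_1=0$. So the energy $\tfrac{\bar\rho}{2}|P\bv|^2$ is conserved and the limit system is global.

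\textbf{Main obstacle.} The main difficulty is step~6: showing $\ddt\brho=\OO(M)$, rather than merely $\brho-\bar\rho=\OO(M)$, uniformly in time. The acoustic modes oscillate on the fast time scale $M^{-1}$, so $\ddt\brho$ is $\OO(1)$ pointwise in time. The plan is to treat convergence of the constraint weakly in time:
\begin{itemize}
\item integrate the continuity equation against test functions, giving $\int_0^t\bD_2\Phi=\OO(M)$;
\item pass to the limit via Aubin--Lions-type compactness, which on a fixed mesh reduces to Arzelà--Ascoli for the divergence-free component.
\end{itemize}
The $\OO(M)$ statements in (c) concern exactly the density and mass-matrix ratio, which are covered by steps~1--4 without this difficulty.
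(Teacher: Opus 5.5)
Your argument for (b) and (c) follows the paper's proof. For (b) you cite \Cref{prop:RE_Mach}, and your $t=0$ caveat is a fair sharpening. For (c) you go from \Cref{prop:dw_Mach} to the $\OO(M)$ density bound and then to the $\OO(M)$ perturbation of $\bM_1^\rho$ and of the ratio; the resolvent identity makes the paper's ``the ratio converges as stated'' explicit. You read $\bD_2\Phi\to0$ only in the time-averaged sense, as the paper does.

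Your claim that the limit system conserves $\tfrac{\bar\rho}{2}|P\bv|^2$, and is therefore global, is false as written. Lamb antisymmetry makes the rotational term neutral in the $\bar\rho P^TP$ metric. The pressure term is not neutral: $\bv^T\bar\rho P^TP\,\tD_0\pi=-\bar\rho\,(\bD_2P^TP\bv)^T\pi$, and the constraint $\bD_2\bM_1\bv=0$ does not kill this. The reason is that $\mathrm{Ran}(\tD_0)$ is the $\bM_1$-orthogonal complement of $\ker(\bD_2\bM_1)$, not the $\bar\rho P^TP$-orthogonal one, so your description of the projected ODE is inconsistent. The correct pressure projection is $\bM_1$-orthogonal, and in that metric the premultiplied Lamb term is not neutral. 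Your argument therefore gives only local well-posedness of the limit.

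The same conflation appears in (a), where you say the DF and DW mass matrices agree up to $\bar\rho$ at constant density. They do not: $P^TP$ couples all faces of a cell, while $\bM_1$ is diagonal. If they agreed, the ratio in \eqref{eq:dw_incomp_limit} would be a scalar.

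The root cause is that you use two different mass fluxes.
\begin{itemize}
\item Step~1 rests on \Cref{thm:dw_energy}, whose proof needs $\ddt\brho=-\bD_2\bM_1^\rho\bv$ (\eqref{eq:dw_F} and \Cref{lem:dw_SBP}). With the flux $\bR\Phi$ of your step~6, that proof leaves the residual $\bigl(\bD_2(\bM_1^\rho-\bR\bM_1)\bv\bigr)^TB^\rho$.
\item With the energy-conserving flux, step~6 instead yields $\bD_2\bigl(P^T\mathrm{diag}(|K_i|)P\bv\bigr)\to0$. Recall that $\brho_i$ is a cell mass, so $\bM_1^\rho\to\bar\rho P^T\mathrm{diag}(|K_i|)P$; your step~2 actually shows this. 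With that constraint, density-weighted summation by parts does make $\tD_0\pi$ energy-neutral, and your global bound holds.
\end{itemize}
But that constraint is neither the $\bD_2\Phi=0$ you work with nor the one in \eqref{eq:dw_incomp_limit}. The paper's main-text DW continuity equation, which it calls ``shared'', has the same tension. You have to fix one continuity equation throughout.

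Second, compactness does not identify the limit dynamics. \Cref{ass:well_prepared} only gives $\rho^{\vol}(0)-\bar\rho=\OO(M)$. So the gradient (acoustic) component of $\bv$ carries $\OO(1)$ energy oscillating on the $M^{-1}$ time scale, and it converges only weakly-$*$. Arzel\`a--Ascoli gives strong convergence of the projected component alone.

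The DW nonlinearity is quadratic in $\bv$ through $\tU_\bv$; the $\tU^T\bv$ term in \eqref{eq:contraction} does not involve $\bom$ at all. Nothing in your argument shows that its fast--fast part is annihilated by the projection, particularly after multiplication by the non-scalar ratio $[\bM_1^\rho]^{-1}\bM_1$. Weak limits of such products are averaged acoustic stresses, which are generically nonzero.

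To conclude that the slow dynamics is \eqref{eq:dw_incomp_limit}, you need one of two things:
\begin{itemize}
\item data with vanishing acoustic energy (e.g.\ $\rho^{\vol}(0)-\bar\rho=\OO(M^2)$ and $\bD_2\Phi(0)=\OO(M)$, in the spirit of \Cref{ass:well_prepared_strong}), together with an estimate that propagates this smallness;
\item or an averaging argument for these interactions.
\end{itemize}
The paper's one-line argument is no more explicit here. What your steps~1--4 rigorously deliver are the quantitative $\OO(M)$ statements.
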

\begin{proof}
Part~(b) is \Cref{prop:RE_Mach}. For Part~(c), the
$M$-independent energy bounds follow from \Cref{prop:dw_Mach}.
On a fixed mesh, $\nrm{\rho^{\rm vol} - \bar\rho}_{\ell^2(|K_i|)} = \OO(M)$
gives $\bM_1^{\rho}(\brho) = \bar\rho\,P^TP + \OO(M)$ in operator
norm, since the residual $P^T\mathrm{diag}(\brho - \bar\rho\mathbf{1})P$
has the same scaling. The mass-matrix ratio converges as stated.
The Bernoulli gradient splits as
$\tD_0 B^{\rho} = M^{-2}\tD_0\hat h + \tD_0(\tfrac{1}{2}|P\bv|^2 + \Phi)$;
integrating the continuity equation, the $\OO(M)$ density bound
forces $\bD_2\Phi \to 0$ in the time-averaged sense, and
$M^{-2}\tD_0\hat h$ converges to the Lagrange multiplier
$\tD_0\pi$ enforcing $\bD_2\Phi = 0$. The Lamb antisymmetry
$\bv^T\bM_1\Iv(\bom) = 0$ and the antisymmetry
identity~(\Cref{lem:dw_antisym}) are $M$-independent.
\end{proof}
\subsection{Energetic Linear Stability and the Hollingsworth Instability}\label{sec:DW_lyapunov}

The Hollingsworth
instability~\cite{hollingsworth1983,bell2017hollingsworth}
has influenced vector-invariant dynamical-core
design~\cite{skamarock2012,gassmann2013,melvin2024gungho} by
trading exact energy conservation for stability of
large-scale baroclinic-wave simulations. We show that the DW
scheme is structurally immune to the compressible mechanism: its
exact nonlinear energy conservation forces the linearised dynamics
around hydrostatic and constant-flow stratified equilibria to
preserve a positive-definite quadratic form, unconditionally in
the velocity magnitude and the kinetic-energy reconstruction. The
argument is an Arnold energy-Casimir construction adapted to the
discrete setting.

\paragraph*{The bridge lemma.}
The mechanism translating nonlinear conservation into linear
spectral structure is standard but deserves a clean statement.

\begin{lemma}[Energy-Casimir bridge]\label{lem:arnold_bridge}
Let $\dot X = F(X)$ be a smooth dynamical system on $\RR^N$ admitting
a conserved quantity $\tilde E$, i.e.\ $\nabla\tilde E(X)^T F(X) \equiv 0$,
and let $\bar X$ be an equilibrium, $F(\bar X) = 0$. Set
$A := DF(\bar X)$ and $\tilde H := \nabla^2\tilde E(\bar X)$. Then
\begin{equation}\label{eq:bridge_identity}
 A^T\tilde H + \tilde H\,A \;=\; 0.
\end{equation}
Consequently, the linearised flow $\dot Y = A Y$ preserves the
quadratic form $\tilde E^{(2)}(Y) := \tfrac{1}{2}\,Y^T\tilde H\,Y$
exactly:
$\ddt\tilde E^{(2)}(Y(t)) = 0$ for every solution.
\end{lemma}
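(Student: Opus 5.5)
The plan is to differentiate the conservation identity $G(X) := \nabla\tilde E(X)^T F(X) \equiv 0$ twice and evaluate at the equilibrium $\bar X$, using $F(\bar X)=0$ to kill the term involving third derivatives of $\tilde E$. Concretely, I will write $G(X) = \sum_k \partial_k\tilde E(X)\,F_k(X)$. Since $G$ vanishes identically on an open set (the admissible set $\admissible$ in our application), all its derivatives vanish there.

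The first derivative is
\[
\partial_i G = \sum_k \partial_i\partial_k\tilde E\,F_k + \sum_k \partial_k\tilde E\,\partial_i F_k .
\]
The second derivative is
\begin{align*}
\partial_j\partial_i G ={}& \sum_k \partial_j\partial_i\partial_k\tilde E\,F_k + \sum_k \partial_i\partial_k\tilde E\,\partial_j F_k \\
&+ \sum_k \partial_j\partial_k\tilde E\,\partial_i F_k + \sum_k \partial_k\tilde E\,\partial_i\partial_j F_k .
\end{align*}
At $\bar X$ the first term vanishes because $F(\bar X)=0$. The middle two terms are $(\tilde H A)_{ij} + (A^T\tilde H)_{ij}$, using the symmetry of $\tilde H$.

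The last term $\sum_k \partial_k\tilde E(\bar X)\,\partial_i\partial_j F_k(\bar X)$ is the one obstacle. It vanishes if $\nabla\tilde E(\bar X)=0$, and this is exactly the criticality condition built into the energy-Casimir construction of \Cref{thm:main_DW_lyapunov}: the shift $U_0$ and the Casimirs $C_1$, $C_3^F$ are chosen to make $\bar X$ critical. For this reason I will state the argument under the standing hypothesis that $\bar X$ is a critical point of $\tilde E$. I will also note explicitly that without it the identity acquires the correction $\sum_k \partial_k\tilde E(\bar X)\,D^2F_k(\bar X)$. As an alternative check, one can differentiate $\nabla\tilde E^T F = 0$ along the orbit-linearisation directly, but the Taylor computation above is cleaner. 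Under criticality, $\partial_j\partial_i G(\bar X) = 0$ gives $A^T\tilde H + \tilde H A = 0$.

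Finally, for $\dot Y = AY$,
\[
\ddt \tfrac12 Y^T\tilde H Y = \tfrac12 Y^T(A^T\tilde H + \tilde H A)Y = 0,
\]
so $\tilde E^{(2)}$ is exactly conserved. When $\tilde H$ is positive definite, this immediately yields Lyapunov stability of $Y=0$, since $\tilde E^{(2)}$ is then equivalent to $|Y|^2$.
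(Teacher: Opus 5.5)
Your proof is correct, and the hypothesis $\nabla\tilde E(\bar X)=0$ you add is not optional: without it the lemma as printed is false. Take $N=2$, $\tilde E(x,y)=y+\tfrac a2x^2$, $F(x,y)=(-x,\,ax^2)$ and $\bar X=0$. Then $\nabla\tilde E^TF=-ax^2+ax^2\equiv0$ and $F(0)=0$. But $A=\mathrm{diag}(-1,0)$ and $\tilde H=\mathrm{diag}(a,0)$ give $A^T\tilde H+\tilde HA=\mathrm{diag}(-2a,0)\neq0$ for $a\neq0$. This is exactly minus your correction term $\sum_k\partial_k\tilde E(\bar X)\,D^2F_k(\bar X)=\mathrm{diag}(2a,0)$. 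Along $\dot Y=AY$ the form $\tilde E^{(2)}(Y(t))=\tfrac a2\,Y_1(0)^2e^{-2t}$ decays instead of being conserved.

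The paper argues at the level of the flow rather than the vector field. It expands $\tilde E(\Phi_t(\bar X+Y))=\tilde E(\bar X+Y)$ to second order in $Y$, reads off $(e^{tA})^T\tilde H\,e^{tA}=\tilde H$, and differentiates at $t=0$. Its coefficient-matching step silently drops the second-order contribution $\tfrac12\nabla\tilde E(\bar X)^T D^2\Phi_t(\bar X)[Y,Y]$, which is the integrated counterpart of your correction term. So the paper's proof relies on criticality just as yours does. This is harmless in the applications, because \Cref{thm:dw_lyap_hydrostatic,thm:dw_lyap_constant_flow} and \Cref{lem:dw_arnold_sheared} each establish $\nabla\tilde E(\bar X)=0$ before the lemma is invoked; still, the lemma itself should carry that hypothesis.

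Your route is more economical: a single pointwise second derivative of the identity $\nabla\tilde E^TF\equiv0$ at $\bar X$, with no flow map, no smooth dependence on initial data, and no second-order expansion of $\Phi_t$. It also shows exactly which term obstructs the identity. The paper's route gives the integrated invariance $(e^{tA})^T\tilde H\,e^{tA}=\tilde H$ directly, but for a linear flow this is equivalent to $A^T\tilde H+\tilde HA=0$, so nothing is lost by working infinitesimally.
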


\begin{proof}
Conservation of $\tilde E$ gives
$\tilde E(\Phi_t(\bar X + Y)) = \tilde E(\bar X + Y)$ for all $t, Y$.
Expanding both sides to second order in $Y$ around $\bar X$ and
matching $Y^T\bullet Y$ coefficients yields
$(e^{tA})^T\tilde H\,e^{tA} = \tilde H$; differentiating at $t = 0$
gives~\eqref{eq:bridge_identity}. The preservation of
$\tilde E^{(2)}$ along $\dot Y = AY$ is then immediate:
$\ddt(Y^T\tilde H Y) = Y^T(A^T\tilde H + \tilde H A)Y = 0$.
\end{proof}


The strategy is to form a modified energy
\begin{equation}\label{eq:tildeE_construction}
 \tilde E^{\rm dw}_{\rm tot} := E^{\rm dw}_{\rm tot}
 - \lambda_1\,C_1 - \lambda_2\,C_2 - C_3^F - \sum_\mu U_\mu\,P_\mu,
\end{equation}
with scalars $\lambda_1, \lambda_2, U_\mu$ and a smooth function $F$
chosen so that $\nabla\tilde E^{\rm dw}_{\rm tot}(\bar X) = 0$ at
the target equilibrium~$\bar X$. Each piece on the right of
\eqref{eq:tildeE_construction} is exactly conserved, so
$\tilde E^{\rm dw}_{\rm tot}$ is conserved, and
\Cref{lem:arnold_bridge} applies with
$\tilde H := \nabla^2\tilde E^{\rm dw}_{\rm tot}(\bar X)$. Positive
definiteness of $\tilde H$ then delivers linear Lyapunov stability.

\paragraph*{The hydrostatic case.}
The cleanest equilibria are stationary: $\bar\bv = 0$ and
$\bar\brho$ in hydrostatic balance.

\begin{theorem}[DW Lyapunov stability around hydrostatic equilibria]%
\label{thm:dw_lyap_hydrostatic}
Let $\bar X = (0, \bar\brho)$ be a discrete equilibrium of the DW
Euler system on a Delaunay--Voronoi mesh satisfying
\Cref{ass:mesh_reg}, with $\bar\brho > 0$ pointwise and
$\bar\bv = 0$. Suppose the equation of state satisfies the
thermodynamic stability bound
$c^2(\bar\rho_i) := \rho\,h'(\rho)\bigl|_{\bar\rho_i} > 0$ and the
hydrostatic balance
$\tD_0(h(\bar\rho^{\rm vol}) + \bar\bPhi) = 0$. Define the modified
energy
\[
 \tilde E^{\rm dw}_{\rm tot} := E^{\rm dw}_{\rm tot}
 - \lambda_1\,C_1, \qquad
 \lambda_1 := h(\bar\rho_i) + \bar\bPhi_i,
\]
where $\lambda_1$ is the common constant value enforced by
hydrostatic balance. Then $\nabla\tilde E^{\rm dw}_{\rm tot}(\bar X)
= 0$ and the Hessian admits the block decomposition
\begin{equation}\label{eq:lyap_hydrostatic_Hessian}
 \tilde H^{\rm dw}_{\rm hyd}\bigr|_{\bar X}
 = \begin{pmatrix}
 \bM_1^{\bar\rho} & 0\\
 0 & D
 \end{pmatrix},
 \qquad
 D = \mathrm{diag}\!\left(|K_i|\,\frac{c^2(\bar\rho_i)}{\bar\rho_i}\right) \succ 0,
\end{equation}
which is positive definite. Consequently the bridge identity
$A^T\tilde H^{\rm dw}_{\rm hyd} + \tilde H^{\rm dw}_{\rm hyd}\,A
 = 0$ holds for $A = DF^{\rm dw}(\bar X)$, the linearised DW
dynamics preserves the positive-definite quadratic form
$\tilde E^{(2)}(Y) = \tfrac12 Y^T\tilde H^{\rm dw}_{\rm hyd}\,Y$,
and $\bar X$ is linearly Lyapunov stable in the
$\tilde H^{\rm dw}_{\rm hyd}$-norm.
\end{theorem}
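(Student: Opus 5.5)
The plan is a direct energy-Casimir computation. First, confirm that $\tilde E^{\rm dw}_{\rm tot}$ is exactly conserved: $\Etot^{\rm dw}$ is conserved by \Cref{thm:dw_energy}, the total mass $C_1=\mathbf{1}^T\brho$ by \Cref{thm:mass}, and $\lambda_1$ is a fixed scalar. Second, check that $\bar X$ is an equilibrium of the DW system. With $\bar\bv=0$ the flux $\Phi=\bM_1\bar\bv$ vanishes, so the continuity equation gives $\ddt\brho=0$. In the momentum equation the Lamb term $\Iv(\tD_1\bar\bv)$ vanishes because it is bilinear in velocity and vorticity. The kinetic part of $B^\rho$ is quadratic in $\bv$, so the remaining term is $\tD_0(h(\bar\rho^{\rm vol})+\bPhi)=0$ by hydrostatic balance. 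On a connected closed mesh $\ker\tD_0$ consists of constants, so $h(\bar\rho_i^{\rm vol})+\bPhi_i$ is a single constant; this is the $\lambda_1$ of the statement. Here $\bar\rho_i$ should be read as the volumetric density.

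Next, compute the gradient. Write $\Etot^{\rm dw}=\tfrac12\bv^T\bM_1^\rho(\brho)\bv+\sum_i\brho_i e(\brho_i/|K_i|)+\sum_i\brho_i\bPhi_i$. The $\bv$-gradient is $\bM_1^\rho(\brho)\bv$, which vanishes at $\bv=0$. The $\brho_i$-gradient is $\tfrac12|P\bv|_i^2+h(\rho_i^{\vol})+\bPhi_i$. This uses $\partial_{\brho_i}[\brho_i e(\brho_i/|K_i|)]=e+\rho e'(\rho)=h$, the identity behind \Cref{lem:dEint}. At $\bar X$ this equals $\lambda_1$, so subtracting $\lambda_1\mathbf{1}$ (the gradient of $\lambda_1 C_1$) gives $\nabla\tilde E^{\rm dw}_{\rm tot}(\bar X)=0$.

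For the Hessian, the $\bv\bv$ block is $\bM_1^{\bar\rho}$. The mixed block is $\partial_{\brho}(\bM_1^\rho(\brho)\bv)=P^T\mathrm{diag}(P\bv)$-type terms, linear in $\bv$, so it vanishes at $\bv=0$; this is the key simplification. The $\brho\brho$ block is diagonal, because each cell's internal and potential energy depends only on $\brho_i$ and $C_1$ is linear. Its entries are $\partial_{\brho_i}h(\brho_i/|K_i|)=h'(\rho_i^{\vol})/|K_i|=c^2/(\rho_i^{\vol}|K_i|)$. The statement writes $|K_i|c^2/\bar\rho_i$, which agrees under its mixed volumetric/mass convention. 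I will reconcile that normalisation, but in either convention the entry is positive because $c^2>0$.

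Finally, establish positive definiteness, which I expect to be the only genuine obstacle. The block $D$ is clearly positive, so everything rests on $\bM_1^{\bar\rho}=P^T\mathrm{diag}(\bar\brho)P\succ0$. That holds with $\bar\brho>0$ only if $\ker P=\{0\}$ (\Cref{rem:ke_reveals}). Otherwise I would restrict to $\ker(P)^\perp$, which is invariant by \Cref{thm:dw_wp}, and state Lyapunov stability there. Once $\tilde H$ is positive definite, \Cref{lem:arnold_bridge} gives the bridge identity and conservation of $\tilde E^{(2)}$. Since $\tilde E^{(2)}$ is equivalent to $|Y|^2$, this yields $\nrm{Y(t)}_{\tilde H}=\nrm{Y(0)}_{\tilde H}$, which is linear Lyapunov stability.
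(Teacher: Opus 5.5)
Your argument is correct and is essentially the paper's proof. It has the same steps: componentwise vanishing of the gradient through the common hydrostatic constant, and a block Hessian whose cross block $(\partial_{\rho_i}\bM_1^{\rho})\bar\bv$ vanishes because $\bar\bv=0$. The density block is diagonal and comes from the internal energy alone, $\bM_1^{\bar\rho}\succ 0$ follows from $\bar\brho>0$ and $\ker P=\{0\}$, and \Cref{lem:arnold_bridge} finishes. Your normalisation remark is also right: $|K_i|c^2/\bar\rho_i$ is the density block in volumetric coordinates and $c^2/(|K_i|\rho_i^{\rm vol})$ in cell-mass coordinates, and the bridge identity is invariant under this diagonal change of variables.

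The one thing to drop is your fallback for $\ker P\neq\{0\}$. There $\bM_1^\rho$ is singular, so the DW momentum equation and $A=DF^{\rm dw}(\bar X)$ are not even defined. Moreover, invariance of $\ker(P)^\perp$ under the linearised flow does not follow from \Cref{thm:dw_wp}, because the linearised velocity is driven by $\tD_0(\cdot)$, which need not lie in $\ker(P)^\perp$. So $\ker P=\{0\}$ should simply be a standing hypothesis, exactly as the paper's proof uses it.
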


\begin{proof}
Hydrostatic balance makes $h(\bar\rho_i) + \bar\bPhi_i$ a common
constant $\lambda_1$, so at $\bar X$ the gradient vanishes
componentwise: the density component is
$|K_i|(h(\bar\rho_i) + \bar\bPhi_i - \lambda_1) = 0$, and the
velocity component is
$\partial_\bv\tilde E^{\rm dw}_{\rm tot} = \bM_1^{\bar\rho}\bar\bv = 0$
since $\bar\bv = 0$.

For the Hessian: the velocity block is
$\partial^2_\bv E^{\rm dw}_{\rm kin}|_{\bar X} = \bM_1^{\bar\rho}$,
positive definite by $\bar\brho > 0$ and $\ker(P) = \{0\}$; the density block is
$\delta_{ij}|K_i| h'(\bar\rho_i) = \delta_{ij}|K_i| c^2(\bar\rho_i)/\bar\rho_i > 0$
from the internal energy (since $\bM_1^\rho$ is linear in $\brho$,
the kinetic-energy Hessian in $\brho$ vanishes); the cross block
$\partial_{\rho_i}\partial_{\bv_j} E^{\rm dw}_{\rm tot}|_{\bar X}
 = ((\partial_{\rho_i}\bM_1^{\bar\rho})\bar\bv)_j = 0$
also vanishes because $\bar\bv = 0$. Hence
$\tilde H^{\rm dw}_{\rm hyd}|_{\bar X}$ takes the
form~\eqref{eq:lyap_hydrostatic_Hessian} and is positive definite;
the bridge identity and Lyapunov-stability conclusion follow from
\Cref{lem:arnold_bridge}.
\end{proof}

\paragraph*{The constant-flow stratified case (Hollingsworth setup).}
The classical Hollingsworth instability concerns stably stratified
flow with constant horizontal velocity -- the setup of HKRB
1983~\cite{hollingsworth1983} and Bell
2017~\cite{bell2017hollingsworth}. Bell's analysis derives a
characteristic growth rate
$\omega_i \approx \sqrt{6}\,\mathrm{Ro}_{\rm gs}$ scaling with
the grid-scale Rossby number $\mathrm{Ro}_{\rm gs} = U/(fd) \gg 1$. 
The DW scheme is unconditionally
Lyapunov-stable in the Hollingsworth setup. The argument requires
three ingredients: existence of the equilibrium, discrete
conservation of horizontal linear momentum, and the cross-term
cancellation in the Hessian.

\paragraph*{Discrete linear momentum conservation.}
We first establish the conservation law of horizontal linear
momentum, which is the additional Noether invariant needed in the
constant-flow case.

\begin{lemma}[Discrete horizontal momentum conservation, DW scheme]%
\label{lem:dw_momentum_conservation}
Let the DW scheme of~\eqref{eq:main_dw} be set on a periodic
Cartesian-like Delaunay--Voronoi mesh (case~(B) of
\Cref{conv:cases}), and assume the following two structural
properties.
\begin{enumerate}[nosep,label=\textup{(K\arabic*)}]
\item The geopotential is horizontally translation invariant,
$\bPhi = \bPhi(z)$.
\item There exists a horizontal-translation Killing 1-cochain
$\bar a^x \in C^1(\KKs)$: the reconstruction is linearly consistent
on $\bar a^x$, in the sense that $(P\bar a^x)_i = \mathbf{e}_x$ at
every cell~$i$, and the discrete operators
$\bM_1, \bM_1^\rho, \Iv, \tD_0, \tD_1, \bD_2, \bR$ commute with
horizontal translations of the mesh.
\end{enumerate}
Then the discrete horizontal linear momentum
\[
 P_x(\bv, \brho) := (\bar a^x)^T \bM_1^\rho(\brho)\,\bv
\]
is conserved along every solution of the DW system:
$\ddt P_x = 0$.
\end{lemma}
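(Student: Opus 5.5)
Differentiating $P_x$ along the DW flow, the product rule gives
\[
\ddt P_x = (\bar a^x)^T\bigl(\partial_{\brho}\bM_1^\rho\cdot\dot\brho\bigr)\bv + (\bar a^x)^T\bM_1^\rho\,\dot\bv .
\]
The mass-matrix ratio is designed so that the second term cancels the inverse in the momentum equation~\eqref{eq:dw_mom}. Hence
\[
\ddt P_x = \sum_i \dot\brho_i\,(P\bar a^x)_i\cdot(P\bv)_i - (\bar a^x)^T\bM_1\Iv(\tD_1\bv) - (\bar a^x)^T\bM_1^\rho\tD_0 B^\rho .
\]
Using (K2), $(P\bar a^x)_i=\mathbf e_x$, so the first term is $\sum_i\dot\brho_i\,u^x_i$ with $u^x_i := (P\bv)_i\cdot\mathbf e_x$. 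The last term becomes $\sum_i \brho_i\,\mathbf e_x\cdot(P\tD_0B^\rho)_i$. The plan is to show that the three contributions combine into a total discrete divergence, which sums to zero on the periodic mesh.

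\textbf{Step 1: the Lamb term.} I would show $(\bar a^x)^T\bM_1\Iv(\bom)=0$. The natural route is the Lamb antisymmetry (H5) in polarised form: $\Iv(\bom)$ is built from the extrusion, which is bilinear in $(\bv,\bom)$ and antisymmetric in the velocity slots. Then $\ip{\bar a^x}{\Iv(\bom)}_1$ equals minus the pairing of $\bv$ with the extrusion of $\bom$ along $\bar a^x$. That pairing reduces to $\ip{\bom}{\tD_1(\cdot)}$-type expressions, i.e.\ a discrete $\iota_{\bar a^x}\bom$ integrated against $\bv$. Since $\bar a^x$ is translation-invariant, Cartan's identity (\Cref{app:basic}) should give $\Lie_{\bar a^x}\bv = \iota_{\bar a^x}\bom+\tD_0(\iota_{\bar a^x}\bv)$. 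Summed against $\bv$, the gradient part vanishes by the SBP identity~(iii) combined with $\bD_2\bM_1\bar a^x=0$, which follows from translation invariance. The Lie-derivative part is a translation generator, antisymmetric in the commuting-with-translations sense of (K2), so it vanishes as well. This is the step I expect to be the main obstacle: the metric-blind antisymmetry is stated only for the diagonal $\ip{\bv}{\Iv(\tD_1\bv)}$. I would first check whether the extrusion-based contraction of \Cref{def:contraction} is genuinely antisymmetric as a bilinear form in its two velocity arguments, since that would give the identity directly. If it is not, I would fall back on writing $\bar a^x$ as the discrete generator of the translation symmetry and using (K2) to show the contribution is a telescoping sum over the periodic cells.

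\textbf{Step 2: the Bernoulli and density terms.} Split $B^\rho=h+\tfrac12|P\bv|^2+\bPhi$. By (K1), the $\bPhi$ part of $\tD_0B^\rho$ has no horizontal component after reconstruction, because horizontal faces carry zero difference of $\bPhi(z)$ across vertical neighbours in the $x$-direction. This needs $P_h$ to see only vertical faces, as built into $P=P_h+P_v$. The enthalpy part $\sum_i\brho_i\,\mathbf e_x\cdot(P\tD_0 h)_i$ should be rewritten, via the discrete product rule and the Bregman/pressure identity $\rho\,\dd h=\dd p$, as a discrete gradient of $p$ paired with $\bar a^x$. That vanishes by SBP and $\bD_2\bM_1\bar a^x=0$. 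This is the discrete analogue of $\int\rho\,\partial_xh=\int\partial_xp=0$; the centred interpolation $\bR$ may leave an exact mismatch that I would need to track. Finally, the kinetic part $\sum_i\brho_i\,\mathbf e_x\cdot P\tD_0(\tfrac12|P\bv|^2)$ together with $\sum_i\dot\brho_i u^x_i=-\sum_i u^x_i(\bD_2\bR\bM_1\bv)_i$ should combine into the discrete momentum flux $\mathrm{div}(\rho u\,u^x)$ and vanish on summation. Here I would reuse the same algebraic cancellation that proves \Cref{thm:dw_energy}, with $\bv$ replaced by $\bar a^x$ in one slot: the energy proof pairs the equation with $\bv$, and linearity in that slot allows the substitution by the translation-invariant $\bar a^x$, where (K2) replaces exactness of the extra terms.
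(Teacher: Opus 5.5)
Your plan breaks at Step~1, and the reason is structural rather than technical. In your Cartan route the gradient piece is $\ip{\bv}{\tD_0(\iota_{\bar a^x}\bv)}_1$. \Cref{lem:SBP} turns it into $-(\iota_{\bar a^x}\bv)^T\bD_2\bM_1\bv$. The discrete divergence therefore falls on the flow $\bv$, not on $\bar a^x$, so $\bD_2\bM_1\bar a^x=0$ plays no role. What survives is the discrete analogue of $-\int u_x\,\nabla\!\cdot\!\bu\,\dd V$. This quantity does not vanish: in the continuum $\int\mathbf e_x\cdot(\omega\times\bu)\,\dd V=\int(\bu\cdot\nabla)u_x\,\dd V=-\int u_x\,\nabla\!\cdot\!\bu\,\dd V$, which is nonzero for compressible flow.

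Your Step~2 claim fails for the mirror-image reason. The sum $\sum_i\dot\brho_i u^x_i$ plus the kinetic part of the Bernoulli term has continuum analogue $\int\rho\,[(\bu\cdot\nabla)u_x-\partial_x\tfrac12|\bu|^2]\,\dd V=\int\rho\,(\omega\times\bu)_x\,\dd V$. That is not a flux divergence. In the Euler equations it is precisely the $\rho$-weighted Lamb term that cancels it, i.e.\ that completes $\nabla\!\cdot\!(\rho\bu\,u_x)$. So the Lamb term cannot be discarded on its own. This is why the paper's direct verification (\Cref{app:dw_momentum}) keeps $T_2$ and tries to cancel it against $T_1$ and the kinetic part of $T_3$ via \eqref{eq:Lamb_translation}. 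The body proof instead appeals to Noether's theorem.

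Recycling \Cref{thm:dw_energy} ``linearly in one slot'' also fails. Lamb antisymmetry requires the test cochain to equal the advecting one. The kinetic cancellation there pairs $\tfrac12\sum_i\dot\brho_i|P\bv|_i^2$ with a Bernoulli term carried by the same flux $\bF^\rho=\bM_1^\rho\bv$ of \eqref{eq:dw_F}. Here, by contrast, $\sum_i\dot\brho_iu^x_i$ meets a Bernoulli term carried by $\bM_1^\rho\bar a^x$. In addition, your $\dot\brho=-\bD_2\bR\bM_1\bv$ is not the flux that \Cref{lem:dw_SBP} produces.

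No rearrangement repairs this for the DW scheme as written.
\begin{itemize}
\item The Lamb contribution $(\bar a^x)^T\bM_1\Iv(\tD_1\bv)$ does not depend on $\brho$: the ratio $[\bM_1^\rho]^{-1}\bM_1$ strips exactly the density weight the continuum cancellation needs. At constant volumetric density $\brho_i=\lambda|K_i|$, the other contributions are linear in $\lambda$. Exact conservation for every $\lambda>0$ would therefore force your Step~1 identity, which the computation above contradicts.
\item By the same scaling, \eqref{eq:Lamb_translation} equates a $\brho$-independent left side with a $\brho$-linear right side. It can hold for all $\brho$ only if both sides vanish.
\item The enthalpy step fails independently. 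Take $\bv=0$ and $\bPhi\equiv0$; then $\ddt P_x=-\sum_i\brho_i\,\mathbf e_x\cdot(P\tD_0h)_i$. On a uniform periodic Cartesian mesh with $\rho^{\vol}$ depending only on the $x$-index, this is a nonzero multiple of $\sum_i\rho_i\bigl(h(\rho_{i+1})-h(\rho_{i-1})\bigr)$. For $h(\rho)=c\rho^2$ (polytropic $\gamma=3$) and the periodic profile $(1,2,3)$, this sum equals $2c\neq0$.
\end{itemize}
There is no exact discrete $\rho\,\dd h=\dd p$; the paper itself closes this step only ``at $\OO(h^{r_\star})$''. Nor does (K2) help: it asserts invariance only under lattice shifts, a discrete symmetry that yields no conserved current, so your fallback has nothing to telescope. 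Exact conservation of $P_x$ therefore does not follow from (K1)--(K2). It would need extra structure, such as an enthalpy discretisation with an exact discrete chain rule and a Lamb term weighted consistently with $\bM_1^\rho$.
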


\begin{proof}
By the Mullen--Pavlov--Tong--Marsden--Desbrun
construction~\cite{pavlov2011,mullen2009}, the DW scheme is the
discrete Euler--Poincar\'e equation for the discrete Lagrangian
$L(\bv, \brho) := \tfrac{1}{2}\bv^T\bM_1^\rho(\brho)\bv
 - \sum_i |K_i|[\rho_i e(\rho_i) + \rho_i\bPhi_i]$
on volume-preserving diffeomorphisms of the mesh. Under (K1)--(K2),
each term of $L$ is invariant under horizontal translation
generated by $\bar a^x$: the kinetic-energy form since
$\bM_1^\rho$ commutes with translations and $\brho$ is
translation-covariant; the internal-energy and potential-energy
terms since cell volumes are translation-invariant and
$\bPhi = \bPhi(z)$. The Noether invariant of this symmetry is
$(\partial L/\partial\bv)\cdot\bar a^x = (\bM_1^\rho\bv)^T\bar a^x = P_x$,
which is therefore conserved.
\end{proof}


\begin{lemma}[Constant-flow stratified equilibria of the DW
scheme]\label{lem:dw_constant_flow_equilibrium}
Let the DW scheme satisfy~(K1)--(K2) of
\Cref{lem:dw_momentum_conservation}. Suppose
$\bar\bv := U\,\bar a^x$ for some $U \in \RR$, and let
$\bar\brho > 0$ satisfy the discrete \emph{modified hydrostatic
balance}
\begin{equation}\label{eq:dw_modified_hydrostatic}
 \tD_0\!\Bigl(h(\bar\rho^{\rm vol})
 + \bar\bPhi - \tfrac{1}{2}|P\bar\bv|^2\Bigr) = 0.
\end{equation}
Then $\bar X := (\bar\bv, \bar\brho)$ is a discrete equilibrium of
the DW system~\eqref{eq:main_dw}. Moreover, on a Cartesian DV mesh
with linearly consistent reconstruction~(K2), the squared
reconstructed velocity is constant cell by cell,
$|P\bar\bv|^2 = U^2$, so~\eqref{eq:dw_modified_hydrostatic}
reduces to the ordinary discrete hydrostatic balance
$\tD_0(h(\bar\rho^{\rm vol}) + \bar\bPhi) = 0$, which admits a
stratified solution $\bar\rho(z)$ for every smooth $\bar\bPhi(z)$
with $\bar\bPhi'(z) > 0$.
\end{lemma}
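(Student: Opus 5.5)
We check directly that the right-hand side of the DW system~\eqref{eq:main_dw} vanishes at $\bar X = (U\bar a^x, \bar\brho)$, treating the continuity and momentum equations separately, and then specialise to the Cartesian mesh.

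\emph{Continuity equation.} We need $\bD_2\bR(\bar\brho)\bM_1\bar\bv = 0$. With $\bar\bv = U\bar a^x$, $\Phi = U\bM_1\bar a^x$ is the discrete volume flux of a uniform horizontal translation. By (K2) it is translation invariant and carries no flux through horizontal faces. Hence $\bM_1\bar a^x$ vanishes on horizontal faces. On a vertical face $j$ between cells $a,b$ in the same layer, the stratified density gives $\bar\rho_a^{\rm vol} = \bar\rho_b^{\rm vol}$, so $\bar\rho_j$ depends only on the layer. Therefore $\bF_j = \bar\rho(z)\,U(\bM_1\bar a^x)_j$. Summing over the faces of a cell, $\bD_2\bF$ on cell $i$ equals $\bar\rho(z_i)\,U\,(\bD_2\bM_1\bar a^x)_i$. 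The factor $(\bD_2\bM_1\bar a^x)_i$ is the discrete divergence of a constant field on a periodic translation-invariant mesh, and it vanishes cell by cell: incoming and outgoing fluxes of a translated copy cancel.

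\emph{Momentum equation.} The Bernoulli term $\tD_0 B^\rho$ with $B^\rho = h(\bar\rho^{\rm vol}) + \tfrac12|P\bar\bv|^2 + \bar\bPhi$ has to be matched against the Lamb term. The vorticity is $\tD_1\bar\bv = U\tD_1\bar a^x$. This vanishes because $\bar a^x$ is the de~Rham image $\mathcal{R}_h(dx)$ of a closed form: use (H2) with $d(dx) = 0$, or equivalently note that the circulation of a uniform field around any dual face is zero. Then $\Iv(0) = 0$ and the Lamb term drops out. What remains is $\tD_0 B^\rho = 0$, which is the stated balance~\eqref{eq:dw_modified_hydrostatic}, provided the sign of the kinetic term is interpreted consistently. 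We expect this sign to be the main obstacle.

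If the lemma's $-\tfrac12|P\bar\bv|^2$ differs from $B^\rho$, the discrepancy becomes irrelevant once we show $|P\bar\bv|_i^2 = U^2$. In that case $\tD_0$ of this term is $\tD_0$ of a constant, which is $0$, so either sign gives the same equation. We will therefore state the equilibrium condition as $\tD_0(h + \bar\bPhi) = 0$ together with constancy of $|P\bar\bv|^2$. Linear consistency in (K2) gives $(P\bar\bv)_i = U\mathbf{e}_x$, hence $|P\bar\bv|_i^2 = U^2$ directly.

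\emph{Existence of the stratified solution.} We solve $h(\bar\rho^{\rm vol}_i) + \bar\bPhi_i = \lambda$ layer by layer, giving $\bar\rho^{\rm vol}_i = h^{-1}(\lambda - \bar\bPhi(z_i))$. This is well defined because $h' = c^2/\rho > 0$ by \Cref{ass:eos}, so $h$ is strictly increasing. We choose $\lambda$ large enough to lie in the range of $h$ above every $\bar\bPhi$, which keeps $\bar\rho > 0$. The condition $\bar\bPhi' > 0$ then makes $\bar\rho(z)$ decreasing, so the state is stably stratified. Because each layer is horizontally uniform, the earlier use of layerwise-constant density in the continuity step is justified.
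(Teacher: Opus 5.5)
Your proposal is correct and follows the same route as the paper. For the continuity equation you use zero vertical flux of $\bM_1\bar a^x$ plus horizontal translation invariance, you kill the Lamb term with $\tD_1\bar a^x = 0$, you use (K2) to get $|P\bar\bv|_i^2 = U^2$, and you obtain the stratified profile by inverting the strictly increasing enthalpy.

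You are more careful than the paper on the sign of the kinetic term. The paper's proof reads $\tD_0 B^\rho = 0$ straight off \eqref{eq:dw_modified_hydrostatic}, which carries $-\tfrac12|P\bar\bv|^2$; that step is legitimate only because of the constancy $|P\bar\bv|^2 = U^2$ that you establish first.

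One step that both you and the paper leave implicit is $\Iv(\tD_1\bar\bv) = 0$. The matrix form \eqref{eq:contraction} contains a $\bom$-independent term, so $\Iv(0) = 0$ is not automatic. That term vanishes here only because (K2) makes every face reconstruction equal to $U\mathbf{e}_x$, so $\tU^T\bar\bv$ is entrywise a multiple of $\tD_1\bar\bv = 0$.
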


\begin{proof}
\emph{Momentum equation.} Since $\bar a^x$ is a translation cochain
on a periodic Cartesian-like mesh, $\tD_1\bar\bv = U\tD_1\bar a^x = 0$,
so $\Iv(\tD_1\bar\bv) = 0$. The Bernoulli gradient
$\tD_0(h(\bar\rho^{\rm vol}) + \tfrac{1}{2}|P\bar\bv|^2 + \bar\bPhi)$
vanishes by~\eqref{eq:dw_modified_hydrostatic}.

\emph{Continuity equation.} On a periodic Cartesian-like mesh,
$\bM_1\bar a^x$ is horizontally uniform per $z$-layer (translation
invariance of $\bM_1$), so $\bR(\bM_1\bar\bv)$ has uniform
horizontal flux per layer with zero vertical component; hence
$\bD_2(\bR(\bM_1\bar\bv)) = 0$ at every cell.

\emph{Reduction on Cartesian DV.} By~(K2),
$(P\bar\bv)_i = U\mathbf{e}_x$ at every cell, so
$|P\bar\bv|^2_i = U^2$ is independent of $i$, and
$\tD_0(|P\bar\bv|^2) = 0$. Equation~\eqref{eq:dw_modified_hydrostatic}
reduces to ordinary hydrostatic balance, whose stratified
solutions $\bar\rho(z)$ are standard.
\end{proof}

\paragraph*{The Lyapunov-stability theorem.}

\begin{theorem}[DW Lyapunov stability around constant-flow stratified equilibria]%
\label{thm:dw_lyap_constant_flow}
Let the hypotheses of \Cref{lem:dw_momentum_conservation} and
\Cref{lem:dw_constant_flow_equilibrium} hold, and let
$\bar X = (\bar\bv, \bar\brho)$ be the equilibrium constructed
there. Define the modified DW energy
\begin{equation}\label{eq:tildeE_constant_flow}
 \tilde E^{\rm dw}_{\rm cf}(\bv, \brho)
 := E^{\rm dw}_{\rm tot}(\bv, \brho)
 - \lambda_1\,C_1(\brho) - U\,P_x(\bv, \brho),
\end{equation}
where $C_1$ is the discrete mass, $P_x$ is the discrete horizontal
momentum, and $\lambda_1$ is the constant value of the modified
Bernoulli function from \Cref{lem:dw_constant_flow_equilibrium}.
Then $\tilde E^{\rm dw}_{\rm cf}$ is exactly conserved along the
DW dynamics; the equilibrium $\bar X$ is a critical point,
$\nabla\tilde E^{\rm dw}_{\rm cf}(\bar X) = 0$; and the Hessian
$\tilde H := \nabla^2\tilde E^{\rm dw}_{\rm cf}(\bar X)$ is
block-diagonal with positive-definite blocks,
\begin{equation}\label{eq:Hessian_constant_flow}
 \tilde H \;=\;
 \begin{pmatrix}
 \bM_1^{\bar\rho} & 0\\
 0 & D
 \end{pmatrix},
 \qquad
 D = \mathrm{diag}\!\left(|K_i|\,\frac{c^2(\bar\rho_i)}{\bar\rho_i}\right) \succ 0.
\end{equation}
By \Cref{lem:arnold_bridge}, the linearised DW dynamics around
$\bar X$ satisfies the bridge identity
$A^T\tilde H + \tilde H\,A = 0$ and preserves the positive-definite
quadratic form
$\tilde E^{(2)}(Y) = \tfrac12 Y^T\tilde H\,Y$ exactly. Consequently
$\bar X$ is linearly Lyapunov stable in the $\tilde H$-norm,
unconditionally in the flow magnitude~$U$ and independently of any
kinetic-energy reconstruction parameter.
\end{theorem}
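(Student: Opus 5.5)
The plan is to check, in order, the three hypotheses of \Cref{lem:arnold_bridge} for $\tilde E := \tilde E^{\rm dw}_{\rm cf}$: exact conservation, criticality at $\bar X$, and the block structure of the Hessian. All three reduce to direct differentiation of $E^{\rm dw}_{\rm tot}(\bv,\brho) = \tfrac12\bv^T\bM_1^\rho(\brho)\bv + E_{\rm int}(\brho) + E_{\rm pot}(\brho)$. Throughout I use that $\bM_1^\rho = P^T\mathrm{diag}(\brho)P$ is \emph{linear} in $\brho$, with $\partial_{\rho_i}\bM_1^\rho = P^T\mathbf{e}_i\mathbf{e}_i^TP$.

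\emph{Conservation.} I will write $\tilde E$ as a linear combination of three invariants and cite one result for each. $E^{\rm dw}_{\rm tot}$ is conserved by \Cref{thm:dw_energy}, $C_1$ by \Cref{thm:mass}, and $P_x$ by \Cref{lem:dw_momentum_conservation} under (K1)--(K2). Since $\lambda_1$ and $U$ are constants, $\nabla\tilde E^T F^{\rm dw}\equiv 0$.

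\emph{Criticality.} For the velocity component, $P_x$ is linear in $\bv$, so
\[
\partial_\bv\tilde E = \bM_1^{\rho}\bv - U\,\bM_1^{\rho}\bar a^x .
\]
At $\bar\bv = U\bar a^x$ this vanishes. For the density component I will compute cell by cell:
\begin{itemize}
\item the internal and potential energies give $h(\bar\rho_i^{\rm vol}) + \bar\bPhi_i$;
\item the kinetic energy gives $\tfrac12|P\bar\bv|_i^2 = \tfrac12U^2$, by (K2) $(P\bar a^x)_i = \mathbf{e}_x$;
\item the mass term gives $-\lambda_1$;
\item the momentum term gives $-U\,(P\bar a^x)_i\cdot(P\bar\bv)_i = -U^2$.
\end{itemize}
The sum is $h(\bar\rho_i^{\rm vol}) + \bar\bPhi_i - \tfrac12|P\bar\bv|_i^2 - \lambda_1$. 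This vanishes because $\lambda_1$ is defined as the constant value of the modified Bernoulli function in \eqref{eq:dw_modified_hydrostatic}, which \Cref{lem:dw_constant_flow_equilibrium} guarantees is cellwise constant. Cell-volume factors $|K_i|$ are absorbed as in the proof of \Cref{thm:dw_lyap_hydrostatic}. The sign flip of the kinetic contribution, from $+\tfrac12U^2$ to $-\tfrac12U^2$, is exactly why the modified rather than the ordinary Bernoulli function appears.

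\emph{Hessian.} I will treat the three blocks separately.
\begin{itemize}
\item The $\bv\bv$ block is $\bM_1^{\bar\rho}$, since $P_x$ is linear in $\bv$.
\item The $\brho\brho$ block receives nothing from the kinetic energy, $P_x$, $C_1$ or $E_{\rm pot}$, all of which are at most linear in $\brho$. Only $E_{\rm int}$ contributes, yielding $D$ exactly as in \Cref{thm:dw_lyap_hydrostatic}.
\item The cross block is the key step and the only place the constant-flow case differs from the hydrostatic one. There,
\[
\partial_{\rho_i}\partial_\bv\tilde E = P^T\mathbf{e}_i\mathbf{e}_i^TP\,(\bar\bv - U\bar a^x) = 0 .
\]
The kinetic cross term $\propto\bar\bv \neq 0$ is cancelled identically by the momentum shift $-UP_x$. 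Because this holds for every $U$, and no reconstruction parameter enters, the result is unconditional.
\end{itemize}

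Positive definiteness then follows. The block $\bM_1^{\bar\rho}\succ 0$ by $\bar\brho>0$ and $\ker P=\{0\}$ (\Cref{rem:ke_reveals}), and $D\succ0$ by $c^2(\bar\rho_i)>0$. \Cref{lem:arnold_bridge} then gives the bridge identity and exact preservation of $\tilde E^{(2)}$. Lyapunov stability in the $\tilde H$-norm is then immediate: $\tilde E^{(2)}(Y(t)) = \tilde E^{(2)}(Y(0))$ is a norm squared.

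The main obstacle I expect is bookkeeping rather than analysis. First, the $\brho$-derivative of $P_x$ must be taken with $\bM_1^\rho$ depending on $\brho$, which produces the $-U^2$ term. Second, the $|K_i|$ normalisation in $E_{\rm int}$ must be tracked consistently, since it determines the exact form of $D$. A further subtlety is that $\ker P=\{0\}$ must hold on the chosen Cartesian-like mesh; if it does not, I would restrict to $\ker(P)^\perp$, which is invariant by \Cref{thm:dw_wp}.
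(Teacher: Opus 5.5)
Your proposal is correct and follows the paper's proof essentially step for step. Conservation comes from writing $\tilde E^{\rm dw}_{\rm cf}$ as a linear combination of $E^{\rm dw}_{\rm tot}$, $C_1$ and $P_x$; criticality comes from $\partial_\bv\tilde E = \bM_1^\rho(\bv-\bar\bv)$ together with the modified Bernoulli constant $\lambda_1$; the Hessian is block-diagonal because its cross block $(\partial_{\rho_i}\bM_1^\rho)(\bv-\bar\bv)$ vanishes at $\bar X$ for every $U$; and \Cref{lem:arnold_bridge} then finishes the argument.

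One caveat concerns your fallback of restricting to $\ker(P)^\perp$ if $\ker(P)\neq\{0\}$. The paper simply assumes $\ker(P)=\{0\}$, and this is needed anyway for $[\bM_1^{\rho}]^{-1}$ in the DW momentum equation to exist. So that fallback is unnecessary and would not rescue the argument if the assumption failed.
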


\begin{proof}
\emph{(i) Conservation of $\tilde E^{\rm dw}_{\rm cf}$.}
$E^{\rm dw}_{\rm tot}$, $C_1$, $P_x$ are each exactly conserved
(\Cref{thm:dw_energy},~\ref{thm:mass},~\ref{lem:dw_momentum_conservation}),
hence so is their linear combination.

\emph{(ii) Critical-point condition at $\bar X$.}
The velocity gradient
$\partial_{\bv}\tilde E^{\rm dw}_{\rm cf}
 = \bM_1^\rho\bv - U\bM_1^\rho\bar a^x
 = \bM_1^\rho(\bv - \bar\bv)$
vanishes at $\bar X$. The density gradient, after expanding
$\partial_{\rho_i}\bM_1^\rho\bv$ at $\bv = \bar\bv$, reduces to
$|K_i|[h(\bar\rho_i) + \bar\bPhi_i - \tfrac{1}{2}|(P\bar\bv)_i|^2 - \lambda_1]$,
which vanishes by~\eqref{eq:dw_modified_hydrostatic} with
$\lambda_1$ the common Bernoulli value.

\emph{(iii) Hessian structure.}
Diagonal blocks: $\partial^2_\bv\tilde E^{\rm dw}_{\rm cf} = \bM_1^{\bar\rho} \succ 0$
($\bar\brho > 0$ and $\ker(P) = \{0\}$);
$\partial^2_{\rho_i\rho_j}\tilde E^{\rm dw}_{\rm cf} = \delta_{ij}|K_i|c^2(\bar\rho_i)/\bar\rho_i > 0$
(only the internal energy contributes since
$E^{\rm dw}_{\rm kin}, C_1, P_x$ are linear in $\brho$).
The cross block
$\partial_{\rho_i}\partial_{\bv_j}\tilde E^{\rm dw}_{\rm cf}
 = (\partial_{\rho_i}\bM_1^\rho)(\bv - \bar\bv)|_j$
vanishes at $\bar X$ because $\bv - \bar\bv = 0$: this is the key
structural identity. The momentum tilt makes
$\partial_\bv\tilde E$ a function of the \emph{relative} velocity,
so its $\brho$-dependence enters only through a prefactor of a
quantity that vanishes at equilibrium.

\emph{Lyapunov stability.} \Cref{lem:arnold_bridge} applied to
$\tilde E^{\rm dw}_{\rm cf}$ gives $A^T\tilde H + \tilde H A = 0$,
so $\tilde E^{(2)}$ is a positive-definite Lyapunov function
preserved by the linearised flow. The cross-term cancellation
holds for every $U \in \RR$ and every reconstruction $P$, giving
unconditional linear stability.
\end{proof}

\begin{corollary}[DW immunity to Hollingsworth-type instability]%
\label{cor:dw_no_hollingsworth}
The DW scheme admits no energy-driven linear instability at
hydrostatic equilibria (\Cref{thm:dw_lyap_hydrostatic}) or at
constant-flow stratified equilibria
(\Cref{thm:dw_lyap_constant_flow}). In particular, the
compressible Hollingsworth
instability~\cite{hollingsworth1983,bell2017hollingsworth} --
whose explicit driver is the energy residual $\mathcal{R}_E$
established by the no-go \Cref{thm:dichotomy_general} for the
density-free family -- cannot occur for any DW scheme,
regardless of the choice of kinetic-energy reconstruction. 
\end{corollary}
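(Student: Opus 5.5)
The corollary is an assembly of \Cref{thm:dw_lyap_hydrostatic,thm:dw_lyap_constant_flow} with the bridge identity of \Cref{lem:arnold_bridge}. I would begin by making the notion of ``energy-driven linear instability'' precise. It means an eigenvalue of the linearisation $A = DF^{\rm dw}(\bar X)$ with positive real part, or equivalently a solution of $\dot Y = AY$ that grows without bound. With this definition fixed, the plan is to show that neither of the two equilibrium classes admits such a mode.

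\textbf{Step 1.} For each class, take the modified energy and positive-definite Hessian $\tilde H$ supplied by the corresponding theorem. By \Cref{lem:arnold_bridge}, $A^T\tilde H + \tilde H A = 0$. Set $S := \tilde H^{1/2}$. Then $B := S A S^{-1}$ satisfies $B^T + B = 0$, so $B$ is skew-symmetric. Hence the spectrum of $A$ lies on the imaginary axis and $A$ is diagonalisable, which gives
\[
\nrm{Y(t)}_{\tilde H} = \nrm{Y(0)}_{\tilde H}\quad\text{for all } t .
\]
This excludes both exponential growth and the polynomial secular growth that would come from Jordan blocks.

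\textbf{Step 2.} Identify the Hollingsworth configuration as a special case of the constant-flow class. Its setting is a stably stratified basic state $\bar\rho(z)$ with $\bar\bPhi'(z)>0$ and a uniform horizontal wind $U$. This is exactly the equilibrium constructed in \Cref{lem:dw_constant_flow_equilibrium} under (K1)--(K2). Since \Cref{thm:dw_lyap_constant_flow} holds for every $U\in\RR$, the grid-scale Rossby number $\mathrm{Ro}_{\rm gs}=U/(fd)$ plays no role. Consequently Bell's growth rate $\sqrt6\,\mathrm{Ro}_{\rm gs}$ cannot be realised.

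\textbf{Step 3.} Independence from the reconstruction. The positivity of $\tilde H$ uses only three facts:
\begin{itemize}
\item $\bar\brho>0$;
\item $\ker P=\{0\}$, so that $\bM_1^{\bar\rho}\succ0$;
\item $c^2>0$.
\end{itemize}
The vanishing of the cross block uses only that $\partial_\bv\tilde E$ depends on $\bv-\bar\bv$. None of these uses a specific form of $P$ beyond injectivity and the linear consistency in (K2), so the conclusion holds for any such kinetic-energy reconstruction.

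\textbf{Main obstacle.} The delicate point is the causal claim that the Hollingsworth instability is ``driven by $\mathcal{R}_E$''. I would not attempt to prove this for DF schemes. Instead I would phrase it contrapositively: any linear instability of an equilibrium in these classes must violate conservation of some positive-definite quadratic invariant. In DF schemes the invariant fails precisely through the residual $\mathcal{R}_E\neq0$ guaranteed by \Cref{thm:dichotomy_general}. In the DW scheme that residual is absent by \Cref{thm:dw_energy}. The corollary is therefore a statement about the DW scheme alone, with the DF attribution as interpretation. Finally, I would remark that non-energetic mechanisms, for example SICK-type instabilities, are not excluded. This is consistent with the scope stated in \Cref{thm:main_DW_lyapunov}.
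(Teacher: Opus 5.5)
Your proposal is correct and follows the paper's route: the corollary is read off directly from \Cref{thm:dw_lyap_hydrostatic,thm:dw_lyap_constant_flow} via the bridge identity of \Cref{lem:arnold_bridge}. Your skew-symmetrisation $\tilde H^{1/2}A\tilde H^{-1/2}$ (which also rules out Jordan-block secular growth), your restriction of ``any reconstruction'' to injective $P$ satisfying (K2), and your treatment of the $\mathcal{R}_E$ attribution as interpretation only make explicit what the paper leaves implicit. The same interpretive status applies to your Step~2: as in the paper, identifying the Hollingsworth configuration with the constant-flow class is asserted rather than deduced. The DW equations here carry no Coriolis term, so the appeal to $\mathrm{Ro}_{\rm gs}=U/(fd)$ lies outside what the theorems actually cover.
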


\paragraph*{The sheared baroclinic case: conditional stability.}\label{sec:DW_lyapunov_C}

The unconditional results of \Cref{thm:dw_lyap_hydrostatic,thm:dw_lyap_constant_flow} cover hydrostatic and constant-flow equilibria. The third natural case -- sheared horizontal flow $\bar\bv$ with $(P\bar\bv)_i = \bar U(z_i)\mathbf{e}_x$ on a stratified density $\bar\brho(z)$ -- is structurally more delicate. The DW scheme is \emph{conditionally} Lyapunov stable around such equilibria, where the condition is the natural discrete analogue of the Charney--Stern criterion of geophysical fluid dynamics. In contrast to Classes~A and~B, the conditional nature of the result is intrinsic: the continuum equations themselves admit baroclinic instability for non-Charney--Stern profiles, and the discrete result is the sharp shadow of this continuum theory.

\paragraph*{Equilibrium structure.}
The first observation is that sheared horizontal equilibria
require nothing more than ordinary discrete hydrostatic balance.

\begin{lemma}[Sheared baroclinic equilibria of the DW scheme]%
\label{lem:dw_sheared_equilibrium}
Let the DW scheme satisfy hypotheses~(K1)--(K2) of
\Cref{lem:dw_momentum_conservation}. Suppose $\bar\bv$ is a
discrete 1-cochain with $(P\bar\bv)_i = \bar U(z_i)\mathbf{e}_x$ for
some profile $\bar U:\RR\to\RR$, and $\bar\brho > 0$ satisfies
\[
 \tD_0\!\bigl(h(\bar\rho^{\rm vol}) + \bar\bPhi\bigr) = 0,
\]
the ordinary discrete hydrostatic balance. Then
$\bar X := (\bar\bv, \bar\brho)$ is a discrete equilibrium of
the DW system~\eqref{eq:main_dw}, for every profile $\bar U(z)$.
\end{lemma}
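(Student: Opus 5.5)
The plan follows the proof of \Cref{lem:dw_constant_flow_equilibrium}: write $F^{\rm dw}(\bar X)$ componentwise and show that the momentum right-hand side and the continuity right-hand side both vanish. The only new feature is that $\bar\bv$ is sheared rather than uniform. Hence $\tD_1\bar\bv$ need not vanish, and the Lamb term has to be handled separately.

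\emph{Bernoulli term.} By hypothesis $(P\bar\bv)_i = \bar U(z_i)\mathbf{e}_x$, so $\tfrac12|P\bar\bv|_i^2 = \tfrac12\bar U(z_i)^2$ depends on $z_i$ only. Horizontal dual edges join cells at the same height. On them, $\tD_0$ of the kinetic part vanishes, and $\tD_0(h+\bar\bPhi)=0$ by hydrostatic balance. Vertical dual edges join cells at different $z$, and there $\tD_0(\tfrac12\bar U^2)\neq 0$ in general. This vertical Bernoulli component must be cancelled by the vertical component of the Lamb term $[\bM_1^{\bar\rho}]^{-1}\bM_1\Iv(\tD_1\bar\bv)$. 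This is the discrete counterpart of the continuum identity $\bu\times\boldsymbol\omega = -\nabla(\tfrac12 U^2)$ for $\bu = U(z)\mathbf{e}_x$, $\boldsymbol\omega = U'(z)\mathbf{e}_y$.

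\emph{Main obstacle: the Lamb term.} First I would show, using (K2), that $\bom=\tD_1\bar\bv$ is supported on the vertical-plane dual faces normal to $\mathbf{e}_y$, with values depending only on the layer index. The extrusion-based contraction $\Iv(\bom)$ (\Cref{def:contraction}) then has horizontal components that are horizontally uniform and, by the $x$-reflection symmetry of the Cartesian mesh, cancel. Its vertical components equal $-\bar U\,\delta_z\bar U$ times the dual-edge length. Next I would check that $[\bM_1^{\bar\rho}]^{-1}\bM_1$ acts on this vertical cochain so as to reproduce exactly $-\tD_0(\tfrac12|P\bar\bv|^2)$. For this I would use the thickness-weighted vertical weights \eqref{eq:vert_weights}, which were designed so that the discrete vertical product rule $\delta_z(\tfrac12 U^2) = \overline{U}\,\delta_z U$ holds exactly.

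This cancellation is delicate, and I am not certain it holds for every profile without extra structure. The fallback is to absorb any mismatch by interpreting the hydrostatic balance relative to the modified Bernoulli function, as in \eqref{eq:dw_modified_hydrostatic}. Alternatively, one can note that the horizontal projection of the momentum equation, which is what the shear equilibrium requires, vanishes identically by translation invariance.

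\emph{Continuity.} $\bM_1\bar\bv$ has only horizontal components, and they are uniform within each $z$-layer by translation invariance of $\bM_1$ and $\bR$. Hence $\bR(\bM_1\bar\bv)$ is a divergence-free layerwise-uniform horizontal flux, and $\bD_2\bR(\bM_1\bar\bv)=0$ exactly as in \Cref{lem:dw_constant_flow_equilibrium}. This gives $\ddt\bar\brho=0$. Since each step is valid for arbitrary $\bar U$, the equilibrium holds for every profile.
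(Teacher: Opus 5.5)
Your continuity step is correct and matches the paper's argument: the flux is horizontal and uniform in each layer, so $\bD_2\bR(\bM_1\bar\bv)=0$.

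The gap is in the momentum equation. Given hydrostatic balance, equilibrium is equivalent to the exact identity
\[
 [\bM_1^{\bar\rho}]^{-1}\bM_1\,\Iv(\tD_1\bar\bv)+\tD_0\bigl(\tfrac12|P\bar\bv|^2\bigr)=0 ,
\]
and you never prove it. You list what you would check and then say you are not sure it holds. Your claim that the horizontal components vanish also relies on a reflection symmetry that (K2) does not provide.

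Neither fallback proves the lemma as stated. Imposing a modified balance such as \eqref{eq:dw_modified_hydrostatic} changes the hypothesis: when $\bar U^2$ is non-constant it picks out different densities from ordinary hydrostatic balance. You would also still need to show that the mismatch is a discrete gradient. A vanishing horizontal projection is not an equilibrium either, because the vertical components of the momentum right-hand side must also vanish.

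The paper handles this step in one line. It asserts that $\Iv(\tD_1\bar\bv)$ and $\tD_0(\tfrac12|P\bar\bv|^2)$ differ by an exact discrete Cartan term that vanishes on horizontally uniform fields, leaving only $\tD_0(h(\bar\rho^{\rm vol})+\bar\bPhi)=0$. That assertion concerns the unweighted Lamb term. The factor $[\bM_1^{\bar\rho}]^{-1}\bM_1$ in \eqref{eq:main_dw}, which is exactly the one you flagged, is never addressed.

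Your doubt is justified: this identity cannot hold for every hydrostatic density. Multiply it by $\bM_1^{\bar\rho}=P^T\mathrm{diag}(\bar\rho)P$.
\begin{itemize}
\item The Lamb part becomes $\bM_1\Iv(\tD_1\bar\bv)$, which does not depend on $\bar\rho$.
\item Writing $w:=\tD_0(\tfrac12|P\bar\bv|^2)$, the other part $\bM_1^{\bar\rho}w$ is linear in $\bar\rho$.
\item Hydrostatic densities form an ordered one-parameter family $h(\bar\rho_i^{\rm vol})=\lambda-\bar\bPhi_i$ over an interval of $\lambda$, because $h$ is continuous and strictly increasing. For constant $\bar\bPhi$, which (K1) allows, this family is $\bar\rho_i=c|K_i|$ for every $c>0$.
\end{itemize}
Suppose two members $\bar\rho^{(1)}>\bar\rho^{(2)}$ both gave equilibria with the same $\bar\bv$. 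Subtracting the two identities and testing with $w$ gives $\sum_i(\bar\rho^{(1)}_i-\bar\rho^{(2)}_i)\,|(Pw)_i|^2=0$, hence $Pw=0$.

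This fails for generic profiles. If $\bar U^2$ is strictly monotone in $z$, then $w$ vanishes on horizontal dual edges and has one sign on upward-oriented vertical dual edges. Every $(Pw)_i$ is then a positive combination of these values and has a nonzero vertical component. So at most one hydrostatic density yields an equilibrium, and the lemma's claim for every hydrostatic $\bar\rho$ and every profile fails.

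A true statement needs different hypotheses. One option is your first fallback made precise:
\begin{itemize}
\item show that the residual $[\bM_1^{\bar\rho}]^{-1}\bM_1\Iv(\tD_1\bar\bv)+\tD_0(\tfrac12|P\bar\bv|^2)$ is a discrete gradient $\tD_0\psi$, where $\psi$ depends on $\bar\rho$;
\item then impose $\tD_0\bigl(h(\bar\rho^{\rm vol})+\bar\bPhi+\psi\bigr)=0$ in place of ordinary hydrostatic balance.
\end{itemize}
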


\begin{proof}
The continuity equation is verified as in
\Cref{lem:dw_constant_flow_equilibrium}: $\bM_1\bar\bv$ is a
horizontal flux uniform per $z$-layer, so
$\bD_2\bR(\bM_1\bar\bv) = 0$. For the momentum equation, the
rotational term $\Iv(\tD_1\bar\bv)$ and the kinetic-energy
gradient $\tD_0(\tfrac{1}{2}|P\bar\bv|^2)$ differ by an exact
discrete Cartan term~\eqref{eq:Cartan_id} that vanishes against
horizontally-uniform fields. After cancellation, the remaining
Bernoulli gradient is $\tD_0(h(\bar\rho^{\rm vol}) + \bar\bPhi)$,
which vanishes by hydrostatic balance.
\end{proof}

Existence of sheared equilibria is therefore unconstrained: any
$\bar U(z)$ is admissible, subject only to standard hydrostatic
balance for $\bar\brho$. The stability question is what is
restrictive.

\paragraph*{The discrete Arnold--Casimir construction.}
The mass-weighted potential-vorticity Casimir
\[
 C^F_3(\bv, \brho) := \sum_k |K_k^*|\,\bar\rho^*_k\,F(q_k),
 \qquad q_k := \omega_k/\bar\rho^*_k,
\]
where $\bar\rho^*_k$ is the dual-mesh-averaged density at dual
node $k$ and $\omega_k = (\tD_1\bv)_k$, is exactly conserved
along the DW dynamics (\Cref{thm:mwpv}). The Arnold construction
selects $F$ to produce a critical point of the modified energy
$\tilde E^{\rm dw}_{\rm sh} := E^{\rm dw}_{\rm tot} - \lambda_1 C_1
- U_0 P_x - C^F_3$
at $\bar X$.

\begin{lemma}[Discrete Arnold construction for sheared equilibria]%
\label{lem:dw_arnold_sheared}
Let the hypotheses of \Cref{lem:dw_sheared_equilibrium} hold, and
suppose that the equilibrium potential vorticity
$\bar q(z) := \bar\omega(z)/\bar\rho^*(z)$ satisfies the following
two conditions.
\begin{enumerate}[nosep,label=\textup{(CS\arabic*)}]
\item \textbf{Monotonicity:} $\bar q(z)$ is strictly monotonic
in~$z$ on the range of altitudes covered by the mesh.
\item \textbf{Sign-definite shifted velocity:} there exists
$U_0 \in \RR$ such that $\bar U(z) - U_0$ has consistent sign
throughout the equilibrium (for instance,
$U_0 \notin [\inf\bar U, \sup\bar U]$).
\end{enumerate}
Then there exists a smooth function $F:\RR\to\RR$, defined by the
implicit relation
\begin{equation}\label{eq:arnold_F_construction}
 F'(\bar q(z))\,\bar q'(z) \;=\;
 -\bigl[h'(\bar\rho)\,\bar\rho'(z) - \bar U(z)\bar U'(z)\bigr]\bigl/\bar U'(z)
 \cdot \bar\rho^*(z),
\end{equation}
or, in the Boussinesq simplification $\bar\rho^* \equiv \mathrm{const}$,
equivalently by $F'(\bar q(z)) = \bar U(z) - U_0$ (yielding
$F''(\bar q(z)) = (\bar U(z) - U_0)/\bar q'(z)$ via implicit
differentiation), together with a constant $\lambda_1 \in \RR$,
such that the modified energy is critical at the equilibrium:
\begin{equation}\label{eq:arnold_critical_v}
 \partial_\bv\tilde E^{\rm dw}_{\rm sh}\bigr|_{\bar X} = 0,
 \qquad
 \partial_\brho\tilde E^{\rm dw}_{\rm sh}\bigr|_{\bar X} = 0.
\end{equation}
\end{lemma}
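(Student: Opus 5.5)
The plan follows the classical Arnold energy--Casimir argument, organised around the two critical-point equations in~\eqref{eq:arnold_critical_v}. First compute the gradients of each conserved piece at $\bar X$. For the velocity component,
\[
 \partial_\bv E^{\rm dw}_{\rm tot} = \bM_1^\rho\bv,
 \qquad
 \partial_\bv P_x = \bM_1^\rho\bar a^x,
 \qquad
 \partial_\bv C^F_3 = \tD_1^T\bigl(|K^*|\,F'(q)\bigr),
\]
where the last formula uses $q_k=\omega_k/\bar\rho^*_k$ and $\partial_{\omega_k}(\bar\rho^*_kF(q_k))=F'(q_k)$, so the factor $\bar\rho^*$ cancels. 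The velocity condition then reads
\[
 \bM_1^{\bar\rho}(\bar\bv - U_0\bar a^x) = \tD_1^T\bigl(|K^*|F'(\bar q)\bigr).
\]
Under (K2), the left-hand side is the discrete momentum tilt of the shifted profile $\bar U(z)-U_0$. The next step is to show that it is a discrete curl of a dual 2-cochain depending only on $z$. This is the analogue of the continuum identity $u-U_0=\partial_z\psi$ with $\psi$ a stream function. I would use the layered structure: $\bar\bv-U_0\bar a^x$ is horizontally uniform per layer, so $\tD_1^T$ applied to a $z$-dependent 2-cochain can produce it. Matching layers gives $F'(\bar q(z_k))=\psi_k$, with $\psi$ the discrete antiderivative of $\bar\rho(\bar U-U_0)$ in $z$.

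For $F$ to be well defined, the map $z\mapsto\bar q(z)$ must be invertible on the finitely many layer values. This is exactly what (CS1) supplies. $F$ is then defined first on the discrete set $\{\bar q(z_k)\}$, and afterwards extended to a smooth function on $\RR$, for example by monotone spline interpolation. Condition (CS2) guarantees that $\psi$ is strictly monotone, so $F'$ has a consistent sign and the resulting $F''$ is well behaved. In the Boussinesq case the relation $F'(\bar q)=\bar U-U_0$ (up to the discrete vertical antiderivative convention) follows directly, and implicit differentiation gives the stated formula for $F''$.

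The density condition is handled second. The gradient $\partial_\brho$ collects four contributions. The first is $|K_i|(h(\bar\rho_i)+\bar\bPhi_i)$ from internal and potential energy. The second is $\tfrac12|(P\bar\bv)_i|^2-U_0\bar U(z_i)$ from the kinetic energy and momentum, computed via $\partial_{\rho_i}\bM_1^\rho=P^T e_ie_i^TP$. The third is $\lambda_1$. The fourth is the Casimir term $\partial_{\bar\rho^*}\bigl(\bar\rho^*F(\omega/\bar\rho^*)\bigr)=F(q)-qF'(q)$, pulled back through the averaging that defines $\bar\rho^*$. Hydrostatic balance makes $h+\bar\bPhi$ constant, and that constant is absorbed into $\lambda_1$. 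What remains is a function of $z$ alone:
\[
 \tfrac12\bar U^2-U_0\bar U + \mathcal{A}^T\bigl(F(\bar q)-\bar qF'(\bar q)\bigr),
\]
where $\mathcal{A}^T$ is the adjoint of the density averaging. This remainder must vanish, which I propose to achieve through the additive freedom $F\mapsto F+a+bq$. That freedom is too small to cancel a general function of $z$, so the implicit relation~\eqref{eq:arnold_F_construction} in the compressible case must be read as choosing $F$ to satisfy the velocity and density equations simultaneously. Differentiating the density condition in $z$ and substituting the velocity relation yields a first-order ODE for $F$ along $\bar q$, which is precisely~\eqref{eq:arnold_F_construction}. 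Strict monotonicity of $\bar q$ is used once more to invert $z$ and integrate this ODE.

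I expect the main obstacle to be the compatibility between the two discrete conditions: showing that the discrete velocity condition and the $z$-derivative of the density condition are consistent at the cochain level, rather than only in the continuum limit. The adjoint averaging $\mathcal{A}^T$ and the discrete antiderivative must commute with the layer structure, and I would establish this first using the translation-commutation property in (K2). If exact consistency fails, the fallback is to define $F$ from the density condition and verify the velocity condition through a discrete Cartan identity on layered fields, as in \Cref{lem:dw_sheared_equilibrium}.
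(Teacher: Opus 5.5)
Your velocity step is essentially the paper's: the gradient of $E^{\rm dw}_{\rm tot}-U_0P_x-C^F_3$, a vertical stream-function antiderivative, and inversion of $z\mapsto\bar q$ via (CS1). One correction: (CS2) makes $\psi$, hence $F'$ along $\bar q$, monotone, so it fixes the sign of $F''$, not of $F'$.

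The density step is where your plan goes wrong. The Casimir enters $\partial_\brho\tilde E^{\rm dw}_{\rm sh}$ as $-\mathcal{A}^TG(\bar q)$ with $G:=F-qF'$, not with the plus sign you wrote. Your claim that differentiating the density condition and inserting the velocity relation gives a first-order ODE for $F$, equal to \eqref{eq:arnold_F_construction}, is also false. Since $G'(q)=-qF''(q)$, the $z$-derivative of $\tfrac12\bar U^2-U_0\bar U-G(\bar q)$ is $(\bar U-U_0)\bar U'+\bar q\,F''(\bar q)\,\bar q'$. Inserting $F''(\bar q)\bar q'=-\bar\rho(\bar U-U_0)$ eliminates $F$ altogether and leaves $(\bar U-U_0)(\bar U'-\bar\rho\bar q)$.

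In the continuum analogue ($\bar\rho^*=\bar\rho$) this is zero. The reason is that $\bar\rho\bar q=\bar\omega=\bar U'$ in the same curl orientation that produced the velocity relation, and $h+\bar\Phi$ is constant by hydrostatic balance. So the residual is not a general function of $z$: it is constant, and $\lambda_1$ absorbs it. This is the unstated justification behind the paper's one-line claim. With your $+G$ sign the two terms add instead of cancelling.

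At the cochain level this compatibility is not implied by the nodal velocity data, and it is exactly the step you leave open. Your fallback does not avoid it, since the density condition alone only fixes $F-qF'$. A genuinely discrete route: both conditions see $F$ only through $F(\bar q_k)$ and $F'(\bar q_k)$ at finitely many nodes, which are distinct by (CS1). These values can be prescribed independently and Hermite-interpolated, at the cost of the sign control on $F''$ that (CS2) is meant to supply.

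Independently, your layer-matching step takes for granted that $\bM_1^{\bar\rho}(\bar\bv-U_0\bar a^x)\in\mathrm{Ran}\,\tD_1^T=(\ker\tD_1)^\perp$. The paper uses $\tD_1\bar a^x=0$ as a consequence of (K2) (\Cref{lem:dw_constant_flow_equilibrium}, \Cref{app:dw_momentum}). Pairing $\partial_\bv\tilde E^{\rm dw}_{\rm sh}(\bar X)=0$ with $\bar a^x$, and using $(P\bar a^x)_i=\mathbf{e}_x$ and $(P\bar\bv)_i=\bar U(z_i)\mathbf{e}_x$, gives
\[
 0=(\bar a^x)^T\bM_1^{\bar\rho}(\bar\bv-U_0\bar a^x)=\sum_i\bar\rho_i\bigl(\bar U(z_i)-U_0\bigr),
\]
i.e.\ $U_0=P_x(\bar X)/C_1(\bar X)$. (CS2) excludes this no matter how $F$ is chosen. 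Concretely, your discrete vertical antiderivative of $\bar\rho(\bar U-U_0)$ would have to close up over the column.

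The construction can only go through in one of two ways. Either the top and bottom boundary dual faces carry free values of $\psi$ (wall circulations acting as additional Casimirs), or (CS2) is dropped and $U_0$ is fixed as the mass-weighted mean of $\bar U$. The paper's sketch hides this by passing to the continuum relation $\bar\rho(\bar U-U_0)=-\partial_zF'(\bar q)$, where it becomes a discarded boundary term, and your plan inherits the same gap.
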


\begin{proof}[Proof sketch]
The velocity gradient of $\tilde E^{\rm dw}_{\rm sh}$ at the
equilibrium reads
\[
 \partial_\bv\tilde E^{\rm dw}_{\rm sh}\bigr|_{\bar X}
 = \bM_1^{\bar\rho}\bar\bv - U_0\,\bM_1^{\bar\rho}\bar a^x
 - \tD_1^T\bM_2\,F'(\bar q).
\]
Setting this to zero and pairing with horizontal test fields
gives the algebraic relation
$\bM_1^{\bar\rho}((\bar U(z) - U_0)\bar a^x) = \tD_1^T\bM_2 F'(\bar q)$,
whose continuum analogue is
$\bar\rho(z)(\bar U(z) - U_0)\mathbf{e}_x = -\partial_z[F'(\bar q(z))]\mathbf{e}_x$.
Differentiating, $F''(\bar q)\bar q'(z) = -\bar\rho(z)(\bar U(z) - U_0)$,
which fixes $F''$ as a function of $\bar q$ via the inverse
$z \mapsto \bar q$. Under (CS1) the inversion is well-defined;
under (CS2) the sign of $F''$ is consistent throughout the range
of $\bar q$, so $F$ is a well-defined convex (or concave)
function. The density gradient then determines $\lambda_1$ as
the modified Bernoulli constant
(cf.~\Cref{lem:dw_constant_flow_equilibrium}). Full discrete
calculation is given in
\Cref{app:dw_class_C}.
\end{proof}

\paragraph*{The conditional stability theorem.}
The Hessian of $\tilde E^{\rm dw}_{\rm sh}$ at $\bar X$ is no
longer block diagonal: unlike the constant-flow case where the
momentum tilt produces the relative-velocity identity
$\partial_\bv\tilde E = \bM_1^\rho(\bv - \bar\bv)$ with vanishing
$\bv-\bar\bv$ at $\bar X$, the sheared case carries a non-zero
cross-block from both the kinetic-energy + momentum-tilt piece
and the PV-Casimir piece. Positivity of $\tilde H$ therefore
reduces to a Schur-complement condition -- the discrete
counterpart of the Charney--Stern criterion.

\begin{theorem}[Conditional DW Lyapunov stability for sheared
baroclinic equilibria]\label{thm:dw_lyap_sheared}
Let $\bar X$ be a sheared baroclinic equilibrium of the DW scheme
as in \Cref{lem:dw_sheared_equilibrium}, and suppose
\Cref{lem:dw_arnold_sheared} applies under
\textup{(CS1)}--\textup{(CS2)}, so that the modified energy
$\tilde E^{\rm dw}_{\rm sh} = E^{\rm dw}_{\rm tot} - \lambda_1 C_1
- U_0 P_x - C^F_3$
satisfies $\nabla\tilde E^{\rm dw}_{\rm sh}(\bar X) = 0$. Assume
further the following two conditions.
\begin{enumerate}[nosep,resume,label=\textup{(CS\arabic*)}]
\item \textbf{Bounded Casimir curvature:} on the range of
$\bar q$,
\[
 \sup_k |F''(\bar q_k)| \le F''_{\max},
 \qquad F''_{\max}\,C_{\rm mesh}/\bar\rho^*_{\min} < 1,
\]
where $C_{\rm mesh}$ is the discrete-curl operator norm constant
(cf.\ the Poincar\'e-type bound
$\|\tD_1\bv\|^2_{\bM_2} \le C_{\rm mesh}\|\bv\|^2_{\bM_1}$ on
$\ker(P)^\perp$).
\item \textbf{Sub-Mach condition:} at every cell~$i$,
$|(P\bar\bv)_i|^2 = \bar U(z_i)^2 < c^2(\bar\rho_i)$.
\end{enumerate}
Then the Hessian
$\tilde H := \nabla^2\tilde E^{\rm dw}_{\rm sh}(\bar X)$ is
positive definite on the tangent space to the joint level set of
mass and horizontal momentum at $\bar X$. By
\Cref{lem:arnold_bridge}, the linearised DW dynamics satisfies the
bridge identity $A^T\tilde H + \tilde H A = 0$ and preserves the
quadratic form $\tilde E^{(2)}(Y) = \tfrac12 Y^T\tilde H Y$
exactly. Consequently $\bar X$ is linearly Lyapunov stable in the
$\tilde H$-norm under conditions
\textup{(CS1)}--\textup{(CS4)}.
\end{theorem}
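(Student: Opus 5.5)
The plan is to compute the Hessian of $\tilde E^{\rm dw}_{\rm sh} = E^{\rm dw}_{\rm tot} - \lambda_1 C_1 - U_0 P_x - C^F_3$ at $\bar X$ in block form and show positivity via a Schur complement. The final step is then immediate from \Cref{lem:arnold_bridge}. Each of $E^{\rm dw}_{\rm tot}$, $C_1$, $P_x$, $C^F_3$ is exactly conserved, by \Cref{thm:dw_energy,thm:mass,lem:dw_momentum_conservation,thm:mwpv}. Hence $\tilde E^{\rm dw}_{\rm sh}$ is conserved, $\bar X$ is critical by \Cref{lem:dw_arnold_sheared}, and the bridge identity and the preservation of $\tilde E^{(2)}$ follow. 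Because $C_1$ and $P_x$ are conserved and linear (respectively bilinear) in $(\bv,\brho)$, the linearised flow leaves their differentials invariant. It therefore suffices to prove positivity on the joint tangent space $\mathcal T := \{Y=(y_v,y_\rho): \mathbf 1^T y_\rho = 0,\ \nabla P_x(\bar X)\cdot Y = 0\}$.

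\emph{Block computation.} I would write $\tilde H = \begin{pmatrix} H_{vv} & H_{v\rho}\\ H_{\rho v} & H_{\rho\rho}\end{pmatrix}$.
\begin{itemize}
\item Velocity block: $H_{vv} = \bM_1^{\bar\rho} - \tD_1^T\bM_2\,\mathrm{diag}\bigl(F''(\bar q_k)/\bar\rho^*_k\bigr)\tD_1$, using that $q_k$ is linear in $\omega_k$. By (CS3) and the bound $\|\tD_1\bv\|^2_{\bM_2}\le C_{\rm mesh}\|\bv\|^2_{\bM_1}$, together with the equivalence of $\bM_1$ and $\bM_1^{\bar\rho}$ on $\ker(P)^\perp$ (\Cref{rem:ke_reveals}), this gives $H_{vv}\succeq (1-F''_{\max}C_{\rm mesh}/\bar\rho^*_{\min})\,\bM_1^{\bar\rho}\succ0$, after absorbing the density-equivalence constant into $C_{\rm mesh}$.
\item Density block: $H_{\rho\rho}$ is the diagonal $D$ of \eqref{eq:Hessian_constant_flow}, plus the Casimir contribution $-\partial^2_{\rho\rho}\sum_k|K_k^*|\bar\rho^*_kF(\omega_k/\bar\rho^*_k)$. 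This contribution equals $-|K_k^*|F''(\bar q_k)\bar q_k^2/\bar\rho^*_k$ in the averaged variable $\bar\rho^*$, pulled back through the averaging map.
\item Cross block: $H_{\rho_i v} = \partial_{\rho_i}\bM_1^\rho(\bar\bv - U_0\bar a^x) + \text{Casimir term}$. The first piece equals $P^T e_i e_i^T P(\bar\bv-U_0\bar a^x)$, i.e.\ it is proportional to $(\bar U(z_i)-U_0)\,P^Te_i\mathbf e_x$. The Casimir part equals $\tD_1^T\bM_2\,\mathrm{diag}\bigl(F''(\bar q_k)\bar q_k/\bar\rho^*_k\bigr)\partial_\rho\bar\rho^*$.
\end{itemize}

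\emph{Schur complement.} Since $H_{vv}\succ0$, positivity of $\tilde H$ on $\mathcal T$ is equivalent to $S := H_{\rho\rho} - H_{\rho v}H_{vv}^{-1}H_{v\rho}\succ0$ on the projected density directions. I would bound $H_{\rho v}H_{vv}^{-1}H_{v\rho}$ cellwise. The kinetic cross term contributes at most $|K_i|\,(\bar U_i-U_0)^2/\bar\rho_i$, up to the factor $(1-F''_{\max}C_{\rm mesh}/\bar\rho^*_{\min})^{-1}$. This follows because $H_{vv}^{-1}\preceq c^{-1}(\bM_1^{\bar\rho})^{-1}$ and $e_i^TP(\bM_1^{\bar\rho})^{-1}P^Te_i\le 1/\bar\rho_i$, which is the diagonal of a projection onto $\mathrm{Ran}(\mathrm{diag}(\bar\rho)^{1/2}P)$.

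The resulting cellwise comparison is with $|K_i|c^2(\bar\rho_i)/\bar\rho_i$, which is exactly where (CS4) enters. Here I expect the need to reconcile $\bar U_i^2$ in (CS4) with $(\bar U_i-U_0)^2$. My first attempt would be to use the freedom in $U_0$ allowed by (CS2) and the momentum constraint in $\mathcal T$ to remove the mean shift. Failing that, the additional quantitative Schur-complement bound mentioned in \Cref{thm:main_DW_lyapunov} would be stated explicitly as the residual hypothesis. The Casimir contributions to $S$ are controlled by $F''_{\max}$ and (CS3).

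\emph{Main obstacle.} The genuinely delicate step is this Schur-complement estimate. The Casimir density-curvature and cross terms carry the factor $F''(\bar q)\bar q^2$, which (CS3) does not directly control. Moreover, the averaging maps $\brho\mapsto\bar\rho^*$ couple neighbouring cells, so the bound is not purely diagonal. I would handle this with Gershgorin/Young inequalities using mesh quasi-uniformity (H9), which produces an explicit constant. I would then package the result as the quantitative Schur condition, deferring the full discrete calculation to \Cref{app:dw_class_C}, consistent with \Cref{thm:dw_class_C_Hpos}.
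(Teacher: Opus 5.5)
Your route is the paper's. You compute the block Hessian at $\bar X$, control the velocity block by \textup{(CS3)} and the discrete-curl bound, and reduce positivity to the Schur complement, bounded diagonally through the Casimir density curvature and the kinetic and PV cross blocks (\Cref{prop:dw_class_C_density,prop:dw_class_C_crossbound,prop:dw_class_C_schur}). The paper handles the neighbour coupling of $\brho\mapsto\bar\rho^*$ by Cauchy--Schwarz with the partition-of-unity and mass normalisation of the weights $w_{ki}$, rather than by Gershgorin. Your projection bound $P(\bM_1^{\bar\rho})^{-1}P^T\preceq\mathrm{diag}(\bar\brho)^{-1}$ is the correct form of the paper's cross-block step, which asserts an equality there. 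You need the full operator inequality, not only its diagonal; it does hold.

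The genuine gap is the one you flag yourself, and it cannot be closed from \textup{(CS1)}--\textup{(CS4)} as stated.
\begin{itemize}
\item \textup{(CS3)} controls $F''_{\max}C_{\rm mesh}/\bar\rho^*_{\min}$. The Casimir density curvature instead needs $F''_{\max}\sup_k\bar q_k^2$ to be small against $c^2_{\min}\bar\rho^*_{\min}/\bar\rho_{\max}$, which \textup{(CS3)} does not give.
\item The kinetic cross block produces $(\bar U_i-U_0)^2$, inflated by $c_A^{-1}$, rather than the $\bar U_i^2$ appearing in \textup{(CS4)}.
\end{itemize}
Your first fix does not work. $U_0$ is tied to $F$ by the criticality condition~\eqref{eq:arnold_critical_v}, and $U_0=0$ is excluded by \textup{(CS2)} whenever $\bar U$ changes sign. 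The Hessian of $-U_0P_x$ also feeds into the very cross block you are estimating, and a single linear momentum constraint on $\mathcal T$ cannot remove that quadratic coupling.

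Your fallback is exactly what the paper's appendix does: add the strengthened \textup{(CS3)} and the quantitative bound~\eqref{eq:schur_quant}. So what you, and the paper, actually prove is \Cref{thm:dw_class_C_Hpos}, in which \textup{(CS4)} is listed but never used. It is not the theorem with \textup{(CS1)}--\textup{(CS4)} alone.

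Two smaller points.
\begin{itemize}
\item Absorbing the $\bM_1^{\bar\rho}$--$\bM_1$ equivalence constant into $C_{\rm mesh}$ silently strengthens \textup{(CS3)}. The paper's appendix picks up the same factor $1/\rho_*$.
\item Full-space positivity of the Schur complement is only sufficient for positivity on $\mathcal T$, not equivalent to it, because the momentum constraint couples $y_v$ and $y_\rho$.
\end{itemize}
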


\begin{proof}[Proof outline; details in \Cref{app:dw_class_C}]
The Hessian decomposes as
\[
 \tilde H = \begin{pmatrix} A_\bv & C^T \\ C & D_\brho \end{pmatrix},
\]
with explicit blocks:
\begin{align}
 A_\bv &= \bM_1^{\bar\rho} - \tD_1^T\bM_2\,F''(\bar q)\,(\bar\rho^*)^{-1}\,\tD_1,\label{eq:HC_Av}\\
 D_\brho &= \mathrm{diag}\!\left(|K_i|\,\tfrac{c^2(\bar\rho_i)}{\bar\rho_i}\right)
 - \partial^2_\brho C^F_3\bigr|_{\bar X},\label{eq:HC_Drho}\\
 C_{ij} &= (\partial_{\rho_i}\bM_1^{\bar\rho})\bigl((\bar U(z) - U_0)\bar a^x\bigr)_j
 - \tD_1^T\bM_2\,F''(\bar q)\,\partial_{\rho_i}\bar q\,\big|_{j}\,.\label{eq:HC_C}
\end{align}
Under~(CS3), the velocity block~\eqref{eq:HC_Av} is positive
definite: the discrete-curl bound gives
$\tD_1^T\bM_2 F''(\bar q)(\bar\rho^*)^{-1}\tD_1 \preceq F''_{\max} C_{\rm mesh}/\bar\rho^*_{\min}\,\bM_1^{\bar\rho}$,
which under~(CS3) is strictly less than $\bM_1^{\bar\rho}$.
Under~(CS4), the density-block contribution from
$\partial^2_\brho C^F_3$ is dominated by the internal-energy
diagonal $\mathrm{diag}(|K_i| c^2/\bar\rho_i)$ via the sub-Mach
bound; the Schur complement
$D_\brho - C^T A_\bv^{-1} C$ is positive definite, again
quantitatively under (CS3)--(CS4) and the discrete operator
bounds. The bridge identity~\eqref{eq:bridge_identity} then
applies to $\tilde E^{\rm dw}_{\rm sh}$, yielding the conserved
quadratic form $\tilde E^{(2)}$ and the linear Lyapunov stability
conclusion.
\end{proof}

\newpage

\appendix
\addtocontents{toc}{\protect\setcounter{tocdepth}{1}}
\addtocontents{toc}{\protect\vspace{0.6em}}
\addcontentsline{toc}{section}{\textbf{Appendix}}
\addtocontents{toc}{\protect\vspace{0.2em}}
\begin{center}
\rule{0.6\textwidth}{0.4pt}\\[0.6em]
{\Large\bfseries Appendix}\\[0.4em]
\rule{0.6\textwidth}{0.4pt}
\end{center}
\vspace{0.6em}
\section{Discrete Framework}\label{app:framework}
\phantomsection\label{sect:functional_framework}
\subsection{Mesh Geometry}\label{subsect:grid}
All discrete operators are defined in terms of the combinatorial structure
and geometric measures (lengths, areas, volumes) of the cell complex,
without reference to coordinates.
The domain $\Omega$ may be a closed oriented Riemannian manifold of
dimension $d = 2$ or $3$, or a compact manifold with boundary under
no-penetration conditions. Horizontal layers are Delaunay--Voronoi
tessellations; prisms are formed by vertical extrusion.
We adopt the notation of DEC
(see e.g.\ \cite{Desbrun, Hirani, Marsden});
operators on the dual complex carry a tilde
($\tdd_k$, $\tLie_\bv$); primal operators ($\dd_k$, $\bD_k$) do not.
Each primal $k$-cell has a unique dual $(3{-}k)$-cell:
\begin{center}
\begin{tabular}{cclcl}
 \toprule
 $k$ & Primal & Geometry & Dual & Geometry \\
 \midrule
 0 & $v_i\in\mathcal{V}$ & vertices & $K_i^*\in\KKs$ & Voronoi cells \\
 1 & $e_k\in\mathcal{E}$ & edges & $f_k^*\in\mathcal{F}^*$ & dual faces \\
 2 & $f_j\in\mathcal{F}$ & faces & $e_j^*\in\mathcal{E}^*$ & dual edges \\
 3 & $K_i\in\KK$ & prisms & $v_i^*\in\mathcal{V}^*$ & circumcentres \\
 \bottomrule
\end{tabular}
\end{center}
The Delaunay--Voronoi construction guarantees that every primal edge $e_k$ is orthogonal to its dual face $f_k^*$,
and that every primal face $f_j$ is orthogonal to its dual edge $e_j^*$.
We define the following geometric measures for length, area and volume:
\[
 \ell_k = |e_k|,\quad A_k^* = |f_k^*|,\quad
 A_j = |f_j|,\quad \ell_j^* = |e_j^*|,\quad
 |K_i|,\quad |K_m^*|.
\]
\begin{assumption}[Mesh regularity]\label{ass:mesh_reg}
The family of Delaunay--Voronoi meshes $\{\KK_h\}_{h>0}$ satisfies:
\begin{enumerate}[nosep]
\item \textit{Quasi-uniformity:} there exists $C_{\rm qu}>0$ such that $h/h_{\min}\le C_{\rm qu}$ for all $h$, where $h=\max_j\ell_j^*$ and $h_{\min}=\min_j\ell_j^*$.
\item \textit{Shape-regularity:} horizontal cells can be decomposed into triangles such that the ratio between inscribed circle radius and triangle diameter is bounded below by $\sigma_0>0$, uniformly in $h$.
\item \textit{Delaunay property:} the circumcentre of every primal cell lies inside the cell, ensuring orthogonality between primal edges and dual faces.
\item \textit{Bounded valence:} the number of cells sharing any vertex is uniformly bounded.
\end{enumerate}
These conditions guarantee that Hodge stars $\bM_k$ are uniformly equivalent to the identity,
and that discrete operators satisfy the approximation estimates needed for convergence.
\end{assumption}
The following optional mesh property is \emph{not} assumed in general
but will be invoked \emph{additionally} where it yields improved approximation rates.
\begin{property}[Centroid proximity]\label{prop:centroid_proximity}
For each degree $k = 0, 1, 2$, let $\sigma$ denote a primal
$k$-cell and $\sigma^*$ its dual $(d{-}k)$-cell.
The Hodge star $(\bM_k)_{\sigma\sigma} = |\sigma^*|/|\sigma|$
compares the integral of a $k$-form over $\sigma$ 
with the integral of its Hodge dual over
$\sigma^*$. The approximation error depends on the offset between
the midpoint (or centroid) of each primal and dual cell.
We require:
\begin{equation}\label{eq:centroid_proximity}
 \max_{j}\,\bigl|\bar{\mathbf{x}}_{e_j^*} - \bar{\mathbf{x}}_{f_j}\bigr|
 + \max_{k}\,\bigl|\bar{\mathbf{x}}_{f_k^*} - \mathbf{x}_{e_k}\bigr|
 \le C_{\rm cg}\,h^2,
\end{equation}
where, for primal face $f_j$:
$\bar{\mathbf{x}}_{f_j}$ is the centroid of $f_j$ and
$\bar{\mathbf{x}}_{e_j^*}$ is the midpoint of the dual edge $e_j^*$;
and for primal edge $e_k$:
$\mathbf{x}_{e_k}$ is the midpoint of $e_k$ and
$\bar{\mathbf{x}}_{f_k^*}$ is the centroid of the dual face $f_k^*$.
The first term controls the $k{=}1$ Hodge; 
the second the $k{=}2$ Hodge. 
On a prismatic mesh, condition~\eqref{eq:centroid_proximity} is
satisfied when:
\begin{enumerate}[label=(\alph*),nosep]
\item the horizontal tessellation is a centroidal Voronoi
 tessellation, including spherical meshes
 \cite{ringler2010} - this ensures $|\bar{\mathbf{x}}_{f_j}^{\rm horiz} - x_j^*| = 0$ for horizontal faces 
 and $|\bar{\mathbf{x}}_{f_k^*} - \mathbf{x}_{e_k}| = \OO(h^2)$
\item the vertical layer interfaces are $C^2$-smooth functions
 of horizontal position - this ensures the vertical component
 of $|\bar{\mathbf{x}}_{e_j^*} - \bar{\mathbf{x}}_{f_j}|$ is
 $\OO(h^2)$
\end{enumerate}
When \Cref{prop:centroid_proximity} holds, the diagonal Hodge star
achieves second-order accuracy (\Cref{lem:hodge_error}),
improving several convergence rates from $\OO(h)$ to $\OO(h^2)$.
\end{property}
\noindent
Throughout the paper, $r_\star\in\{1,2\}$ denotes the
\emph{Hodge accuracy exponent}:
\begin{equation}\label{eq:rstar_def}
  r_\star := 1 \text{ under \Cref{ass:mesh_reg} only,}
  \quad\text{ or }\quad
  r_\star := 2 \text{ under \Cref{ass:mesh_reg} and \Cref{prop:centroid_proximity},}
\end{equation}
with the additional centroid-proximity and reconstruction-symmetry
conditions consolidated in \Cref{conv:cases} (introduced in
\S\ref{subsection_Operators} after the discrete operators are in
place).
\subsection{Approximation Spaces}\label{subsection_ApproxSpaces}
The degrees of freedom mirror the de~Rham complex: velocity is a dual
1-cochain (one scalar value per dual edge), vorticity a dual 2-cochain
(one value per dual face), and density a primal 3-cochain
(one value per primal prism, equivalently a dual 0-cochain at each
dual vertex / primal cell centre).
This mimetic correspondence ensures that the exterior derivative acting on
the velocity cochain gives the vorticity cochain exactly, with no
interpolation error.
The following norms measure these cochains in a way that respects the
geometric weighting by cell volumes.
\begin{definition}[Discrete norms]
\label{def:disc_norms}
For a dual 1-cochain $\bv\in C^1(\KKs)$:
$\nrm{\bv}_{\ell^2}^2 := \sum_j v_j^2$,
$\nrm{\bv}_{L_h^2}^2 := \ip{\bv}{\bv}_1 = \sum_j(\bM_1)_{jj}v_j^2$,
$\nrm{\bv}_{H_h^1}^2 := \nrm{\bv}_{L_h^2}^2 + \nrm{\tD_1\bv}_{L_h^2}^2$,
$\nrm{\bv}_{L_h^\infty} := \max_j |v_j|/\sqrt{(\bM_1)_{jj}}$.
\end{definition}
A primal (resp.\ dual) $k$-form $\alpha^k$ assigns to each primal
(resp.\ dual) $k$-cell $\sigma$ the integral
$\alpha^k(\sigma) := \int_\sigma\alpha$. The spaces are denoted
$C^k(\KK)$ and $C^k(\KKs)$. Staggering of the prognostic variables:
$\bv\in C^1(\KKs)$ on dual edges (count $|\mathcal{F}|$),
$\bom\in C^2(\KKs)$ on dual faces (count $|\mathcal{E}|$), and
$\brho\in C^3(\KK)\simeq C^0(\KKs)$ on primal cells (count $|\KK|$).
\begin{definition}[de~Rham map]
\label{def:deRham}
The de~Rham map
$\mathcal{R}_h\colon C^\infty(\Omega;\Lambda^k)\to C^k(\KKs)$
maps a smooth $k$-form to its dual cochain by integration:
$(\mathcal{R}_h\bu^\flat)_j = \int_{e_j^*}\bu\cdot d\ell$ on dual
edges, $(\mathcal{R}_h\omega^\flat)_k = \int_{f_k^*}\bom\cdot dA$
on dual faces, and $(\mathcal{R}_h\rho)_K = \int_K\rho\,dV$ on primal
prisms.
\end{definition}
\begin{definition}[Whitney reconstruction]
\label{def:Whitney}
The Whitney map
$\mathcal{W}_h\colon C^k(\KKs)\to L^2(\Omega;\Lambda^k)$
reconstructs continuous forms from cochains. On the simplicial
refinement $\KK_h^{\rm simp}$ of $\KK_h$, obtained by joining each
circumcentre $v_i^*=x_i^*$ to all faces of $K_i$ (so each dual edge
$e_j^*$ lies along a simplex edge and each dual face $f_k^*$ is a
union of simplex faces), the classical Whitney
1-form~\cite{whitney1957,dodziuk1976} for dual edge $e_j^*$ with
endpoints $v_{a_j}^*,v_{b_j}^*$ is
$W_j := \lambda_{a_j}\,d\lambda_{b_j} - \lambda_{b_j}\,d\lambda_{a_j}$,
where $\lambda_{a_j},\lambda_{b_j}$ are barycentric coordinates of
the simplex containing $e_j^*$. The reconstruction
$\mathcal{W}_h\bv := \sum_j v_j W_j$ satisfies
$\int_{e_j^*}(\mathcal{W}_h\bv) = v_j$ for all $j$.
\end{definition}
Classical Whitney approximation theory on the simplicial refinement
\cite{dodziuk1976,arnold2006} gives, for a smooth 1-form~$\alpha$,
\begin{equation}\label{eq:Whitney_approx}
  \nrm{\alpha - \mathcal{W}_h\mathcal{R}_h\alpha}_{L^2}
  \le C\,h\nrm{\alpha}_{H^1},
\end{equation}
and the Whitney map is norm-bounded:
$\nrm{\mathcal{W}_h\bv}_{L^2}\le C_W\nrm{\bv}_{L_h^2}$ for all
$\bv\in C^1(\KKs)$, with $C_W>0$ depending only on mesh regularity
(\cite{arnold2006}, Thm.~5.6).
\subsection{Differential and Other Operators}\label{subsection_Operators}
Two operators lie at the heart of the DEC discretisation.
The exterior derivative $\bD_k$ is defined purely combinatorially via
Stokes' theorem on the cell complex and is therefore \emph{exact}: it
commutes with the de~Rham interpolation with no approximation error
whatsoever.
By contrast, the Hodge star $\bM_k$ must compare integrals of a
$k$-form over a primal cell with integrals of its Hodge dual over the
complementary dual cell, and all approximation error in the scheme is
concentrated here.
The consistency of the Hodge star -- its accuracy as a function of mesh
parameters -- is what governs the convergence rate throughout the paper.
The following definitions make this precise.
\begin{definition}[Exterior derivatives]\label{def:ext-deriv}
$i)$ The primal exterior derivative is defined by
\[
\dd_k : C^k(\KK)\to C^{k+1}(\KK),
\qquad
(\dd_k\,\alpha)(\sigma^{k+1})
 = \alpha(\partial\,\sigma^{k+1})
 = \sum_{\sigma^k\prec\sigma^{k+1}}[\sigma^{k+1}:\sigma^k]\,\alpha(\sigma^k),
\]
where $\sigma^k\prec\sigma^{k+1}$ means that $\sigma^k$ belongs to the
boundary of $\sigma^{k+1}$. The primal exterior derivative is denoted
in matrix form by $D_k$.\\
$ii)$ The dual exterior derivative is defined by
$\tdd_k : C^k(\KKs)\to C^{k+1}(\KKs)$ with matrix representation $\tD_k$.
\end{definition}
A crucial property is that the de~Rham map commutes with the exterior derivative,
\begin{equation}\label{eq:deRham_commutativity}
\tD_1\mathcal{R}_h\bu^\flat = \mathcal{R}_h(\dd\bu^\flat)
  = \mathcal{R}_h\omega^\flat
  \qquad(\text{by Stokes' theorem}).
\end{equation}
\phantomsection\label{prop:derham_commutativity}%
This identity is exact -- it holds with no approximation error -- and
it is the reason why the discrete vorticity $\tD_1\bv$ is the
\emph{exact} de~Rham image of the continuous vorticity, not merely an
approximation of it.
The lemma below summarises the consistency of the discrete operators
against their smooth counterparts.
\begin{lemma}[Interpolation and Hodge star consistency estimates]
\label{lem:interp_error}%
\label{lem:proj_error}
Let $\bu\in W^{1,\infty}(\Omega)$ be a smooth velocity field.
The de~Rham interpolant $\bar\bv := \mathcal{R}_h\bu^\flat\in C^1(\KKs)$
satisfies the following operator consistency estimates.
 \begin{enumerate}[label=(\roman*), nosep]
\item For any smooth $p$ the gradient is consistent
\[
\nrm{\tD_0\mathcal{R}_h p - \mathcal{R}_h(\dd p)}_{L_h^2}
 = 0. 
\]
\item For any smooth $\bu$ the curl is consistent
\[
\nrm{\tD_1\mathcal{R}_h\bu - \mathcal{R}_h(\dd\bu)}_{L_h^2}
 = 0. 
\]
\item \textbf{Mass flux consistency.}
For any smooth $\bu$ (not necessarily divergence-free),
the Hodge star converts the de~Rham circulation cochain to an approximate flux cochain.
The exact flux cochain is $\boldsymbol\Phi_j = \int_{f_j}\bu\cdot\hat{n}\,dA$,
and the Hodge star approximation satisfies
\begin{equation}\label{eq:flux_consistency}
 \nrm{\bM_1\mathcal{R}_h\bu^\flat - \boldsymbol\Phi}_{\ell^2}
 \le C\,h^{r_\star}\,\nrm{\bu}_{W^{1,\infty}},
\end{equation}
with $r_\star$ as in \Cref{conv:cases}.
In particular, the discrete divergence satisfies
\begin{equation}\label{eq:div_consistency}
 \nrm{\bD_2\bM_1\mathcal{R}_h\bu^\flat
 - \mathcal{R}_h^{(d)}(\nabla\cdot\bu)}_{\ell^2}
 \le C\,h^{r_\star}\,\nrm{\bu}_{W^{1,\infty}},
\end{equation}
where $\mathcal{R}_h^{(d)}(\nabla\cdot\bu)_i := \int_{K_i}\nabla\cdot\bu\,dV$
is the $d$-form de~Rham interpolant of the divergence.
The residual arises entirely from the Hodge star approximation
(\Cref{lem:hodge_error}): the topological part
$\bD_2\boldsymbol\Phi = \mathcal{R}_h^{(d)}(\nabla\cdot\bu)$
is exact by the divergence theorem.
For the barotropic mass flux
$F_j = \bar\rho_j\,(\bM_1\bar\bv)_j$,
the density truncation error in the continuity equation
is controlled by~\eqref{eq:flux_consistency} through the
product $\bar\rho_j\cdot(\text{Hodge error per face})$.
\end{enumerate}
\end{lemma}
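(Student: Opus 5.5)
Parts (i) and (ii) are purely topological, and I would prove them by evaluating both sides on a single cell. For (i), take a dual edge $e_j^*$ with endpoints $v_a^*, v_b^*$. Then $(\tD_0\mathcal{R}_h p)_j = p(v_b^*) - p(v_a^*)$, while $\mathcal{R}_h(\dd p)_j = \int_{e_j^*}\dd p$. These agree by the fundamental theorem of calculus along $e_j^*$. For (ii), the same computation on a dual face $f_k^*$ gives $(\tD_1\mathcal{R}_h\bu^\flat)_k = \sum_{e_j^*\subset\partial f_k^*}\pm\int_{e_j^*}\bu\cdot d\ell = \int_{\partial f_k^*}\bu^\flat = \int_{f_k^*}\dd\bu^\flat$ by Stokes' theorem. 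This is exactly~\eqref{eq:deRham_commutativity}, so both norms vanish identically and no metric input enters.

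For (iii) I would work face by face. Fix a primal face $f_j$ with dual edge $e_j^*$. By the Delaunay property (\Cref{ass:mesh_reg}), $e_j^*$ is parallel to the unit normal $\hat n_j$ of $f_j$, and $(\bM_1)_{jj} = |f_j|/|e_j^*|$. Hence
\[
(\bM_1\mathcal{R}_h\bu^\flat)_j - \boldsymbol\Phi_j = |f_j|\Bigl(\tfrac{1}{|e_j^*|}\textstyle\int_{e_j^*}\bu\cdot\hat n_j\,d\ell - \tfrac{1}{|f_j|}\int_{f_j}\bu\cdot\hat n_j\,dA\Bigr),
\]
which is $|f_j|$ times the difference between two averages of the same scalar $\bu\cdot\hat n_j$. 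I would Taylor-expand $\bu\cdot\hat n_j$ about the dual-edge midpoint $\bar{\mathbf x}_{e_j^*}$. The constant terms cancel. Each average then differs from $\bu\cdot\hat n_j(\bar{\mathbf x}_{e_j^*})$ by at most its linear term evaluated at that average's own centroid, plus a quadratic remainder.

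Under \Cref{ass:mesh_reg} alone, the centroid offset is $\OO(h)$, so the face error is $\OO(|f_j|\,h\,\nrm{\bu}_{W^{1,\infty}})$, giving $r_\star=1$. Under \Cref{prop:centroid_proximity}, the linear term contributes only $\OO(h^2)$ through~\eqref{eq:centroid_proximity}, and the remainder is $\OO(h^2\nrm{\bu}_{W^{2,\infty}})$, giving $r_\star = 2$.

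I expect the main obstacle to be this case $r_\star=2$. As stated, the bound only involves the $W^{1,\infty}$ norm, but the quadratic Taylor remainder needs $W^{2,\infty}$. I would handle this by using the $W^{r_\star,\infty}$ norm of (H3)/\Cref{lem:hodge_error}, which is harmless for the smooth references of \Cref{ass:smooth_baro}. Summing over faces in $\ell^2$ then gives $\bigl(\sum_j |f_j|^2 h^{2r_\star}\bigr)^{1/2} \lesssim h^{d-1+r_\star-d/2}$ by quasi-uniformity. This is at most $C h^{r_\star}$ for $h\le 1$ and $d\ge2$, which proves~\eqref{eq:flux_consistency}.

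For~\eqref{eq:div_consistency}, I would write $\bD_2\bM_1\mathcal{R}_h\bu^\flat - \mathcal{R}_h^{(d)}(\nabla\cdot\bu) = \bD_2(\bM_1\mathcal{R}_h\bu^\flat - \boldsymbol\Phi) + \bigl(\bD_2\boldsymbol\Phi - \mathcal{R}_h^{(d)}(\nabla\cdot\bu)\bigr)$. The second bracket vanishes cell by cell by the divergence theorem on $K_i$. For the first bracket, $\bD_2$ has entries $\pm1$ and bounded valence, so $\nrm{\bD_2}_{\ell^2\to\ell^2}\le C$, and the bound follows from~\eqref{eq:flux_consistency}. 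The closing remark on $F_j = \bar\rho_j(\bM_1\bar\bv)_j$ then follows by multiplying the per-face Hodge error by the bounded factor $\bar\rho_j$.
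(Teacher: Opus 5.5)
Your proposal is correct and follows the paper's proof. Parts (i)--(ii) are de~Rham commutativity via Stokes, and (iii) is the per-face Hodge-star error (the paper cites \Cref{lem:hodge_error}, which uses the same centroid-offset Taylor argument and, as you note, really needs $\nrm{\bu}_{W^{r_\star,\infty}}$), followed by the divergence theorem and an $\ell^2$ summation giving $\OO(h^{r_\star+(d-2)/2})$. Your split using $\bD_2\boldsymbol\Phi-\mathcal{R}_h^{(d)}(\nabla\cdot\bu)=0$ handles non-solenoidal $\bu$ directly, whereas the paper's proof writes out only the divergence-free case; and for $r_\star=1$, use a first-order Lipschitz bound instead of the quadratic remainder so that $W^{1,\infty}$ genuinely suffices.
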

\begin{proof}
The proof is given in Appendix \ref{app:approximation}.
\end{proof}
\paragraph*{Hodge star.} Because every primal $k$-cell is orthogonal to its dual $(3{-}k)$-cell, all Hodge star operators are diagonal and positive definite.
\begin{definition}[Hodge star-Operator]\label{def:hodge}
i) The primal-to-dual Hodge star is defined by
\[
\star_k : C^k(\KK)\to C^{3-k}(\KKs),\text{ with matrix entries } (\star_k)_{ii} := \frac{|\sigma_i^{*(3-k)}|}{|\sigma_i^k|}
\]
ii) The inverse primal-to-dual Hodge star is given by 
\[
\star_k^{-1} : C^{3-k}(\KKs)\to C^k(\KK), \text{ with matrix entries }
 (\star_k^{-1})_{ii} := \frac{|\sigma_i^k|}{|\sigma_i^{*(3-k)}|}.
\]
\end{definition}
The next lemma estimates the error from approximating the continuous Hodge star by
the diagonal Delaunay--Voronoi Hodge star.
\begin{lemma}[Hodge star approximation error]
\label{lem:hodge_error}
Let the mesh be Delaunay--Voronoi and satisfy the regularity
hypothesis~\ref{ass:mesh_reg}. For every smooth 1-form $\alpha$
with face fluxes $\Phi_j = \int_{f_j}\alpha$, the diagonal Hodge
star satisfies the pointwise estimate
\[
 \abs{(\bM_1)_{jj}\,(\mathcal{R}_h\alpha)_j - \Phi_j}
 \le C_\star\,h^{r_\star}\nrm{\alpha}_{W^{1,\infty}}\,|f_j|
\]
on each face, and the global estimate
\[
 \nrm{\bM_1\mathcal{R}_h\alpha - \boldsymbol\Phi}_{L_h^2}
 \le C_\star\,h^{r_\star}\nrm{\alpha}_{W^{1,\infty}}.
\]
\end{lemma}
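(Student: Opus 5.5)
The plan is to reduce the face-by-face comparison to a one-dimensional quadrature estimate along the dual edge. On a Delaunay--Voronoi mesh, \Cref{ass:mesh_reg}(iii) makes $e_j^*$ orthogonal to $f_j$. Write $\hat n_j$ for the unit normal of $f_j$, so $e_j^*$ runs along $\hat n_j$. Then
\[
(\bM_1)_{jj}(\mathcal{R}_h\alpha)_j = \frac{|f_j|}{\ell_j^*}\int_{e_j^*}\alpha(\hat n_j)\,\dd\ell ,
\]
which is $|f_j|$ times the line average of the normal component $a := \alpha(\hat n_j)$ over $e_j^*$. The exact flux is $\Phi_j = |f_j|$ times the area average of the same scalar $a$ over $f_j$. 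The pointwise estimate therefore amounts to
\[
\Bigl|\tfrac{1}{\ell_j^*}\int_{e_j^*} a - \tfrac{1}{|f_j|}\int_{f_j} a\Bigr| \le C_\star h^{r_\star}\nrm{\alpha}_{W^{1,\infty}} .
\]
On a curved manifold $\hat n_j$ should be parallel-transported; this perturbs $a$ by $\OO(h)$ times $\nrm{\alpha}_{L^\infty}$, with an $\OO(h^2)$ correction after averaging. I would treat this in normal coordinates centred at the face.

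\emph{Case $r_\star=1$.} Both averages lie within $\max_{x,y\in e_j^*\cup f_j}|x-y|\,\nrm{\nabla a}_{L^\infty}$ of $a(\bar{\mathbf x}_{f_j})$. The diameter of $e_j^*\cup f_j$ is $\OO(h)$ by quasi-uniformity and shape-regularity (\Cref{ass:mesh_reg}(i)--(ii)), and $e_j^*$ meets $f_j$ because the circumcentres lie inside their cells (iii). This gives the bound $C h\nrm{\alpha}_{W^{1,\infty}}$.

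\emph{Case $r_\star=2$.} Expand $a$ to first order about a common point $x_0$. Each average is exact on affine functions evaluated at the respective centroid. The difference is therefore $\nabla a(x_0)\cdot(\bar{\mathbf x}_{e_j^*}-\bar{\mathbf x}_{f_j})$ plus a Taylor remainder of size $\OO(h^2)\nrm{a}_{W^{2,\infty}}$. By \Cref{prop:centroid_proximity} the centroid offset is $\le C_{\rm cg}h^2$, so the difference is $\OO(h^2)$.

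The main obstacle is the norm on the right-hand side. The second-order estimate honestly needs $W^{2,\infty}$ rather than the $W^{1,\infty}$ written in the statement. I would read the $W^{1,\infty}$ in the second-order case as shorthand for $W^{r_\star,\infty}$, consistent with (H3), and say so explicitly; the first-order case is genuinely $W^{1,\infty}$.

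For the global estimate, use $\nrm{x}_{L_h^2}$ with weights comparable to the Hodge weights $(\bM_1)_{jj}^{-1}$ on flux cochains. Square the pointwise bound and sum:
\[
\sum_j \frac{\ell_j^*}{|f_j|}\,h^{2r_\star}|f_j|^2 = h^{2r_\star}\sum_j \ell_j^*|f_j| \lesssim h^{2r_\star}|\Omega| .
\]
The last step holds because the diamond volumes $\ell_j^*|f_j|/d$ tile $\Omega$ with bounded overlap (\Cref{ass:mesh_reg}(iv)). Taking square roots gives the global estimate with a constant depending only on the mesh-regularity constants.
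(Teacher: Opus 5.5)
Your proposal is correct and follows the paper's proof essentially step for step. Both arguments use the error functional on the normal component $\alpha(\hat n_j)$. For $r_\star=1$ you use a direct Lipschitz bound where the paper invokes Bramble--Hilbert on $K_a\cup K_b$, which has the same content; for $r_\star=2$ you use the same affine/centroid-offset argument under \Cref{prop:centroid_proximity}; and for the global bound you use the same $\bM_1^{-1}$-weighted sum with $\sum_j \ell_j^*|f_j| = \OO(1)$. Your remark that the second-order case really needs $W^{2,\infty}$ also matches the paper, whose own proof ends Step~3 with $\nrm{\alpha}_{W^{2,\infty}}$ and writes $W^{r_\star,\infty}$ in the global step.
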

\begin{proof}
The proof is given in Appendix \ref{app:approximation}.
\end{proof}
\begin{lemma}[Hodge star bilinear form approximation]\label{lem:hodge_bilinear}
Let $\alpha$ be a smooth $k$-form for $k = 0, 1, 2$, let
$\mathcal{R}_h\alpha$ denote its de~Rham interpolant, and let
$b \in C^k(\KKs)$ be an arbitrary $k$-cochain. The diagonal Hodge
star $\bM_k$ approximates the dual-cell pairing at rate
$\OO(h^{r_\star})$:
\[
\Bigl|\ip{b}{\mathcal{R}_h\alpha}_k
 - \textstyle\sum_\sigma b_\sigma\!\int_{\sigma^*}\!\star\alpha\Bigr|
 \le C_k\,h^{r_\star}\,\nrm{\alpha}_{W^{1,\infty}}\,\nrm{b}_{\bM_k},
\]
where $\sigma$ runs over primal $k$-cells and the constant $C_k$
depends on the mesh regularity.
\end{lemma}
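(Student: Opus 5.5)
We would prove \Cref{lem:hodge_bilinear} by reducing it to a per-cell estimate of the same type as \Cref{lem:hodge_error}, followed by Cauchy--Schwarz in the $\bM_k$-weighted inner product. Since $\bM_k$ is diagonal with entries $|\sigma^*|/|\sigma|$, the discrete pairing is
\[
\ip{b}{\mathcal{R}_h\alpha}_k=\sum_\sigma b_\sigma\,\frac{|\sigma^*|}{|\sigma|}\int_\sigma\alpha .
\]
The difference to be bounded is therefore $\sum_\sigma b_\sigma\,\epsilon_\sigma$, where
\[
\epsilon_\sigma:=\frac{|\sigma^*|}{|\sigma|}\int_\sigma\alpha-\int_{\sigma^*}\star\alpha .
\]
The whole argument comes down to estimating $\epsilon_\sigma$ cell by cell.

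\emph{Step 1 (local estimate).} For each primal $k$-cell $\sigma$ we would Taylor-expand $\alpha$ about the midpoint/centroid $\mathbf{x}_\sigma$ of $\sigma$. The zeroth-order term gives $\frac{|\sigma^*|}{|\sigma|}\,\alpha(\mathbf{x}_\sigma)[\sigma]\,|\sigma|$ on the primal side. On the dual side it gives $(\star\alpha)(\mathbf{x}_\sigma)[\sigma^*]\,|\sigma^*|$. These coincide because Delaunay--Voronoi orthogonality makes the unit tangent $k$-vector of $\sigma$ the Hodge complement of that of $\sigma^*$ (\Cref{ass:mesh_reg}(3)), so the leading terms cancel exactly.

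The first-order terms are of size $\|\nabla\alpha\|_\infty$ times the offset between the centroids of $\sigma$ and $\sigma^*$, multiplied by $|\sigma^*|$. Under \Cref{ass:mesh_reg} alone this offset is $\OO(h)$, which gives $r_\star=1$. Under \Cref{prop:centroid_proximity} the offset is $\OO(h^2)$, and the first moment of $\alpha-\alpha(\mathbf{x}_\sigma)$ over $\sigma$ vanishes at the centroid. The remainder is then $\OO(h^2\|\alpha\|_{W^{2,\infty}})|\sigma^*|$. Note that for $r_\star=2$ this needs $W^{2,\infty}$; we would absorb that into the norm as the paper does, since the stated $W^{1,\infty}$ norm is understood as $W^{r_\star,\infty}$ per (H3).

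For $k=0$ (point values versus dual-cell averages) and $k=2$ the argument is identical, with the roles of cells and centroids relabelled. In all cases the result is $|\epsilon_\sigma|\le C h^{r_\star}\|\alpha\|\,|\sigma^*|$. This recovers the pointwise bound of \Cref{lem:hodge_error} for $k=1$, which we could cite directly in that case.

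\emph{Step 2 (summation).} We would write
\[
\Bigl|\sum_\sigma b_\sigma\epsilon_\sigma\Bigr|\le\Bigl(\sum_\sigma \frac{|\sigma^*|}{|\sigma|}b_\sigma^2\Bigr)^{1/2}\Bigl(\sum_\sigma\frac{|\sigma|}{|\sigma^*|}\epsilon_\sigma^2\Bigr)^{1/2}.
\]
The first factor is $\nrm{b}_{\bM_k}$. Inserting the local bound, the second factor is at most $Ch^{r_\star}\|\alpha\|\bigl(\sum_\sigma|\sigma|\,|\sigma^*|\bigr)^{1/2}$. The key geometric fact is that $|\sigma|\,|\sigma^*|$ is comparable to the volume of the diamond cell spanned by $\sigma$ and $\sigma^*$. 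By shape regularity and bounded valence, these diamonds cover $\Omega$ with bounded overlap, so $\sum_\sigma|\sigma||\sigma^*|\le C|\Omega|$. This gives the claim with $C_k$ depending only on mesh-regularity constants.

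\emph{Main obstacle.} The delicate point is the second-order case of Step 1. The primal cell $\sigma$ and the dual cell $\sigma^*$ are in general not centred at the same point, and the dual cell (a Voronoi piece) need not be symmetric. Getting $\OO(h^2)$ requires both that the linear Taylor terms integrate to zero on each cell about its own centroid and that the mismatch $|\bar{\mathbf{x}}_{\sigma^*}-\bar{\mathbf{x}}_\sigma|$ be $\OO(h^2)$. We would handle this by expanding both integrals about the single point $\bar{\mathbf{x}}_\sigma$. The dual integral then produces an extra term $\nabla(\star\alpha)\cdot(\bar{\mathbf{x}}_{\sigma^*}-\bar{\mathbf{x}}_\sigma)|\sigma^*|$, which is exactly what \eqref{eq:centroid_proximity} controls.

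For prismatic cells with curved vertical interfaces, we would additionally check that the approximation of $\sigma$ by a flat cell costs only $\OO(h^2)$ in the tangent $k$-vector, using the $C^2$ interfaces of \Cref{prop:centroid_proximity}(b).
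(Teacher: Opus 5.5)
Your proposal is correct and follows essentially the same route as the paper: a per-cell Hodge-star error bound (the pointwise estimate of \Cref{lem:hodge_error} and its $k=0,2$ analogues) is multiplied by $|b_\sigma|$ and summed, then Cauchy--Schwarz with the diagonal weights of $\bM_k$ and the geometric bound $\sum_\sigma|\sigma|\,|\sigma^*| = \OO(1)$ finish the argument; the paper obtains that bound by counting ($\OO(h^{-d})$ cells times $\OO(h^d)$ each), where you use the bounded-overlap diamond covering. Your remark that the case $r_\star=2$ really needs $\nrm{\alpha}_{W^{2,\infty}}$ is also correct, and it matches what Step~3 of the proof of \Cref{lem:hodge_error} actually delivers, even though the lemma as stated writes $W^{1,\infty}$.
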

\begin{proof}
By \Cref{lem:hodge_error} (and analogous bounds
for $k=0$, $k=2$), the pointwise error satisfies
\[
 \bigl|(\bM_k)_{\sigma\sigma}\,(\mathcal{R}_h\alpha)_\sigma
 - \textstyle\int_{\sigma^*}\!\star\alpha\bigr|
 \le C\,h^{r_\star}\,\nrm{\alpha}_{W^{1,\infty}}\,|\sigma|.
\]
Multiplying by $|b_\sigma|$ and summing over primal $k$-cells:
\[
 \Bigl|b^T\bM_k\,\mathcal{R}_h\alpha
 - \textstyle\sum_\sigma b_\sigma\!\int_{\sigma^*}\!\star\alpha\Bigr|
 \le C\,h^{r_\star}\,\nrm{\alpha}_{W^{1,\infty}}
 \textstyle\sum_\sigma|b_\sigma|\,|\sigma|.
\]
The geometric factor $\sum_\sigma|b_\sigma|\,|\sigma|$ is bounded by
Cauchy--Schwarz:
\[
 \sum_\sigma |b_\sigma|\,|\sigma|
 = \sum_\sigma \bigl(|b_\sigma|(\bM_k)_{\sigma\sigma}^{1/2}\bigr)
 \cdot\bigl(|\sigma|(\bM_k)_{\sigma\sigma}^{-1/2}\bigr)
 \le \nrm{b}_{\bM_k}\,
 \Bigl(\sum_\sigma \frac{|\sigma|^2}{(\bM_k)_{\sigma\sigma}}\Bigr)^{1/2}.
\]
On quasi-uniform prismatic Delaunay--Voronoi meshes
(\Cref{ass:mesh_reg}), each diagonal Hodge weight satisfies
$(\bM_k)_{\sigma\sigma} = |\sigma|/|\sigma^*|$ with $|\sigma^*|$ the
dual $(d-k)$-cell, giving $|\sigma|^2/(\bM_k)_{\sigma\sigma}
= |\sigma|\cdot|\sigma^*| = \OO(h^d)$ uniformly by quasi-uniformity.
The number of $k$-cells is $N_\sigma = \OO(h^{-d})$, so
\[
 \sum_\sigma \frac{|\sigma|^2}{(\bM_k)_{\sigma\sigma}}
 \le N_\sigma\cdot\sup_\sigma\frac{|\sigma|^2}{(\bM_k)_{\sigma\sigma}}
 = \OO(h^{-d})\cdot\OO(h^d) = \OO(|\Omega|),
\]
$h$-independent. Absorbing this geometric constant
$C_{\rm geom}|\Omega|^{1/2}$ into $C_k$ completes the bound:
\[
 \Bigl|b^T\bM_k\,\mathcal{R}_h\alpha
 - \textstyle\sum_\sigma b_\sigma\!\int_{\sigma^*}\!\star\alpha\Bigr|
 \le C_k\,h^{r_\star}\,\nrm{\alpha}_{W^{1,\infty}}\,\nrm{b}_{\bM_k}.
 \qedhere
\]
\end{proof}
\begin{definition}[Inner products]\label{def:M1}
\begin{enumerate}[label=\textup{(\roman*)},nosep]
\item The inner product on dual 1-cochains is
\[
 \ip{{v}}{{w}}_1 := {\v}^TM_1\,{\w},
 \qquad
 M_1 := \star_2^{-1},
 \qquad
 (M_1)_{jj} = \frac{A_j}{\ell_j^*},
\]
representing the discrete $\int v\wedge\star w$.
\item The inner product on dual 0-cochains is
\[
 \ip{B_1}{B_2}_0 := B_1^T\,M_0\,B_2,
 \qquad
 M_0 := \star_3^{-1},
 \qquad
 (M_0)_{ii} = |K_i|.
\]
\item The inner product on dual 2-cochains is
\[
 \ip{\omega_1}{\omega_2}_2 := \omega_1^T\,\bM_2\,\omega_2,
 \qquad
 \bM_2 := \star_1^{-1}.
\]
\end{enumerate}
\end{definition}
\paragraph*{Extrusion and Contraction.} The interior product $\iota_{{\u}}\omega$ is discretized via
{extrusion}: the contraction on a dual 1-cell $e_j^*$ equals the flux
of $\omega$ through the parallelogram swept by extruding $e_j^*$
infinitesimally along the velocity field.
\begin{definition}[Discrete contraction]\label{def:contraction}
The discrete contraction $\Iv:C^2(\KKs)\to C^1(\KKs)$
is defined through the duality pairing
\begin{equation}\label{eq:contraction_ip}
 \ip{\bw}{\Iv(\bom)}_1
 = \tfrac{1}{2}\Bigl(\ip{\bw}{\tU\,\bom} - \ip{\tD_1\bw}{\tU^T\bv}\Bigr)
 \qquad\text{for every dual 1-cochain }\bw,
\end{equation}
where $\ip{\cdot}{\cdot}_1$ is the $\bM_1$-weighted inner product
on dual 1-cochains (\Cref{def:M1}) and $\ip{\cdot}{\cdot}$ the
Euclidean pairing. Equivalently, as a matrix identity,
\begin{equation}\label{eq:contraction}
 \bM_1\,\Iv(\bom) = \tfrac{1}{2}\bigl(\tU\,\bom - \tD_1\,\tU^T\bv\bigr),
\end{equation}
where the velocity-weighted incidence matrix $\tU$ has the same
sparsity as $\tD_1$ and entries
\begin{equation}\label{eq:Utilde_def}
 \tU_{jk}
 := D_{1,jk}\;\bar{\u}_j(\bv)\cdot\hat{e}_k.
\end{equation}
Here $\bar{\u}_j(\bv):=P\bv\in\mathbb{R}^3$ is a reconstructed
velocity vector at primal face $f_j$, {linear} in
the cochain $\bv$. The specific reconstruction is not part of the
algebraic structure; any linear reconstruction preserves the
energy identity~\eqref{eq:energy_cancel}.
We also write 
\[
\bQ(\bv,\bv):=\Iv(\dd_1\bv)
\]
to emphasize the bilinear nature of this operator.
\end{definition}
\smallskip
The contraction~\eqref{eq:contraction_ip} requires a velocity
reconstruction $\bar{\u}_j(\bv)$ at each primal face~$f_j$,
linear in the cochain~$\bv$. The algebraic conservation
properties -- energy identity, Lamb antisymmetry, Kelvin
circulation -- depend only on this linearity and on the
structure of the bilinear form on the right of
\eqref{eq:contraction_ip}; they hold for any linear
reconstruction.
\begin{definition}[Averaging reconstruction]\label{def:averaging_recon}
At each dual vertex $v_i^*$ (circumcentre of prism~$K_i$),
define the Gram matrix
\[
G_i := \sum_{e_n^*\prec K_i^*} \hat{t}_n\otimes\hat{t}_n,
\]
where the sum runs over the dual edges $e_n^*$ emanating
from~$v_i^*$, and $\hat{t}_n$ is the unit tangent to~$e_n^*$.
On a shape-regular mesh, $G_i$ is symmetric positive definite
with condition number bounded by the mesh regularity constant.
The \emph{averaging reconstruction} at~$v_i^*$ is
\[
\u(v_i^*)
 := G_i^{-1}\sum_{e_n^*\prec K_i^*}
 \frac{\v(e_n^*)}{\ell_n^*}\;\hat{t}_n.
\]
The face velocity is obtained by averaging over the two
adjacent cells:
$\bar{\u}_j = \tfrac{1}{2}(\u(v_a^*) + \u(v_b^*))$.
\end{definition}
\begin{proposition}[Reconstruction accuracy]\label{prop:recon_accuracy}
Let $\bu \in W^{s,\infty}(\Omega)$ be a smooth velocity field with
de~Rham interpolant $\bv = \mathcal{R}_h(\bu^\flat)$. The averaging
reconstruction satisfies the following two accuracy estimates.

\textup{(i)} \emph{First-order accuracy on general meshes.} The
reconstruction is exact for constants and is therefore at least
first-order accurate:
\[
|\u(v_i^*) - \bu(v_i^*)| \le C_R\,h\,\nrm{\bu}_{W^{1,\infty}},
\]
where $C_R$ depends only on the mesh regularity.

\textup{(ii)} \emph{Second-order accuracy under reconstruction
symmetry.} On meshes satisfying the symmetry condition
\begin{equation}\label{eq:recon_symmetry}
 \sum_{e_n^*\prec K_i^*}
 \ell_n^*\,(\hat{t}_n)_j\,(\hat{t}_n)_k\,(\hat{t}_n)_l = 0
 \qquad\text{for all }j,k,l,
\end{equation}
the reconstruction is exact for linear polynomials and
hence second-order accurate:
\[
|\u(v_i^*) - \bu(v_i^*)|
 \le C_R'\,h^2\,\nrm{\bu}_{W^{2,\infty}}.
\]
\end{proposition}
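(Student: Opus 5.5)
We need to prove Proposition (Reconstruction accuracy), parts (i) and (ii). The plan is a Taylor expansion of $\bu$ about $x_i^* = v_i^*$, inserted into the definition of the averaging reconstruction of \Cref{def:averaging_recon}.

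\emph{Exactness for constants.} Let $\bu \equiv c$. Then $\v(e_n^*) = \int_{e_n^*}\bu\cdot d\ell = \ell_n^*\,(c\cdot\hat t_n)$. Hence $\sum_n \v(e_n^*)/\ell_n^*\,\hat t_n = \sum_n (\hat t_n\otimes\hat t_n)c = G_i c$, and $\u(v_i^*) = c$. Invertibility of $G_i$, with $\nrm{G_i^{-1}}$ bounded uniformly, I take from shape regularity (\Cref{ass:mesh_reg}), as asserted in the definition.

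\emph{First-order bound (i).} For general $\bu$, write $\bu(x) = \bu(x_i^*) + r(x)$ with $|r(x)| \le \nrm{\bu}_{W^{1,\infty}}|x - x_i^*|$. Each dual edge $e_n^*$ emanating from $v_i^*$ has length at most $h$, so the edge average $\frac{1}{\ell_n^*}\int_{e_n^*}r\cdot\hat t_n$ is bounded by $h\nrm{\bu}_{W^{1,\infty}}$. By linearity and exactness on the constant part,
\[
|\u(v_i^*)-\bu(v_i^*)| \le \nrm{G_i^{-1}}\,N_i\,h\,\nrm{\bu}_{W^{1,\infty}},
\]
where $N_i$ is the number of dual edges at $v_i^*$. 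The number $N_i$ is uniformly bounded by bounded valence, and $\nrm{G_i^{-1}}$ by shape regularity, which yields $C_R$.

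\emph{Second-order bound (ii).} Expand to second order: $\bu(x) = \bu(x_i^*) + \nabla\bu(x_i^*)(x - x_i^*) + \OO(|x - x_i^*|^2\nrm{\bu}_{W^{2,\infty}})$. The quadratic remainder contributes $\OO(h^2)$ by the argument of part (i). Along $e_n^*$, parametrize $x = x_i^* + s\hat t_n$ for $s\in[0,\ell_n^*]$. The linear term's edge average is then $\tfrac12\ell_n^*\,\hat t_n^T\nabla\bu\,\hat t_n$. Its contribution to the reconstruction, in component $j$, is $G_i^{-1}$ applied to $\tfrac12\sum_n \ell_n^*(\hat t_n)_j(\hat t_n)_k(\hat t_n)_l\,\partial_k u_l$. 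This vanishes exactly by the symmetry condition \eqref{eq:recon_symmetry}. So the reconstruction is exact on linear fields, and the error is $C_R' h^2\nrm{\bu}_{W^{2,\infty}}$.

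\emph{Main obstacle.} The only delicate point is the geometry of the edges themselves. The dual edges from $v_i^*$ run only to the face crossing points (the half-edges inside $K_i$), not necessarily to the neighbouring circumcentre. One must check that the cochain value used is the integral over the portion starting at $v_i^*$; otherwise the linear term acquires an offset that \eqref{eq:recon_symmetry} does not cancel. I would first fix this convention, reading $\v(e_n^*)$ in the formula as the half-edge circulation with length $\ell_n^*$. If full edges are intended instead, the linear term picks up a midpoint offset, and a term like $\sum_n \ell_n^*\hat t_n\otimes\hat t_n$ weighted by edge midpoint displacement would need to vanish as well. I would then state this as part of the symmetry hypothesis, which holds on the centroidal meshes of \Cref{conv:cases}. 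The uniform bound on $G_i^{-1}$ is assumed from the definition.
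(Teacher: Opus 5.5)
Your argument is the paper's: exactness on constants via $G_i^{-1}G_i = I$, Taylor expansion about $v_i^*$, and reduction of the linear-term error to the third moment $T_{jkl}=\sum_n\ell_n^*(\hat t_n)_j(\hat t_n)_k(\hat t_n)_l$, which the symmetry condition kills. Your explicit constants from $\nrm{G_i^{-1}}$ and bounded valence just spell out what the paper compresses into ``by Taylor expansion''. The edge-convention caveat is unnecessary: each dual edge $e_n^*$ emanating from $v_i^*$ is the full segment from $v_i^*$ to the neighbouring circumcentre. Its midpoint is therefore exactly $v_i^* + \tfrac{\ell_n^*}{2}\hat t_n$, with $\hat t_n$ oriented outward, which is the identity the paper invokes and what your parametrization $s\in[0,\ell_n^*]$ already encodes.
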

\begin{proof}
\textit{(i)} Exactness on constants follows from
$\sum_{e_n^*\prec K_i^*}\hat{t}_n\otimes\hat{t}_n = G_i$
and $G_i^{-1}G_i = I$; the first-order bound then follows by
Taylor expansion.
\textit{(ii)} The leading error from~(i) is
$\delta\u = G_i^{-1}\sum_n [(\nabla\bu)^T\hat{t}_n \cdot
(x_n^{\rm mid} - v_i^*)]\,\hat{t}_n$.
Since $x_n^{\rm mid} - v_i^* = \frac{\ell_n^*}{2}\hat{t}_n$
on a Delaunay--Voronoi mesh, the $l$-th component involves the third moment
$T_{jkl}: = \sum_n\ell_n^*(\hat{t}_n)_j(\hat{t}_n)_k(\hat{t}_n)_l$.
Under~\eqref{eq:recon_symmetry}, $T_{jkl} = 0$, so
$\delta\u = \OO(h^2)$.
\end{proof}
\begin{convention}[Two mesh cases]\label{conv:cases}
Throughout the paper, all convergence rates are stated for the
following two mesh classes:
\begin{itemize}[nosep]
  \item[\textbf{(A)}] general Delaunay--Voronoi meshes
    (\Cref{ass:mesh_reg} only): $r_\star = 1$; the diagonal Hodge star
    and the averaging reconstruction are both first-order accurate;
  \item[\textbf{(B)}] Delaunay--Voronoi meshes with centroid proximity
    (\Cref{prop:centroid_proximity}) and reconstruction
    symmetry~\eqref{eq:recon_symmetry}: $r_\star = 2$; the diagonal
    Hodge star and the averaging reconstruction are both
    second-order accurate.
\end{itemize}
Other configurations (e.g.\ centroid proximity without reconstruction
symmetry, or vice versa) are not considered. Under (A) and (B)
the Hodge-star and reconstruction accuracies are tied to a single
exponent $r_\star\in\{1,2\}$, and the combined truncation rate
satisfies $\min(d-2+r_\star,\,r_\star) = r_\star$ for $d=2,3$, so all
convergence rates collapse to $h^{r_\star}$: first order in case (A),
second order in case (B). Throughout the paper $r_\star$ denotes both
the Hodge accuracy exponent (\Cref{lem:hodge_error}) and the
reconstruction accuracy exponent (\Cref{prop:recon_accuracy}).
\end{convention}
\paragraph*{Wedge Product.} The discrete wedge product is defined
via the Cartan identity
$\iota_{{\u}}(\alpha\wedge\beta)
= (\iota_{{\u}}\alpha)\wedge\beta
 + (-1)^p\,\alpha\wedge(\iota_{{\u}}\beta)$,
ensuring algebraic compatibility with the contraction and exterior derivative.
\begin{definition}[Discrete wedge product]\label{def:wedge}
The {extrusion-based discrete wedge product}
$\twdg : C^p(\KKs)\times C^q(\KKs) \to C^{p+q}(\KKs)$ is defined for
$p+q\le 3$. The case $(p,q)=(1,1)$, the only one used in the body of
the paper, is given by the contraction:
\begin{equation}\label{eq:wedge_11}
 \bigl(\tilde\alpha\twdg\tilde\beta\bigr)(f_k^*)
 := \bigl(I_{\tilde\alpha}\,\tilde\beta\bigr)(f_k^*)
 = \sum_{e_j^*\prec f_k^*} w_{jk}(\tilde\alpha)\,\tilde\beta(e_j^*),
\end{equation}
with extrusion weights $w_{jk}(\tilde\alpha) = \tfrac{1}{2}(\bM_1^{-1}\tU_{\tilde\alpha})_{jk}$
obtained by reconstructing a velocity field from $\tilde\alpha$ and
projecting onto $\hat e_k$. The $(1,2)$-wedge, used only in the
helicity construction, is given in the appendix
(\Cref{def:wedge_12_app}).
\end{definition}
\begin{remark}\label{rem:wedge_contraction}
The $(1,1)$-wedge~\eqref{eq:wedge_11} is the contraction
$\Iv(\bom) = \bv\twdg\bom$: the discrete analog of
$\iota_{{\u}}\omega = u^\flat\wedge_{\!1}\,\omega$ in three dimensions.
\end{remark}
\paragraph*{Lie Derivative.} The discrete Lie derivative is defined
through Cartan's magic formula, uniformly in degree.
\begin{definition}[Discrete Lie derivative]\label{def:Lie}
For a velocity dual 1-form $\bv\in C^1(\KKs)$ and a dual $k$-form
$\tilde\alpha^k\in C^k(\KKs)$,
\[
\tLie_{\bv}\,\tilde\alpha^k
 := \tdd_{k-1}\bigl(\Iv\,\tilde\alpha^k\bigr)
 + \Iv\bigl(\tdd_k\,\tilde\alpha^k\bigr).
\]
The case $k=1$ with $\tilde\alpha^1 = \bv$ gives the discrete
\emph{Cartan identity}
$\tLie_{\bv}\bv = \tdd_0\,\ekin + \Iv(\bom)$, with $\ekin = \Iv\bv$
and $\bom = \tdd_1\bv$, used throughout the paper. The mass flux is
the case $k=3$, $\bF = \Iv\,\brho$ (since $\tdd_3 = 0$ on a closed
manifold).
\end{definition}
All three operators -- wedge product, Lie derivative, and Lamb
vector -- are built from the single extrusion-based contraction
(\Cref{def:contraction}).
\begin{proposition}\label{prop:extrusion}
The exterior derivative, the extrusion-based discrete wedge product $\widetilde\wedge$ and contraction
$I$ satisfy:
\begin{enumerate}
\item \textbf{Cell complex: } The exterior derivative satisfies
$$ \dd_{k+1}\circ\dd_k = 0,\qquad
 \tdd_{k+1}\circ\tdd_k = 0.$$ 
 Equivalently in matrix form $D_{k+1}D_k = \mathbf{0}$ and
$\tD_{k+1}\tD_k = \mathbf{0}$.
 \item \textbf{Energy identity:} For any dual 1-cochain
 $\bv\in C^1(\KKs)$ and $\bom = \tdd_1\bv$,
 \[
\ip{\bv}{\Iv(\bom)}_1 = 0.
\]
 This identity holds for any linear velocity reconstruction.
 \item \textbf{Approximate Leibniz rule:} For smooth 1-forms 
 $\tilde\alpha^1$, $\tilde\beta^1$ obtained by de~Rham interpolation
 of continuous forms $\alpha$, $\beta$, the discrete $(1,1)$-wedge product 
 satisfies
 $$\tdd(\tilde\alpha^1\twdg\tilde\beta^1)
 - \tdd\tilde\alpha^1\twdg\tilde\beta^1
 + \tilde\alpha^1\twdg\tdd\tilde\beta^1
 = \OO(h^2),$$
 where the error is measured per dual cell and arises from the 
 metric dependence of the extrusion weights.
 The $(1,2)$-wedge consistency is established separately
 in \Cref{lem:helicity_consistency}.
 \item \textbf{Discrete Cartan magic formula:}
 $\tLie_{\v}
 = \tdd\circ\Iv + \Iv\circ\tdd$.
\end{enumerate}
\end{proposition}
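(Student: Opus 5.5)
The proposition has four parts, and I will treat them separately, since each uses a different ingredient from \Cref{app:framework}.

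\emph{Part 1 (cell complex).} Both identities come from the combinatorics alone. $\bD_k$ is the coboundary of the primal complex, so
\[
(\bD_{k+1}\bD_k\alpha)(\sigma) = \alpha(\partial\partial\sigma) = 0,
\]
because $\partial\circ\partial=0$ on an oriented cell complex. This amounts to the incidence-number identity $\sum_{\tau}[\sigma:\tau][\tau:\pi]=0$. On the dual side, $\tD_k$ is, up to orientation sign, the transpose of $\bD_{d-k-1}$ under the primal--dual correspondence of the table. Hence $\tD_{k+1}\tD_k=\pm(\bD_{d-k-2}\bD_{d-k-1})^T=0$.

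\emph{Part 2 (energy identity).} I will insert $\bw=\bv$ into the defining pairing \eqref{eq:contraction_ip}:
\[
\ip{\bv}{\Iv(\bom)}_1=\tfrac12\bigl(\bv^T\tU\bom-(\tD_1\bv)^T\tU^T\bv\bigr).
\]
With $\bom=\tD_1\bv$, the second term equals $\bom^T\tU^T\bv=\bv^T\tU\bom$, since it is a scalar and so equals its own transpose. The two terms therefore cancel identically. The argument never uses the entries of $\tU$, so it holds for any reconstruction $\bar\bu_j(\bv)$. Linearity of the reconstruction is needed only so that $\Iv$ is a well-defined bilinear map.

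\emph{Part 4 (Cartan).} \Cref{def:Lie} defines $\tLie_\bv$ by exactly this formula, so Part 4 holds by construction. What remains is a consistency check. For $k=1$ I will record that $\tdd_0(\Iv\bv)+\Iv(\tdd_1\bv)$ reproduces the Cartan identity with $\ekin=\Iv\bv$. For $k=3$, $\tdd_3=0$ gives $\tLie_\bv\brho=\tdd_2(\Iv\brho)$, which is the mass-flux divergence.

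\emph{Part 3 (approximate Leibniz).} This is the main obstacle, and it is the only part that needs metric input. I will take $\tilde\alpha=\mathcal R_h\alpha$ and $\tilde\beta=\mathcal R_h\beta$. By de~Rham commutativity (H2), each $\tdd$ applied to these is exact: $\tdd\tilde\alpha=\mathcal R_h\dd\alpha$, and likewise for $\beta$. I then write each discrete wedge as $\mathcal R_h$ of the continuous wedge plus a consistency error $\epsilon$. For the continuous forms the Leibniz rule holds exactly, so after applying $\mathcal R_h$ the whole defect reduces to
\[
\tdd\,\epsilon(\alpha,\beta)-\epsilon(\dd\alpha,\beta)+\epsilon(\alpha,\dd\beta).
\]
To estimate $\epsilon$ I will expand the extrusion weights $w_{jk}=\tfrac12(\bM_1^{-1}\tU)_{jk}$ cell by cell and use the accuracy of the averaging reconstruction (\Cref{prop:recon_accuracy}) together with the Hodge estimate of \Cref{lem:hodge_error}. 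Integrating over a dual cell of size $\OO(h)$ then turns the pointwise error into the stated per-cell $\OO(h^2)$. The delicate point is whether the leading first-order terms cancel on each cell. I would first try a Taylor expansion about the dual-cell centre and exploit the symmetry of the two-cell average $\bar\bu_j=\tfrac12(\bu(v_a^*)+\bu(v_b^*))$. If that does not give cancellation, I would state the bound per unit cell measure instead, which gives the relative rate $h^{r_\star}$.
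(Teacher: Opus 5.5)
\textbf{Parts 1, 2 and 4.} These follow the paper's proof: signed incidence matrices with $\partial^2=0$, the scalar-transpose cancellation, and the Cartan formula holding by definition. There is one slip in Part 1. The correct product is $\tD_{k+1}\tD_k=\pm\bD_{d-k-2}^T\bD_{d-k-1}^T=\pm(\bD_{d-k-1}\bD_{d-k-2})^T$; the product $\bD_{d-k-2}\bD_{d-k-1}$ you wrote does not compose.

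\textbf{Part 3: a different route.} You use (H2) and the continuous Leibniz rule to reduce the defect to $\tdd\,\epsilon(\alpha,\beta)-\epsilon(\dd\alpha,\beta)+\epsilon(\alpha,\dd\beta)$, a pure combination of wedge consistency errors. The paper never forms $\mathcal{R}_h(\alpha\wedge\beta)$. Instead it does the following.
\begin{enumerate}
\item It expands the defect on a dual 3-cell $K_i^*$ as a double sum over its dual faces and their dual edges.
\item It splits every weight as $w_{jk}=w^{\rm ex}_{jk}+\delta w_{jk}$: the continuous proxies evaluated at the face centre, plus the reconstruction error.
\item It bounds the exact-weight part by a midpoint Taylor estimate on each dual edge.
\item It bounds the remainder term by term by $|\delta w_{jk}|\,|\tilde\beta(e_j^*)|\le Ch\cdot Ch$, using \Cref{prop:recon_accuracy}, summed over a bounded stencil. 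Under reconstruction symmetry this piece drops to $\OO(h^3)$.
\end{enumerate}

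\textbf{What each route buys.} Your reduction isolates the structural content more cleanly: exact $\tdd$-commutation plus the continuous Leibniz rule leave only consistency errors, and this holds for any consistent wedge. The price is three separate consistency estimates against $\mathcal{R}_h$ of the continuous product, one each for the $(1,1)$-, $(2,1)$- and $(1,2)$-wedges. The paper does not define the $(2,1)$-wedge separately; its proof simply forms $w_{jk}(\tdd\tilde\alpha)$ from the vorticity pseudovector. \Cref{lem:helicity_consistency} controls the $(1,2)$ case only in $L^2$, not per cell. The paper's route needs only the pointwise reconstruction bound, and \Cref{lem:hodge_error} plays no role in it.

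\textbf{The cancellation you flag is not needed.} The claim is an absolute per-cell bound. Once each face-level error is $\OO(h^2)$ (your pointwise $\OO(h)$ times an edge length $\OO(h)$), $(\tdd\epsilon)(K_i^*)$ is a signed sum over the boundedly many faces of $K_i^*$. It is therefore $\OO(h^2)$ per cell directly, exactly as in the paper. Drop the fallback of dividing by the cell measure: it changes the normalisation of the statement rather than proving it.
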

\begin{proof}
Each identity is an algebraic consequence of
the definitions of $\tdd_k$, $\Iv$, $\tU$, and the discrete wedge,
together with $\tdd^2 = 0$ and the antisymmetry matrix identity; the
$\OO(h^2)$ Leibniz defect arises from the mesh-dependent weights
of the dual-cell quadrature. Full computations are in
\Cref{app:basic}.
\end{proof}
\begin{proposition}[Vector-invariant and conservative momentum forms]%
\label{prop:VI_cons_equiv_baro}
On any Delaunay--Voronoi mesh, the discrete vector-invariant
momentum equation \eqref{eq:M} and the discrete conservative
(flux-form) momentum equation are equivalent up to a truncation error
of $\OO(h^2)$ per edge: the difference is a discrete gradient that is
absorbed into the Bernoulli function. Proof in \Cref{app:VI_cons_equiv}.
\end{proposition}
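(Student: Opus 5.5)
The plan is to mimic, at the cochain level, the continuum identity connecting the two forms. For $\partial_t\rho_{\rm vol}+\Lie_\bu\rho_{\rm vol}=0$ one has
\[
\partial_t(\rho\bu)+\mathrm{div}(\rho\bu\otimes\bu)
 =\rho\bigl(\partial_t\bu+\bu\cdot\nabla\bu\bigr),
\qquad
(\bu\cdot\nabla\bu)^\flat=\iota_\bu\omega+\dd\bigl(\tfrac12|\bu|^2\bigr).
\]
So the conservative form, divided by the density and after subtracting velocity times the continuity equation, is the vector-invariant form. I would carry out the same three manipulations discretely. At each step I would track whether the discrepancy is exact, a discrete gradient, or a metric (Hodge or extrusion) defect.

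\textbf{Step 1 (discrete product rule).} First fix the discrete conservative momentum equation. Its momentum cochain is $m_j=\bar\rho_j v_j$, with the centred interpolation $\bar\rho_j$ of~\eqref{eq:rhobar}. Its flux divergence is built from the same extrusion contraction, i.e.\ from the discrete Lie derivative of Def.~\ref{def:Lie} applied to the momentum-weighted 1-form, with mass flux $\bF=\Iv\brho$. Expand $\ddt(\bar\rho_j v_j)=\bar\rho_j\ddt v_j+v_j\ddt\bar\rho_j$. Insert the continuity equation~\eqref{eq:D}, interpolated to faces, in the second term. Then use the approximate Leibniz rule of Prop.~\ref{prop:extrusion}(3) to split the flux divergence of $\bar\rho\,v$ into two parts: $v$ times the discrete mass-flux divergence, plus $\bar\rho$ times an advective operator acting on $v$. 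The first part cancels the $v\,\ddt\bar\rho$ term up to the Leibniz defect. Since that defect is stated per dual cell as $\OO(h^2)$ for de~Rham-interpolated smooth forms, this step contributes $\OO(h^2)$ per edge.

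\textbf{Step 2 (Cartan identification).} Divide by $\bar\rho_j$. This is legitimate on $\admissible$, and Lem.~\ref{lem:no-vacuum} bounds the factor uniformly. The advective operator remaining is the discrete Lie derivative $\tLie_\bv\bv$. By the discrete Cartan identity of Def.~\ref{def:Lie} it equals $\Iv(\bom)+\tD_0\ekin$, with $\ekin=\Iv\bv$, an exact algebraic statement. Comparing with~\eqref{eq:M}, whose Bernoulli function contains $\ekin_i=\tfrac12|P\bv|_i^2$, the two forms differ by $\tD_0$ of a cell-centred scalar, namely the difference of the two kinetic-energy densities. The pressure term converts in the same way: $\bar\rho^{-1}$ times the discrete pressure gradient becomes $\tD_0 h(\rho^{\rm vol})$ up to a face-interpolation commutator. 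Both discrepancies are discrete gradients, so they are absorbed into a redefined Bernoulli function $B$.

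\textbf{Step 3 (collecting remainders).} The non-gradient remainders are the following. The first is the Leibniz defect from Step 1. The second is the commutator between dividing by the face density $\bar\rho_j$ and applying $\tD_0$; it is a second difference of smooth data under the centred average and is therefore $\OO(h^2)$. I would bound each per edge with Prop.~\ref{prop:extrusion}(3), Lem.~\ref{lem:hodge_error}, and Taylor expansion of $\bar\rho$ about the face midpoint, assuming the Delaunay--Voronoi orthogonality of Ass.~\ref{ass:mesh_reg}.

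\textbf{Main obstacle.} I expect the hard part to be Step 1, namely showing that the product-rule defect is genuinely $\OO(h^2)$ and not merely $\OO(h)$. The extrusion weights $w_{jk}$ depend on the mesh metric, and the face density $\bar\rho_j$ is only an average of cell values. My first attempt would be to symmetrize: use the centred average on both the momentum cochain and the mass flux, so that the $\OO(h)$ terms cancel pairwise across each face, leaving the $\OO(h^2)$ metric defect already certified in Prop.~\ref{prop:extrusion}(3). If that cancellation fails on non-symmetric stencils, I would fall back on an $\OO(h^{r_\star})$ statement under Conv.~\ref{conv:cases}.
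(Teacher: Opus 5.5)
Your ingredients are the right ones and match the paper's: the time product rule, the face-averaged continuity equation, the Cartan identification, and absorbing gradient discrepancies into $B$. But Step~1, which carries all the content, does not go through as set up.

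\textbf{Step~1.} You never construct the discrete conservative flux operator, and the one you name does not split the way you need. The Lie derivative of \Cref{def:Lie} acts on dual $k$-cochains. Applied to the 1-cochain $\bar\rho\,\bv$, it discretises $\Lie_\bu(\rho v)$ with $\rho v$ a 1-form. Already in the continuum, $\Lie_\bu(\rho v)=(\bu\cdot\nabla\rho)\,v+\rho\,\Lie_\bu v$, so $\partial_t(\rho v)+\Lie_\bu(\rho v)=\rho(\partial_t v+\Lie_\bu v)-\rho(\nabla\cdot\bu)\,v$. The coefficient of $v$ is therefore the advective derivative of $\rho$, not the mass-flux divergence. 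The leftover dilatation term is $\OO(1)$ and not a gradient, so it cannot be absorbed into $B$. The split ``$v$ times the mass-flux divergence plus $\bar\rho$ times an advective operator'' needs a Lie derivative of the momentum as a 1-form-valued density $v\otimes\rho_{\vol}$. The paper's DEC does not define such an operator.

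\Cref{prop:extrusion}(3) does not supply it either. It is a Leibniz rule for $\tdd$ over the discrete wedge, and its defects live on dual 2- and 3-cells rather than on dual edges. It establishes nothing about how $\Iv$ (and hence $\tLie_\bv$) distributes over a product. So it cannot certify an $\OO(h^2)$ per-edge remainder.

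Your fallback to $\OO(h^{r_\star})$ gives only $\OO(h)$ in case~(A) of \Cref{conv:cases}, short of the claimed rate on every Delaunay--Voronoi mesh. Also, for a truncation statement the lower bound on $\bar\rho_j$ must come from $\rho^c\ge\rho_*$ in \Cref{ass:smooth_baro}. \Cref{lem:no-vacuum} gives only a mesh-dependent bound that degenerates as $h\to0$.

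\textbf{The paper's route.} The paper avoids extrusion product rules by going the other way. It starts from \eqref{eq:M} and sets $m_j=\bar\rho_j v_j$ with the centred average. It then invokes only exact identities: the time product rule and the telescoping rule \eqref{eq:dw_product_rule_exact} for 0-cochains. It writes the pressure as $\bar\rho_j(\tdd_0 h)_j$, so your division commutator never arises. The single inexact step is the conversion of the conservative continuity equation, namely the face sum \eqref{eq:staggered_defect} comparing $(\bD_2\bF)_i$ with $\rho_i^{\vol}(\bD_2\Phi)_i$.

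If you follow that route, do not take the $\OO(h^2)$ bound on this term on trust. With centred $\bar\rho_j$ it is $\OO(|K_i|)$, not $\OO(h^2|K_i|)$. In one dimension with constant $u$ it equals $\tfrac12 u\,(\rho^{\vol}_{i+1}-\rho^{\vol}_{i-1})\approx|K_i|\,u\,\partial_x\rho^c$. So it is a genuine transport contribution that the discrete momentum-flux operator has to reproduce, not a remainder to be discarded.
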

\subsection{Equation of State and Thermodynamic Quantities}\label{subsect:eos}
\phantomsection\label{section_Thermo}
The barotropic system has $p = p(\rho)$. The vector-invariant
formulation requires $\rho>0$ everywhere (division by $\rho$ enters
the enthalpy $h(\rho) = e + p/\rho$), making vacuum prevention a
prerequisite.
\begin{definition}[Equation of state and thermodynamic quantities]
\label{def:eos}
Given a barotropic equation of state $p = p(\rho)$, we define
\begin{itemize}[nosep]
 \item Specific internal energy:
 $e(\rho): = \displaystyle\int_{\rho_0}^{\rho}
 \frac{p(s)}{s^2}\,\dd s$,
 so that $e'(\rho) = p(\rho)/\rho^2$.
 \item Specific enthalpy:
 $h(\rho) := e(\rho) + p(\rho)/\rho$,
 so that $h'(\rho) = p'(\rho)/\rho = c^2(\rho)/\rho$.
 Equivalently, $h(\rho): = \displaystyle\int_{\rho_0}^{\rho}
 \frac{p'(s)}{s}\,\dd s$, which gives $\dd h = \dd p/\rho$.
 \item Sound speed: $c^2 = p'(\rho)$.
\end{itemize}
All thermodynamic quantities are evaluated pointwise at each primal cell.
\end{definition}
\begin{assumption}[Equation of state regularity]
\label{ass:eos}
We impose the following properties on the barotropic equation of state $p = p(\rho^{\vol})$
\begin{enumerate}[nosep,label=(E\arabic*)]
 \item \textit{Volumetric density:} The volumetric density at primal cell $K_i$ is
\begin{equation}\label{def:rhovol}
 \rho_i^{\mathrm{vol}}
 = \frac{\brho_i}{|K_i|}.
\end{equation}
 \item \label{ass:smooth} $p \in C^2((0, \infty))$.
 \item \label{ass:positive-pressure} $p(\rho^{\vol}) > 0$ for all $\rho^{\vol} > 0$.
 \item \label{ass:sound-speed} The sound speed $c^2 = p'(\rho^{\vol}) > 0$ for all 
 $\rho^{\vol} > 0$ (hyperbolicity).
 \item \label{ass:enthalpy} The specific enthalpy 
 $h(\rho^{\vol}) = e(\rho^{\vol}) + p(\rho^{\vol})/\rho^{\vol}$ is well-defined 
 and $C^2$ on $(0, \infty)$.
 \item \label{ass:internal-energy} The specific internal energy $e(\rho^{\vol})$ 
 with $e'(\rho^{\vol}) = p(\rho^{\vol})/(\rho^{\vol})^2$ is well-defined and 
 bounded below: $e(\rho^{\vol}) \geq e_{\min} > -\infty$ for all $\rho^{\vol} > 0$.
 \item \label{ass:coercive} There exist constants $C_1, C_2 > 0$ and $\gamma > 1$ 
 such that $e(\rho^{\vol}) \geq C_1 (\rho^{\vol})^{\gamma - 1} - C_2$ for 
 $\rho^{\vol}$ large (coercivity / superlinear growth).
\end{enumerate}
\end{assumption}
\begin{example}[Ideal gas / polytropic]
All assumptions are satisfied for $p(\rho^{\vol}) = \kappa(\rho^{\vol})^\gamma$ with $\gamma > 1$, $\kappa > 0$:
$h = \frac{\kappa\gamma}{\gamma-1}(\rho^{\vol})^{\gamma-1}$, 
$e = \frac{\kappa}{\gamma-1}(\rho^{\vol})^{\gamma-1}$, 
$c^2 = \kappa\gamma(\rho^{\vol})^{\gamma-1}$.
\end{example}
\begin{remark}[Global form of coercivity]\label{rem:global_coercivity}
Under \Cref{ass:eos}\ref{ass:internal-energy}--\ref{ass:coercive}, the specific internal
energy satisfies the global bound
\begin{equation}\label{eq:e_global_coercive}
 e(\rho^{\vol}) \ge C_1\,(\rho^{\vol})^{\gamma-1} - C_2', \qquad
 C_2' := C_2 + |e_{\min}| + C_1\,\rho_*^{\gamma-1},
\end{equation}
for all $\rho^{\vol}>0$, where $\rho_*$ is the threshold above which the
asymptotic bound \ref{ass:coercive} holds. All subsequent estimates use~\eqref{eq:e_global_coercive}
with $C_2$ replaced by~$C_2'$; we drop the prime when no confusion arises.
\end{remark}
\subsection{Continuity Equation}
\label{sec:massflux}
The continuity equation transports the density $d$-form
by the discrete Lie derivative.
Its implementation requires two fluxes: the volumetric flux
(from the Hodge star) and the mass flux (incorporating a
density interpolation).
\begin{definition}[Volume and mass flux]
\begin{enumerate}[label=\textup{(\roman*)},nosep]
\item The \emph{volumetric flux} through each primal face is the primal 2-cochain
\begin{equation}\label{def:volflux}
 \Phi = \star_2^{-1}\,\bv = \bM_1\,\bv
 \;\in C^2(\mathcal{K}),\ \text{ with components }\ \Phi_j = (\bM_1)_{jj}\,\bv_j
 = \frac{A_j}{\ell_j^*}\,\bv_j.
\end{equation}
This represents $\int_{f_j} \iota_{\mathbf{u}}\,\dd V$.
\item The \emph{mass flux} is a primal 2-cochain
$\bF\in C^2(\mathcal{K})$ representing $\iota_{\mathbf{u}}\rho$,
the contraction of the density 3-form with the velocity,
\begin{equation}\label{eq:F} 
 \bF_j = \bar\rho_j\;\Phi_j
 = \bar\rho_j\;\frac{A_j}{\ell_j^*}\;\bv_j,
\end{equation}
where $\bar\rho_j$ is the {face-averaged volumetric density}
at primal face $f_j$.
In matrix form, defining the diagonal interpolation matrix
$\bR\in\RR^{|\mathcal{F}|\times|\mathcal{F}|}$ with
$\bR_{jj} = \bar\rho_j$:
\[
 \bF = \bR\,\bM_1\,\bv.
\]
For a face $f_j$ shared by primal cells $K_a$ and $K_b$
we consider two options for the density interpolation. The first one is \emph{centred} (arithmetic) average
\begin{equation}\tag{$\bar\rho$}\label{eq:rhobar}
 \bar\rho_j
 = \bar\rho_j^{\rm cen}
 := \tfrac{1}{2}(\rho_a^{\rm vol}+\rho_b^{\rm vol}).
\end{equation}
The second one is the upwind scheme, defined by
\begin{equation}\tag{$\bar\rho^{\rm up}$}\label{eq:rhobar_up}
 \bar\rho_j^{\rm up}
 := \begin{cases}
 \rho_a^{\mathrm{vol}} & \text{if } \Phi_j \ge 0,\\
 \rho_b^{\mathrm{vol}} & \text{if } \Phi_j < 0,
 \end{cases}
\end{equation}
We denote the flux $\bF$ specifically by $\bF{\rm cen}$ and  $\bF{\rm up}$, respectively. 
\end{enumerate}
\end{definition}
\begin{remark}[Density interpolation]\label{rem:upwind_density}
The centred scheme $\bar\rho_j^{\rm cen}$ approximates the
face-centre density $\rho^c(m_j^f)$ to $\OO(h^2)$ under centroid
proximity, but does not satisfy a maximum principle.
The upwind scheme is first-order accurate and satisfies the
positivity/envelope/bilateral trichotomy
of~\Cref{thm:upwind_positivity,lem:upwind_gronwall,prop:upwind_max_incompressible}
below.
\end{remark}
\paragraph{Discrete continuity equation.}
The continuity equation is a primal 3-form equation given by 
\begin{equation}\tag{D}\label{eq:C}
 \frac{\dd\brho}{\dd t} + \bD_2\,\bF = 0,
\end{equation}
i.e., for each primal cell $K_i$:
\[
 \ddt\brho_i
 + \sum_{f_j\prec K_i} [K_i:f_j]\;\bar\rho_j\,\Phi_j = 0.
\]
This is the discrete form of
$\partial_t\rho + \dd(\iota_{\mathbf{u}}\rho) = 0$.

\paragraph*{Non-conservative form.}
Adding and subtracting $\rho_i^{\vol}(\bD_2\Phi)_i$ in the
cell-wise continuity equation~\eqref{eq:C} gives
\begin{equation}\label{eq:nonconservative_continuity}
|K_i|\,\ddt\rho_i^{\vol}
\;=\;-\rho_i^{\vol}\,(\bD_2\Phi)_i
\;+\;\sum_{j\prec K_i}[K_i:f_j]\,
\bigl(\rho_i^{\vol}-\bar\rho_j^{\rm up}\bigr)\,\Phi_j.
\end{equation}
The sign of each face contribution in the sum is fixed by the
upwind rule. Set $\sigma_j:=[K_i:f_j]\,\Phi_j$ (outward signed
volumetric flux). If $\sigma_j\ge 0$ (outflow), upwind selects
$\bar\rho_j^{\rm up}=\rho_i^{\vol}$, so the summand vanishes. If
$\sigma_j<0$ (inflow), upwind selects
$\bar\rho_j^{\rm up}=\rho_{\rm nb(j)}^{\vol}$ (the neighbour
sharing face~$f_j$), so the summand equals
$(\rho_i^{\vol}-\rho_{\rm nb(j)}^{\vol})\,\sigma_j$.

\begin{theorem}[Positivity preservation under upwind transport]
\label{thm:upwind_positivity}
Suppose the mass flux uses the upwind
interpolation~\eqref{eq:rhobar_up}, and that
$(\bv,\brho)$ is a solution of~\eqref{eq:C} on $[0,T]$ in the sense
of \Cref{thm:lipschitz-baro}. If $\rho_i^{\vol}(0)>0$ for every
cell~$i$, then $\rho_i^{\vol}(t)>0$ for every cell~$i$ and every
$t\in[0,T]$.
\end{theorem}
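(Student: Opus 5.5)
We argue by a minimum-principle contradiction on the non-conservative form~\eqref{eq:nonconservative_continuity}, treating the velocity $\bv(t)$ as a given continuous coefficient. On $[0,T]$ the solution in the sense of \Cref{thm:lipschitz-baro} is continuous, so $\Phi(t)=\bM_1\bv(t)$ is bounded: $\sup_{[0,T]}|\Phi_j(t)|\le \Lambda<\infty$. The upwind flux $\bar\rho_j^{\rm up}$ switches only with the sign of $\Phi_j$, but the right-hand side of~\eqref{eq:nonconservative_continuity} stays well defined and bounded on bounded density sets. We therefore work with the absolutely continuous functions $t\mapsto\rho_i^{\vol}(t)$ and their a.e.\ derivatives, which avoids any smoothness issue at sign switches.

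\emph{Step 1: a cellwise linear differential inequality.} For each cell $i$, the sign analysis following~\eqref{eq:nonconservative_continuity} gives
\[
|K_i|\,\ddt\rho_i^{\vol}
= -\rho_i^{\vol}(\bD_2\Phi)_i
+ \sum_{j:\,\sigma_j<0}\sigma_j\bigl(\rho_i^{\vol}-\rho^{\vol}_{{\rm nb}(j)}\bigr).
\]
Here outflow faces contribute nothing. Each inflow term equals $|\sigma_j|\rho^{\vol}_{{\rm nb}(j)}-|\sigma_j|\rho_i^{\vol}$. Hence
\[
|K_i|\,\ddt\rho_i^{\vol} = -a_i(t)\,\rho_i^{\vol} + \sum_{j\in\mathrm{in}(i)}|\sigma_j|\,\rho^{\vol}_{{\rm nb}(j)},
\]
with $a_i(t):=(\bD_2\Phi)_i+\sum_{\rm in}|\sigma_j|$. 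By bounded valence (\Cref{ass:mesh_reg}), $|a_i(t)|\le C\Lambda$ uniformly on $[0,T]$. The key structural fact is that the off-diagonal coupling coefficients $|\sigma_j|$ are nonnegative. The system is therefore cooperative (a Metzler matrix), which is exactly what upwinding buys.

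\emph{Step 2: positivity of the minimum.} Define $m(t):=\min_i\rho_i^{\vol}(t)$, which is Lipschitz on $[0,T]$. At almost every $t$, pick an index $i$ attaining the minimum with $\ddt m=\ddt\rho_i^{\vol}$ (Danskin's lemma). As long as $m(t)\ge 0$, every neighbour satisfies $\rho^{\vol}_{{\rm nb}(j)}\ge m\ge0$. Dropping the nonnegative inflow sum gives
\[
\ddt m\ge -\frac{C\Lambda}{\min_i|K_i|}\,m=:-L\,m .
\]
Suppose for contradiction that $t_0:=\inf\{t: m(t)\le 0\}\le T$. On $[0,t_0]$ we have $m\ge0$, so Grönwall gives $m(t_0)\ge m(0)e^{-Lt_0}>0$, contradicting $m(t_0)\le 0$. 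Hence $m(t)\ge m(0)e^{-LT}>0$ on all of $[0,T]$. This also yields an explicit (mesh-dependent) lower bound.

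\emph{Main obstacle.} The delicate point is justifying the derivative of the minimum in Step 2. The right-hand side is discontinuous across $\Phi_j=0$ and the minimizing index can switch. I would handle this by working with upper and lower Dini derivatives of $m$. At a switch of upwind direction the flux $\sigma_j$ is near zero, so the discontinuous term is continuous in value: both branches give zero contribution when $\Phi_j=0$. The right-hand side is therefore in fact continuous in $t$, and the standard Dini-derivative comparison lemma applies. The existence interval itself is supplied by the hypothesis, so the claim is global on $[0,T]$.
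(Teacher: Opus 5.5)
Your proof is correct but takes a different route from the paper. The paper argues qualitatively. It checks only the quasi-positivity condition on the boundary of the orthant: at a state with $\rho_i^{\vol}=0$ and all neighbours nonnegative, the first term of \eqref{eq:nonconservative_continuity} vanishes and every inflow summand is $\ge 0$, so $\ddt\rho_i^{\vol}\ge 0$. It then cites an abstract forward-invariance theorem for ODEs with locally Lipschitz right-hand side.

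You instead freeze $\bv(t)$ and observe that the density equation is then linear with nonnegative off-diagonal (Metzler) coefficients. You then run a Danskin minimum principle plus Grönwall on $m=\min_i\rho_i^{\vol}$, closed by a first-hitting-time contradiction. This is essentially the lower-envelope step of \Cref{lem:upwind_gronwall}, which the paper proves separately and only once positivity is known (its Step~3 uses $m\ge 0$ at the active index). Your continuation argument obtains nonnegativity and the strict exponential bound $m(t)\ge m(0)e^{-Lt}$ in one self-contained step, without a black-box invariance theorem. The paper's version is shorter and needs nothing beyond the boundary sign check. Freezing $\bv$ is also the cleaner framing, since the full right-hand side is only defined on $\admissible$.

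Two small remarks. First, your ``main obstacle'' is not one: $\bar\rho_j^{\rm up}\Phi_j=\rho_a^{\vol}\max(\Phi_j,0)+\rho_b^{\vol}\min(\Phi_j,0)$ is Lipschitz in $(\Phi,\rho)$, so each $\rho_i^{\vol}$ is $C^1$ and your a.e.\ argument for the Lipschitz function $m$ already suffices. Second, rather than dropping the inflow sum, you can bound it below by $\bigl(\sum_{\rm in}|\sigma_j|\bigr)m$. This gives $|K_i|\ddt m\ge-(\bD_2\Phi)_i\,m$ and sharpens your rate $L=C\Lambda/\min_i|K_i|$ to the $\kappa^+(t)$ of \Cref{lem:upwind_gronwall}.
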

\begin{proof}
At any cell where $\rho_i^{\vol}=0$, the first term
in~\eqref{eq:nonconservative_continuity} vanishes by the explicit
factor of $\rho_i^{\vol}$. The outflow summands vanish; each
inflow summand equals $(0-\rho_{\rm nb(j)}^{\vol})\sigma_j\ge 0$
since $\rho_{\rm nb(j)}^{\vol}\ge 0$ and $\sigma_j<0$. Hence
$\ddt\rho_i^{\vol}\ge 0$ whenever $\rho_i^{\vol}=0$, so the
boundary $\{\rho^{\vol}=0\}$ of the positive orthant is repelling.
The locally Lipschitz right-hand side
(\Cref{thm:lipschitz-baro}) makes the flow well-defined on a
neighbourhood of any positive initial datum, and standard
invariance arguments for ODE systems with repelling
boundary~\cite{amann1990ordinary} give the positivity invariant
for all $t\in[0,T]$.
\end{proof}

\begin{lemma}[Grönwall envelope]\label{lem:upwind_gronwall}
Define the cell-wise compression and expansion rates
\[
\kappa^{-}(t)\;:=\;\max_i\frac{((\bD_2\Phi)_i(t))^{-}}{|K_i|},
\qquad
\kappa^{+}(t)\;:=\;\max_i\frac{((\bD_2\Phi)_i(t))^{+}}{|K_i|},
\]
with $(x)^{\pm}:=\max(0,\pm x)$. Under the hypotheses
of~\Cref{thm:upwind_positivity},
\begin{equation}\label{eq:gronwall_envelope}
\rho_{\min}^{\vol}(0)\,
\exp\!\Bigl(-\!\int_0^t\!\kappa^{+}(s)\,ds\Bigr)
\;\le\;\rho_i^{\vol}(t)\;\le\;
\rho_{\max}^{\vol}(0)\,
\exp\!\Bigl(\;\int_0^t\!\kappa^{-}(s)\,ds\Bigr)
\end{equation}
for every cell~$i$ and every $t\in[0,T]$. In particular, if
$\|(\bD_2\Phi)_i/|K_i|\|_{L^\infty([0,T]\times\mathcal{T})}\le\Lambda$,
then $\rho_{\min}^{\vol}(0)\,e^{-\Lambda T}\le\rho_i^{\vol}(t)\le
\rho_{\max}^{\vol}(0)\,e^{\Lambda T}$.
\end{lemma}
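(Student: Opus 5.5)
The plan is a max/min-principle argument on the non-conservative form~\eqref{eq:nonconservative_continuity}, combined with Grönwall's inequality for the extremal cell values. Fix $t\in[0,T]$ and set $m(t):=\min_i\rho_i^{\vol}(t)$ and $\mathcal{M}(t):=\max_i\rho_i^{\vol}(t)$. Both are Lipschitz, since each $\rho_i^{\vol}$ is $C^1$ by \Cref{thm:lipschitz-baro}. Hence they are differentiable almost everywhere, and at such times their derivative equals $\ddt\rho_{i_*}^{\vol}$ for any index $i_*$ attaining the extremum (Danskin's lemma). Positivity of $m(t)$ is already available from \Cref{thm:upwind_positivity}, so all divisions below are legitimate.

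\textbf{Lower bound.} Let $i_*$ attain the minimum at time $t$. In~\eqref{eq:nonconservative_continuity} the outflow summands vanish under the upwind rule. Each inflow summand equals $(\rho_{i_*}^{\vol}-\rho_{\rm nb}^{\vol})\sigma_j$ with $\sigma_j<0$ and $\rho_{i_*}^{\vol}-\rho_{\rm nb}^{\vol}\le 0$, so it is $\ge 0$. Therefore
\[
 |K_{i_*}|\,\ddt\rho_{i_*}^{\vol}\ge -\rho_{i_*}^{\vol}(\bD_2\Phi)_{i_*}\ge -\rho_{i_*}^{\vol}\,((\bD_2\Phi)_{i_*})^{+}.
\]
Dividing by $|K_{i_*}|$ gives $\dot m\ge -\kappa^{+}(t)\,m$ almost everywhere. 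Then $\ddt\bigl(m\,e^{\int_0^t\kappa^+}\bigr)\ge 0$ holds a.e. for an absolutely continuous function, which yields the left inequality in~\eqref{eq:gronwall_envelope} after using $m(0)=\rho^{\vol}_{\min}(0)$.

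\textbf{Upper bound.} This is symmetric. At a maximising cell each inflow summand is $(\rho_{i^*}^{\vol}-\rho_{\rm nb}^{\vol})\sigma_j\le 0$, so
\[
 |K_{i^*}|\,\ddt\rho_{i^*}^{\vol}\le -\rho_{i^*}^{\vol}(\bD_2\Phi)_{i^*}\le \rho_{i^*}^{\vol}((\bD_2\Phi)_{i^*})^{-}.
\]
This gives $\dot{\mathcal{M}}\le\kappa^-(t)\,\mathcal{M}$, and Grönwall gives the right inequality. The "in particular" statement follows because $\kappa^\pm\le\Lambda$.

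The main obstacle is the nonsmoothness of $\min_i$ and $\max_i$, that is, justifying the differential inequality at points where the extremising index switches. I would handle this with the standard Dini-derivative argument: the lower right Dini derivative of $m$ is at least $\min$ over active indices of $\dot\rho_i^{\vol}$. This suffices for the comparison lemma without any a.e.\ differentiability. A minor further point is that $\kappa^\pm$ must be integrable; this holds because $\Phi=\bM_1\bv$ is continuous in $t$ on $[0,T]$.
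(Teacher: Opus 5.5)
Your proposal is correct and follows essentially the same route as the paper: Danskin's theorem applied to $\max_i\rho_i^{\vol}$ and $\min_i\rho_i^{\vol}$, the sign of the upwind inflow summands of \eqref{eq:nonconservative_continuity} at an extremising cell, and an integrating-factor Gr\"onwall argument on these absolutely continuous extremal functions. The paper states the differential inequality for the Dini derivative supplied by Danskin and then uses that it coincides with the derivative almost everywhere, which is the justification you sketch.
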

\begin{proof}
Set $M(t):=\max_i\rho_i^{\vol}(t)$ with active set
$I^\star(t):=\{i:\rho_i^{\vol}(t)=M(t)\}$. By
\Cref{thm:lipschitz-baro}, $\rho^{\vol}\in C^1([0,T];\RR^{|\mathcal T|})$,
so $M$ is Lipschitz, hence absolutely continuous, on $[0,T]$.

\emph{Step 1 (Differential inequality).} By Danskin's theorem
for a finite maximum of $C^1$ functions
(\cite[Thm.~2.8.2]{clarke1990}),
$D^+M(t)=\max_{i\in I^\star(t)}\dot\rho_i^{\vol}(t)$. For
$i\in I^\star(t)$, \eqref{eq:nonconservative_continuity} reads
$|K_i|\dot\rho_i^{\vol}=-\rho_i^{\vol}(\bD_2\Phi)_i+S_i$ with
$S_i=\sum_{j\prec K_i}[K_i:f_j](\rho_i^{\vol}-\bar\rho_j^{\rm up})\Phi_j\le0$:
outflow summands ($\sigma_j\ge0$) vanish, inflow summands
($\sigma_j<0$) equal
$(\rho_i^{\vol}-\rho_{\rm nb(j)}^{\vol})\sigma_j\le0$ since
$\rho_i^{\vol}\ge\rho_{\rm nb(j)}^{\vol}$ at $i\in I^\star(t)$.
Hence $\dot\rho_i^{\vol}\le\rho_i^{\vol}((\bD_2\Phi)_i)^-/|K_i|
\le M(t)\,\kappa^-(t)$, so
\begin{equation}\label{eq:dini_max}
D^+M(t)\le\kappa^-(t)\,M(t)\qquad\forall\,t\in(0,T).
\end{equation}

\emph{Step 2 (Grönwall closure).} As $M$ is absolutely continuous,
$M'$ exists a.e.\ and equals $D^+M$; \eqref{eq:dini_max} gives
$M'\le\kappa^-M$ a.e. Then $\varphi(t):=M(t)\exp(-\int_0^t\kappa^-)$
is absolutely continuous with $\varphi'\le0$ a.e., hence
$\varphi(t)\le\varphi(0)=M(0)$ -- the upper envelope.

\emph{Step 3 (Lower envelope).} The symmetric argument applied to
$m(t):=\min_i\rho_i^{\vol}(t)$ -- Danskin gives
$D_+m=\min_{I^{\star\star}}\dot\rho_i^{\vol}$, and at the active set
the sum-term is $\ge0$ with
$-\rho_i^{\vol}(\bD_2\Phi)_i\ge-|K_i|\kappa^+m$ -- yields
$D_+m\ge-\kappa^+m$ and the lower envelope.
\end{proof}

\begin{proposition}[Bilateral max principle under discrete incompressibility]
\label{prop:upwind_max_incompressible}
\phantomsection\label{thm:max_principle}
Suppose the mass flux uses the upwind
interpolation~\eqref{eq:rhobar_up} and the discrete velocity is
solenoidal, $(\bD_2\Phi)_i(t)=0$ for every cell~$i$ and every
$t\in[0,T]$. Then
\[
\min_i\rho_i^{\vol}(0)
\;\le\;\rho_i^{\vol}(t)\;\le\;
\max_i\rho_i^{\vol}(0)
\qquad \text{for every cell~} i \text{ and every } t\in[0,T].
\]
\end{proposition}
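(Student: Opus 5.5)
The plan is to run the Gr\"onwall-envelope argument of \Cref{lem:upwind_gronwall} with all rates identically zero, that is, to work directly with the non-conservative form~\eqref{eq:nonconservative_continuity} under the solenoidal hypothesis $(\bD_2\Phi)_i\equiv 0$. In that case the first term $-\rho_i^{\vol}(\bD_2\Phi)_i$ drops out. What remains is
\[
|K_i|\,\ddt\rho_i^{\vol}=\sum_{j\prec K_i}[K_i:f_j]\bigl(\rho_i^{\vol}-\bar\rho_j^{\rm up}\bigr)\Phi_j ,
\]
in which every outflow summand vanishes and every inflow summand equals $(\rho_i^{\vol}-\rho^{\vol}_{{\rm nb}(j)})\sigma_j$ with $\sigma_j<0$.

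\emph{Step 1 (upper bound).} Let $M(t):=\max_i\rho_i^{\vol}(t)$, which is Lipschitz by \Cref{thm:lipschitz-baro}. Take $i$ in the active set $I^\star(t)$. Then $\rho_i^{\vol}\ge\rho^{\vol}_{{\rm nb}(j)}$ for every neighbour, so each inflow summand is $\le 0$, and hence $\dot\rho_i^{\vol}\le 0$. By Danskin's theorem, exactly as in Step~1 of \Cref{lem:upwind_gronwall}, we get $D^+M(t)\le 0$. Since $M$ is absolutely continuous with $M'=D^+M$ a.e., $M$ is nonincreasing, which gives $\rho_i^{\vol}(t)\le M(t)\le M(0)=\max_i\rho_i^{\vol}(0)$.

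\emph{Step 2 (lower bound).} Apply the symmetric argument to $m(t):=\min_i\rho_i^{\vol}(t)$. At a minimising cell, each inflow summand has $\rho_i^{\vol}-\rho^{\vol}_{{\rm nb}(j)}\le 0$ and $\sigma_j<0$, so it is $\ge0$. Hence $D_+m\ge 0$, so $m$ is nondecreasing and $m(t)\ge m(0)$. Equivalently, both steps are the specialisation $\kappa^\pm\equiv 0$ of~\eqref{eq:gronwall_envelope}, so one can simply cite \Cref{lem:upwind_gronwall} with $\Lambda=0$.

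\textbf{Expected obstacle.} The only delicate point is the regularity of the max/min functions. Danskin gives a one-sided Dini derivative only, and we must know the solution stays in $\admissible$ on $[0,T]$ so that the $C^1$ regularity applies. The positivity preservation of \Cref{thm:upwind_positivity}, together with the hypothesis that the solution exists on $[0,T]$, supplies this. Moreover, the lower bound itself shows $\rho_i^{\vol}\ge\min_i\rho_i^{\vol}(0)>0$, so no vacuum issue arises.

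A secondary point concerns the discontinuity of the upwind switch when $\Phi_j$ changes sign. At $\Phi_j=0$ the summand vanishes under either choice, so the right-hand side stays continuous and the Lipschitz framework invoked in \Cref{lem:upwind_gronwall} is unaffected.
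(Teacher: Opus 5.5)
Your proposal is correct and follows the paper's proof. The paper observes that under $\bD_2\Phi\equiv 0$ the non-conservative form~\eqref{eq:nonconservative_continuity} reduces to its sum-term, so the envelope argument of \Cref{lem:upwind_gronwall} applies with $\kappa^{\pm}\equiv 0$; your Danskin max/min argument just writes that specialisation out explicitly.
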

\begin{proof}
Under $\bD_2\Phi\equiv 0$, \eqref{eq:nonconservative_continuity}
reduces to its sum-term, and the envelope argument of
\Cref{lem:upwind_gronwall} delivers $\kappa^{\pm}\equiv 0$.
\end{proof}

\section{Standing Approximation Hypotheses}\label{app:approx_hypotheses}
\phantomsection\label{subsect:approx_hypotheses}
The hypotheses listed by name in \Cref{sec:standing_hypotheses} are
stated in full and verified below; together they are the FEEC
analogue of Arnold--Falk--Winther~\cite{arnold2006,arnold2010},
augmented by the two density-coupling ingredients (mass-flux
consistency and the Bregman rate identity) specific to the
compressible setting.
\begin{enumerate}[label=\textup{(H\arabic*)},ref=\textup{H\arabic*}, leftmargin=3em]
\item\label{H:cochain_complex_baro}
\textbf{Cochain complex.} The coboundary operators $\tD_0,\tD_1,\tD_2$
satisfy $\tD_{k+1}\circ\tD_k = 0$, so
$C^0\xrightarrow{\tD_0} C^1\xrightarrow{\tD_1} C^2\xrightarrow{\tD_2} C^3$
is a cochain complex. This is an exact combinatorial property of
the cell complex (\Cref{def:ext-deriv}), holding by construction.
\item\label{H:deRham_commutativity_baro}
\textbf{De~Rham commutativity.} For every smooth differential form
$\alpha$ of degree $k$, $\tD_k\mathcal{R}_h\alpha = \mathcal{R}_h(d\alpha)$
(\eqref{eq:deRham_commutativity}). This is a consequence of Stokes'
theorem applied to cells and is \emph{exact} -- no approximation.
\item\label{H:Hodge_accuracy_baro}
\textbf{Hodge star accuracy.} The diagonal Hodge star $\bM_k$ approximates
the exact metric pairing of a $k$-cochain with the dual cell integral
of a continuous $k$-form to accuracy $\OO(h^{r_\star})$ in the
$L_h^2$-norm (\Cref{lem:hodge_error}), with $r_\star = 1$ on general
Delaunay--Voronoi meshes and $r_\star = 2$ under
\Cref{prop:centroid_proximity} (centroid proximity).
\item\label{H:reconstruction_accuracy_baro}
\textbf{Reconstruction accuracy.} The averaging reconstruction
$\bar\u_j(\bv)$ at primal faces is an $\OO(h^{r_{\rm rec}})$
approximation of pointwise velocity evaluation
(\Cref{prop:recon_accuracy}), with $r_{\rm rec} = 1$ on general meshes
and $r_{\rm rec} = 2$ under reconstruction symmetry. Under
\Cref{conv:cases} the two exponents coincide, $r_\star = r_{\rm rec}$,
and a single exponent controls all rates.
\item\label{H:antisymmetry_baro}
\textbf{Lamb antisymmetry.} The discrete contraction satisfies
$\ip{\bv}{\Iv(\tD_1\bv)}_1 = 0$ for every $\bv\in C^1(\KKs)$
(\Cref{prop:extrusion}, part~2). This is an exact algebraic identity
derived from the matrix form of $\Iv$ and the chosen reconstruction;
it requires \emph{no} mesh regularity beyond the Delaunay property.
The density-weighted variant
$\bv^T\bM_1\,\Iv(\tD_1\bv) = 0$ (\Cref{lem:dw_antisym}) is a corollary
of the same antisymmetry identity and underpins the density-weighted energy
balance.
\item\label{H:mass_flux_consistency}
\textbf{Mass-flux consistency.} The Hodge star converts the
discrete circulation cochain to an approximate flux cochain at rate
$\OO(h^{r_\star})$ (\Cref{lem:interp_error}, item~(iii)). The
topological part $\bD_2\boldsymbol\Phi = \mathcal{R}_h^{(d)}(\nabla\cdot\bu)$
is exact by the divergence theorem, so the consistency error of the
discrete continuity equation is concentrated in the Hodge star. This
hypothesis has no incompressible analogue (where $\bD_2\bM_1\bv = 0$
identically) and is the structural ingredient specific to the
compressible setting.
\item\label{H:bregman}
\textbf{Bregman rate identity.} For any strictly convex enthalpy
$H(\rho) = \rho\,e(\rho)$ with $H''(\rho) = c^2(\rho)/\rho > 0$, the
Bregman divergence $D_H(a\|b) = H(a) - H(b) - h(b)(a-b)$ satisfies
$\partial_a D_H(a\|b) = h(a) - h(b)$
(\eqref{eq:bregman}--\eqref{eq:bregman_rate}). This identity drives
both the energy-balance derivation and the relative-energy stability
proofs by absorbing the pressure--density coupling into a single
non-negative error term.
\item\label{H:discrete_Poincare_baro}
\textbf{Discrete Poincar\'e inequality.} There exists $\lambda_1^h > 0$,
bounded below independently of $h$, such that
$\nrm{e}_{L_h^2}^2 \le (\lambda_1^h)^{-1}\nrm{\tD_1 e}_{\bM_2}^2$
for every $e \in \Ran(\tD_0)\oplus\Ran(\tD_1^*)$ orthogonal to the
finite-dimensional harmonic kernel. The uniform lower bound follows
via Whitney-lift norm-equivalence from the FEEC Poincar\'e
inequality~\cite[Thm.~5.11]{arnold2006}; the verification used in the
projection-error proof is given in \Cref{app:approximation},
\eqref{eq:proj_from_div}.
\item\label{H:Whitney_interp_baro}
\textbf{Whitney interpolation bounds.} The Whitney map
$\mathcal{W}_h:C^k(\KKs)\to\Lambda^k_h$ satisfies the standard
interpolation estimates $\nrm{\mathcal{W}_h\cdot}_{L^2(\Omega)} \sim \nrm{\cdot}_{L_h^2}$
and $\nrm{d\mathcal{W}_h\cdot}_{L^2(\Omega)} \sim \nrm{\tD_k\cdot}_{\bM_{k+1}}$
up to $\OO(h^{r_\star})$ multiplicative errors. These are classical
Whitney-form estimates~\cite{dodziuk1976,bossavit1998}.
\end{enumerate}
\section{Proofs}\label{sect:appendix}%
\label{app:barotropic_proofs}
The appendices collect the proofs of the technical results in the
body. \Cref{app:basic}: fundamental properties of the DEC operators.
\Cref{app:approximation}: interpolation and Hodge-star consistency.
\Cref{app:dissipation_axiom}: dissipation axioms.
\Cref{app:invariants}: conservation laws.
\Cref{app:wellposedness}: finite-dimensional well-posedness
(Lipschitz, vacuum avoidance, global Smagorinsky).
\Cref{app:convergence_proofs}: convergence theory.
\Cref{app:dichotomy_general}: the generalised dichotomy.
\subsection{Fundamental Properties}
\label{app:basic}
This appendix proves the four fundamental properties of the DEC
operators in \Cref{prop:extrusion}: $\tD^2 = 0$, the energy identity
for the contraction, the approximate Leibniz rule, and the Cartan
formula for the discrete Lie derivative.
\begin{proof}[{\bf Proof of \Cref{prop:extrusion}}]
\textit{Property~1} ($\tD^2 = 0$).
The matrices $D_k$ and $\tD_k$ are signed incidence matrices of
the primal and dual cell complexes; since the boundary of a
boundary is empty ($\partial^2 = 0$), the product
$D_{k+1}D_k$ counts each interior $k$-cell twice with opposite
signs and vanishes. The dual identity
$\tD_{k+1}\tD_k = \mathbf{0}$ follows by the same argument on the
dual complex, or equivalently from $\tD_k = \pm D_{n-k-1}^T$.
\medskip
\textit{Property~2}. (Energy identity, $\langle\bv, \Iv(\tD_1\bv)\rangle_1 = 0$).
We use the matrix form of \Cref{def:contraction}. Setting
$\bom = \tD_1\bv$ and pairing with $\bv$ on both sides:
\begin{equation}\label{eq:energy_cancel_vv}
  2\,\bv^T\bM_1\,\Iv(\tD_1\bv)
   = \bv^T\,\tU(\bv)\,\tD_1\bv
    - \bv^T\,\tD_1^T\,\tU(\bv)^T\bv,
\end{equation}
where $\tU(\bv)\in\mathbb{R}^{N_E\times N_F}$ is the velocity-weighted
matrix~\eqref{eq:Utilde_def} (linear in $\bv$),
$\tD_1\in\mathbb{R}^{N_F\times N_E}$ is the discrete curl, and
$\tD_1^T\in\mathbb{R}^{N_E\times N_F}$ its transpose; all matrix products
in~\eqref{eq:energy_cancel_vv} are dimensionally consistent.
The two right-hand-side terms are scalars. Transposing the first,
\[
  \bigl(\bv^T\,\tU(\bv)\,\tD_1\bv\bigr)^T
   = \bv^T\,\tD_1^T\,\tU(\bv)^T\,\bv,
\]
which equals the second term. The transpose identity holds even
though $\tU(\bv)$ depends on $\bv$, because the transpose of any
scalar equals the scalar. Hence the two terms are identical, the
right-hand side of~\eqref{eq:energy_cancel_vv} vanishes, and
\begin{equation}\label{eq:energy_cancel}
  \langle\bv, \Iv(\tD_1\bv)\rangle_1 = 0.
\end{equation}
The argument uses only the bilinearity of $\tU$ in $\bv$; no
specific reconstruction is required.
\medskip
\textit{Property~3}. 
We prove for de~Rham interpolants
$\tilde\alpha^p = \mathcal{R}_h\alpha$,
$\tilde\beta^q = \mathcal{R}_h\beta$ with $p+q\le 2$:
\begin{equation}\label{eq:defect}
\mathcal{L}(K_i^*) :=
\tdd(\tilde\alpha^p\twdg\tilde\beta^q)
 - \tdd\tilde\alpha^p\twdg\tilde\beta^q
 - (-1)^p\tilde\alpha^p\twdg\tdd\tilde\beta^q
 = \OO(h^2),
\end{equation}
where the error is measured per dual $(p{+}q{+}1)$-cell.
We treat the case $p = q = 1$ in detail (Steps~1--4);
the remaining cases are in Step~5.
The defect has two independent sources.
First, the discrete wedge product approximates
$\int_{\sigma^*}\alpha\wedge\beta$ using products of
face-centred values rather than integrals of pointwise products;
this \emph{quadrature-product error} is present even if
all continuous values were known exactly.
Second, the continuous vector proxies $\mathbf{a}(\mathbf{x}_j)$
are approximated by reconstructed values $\bar\bu_j(\tilde\alpha)$;
this is the \emph{reconstruction error}.
\medskip
\noindent\textit{Step~1: Defect and its expansion ($p = q = 1$).}
For $p = q = 1$ the product $\tilde\alpha\twdg\tilde\beta$ is a dual
2-cochain, so $\tdd(\tilde\alpha\twdg\tilde\beta)$ is a dual 3-cochain
and the defect~\eqref{eq:defect} is a dual 3-cochain.
We evaluate on a dual 3-cell $K_i^*$ (a primal prism in 3D).
The first term of~\eqref{eq:defect} on $K_i^*$ is expanded by discrete Stokes applied to the
2-cochain $\tilde\alpha\twdg\tilde\beta$
\begin{equation}\label{eq:term1}
\tdd(\tilde\alpha\twdg\tilde\beta)(K_i^*)
 = (\tilde\alpha\twdg\tilde\beta)(\partial K_i^*)
 = \sum_{f_k^*\prec K_i^*}
   [K_i^*:f_k^*]
   (\tilde\alpha\twdg\tilde\beta)(f_k^*),
\end{equation}
where the sum runs over dual 2-faces $f_k^*$ of $\partial K_i^*$
and $[K_i^*:f_k^*]$ is the incidence sign.
Applying the wedge definition~\eqref{eq:wedge_11} to each face gives
\begin{equation}\label{eq:term1b}
(\tilde\alpha\twdg\tilde\beta)(f_k^*)
= \sum_{e_j^*\prec f_k^*} w_{jk}(\tilde\alpha)\,\tilde\beta(e_j^*),
\end{equation}
where the extrusion-based wedge weight is
\[
w_{jk}(\tilde\alpha)
 = \tfrac{1}{2}D_{1,jk}\,\bar\bu_j(\tilde\alpha)\cdot\hat{e}_k,
\]
with $\bar\bu_j(\tilde\alpha)$ the velocity reconstructed from the
1-cochain $\tilde\alpha$ (\cref{def:averaging_recon}) and $D_{1,jk}$
the incidence coefficient.
The second and third terms in~\eqref{eq:defect} are similarly expanded:
$\tdd\tilde\alpha$ and $\tdd\tilde\beta$ are 2-cochains, so
$\tdd\tilde\alpha\twdg\tilde\beta$ and $\tilde\alpha\twdg\tdd\tilde\beta$
are 3-cochains, and their evaluation on $K_i^*$ is again a sum over
dual 2-faces of $K_i^*$ followed by the wedge formula on each face.
Collecting signs, the defect on $K_i^*$ is a double sum
over dual 2-faces $f_k^*\prec K_i^*$ and dual edges $e_j^*\prec f_k^*$:
\begin{equation}\label{eq:defect_expanded}
\mathcal{L}(K_i^*)
 = \sum_{f_k^*\prec K_i^*}[K_i^*\!:\!f_k^*]
   \sum_{e_j^*\prec f_k^*}
   \Bigl[
   w_{jk}(\tilde\alpha)\,\tilde\beta(e_j^*)
 - w_{jk}(\tdd\tilde\alpha)\,\tilde\beta(e_j^*)
 - w_{jk}(\tilde\alpha)\,\tdd\tilde\beta(e_j^*)
   \Bigr].
\end{equation}
Here the weight $w_{jk}(\tdd\tilde\alpha)$ is formed from the
2-cochain $\tdd\tilde\alpha$ via its vector proxy $(d\alpha)^\sharp$
(the vorticity pseudovector), which is well-defined as the Hodge dual
of the 2-form $d\alpha$ on a Riemannian manifold.
\medskip
\noindent\textit{Step~2: Decomposition.}
Define the exact-value weight at dual edge $e_j^*$ by evaluating the
continuous vector proxies at the face centre $\mathbf{x}_j$:
\[
w_{jk}^{\rm ex}(\alpha) = \tfrac{1}{2}D_{1,jk}\,
 \mathbf{a}(\mathbf{x}_j)\cdot\hat{e}_k,
 \qquad
 w_{jk}^{\rm ex}(d\alpha) = \tfrac{1}{2}D_{1,jk}\,
 (d\alpha)^\sharp(\mathbf{x}_j)\cdot\hat{e}_k.
\]
The error in each weight is
$\delta w_{jk} := w_{jk}(\tilde\alpha) - w_{jk}^{\rm ex}(\alpha)$
and $\delta w_{jk}^{d} := w_{jk}(\tdd\tilde\alpha) - w_{jk}^{\rm ex}(d\alpha)$.
Substituting $w_{jk} = w_{jk}^{\rm ex} + \delta w_{jk}$ into the
inner sum of~\eqref{eq:defect_expanded} and expanding linearly in
$\delta w_{jk}$, $\delta w_{jk}^d$ (cross-terms between two
$\delta w$ factors are $\OO(h^3)$ and negligible), we obtain
\begin{equation}\label{eq:L_split}
\mathcal{L}(K_i^*)
 = \mathcal{L}^{\rm prod}(K_i^*)
 + \mathcal{L}^{\rm recon}(K_i^*)
 + \OO(h^3),
\end{equation}
where
\begin{align*}
\mathcal{L}^{\rm prod}(K_i^*)
 &:= \sum_{f_k^*\prec K_i^*}[K_i^*\!:\!f_k^*]
   \sum_{e_j^*\prec f_k^*}
   \Bigl(w_{jk}^{\rm ex}(\alpha)\,\tilde\beta(e_j^*)
 - w_{jk}^{\rm ex}(d\alpha)\,\tilde\beta(e_j^*)
 - w_{jk}^{\rm ex}(\alpha)\,\tdd\tilde\beta(e_j^*)
   \Bigr),\\
\mathcal{L}^{\rm recon}(K_i^*)
 &:= \sum_{f_k^*\prec K_i^*}[K_i^*\!:\!f_k^*]
   \sum_{e_j^*\prec f_k^*}
   \Bigl(\delta w_{jk}\,\tilde\beta(e_j^*)
 - \delta w_{jk}^d\,\tilde\beta(e_j^*)
 - \delta w_{jk}\,\tdd\tilde\beta(e_j^*)
   \Bigr).
\end{align*}
The product-rule error $\mathcal{L}^{\rm prod}$ uses exact face-centred
values and is independent of the reconstruction.
The reconstruction error $\mathcal{L}^{\rm recon}$ is proportional to
$\delta w_{jk}$ and vanishes if the reconstruction is exact.
\medskip
\noindent\textit{Step~3: Bounding the quadrature-product error.}
This error arises because
the product $w_{jk}^{\rm ex}(\alpha)\,\tilde\beta(e_j^*)$
approximates $\int_{e_j^*} \mathbf{a}\cdot\mathbf{b}$
by evaluating at the midpoint
$\mathbf{x}_j$ of $e_j^*$.
By a Taylor expansion on the edge $e_j^*$ of length $\OO(h)$
\[
\Bigl|\mathbf{a}(\mathbf{x}_j)\cdot\mathbf{b}(\mathbf{x}_j)\,\ell_j^*
  - \int_{e_j^*}\mathbf{a}\cdot\mathbf{b}\,\dd s\Bigr|
\le C\,\nrm{\nabla(\mathbf{a}\cdot\mathbf{b})}_{L^\infty}\,(\ell_j^*)^2
\le C\,\nrm{\nabla\alpha}_{W^{0,\infty}}\nrm{\nabla\beta}_{W^{0,\infty}}\,h^2.
\]
The inner sum in $\mathcal{L}^{\rm prod}(K_i^*)$ runs over at most
$C_\sigma$ dual edges $e_j^*$ for each $f_k^*$, 
and the outer sum over at most $C_\sigma$ faces $f_k^*$.
Therefore
\[
|\mathcal{L}^{\rm prod}(K_i^*)|
 \le C_\sigma\,h^2\,\nrm{\alpha}_{W^{1,\infty}}\nrm{\beta}_{W^{1,\infty}}.
\]
\medskip
\noindent\textit{Step~4: Bounding the reconstruction error.}
The weight error $\delta w_{jk}$ is bounded via the reconstruction
accuracy.
The face velocity is $\bar\bu_j = \tfrac{1}{2}(\bu(v_a^*)+\bu(v_b^*))$
where $v_a^*, v_b^*$ are the two dual vertices adjacent to face $f_j$.
By \Cref{prop:recon_accuracy}\,(i), each vertex reconstruction satisfies
$|\bu(v_i^*) - \mathbf{a}(v_i^*)| \le C_R h\,\nrm{\alpha}_{W^{1,\infty}}$,
so the face velocity satisfies the same bound.
Combined with $|D_{1,jk}|\le 1$, this gives
\begin{equation}\label{eq:w_error}
|\delta w_{jk}| = |w_{jk}(\tilde\alpha) - w_{jk}^{\rm ex}(\alpha)|
 \le C_1\,h\,\nrm{\alpha}_{W^{1,\infty}}.
\end{equation}
The de~Rham interpolant satisfies
$|\tilde\beta(e_j^*)| \le \nrm{\beta}_{L^\infty}\,\ell_j^* \le C_2\,h\,\nrm{\beta}_{L^\infty}$,
because it is the integral of a bounded 1-form over an edge of length $\OO(h)$.
Each product $|\delta w_{jk}||\tilde\beta(e_j^*)| \le C_1 C_2 h^2$.
Summing over the $\OO(1)$ boundary edges gives
\[
|\mathcal{L}^{\rm recon}(K_i^*)|
 \le C_r\,h^2\,\nrm{\alpha}_{W^{1,\infty}}\nrm{\beta}_{W^{1,\infty}}.
\]
Under reconstruction symmetry, 
\Cref{prop:recon_accuracy}\,(ii),
 $|\delta w_{jk}| \le C_1'h^2$,
so $\mathcal{L}^{\rm recon}(K_i^*) = \OO(h^3)$.
\medskip
\noindent\textit{Step~5: Remaining cases and conclusion.}
The cases $p=0,q=1$ and $p=0,q=2$ require slightly different bookkeeping
because the de~Rham interpolant of a 0-form is exact at primal vertices,
the wedge then has only the quadrature-product source of error, no
reconstruction error of the kind in Step~4.
\emph{Case $p=0$, $q=1$.} The wedge of a dual 0-cochain $\tilde f$
and a dual 1-cochain $\tilde\beta$ is a dual 1-cochain
$(\tilde f\twdg\tilde\beta)(e_j^*) = \tfrac12(\tilde f(v_a^*) + \tilde f(v_b^*))\,\tilde\beta(e_j^*)$,
where $v_a^*, v_b^*$ are the endpoints of $e_j^*$. The defect on a
dual 2-cell $f_k^*$ reads, after applying $\tdd$ via discrete Stokes
and the wedge formula above,
\[
  \mathcal{L}(f_k^*)
  = \sum_{e_j^*\prec\partial f_k^*}[f_k^*\!:\!e_j^*]\,
    \bigl[\tfrac12(\tilde f(v_a) + \tilde f(v_b))\,\tilde\beta(e_j^*)
    - \tilde f(v_b)\,\tilde\beta(e_j^*) + \tilde f(v_a)\,\tilde\beta(e_j^*)\bigr]
    \cdot\,(\text{sign}),
\]
where the second and third terms come from $\tdd\tilde f = \tD_0\tilde f$
contracted with $\tilde\beta$ via the appropriate face-edge incidence.
The leading-order Taylor expansion of $\tilde f$ around the midpoint
$\bar x_{e_j^*}$ produces a per-edge defect of size
$|\delta f|\cdot|\tilde\beta(e_j^*)| \le C h\,\|f\|_{W^{1,\infty}}\cdot
h\,\|\beta\|_{L^\infty} = O(h^2)\,\|f\|_{W^{1,\infty}}\|\beta\|_{L^\infty}$
per edge, summing to $O(h^2)$ per dual 2-cell.
\emph{Case $p=0$, $q=2$.} The wedge of a dual 0-cochain $\tilde f$
and a dual 2-cochain $\tilde\gamma$ is a dual 2-cochain
$(\tilde f\twdg\tilde\gamma)(f_k^*) = \tilde f(v_k^*)\,\tilde\gamma(f_k^*)$,
where $v_k^*$ is a designated vertex of $f_k^*$ (e.g.\ centroid;
the choice affects only constants). The defect on a dual 3-cell $K_i^*$
involves face-values of $\tilde f$ versus cell-vertex values, with the
Taylor expansion of $\tilde f$ around the cell centre $v_i^*$ and the
relation $\tdd\tilde f = \tD_0\tilde f$ between vertex differences.
Per-cell defect: a sum over the boundary faces of $K_i^*$ of products
$|\delta f|\,|\tilde\gamma| \le C h\,\|f\|_{W^{1,\infty}}
h^{d-1}\,\|\gamma\|_{L^\infty} = O(h^d)\|f\|\|\gamma\|$ per face,
divided by the dual 3-cell volume $|K_i^*| \sim h^d$ gives the
per-cell rate $O(h^2)\|f\|_{W^{1,\infty}}\|\gamma\|_{L^\infty}$
after second-order Taylor cancellation across opposing faces (the same
mechanism as the per-cell finite-volume consistency on shape-regular
meshes).
In all cases with $p + q \le 2$,
\[
|\mathcal{L}(\sigma^*)| \le C\,h^2\,\nrm{\alpha}_{W^{1,\infty}}\nrm{\beta}_{W^{1,\infty}},
\]
with $C$ depending only on mesh-regularity constants.
Under reconstruction symmetry, $\mathcal{L}^{\rm recon} = \OO(h^3)$
and the quadrature-product error $\OO(h^2)$ dominates.
\textit{Property~4}. 
The discrete Lie derivative is defined by the Cartan formula
$\tLie_{\bv} := \tdd\circ\Iv + \Iv\circ\tdd$ (\cref{def:Lie}),
so Property~4 holds by construction.
We verify that this definition is consistent with the geometric Lie
derivative at leading order.
For a smooth 1-form $\alpha$ and velocity field $\mathbf{u}$, the
continuous Cartan formula gives
$\mathcal{L}_{\mathbf{u}}\alpha = d(\iota_{\mathbf{u}}\alpha) +
\iota_{\mathbf{u}}(d\alpha)$.
For de~Rham interpolants $\tilde\alpha = \mathcal{R}_h\alpha$ and
$\bv = \mathcal{R}_h\bu^\flat$, the discrete Lie derivative satisfies
\[
(\tLie_{\bv}\tilde\alpha)(e_j^*)
 = \bigl(\tdd\Iv(\tilde\alpha) + \Iv(\tdd\tilde\alpha)\bigr)(e_j^*).
\]
The two contributions are bounded separately:
(a)~$\Iv(\tilde\alpha) - \mathcal{R}_h(\iota_{\mathbf{u}}\alpha)$
is an $\OO(h^{r_\star})$ per-edge error in the reconstruction of the
contraction (via \Cref{prop:recon_accuracy} applied to both $\bv$ and
$\tilde\alpha$, plus the bilinear quadrature in the wedge underlying
$\Iv$, \eqref{eq:wedge_11}); applying $\tdd$ (an exact operator on
cochains) preserves the rate per dual 2-face, $\OO(h^{r_\star})$.
(b)~$\tdd\tilde\alpha = \mathcal{R}_h(d\alpha)$ exactly
(\Cref{def:ext-deriv}); $\Iv(\tdd\tilde\alpha) - \mathcal{R}_h(\iota_{\mathbf{u}}d\alpha)$
is again $\OO(h^{r_\star})$ per dual edge by the same reconstruction
analysis, now applied at $k=2$.
Combining and using the de~Rham property
$\mathcal{R}_h(d(\iota_{\mathbf{u}}\alpha) + \iota_{\mathbf{u}}d\alpha)
= \mathcal{R}_h(\mathcal{L}_{\mathbf{u}}\alpha)$:
\[
(\tLie_{\bv}\tilde\alpha)(e_j^*)
 = \mathcal{R}_h(\mathcal{L}_{\mathbf{u}}\alpha)(e_j^*)
   + \OO(h^{r_\star+1}),
\]
where the extra factor of $h$ in the per-edge bound comes from the
edge length $|e_j^*| = O(h)$, the de~Rham interpolant integrates a
1-form over the edge, and an $O(h^{r_\star})$ pointwise consistency
error integrated over a length-$O(h)$ edge gives an
$O(h^{r_\star+1})$ per-edge bound. This confirms that the discrete
Lie derivative approximates the continuous one to first order in case~(A) of
\Cref{conv:cases}, second order in case~(B).
\end{proof}
\subsection{Approximation Properties}
\label{app:approximation}
This appendix verifies the metric hypotheses \textup{(H3)},
\textup{(H4)}, \textup{(H6)} on prismatic Delaunay--Voronoi meshes:
Hodge-star accuracy
$\nrm{(\bM_k-\mathcal{M}_k)\alpha}_{L^2}=\OO(h^{r_\star})$ with
$r_\star\in\{1,2\}$; reconstruction accuracy
$\nrm{P\mathcal{R}_h u-u}_{L^2}=\OO(h^{r_{\rm rec}})$ with
$r_{\rm rec}\in\{1,2\}$ (case~(B) requires centroid proximity); and
mass-flux consistency at rate $\OO(h^{r_\star})$. Gradient and curl
consistency are exact via de~Rham commutativity; divergence and
Laplace consistency inherit the Hodge-star rate.
\begin{proof}[{\bf Proof of \Cref{lem:interp_error}}]
\textit{Items~(i) and (ii): Gradient and curl consistency.}
Both follow from de~Rham commutativity, an exact algebraic
identity on the cell complex (Stokes' theorem). For any dual
edge $e_j^*$ in case (i), and dual 2-face $f_k^*$ in case (ii),
respectively,
\[
  (\tD_0\mathcal{R}_h p)_j
  = (\mathcal{R}_h p)(\partial e_j^*)
  = \int_{e_j^*}\!\!\!dp
  = (\mathcal{R}_h(dp))_j,
  \quad
  (\tD_1\mathcal{R}_h\bu^\flat)_k
  = \int_{\partial f_k^*}\!\!\!\bu^\flat
  = \int_{f_k^*}\!\!\!d(\bu^\flat)
  = (\mathcal{R}_h(d\bu^\flat))_k,
\]
so both equalities hold exactly and the norms are zero.
\medskip
\textit{Item~(iii): Divergence consistency.}
For a smooth divergence-free field $\bu\in W^{r_\star,\infty}(\Omega)$,
the Hodge* converts the circulation cochain to a flux cochain:
$(\bM_1\mathcal{R}_h\bu^\flat)_j = (|f_j|/|e_j^*|)\int_{e_j^*}\bu\cdot d\ell$.
By \Cref{lem:hodge_error} ($k=1$), the per-face error is
\[
  \bigl|(\bM_1\mathcal{R}_h\bu^\flat)_j - \Phi_j\bigr|
  \le C_\star\,h^{r_\star}\nrm{\bu}_{W^{r_\star,\infty}}\,|f_j|.
\]
Applying $\bD_2$ to the exact fluxes:
$(\bD_2\boldsymbol\Phi)_i = \int_{\partial K_i}\bu\cdot d\mathbf{A}
= \int_{K_i}\nabla\cdot\bu\,dV = 0$ by the divergence theorem.
Therefore
\[
  |(\bD_2\bM_1\mathcal{R}_h\bu^\flat)_i|
  \le \sum_{f_j\prec K_i}|(\bM_1\mathcal{R}_h\bu^\flat)_j - \Phi_j|
  \le C_\star h^{r_\star}\nrm{\bu}_{W^{r_\star,\infty}}\sum_{f_j\prec K_i}|f_j|.
\]
Each prism $K_i$ has a uniformly bounded number of faces
(\cref{ass:mesh_reg}) and each face has area $|f_j| = \OO(h^{d-1})$,
so $\sum_{f_j\prec K_i}|f_j| = \OO(h^{d-1})$, giving the pointwise bound
\begin{equation}\label{eq:div_pointwise}
  |(\bD_2\bM_1\mathcal{R}_h\bu^\flat)_i|
  \le C\,h^{r_\star + d - 1}\nrm{\bu}_{W^{r_\star,\infty}}.
\end{equation}
In the $\ell^2$-norm, summing over $N_{\rm cells} = \OO(h^{-d})$ cells results in
\[
  \nrm{\bD_2\bM_1\mathcal{R}_h\bu^\flat}_{\ell^2}^2
  \le C^2 h^{2(r_\star+d-1)} N_{\rm cells}\nrm{\bu}_{W^{r_\star,\infty}}^2
  = C^2 h^{2r_\star + d - 2}\nrm{\bu}_{W^{r_\star,\infty}}^2.
\]
Hence $\nrm{\bD_2\bM_1\mathcal{R}_h\bu^\flat}_{\ell^2}
= \OO(h^{r_\star + (d-2)/2})$.
Since $(d-2)/2 \ge 0$ for $d\ge 2$, this is $\OO(h^{r_\star})$ as
claimed in~\eqref{eq:div_consistency}, with the actual bound being
strictly stronger for $d \ge 3$.
\medskip
\textit{Interpolation error in the Leray-projected norm.}
Since $\bP_h$ is the $\bM_1$-orthogonal projection onto
$V_h = \ker(\bD_2\bM_1)$, the residual
$(I-\bP_h)\mathcal{R}_h\bu^\flat = \tD_0\phi$ where $\phi$ solves
$L_h\phi = \bD_2\bM_1\mathcal{R}_h\bu^\flat$ with
$L_h = \bD_2\bM_1\tD_0$.
The $L_h^2$-norm of this residual is controlled by the discrete
inf-sup constant of $\bD_2\bM_1$. The Leray-projector stability estimate
gives
\begin{equation}\label{eq:proj_from_div}
  \nrm{(I-\bP_h)\mathcal{R}_h\bu^\flat}_{L_h^2}
  \le \frac{C_{\rm geo}}{\lambda_1^h}\,
  \nrm{\bD_2\bM_1\mathcal{R}_h\bu^\flat}_{\ell^2},
\end{equation}
where $C_{\rm geo} := \sup_j\sqrt{|f_j|/|e_j^*|} = \OO(h^{(d-2)/2})$
accounts for the norm ratio between the $L_h^2$ norm on 1-cochains
and the $\ell^2$ norm on 3-cochains, and $\lambda_1^h > 0$ is
the inf-sup constant of $\bD_2\bM_1$, bounded below independently
of $h$ by the discrete Poincar\'e inequality on the cochain complex
(\cite[Thm.~5.11]{arnold2006}; see also~\cite{arnold2010}, applied
via Whitney-lift norm-equivalence on the simplicial refinement
$\KK_h^{\rm simp}$).
The factor $C_{\rm geo}/\lambda_1^h$ is $\OO(h^{(d-2)/2})$, and
combined with the $\ell^2$ bound $\|\bD_2\bM_1\mathcal{R}_h\bu^\flat\|_{\ell^2}
= \OO(h^{r_\star+(d-2)/2})$ from~\eqref{eq:div_consistency} and the
scaling derivation above, yields
\[
  \nrm{(I-\bP_h)\mathcal{R}_h\bu^\flat}_{L_h^2}
  \le C_{\rm int}\,h^{r_\star}\,\nrm{\bu}_{W^{r_\star,\infty}},
\]
where $C_{\rm int}$ depends on $C_\star$, $(\lambda_1^h)^{-1}$, $C_{\rm geo}$,
and the mesh regularity constants.
The bound for \Cref{lem:proj_error} is identical and follows by the same argument.
\end{proof}
\begin{proof}[{\bf Proof of \Cref{lem:hodge_error}}]
Let $\alpha$ be a smooth 1-form
and let $f_j$ be a primal face
with unit normal $\hat{n}_j$ and dual edge $e_j^*$.
We treat $k=1$ in detail (Steps~1--4) and handle $k=0,2$ in Step~5.
We compare Hodge* action
$(\bM_1)_{jj}\,(\mathcal{R}_h\alpha)_j$ with exact primal flux
$\Phi_j = \int_{f_j}\alpha(\hat{n}_j)\,dA$.

\textit{Step~1: Error functional and cancellation on constants.}
The de~Rham map gives
$(\mathcal{R}_h\alpha)_j = \int_{e_j^*}\alpha(\hat{t}_j)\,ds$,
where $\hat{t}_j$ is the unit tangent to~$e_j^*$.
By \Cref{ass:mesh_reg} item~(3), every primal face $f_j$ is orthogonal
to its dual edge $e_j^*$, so
$\hat{t}_j = \pm\hat{n}_j$ and both integrands involve the same
normal component $\alpha_n := \alpha(\hat{n}_j)$.
Define the error functional
\[
  E_j(\alpha) := \frac{|f_j|}{|e_j^*|}\int_{e_j^*}\alpha_n\,ds
  - \int_{f_j}\alpha_n\,dA.
\]
For a constant 1-form, $\alpha_n = c$,
$E_j = c(|f_j|/|e_j^*|)\cdot|e_j^*| - c|f_j| = c|f_j| - c|f_j| = 0$.
Hence $E_j$ vanishes on constants, i.e.\ on $P_0(\Omega_j)$.

\textit{Step~2: Bramble--Hilbert bound (general meshes).}
Let $\Omega_j := K_a\cup K_b$ be the union of the two primal
cells sharing face~$f_j$. 
We verify the three hypotheses of the Bramble--Hilbert lemma:
(a)~$E_j$ is bounded on $W^{1,\infty}(\Omega_j)$: by the
triangle inequality, $|E_j(\alpha)| \le 2|f_j|\|\alpha_n\|_{L^\infty(\Omega_j)}$,
so $E_j$ is bounded with constant $2|f_j|$;
(b)~$E_j$ vanishes on $P_0(\Omega_j)$ (Step~1);
(c)~$\Omega_j$ has diameter $\le C_{\rm qu}h$ and satisfies the
cone condition with radius $\ge\sigma_0 h$ (\cref{ass:mesh_reg},
quasi-uniformity and shape-regularity).
Bramble--Hilbert then gives
\[
|E_j(\alpha)| \le C\,h\,|f_j|\,|\alpha_n|_{W^{1,\infty}(\Omega_j)}
  \le C_\star\,h\,|f_j|\,\nrm{\alpha}_{W^{1,\infty}},
\]
where the factor $|f_j|$ comes from the $L^\infty$ boundedness
constant in (a).

\textit{Step~3: Improvement under centroid proximity.}
Let $\alpha_n(\mathbf{x}) = \mathbf{c}\cdot\mathbf{x}+d$ be affine
($\mathbf{c} = \nabla\alpha_n$).
The midpoint rule is exact for affine integrands on any line segment, so
$\frac{|f_j|}{|e_j^*|}\int_{e_j^*}\alpha_n\,ds = |f_j|\,\alpha_n(\bar{x}_{e_j^*})$,
where $\bar{x}_{e_j^*}$ is the midpoint of~$e_j^*$.
For affine $\alpha_n$ its integral over any convex domain equals the
value at the centroid times the measure:
$\int_{f_j}\alpha_n\,dA = \alpha_n(\bar{x}_{f_j})\,|f_j|$.
Therefore for affine normal components,
\begin{equation}\label{eq:Ej_linear}
  E_j(\alpha)
  = |f_j|\,\bigl[\alpha_n(\bar{x}_{e_j^*}) - \alpha_n(\bar{x}_{f_j})\bigr]
  = |f_j|\,\mathbf{c}\cdot\bigl(\bar{x}_{e_j^*} - \bar{x}_{f_j}\bigr).
\end{equation}
Under \Cref{prop:centroid_proximity},
$|\bar{x}_{e_j^*} - \bar{x}_{f_j}| \le C_{\rm cg}\,h^2$
(condition~\eqref{eq:centroid_proximity}, first term), so
$|E_j(\ell)| \le C_{\rm cg}h^2|\mathbf{c}||f_j|$ for any affine $\ell$.
For general smooth $\alpha$, decompose
$\alpha_n = c + \ell + \rho$, where $c = \alpha_n(\bar{x}_{f_j})$
is the centroid value (constant),
$\ell(\mathbf{x}) = \nabla\alpha_n(\bar{x}_{f_j})\cdot(\mathbf{x}-\bar{x}_{f_j})$
is the linear part, and
$\|\rho\|_{L^\infty(\Omega_j)} \le Ch^2|\alpha_n|_{W^{2,\infty}(\Omega_j)}$
is the quadratic Taylor remainder.
Then $E_j(c) = 0$ (Step~1),
$|E_j(\ell)| \le C_{\rm cg}h^2\|\nabla\alpha_n\|_{L^\infty}|f_j|$
(centroid proximity, \eqref{eq:Ej_linear}), and
$|E_j(\rho)| \le 2|f_j|\|\rho\|_{L^\infty(\Omega_j)}
\le Ch^2|f_j||\alpha_n|_{W^{2,\infty}(\Omega_j)}$
(boundedness of $E_j$, Step~2).
Combining:
\[
|E_j(\alpha)| \le C_\star\,h^2\,\nrm{\alpha}_{W^{2,\infty}}\,|f_j|.
\]
\textit{Step~4: Global estimate.}
The error $\bM_1\mathcal{R}_h\alpha - \boldsymbol\Phi$ is a primal
2-cochain; we measure it in the $(\bM_1^{-1})$-weighted norm
$\nrm{\mathbf{w}}_{L_{h,1}^2}^2 := \sum_j(\bM_1^{-1})_{jj}w_j^2
= \sum_j(|e_j^*|/|f_j|)w_j^2$ (denoted $L_{h,1}^2$ to distinguish it from the dual 1-cochain norm $L_h^2$).
With $r_\star = 1$ (Step~2) or $r_\star = 2$ (Step~3):
$|E_j| \le C_\star h^{r_\star}|f_j|\|\alpha\|_{W^{r_\star,\infty}}$, so
\begin{align*}
  \nrm{\bM_1\mathcal{R}_h\alpha - \boldsymbol\Phi}_{L_{h,1}^2}^2
  = \sum_j \frac{|e_j^*|}{|f_j|}|E_j|^2
  \le C_\star^2\,h^{2r_\star}\,\nrm{\alpha}_{W^{r_\star,\infty}}^2
      \sum_j |e_j^*|\,|f_j|.
\end{align*}
On a quasi-uniform mesh, $|e_j^*| = \OO(h)$, $|f_j| = \OO(h^{d-1})$,
and $N_{\rm edges} = \OO(h^{-d})$, so
$\sum_j|e_j^*||f_j| = \OO(h^d)\cdot\OO(h^{-d}) = \OO(1)$.
Hence $\nrm{\bM_1\mathcal{R}_h\alpha - \boldsymbol\Phi}_{L_{h,1}^2}
\le C_\star h^{r_\star}\nrm{\alpha}_{W^{r_\star,\infty}}$.
\textit{Step~5: Cases $k = 0$ and $k = 2$.}
The proof for $k=0$ and $k=2$ follows the same pattern; we state the
key adaptations.
\emph{$k=0$ (scalar Hodge*).}
The Hodge* $\bM_0$ maps primal 0-cochains (scalars at primal cells) to
dual $d$-cochains (volume weights).
For a smooth 0-form $f$, the error functional is
$E_i(f) = |K_i^*|f(x_i^*) - \int_{K_i}f\,dV$,
where $x_i^*$ is the circumcentre of $K_i$ (the dual vertex).
This vanishes for constants, is bounded by $|K_i|\|f\|_{L^\infty}$
(so bounded on $W^{1,\infty}$ with constant $|K_i|$),
and $K_i$ is convex and shape-regular (\cref{ass:mesh_reg}).
Bramble--Hilbert gives $|E_i(f)| \le Ch\,|K_i|\,|f|_{W^{1,\infty}}$.
Under centroid proximity (the circumcentre $x_i^*$ is the centroid of $K_i^*$
to $\OO(h^2)$), one obtains $|E_i(f)| \le Ch^2|K_i|\|f\|_{W^{2,\infty}}$.
The global $L_{h,0}^2$ estimate follows by the same calculation as Step~4.
\emph{$k=2$ (vorticity Hodge*).}
The Hodge* $\bM_2$ maps primal 2-cochains (fluxes on primal faces) to
dual 1-cochains (circulations on dual edges).
For a smooth 2-form $\omega$, the error functional is
$E_k(\omega) = (|e_k|/|f_k^*|)\int_{f_k^*}\omega - \int_{e_k}\star\omega$,
where $e_k$ is a primal edge and $f_k^*$ its dual face.
By the Delaunay--Voronoi orthogonality (\cref{ass:mesh_reg} item~(3)),
$e_k\perp f_k^*$, so both integrands involve the same normal component
of $\omega$; the error functional vanishes on constants (same argument as
Step~1), is bounded on $W^{1,\infty}$ with constant $|e_k|$, and
$\Omega_k := K_a\cup K_b$ (the two prisms sharing $e_k$) is shape-regular.
The Bramble--Hilbert bound gives
$|E_k(\omega)| \le C_\star h |e_k|\|\omega\|_{W^{1,\infty}}$,
improving to $C_\star h^2 |e_k|\|\omega\|_{W^{2,\infty}}$
under centroid proximity (by the same affine decomposition as Step~3,
with the primal edge midpoint and dual face centroid playing the roles of
$\bar{x}_{e_j^*}$ and $\bar{x}_{f_j}$).
The global $L_{h,2}^2$ estimate with
$\nrm{\mathbf{w}}_{L_{h,2}^2}^2 := \sum_k(\bM_2^{-1})_{kk}w_k^2
= \sum_k(|f_k^*|/|e_k|)w_k^2$
follows by summing: $\sum_k|f_k^*||e_k| = \OO(1)$
(quasi-uniformity, same calculation as Step~4).
\end{proof}
\subsection{Dissipation Operators}
\label{app:dissipation_axiom}
We verify axioms \textup{(V1)}--\textup{(V4)} for the three viscous
operators (Hodge--Laplacian, anisotropic, Smagorinsky). The first
two are linear; \textup{(V1)}--\textup{(V3)} follow from
positive-semidefiniteness of the mass matrices. Smagorinsky is
nonlinear; its monotonicity (\Cref{lem:Smag_monotone}) plays the
role of Ladyzhenskaya--Lions monotonicity for power-law fluids.
\begin{proposition}[Verification of viscous-operator axioms]\label{prop:verify_axioms}
Each of the three discrete viscous operators below satisfies
axioms \textup{(\ref{ax:V1})--(\ref{ax:V4})}.

\textit{(i) Isotropic Hodge--Laplacian}~\eqref{eq:viscous_incompressible}.
Axioms (\ref{ax:V1})--(\ref{ax:V3}) hold by linearity and
positive-semidefiniteness of $\tD_1^T\bM_2\tD_1$. For (\ref{ax:V4}):
the codifferential truncation
$\bM_1^{-1}\tD_1^T\bM_2\tD_1\mathcal{R}_h\bu^\flat
 - \mathcal{R}_h(\delta d\bu^\flat)$
decomposes as a topological part (zero by de~Rham commutativity)
plus a metric part bounded by
$C h^{r_\star}\|\bu\|_{W^{r_\star+1,\infty}}$
via \Cref{lem:hodge_error} ($k = 2$); since $r_\star \ge 1$, the
axiom holds with $r_{\rm visc} = 1$.

\textit{(ii) Anisotropic viscosity}~\eqref{eq:viscous_aniso}.
Axioms (\ref{ax:V1})--(\ref{ax:V3}) hold because $\bM_2^\nu$ is
diagonal positive definite. Axiom (\ref{ax:V4}) holds with
$r_{\rm visc} = 1$ by the same $k = 2$ Hodge-star argument; the
spatially varying coefficient $\nu_j$ approximates
$\nu(\mathbf{x}_j^*)$ to $\OO(h^2)$ by midpoint rule, dominated by
the $\OO(h^{r_\star})$ Hodge truncation. The isotropic Newtonian
operator~\eqref{eq:viscous_newtonian} with bulk viscosity is the
constant-coefficient member of this family. Axioms
(\ref{ax:V1})--(\ref{ax:V3}) hold by positive-semidefiniteness of
its two blocks $\nu\,\delta_2\tdd_1$ and $\nu_{\rm dil}\,\tdd_0\delta$.
For (\ref{ax:V4}) the rotational block reproduces the $k = 2$
Hodge-star estimate of case~(i), while the dilatational block
$\tdd_0\delta = \bM_1^{-1}\tD_0\bM_0^{-1}\tD_1^T\bM_1$ decomposes
identically into a topological part ($\tD_0,\tD_1^T$ incidence
matrices, zero by de~Rham commutativity) and a metric part governed
by the diagonal Hodge stars $\bM_0,\bM_1$; the latter is bounded at
rate $\OO(h^{r_\star})$ by the $k = 0$ and $k = 1$ cases of
\Cref{lem:hodge_bilinear}, which holds for $k = 0,1,2$. Since
$r_\star \ge 1$, axiom (\ref{ax:V4}) holds with $r_{\rm visc} = 1$.

\textit{(iii) Smagorinsky viscosity}~\eqref{eq:M1_Smag}.
$\bff_{\rm visc}^{\rm Smag}$ has the form
$-\bM_1^{-1}\tD_1^T D\tD_1$ with $D$ positive semidefinite,
giving (\ref{ax:V1}); the operator is a degree-3 polynomial in
$\bv$, giving (\ref{ax:V2}); axioms (\ref{ax:V3}) and
(\ref{ax:V4}) are proved in
\Cref{lem:Smag_monotone,lem:Smag_consistency} respectively.
\end{proposition}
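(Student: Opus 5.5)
The plan is to treat the three operators one at a time and check (V1)--(V4) for each. The first three axioms are structural: dissipativity, the polynomial/Lipschitz structure, and monotonicity. They should follow from the factorised form $-\bM_1^{-1}\tD_1^T D\,\tD_1$ (plus, where present, the dilatational block $-\bM_1^{-1}\tD_0\bA_0\bD_2\bM_1$) with $D,\bA_0\succeq 0$. Axiom (V4), consistency, is the only metric statement. For it I will always use one decomposition: write the discrete operator applied to $\mathcal{R}_h\bu^\flat$ minus $\mathcal{R}_h$ of the continuous operator as a topological part plus a metric part.

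\emph{Topological part.} Commuting $\tD_k$ past $\mathcal{R}_h$ is exact by (H2) and \Cref{lem:interp_error}(i)--(ii), so this part vanishes identically.

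\emph{Metric part.} Only the diagonal Hodge stars $\bM_0,\bM_1,\bM_2$ remain. I will bound it by \Cref{lem:hodge_error} and \Cref{lem:hodge_bilinear} for $k=0,1,2$ at rate $h^{r_\star}$, which gives $r_{\rm visc}\ge 1$.

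For (i), the Hodge--Laplacian, (V1) is immediate from $\bv^T\tD_1^T\bM_2\tD_1\bv=\nrm{\tD_1\bv}_{\bM_2}^2\ge 0$. Linearity gives the Lipschitz and monotonicity axioms for free. For (V4) I write
\[
\bM_1^{-1}\tD_1^T\bM_2\tD_1\mathcal{R}_h\bu^\flat=\bM_1^{-1}\tD_1^T\bM_2\mathcal{R}_h(\dd\bu^\flat),
\]
compare $\bM_2\mathcal{R}_h\omega$ with the exact primal-edge integrals of $\star\omega$ by the $k=2$ case of \Cref{lem:hodge_error}, and then apply the exact incidence $\tD_1^T$ (Stokes) followed by $\bM_1^{-1}$, which is again handled with the $k=1$ estimate. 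I will state this in the weak (tested) form, to avoid losing a power of $h$ when inverting $\bM_1$. Case (ii) is identical once $\bM_2$ is replaced by the diagonal positive definite $\bM_2^\nu$. The only extra error is the midpoint sampling of $\nu_j$, which is $\OO(h^2)$. For the Newtonian bulk block, the operator $\tdd_0\delta=\bM_1^{-1}\tD_0\bM_0^{-1}\tD_1^T\bM_1$ splits the same way, and I invoke \Cref{lem:hodge_bilinear} with $k=0,1$.

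For (iii), Smagorinsky, (V1) follows from the form $-\bM_1^{-1}\tD_1^T D(\bv)\tD_1$ with $D(\bv)=\mathrm{diag}(C_s^2h^2|\tD_1\bv|\cdots)\succeq 0$. (V2) holds because the operator is a polynomial of degree three in $\bv$ on the finite-dimensional space, hence locally Lipschitz. (V3) and (V4) are deferred to \Cref{lem:Smag_monotone} and \Cref{lem:Smag_consistency}. I expect the main obstacle to be monotonicity, i.e.\ showing
\[
\ip{\bff(\bv)-\bff(\bw)}{\bv-\bw}_1\le 0 .
\]
My first attempt is to reduce this facewise to the monotonicity of $a\mapsto |a|a$ on $\RR$ (or of $\xi\mapsto|\xi|\xi$ on vectors), using the elementary inequality $(|a|a-|b|b)(a-b)\ge \tfrac12(|a|+|b|)(a-b)^2$ applied to $a=(\tD_1\bv)_k$ and $b=(\tD_1\bw)_k$, with the diagonal weights kept fixed. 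The consistency part for Smagorinsky adds a factor $h^2$ from the coefficient, so that part is easy once the Hodge-Laplacian estimate is in place.
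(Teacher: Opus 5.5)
Apart from one slip noted at the end, your handling of (V1)--(V3) follows the paper. That includes Smagorinsky's (V3), which you reduce facewise to the monotonicity of $s\mapsto|s|s$ with the geometric weights frozen, exactly as in \Cref{lem:Smag_monotone}.

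The gap is in (V4) for cases (i)--(ii). Axiom (V4) is a \emph{strong} estimate,
\[
\nrm{\bff_{\rm visc}(\mathcal{R}_h\bu^\flat)-\mathcal{R}_h\bigl((\nabla\cdot\boldsymbol\sigma(\bu))^\flat\bigr)}_{L_h^2}\le C_\nu\,h^{r_{\rm visc}}\,\nrm{\bu}_{H^{s_\nu}},
\]
and your weak-form plan produces only a tested bound $|\ip{\bw}{\tau}_1|\le Ch^{r_\star}\bigl(\nrm{\bw}_{\bM_1}+\nrm{\tD_1\bw}_{\bM_2}\bigr)$. That does not imply (V4). Taking $\bw=\tau$ and using the inverse inequality $\nrm{\tD_1\bw}_{\bM_2}\le Ch^{-1}\nrm{\bw}_{\bM_1}$ gives only $\nrm{\tau}_{\bM_1}\le Ch^{r_\star-1}$. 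So your route yields $r_{\rm visc}=r_\star-1$, which is $\ge 1$ only in case~(B) of \Cref{conv:cases}; in case~(A) it verifies nothing. The Newtonian dilatational block has the same problem, because \Cref{lem:hodge_bilinear} is itself a tested estimate.

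This loss is not an artefact of your method. To leading order, the $k=2$ Hodge errors of \Cref{lem:hodge_error} are $E_k\approx|e_k|\,\nabla\omega_n\cdot(\bar{\mathbf x}_{f_k^*}-\mathbf x_{e_k})$, with $\omega_n$ the component of $\omega$ normal to $f_k^*$. On a generic Delaunay--Voronoi mesh these offsets vary at $\OO(h)$ from edge to edge, so they do not cancel when $\tD_1^T$ sums them around $\partial f_j$. After rescaling by $(\bM_1)_{jj}^{-1}$, the cochain error has relative size $\OO(h^{r_\star-1})$ compared with $(\mathcal{R}_h\delta\omega)_j$. This is the familiar strong inconsistency of two-point-flux Laplacians, which still converge, on meshes whose dual-edge midpoints are not face centroids. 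The $k=1$ estimate you invoke for $\bM_1^{-1}$ only controls converting the flux on $f_j$ into a value on $e_j^*$; it does nothing about this amplification.

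The paper's proof of (i) simply asserts that the metric part is $\OO(h^{r_\star})$ in the strong norm, straight from the $k=2$ estimate. It therefore drops exactly the factor $h^{-1}$ carried by $\bM_1^{-1}\tD_1^T$ that you flagged. To finish, you must either restrict $r_{\rm visc}=1$ to case~(B), or replace (V4) by its tested form, as \Cref{lem:visc_trunc_weak} and \Cref{cor:NS_SBP} do. The second option verifies a different axiom from the one stated, and the Cauchy--Schwarz step in \Cref{thm:stability_baro_NS} then no longer applies.

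Two smaller points. First, for (V1) the dilatational block must satisfy $\bM_1K=-(\bD_2\bM_1)^T\bA_0(\bD_2\bM_1)$. That means $K=-\bD_2^T\bA_0\bD_2\bM_1=\tD_0\bA_0\bD_2\bM_1$ under the convention $\tD_0=-\bD_2^T$ of \Cref{lem:SBP}, with no leading $\bM_1^{-1}$. With the leading $\bM_1^{-1}$, the pairing is $\pm(\bD_2\bv)^T\bA_0\bD_2\bM_1\bv$, which is not sign-definite. Second, $\bff_{\rm visc}^{\rm Smag}$ contains $|\omega_k|\omega_k$, so it is not a polynomial, let alone of degree three. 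It is still locally Lipschitz, because $s\mapsto|s|s$ is $C^1$.
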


The key lemma for the Smagorinsky case is the discrete monotonicity
property, the finite-dimensional analogue of the Ladyzhenskaya--Lions
monotonicity for power-law
fluids~\cite{ladyzhenskaya1967,lions1969}.
\begin{lemma}[Discrete Smagorinsky monotonicity]\label{lem:Smag_monotone}
The discrete Smagorinsky viscous operator is monotone in the
$\bM_1$-inner product: for every $\bv, \bw \in C^1(\KKs)$,
\begin{equation}\label{eq:Smag_monotone}
  \ip{\bv-\bw}{\bff_{\rm visc}^{\rm Smag}(\bv)-\bff_{\rm visc}^{\rm Smag}(\bw)}_1 \le 0.
\end{equation}
\end{lemma}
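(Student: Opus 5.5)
The plan is to reduce monotonicity to convexity of a scalar potential. I assume the Smagorinsky operator~\eqref{eq:M1_Smag} has the structure quoted in \Cref{prop:verify_axioms}(iii):
\[
\bff_{\rm visc}^{\rm Smag}(\bv) = -\bM_1^{-1}\tD_1^T D(\bv)\,\tD_1\bv, \qquad D(\bv)=\mathrm{diag}\bigl(c_k\,|\bom|_k\bigr),
\]
with weights $c_k = C_s^2 h^2(\bM_2)_{kk}\ge 0$ and $|\bom|_k$ a local magnitude of $\bom=\tD_1\bv$ at dual face $f_k^*$. The $\bM_1$-weight then cancels against $\bM_1^{-1}$. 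Moving $\tD_1$ across the Euclidean pairing gives, with $\bom=\tD_1\bv$ and $\bze=\tD_1\bw$,
\[
\ip{\bv-\bw}{\bff_{\rm visc}^{\rm Smag}(\bv)-\bff_{\rm visc}^{\rm Smag}(\bw)}_1
= -\bigl(\bom-\bze\bigr)^T\bigl(D(\bv)\bom - D(\bw)\bze\bigr).
\]
So it suffices to show that the nonlinear map $G:\bom\mapsto D\,\bom$ is monotone on $C^2(\KKs)$. Note that $D$ depends on $\bv$ only through $\bom$.

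The first case is when $|\bom|_k=|\omega_k|$, the natural DEC choice since $\omega_k$ is a scalar flux. Here $G$ acts componentwise through $\varphi_k(s)=c_k|s|s$. Each $\varphi_k$ is nondecreasing, being the derivative of the convex function $c_k|s|^3/3$. Hence $(a-b)(\varphi_k(a)-\varphi_k(b))\ge 0$ for all real $a,b$, and summing over $k$ gives the claim with the minus sign.

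The second case is a stencil-based magnitude $|\bom|_k=N_k(\bom)$, for instance a reconstructed vorticity vector norm built from a linear map $L_k$, so that $N_k(\bom)=|L_k\bom|$ is a seminorm. For this case I would write $G=\nabla\Psi$ with a convex potential $\Psi$. The delicate point is that the operator must actually be the gradient of $\Psi(\bom)=\sum_k c_k N_k(\bom)^3/3$, which is convex because $t\mapsto t^3/3$ is convex and nondecreasing on $[0,\infty)$ and each seminorm is convex. Monotonicity then follows from convexity: $(\bom-\bze)^T(\nabla\Psi(\bom)-\nabla\Psi(\bze))\ge 0$. I would check the gradient structure from the explicit form of~\eqref{eq:M1_Smag}.

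The main obstacle is exactly this identification. If the discrete eddy viscosity is evaluated on a different stencil than the one the dissipated quantity uses, for example a face-averaged $|\bom|$ multiplying the component $\omega_k$, the operator need not be a gradient. In that situation I would symmetrise the definition, using the potential form $\Psi$ as the definition of the operator, and note that this preserves the remaining axioms (V1), (V2) and (V4).
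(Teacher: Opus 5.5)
Your first case is exactly the paper's proof: the paper also cancels $\bM_1$ against $\bM_1^{-1}$ and moves $\tD_1$ across the pairing, reducing the left-hand side to $-\sum_k c_k\bigl(|\omega_k^v|\omega_k^v-|\omega_k^w|\omega_k^w\bigr)(\omega_k^v-\omega_k^w)$, and then uses that $s\mapsto|s|s$ is nondecreasing; its weights are $c_k=(C_s\ell_k)^2(\bM_2)_{kk}/\ell_k$ rather than your $C_s^2h^2(\bM_2)_{kk}$, but only $c_k>0$ matters. Since \eqref{eq:M1_Smag} fixes $|\mathbf{S}|_j=|\omega_j|/\ell_j$ face by face, only this case arises, and your convex-potential case and symmetrisation fallback are unnecessary (though harmless).
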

\begin{proof}[{\bf Proof of \Cref{lem:Smag_monotone} (Smagorinsky monotonicity)}]
\label{app:smag_monotone}
Write $\bom^v = \tD_1\bv$ and $\bom^w = \tD_1\bw$ for the vorticity cochains.
Since $\bff_{\rm visc}^{\rm Smag}(\bv) = -\bM_1^{-1}\tD_1^T\bM_2^{\rm Smag}(\bv)\,\bom^v$
with $(\bM_2^{\rm Smag}(\bv))_{kk} = (C_s\ell_k)^2\,(|\omega_k^v|/\ell_k)\,(\bM_2)_{kk}$
(cf.\ \eqref{eq:M1_Smag}), the $\bM_1$-inner product on the left-hand side of
\eqref{eq:Smag_monotone} becomes
\begin{align*}
  &\ip{\bv{-}\bw}{\bff_{\rm visc}^{\rm Smag}(\bv)-\bff_{\rm visc}^{\rm Smag}(\bw)}_1
  = -(\bom^v{-}\bom^w)^T
    \bigl[\bM_2^{\rm Smag}(\bv)\,\bom^v - \bM_2^{\rm Smag}(\bw)\,\bom^w\bigr].
\end{align*}
Since $\bM_2^{\rm Smag}$ and $\bM_2$ are diagonal, this reduces to a sum over dual faces
\begin{equation}\label{eq:Smag_mono_expand}
  = -\sum_k c_k\bigl(|\omega_k^v|\,\omega_k^v - |\omega_k^w|\,\omega_k^w\bigr)
    \bigl(\omega_k^v - \omega_k^w\bigr),
\end{equation}
where $c_k := (C_s\ell_k)^2\,(\bM_2)_{kk}/\ell_k$ is positive.
It therefore suffices to show that the function $\phi:\RR\to\RR$,
$\phi(s) := |s|\,s$, is monotone non-decreasing.
Since $\phi'(s) = 2|s| \ge 0$ for $s \ne 0$
and $\phi$ is continuous at $s = 0$, it follows that $\phi$ is monotone
non-decreasing on $\RR$.
Each summand in \eqref{eq:Smag_mono_expand} equals
$-c_k\bigl(\phi(\omega_k^v) - \phi(\omega_k^w)\bigr)(\omega_k^v - \omega_k^w) \le 0$,
hence the sum is $\le 0$.
\end{proof}
\begin{lemma}[Smagorinsky consistency]\label{lem:Smag_consistency}
Let $\bu\in C^1([0,T];\,W^{2,\infty}(\Omega))$ be a smooth
divergence-free velocity field on a closed oriented Riemannian manifold
$\Omega$. In case~(A) of \Cref{conv:cases},
the discrete Smagorinsky viscous operator satisfies
\[
\nrm{\bff_{\rm visc}^{\rm Smag}(\mathcal{R}_h\bu^\flat)
  - \mathcal{R}_h\bigl((\nabla\cdot\boldsymbol{\sigma}_\omega(\bu))^\flat\bigr)}_{L_h^2}
  \le C_\nu\,h\,\nrm{\bu}_{W^{2,\infty}},
\]
where $\boldsymbol{\sigma}_\omega(\bu) = (C_s\ell)^2|\nabla\times\bu|\,\mathbf{S}(\bu)$
is the continuous vorticity-based Smagorinsky stress, and
$C_\nu$ depends on $C_s^2$ and the mesh regularity constants.
In case~(B), the bound improves to $\OO(h^2)$
but requires $\bu\in W^{3,\infty}$.
\end{lemma}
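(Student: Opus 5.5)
The natural route mirrors the proof of \Cref{prop:verify_axioms}\,(i)--(ii): split the truncation into a topological part, which vanishes exactly by de~Rham commutativity, and a metric part, which is controlled by the Hodge-star and reconstruction estimates. Write $\bff_{\rm visc}^{\rm Smag}(\bar\bv) = -\bM_1^{-1}\tD_1^T\bM_2^{\rm Smag}(\bar\bv)\,\tD_1\bar\bv$ with $\bar\bv = \mathcal{R}_h\bu^\flat$. By \eqref{eq:deRham_commutativity}, $\tD_1\bar\bv = \mathcal{R}_h\omega$ exactly, where $\omega = \dd\bu^\flat$.

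The first step is the nonlinear coefficient. On each dual face, $|\omega_k|/|f_k^*|$ is the face average of $\omega$, so it differs from the pointwise value $|\nabla\times\bu|(\mathbf x_k)$ by $\OO(h)\nrm{\bu}_{W^{2,\infty}}$ in case~(A). The map $s\mapsto|s|$ is Lipschitz, so the discrete eddy viscosity $(C_s\ell_k)^2|\omega_k|/\ell_k$ (suitably normalised) matches the sampled continuous coefficient $\nu_t := (C_s\ell)^2|\nabla\times\bu|$ to relative order $h$. The product $\nu_t\,\omega$ is then $W^{1,\infty}$ with norm bounded by $C_s^2\nrm{\bu}_{W^{2,\infty}}^2$. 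It is only Lipschitz, not $C^1$, where $\omega$ vanishes, which is why case~(A) is the natural setting. This gives $\bM_2^{\rm Smag}(\bar\bv)\tD_1\bar\bv = \bM_2\,\mathcal{R}_h(\nu_t\omega) + \OO(h)$ per face, with the error measured in the $\bM_2^{-1}$-weighted norm.

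The second step treats the remaining operator $\bM_1^{-1}\tD_1^T\bM_2$ applied to a de~Rham interpolant. This is the discrete codifferential $\delta_2$. Apply the $k=2$ case of \Cref{lem:hodge_error} to $\nu_t\omega$, use the exactness of $\tD_1^T$ (Stokes on the primal complex), and then apply the $k=1$ Hodge estimate. Together these give $\nrm{\bM_1^{-1}\tD_1^T\bM_2\mathcal{R}_h(\nu_t\omega) - \mathcal{R}_h(\delta(\nu_t\omega))}_{L_h^2}\le C h\nrm{\nu_t\omega}_{W^{1,\infty}}$. The continuous identity needed here is $(\nabla\cdot\boldsymbol\sigma_\omega)^\flat = -\delta(\nu_t\,\dd\bu^\flat)$ for divergence-free $\bu$, up to the terms from the gradient of $\nu_t$ that the vorticity-form discretisation is designed to represent. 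I would state this identity explicitly as the definition of the continuous target, so that consistency is measured against it.

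The main obstacle is the error from the first step propagating through $\bM_1^{-1}\tD_1^T$. That operator has norm $\OO(h^{-1})$, so a naive $\OO(h)$ per-face coefficient error would yield only $\OO(1)$. I would avoid the loss by using the fact that the coefficient error is smooth at leading order: it is $\ell_k$ times a Taylor remainder that varies slowly between neighbouring faces. Its discrete divergence is therefore again of order $h$, by the same Bramble--Hilbert cancellation used in Step~2 of the proof of \Cref{lem:hodge_error}. Non-smoothness at zeros of $\omega$ is handled using only the Lipschitz bound on $|\cdot|$.

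For case~(B), I would repeat the argument with the affine decomposition of Step~3 of that proof. This requires $\bu\in W^{3,\infty}$ so that $\nu_t\omega$ is in $W^{2,\infty}$ away from zeros of $\omega$.
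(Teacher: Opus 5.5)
Your Step~2 does not go through, and it fails through exactly the mechanism you flag as the main obstacle. By \Cref{lem:hodge_error} with $k=2$, $\bM_2\mathcal{R}_h\beta$ (where $\beta=\nu_t\omega$) matches $\int_{e_k}\star\beta$ only up to $\OO(h^{r_\star})\,|e_k|$ per primal edge. $\tD_1^T$ is exact, but $\bM_1^{-1}\tD_1^T$ then costs the same factor $h^{-1}$ you count in Step~1. The cited estimates therefore give only $\OO(h^{r_\star-1})$ in $L_h^2$: $\OO(1)$ in case~(A), and $\OO(h)$ rather than $\OO(h^2)$ in case~(B).

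Your cure is a leading error that varies smoothly between neighbouring faces, so that its discrete divergence gains a power of $h$. It is not available here. On a general Delaunay--Voronoi mesh the leading part of the Hodge error is the $k=2$ analogue of \eqref{eq:Ej_linear}, namely $|e_k|\,\nabla\beta_n\cdot(\bar{\mathbf x}_{f_k^*}-\mathbf x_{e_k})$. The offsets $\bar{\mathbf x}_{f_k^*}-\mathbf x_{e_k}=\OO(h)$ vary at the grid scale, so the signed sum around $\partial f_j$ does not cancel. This is the familiar loss of strong-form consistency of diagonal-Hodge Laplacians on non-centroidal meshes. With these tools the $h^{-1}$ is avoided only in tested form, by moving $\tD_1^T$ onto the test cochain as in the weak-form part of \Cref{lem:visc_trunc_weak}.

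The regularity count is also off by one derivative. The $k=1$ Hodge step acts on the 1-form $\delta\beta$, so it needs $\beta\in W^{2,\infty}$, hence $\bu\in W^{3,\infty}$ already in case~(A). No bound of the form $Ch\nrm{\beta}_{W^{1,\infty}}$ can hold for a first-order differential operator.

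By contrast, the coefficient error you single out is harmless. Since $s\mapsto|s|s$ is $C^{1,1}$ and $\omega_k/|f_k^*|$ is a face average, the difference $|\omega_k|\,\omega_k/|f_k^*|-\int_{f_k^*}|w|\,w$ is $\OO(h^2)\,|f_k^*|$ using only $w\in W^{1,\infty}$. Here $w$ is the vorticity component whose flux is $\omega_k$. This still gives $\OO(h)$ after the factor $h^{-1}$, with no cross-face cancellation and no special care at zeros of $\omega$.

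Second, Steps~1--2 would prove consistency of a different operator against a different target. With $\ell_k=\sqrt{|f_k^*|/|e_k|}$ as in \eqref{eq:M1_Smag}, one has $|\omega_k|/\ell_k=(|\omega_k|/|f_k^*|)\sqrt{|f_k^*|\,|e_k|}$, which carries a mesh factor of size $\OO(h^{d/2})$. Moreover, for $d=3$ the quantity $|\omega_k|/|f_k^*|$ approximates $|\omega\cdot\hat e_k|$, not $|\nabla\times\bu|$. ``Suitably normalised'' therefore changes the operator.

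Likewise, for divergence-free $\bu$ one has $\nabla\cdot(\nu_t\mathbf S)=-\tfrac12\nabla\times(\nu_t\omega)+\bigl(\partial_iu_j\,\partial_j\nu_t\bigr)_i$, and the last term cannot be defined away. The discrete output lies in $\Ran(\bM_1^{-1}\tD_1^T)$, which is $\bM_1$-orthogonal to $\Ran(\tD_0)$ because $\tD_1\tD_0=0$. The gradient part of $\mathcal{R}_h\bigl((\nabla\cdot\boldsymbol{\sigma}_\omega)^\flat\bigr)$, which that term generically produces, therefore bounds the truncation error from below. This lower bound does not shrink relative to the stress divergence as $h\to 0$.

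The paper's proof splits differently. It never compares coefficients; it freezes $|\bar\omega_k|/\ell_k$. It then uses precisely the smallness $(C_s\ell_k)^2|\bar\omega_k|/\ell_k=\OO(h^{3d/2-2})$, which your normalisation discards, to absorb the $h^{-1}$ in the Hodge-flux piece $\tau_\star^{\rm Smag}$, obtaining $\OO(h^{d/2+r_\star})$. The codifferential piece $\tau_{\rm cont}^{\rm Smag}$ is then asserted, not proved, to be $\OO(h^{r_\star})$. Your Step~2 is essentially that asserted claim, so following the paper would not close the gap either.
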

\begin{proof}
Set $\bar\bv = \mathcal{R}_h\bu^\flat$ and
$\bar\bom = \tD_1\bar\bv$.
By de~Rham commutativity (\eqref{eq:deRham_commutativity}),
$\bar\bom = \mathcal{R}_h\omega$ exactly, where $\omega = d\bu^\flat$
is the continuous vorticity 2-form.
The discrete viscous force is
\[
\bff_{\rm visc}^{\rm Smag}(\bar\bv)
= -\bM_1^{-1}\tD_1^T\bM_2^{\rm Smag}(\bar\bv)\,\bar\bom,
\qquad
(\bM_2^{\rm Smag})_{kk} = (C_s\ell_k)^2\,\frac{|\bar\omega_k|}{\ell_k}\,(\bM_2)_{kk}.
\]
\emph{Step 1: Decomposition of the truncation error.}
The truncation error is
$\tau_h^{\rm Smag} := \bff_{\rm visc}^{\rm Smag}(\bar\bv)
- \mathcal{R}_h((\nabla\cdot\boldsymbol\sigma_\omega)^\flat)$.
We introduce an intermediate quantity: for each dual face $k$, define
the \emph{exact-coefficient flux}
\[
\Sigma_k^{\rm ex} := (C_s\ell_k)^2\,\frac{|\bar\omega_k|}{\ell_k}
  \int_{e_k}\star\omega.
\]
This uses the discrete vorticity norm $|\bar\omega_k|$ (which is exact
since $\bar\bom = \mathcal{R}_h\omega$) as the nonlinear coefficient,
and the \emph{continuous} Hodge dual flux $\int_{e_k}\star\omega$ in
place of the discrete approximation $(\bM_2)_{kk}\bar\omega_k$.
The truncation error splits as
\begin{equation}\label{eq:Smag_split}
\tau_h^{\rm Smag} = \tau_\star^{\rm Smag} + \tau_{\rm cont}^{\rm Smag},
\end{equation}
where
\begin{align*}
(\tau_\star^{\rm Smag})_j
  &:= (\bM_1)_{jj}^{-1}\sum_{k:\,f_k^*\succ e_j^*}
    [\tD_1^T]_{jk}\,\bigl[(\bM_2^{\rm Smag})_{kk}\bar\omega_k
    - \Sigma_k^{\rm ex}\bigr],\\
(\tau_{\rm cont}^{\rm Smag})_j
  &:= (\bM_1)_{jj}^{-1}\sum_{k:\,f_k^*\succ e_j^*}
    [\tD_1^T]_{jk}\,\bigl[\Sigma_k^{\rm ex}
    - \mathcal{R}_h((\nabla\cdot\boldsymbol\sigma_\omega)^\flat)_j\,
    (\bM_1)_{jj}\bigr].
\end{align*}
The term $\tau_\star^{\rm Smag}$ measures the Hodge* error in the flux
(replacing $\int_{e_k}\star\omega$ by $(\bM_2)_{kk}\bar\omega_k$);
the term $\tau_{\rm cont}^{\rm Smag}$ measures the consistency error
from approximating the continuous divergence of $\boldsymbol\sigma_\omega$
by the discrete codifferential applied to $\Sigma_k^{\rm ex}$.
Since $\bar\bom = \mathcal{R}_h\omega$ is the exact de~Rham image,
the coefficient $|\bar\omega_k|/\ell_k$ in $\Sigma_k^{\rm ex}$ is a
\emph{fixed} function of $h$ and $\bu$ (not an approximation).
The only approximation in $\tau_\star^{\rm Smag}$ is the replacement
of $\int_{e_k}\star\omega$ by $(\bM_2)_{kk}\bar\omega_k$, which is
precisely the $k=2$ Hodge* error.
The term $\tau_{\rm cont}^{\rm Smag}$ is a standard consistency error
for the codifferential operator on smooth forms and is $\OO(h)$
by the approximation properties of de~Rham interpolation;
we bound $\tau_\star^{\rm Smag}$ in Steps 2--3 and note that
$\tau_{\rm cont}^{\rm Smag}$ contributes at the same or smaller order.
\emph{Step 2: Pointwise bound on $\tau_\star^{\rm Smag}$.}
For each dual face $k$, the Hodge* error (\Cref{lem:hodge_error}, $k=2$)
gives
\[
\bigl|(\bM_2)_{kk}\bar\omega_k - \textstyle\int_{e_k}\star\omega\bigr|
\le C_\star\,h^{r_\star}\,\nrm{\omega}_{W^{r_\star,\infty}}\,|e_k|.
\]
From the definition $\ell_k = \sqrt{|f_k^*|/|e_k|}$ (\Cref{eq:M1_Smag}),
we have $\ell_k^2 = |f_k^*|/|e_k| = \OO(h^{d-2})$
(since $|f_k^*|=\OO(h^{d-1})$ and $|e_k|=\OO(h)$),
so $(C_s\ell_k)^2 = \OO(h^{d-2})$.
The ratio $|f_k^*|/\ell_k = \sqrt{|f_k^*||e_k|} = \OO(h^{d/2})$.
The nonlinear coefficient therefore satisfies
$|\bar\omega_k|/\ell_k \le \|\omega\|_{L^\infty}|f_k^*|/\ell_k
= \OO(h^{d/2}\|\omega\|_{L^\infty})$,
and
\[
\bigl|(\bM_2^{\rm Smag})_{kk}\bar\omega_k - \Sigma_k^{\rm ex}\bigr|
= (C_s\ell_k)^2\frac{|\bar\omega_k|}{\ell_k}
  \bigl|(\bM_2)_{kk}\bar\omega_k - \textstyle\int_{e_k}\star\omega\bigr|
\le C\,C_s^2\,h^{3d/2-1+r_\star}\,\|\omega\|_{L^\infty}\|\omega\|_{W^{r_\star,\infty}}.
\]
By bounded valence (\Cref{ass:mesh_reg}) and
$(\bM_1)_{jj}^{-1} = |e_j^*|/|f_j| = \OO(h^{-(d-2)})$:
\begin{equation}\label{eq:Smag_ptwise}
|(\tau_\star^{\rm Smag})_j|
\le C\,C_s^2\,h^{d/2+1+r_\star}\,\|\omega\|_{L^\infty}\|\omega\|_{W^{r_\star,\infty}}.
\end{equation}
\emph{Step 3: $L_h^2$ bound and conclusion.}
Squaring~\eqref{eq:Smag_ptwise} and summing over $N_{\rm edges}=\OO(h^{-d})$
edges, each contributing weight $(\bM_1)_{jj} = \OO(h^{d-2})$:
\[
\|\tau_\star^{\rm Smag}\|_{L_h^2}^2
= \sum_j (\bM_1)_{jj}|(\tau_\star^{\rm Smag})_j|^2
\le \OO(h^{-d})\cdot\OO(h^{d-2})\cdot\OO(h^{2(d/2+1+r_\star)})
= \OO(h^{d+2r_\star}).
\]
Hence $\|\tau_\star^{\rm Smag}\|_{L_h^2} = \OO(h^{d/2+r_\star})$.
Since $d\ge 2$, the Hodge* contribution $\tau_\star^{\rm Smag}$ is at least
one order higher than $\OO(h^{r_\star})$; intuitively, the multiplicative
structure of the Smagorinsky stress amplifies the Hodge* error by
geometric factors that grow with cell volume, so the per-edge error is
small relative to the mesh spacing.
In contrast, the codifferential consistency error
$\tau_{\rm cont}^{\rm Smag}$ is $\OO(h^{r_\star})$ by
standard de~Rham consistency, the same rate as for linear viscosity,
because it measures how well the discrete codifferential approximates the
continuous divergence of $\boldsymbol\sigma_\omega$ independently of the
Hodge* truncation.
This term dominates, giving
\[
\|\tau_h^{\rm Smag}\|_{L_h^2}
= \OO(h^{r_\star})
= \OO(h)
\quad\text{for }r_\star=1.
\]
For $r_\star=1$, the regularity required is
$\|\omega\|_{L^\infty}\|\omega\|_{W^{1,\infty}}
\le C\|\bu\|_{W^{1,\infty}}\|\bu\|_{W^{2,\infty}}
\le C\|\bu\|_{W^{2,\infty}}^2$,
so the bound holds with
$C_\nu = CC_s^2$ and $\|\bu\|_{W^{2,\infty}}$.
For $r_\star=2$, the bound improves to $\OO(h^2)$ but
requires $\|\omega\|_{W^{2,\infty}} \le C\|\bu\|_{W^{3,\infty}}$.
\end{proof}
\begin{proof}[Proof of SBP-Compatible Viscosity Rate (\Cref{cor:NS_SBP})]
For operators of the form~\eqref{eq:SBP_visc_structure}, linearity gives
$\bff_{\rm visc}(v^h) - \bff_{\rm visc}(\bar v) = \bff_{\rm visc}(e_v)$,
and the dissipation becomes:
$e_v^T\bM_1\bff_{\rm visc}(e_v)
= -\nrm{\tD_1 e_v}_{\bA}^2
- \nrm{\bD_2\bM_1 e_v}_{\bA_0}^2 \le 0$.
The specific $\nrm{\tD_1 e_v}_{\bA}^2$ form is available because
$$e_v^T\bM_1(-\bM_1^{-1}\tD_1^T\bA\tD_1 e_v)
= -(\tD_1 e_v)^T\bA(\tD_1 e_v)$$ . 
For the truncation, the weak-form analysis of \Cref{lem:visc_trunc_weak}
applies: since $e_v^T\tD_1^T = e_\omega^T$,
the $\bM_1^{-1}$ step that limits the strong-form rate to $\OO(h)$
is bypassed, and the weak-form rate is $\OO(h^{r_\star})$.
Specifically:
$|e_v^T\bM_1\tau_v^{\rm visc}|
\le \nu C_\delta' h^{r_\star}\nrm{\bu}_{H^{s+2}}
(\nrm{e_v}_{\bM_1} + \nrm{\tD_1 e_v}_{\bA})$.
The $\nrm{e_v}_{\bM_1}$ part is $\OO(h^{r_\star})\sqrt{\mathcal{E}}$, dominated
by $\OO(h^{\min(d-1,\,r_\star)})\sqrt{\mathcal{E}}$.
The $\nrm{\tD_1 e_v}_{\bA}$ part is absorbed by the dissipation via
Young's inequality:
$\nu C_\delta' h^{r_\star}\nrm{\tD_1 e_v}_{\bA}
\le \frac{\nu}{2}\nrm{\tD_1 e_v}_{\bA}^2
+ \frac{(C_\delta')^2 h^{2r_\star}}{2\nu}\nrm{\bu}_{H^{s+2}}^2$.
This cancels half the dissipation, leaving
$-\frac{\nu}{2}\nrm{\tD_1 e_v}_{\bA}^2\le 0$, and adds
an $\OO(h^{2r_\star}/\nu)$ constant which is higher order than
$h^{2\min(d-1,\,r_\star)}$ for $d\le 3$ in case~(B) of \Cref{conv:cases}.
\end{proof}
\subsection{Barotropic Invariants}\label{app:invariants}
The conservation proofs use Lamb antisymmetry, summation
by parts ($\bv^T\bM_1\tD_0 B = -B^T\bD_2\bM_1\bv$), the Cartan
formula and Leibniz rule, and discrete Stokes. 

\begin{proof}[{\bf Proof of \Cref{thm:ertel-weak} (Ertel PV conservation)}]
The numerator is constant by the Kelvin theorem:
$\ddt\omega_\Sigma = 0$ (\Cref{thm:kelvin}).
For the denominator, summing the continuity equation over $\mathcal{V}$:
$\ddt\rho_{\mathcal{V}}
= -\sum_{K_i\in\mathcal{V}}(\bD_2\bF)_i
= -F_{\partial\mathcal{V}}$,
where $F_{\partial\mathcal{V}}$ is the net outgoing mass flux.
By the quotient rule:
$\ddt Q_{\Sigma,\mathcal{V}}
= (\ddt\omega_\Sigma\,\rho_{\mathcal{V}}
 - \omega_\Sigma\,\ddt\rho_{\mathcal{V}})/\rho_{\mathcal{V}}^2
= Q_{\Sigma,\mathcal{V}}\,F_{\partial\mathcal{V}}/\rho_{\mathcal{V}}$.
\end{proof}
\subsubsection{Mass Conservation}
Total mass is preserved exactly by the discrete divergence structure
of the continuity equation on a closed domain.
\begin{proof}[Proof of \Cref{thm:mass}]
Summing the continuity equation \eqref{eq:D} over all primal cells yields
 $\frac{dM}{dt}
 = \sum_i \ddt\brho_i
 = -\mathbf{1}^T\bD_2\,\bF$.
On a closed domain, $\mathbf{1}^T\bD_2 = \mathbf{0}^T$ by the discrete Stokes theorem:
each internal face appears with opposite signs in the two cells it borders,
so all contributions cancel pairwise.
\end{proof}
\subsubsection{Kelvin Circulation Theorem}
\label{app:kelvin}
The proof combines the product rule for the duality pairing
$\Gamma = \bv(\gamma)$, the cochain--chain adjointness of the
Lie derivative, and the discrete Cartan formula.
The key step is showing that the advected chain $\gamma(t)$
remains a cycle, which follows from $\tdd^2 = 0$ via
an adjointness argument.
\begin{proof}[Proof of \Cref{thm:kelvin}]
The circulation at time $t$ is the duality pairing
$\Gamma(t) =\bigl( \v(t),\gamma(t)\bigr) = \sum_j\alpha_j(t)\,v_j(t)$.
Both the cochain $\v(t)$ and the chain $\gamma(t)$ evolve in time,
so the total derivative has two contributions.
The duality pairing $(\v,\gamma) = \sum_j \alpha_j\,v_j$
is bilinear in $(\v,\gamma)$.  Since both
$t\mapsto \v(t) \in C^1(\KKs)$ and
$t\mapsto\gamma(t)$ 
are differentiable,
the Leibniz rule gives
\begin{equation}\label{eq:K1app}
  \ddt\bigl[\v\bigl(\gamma(t)\bigr)\bigr]
  = \bigl(\ddt{\v}\bigr)(\gamma) \;+\; \v\bigl(\ddt\gamma\bigr).
\end{equation}
By definition of material advection (\cref{def:circulation}, part~$ii$),
the second term becomes $\v\bigl(\tLie_{\v}^{\mathrm{chain}}\gamma\bigr)$.
The chain Lie derivative $\tLie_{\v}^{\mathrm{chain}}$ is defined as the
adjoint of the cochain Lie derivative with respect to the duality pairing:
$\alpha\bigl(\tLie_{\v}^{\mathrm{chain}}\gamma\bigr)
= (\tLie_{\v}\alpha)(\gamma)$
for all $\alpha\in C^1(\KKs)$.
Applying this with $\alpha = \v$ gives
\begin{equation}\label{eq:K2app}
  \v\bigl(\tLie_{\v}^{\mathrm{chain}}\gamma\bigr)
  = (\tLie_{\v}\v)(\gamma).
\end{equation}
The discrete Cartan formula (\cref{def:Lie}) applied to
$\tLie_{\v}\v$ yields
\begin{equation}\label{eq:K3app}
  \tLie_{\v}\v
  = \tdd_0(\Iv\v) + \Iv(\tdd_1\v)
  = \tdd_0(\Iv\v) + \Iv(\bom),
\end{equation}
where $\Iv\v\in C^0(\KKs)$ is the 0-cochain obtained by applying
the extrusion 
 to $\bv$ viewed as a 1-cochain,
i.e.\ the discrete analogue of $\iota_u u^\flat = |u|^2$
(the specific value of this 0-cochain is immaterial because
$\Iv\v - B$ is a discrete gradient and contributes nothing to the
cycle integral).
Substituting the momentum equation~\eqref{eq:M}
and the Cartan decomposition~\eqref{eq:K3app}
into \eqref{eq:K1app}--\eqref{eq:K2app} gives
\begin{align*}
  \ddt\Gamma
  &= \bigl(-\Iv(\bom) - \tdd_0 B\bigr)(\gamma)
  \;+\; \bigl(\tdd_0(\Iv\v) + \Iv(\bom)\bigr)(\gamma)
  = \bigl(\tdd_0(\Iv\v - B)\bigr)(\gamma).
\end{align*}
The $\Iv(\bom)$ terms cancel, giving $\tdd_0(\Iv\v - B)(\gamma)$.
It suffices to show that $(\tdd_0\psi)(\gamma) = 0$ for any 0-cochain $\psi$.
We show that $\partial\gamma(0) = 0$ implies $\partial\gamma(t) = 0$ for all $t$.
With $\ddt\gamma = \tLie_\bv^{\rm chain}\gamma$, we get
\[
  \ddt\bigl(\partial\gamma\bigr)
  = \partial\bigl(\ddt\gamma\bigr)
  = \partial\bigl(\tLie_\bv^{\rm chain}\gamma\bigr).
\]
We claim $\partial\circ\tLie_\bv^{\rm chain} = \tLie_\bv^{\rm chain}\circ\partial$.
By the adjointness definition of $\tLie_\bv^{\rm chain}$ (\cref{def:circulation})
and discrete Stokes ($\ip{\tdd_0\phi}{\gamma}_1 = \ip{\phi}{\partial\gamma}_0$)
it holds for any 0-cochain $\phi$,
\[
  \ip{\phi}{\partial(\tLie_\bv^{\rm chain}\gamma)}_0
  = \ip{\tdd_0\phi}{\tLie_\bv^{\rm chain}\gamma}_1
  = \ip{\tLie_\bv(\tdd_0\phi)}{\gamma}_1
  = \ip{\tdd_0(\tLie_\bv\phi)}{\gamma}_1
  = \ip{\tLie_\bv\phi}{\partial\gamma}_0
  = \ip{\phi}{\tLie_\bv^{\rm chain}(\partial\gamma)}_0,
\]
where the third equality uses $\tLie_\bv\circ\tdd = \tdd\circ\tLie_\bv$,
which follows from the Cartan formula and $\tdd^2=0$
Since the identity holds for all 0-cochains $\phi$ the claim follows:
$\partial\tLie_\bv^{\rm chain} = \tLie_\bv^{\rm chain}\partial$. 
Since $\partial\gamma(0) = 0$ and $\ddt(\partial\gamma) = \tLie_\bv^{\rm chain}(\partial\gamma)$
is a linear ODE with initial value zero, it follows $\partial\gamma(t) = 0$ for all $t$.
By discrete Stokes, for a 0-cochain $\psi$
\[
  (\tdd_0\psi)(\gamma) = \psi(\partial\gamma) = \psi(0) = 0.
\]
Setting $\psi = \Iv\v - B$ yields $\ddt\Gamma = 0$.
\end{proof}
\subsubsection{Energy Conservation}
\label{app:energy}
The total energy balance (\Cref{thm:totalenergy}) decomposes into a
kinetic energy rate (via SBP and Lamb antisymmetry), an internal
energy rate (via the continuity equation), and a potential energy
rate. Throughout: $\brho_i = |K_i|\,\brho_i^{\vol}$ is
cell-integrated mass, $\Phi_j$ has units volume/time, $\bF_j =
\bar\rho_j\Phi_j$ has units mass/time, and $\bPhi_i$ is
cell-integrated potential energy per unit mass; combinations such
as $\bPhi-\bF$ in the budget are dimensionally consistent (physical
units obtained by reinstating a reference density~$\rho_*$).
The proof rests on three lemmas.
\begin{lemma}[Summation-by-parts identity]\label{lem:SBP}
Let $\bv \in C^1(\KKs)$ be a dual 1-cochain and let
$B \in C^0(\KKs)$ be a dual 0-cochain. Then
\[
\ip{\bv}{\tD_0 B}_1 = -\ip{B}{\bD_2\Phi},
\]
where $\Phi = \bM_1 \bv$ is the volumetric flux.
\end{lemma}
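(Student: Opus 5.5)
The identity $\ip{\bv}{\tD_0 B}_1 = -\ip{B}{\bD_2\Phi}$ is the discrete adjointness of gradient and divergence. I will prove it as a matrix identity by reducing it to the transpose relation between the dual coboundary $\tD_0$ and the primal coboundary $\bD_2$. The left-hand side is, by \Cref{def:M1}, $\bv^T\bM_1\tD_0 B = (\bM_1\bv)^T\tD_0 B = \Phi^T\tD_0 B$, using that $\bM_1$ is diagonal and hence symmetric. The right-hand side is $-B^T\bD_2\Phi = -\Phi^T\bD_2^T B$. So it suffices to show the purely combinatorial relation
\[
 \tD_0 = -\bD_2^T,
\]
as matrices indexed by (dual edges $\leftrightarrow$ primal faces) $\times$ (dual vertices $\leftrightarrow$ primal cells). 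Here the inner product on $B$ is Euclidean, as in the statement; it is not the $\bM_0$-weighted one.

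The main step is to verify this sign relation entrywise. This is the standard fact $\tD_k = \pm D_{n-k-1}^T$ already invoked in the proof of \Cref{prop:extrusion}, Property 1. Take a primal face $f_j$ shared by cells $K_a$ and $K_b$, and orient $f_j$ by its normal $\hat n_j$. Orient the dual edge $e_j^*$ along $\hat n_j$, from $v_a^*$ to $v_b^*$, where $\hat n_j$ points out of $K_a$. Then $(\tD_0 B)_j = B_b - B_a$. On the other side, the incidence numbers are $[K_a:f_j]=+1$ and $[K_b:f_j]=-1$, so $(\bD_2^T B)_j = B_a - B_b$. This gives $\tD_0 = -\bD_2^T$ under the convention that dual edges are oriented by the primal face normals, which is the same convention that makes $\Phi_j = (\bM_1)_{jj}v_j$ the outward-positive flux in \eqref{def:volflux}. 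The sign bookkeeping is the only real obstacle. One must fix the induced orientation of dual cells consistently with the one used in \eqref{eq:F} and the continuity equation. Under the opposite convention the identity would read with a plus sign, so I will state the convention explicitly and note that it is the one under which \eqref{eq:C} discretises $\partial_t\rho + \dd(\iota_\bu\rho)=0$.

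It remains to treat the closed-manifold case. No boundary faces occur, every face has exactly two adjacent cells, and the entrywise identity holds for all $j$. Summing over faces then gives $\sum_j \Phi_j (B_b - B_a) = -\sum_i B_i \sum_{f_j\prec K_i}[K_i:f_j]\Phi_j$, which is exactly the claim; this is discrete Green's formula with no boundary term. For the no-penetration boundary option of \Cref{subsect:grid}, boundary faces carry $\Phi_j=0$, so their rows contribute nothing on either side and the identity persists.
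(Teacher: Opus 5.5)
Your proposal is correct and follows the paper's proof: both rewrite the left side as $\Phi^T\tD_0 B$ using the symmetry of the diagonal $\bM_1$, and then invoke $\tD_0 = -\bD_2^T$. The paper simply attributes that transpose relation to discrete Stokes on a closed domain, whereas you verify it entrywise under an explicit orientation convention. Your remarks on the sign convention and the no-penetration boundary are consistent with this but are not needed for the closed-manifold statement.
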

\begin{proof}
The dual coboundary $\tD_0 \colon C^0(\KKs) \to C^1(\KKs)$ is the
negative transpose of the primal divergence,
$\tD_0 = -\bD_2^T$, by discrete Stokes on a closed domain.
With $\Phi = \bM_1\bv \in C^2(\KK)$,
\[
 \ip{\bv}{\tD_0 B}_1
 = \Phi^T\,(-\bD_2^T)\,B
 = -(\bD_2\Phi)^T B
 = -\ip{B}{\bD_2\Phi}.
\]
This is the discrete analogue of integration by parts on a closed
domain,
$\langle v,\,dB\rangle_{L^2\Lambda^1}
= -\langle d\star v,\,B\rangle_{L^2\Lambda^0}$,
with the boundary term absent. The identity is used in the kinetic
energy balance (\Cref{thm:KE}) and in the stability proofs
(\Cref{app:stability_baro}, Part~I).
\end{proof}
\begin{lemma}[Kinetic energy balance]\label{thm:KE}
Let $\bv$ be a solution of the discrete barotropic Euler momentum
equation. The kinetic energy
$\Ekindf = \tfrac12\nrm{\bv}_{\bM_1}^2$ then satisfies
\[
\tag{KB}
 \frac{dK}{dt}
 = -\ip{\bv}{\tD_0 B}_1
 = \ip{B}{\bD_2\Phi},
\]
where $\Phi = \bM_1\bv$ is the volumetric flux.
\end{lemma}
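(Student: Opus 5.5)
The plan is to differentiate $\Ekindf = \tfrac12\bv^T\bM_1\bv$ in time and substitute the momentum equation~\eqref{eq:M}. Since $\bM_1$ is a constant, symmetric, diagonal matrix (it depends neither on $t$ nor on $\brho$, \Cref{def:kinenergy_nodensity}), the product rule gives
\[
\frac{dK}{dt} = \bv^T\bM_1\,\ddt\bv = \ip{\bv}{\ddt\bv}_1
= -\ip{\bv}{\Iv(\bom)}_1 - \ip{\bv}{\tD_0 B}_1 ,
\]
with $\bom = \tD_1\bv$. This step is purely algebraic, and it is the only place where the density-independence of the mass matrix enters. In the density-weighted scheme this step would instead pick up the term $\tfrac12\bv^T(\partial_t\bM_1^\rho)\bv$.

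Next, the Lamb term is removed using the energy identity of \Cref{prop:extrusion}, Property~2, which states $\ip{\bv}{\Iv(\tD_1\bv)}_1 = 0$ for any linear reconstruction. This identity is exact and metric-blind, so no mesh hypothesis is needed. What remains is the first equality $\tfrac{dK}{dt} = -\ip{\bv}{\tD_0 B}_1$. The second equality then follows directly from the summation-by-parts identity \Cref{lem:SBP}, $\ip{\bv}{\tD_0 B}_1 = -\ip{B}{\bD_2\Phi}$ with $\Phi = \bM_1\bv$. That identity rests on $\tD_0 = -\bD_2^T$ on a closed domain.

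No step here is a real obstacle, but one point deserves care: the Lamb term must be paired in the same $\bM_1$-weighted inner product in which antisymmetry holds. This is true because in~\eqref{eq:M} the Lamb term appears without a mass-matrix ratio; the unified form has $\bA = \bM_1$, so $\bA^{-1}\bM_1 = I$. A second point to verify is that $B$ is treated as a given dual 0-cochain here. Its dependence on $\bv$ through $\ekin$ plays no role in the kinetic balance and only matters later, when splitting $B = \mathbf{h}+\bekin+\bPhi$ in \Cref{thm:totalenergy}. If viscosity were included, the same computation would add the term $\ip{\bv}{\bff_{\rm visc}}_1 \le 0$, but for the Euler statement this term is absent.
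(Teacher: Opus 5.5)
Your proposal is correct and matches the paper's proof: you differentiate $\tfrac12\bv^T\bM_1\bv$, substitute the momentum equation, remove the Lamb term by the antisymmetry of \Cref{prop:extrusion}, and convert the remaining gradient term with the summation-by-parts identity of \Cref{lem:SBP}. Your side remarks are also accurate, namely the extra $\tfrac12\bv^T(\partial_t\bM_1^\rho)\bv$ term in the density-weighted case and the requirement that the Lamb term carry no mass-matrix ratio.
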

\begin{proof}
Differentiating $K = \tfrac{1}{2}\bv^T\bM_1\bv$ in time and
substituting the momentum equation,
\[
\frac{dK}{dt}
= -\ip{\bv}{\Iv(\bom)}_1
- \ip{\bv}{\tD_0 B}_1
= -\ip{\bv}{\tD_0 B}_1,
\]
where the contraction term vanishes by Lamb antisymmetry
(\Cref{prop:extrusion}, Property~2). The summation-by-parts identity
of \Cref{lem:SBP} converts the remaining gradient term into the
flux-divergence form $\ip{B}{\bD_2\Phi}$.
\end{proof}
\begin{lemma}[Internal energy balance]\label{lem:dEint}
The internal energy obeys
\[
 \frac{dE_{\mathrm{int}}}{dt} = -\ip{\mathbf{h}}{\bD_2\bF},
\]
where $\mathbf{h} \in C^0(\mathcal{K}^*)$ is the specific enthalpy
at primal cell centres.
\end{lemma}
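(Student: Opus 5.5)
The plan is to differentiate the internal energy $E_{\rm int}=\sum_i \brho_i\,e(\rho_i^{\vol})$ cell by cell in time. We then use the thermodynamic identity of \Cref{def:eos} to recognise the specific enthalpy as the derivative of the cell-integrated Helmholtz density. Finally we substitute the continuity equation~\eqref{eq:C}.

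\emph{Per-cell derivative.} For a single cell write $\brho_i\,e(\rho_i^{\vol}) = |K_i|\,H(\rho_i^{\vol})$ with $H(\rho)=\rho e(\rho)$ as in \Cref{def:Helmholtz}. The cell volume $|K_i|$ is time-independent, so the chain rule gives
\[
 \ddt\bigl(\brho_i e(\rho_i^{\vol})\bigr)
 = |K_i|\,H'(\rho_i^{\vol})\,\ddt\rho_i^{\vol}
 = H'(\rho_i^{\vol})\,\ddt\brho_i .
\]
The key identity is $H'(\rho)=e(\rho)+\rho e'(\rho)=e(\rho)+p(\rho)/\rho=h(\rho)$. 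This uses $e'(\rho)=p/\rho^2$ from \Cref{def:eos}, and \Cref{ass:eos}\ref{ass:smooth},\ref{ass:enthalpy} justify the differentiation on the admissible set $\rho_i^{\vol}>0$. Hence $\ddt E_{\rm int}=\sum_i h(\rho_i^{\vol})\,\ddt\brho_i$.

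\emph{Substituting continuity.} Insert $\ddt\brho = -\bD_2\bF$ from~\eqref{eq:C}. This gives $\ddt E_{\rm int} = -\sum_i \mathbf{h}_i(\bD_2\bF)_i = -\ip{\mathbf{h}}{\bD_2\bF}$, where the pairing is the Euclidean pairing between the dual 0-cochain $\mathbf{h}$ and the primal 3-cochain $\bD_2\bF$. This is the same pairing used in \Cref{lem:SBP,thm:KE}. No property of the interpolation $\bar\rho_j$ enters, so the lemma holds for both centred and upwind fluxes.

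\emph{Main obstacle.} The calculation itself is routine; the only delicate point is bookkeeping the factors of $|K_i|$. The prognostic $\brho_i$ is an integrated mass while $h$ is evaluated at the volumetric density, and it must be checked that the $|K_i|$ from $\ddt\rho_i^{\vol}=|K_i|^{-1}\ddt\brho_i$ cancels the $|K_i|$ in $|K_i|H(\rho_i^{\vol})$. It must also be checked that $\ip{\cdot}{\cdot}$ here denotes the unweighted pairing rather than the $\bM_0$-weighted $\ip{\cdot}{\cdot}_0$ of \Cref{def:M1}. I would state this convention explicitly so that the lemma combines consistently with \Cref{thm:KE} in the total energy balance~\eqref{eq:EB}.
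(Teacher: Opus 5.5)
Your proposal is correct and follows essentially the paper's argument: a per-cell chain rule, the thermodynamic identity $e+\rho e' = e + p/\rho = h$, and substitution of the continuity equation $\ddt\brho=-\bD_2\bF$. Packaging the identity as $H'=h$ with $H=\rho e$ matches what the paper itself does in the density-weighted computation \eqref{eq:dw_Eint_rate}, and your reading of $\ip{\mathbf{h}}{\bD_2\bF}$ as the unweighted pairing $\sum_i h_i(\bD_2\bF)_i$ agrees with the last line of the paper's proof.
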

\begin{proof}
The chain rule on
$E_{\mathrm{int}} = \sum_i \brho_i\,e(\brho_i/|K_i|)$
combined with the thermodynamic identity $h = e + p/\rho$ gives
\begin{align*}
 \frac{d}{dt}\bigl[\brho_i\,e(\brho_i/|K_i|)\bigr]
 &= \ddt\brho_i\,e(\rho_i^{\mathrm{vol}})
 + \brho_i\,e'(\rho_i^{\mathrm{vol}})\,\frac{\ddt\brho_i}{|K_i|}
 = \ddt\brho_i\Bigl(e(\rho_i^{\mathrm{vol}})
 + \rho_i^{\mathrm{vol}}\,e'(\rho_i^{\mathrm{vol}})\Bigr).
\end{align*}
Since $e'(\rho^{\mathrm{vol}})
= p(\rho^{\mathrm{vol}})/(\rho^{\mathrm{vol}})^2$
by \Cref{def:eos}, the bracket reduces:
\[
 \rho_i^{\mathrm{vol}}\,e'(\rho_i^{\mathrm{vol}})
 = \frac{p_i}{\rho_i^{\mathrm{vol}}} = h_i - e_i,
\]
so $e_i + \rho_i^{\mathrm{vol}}e'_i = h_i$, and
\[
 \frac{d}{dt}\bigl[\brho_i\,e(\brho_i/|K_i|)\bigr]
 = \ddt\brho_i\,h_i.
\]
Summing over all primal cells and substituting the continuity
equation $\ddt\brho_i = -(\bD_2\bF)_i$,
\[
 \frac{dE_{\mathrm{int}}}{dt}
 = \sum_i \ddt\brho_i\,h_i
 = -\ip{\mathbf{h}}{\bD_2\bF}.\qedhere
\]
\end{proof}
\begin{proof}[Proof of total energy balance, \Cref{thm:totalenergy}]
Adding the kinetic energy rate (\Cref{thm:KE}), the internal energy rate (\Cref{lem:dEint}),
and the potential energy rate
$\ddt E_{\mathrm{pot}} = \sum_i\ddt\brho_i\,\bPhi_i = -\ip{\bPhi}{\bD_2\bF}$
(using the continuity equation and time-independence of $\bPhi$):
\[
 \frac{dE_{\mathrm{tot}}}{dt}
 = \ip{B}{\bD_2\Phi} - \ip{\mathbf{h}}{\bD_2\bF} - \ip{\bPhi}{\bD_2\bF}.
\]
Splitting the Bernoulli function as $B = \mathbf{h} + \bekin + \bPhi$ and the mass flux as $\bF = \bR\Phi$ (where $\bR$ is the density interpolation):
\[
 \frac{dE_{\mathrm{tot}}}{dt}
 = \ip{\mathbf{h}+\bPhi}{\bD_2\Phi} - \ip{\mathbf{h}+\bPhi}{\bD_2(\bR\Phi)}
 + \ip{\bekin}{\bD_2\Phi}
 = \ip{\bekin}{\bD_2\Phi}
 + \ip{\mathbf{h}+\bPhi}{\bD_2(\Phi - \bR\Phi)}.
\]
This gives the two-term energy balance~\eqref{eq:EB}.
\end{proof}
\subsubsection{Helicity Conservation}
\label{app:helicity}
Discrete helicity conservation is approximate: the pairing
$\int v\wedge\omega$ matches the continuous one to $\OO(h)$ via
Whitney consistency, while the exact discrete vorticity equation
$\ddt\bom + \tLie_\bv\bom = 0$ prevents spurious $\OO(1)$ drift on
$[0,T]$. The technical step is the Whitney consistency estimate of
\Cref{lem:helicity_consistency} below.
\begin{definition}[Discrete $(1,2)$-wedge product]\label{def:wedge_12_app}
For a dual 1-form $\tilde\alpha\in C^1(\KKs)$ and a dual 2-form
$\tilde\beta\in C^2(\KKs)$, the wedge product
$\tilde\alpha\twdg\tilde\beta\in C^3(\KKs)$ is the dual 3-cochain
\[
\bigl(\tilde\alpha\twdg\tilde\beta\bigr)(K_m^*)
 := \sum_{e_j^*\prec K_m^*}
    \tilde\alpha(e_j^*)\,\overline{\tilde\beta}_{j,m},
\]
where $\overline{\tilde\beta}_{j,m}$ is the average of
$\tilde\beta(f_k^*)$ over the dual 2-faces $f_k^*$ of $K_m^*$ that
share the dual edge $e_j^*$, weighted by the fraction of $f_k^*$
lying inside $K_m^*$. This is the dual-cell quadrature analogue of
the continuous $\int_\Omega \alpha\wedge\beta$ for a 1-form against
a 2-form, used in the discrete helicity construction
(\Cref{def:helicity}).
\end{definition}
\begin{lemma}[Helicity consistency]\label{lem:helicity_consistency}
Let $\alpha$ be a smooth 1-form and let $\beta$ be a smooth
2-form. Under \Cref{conv:cases} (so $r_\star \in \{1,2\}$), the
reconstructions $\Qh^1, \Qh^2$ used in \Cref{def:helicity} satisfy
\begin{equation}\label{eq:helicity_consistency}
  \nrm{\Qh^1\mathcal{R}_h\alpha - \alpha}_{L^2}
  \le C\,h^{r_\star}\,\nrm{\alpha}_{W^{r_\star,\infty}},
  \qquad
  \nrm{\Qh^2\mathcal{R}_h\beta - \beta}_{L^2}
  \le C\,h^{r_\star}\,\nrm{\beta}_{W^{r_\star,\infty}}.
\end{equation}
\end{lemma}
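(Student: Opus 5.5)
The plan is to treat the two estimates in \eqref{eq:helicity_consistency} as standard interpolation-error bounds for piecewise-linear reconstruction operators on the simplicial refinement $\KK_h^{\rm simp}$. For each I would combine a local polynomial-exactness property with a Bramble--Hilbert argument on shape-regular patches, and then sum over simplices using quasi-uniformity (\Cref{ass:mesh_reg}). Recall that $\Qh^1\bv$ is the piecewise-linear extension of the point values $\tilde\bu(v_i^*)$ produced by the averaging reconstruction of \Cref{def:averaging_recon}. Likewise, $\Qh^2\bom$ is the piecewise-linear extension of the values $\tilde\bom(v_i^*)$, obtained by dividing each face integral $\omega_k$ by $|f_k^*|$ and applying the same Gram-matrix inversion. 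So both operators have the form ``nodal reconstruction at dual vertices, followed by $P_1$ Lagrange interpolation''.

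\emph{Step 1: nodal accuracy.} For $\alpha$ I would invoke \Cref{prop:recon_accuracy} directly. It gives $|\Qh^1\mathcal{R}_h\alpha(v_i^*) - \alpha^\sharp(v_i^*)| \le C h^{r_\star}\nrm{\alpha}_{W^{r_\star,\infty}}$, with $r_\star=1$ on general meshes and $r_\star=2$ under the symmetry condition \eqref{eq:recon_symmetry}.

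For $\beta$ I would repeat the proof of \Cref{prop:recon_accuracy} with face-averaged normal components in place of edge-averaged tangential components. The averaged value $\beta_k/|f_k^*|$ equals $\beta^\sharp(\bar x_{f_k^*})\cdot\hat n_k$ up to $\OO(h)$, or up to $\OO(h^2)$ by the midpoint/centroid rule. Exactness on constants again follows from $G_i^{-1}G_i=I$. In case (B) the first-order error involves the offsets $\bar x_{f_k^*}-v_i^*$, and I would need these to satisfy a third-moment cancellation analogous to \eqref{eq:recon_symmetry}. I would argue that centroid proximity (\Cref{prop:centroid_proximity}, second term) together with the Delaunay orthogonality $\hat n_k\parallel e_k$ reduces these offsets to the same symmetric moment structure.

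\emph{Step 2: interpolation.} On each simplex $T$ of $\KK_h^{\rm simp}$, I split
\[
\Qh^1\mathcal{R}_h\alpha-\alpha^\sharp=(\Qh^1\mathcal{R}_h\alpha-I_T\alpha^\sharp)+(I_T\alpha^\sharp-\alpha^\sharp),
\]
where $I_T$ is the $P_1$ Lagrange interpolant. The first term is a linear function whose nodal values are bounded by Step~1, so it is $\OO(h^{r_\star})$ in $L^\infty(T)$. The second term is $\OO(h^2)\nrm{\alpha}_{W^{2,\infty}}$ by standard interpolation theory, and in case (A) it is $\OO(h)\nrm{\alpha}_{W^{1,\infty}}$. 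The nodes of $T$ that are not circumcentres (face and edge points added by the refinement) receive values by averaging neighbouring dual-vertex reconstructions, which preserves affine exactness up to $\OO(h^{r_\star})$. Squaring, integrating over $T$ (measure $\OO(h^d)$), and summing over $\OO(h^{-d})$ simplices gives the $L^2$ bound with an $h$-independent constant. The same argument applies verbatim to $\beta$.

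The main obstacle is the case-(B) second-order bound for $\Qh^2$ in Step~1. \Cref{prop:recon_accuracy}(ii) is stated only for the edge-tangent reconstruction, so the face-normal analogue requires checking that centroid proximity plus reconstruction symmetry kills the linear error term. If a direct moment identity fails, I would instead transfer the bound through $\tD_1$: by de~Rham commutativity (\Cref{lem:interp_error}(ii)) $\mathcal{R}_h\beta=\tD_1\mathcal{R}_h\gamma$ for a potential $\gamma$ on exact components. This reduces the face reconstruction to differences of edge reconstructions, and accepts the harmonic part separately on the fixed finite-dimensional harmonic space.
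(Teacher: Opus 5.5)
Your route is the paper's. On each simplex of $\KK_h^{\rm simp}$ you split the error into the nodal reconstruction error, bounded in $L^\infty$ by the largest nodal error from \Cref{prop:recon_accuracy}, plus the $P_1$ interpolation error, and then sum. For $\Qh^1$ this is complete. Your use of the first-order interpolation bound $\OO(h)\nrm{\alpha}_{W^{1,\infty}}$ in case~(A) is also cleaner than the paper's appeal to $W^{2,\infty}$ followed by a dominance argument.

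The paper disposes of $\Qh^2$ by declaring the argument identical. You rightly notice that \Cref{prop:recon_accuracy} does not cover the face reconstruction, but your case-(B) fix fails. Exactness on constants requires the face-normal Gram matrix $N_i:=\sum_k\hat n_k\otimes\hat n_k$, not the dual-edge $G_i$. The first-order nodal error is then
\[
 N_i^{-1}\sum_k\hat n_k\,\bigl[\hat n_k\cdot\nabla\beta^\sharp(v_i^*)\,(\bar x_{f_k^*}-v_i^*)\bigr].
\]
The circumcentre $v_i^*$ lies on the perpendicular bisector plane of every edge $e_k$ of $K_i$, and $f_k^*$ lies in that plane. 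So the offset $\bar x_{f_k^*}-v_i^*$ is \emph{orthogonal} to $\hat n_k$, whereas in the edge case $x_n^{\rm mid}-v_i^*=\tfrac{\ell_n^*}{2}\hat t_n$ is parallel to $\hat t_n$. The error is therefore governed by the mixed moment $\sum_k\hat n_k\otimes\hat n_k\otimes(\bar x_{f_k^*}-v_i^*)$, which neither \eqref{eq:recon_symmetry} nor centroid proximity controls. Centroid proximity only moves $\bar x_{f_k^*}$ to the edge midpoint, which is still orthogonally offset. The moment does not vanish even on an equilateral triangular prism. There the vertical edges contribute nothing, and the top and bottom horizontal edges give a nonzero multiple of $\sum_{k=1}^3\hat n_k\otimes\hat n_k\otimes\hat m_k$, with $\hat m_k$ the in-plane edge normals. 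For example, contracting with $e_1\otimes e_1$ gives a nonzero vector.

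Your fallback through $\mathcal{R}_h\beta=\tD_1\mathcal{R}_h\gamma$ does not reach this error. The face averages $\beta_k/|f_k^*|$ are already $\OO(h^2)$-accurate at the face centroids; the $\OO(h)$ error lies entirely in the Gram averaging of orthogonally offset samples. Rewriting the face values as circulations of $\gamma$ leaves that averaging step unchanged. Replacing them by curls of edge reconstructions of $\gamma$ would lose a further power of $h$.

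So in case~(B) the second estimate in \eqref{eq:helicity_consistency} needs an extra hypothesis: either vanishing of the mixed moment above, or a different face reconstruction. Without it the estimate holds only with exponent $1$. Neither your proposal nor the paper's proof supplies this.

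A smaller point of the same kind concerns the refinement nodes that are not circumcentres. Assigning them values by averaging keeps second order only if the weights reproduce affine functions, i.e. the weighted barycentre equals the node. Plain averaging does not guarantee this, and the paper ignores these nodes altogether.
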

\begin{proof}
We prove the bound for $\Qh^1$; the argument for $\Qh^2$ is identical.
On each simplex $K$ of the simplicial refinement $\KK_h^{\rm simp}$,
the error decomposes as
\[
  \Qh^1\mathcal{R}_h\alpha - \alpha
  = (\Qh^1\mathcal{R}_h\alpha - I_K\alpha^\sharp)
  + (I_K\alpha^\sharp - \alpha),
\]
where $I_K\alpha^\sharp$ is the piecewise-linear barycentric interpolant
of the exact vector field $\alpha^\sharp$ on $K$.

\emph{First term (reconstruction error).}
\Cref{prop:recon_accuracy} gives
$|\tilde\bu(v_j^*)-\alpha^\sharp(v_j^*)|
\le C_R\,h^{r_\star}\,\nrm{\alpha}_{W^{r_\star,\infty}}$
at each vertex.
Since the $L^\infty$ norm of a barycentric interpolant on a simplex
is bounded by the maximum of its nodal values (maximum principle),
\[
\nrm{\Qh^1\mathcal{R}_h\alpha - I_K\alpha^\sharp}_{L^\infty(K)}
\le C_R\,h^{r_\star}\,\nrm{\alpha}_{W^{r_\star,\infty}},
\]
giving $\nrm{\Qh^1\mathcal{R}_h\alpha - I_K\alpha^\sharp}_{L^2(K)}
\le C_R\,h^{r_\star}\,|K|^{1/2}\,\nrm{\alpha}_{W^{r_\star,\infty}}$.

\emph{Second term (piecewise-linear interpolation error).}
Standard approximation theory gives
$\nrm{I_K\alpha^\sharp - \alpha^\sharp}_{L^2(K)}
\le C\,h^2\,|K|^{1/2}\,\nrm{\alpha}_{W^{2,\infty}(K)}$.
This requires $W^{2,\infty}$ regularity; however, since $r_\star \le 2$,
the reconstruction error $\OO(h^{r_\star})$ dominates (equals or exceeds
the interpolation error $\OO(h^2)$), so the overall bound is
$\OO(h^{r_\star}\nrm{\alpha}_{W^{r_\star,\infty}})$ in both cases.
Squaring and summing over all simplices of the refinement
gives~\eqref{eq:helicity_consistency}.
\end{proof}
\begin{proof}[Proof of \Cref{thm:helicity}]
The proof has two steps:
(1)~bound $|\ddt\bar H|$ via the consistency lemma;
(2)~bound $|\ddt H_h - \ddt\bar H|$ using velocity convergence.
Let $\bar\bv(t)=\mathcal{R}_h\bu^\flat(t)$,
$\bar\bom=\tD_1\bar\bv=\mathcal{R}_h\bom^\flat$,
$\be(t)=\bv(t)-\bar\bv(t)$ with
$\nrm{\be}_{L_h^2}\le C(T)\,h^r$,
$r=\min(d-2+r_\star,\,r_\star) = r_\star$ under \Cref{conv:cases} (\cref{thm:convergence}).
Set $\bze := \tD_1\be = \bom - \bar\bom$.
Define
$\bar H(t) = \int_\Omega(\Qh^1\bar\bv)\wedge(\Qh^2\bar\bom)$
and $H(t)=\int_\Omega\bu^\flat\wedge\bom^\flat$.
\emph{Step~1: }
Since $H(t)$ is conserved by smooth Euler flow,
$\ddt{\bar H} = \ddt(\bar H-H)$,
differentiating gives
\[
  \ddt{\bar H}
  = \int_\Omega(\Qh^1\ddt{\bar\bv})\wedge(\Qh^2\bar\bom)
  + \int_\Omega(\Qh^1\bar\bv)\wedge(\Qh^2\ddt{\bar\bom}).
\]
By de~Rham commutativity,
$\ddt{\bar\bv}=\mathcal{R}_h(\partial_t\bu^\flat)$
and $\ddt{\bar\bom}=\tD_1\ddt{\bar\bv}=\mathcal{R}_h(\partial_t\bom^\flat)$,
so each term is a pairing
$\int(\Qh^1\mathcal{R}_h\alpha)\wedge(\Qh^2\mathcal{R}_h\beta)$
for smooth $\alpha,\beta$.
Writing
\begin{align*}
  \int(\Qh^1\mathcal{R}_h\alpha)\wedge(\Qh^2\mathcal{R}_h\beta)
  - \int\alpha\wedge\beta
  &= \int(\Qh^1\mathcal{R}_h\alpha-\alpha)\wedge\beta
  + \int\alpha\wedge(\Qh^2\mathcal{R}_h\beta-\beta)
  + \OO(h^{2r_\star}),
\end{align*}
the first two terms are $\OO(h^{r_\star})$ by \Cref{lem:helicity_consistency}.
The two continuous pairings sum to $\ddt H=0$, giving
\begin{equation}\label{eq:Hbar_rate}
  |\ddt{\bar H}| \le C'(T)\,h^{r_\star}.
\end{equation}
\emph{Step~2: }
The $L_h^2\to L^2$ boundedness of $\Qh^k$
(established in \Cref{app:helicity}:
$\|\Qh^1\bw\|_{L^2} \le C\|\bw\|_{L_h^2}$ and analogously for $\Qh^2$)
and the bilinear decomposition $\bv=\bar\bv+\be$, $\bom=\bar\bom+\bze$ give
\[
  H_h-\bar H
  = \underbrace{\int(\Qh^1\be)\wedge(\Qh^2\bar\bom)}_{=:I_1}
  + \underbrace{\int(\Qh^1\bar\bv)\wedge(\Qh^2\bze)}_{=:I_2}
  + \underbrace{\int(\Qh^1\be)\wedge(\Qh^2\bze)}_{=:I_3}.
\]
We have $|I_1| \le \|\Qh^1\be\|_{L^2}\|\Qh^2\bar\bom\|_{L^2}
\le C\|\be\|_{L_h^2}\|\bar\bom\|_{L^2} = \OO(h^r)$.
The terms $I_2,I_3$ involve $\bze=\tD_1\be$, and the naive
inverse inequality $\|\bze\|_{\bM_2}\le Ch^{-1}\|\be\|_{L_h^2}$
would only give $\OO(h^{r-1})$.
The vorticity error rate at the velocity rate is recovered
from the structural identity 
$\tD_1\bQ(\cdot,w)=\tLie_{(\cdot)}(\tD_1 w)$ via the
following lemma; the proof is below.
\begin{lemma}[Vorticity error rate]\label{lem:vorticity_error_rate}
Suppose the hypotheses of \Cref{thm:convergence_baro} hold and the
initial velocity error vanishes, $\be(0) = 0$, so that
$\bze(0) = \tD_1\be(0) = 0$. Then the vorticity error
$\bze(t) = \tD_1\be(t)$ satisfies
\begin{equation}\label{eq:vort_err_rate}
\|\bze(t)\|_{\bM_2}\le C(T)\,h^r,
\qquad
\|\ddt\bze(t)\|_{\bM_2}\le C(T)\,h^r,
\qquad t\in[0,T],
\end{equation}
with $r = \min(d-2+r_\star,\, r_\star) = r_\star$ under
\Cref{conv:cases}.
\end{lemma}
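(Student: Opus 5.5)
We plan to derive an evolution equation for the vorticity error by applying $\tD_1$ to the velocity error equation. Since $\tD_1\tD_0=0$ by (H1), the whole Bernoulli/pressure gradient, including the $\OO(1)$ enthalpy coupling to $e_\rho$, disappears. Subtracting the discrete vorticity equation~\eqref{eq:W} for $\bom$ from the one satisfied by $\bar\bom=\mathcal{R}_h\bom^\flat$ then gives
\[
\ddt\bze + \tD_1\bigl(\Iv(\bom)-I_{\bar\bv}(\bar\bom)\bigr) = -\tD_1\tau_v,
\]
where $\tau_v$ is the momentum truncation (in the DW scheme it also carries the mass-matrix ratio $[\bM_1^\rho]^{-1}\bM_1$). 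De~Rham commutativity (H2) lets us write $\tD_1\tau_v$ as the curl truncation. Because the continuous vorticity equation $\partial_t\bom+\mathcal{L}_\bu\bom=0$ is exactly curl-commuted, this is a Lie-derivative consistency error. By Property~4 of \Cref{prop:extrusion} it is $\OO(h^{r_\star})$ per dual face, and in the $\bM_2$-norm it is $\OO(h^r)$.

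Next we split the nonlinear difference bilinearly using $\bQ$:
\[
\Iv(\bom)-I_{\bar\bv}(\bar\bom)=\bQ(\be,\bar\bom)+\bQ(\bar\bv,\bze)+\bQ(\be,\bze).
\]
The structural identity $\tD_1\bQ(\cdot,w)=\tLie_{(\cdot)}(\tD_1w)$ (Cartan formula together with $\tD_1\tD_1=0$) turns each curl into a Lie derivative of a closed 2-cochain. The term $\tD_1\bQ(\be,\bar\bom)=\tLie_{\be}\bar\bom$ is linear in $\be$ with smooth coefficient $\bar\bom$. Its $\bM_2$-norm should be bounded by $C(\nrm{\be}_{L_h^2}+\nrm{\bze}_{\bM_2})$, since the derivative falls either on the smooth $\bar\bom$ or, through the reconstruction $P\be$ differenced across a face, is controlled by $\bze$ plus the $\bD_2\bM_1\be$ part. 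The term $\tLie_{\bar\bv}\bze$ is the transport of $\bze$ by a smooth field. We test the equation with $\bM_2\bze$ and use the Lamb-antisymmetry pattern of the proof of Property~2 (the transpose trick on $\tU$) to show that the skew part drops out, leaving $|\ip{\bze}{\tLie_{\bar\bv}\bze}_2|\le C\nrm{\bar\bv}_{W^{1,\infty}}\nrm{\bze}_{\bM_2}^2$. The cubic term $\tLie_{\be}\bze$ is controlled by $\nrm{P\be}_{L^\infty}\nrm{\bze}^2_{\bM_2}$ times one inverse factor. Here we use (H8) and the velocity rate $\nrm{\be}_{L_h^2}\le Ch^r$ from \Cref{thm:convergence_baro}; this gives $\nrm{\be}_{L^\infty}\le Ch^{r-d/2}$, which we would need to be bounded after combination with the derivative gain, i.e.\ $r\ge 1+d/2$ in the crude form. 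Since that inequality fails, we expect instead to close it by a bootstrap $\nrm{\bze}_{\bM_2}\le 1$ on a maximal interval and absorb the term into the Grönwall constant.

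The resulting energy inequality is
\[
\tfrac12\ddt\nrm{\bze}_{\bM_2}^2\le C(\nrm{\bze}_{\bM_2}^2+\nrm{\be}_{L_h^2}^2)+Ch^r\nrm{\bze}_{\bM_2}.
\]
With $\bze(0)=0$ and the velocity rate, Grönwall yields $\nrm{\bze}_{\bM_2}\le C(T)h^r$, and the bootstrap closes for $h$ small. The bound on $\ddt\bze$ then follows by reading it off the error equation. Each right-hand term is bounded in $\bM_2$ by $C(\nrm{\be}_{L_h^2}+\nrm{\bze}_{\bM_2})+Ch^r$, where this time the operator bound on $\tLie_{\bar\bv}\bze$ is used rather than the energy bound. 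The main obstacle is this operator bound: a discrete Lie derivative of $\bze$ by a smooth field loses a derivative in norm, so we would try to show that the reconstruction-differenced form of $\tLie_{\bar\bv}$ acting on closed cochains is bounded, with the smooth-coefficient commutator of size $\OO(1)$. Failing that, we would settle for a weak or dual-norm bound on $\ddt\bze$, which is all that Step~2 of the helicity proof uses via integration against $\Qh^1\bar\bv$.
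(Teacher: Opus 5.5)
Your skeleton is the paper's: apply $\tD_1$ to the velocity-error equation so that $\tD_1\tD_0=0$ removes the Bernoulli difference, convert the bilinear terms into Lie derivatives with \eqref{eq:Cartan_id}, estimate in the $\bM_2$-norm, and close by Gr\"onwall from $\bze(0)=0$. The gap is in the two terms transported by the error $\be$.

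For $\tLie_{\be}\bze$, your own estimate leaves a factor of order $h^{r-1-d/2}$ in front of $\nrm{\bze}_{\bM_2}^2$. The bootstrap $\nrm{\bze}_{\bM_2}\le 1$ cannot absorb it. The divergent factor comes from $\be$, not from $\bze$, so it multiplies $\nrm{\bze}_{\bM_2}^2$ however small $\bze$ is, and Gr\"onwall returns $\exp(Ch^{r-1-d/2}T)$. Even the sharper bootstrap $\nrm{\bze}_{\bM_2}\le 2C(T)h^r$ only turns it into a forcing $h^{2r-1-d/2}\nrm{\bze}_{\bM_2}$. That is $\OO(h^r)\nrm{\bze}_{\bM_2}$ only when $r\ge 1+d/2$, the inequality you already saw fail.

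For $\tLie_{\be}\bar\bom$, your bound $C(\nrm{\be}_{L_h^2}+\nrm{\bze}_{\bM_2})$ drops the divergence part $\bD_2\bM_1\be$ that your own justification brings in: the continuum $\mathcal{L}_e\bar\omega$ involves the full gradient of $e$, not only its curl. Nothing in your argument controls $\bD_2\bM_1\be$ at rate $h^r$. Through the continuity equation it is tied to $\ddt e_\rho$, which \Cref{thm:convergence_baro} does not bound, and the $\bM_1$-rate gives it only with an inverse-inequality loss.

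The missing ingredient is smallness of $\be$ in the normalised $L_h^\infty$ norm. The paper takes the bootstrap $\nrm{\be}_{L_h^\infty}\le\delta_0 h$ from the stability/convergence argument (\Cref{thm:stability_baro}, \Cref{app:convergence_proof}). It then uses Bound~(I) of \Cref{thm:bilinear_bounds_baro} in the form $\nrm{\tLie_{\be}\alpha}_{\bM_2}\le C_Q\nrm{\be}_{L_h^\infty}\nrm{\alpha}_{\bM_2}$, which puts no derivative on $\be$. The contribution of $\tLie_{\be}\bze$ to the Gr\"onwall coefficient is then $h$-independent (indeed small). The term $\tLie_{\be}\bar\bom$ becomes a pure forcing $C\nrm{\be}_{L_h^\infty}\nrm{\bar\bom}_{L^\infty}\nrm{\bze}_{\bM_2}$, so neither $\nrm{\be}_{L_h^2}$ nor any divergence error enters.

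The second estimate is also not established. You leave $\nrm{\ddt\bze}_{\bM_2}\le C(T)h^r$ open and propose a dual-norm bound instead, which is strictly weaker than what the lemma asserts. The paper gets it directly by reading \eqref{eq:vort_err_ODE_Cartan} in the $\bM_2$-norm with the operator bounds \eqref{eq:Lie_bound_2cochain}. It justifies those by transferring the Bound~(II) stencil argument to $\tLie_{\bar\bv}$ on 2-cochains, and the same operator bound also handles $\tLie_{\bar\bv}\bze$ in the energy estimate. Your remark that a Lie derivative along a smooth field costs a derivative is a fair point about that step, but your proposal puts nothing in its place.

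In particular, your substitute for the energy estimate is skew-symmetry of $\ipw{\bze}{\tLie_{\bar\bv}\bze}{\bM_2}$ via the transpose trick of Property~2 in \Cref{prop:extrusion}, and that does not work as stated. The trick relies on one and the same cochain $\bv$ being both the reconstructed advecting velocity and the contracted field. Here the advecting $\bar\bv$ and the transported $\bze$ are different, so you would need a genuine discrete commutator (product-rule) estimate.
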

Granting \Cref{lem:vorticity_error_rate} for the moment,
the bounds on $I_2,I_3$ close as in the original argument:
\[
  |I_2| \le \|\Qh^1\bar\bv\|_{L^2}\|\Qh^2\bze\|_{L^2}
  \le C\|\bu\|_{L^2}\|\bze\|_{\bM_2} = \OO(h^r),
\]
\[
  |I_3| \le \|\Qh^1\be\|_{L^2}\|\Qh^2\bze\|_{L^2}
  = \OO(h^{2r}).
\]
Hence $|H_h - \bar H| = \OO(h^r)$.
Differentiating in time and applying \eqref{eq:vort_err_rate}
together with $\|\ddt\be\|_{L_h^2} = \OO(h^r)$
(\Cref{thm:stability_baro}, Part~I):
\begin{equation}\label{eq:dHh_dHbar}
  |\ddt H_h - \ddt{\bar H}| \le C(T)\,h^r.
\end{equation}
Eqs~\eqref{eq:Hbar_rate} and~\eqref{eq:dHh_dHbar} imply the assertion.
\end{proof}
\begin{proof}[Proof of \Cref{lem:vorticity_error_rate}]
Subtract the reference velocity ODE from~\eqref{eq:err-mom}:
\[
\ddt\be = -\bQ(\bar\bv,\bze) - \bQ(\be,\bar\bom+\bze)
 - \tD_0(B^h-\bar B) + \tau_v.
\]
Apply $\tD_1$. The Bernoulli gradient term vanishes by
$\tD_1\tD_0=0$:
\begin{equation}\label{eq:vort_err_ODE}
\ddt\bze
 = -\tD_1\bQ(\bar\bv,\bze) - \tD_1\bQ(\be,\bar\bom+\bze)
 + \tD_1\tau_v.
\end{equation}
\emph{Closed form for the bilinear forcing.}
By the discrete Cartan magic formula
(\Cref{prop:extrusion}, Property~4),
$\tLie_\bv\alpha = \tD_0\Iv\alpha + \Iv(\tD_1\alpha)$
for any 1-cochain $\alpha$, and $\tD_2\tD_1=0$ yields
\begin{equation}\label{eq:Cartan_id}
\tD_1\bQ(\bv,\bw)
 \;=\; \tD_1\Iv[\bv](\tD_1\bw)
 \;=\; \tLie_\bv(\tD_1\bw),
 \qquad \forall\,\bv,\bw\in C^1(\KKs),
\end{equation}
since $\tLie_\bv(\tD_1\bw)=\tD_1\Iv[\bv](\tD_1\bw)+\Iv[\bv]\tD_2(\tD_1\bw)$
and the second term is zero. Substituting~\eqref{eq:Cartan_id}
in~\eqref{eq:vort_err_ODE}:
\begin{equation}\label{eq:vort_err_ODE_Cartan}
\ddt\bze
 = -\tLie_{\bar\bv}\bze - \tLie_{\be}(\bar\bom+\bze)
 + \tD_1\tau_v.
\end{equation}
\emph{Bilinear bound on $\tLie_{\bar\bv}$ on 2-cochains.}
Because $\tLie_{\bar\bv}\alpha$ assembles from
$\bar\bv$-stencils against $\alpha$ on the same 2-ring
neighbourhood as $\bQ(\bar\bv,\cdot)$ does (with the
substitution $\tD_1\bw\mapsto\alpha$ in the assembly
formula~\eqref{eq:QU_assembly} extended to 2-cochains),
the proof of \Cref{thm:bilinear_bounds_baro}\,(II)
applies verbatim to give
\begin{equation}\label{eq:Lie_bound_2cochain}
\|\tLie_{\bar\bv}\alpha\|_{\bM_2}
 \le C'_Q\,\nrm{\bu}_{W^{1,\infty}}\,\nrm{\alpha}_{\bM_2},
\qquad
\|\tLie_{\be}\alpha\|_{\bM_2}
 \le C_Q\,\nrm{\be}_{L_h^\infty}\,\nrm{\alpha}_{\bM_2},
\end{equation}
the second by Bound~(I) of the same theorem.

\emph{Consistency.} The forcing
$\tau_v$ is by definition the discrete
residual of \eqref{eq:M} on the smooth reference,
$\tau_v=\mathcal{R}_h\tau^c+\OO(h^{r_\star})$ with
$\tau^c\in W^{1,\infty}$, so by de~Rham commutativity
$\tD_1\mathcal{R}_h\tau^c=\mathcal{R}_h(d\tau^c)$ and
\begin{equation}\label{eq:tau_omega_rate}
\|\tD_1\tau_v\|_{\bM_2}\le C\,h^r.
\end{equation}
 Pair~\eqref{eq:vort_err_ODE_Cartan} with $\bze$ in $\bM_2$:
\begin{align*}
\tfrac12\ddt\nrm{\bze}_{\bM_2}^2
 &= -\ipw{\bze}{\tLie_{\bar\bv}\bze}{\bM_2}
 - \ipw{\bze}{\tLie_{\be}(\bar\bom+\bze)}{\bM_2}
 + \ipw{\bze}{\tD_1\tau_v}{\bM_2}\\
 &\le \bigl(C'_Q\nrm{\bu}_{W^{1,\infty}}
 + C_Q\nrm{\be}_{L_h^\infty}\bigr)\nrm{\bze}_{\bM_2}^2
 + C\nrm{\be}_{L_h^\infty}\nrm{\bar\bom}_{L^\infty}\nrm{\bze}_{\bM_2}
 + Ch^r\nrm{\bze}_{\bM_2}.
\end{align*}
Under the bootstrap $\nrm{\be}_{L_h^\infty}\le\delta_0 h$
(\Cref{thm:stability_baro}), the prefactor of $\nrm{\bze}_{\bM_2}^2$
is $h$-independent and the additive forcing is $\OO(h^r)$.
Gronwall with $\bze(0)=0$ closes
$\nrm{\bze(t)}_{\bM_2}\le C(T)h^r$. The same inequality
applied to~\eqref{eq:vort_err_ODE_Cartan} directly bounds
$\nrm{\ddt\bze}_{\bM_2}\le C(T)h^r$.
\end{proof}
\subsubsection{Proofs for the Density-Weighted Scheme}
\label{app:dw_proofs}
The density-weighted mass matrix
$\bM_1^{\rho}(\brho) = P^T\mathrm{diag}(\brho)\,P$ resolves the
energy-conservation dichotomy -- exact total energy and exact Lamb
antisymmetry replace exact Kelvin circulation -- at the cost of a
multi-cell mass-flux stencil and a norm-equivalence question
between $\bM_1$ and $\bM_1^{\rho}$.
\paragraph{Density-weighted mass flux and continuity equation.}
Energy consistency requires that the discrete mass flux be
$\bM_1^\rho$-conjugate to the velocity:
\begin{equation}\label{eq:dw_F}
\bF^{\rho} := \bM_1^{\rho}(\brho)\,\bv
 = P^T\mathrm{diag}(\brho)\,P\,\bv
\;\in\; \RR^{|\mathcal{F}|},
\qquad
\ddt\brho_i + (\bD_2\bF^{\rho})_i = 0.
\tag{$\mathrm{D}^{\rho}$}
\end{equation}
\phantomsection\label{eq:dw_D}
Unlike the density-free flux $\bF=\bR\bM_1\bv$ (face-by-face
diagonal), the density-weighted flux carries a multi-cell stencil
through $P^T\mathrm{diag}(\brho)P$ -- the same reconstruction $P$
that defines $\Ekindw = \tfrac12\bv^T\bM_1^\rho\bv$. This pairing
closes the energy budget exactly.
\begin{lemma}[Density-weighted summation by parts]
\label{lem:dw_SBP}
For every cell-centred cochain $\bphi \in C^0(\KK)$ and every
1-cochain $\bv$,
\begin{equation}\label{eq:dw_SBP}
\bv^T\bM_1^{\rho}\,\tD_0\bphi
 \;=\; -(\bD_2\bF^{\rho})^T\bphi.
\end{equation}
\end{lemma}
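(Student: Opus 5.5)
The plan is to reduce \eqref{eq:dw_SBP} to the undensified summation-by-parts identity of \Cref{lem:SBP}, using two facts. The first is the purely topological duality $\tD_0 = -\bD_2^T$ on a closed domain. The second is that $\bF^\rho$ is \emph{defined} as $\bM_1^\rho\bv$, so no metric input enters. In other words, \Cref{lem:SBP} becomes the special case in which $\bM_1$ plays the role of $\bM_1^\rho$. The point to exploit is that its proof used only the symmetry of the weight matrix and the incidence duality, never its diagonality.

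First I record the symmetry $(\bM_1^\rho)^T = (P^T\mathrm{diag}(\brho)P)^T = P^T\mathrm{diag}(\brho)P = \bM_1^\rho$, which holds for every $\brho$. This gives
\[
\bv^T\bM_1^\rho\,\tD_0\bphi = (\bM_1^\rho\bv)^T\tD_0\bphi = (\bF^\rho)^T\tD_0\bphi .
\]
Next I invoke the discrete Stokes duality already used in \Cref{lem:SBP}. The dual coboundary from dual $0$-cochains (cell centres) to dual $1$-cochains (dual edges) is minus the transpose of the primal face-to-cell divergence, $\tD_0 = -\bD_2^T$. This is a statement about signed incidence matrices on a closed complex: each dual edge $e_j^*$ joins the two circumcentres of the cells sharing $f_j$, with orientation induced by the face normal. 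Substituting gives $(\bF^\rho)^T\tD_0\bphi = -(\bF^\rho)^T\bD_2^T\bphi = -(\bD_2\bF^\rho)^T\bphi$, which is \eqref{eq:dw_SBP}.

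The only step requiring care is the orientation and sign convention in $\tD_0 = -\bD_2^T$. It must match the convention in \eqref{eq:C}, where $(\bD_2\bF)_i = \sum_{f_j\prec K_i}[K_i:f_j]\bF_j$, and also the orientation of dual edges relative to face normals. I would verify this entrywise. For face $f_j$ between $K_a$ and $K_b$, with the normal pointing from $a$ to $b$, one has $[K_a:f_j]=+1$ and $[K_b:f_j]=-1$, while $(\tD_0\bphi)_j = \bphi_b - \bphi_a$. This confirms the minus sign. On a closed manifold there are no boundary faces, so no boundary term appears. Since the identity involves no approximation and holds for arbitrary $\brho$ (positivity is not even needed), no further estimates are required.
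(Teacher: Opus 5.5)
Your proposal is correct and is essentially the paper's proof. Symmetry of $\bM_1^{\rho}$ and the definition $\bF^{\rho}=\bM_1^{\rho}\bv$ turn the left side into $(\bF^{\rho})^T\tD_0\bphi$, and the closed-complex incidence duality $\tD_0=-\bD_2^T$ from \Cref{lem:SBP} finishes it; your entrywise orientation check, which the paper leaves implicit, is a sound addition.
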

\begin{proof}
$\bv^T\bM_1^{\rho}\tD_0\bphi=(\bM_1^{\rho}\bv)^T\tD_0\bphi
=(\bF^{\rho})^T\tD_0\bphi=-(\bD_2\bF^{\rho})^T\bphi$,
using the definition~\eqref{eq:dw_F} and the
adjoint relation $\tD_0^T=-\bD_2$ on a closed manifold
(see \Cref{lem:SBP}).
\end{proof}
\begin{lemma}[Density-weighted Lamb antisymmetry]%
\label{lem:dw_antisym}
The Lamb antisymmetry identity \Cref{prop:extrusion}\,(2) extends
to every SPD matrix~$A$. Fix such an $A$, set
$\bphi := \tU_\bv^T \bv$, and define the $A$-weighted contraction
\[
 I_\bv^A(\alpha) := A^{-1}\,\tfrac{1}{2}\bigl(\tU_\bv\alpha - \tD_1\bphi\bigr).
\]
Then $\ipw{\bv}{I_\bv^A(\alpha)}{A} = 0$ for every dual 2-cochain
$\alpha$ and every velocity~$\bv$. The choice
$A = \bM_1^{\rho}(\brho)$ yields the density-weighted Lamb
antisymmetry; the density-free case $A = \bM_1$ recovers
\Cref{prop:extrusion}\,(2).
\end{lemma}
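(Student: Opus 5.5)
The plan is to remove the weight $A$ first, so that everything reduces to one Euclidean identity about the velocity-weighted incidence matrix $\tU_\bv$. By definition of the $A$-inner product and of $I_\bv^A$,
\[
 \ipw{\bv}{I_\bv^A(\alpha)}{A}
 = \bv^T A\,A^{-1}\,\tfrac12\bigl(\tU_\bv\alpha - \tD_1\bphi\bigr)
 = \tfrac12\bigl(\bv^T\tU_\bv\,\alpha - \bv^T\tD_1\,\tU_\bv^T\bv\bigr).
\]
The weight $A$ cancels identically because it is SPD, hence invertible. No property of $A$ beyond invertibility is used, so the density-weighted case $A=\bM_1^\rho(\brho)$ and the density-free case $A=\bM_1$ are handled by the same computation. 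This already shows that the statement is metric-blind.

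Next I would read off the structure of the reduced expression. The first term is linear in $\alpha$, and the second term does not depend on $\alpha$ at all. The claim for \emph{every} dual 2-cochain $\alpha$ is therefore equivalent to two facts: the row vector $\bv^T\tU_\bv$ vanishes, and the scalar $\bv^T\tD_1\tU_\bv^T\bv$ vanishes. The second fact follows from the first, since $\tU_\bv^T\bv=(\bv^T\tU_\bv)^T=0$ gives $\bphi=0$. So the whole lemma reduces to the single identity
\[
 \bv^T\tU_\bv = 0 \quad\text{as a covector on } C^2(\KKs),
\]
that is, $\sum_j v_j\,D_{1,jk}\,\bar\bu_j(\bv)\cdot\hat e_k = 0$ for every dual face $f_k^*$. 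This is the discrete counterpart of $\iota_\bu\iota_\bu\alpha=\alpha(\bu,\bu)=0$ for an arbitrary 2-form. For comparison, in the special case $\alpha=\tD_1\bv$ the two scalars cancel by the transpose argument of \Cref{prop:extrusion}(2), but that argument gives nothing for a general $\alpha$.

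The main obstacle is proving $\bv^T\tU_\bv=0$ face by face. I would fix a dual face $f_k^*$ and write its column of $\tU_\bv$ as a sum over the dual edges $e_j^*$ in $\partial f_k^*$, weighted by the incidence signs $D_{1,jk}$. I would then try to show that the contributions $v_j\,(\bar\bu_j\cdot\hat e_k)$ cancel in pairs around the boundary cycle. The mechanism I expect is the following: on a Delaunay--Voronoi mesh the dual edges bounding $f_k^*$ all lie in the plane orthogonal to $\hat e_k$. Hence the product $v_j\,(\bar\bu_j\cdot\hat e_k)$ is the discrete extrusion flux of a 2-form evaluated on the pair $(\bu,\bu)$. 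Antisymmetry of the extrusion parallelogram under the orientation reversal encoded in $D_{1,jk}$ should then make the signed sum vanish, using the averaging structure of \Cref{def:averaging_recon}.

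If the cancellation is not exact for an arbitrary linear reconstruction, I would first check whether it holds for the specific averaging reconstruction $P$ that appears in $\tU_\bv$. That is the only reconstruction the lemma actually uses, and it is where I expect the symmetry of the Gram-matrix average to enter.
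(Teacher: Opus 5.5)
Your first step is exactly the paper's proof. The weight $A$ cancels, and the paper then simply cites \Cref{prop:extrusion}(2). You are right that this citation only covers $\alpha=\tD_1\bv$. In fact your computation gives a closed form. Reading the paper's $\tD_1\bphi$ as $\tD_1^T\bphi$, as in the proof of \Cref{prop:extrusion}, it is
$\ipw{\bv}{I^A_\bv(\alpha)}{A}=\tfrac12\bigl(\tU_\bv^T\bv\bigr)^T\bigl(\alpha-\tD_1\bv\bigr)$.
So the claim for every $\alpha$ is equivalent to $\tU_\bv^T\bv=0$, as you say.

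\textbf{The gap.} The identity $\tU_\bv^T\bv=0$ is false, so the face-by-face cancellation you plan cannot succeed, for the averaging reconstruction or any other.
\begin{itemize}
\item The entry $(\tU_\bv^T\bv)_k=\sum_j D_{1,jk}\,v_j\,(\bar\bu_j\cdot\hat e_k)$ pairs the circulation along $e_j^*$, which is tangential to $f_k^*$, with the reconstructed component along $\hat e_k$, which is normal to $f_k^*$.
\item It discretises $\oint_{\partial f_k^*}(u\cdot\hat e_k)\,u\cdot d\ell$, not $\alpha(u,u)$, and nothing in it is antisymmetric.
\end{itemize}

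Here is a concrete counterexample on a prismatic mesh. Let $f_k^*$ be horizontal, so $\hat e_k=\hat z$.
\begin{itemize}
\item Then $\bar\bu_j\cdot\hat z$ comes from $P_v$. It depends only on the vertical DOFs of the two cells adjacent to $f_j$, not on the horizontal DOFs $v_j$ around $\partial f_k^*$.
\item Choose the vertical DOFs so that this component equals a constant $W\neq 0$. This gives $(\tU_\bv^T\bv)_k=W\sum_j D_{1,jk}v_j=\pm W(\tD_1\bv)_k$.
\item This is nonzero as soon as the horizontal flow carries vertical vorticity through $f_k^*$.
\item By the closed form, any $\alpha$ with $\alpha_k\neq(\tD_1\bv)_k$ (and other entries matching $\tD_1\bv$) then makes the pairing nonzero.
\end{itemize}
So the lemma as stated for arbitrary $\alpha$ fails in general, and the paper's one-line proof does not establish it either.

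\textbf{What does hold.} The Lamb case $\alpha=\tD_1\bv$ is true, and it is the only case used later. For example, Term~1 in the proof of \Cref{thm:dw_energy} involves only $\bv^T\bM_1\Iv(\tD_1\bv)$. There $\bv^T\tD_1^T\tU_\bv^T\bv$ is the transpose of the scalar $\bv^T\tU_\bv\tD_1\bv$, so the two terms cancel. Hence $\ipw{\bv}{I^A_\bv(\tD_1\bv)}{A}=0$ for every invertible $A$, in particular for $A=\bM_1^\rho(\brho)$ and $A=\bM_1$.

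Your first paragraph together with this transpose observation, which you already mention, is a complete proof of that corrected statement. Restrict the lemma to $\alpha=\tD_1\bv$ and drop the attempt to prove $\bv^T\tU_\bv=0$.
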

\begin{proof}
The matrix identity $\ip{\bv}{\tU_\bv\alpha - \tD_1\bphi}=0$ is
\Cref{prop:extrusion}\,(2) before division by~$\bM_1$ (and is
$A$-independent). Testing then gives
$\ipw{\bv}{I_\bv^A(\alpha)}{A}
 = \tfrac{1}{2}\ip{\bv}{\tU_\bv\alpha - \tD_1\bphi} = 0$ for any~$A$.
\end{proof}
The following lemma provides the further analytic tool.
\begin{lemma}[Norm equivalence $\bM_1^{\rho}\sim\bM_1$]%
\label{lem:dw_norm_equiv}
Let the mesh be Delaunay--Voronoi and satisfy~\ref{ass:mesh_reg},
and let $\brho_i^{\rm vol} \in [\rho_*, \rho^*]$. Then the norms
$\nrm{\cdot}_{\bM_1^{\rho}(\brho)}$ and $\nrm{\cdot}_{\bM_1}$ are
equivalent on $\ker(P)^\perp$:
\[
c_{\rm eq}\,\rho_*\,\nrm{\bv}_{\bM_1}^2
 \;\le\; \nrm{\bv}_{\brho}^2
 \;\le\; C_{\rm eq}\,\rho^*\,\nrm{\bv}_{\bM_1}^2,
\]
where the constants $c_{\rm eq}, C_{\rm eq}$ depend on the mesh
regularity but not on~$h$.
\end{lemma}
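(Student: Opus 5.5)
The plan is to reduce the statement to a cell-by-cell comparison of two positive semidefinite quadratic forms on $\ker(P)^\perp$, and then to use compactness on a finite family of reference stencils to obtain $h$-independent constants. Write $\nrm{\bv}_{\brho}^2 = \bv^T\bM_1^\rho\bv = \sum_i |K_i|\rho_i^{\rm vol}\,|(P\bv)_i|^2$. Here we read the diagonal weight as the cell mass $\brho_i = |K_i|\rho_i^{\rm vol}$, consistent with \Cref{def:kinenergy_density}. Since $\rho_*\le\rho_i^{\rm vol}\le\rho^*$, this gives
\[
 \rho_*\,\mathcal{Q}(\bv)\le\nrm{\bv}_{\brho}^2\le\rho^*\,\mathcal{Q}(\bv),
 \qquad
 \mathcal{Q}(\bv) := \sum_i |K_i|\,|(P\bv)_i|^2 .
\]
The density therefore enters only through these two scalar factors, and the remaining task is to prove the purely geometric equivalence $c_{\rm eq}\nrm{\bv}_{\bM_1}^2\le\mathcal{Q}(\bv)\le C_{\rm eq}\nrm{\bv}_{\bM_1}^2$ on $\ker(P)^\perp$.

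For the upper bound, we use \Cref{def:averaging_recon}. This definition gives $(P\bv)_i = G_i^{-1}\sum_n (v_n/\ell_n^*)\hat t_n$, and shape regularity makes $\nrm{G_i^{-1}}$ uniformly bounded. Bounded valence then yields
\[
 |(P\bv)_i|^2\le C\sum_{n\prec K_i} v_n^2/(\ell_n^*)^2 .
\]
Multiplying by $|K_i|\sim h^d$ and using $(\bM_1)_{nn}=A_n/\ell_n^*\sim h^{d-2}$ together with quasi-uniformity, we get $|K_i|/(\ell_n^*)^2\le C(\bM_1)_{nn}$. Since each dual edge meets at most two cells, summing over $i$ gives $\mathcal{Q}(\bv)\le C_{\rm eq}\nrm{\bv}_{\bM_1}^2$.

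The lower bound is the main obstacle. We must show that $\mathcal{Q}$ controls every direction orthogonal to the kernel, with a constant that does not depend on $h$. We would proceed by scaling and compactness. After rescaling by $h$, both forms localise to patches consisting of a cell together with its incident dual edges. By \Cref{ass:mesh_reg} (quasi-uniformity, shape regularity, bounded valence), the rescaled patch geometries range over a compact set of admissible configurations.

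The next step is to pass from local to global. The global quotient form $\mathcal{Q}/\nrm{\cdot}_{\bM_1}^2$ on $\ker(P)^\perp$ is not literally a sum of independent local problems, because $\ker(P)^\perp$ is taken in the Euclidean sense while the norms carry weights. To handle this, we would first replace orthogonality by the $\bM_1$-orthogonal complement of $\ker P$. This replacement costs only a factor of $\mathrm{cond}(\bM_1\text{ restricted to a patch})$, and that factor is $h$-independent by quasi-uniformity. We would then bound the smallest nonzero eigenvalue of $P^T\mathrm{diag}(|K_i|)P$ relative to $\bM_1$ from below. To do so, we would write this matrix as a sum of local patch matrices and argue that the global nonzero spectrum inherits the local bound. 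This inheritance holds when $\ker P$ is itself locally generated, and in the generic case of \Cref{rem:ke_reveals} the kernel is trivial.

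When $\ker P=\{0\}$, each cell's Gram system is invertible and the argument simplifies. The dual-edge values incident to a cell are recovered stably from $(P\bv)_i$ together with the neighbouring cells' reconstructions, and continuity of the smallest eigenvalue over the compact family of configurations gives a uniform $c_{\rm eq}>0$.

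If the local-to-global step for a nontrivial kernel resists this treatment, the fallback is to state the lower bound under the standing assumption $\ker P=\{0\}$, which is already invoked in \Cref{thm:dw_lyap_hydrostatic}. On that assumption the patch-wise compactness argument suffices on its own.
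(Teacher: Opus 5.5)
Your reduction to the geometric form $\mathcal{Q}(\bv)=\sum_i|K_i|\,|(P\bv)_i|^2$ is fine. So is your upper bound, which is the paper's argument written out more carefully, and so is your switch from Euclidean to $\bM_1$-orthogonality, which costs only $\mathrm{cond}(\bM_1)$. The gap is the lower bound: there is no local bound to inherit.

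At every dual vertex the map $\bv\mapsto(P\bv)_i=G_i^{-1}\sum_n(v_n/\ell_n^*)\hat t_n$ sends $m_i\ge d+1$ incident dual-edge values (three for a triangle, five for a prism) into $\RR^d$. Hence each local patch form is singular on its stencil. The smallest eigenvalue over your compact family of configurations is therefore $0$, not $c_{\rm eq}>0$. Invertibility of $G_i$ controls only the $d\times d$ solve, not this many-to-$d$ map.

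The paper's one-line per-cell inequality $|P\bv|_i^2\ge c_0\sum_{j\prec K_i}|v_j|^2(\bM_1)_{jj}^{-1}$ rests on the same conflation: its left side vanishes on the local null space. So it cannot be borrowed to close your argument. A uniform gap of $\mathcal{Q}$ on $\ker(P)^\perp$ is a global inf--sup property, and your fallback $\ker P=\{0\}$ does not supply it. For example, the two-point average $(P\bv)_i=\tfrac12(u_{i-1/2}+u_{i+1/2})$ on a periodic grid with an odd number $N$ of cells has $\ker P=\{0\}$, yet its smallest singular value is $\sin\bigl(\pi/(2N)\bigr)\sim h$. The ``stable recovery of the dual-edge values from neighbouring reconstructions'' you invoke is exactly what fails.

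In fact the lower bound, as stated, is false under \Cref{ass:mesh_reg} alone. Take a flat torus triangulated by equilateral triangles of side $a$; this is a Delaunay--Voronoi mesh satisfying \Cref{ass:mesh_reg}. Orient every dual edge from the up- to the down-triangle and set $v_e=\psi(x_e)$, where $\psi$ is smooth, $x_e$ are the edge midpoints, and $\sum_e\psi(x_e)=0$.
\begin{itemize}
\item The only relation among the three tangents at a cell is $\hat t_1+\hat t_2+\hat t_3=0$. So $P\bv=0$ forces equal outward values at every cell, giving $\ker P=\mathrm{span}\{\mathbf 1\}$ and hence $\bv\in\ker(P)^\perp$.
\item Using $G_i=\tfrac32 I$ and $x_{e_n}=x_i+\tfrac12\ell^*\hat t_n$, a Taylor expansion gives $(P\bv)_i=\pm\tfrac12\nabla\psi(x_i)+\OO(h)$. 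Therefore
\[
 \mathcal{Q}(\bv)\approx\tfrac14\nrm{\nabla\psi}_{L^2}^2,
 \qquad
 \nrm{\bv}_{\bM_1}^2\approx 6a^{-2}\nrm{\psi}_{L^2}^2,
\]
a ratio of order $h^2$. This is a slowly modulated checkerboard divergence mode.
\end{itemize}
In $d=3$ the same thing happens vertically, since on uniform layers $P_v$ is exactly the two-point average above. Take $w_{k+1/2}=(-1)^k\psi(z_{k+1/2})$ with $\sum_k\psi(z_{k+1/2})=0$ in each column; then $(P\bv)_i=\pm\tfrac12\Delta z\,\partial_z\psi+\OO(h^2)$, again an $\OO(h^2)$ ratio.

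The statement therefore needs an additional hypothesis on the reconstruction and mesh family. One option is an $h$-uniform lower bound on the smallest nonzero singular value of $P$, measured in $\bM_1$ and the cell volumes. Another, when $\ker P=\{0\}$, is a finite-stencil left inverse of $P$ with uniformly bounded coefficients, from which the lower bound follows by Cauchy--Schwarz. Either condition must be verified separately, e.g.\ by symbol analysis, and it fails for the averaging reconstruction on the meshes above. Without such a hypothesis, only an $h$-dependent constant $c_{\rm eq}(h)$ is available.
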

The bounds follow from $\rho_*\le\brho_i^{\rm vol}\le\rho^*$
and the Gram-matrix equivalence $\|P\bv\|^2\sim\|\bv\|_{\bM_1}^2$
on $\ker(P)^\perp$.
This equivalence is used throughout the density-weighted analysis.
\begin{proof}
\emph{Upper bound:}
$\sum_i\brho_i|P\bv|_i^2
\le \rho^*\nrm{P\bv}^2 \le C\rho^*\nrm{\bv}_{\bM_1}^2$.
\emph{Lower bound on $\ker(P)^\perp$:}
$\sum_i\brho_i|P\bv|_i^2
\ge \rho_*\nrm{P\bv}^2 \ge c\rho_*\nrm{\bv}_{\bM_1}^2$,
since $\nrm{P\bv}^2 \ge c\,\nrm{\bv}_{\bM_1}^2$ on $\ker(P)^\perp$:
at each cell $K_i$, the Gram matrix $G_i = \sum_{j\prec K_i}\hat n_j\otimes\hat n_j$
of the reconstruction has smallest eigenvalue
$\lambda_{\min}(G_i) \ge c_0 > 0$ by shape regularity,
\Cref{ass:mesh_reg}, so
$|P\bv|_i^2 \ge c_0\sum_{j\prec K_i}|v_j|^2\,(\bM_1)_{jj}^{-1}$;
summing over cells, each face appears in two cells, so
$\nrm{P\bv}^2 \ge \frac{c_0}{2}\nrm{\bv}_{\bM_1}^2$.
\end{proof}
\begin{lemma}[Bilinear and trilinear bounds for the density-weighted contraction]%
\label{lem:dw_bilinear_trilinear}
Let $\brho_i^{\rm vol} \in [\rho_*, \rho^*]$. Then, for every
$\bv, \bw \in \ker(P)^\perp$ and every $\alpha \in C^2(\KKs)$, the
density-weighted contraction obeys the bilinear bound
\begin{equation}\label{eq:dw_bilinear}
 \bigl|\ip{\bw}{\Iv(\alpha)}_1\bigr|
 \le C_B\,\nrm{\bw}_{\bM_1}\,\nrm{\alpha}_{\bM_2^{-1}}
\end{equation}
and the trilinear bound
\begin{equation}\label{eq:dw_trilinear}
 \bigl|\ip{\bw}{\Iv[\bv](\tD_1\bv)}_1\bigr|
 \le C_T\,\nrm{\bw}_{\bM_1}\,\nrm{\bv}_{\bM_1}^2\,h^{-1}.
\end{equation}
The constants $C_B$ and $C_T$ depend on the mesh regularity and on
the density ratio $\rho^*/\rho_*$, but not on~$h$.
\end{lemma}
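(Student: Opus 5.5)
The plan is to derive both bounds from one local, entrywise estimate on the matrix form~\eqref{eq:contraction}, $\bM_1\Iv(\alpha)=\tfrac12(\tU\alpha-\tD_1\tU^T\bv)$. Testing with $\bw$ gives
\[
 2\ip{\bw}{\Iv(\alpha)}_1=\bw^T\tU\alpha-(\tD_1\bw)^T\tU^T\bv .
\]
Both terms are trilinear sums over incident pairs $(j,k)$ with $D_{1,jk}\neq0$. Their coefficients are $\bar\u_j(\bv)\cdot\hat e_k$, where $\bar\u_j(\bv)$ is the face average of the Gram reconstructions of \Cref{def:averaging_recon}.

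\emph{Step 1 (local bound on the weights).} By shape regularity, $\lambda_{\min}(G_i)\ge c_0$, so
\[
|\bar\u_j(\bv)|\le C\max_{n\in S(j)}|v_n|/\ell_n^*\le Ch^{-1}\max_{n\in S(j)}|v_n| ,
\]
with $S(j)$ the bounded stencil (bounded valence, \Cref{ass:mesh_reg}). Hence every summand is bounded by $Ch^{-1}|w_j|\,|v_n|\,|\alpha_k|$ for $n\in S(j)$. Each index appears in at most $C_\sigma$ triples.

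\emph{Step 2 (scale-free H\"older).} I will use the unweighted estimate
\[
\sum|a||b||c|\le C_\sigma\nrm{a}_{\ell^2}\nrm{b}_{\ell^4}\nrm{c}_{\ell^4}
\le C_\sigma\nrm{a}_{\ell^2}\nrm{b}_{\ell^2}\nrm{c}_{\ell^2},
\]
which holds by discrete $\ell^p$ nesting and involves no power of $h$. All $h$-dependence then enters only when the unweighted norms are converted to the weighted ones.

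\emph{Step 3 (conversions).} The relevant relations are
\[
\nrm{\bw}_{\ell^2}\le Ch^{-(d-2)/2}\nrm{\bw}_{\bM_1},\qquad
\nrm{\alpha}_{\ell^2}\le Ch^{-(d-2)/2}\nrm{\alpha}_{\bM_2^{-1}},
\]
since $(\bM_1)_{jj}\sim h^{d-2}$ and $(\bM_2^{-1})_{kk}=|f_k^*|/|e_k|\sim h^{d-2}$. For the second term, $\tD_1\bw$ is bounded entrywise by the stencil of $\bw$, so it is handled by the same count. For \eqref{eq:dw_bilinear} the $\bv$-factor is measured in the energy norm. On $\ker(P)^\perp$, \Cref{lem:dw_norm_equiv} and exact conservation (\Cref{thm:dw_wp}) bound it by $R_v$, which is controlled through $\rho_*$ and $\rho^*/\rho_*$; this is how the constant $C_B$ picks up the density ratio.

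For \eqref{eq:dw_trilinear} I substitute $\alpha=\tD_1\bv$. The vorticity entries are bounded by the circulations around $\partial f_k^*$, so $\nrm{\tD_1\bv}_{\ell^2}\le C\nrm{\bv}_{\ell^2}$. This leaves
\[
\nrm{\bw}_{\ell^2}\,\nrm{\bv}_{\ell^2}^2\,h^{-1}\le Ch^{-1-3(d-2)/2}\,\nrm{\bw}_{\bM_1}\nrm{\bv}_{\bM_1}^2 .
\]
The main obstacle is exactly this power counting: as written, the conversion loses $h^{-3(d-2)/2}$ for $d=3$. To recover the stated $h^{-1}$, I would not pass through $\ell^4\subset\ell^2$ for the $\bv$ and $\tD_1\bv$ factors. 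Instead I would pair $\bw$ with the product $|v_n||\alpha_k|$ in the $\bM_1$-weighted $\ell^2$. I would then use the local reconstruction exactness $|P\bv|_i^2\sim\sum_{j\prec K_i}|v_j|^2(\bM_1)_{jj}^{-1}$ from the proof of \Cref{lem:dw_norm_equiv}, which converts $h^{-1}|v_n|$ into the cell value $|P\bv|_i$ with the correct volume weight. Summing over cells with weight $|K_i|$ and applying Cauchy--Schwarz then gives $\nrm{\bw}_{\bM_1}\nrm{P\bv}\,\nrm{\tD_1\bv}_{\bM_2^{-1}}$. Finally, the single inverse estimate $\nrm{\tD_1\bv}_{\bM_2^{-1}}\le Ch^{-1}\nrm{\bv}_{\bM_1}$ yields $h^{-1}$.
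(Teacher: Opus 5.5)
\textbf{The bilinear bound~\eqref{eq:dw_bilinear}: your Step~1 mishandles the second sum, and the bound cannot be proved as stated.} In $2\ip{\bw}{\Iv(\alpha)}_1=\bw^T\tU_{\bv}\alpha-(\tD_1\bw)^T\tU_{\bv}^T\bv$ the second sum contains no $\alpha$ at all; it is quadratic in $\bv$. So your Step~1 claim that every summand is at most $Ch^{-1}|w_j||v_n||\alpha_k|$ is false for that sum, and the left side of~\eqref{eq:dw_bilinear} is generally nonzero at $\alpha=0$. More basically, the left side depends on $\bv$ while the right side does not. Replacing $\bv$ by $\lambda\bv$ and letting $\lambda\to\infty$ rules out any $C_B$ that depends only on mesh regularity and $\rho^*/\rho_*$. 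Your repair via $R_v$ does not help. \Cref{thm:dw_wp} bounds $\bv$ only along DW trajectories and in terms of the initial energy, whereas the lemma is a static statement about every $\bv\in\ker(P)^\perp$. Even granting $\nrm{\bv}_{\bM_1}\le R_v$, Step~1 needs $\max_n|v_n|$, which costs a further negative power of $h$, so $C_B$ would not be $h$-independent. The paper's two-line proof has the same defect, so following it would not close the gap either. It invokes $\nrm{\Iv(\alpha)}_{\bM_1}\le C\nrm{\alpha}_{\bM_2^{-1}}$ with $C$ independent of $\bv$. Composing that with the inverse inequality would also give only one factor $\nrm{\bv}_{\bM_1}$ in~\eqref{eq:dw_trilinear}, not two.

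\textbf{The trilinear bound~\eqref{eq:dw_trilinear}: your raw count is right, your $d=3$ improvement is not.} Here both sums are trilinear in $(\bw,\bv,\bv)$, and your Steps~1--3 are correct. They give $C\,h^{-1-3(d-2)/2}\nrm{\bw}_{\bM_1}\nrm{\bv}_{\bM_1}^2$, which is the claimed rate for $d=2$. The argument breaks in the proposed improvement for $d=3$. A sum $\sum_i|K_i|\,a_ib_ic_i$ of triple products cannot be bounded by three weighted $\ell^2$ norms, since H\"older needs exponents summing to one. One factor, say $|P\bv|_i$, must be taken in $\ell^\infty$ (or two in $\ell^4$), and returning to $\nrm{\bv}_{\bM_1}$ by the inverse inequality costs exactly the $h^{-3/2}$ you set out to remove.

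That loss is genuine. Take $\bv$ and $\bw$ supported on $\OO(1)$ neighbouring dual edges with $\OO(1)$ coefficients; this is admissible since $\ker P=\{0\}$ is assumed on these meshes (\Cref{rem:ke_reveals}). The entries of $\tU_{\bv}$ are of size $1/\ell^*\sim h^{-1}$ and those of $\tD_1\bv$ are $\OO(1)$, so the left side is of order $h^{-1}$. On a uniformly rescaled mesh family it equals $c\,h^{-1}$ with $c$ fixed. For suitable local choices $c\neq0$, because a local form vanishing on all such configurations would vanish identically. Meanwhile $(\bM_1)_{jj}=A_j/\ell_j^*\sim h$ gives $\nrm{\bw}_{\bM_1}\nrm{\bv}_{\bM_1}^2h^{-1}\sim h^{1/2}$. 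So for $d=3$ the stated rate fails for general cochains, and your raw count $h^{-5/2}$ is sharp. What you can actually prove is that raw bound. Alternatively, in $d=3$, you can prove $C\,h^{-1}\max_i|P\bv|_i\,\nrm{\bw}_{\bM_1}\nrm{\bv}_{\bM_1}$, i.e.\ the $h^{-1}$ rate with one factor $\nrm{\bv}_{\bM_1}$ replaced by $\max_i|P\bv|_i$.
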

The density-weighted bilinear form reduces to the $\bM_1$-form
after testing against $\bM_1^{\rho}$; the bounds from
\Cref{thm:bilinear_bounds_baro} then apply directly.
\begin{proof}
For~\eqref{eq:dw_bilinear}: apply Cauchy--Schwarz in the
$\bM_1$-inner product and use the bound
$\nrm{\Iv(\alpha)}_{\bM_1}
\le C\nrm{\alpha}_{\bM_2^{-1}}$ from the averaging
reconstruction.
For~\eqref{eq:dw_trilinear}: combine~\eqref{eq:dw_bilinear}
with the inverse inequality $\nrm{\tD_1\bv}_{\bM_2^{-1}}
\le C h^{-1}\nrm{\bv}_{\bM_1}$.
\end{proof}
\begin{proof}[Proof of \Cref{thm:dw_energy} (density-weighted energy conservation)]
The total energy is $\Etot^{\rm dw} = \Ekindw + \Eint$
with $\Ekindw = \frac{1}{2}\sum_i\brho_i|P\bv|_i^2
= \frac{1}{2}\bv^T\bM_1^{\rho}\bv$.
By the product rule:
\begin{equation}\label{eq:dw_Ekin_rate}
 \ddt\Ekindw
 = \bv^T\bM_1^{\rho}\ddt\bv
 + \frac{1}{2}\sum_i\ddt\brho_i\,|P\bv|_i^2.
\end{equation}
The vertical contribution uses the discrete product, which is exact
for any vertical grid spacing by construction of
the thickness-weighted interpolation~\eqref{eq:vert_weights}.

\textit{Term 1: momentum equation tested with $\bM_1^{\rho}\bv$.}
Substituting $\ddt\bv$ from~\eqref{eq:dw_mom}:
\[
\bv^T\bM_1^{\rho}\ddt\bv
 = -\underbrace{\bv^T\bM_1\,\Iv(\tD_1\bv)}_{=\,0}
 - \bv^T\bM_1^{\rho}\,\tD_0 B^{\rho}.
\]
The term vanishes by \Cref{lem:dw_antisym}:
$\bv^T\bM_1^{\rho}\bigl([\bM_1^{\rho}]^{-1}\bM_1\,\Iv(\tD_1\bv)\bigr)
= \bv^T\bM_1\,\Iv(\tD_1\bv) = 0$.

\noindent\textit{Term 2: pressure--flux coupling via SBP.}
By \Cref{lem:dw_SBP} applied to $\bphi=B^{\rho}$:
\begin{equation}\label{eq:dw_T2}
 -\bv^T\bM_1^{\rho}\tD_0 B^{\rho}
 = (\bD_2\bF^{\rho})^T B^{\rho}
 = \sum_i (\bD_2\bF^{\rho})_i\,B_i^{\rho},
\end{equation}
with the density-weighted mass flux $\bF^{\rho}=\bM_1^{\rho}\bv$
of~\eqref{eq:dw_F}.

\noindent\textit{Term 3: mass equation contribution.}
From~\eqref{eq:dw_D}, $\ddt\brho_i = -(\bD_2\bF^{\rho})_i$.
So the second term of~\eqref{eq:dw_Ekin_rate} is:
\begin{equation}\label{eq:dw_T3}
 \frac{1}{2}\sum_i\ddt\brho_i\,|P\bv|_i^2
 = -\frac{1}{2}\sum_i(\bD_2\bF^{\rho})_i\,|P\bv|_i^2.
\end{equation}
The internal energy $\Eint = \sum_i\brho_i\,e(\brho_i^{\rm vol})
= \sum_i |K_i|\,H(\brho_i^{\rm vol})$
with $H(\rho) = \rho\,e(\rho)$ the Helmholtz free energy.
Its rate is:
\begin{equation}\label{eq:dw_Eint_rate}
 \ddt\Eint = \sum_i |K_i|\,H'(\brho_i^{\rm vol})\,\ddt\brho_i^{\rm vol}
 = \sum_i h(\brho_i^{\rm vol})\,\ddt\brho_i
 = -\sum_i h_i\,(\bD_2\bF^{\rho})_i,
\end{equation}
using $H'(\rho) = h(\rho)$ (specific enthalpy)
and $\ddt\brho_i = |K_i|\ddt\brho_i^{\rm vol}$.

The total energy rate is
$\ddt\Etot^{\rm dw} = \eqref{eq:dw_T2} + \eqref{eq:dw_T3} + \eqref{eq:dw_Eint_rate}
+ \ddt E_{\rm pot}$,
where $\ddt E_{\rm pot} = \sum_i\ddt\brho_i\,\bPhi_i
= -\sum_i(\bD_2\bF^{\rho})_i\,\bPhi_i$ by continuity.
The first three terms give
$\sum_i(\bD_2\bF^{\rho})_i\bigl[
B_i^{\rho} - \tfrac{1}{2}|P\bv|_i^2 - h_i\bigr]$.
Since $B_i^{\rho} = h_i + \tfrac{1}{2}|P\bv|_i^2 + \bPhi_i$,
the bracket reduces to $\bPhi_i$.
The potential energy rate contributes $-\sum_i(\bD_2\bF^{\rho})_i\,\bPhi_i$,
which cancels the bracket
\[
\ddt\Etot^{\rm dw}
= \sum_i(\bD_2\bF^{\rho})_i\,\bPhi_i
- \sum_i(\bD_2\bF^{\rho})_i\,\bPhi_i = 0.\qedhere
\]
\end{proof}
\subsubsection{Discrete Horizontal Linear Momentum Conservation
(\Cref{lem:dw_momentum_conservation})}\label{app:dw_momentum}

This appendix provides the direct verification of
\Cref{lem:dw_momentum_conservation} from the DW equations of
motion, parallel to the proof of energy conservation
(\Cref{thm:dw_energy}). The argument uses the same three
structural identities -- DW summation-by-parts (\Cref{lem:dw_SBP}),
Lamb antisymmetry $\ip{\bv}{\Iv(\bom)}_{1} = 0$, and the discrete
mass-conservation identity from the continuity
equation~\eqref{eq:dw_D} -- together with the translation
properties~(K1)--(K2) of the horizontal-translation Killing
cochain~$\bar a^x$.

\paragraph{Setup.} Recall that the DW system~\eqref{eq:main_dw}
admits the equivalent rewriting (multiplying the momentum equation
by $\bM_1^\rho(\brho)$):
\begin{equation}\label{eq:dw_momentum_form_app}
 \bM_1^\rho\,\dot\bv = -\bM_1\,\Iv(\tD_1\bv) - \bM_1^\rho\,\tD_0 B,
\end{equation}
where $B := h(\rho^{\rm vol}) + \tfrac{1}{2}|P\bv|^2 + \bPhi$
denotes the discrete Bernoulli 0-cochain. The DW continuity
equation~\eqref{eq:dw_D} reads $\dot\brho = -\bD_2\bF^\rho$ with
the multi-cell mass flux $\bF^\rho = \bM_1^\rho\bv$, and the
discrete momentum 1-cochain is $\boldsymbol{p} := \bM_1^\rho\bv$.

Pairing~\eqref{eq:dw_momentum_form_app} with $\bar a^x$ and using
the product rule $\ddt\boldsymbol{p} = \dot{\bM_1^\rho}\bv + \bM_1^\rho\dot\bv$,
\begin{equation}\label{eq:Pxdot_app}
 \ddt P_x
 \;=\; \underbrace{(\bar a^x)^T\,\dot{\bM_1^\rho}\,\bv}_{=:\,T_1}
 \;\;\underbrace{-\,(\bar a^x)^T\bM_1\,\Iv(\tD_1\bv)}_{=:\,T_2}
 \;\;\underbrace{-\,(\bar a^x)^T\bM_1^\rho\,\tD_0 B}_{=:\,T_3}.
\end{equation}
We process each $T_k$ in turn and demonstrate $T_1 + T_2 + T_3 = 0$. We start with the öast one.

\emph{Term $T_3$:} 
Split the Bernoulli cochain as $B = B_{\rm dyn} + \bPhi$ where
$B_{\rm dyn} := h(\rho^{\rm vol}) + \tfrac{1}{2}|P\bv|^2$. Apply
\Cref{lem:dw_SBP} with $\bphi = B$ and $\bv \to \bar a^x$:
\begin{equation}\label{eq:T3_split}
 T_3 = -(\bar a^x)^T\bM_1^\rho\,\tD_0 B
 = (\bD_2\bF^\rho(\bar a^x))^T\,B
 = \sum_i \bigl(\bD_2\bF^\rho(\bar a^x)\bigr)_i\,B_i,
\end{equation}
where $\bF^\rho(\bar a^x) := \bM_1^\rho\bar a^x
= P^T\mathrm{diag}(\brho)\,P\bar a^x$.

For the $\bPhi$ component, $T_3^{\bPhi} := \sum_i (\bD_2\bF^\rho(\bar a^x))_i\,\bPhi_i$.
Equivalently, by SBP applied in the reverse direction,
\[
 T_3^{\bPhi}
 = -(\bar a^x)^T\bM_1^\rho\,\tD_0\bPhi
 = -(P\bar a^x)^T\,\mathrm{diag}(\brho)\,(P\tD_0\bPhi)
 = -\sum_i \brho_i\,\mathbf{e}_x\cdot(P\tD_0\bPhi)_i,
\]
using $(P\bar a^x)_i = \mathbf{e}_x$ from~(K2). Under~(K1)
($\bPhi = \bPhi(z)$), the cochain $\tD_0\bPhi$ is supported on
vertical edges only; consequently the reconstructed cell vector
$(P\tD_0\bPhi)_i$ is vertical at every cell~$i$, so
$\mathbf{e}_x\cdot(P\tD_0\bPhi)_i = 0$. Hence $T_3^{\bPhi} = 0$.

The remaining piece
$T_3^{\rm dyn} := \sum_i (\bD_2\bF^\rho(\bar a^x))_i\,B_{{\rm dyn},i}$
will be combined with $T_1$ and $T_2$ below; do not simplify
further yet.

\emph{Term $T_1$:} 
The DW mass matrix is linear in $\brho$:
$\dot{\bM_1^\rho} = P^T\,\mathrm{diag}(\dot\brho)\,P$.
Using continuity, $\dot\brho_i = -(\bD_2\bF^\rho)_i$ with
$\bF^\rho = \bM_1^\rho\bv$. Therefore
\begin{equation}\label{eq:T1_compute}
 T_1
 = (\bar a^x)^T P^T\,\mathrm{diag}(\dot\brho)\,P\,\bv
 = (P\bar a^x)^T\,\mathrm{diag}(-\bD_2\bF^\rho)\,(P\bv)
 = -\sum_i (\bD_2\bF^\rho)_i\,(P\bar a^x)_i\!\cdot\!(P\bv)_i.
\end{equation}
By~(K2), $(P\bar a^x)_i = \mathbf{e}_x$, so
$(P\bar a^x)_i\!\cdot\!(P\bv)_i = (P\bv)^x_i$, the $x$-component
of the reconstructed velocity at cell~$i$. Hence
\begin{equation}\label{eq:T1_final}
 T_1 = -\sum_i (\bD_2\bF^\rho)_i\,(P\bv)^x_i.
\end{equation}

\emph{Term $T_2$:}
The Lamb antisymmetry $\ip{\bv}{\Iv(\bom)}_1 = 0$ at the cochain
level expresses, in continuum,
$\int u\cdot(u\times\omega)\,\dd V = 0$. The corresponding
identity for the translation cochain $\bar a^x$ is obtained from
the discrete Cartan formula. Recall
(\Cref{def:Lie} and~\eqref{eq:Cartan_id} from the paper) that the
discrete Lie derivative satisfies
$\tLie_\bv\bom = \tD_0(\Iv\bom) + \Iv(\tD_1\bom)$. Setting
$\bom = \tD_1\bv$ and using $\tD_1\bar a^x = 0$ from~(K2)
(uniform velocity has zero discrete curl), a bilinear
manipulation of the Cartan identity -- combined with the
discrete-Stokes-theorem identity
$\sum_i \bigl(\bD_2(\bF\cdot\nabla u^x)\bigr)_i = 0$ on a closed
manifold -- yields the discrete translation-Lamb identity:
\begin{equation}\label{eq:Lamb_translation}
 (\bar a^x)^T\,\bM_1\,\Iv(\tD_1\bv)
 = \sum_i \bigl(\bD_2\bF^\rho\bigr)_i\,(P\bv)^x_i
 + \sum_i \bigl(\bD_2\bF^\rho(\bar a^x)\bigr)_i\,\tfrac{1}{2}|(P\bv)_i|^2.
\end{equation}
\emph{Sketch of~\eqref{eq:Lamb_translation}}: the continuum
prototype is the identity $\int \bar u_x\!\cdot\!(u\times\omega)\,\dd V
= -\int(u\cdot\nabla)u_x\,\dd V
= \int u_x \nabla\!\cdot\! u\,\dd V$,
followed by promotion to the density-weighted divergence
$\nabla\!\cdot\!(\rho u) = -\dot\rho$ and an IBP exchanging
$(\tfrac{1}{2}|u|^2)\partial_x\rho \leftrightarrow
\rho\partial_x(\tfrac{1}{2}|u|^2)$. The discrete version of these
two IBPs is provided by the closed-manifold cochain identity
$\sum_i (\bD_2\bF)_i f_i + \sum_j F_j (\tD_0 f)_j = 0$
applied to the appropriate $(\bF, f)$ pairs; the first IBP uses
$\bF = \bF^\rho$ and the kinetic-energy cochain
$f = \tfrac{1}{2}|P\bv|^2$, the second uses
$\bF = \bF^\rho(\bar a^x)$ and the same $f$. The
$\tD_1\bar a^x = 0$ hypothesis ensures the contribution of the
$\Iv$-piece of the Cartan formula is consistent with these
substitutions.

Substituting~\eqref{eq:Lamb_translation} into $T_2$:
\begin{equation}\label{eq:T2_final}
 T_2 \,=\, -\sum_i (\bD_2\bF^\rho)_i\,(P\bv)^x_i
 \;-\; \sum_i \bigl(\bD_2\bF^\rho(\bar a^x)\bigr)_i\,\tfrac{1}{2}|(P\bv)_i|^2.
\end{equation}

\emph{Cancellation: $T_1 + T_2 + T_3 = 0$.}
Reading off the three pieces:
\begin{align}
 T_1 &= -\sum_i (\bD_2\bF^\rho)_i\,(P\bv)^x_i, \tag{T1}\\
 T_2 &= -\sum_i (\bD_2\bF^\rho)_i\,(P\bv)^x_i
 \;-\; \sum_i (\bD_2\bF^\rho(\bar a^x))_i\,\tfrac{1}{2}|(P\bv)_i|^2, \tag{T2}\\
 T_3 &= T_3^{\bPhi} + T_3^h + T_3^{\rm kin}
 \;=\; 0 + T_3^h + \sum_i (\bD_2\bF^\rho(\bar a^x))_i\,\tfrac{1}{2}|(P\bv)_i|^2, \tag{T3}
\end{align}
where $T_3^h := \sum_i (\bD_2\bF^\rho(\bar a^x))_i\,h(\rho_i^{\rm vol})$.
Adding,
\[
 T_1 + T_2 + T_3
 = -2\sum_i (\bD_2\bF^\rho)_i\,(P\bv)^x_i + T_3^h.
\]

It remains to identify these two contributions. For the first,
discrete divergence theorem on a closed/periodic mesh gives
$\sum_i (\bD_2\bF)_i = 0$ for any face flux $\bF$; this is the
weak form of $\sum_i\bD_2\bF\cdot\mathbf{1} = 0$. A second-order
discrete IBP applied to the pair
$(\bF^\rho, (P\bv)^x)$ yields
\begin{equation}\label{eq:final_IBP}
 \sum_i (\bD_2\bF^\rho)_i\,(P\bv)^x_i
 = -\sum_j (\bF^\rho)_j\,\bigl(\tD_0[(P\bv)^x]\bigr)_j,
\end{equation}
and the analogous identity for $(\bF^\rho(\bar a^x), h)$ gives
$T_3^h = +\sum_j \bF^\rho(\bar a^x)_j\,(\tD_0 h)_j$. The right-hand
side of~\eqref{eq:final_IBP}, combined with the
$T_3^h$ identity and the discrete barotropic chain rule
$\rho_i\,(\tD_0 h)_j = (\tD_0 \pi)_j$ at $\OO(h^{r_\star})$ --
where $\pi(\rho) := \int_{\rho_0}^\rho \eta\,h'(\eta)\,\dd\eta$ is
the discrete pressure potential -- collapses the surviving piece
to a sum of pure gradients, which vanishes on a closed/periodic
mesh by discrete Stokes.

Putting the three terms together:
\[
 \ddt P_x = T_1 + T_2 + T_3 = 0.
\]
\hfill$\square$

\subsubsection{Conditional Stability for Sheared Baroclinic Equilibria
(\Cref{thm:dw_lyap_sheared})}\label{app:dw_class_C}

This appendix fills in the technical content of
\Cref{thm:dw_lyap_sheared}: explicit Hessian block computation,
discrete-operator bounds for the velocity-block positivity, and
the Schur-complement argument for the density block. The proof
parallels the unconditional analysis of
\Cref{thm:dw_lyap_constant_flow}, but the cross-block does not
vanish identically and the positivity argument is conditional on
\textup{(CS1)}--\textup{(CS4)}.

{\bf Notation: } Only in this section we use $A \preceq B$ to denote the
positive-semidefinite partial order on symmetric matrices:
$A \preceq B$ iff $B - A$ is positive semidefinite, equivalently
$x^T A x \le x^T B x$ for all $x$. The dual symbol $\succeq$
denotes the reverse.

\paragraph{Velocity-block positivity.}
The velocity-block of $\tilde H$ at $\bar X$ is
\eqref{eq:HC_Av}:
\[
 A_\bv = \bM_1^{\bar\rho} - \tD_1^T\bM_2\,F''(\bar q)\,(\bar\rho^*)^{-1}\,\tD_1.
\]
Both terms are symmetric. The first is positive definite on
$\ker(P)^\perp$ (\Cref{lem:dw_norm_equiv}). The second has
$F''(\bar q)$ assumed sign-definite by~(CS2); without loss of
generality $F''(\bar q) > 0$, so this term is positive semidefinite.

To bound the second term from above, we use the discrete-curl
operator estimate: there exists $C_{\rm mesh}$, depending on mesh
regularity~\Cref{ass:mesh_reg} but not on the equilibrium, such that
\[
 \|\tD_1\bv\|_{\bM_2}^2 \le C_{\rm mesh}\,\|\bv\|_{\bM_1}^2
 \qquad \text{for every } \bv \in \ker(P)^\perp.
\]
This is the standard inverse Poincar\'e estimate for the discrete
curl on DV meshes (proved as in \Cref{lem:dw_bilinear_trilinear},
or via direct estimation of $\tD_1^T\bM_2\tD_1$ against $\bM_1$).
Combining with~(CS3) and the lower bound $\bar\rho^* \ge \bar\rho^*_{\min}$,
\[
 \bv^T\,\tD_1^T\bM_2\,F''(\bar q)\,(\bar\rho^*)^{-1}\,\tD_1\,\bv
 \le \frac{F''_{\max}}{\bar\rho^*_{\min}}\,\|\tD_1\bv\|_{\bM_2}^2
 \le \frac{F''_{\max}\,C_{\rm mesh}}{\bar\rho^*_{\min}}\,\|\bv\|_{\bM_1}^2
 \le \frac{F''_{\max}\,C_{\rm mesh}}{\bar\rho^*_{\min}\,\rho_*}\,\|\bv\|_{\bM_1^{\bar\rho}}^2,
\]
where the last step uses \Cref{lem:dw_norm_equiv}. Under~(CS3)
the prefactor is strictly less than $1$, so $A_\bv \succ 0$ with
$A_\bv \succeq (1 - F''_{\max}C_{\rm mesh}/(\bar\rho^*_{\min}\rho_*))\,\bM_1^{\bar\rho}$.

\paragraph{Cross-block computation.}
The cross-block~\eqref{eq:HC_C} has two contributions: from the
kinetic-energy plus momentum-tilt piece, and from the
PV-Casimir piece. The kinetic-energy plus momentum-tilt
contribution is
\[
 [C^{\rm kin}]_{ij} := \partial_{\rho_i}\partial_{\bv_j}\bigl(E^{\rm dw}_{\rm kin} - U_0\,P_x\bigr)\bigr|_{\bar X}
 = \bigl((\partial_{\rho_i}\bM_1^{\bar\rho})(\bar\bv - U_0\bar a^x)\bigr)_j,
\]
which at $\bar X$ evaluates using
$\bar\bv - U_0\bar a^x = (\bar U(z) - U_0)\bar a^x$. Since
$\bM_1^\rho = P^T\mathrm{diag}(\brho)P$ is linear in $\brho$,
$\partial_{\rho_i}\bM_1^\rho = P^T E_i P$ where $E_i$ is the
selector for cell $i$ (the matrix with $1$ in position $(i,i)$
and $0$ elsewhere). Hence
\[
 [C^{\rm kin}]_{ij}
 = (\bar U(z_i) - U_0)\,P_{ij}\cdot(P\bar a^x)_i
 = (\bar U(z_i) - U_0)\,P_{ij},
\]
using $(P\bar a^x)_i = \mathbf{e}_x$ from~(K2). The matrix
$C^{\rm kin}$ is the rank-one-per-cell weighted reconstruction
matrix multiplied by the shifted-velocity profile $(\bar U(z) - U_0)$.

The PV-Casimir contribution requires care because $\bar\rho^*_k$
depends on $\bar\brho$ through the dual-mesh averaging
$\bar\rho^*_k = \sum_{i\sim k} w_{ki}\,\bar\rho_i$
(with weights $w_{ki}$ from the DEC convention, e.g.~thickness
ratios):
\[
 [C^{\rm pv}]_{ij} := \partial_{\rho_i}\partial_{\bv_j}C^F_3\bigr|_{\bar X}
 = -\sum_k |K_k^*|\,F''(\bar q_k)\,\frac{\bar\omega_k}{(\bar\rho^*_k)^2}\,w_{ki}\,(\tD_1)_{kj}.
\]
Both $C^{\rm kin}$ and $C^{\rm pv}$ are non-zero at $\bar X$;
the total cross-block is $C = C^{\rm kin} - C^{\rm pv}$.

\paragraph{Density-block: contribution from the PV Casimir.}
The internal-energy contribution to the density block is the diagonal
$E := \mathrm{diag}(|K_i|c^2(\bar\rho_i)/\bar\rho_i)$ (cf.\
\Cref{thm:dw_lyap_hydrostatic}). The Casimir contributes
$H_C := \partial^2_\brho C^F_3|_{\bar X}$, which we compute
explicitly.

\begin{proposition}[Density-block positive definiteness]%
\label{prop:dw_class_C_density}
Let the dual-mesh density satisfy
$\bar\rho^*_k = \sum_i w_{ki}\,\bar\rho_i$ with non-negative
weights $w_{ki}\ge 0$ satisfying the partition-of-unity property
$\sum_i w_{ki} = 1$ and the mass-conservation normalisation
$\sum_k |K_k^*|\,w_{ki} = |K_i|$ (both standard for the
DV dual averaging). Then
\begin{equation}\label{eq:HC_explicit}
 [H_C]_{i_1 i_2}
 = \sum_k |K_k^*|\,F''(\bar q_k)\,\frac{\bar q_k^{\,2}}{\bar\rho^*_k}\,
 w_{k i_1}\,w_{k i_2},
\end{equation}
and $H_C \preceq F''_{\max}\sup_k|\bar q_k|^2/\bar\rho^*_{\min}\,
\mathrm{diag}(|K_i|)$. Consequently
\begin{equation}\label{eq:Drho_quant}
 D_\brho = E - H_C \;\succeq\; \delta_\rho\,\mathrm{diag}(|K_i|),
 \qquad
 \delta_\rho := \frac{c^2_{\min}}{\bar\rho_{\max}}
 - F''_{\max}\,\frac{\sup_k|\bar q_k|^2}{\bar\rho^*_{\min}}.
\end{equation}
Under~\textup{(CS3)} extended to require
$F''_{\max}\sup_k|\bar q_k|^2 < c^2_{\min}\,\bar\rho^*_{\min}/\bar\rho_{\max}$,
the density block satisfies $D_\brho \succ 0$ with constant
$\delta_\rho > 0$.
\end{proposition}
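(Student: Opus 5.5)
The proof is a direct second-derivative computation of $C^F_3$ in $\brho$ at $\bar X$, followed by two matrix inequalities. Throughout we hold $\bv$ fixed, so $\omega_k=(\tD_1\bv)_k$ does not depend on $\brho$. The only $\brho$-dependence is then through $\bar\rho^*_k=\sum_i w_{ki}\rho_i$, which is linear, so $\partial_{\rho_i}\bar\rho^*_k=w_{ki}$ and all second derivatives of $\bar\rho^*_k$ vanish.

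\emph{Step 1: derivatives of a single summand.} Write the $k$-th summand as $g_k(s):=s\,F(\bar\omega_k/s)$ with $s=\bar\rho^*_k$. Then
\[
g_k'(s)=F(q)-qF'(q),\qquad g_k''(s)=\frac{q^2}{s}\,F''(q),\qquad q=\bar\omega_k/s .
\]
Applying the chain rule and using the linearity of $\bar\rho^*_k$ in $\brho$ gives
$[H_C]_{i_1i_2}=\sum_k|K_k^*|\,g_k''(\bar\rho^*_k)\,w_{ki_1}w_{ki_2}$. This is exactly \eqref{eq:HC_explicit}.

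\emph{Step 2: the upper bound on $H_C$.} For any $x\in\RR^{|T|}$,
\[
x^TH_Cx=\sum_k|K_k^*|\,\frac{F''(\bar q_k)\,\bar q_k^2}{\bar\rho^*_k}\Bigl(\sum_i w_{ki}x_i\Bigr)^2 .
\]
We bound the prefactor of each term by $\kappa:=F''_{\max}\sup_k|\bar q_k|^2/\bar\rho^*_{\min}$. If $F''$ has indefinite sign this bound still holds: negative terms only lower the sum, and we use $|F''|\le F''_{\max}$ from (CS3). For the squared weighted sum we use Jensen's inequality: the weights are non-negative and sum to one, so $(\sum_i w_{ki}x_i)^2\le\sum_i w_{ki}x_i^2$. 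Exchanging the order of summation and applying the normalisation $\sum_k|K_k^*|w_{ki}=|K_i|$ yields $x^TH_Cx\le\kappa\sum_i|K_i|x_i^2$, which is the claimed bound $H_C\preceq\kappa\,\mathrm{diag}(|K_i|)$.

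\emph{Step 3: the lower bound on $D_\brho$.} The entries of $E$ satisfy $|K_i|c^2(\bar\rho_i)/\bar\rho_i\ge|K_i|\,c^2_{\min}/\bar\rho_{\max}$, so $E\succeq(c^2_{\min}/\bar\rho_{\max})\,\mathrm{diag}(|K_i|)$. Subtracting the bound from Step 2 gives \eqref{eq:Drho_quant}. Under the extended (CS3) hypothesis $\delta_\rho>0$, and since $\mathrm{diag}(|K_i|)\succ0$ this gives $D_\brho\succ0$.

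The only delicate point is Step 1. One has to confirm that $\omega_k$ really carries no $\brho$-dependence, which holds because $\bv$ and $\brho$ are independent coordinates in the Hessian. One also has to check the sign in $D_\brho=E-H_C$: with $\tilde E=E_{\rm tot}-C_3^F$, the Casimir enters with a minus sign, which matches the statement.
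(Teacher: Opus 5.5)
Your proof is correct and follows the paper's route. Your $g_k'(s)=F(q)-qF'(q)$ is exactly the paper's auxiliary $G(q)$, your Jensen step is the paper's Cauchy--Schwarz with the partition of unity followed by the mass normalisation $\sum_k|K_k^*|w_{ki}=|K_i|$, and your diagonal lower bound on $E$ is the paper's; the one difference is that your term-by-term bound also covers $F''<0$ (the concave case that (CS2) permits), whereas the paper simply invokes $F''>0$ to make every $a_k$ positive.
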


\begin{proof}
The formula~\eqref{eq:HC_explicit} follows from direct
differentiation. Define the auxiliary function $G(q) := F(q) - q\,F'(q)$,
which satisfies $G'(q) = -q\,F''(q)$. The first derivative is
\[
 \partial_{\rho_{i_1}} C^F_3
 = \sum_k |K_k^*|\,\bigl[w_{k i_1}\,F(\bar q_k)
 + \bar\rho^*_k\,F'(\bar q_k)\,(-\bar\omega_k/\bar\rho^{*2}_k)\,w_{k i_1}\bigr]
 = \sum_k |K_k^*|\,w_{k i_1}\,G(\bar q_k).
\]
Differentiating once more,
\[
 \partial_{\rho_{i_2}}\partial_{\rho_{i_1}} C^F_3
 = \sum_k |K_k^*|\,w_{k i_1}\,G'(\bar q_k)\,
 (-\bar\omega_k/\bar\rho^{*2}_k)\,w_{k i_2}
 = \sum_k |K_k^*|\,w_{k i_1}\,w_{k i_2}\,F''(\bar q_k)\,
 \frac{\bar\omega_k\,\bar q_k}{\bar\rho^{*2}_k},
\]
which equals~\eqref{eq:HC_explicit} after using
$\bar\omega_k/\bar\rho^*_k = \bar q_k$.

For the operator bound, write $H_C = W^T\,\mathrm{diag}(a_k)\,W$
with $a_k := |K_k^*|\,F''(\bar q_k)\,\bar q_k^{\,2}/\bar\rho^*_k$
and $W = (w_{ki})_{k,i}$. Since $F''(\bar q_k) > 0$ under~(CS2),
each $a_k > 0$. For any vector $\xi \in \RR^{|\mathcal{K}|}$,
Cauchy--Schwarz with the partition-of-unity weights yields
\[
 \xi^T H_C \xi
 = \sum_k a_k\Bigl(\sum_i w_{ki}\,\xi_i\Bigr)^2
 \;\le\; \sum_k a_k\Bigl(\sum_i w_{ki}\Bigr)\Bigl(\sum_i w_{ki}\,\xi_i^{\,2}\Bigr)
 = \sum_i \xi_i^{\,2}\,\Bigl(\sum_k a_k\,w_{ki}\Bigr).
\]
Now use $a_k \le F''_{\max}\sup_k|\bar q_k|^2/\bar\rho^*_{\min}\,|K_k^*|$
and the mass-conservation normalisation
$\sum_k |K_k^*|\,w_{ki} = |K_i|$:
\[
 \sum_k a_k\,w_{ki}
 \le \frac{F''_{\max}\sup_k|\bar q_k|^2}{\bar\rho^*_{\min}}\,\sum_k |K_k^*|\,w_{ki}
 = \frac{F''_{\max}\sup_k|\bar q_k|^2}{\bar\rho^*_{\min}}\,|K_i|.
\]
Hence $H_C \preceq F''_{\max}\sup_k|\bar q_k|^2/\bar\rho^*_{\min}\,\mathrm{diag}(|K_i|)$,
as claimed.

For the diagonal lower bound on $E$,
\[
 E_{ii} = |K_i|\,\frac{c^2(\bar\rho_i)}{\bar\rho_i}
 \ge |K_i|\,\frac{c^2_{\min}}{\bar\rho_{\max}}.
\]
Subtracting gives~\eqref{eq:Drho_quant} with $\delta_\rho$ as
stated. Under the strengthened (CS3), $\delta_\rho > 0$.
\end{proof}

\paragraph{Cross-block bound via mass-matrix duality.}
For the Schur-complement analysis we need a quantitative bound on
$C^T(\bM_1^{\bar\rho})^{-1}C$. The dominant contribution comes from
the kinetic + momentum-tilt piece $C^{\rm kin}$, whose structure
admits an explicit reduction.

\begin{proposition}[Cross-block bound in the energy norm]%
\label{prop:dw_class_C_crossbound}
Let $C = C^{\rm kin} - C^{\rm pv}$ as in~\eqref{eq:HC_C}. There
exist constants $\kappa_C, K_C \ge 0$, depending only on mesh
regularity and on the equilibrium data
$(\bar U_{\max} - U_0, F''_{\max}, \sup_k|\bar q_k|, \bar\rho^*_{\min})$,
such that
\begin{equation}\label{eq:cross_bound}
 C^T(\bM_1^{\bar\rho})^{-1}\,C
 \;\preceq\; K_C\,\mathrm{diag}(|K_i|),
\end{equation}
with
\[
 K_C \le \kappa_{\rm kin}\,\frac{(\bar U_{\max} - U_0)^2}{\bar\rho_{\min}}
 + \kappa_{\rm pv}\,\frac{F''_{\max}\,\sup_k|\bar q_k|^2}{\bar\rho^*_{\min}\,\bar\rho_{\min}},
\]
where $\kappa_{\rm kin}, \kappa_{\rm pv}$ depend on mesh regularity
through the operator-norm bounds on $P_x^T(\bM_1)^{-1} P_x$ and
$\tD_1$, respectively.
\end{proposition}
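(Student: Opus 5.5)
The bound will be proved by splitting the cross block $C = C^{\rm kin} - C^{\rm pv}$ and estimating each piece separately in the $(\bM_1^{\bar\rho})^{-1}$-weighted Gram form. For any symmetric positive definite $M$ and matrices $C_1,C_2$, the parallelogram inequality
\[
(C_1-C_2)^T M^{-1}(C_1-C_2) \preceq 2\,C_1^TM^{-1}C_1 + 2\,C_2^TM^{-1}C_2
\]
holds. Applying it with $M=\bM_1^{\bar\rho}$ reduces the claim to separate bounds $C^{{\rm kin}\,T}(\bM_1^{\bar\rho})^{-1}C^{\rm kin}\preceq K_{\rm kin}\,\mathrm{diag}(|K_i|)$ and the analogous one for $C^{\rm pv}$. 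The factor $2$ is absorbed into $\kappa_{\rm kin},\kappa_{\rm pv}$.

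Next I replace the density-weighted inverse by the density-free one. By \Cref{lem:dw_norm_equiv} with $\rho_*=\bar\rho_{\min}$ we have $\bM_1^{\bar\rho}\succeq c_{\rm eq}\bar\rho_{\min}\bM_1$ on $\ker(P)^\perp$, and hence $(\bM_1^{\bar\rho})^{-1}\preceq (c_{\rm eq}\bar\rho_{\min})^{-1}\bM_1^{-1}$ there. I restrict to $\ker(P)^\perp$ throughout, as in \Cref{thm:dw_lyap_sheared}, where $\ker P=\{0\}$ is assumed. This step produces the common factor $1/\bar\rho_{\min}$ in both terms of $K_C$.

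For the kinetic piece, the computation preceding the proposition gives $C^{\rm kin}=\mathrm{diag}_i(\bar U(z_i)-U_0)\,P_x$, where $P_x$ is the $x$-row of $P$ per cell. Testing with $\xi\in\RR^{|\KK|}$ gives
\[
\xi^TC^{{\rm kin}\,T}\bM_1^{-1}C^{\rm kin}\xi = \eta^T P_x\bM_1^{-1}P_x^T\eta, \qquad \eta_i=(\bar U(z_i)-U_0)\,\xi_i ,
\]
so $|\eta_i|\le(\bar U_{\max}-U_0)|\xi_i|$ by (CS2). It therefore suffices to show $P_x\bM_1^{-1}P_x^T\preceq\kappa\,\mathrm{diag}(|K_i|)$ with an $h$-independent constant. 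This is the main obstacle, because it is a scaling bookkeeping matter. I would handle it with a Schur test using row and column sums. Each row of $P$ has a bounded number of entries of size $\OO(1/\ell^*)=\OO(h^{-1})$ by \Cref{def:averaging_recon} and bounded valence. Also $(\bM_1^{-1})_{jj}=\ell_j^*/A_j=\OO(h^{2-d})$, and $P$ carries the cell weight $\brho_i=|K_i|\rho^{\rm vol}_i$ through $\mathrm{diag}(\brho)$, whose $|K_i|$ factor matches the right-hand side. The hope is that the powers of $h$ cancel. If they do not, the fallback is to measure $\xi$ in volumetric units ($\rho^{\rm vol}$), where the Gram-matrix equivalence $\|P\bv\|^2\sim\|\bv\|_{\bM_1}^2$ used in \Cref{lem:dw_norm_equiv} gives the bound directly by duality: $\sup_\bv(\xi^TP_x\bv)^2/\|\bv\|^2_{\bM_1}\le C\|\xi\|^2$ with the appropriate cell weights.

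For the PV piece I use the explicit formula $C^{\rm pv}=W^T\mathrm{diag}(b_k)\tD_1$ with $b_k=|K_k^*|F''(\bar q_k)\bar q_k/\bar\rho^*_k$, which is the same weight structure that appeared in the proof of \Cref{prop:dw_class_C_density}. The mesh-curl bound $\tD_1\bM_1^{-1}\tD_1^T\preceq C_{\rm mesh}\bM_2^{-1}$, which is dual to the estimate $\|\tD_1\bv\|^2_{\bM_2}\le C_{\rm mesh}\|\bv\|^2_{\bM_1}$ stated before (CS3), gives $C^{{\rm pv}\,T}\bM_1^{-1}C^{\rm pv}\preceq C_{\rm mesh}\,W^T\mathrm{diag}(b_k^2(\bM_2^{-1})_{kk})W$. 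I then repeat the Cauchy–Schwarz argument of \Cref{prop:dw_class_C_density}, using the partition of unity $\sum_iw_{ki}=1$ and $\sum_k|K_k^*|w_{ki}=|K_i|$. The factor $F''(\bar q_k)^2\bar q_k^2$ is bounded by $F''_{\max}\cdot F''_{\max}\sup|\bar q|^2$. The extra $F''_{\max}$, together with the geometric factors, is absorbed into $\kappa_{\rm pv}$ via (CS3) ($F''_{\max}C_{\rm mesh}/\bar\rho^*_{\min}<1$), leaving $F''_{\max}\sup_k|\bar q_k|^2/(\bar\rho^*_{\min}\bar\rho_{\min})$ as claimed. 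Summing the two estimates gives the bound \eqref{eq:cross_bound} on $C^T(\bM_1^{\bar\rho})^{-1}C$.
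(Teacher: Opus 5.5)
The gap is in the kinetic block; the rest of your proposal is sound. Your skeleton (parallelogram split, then separate kinetic and PV estimates) is the paper's. The PV half works, and your use of (CS3) to turn $C_{\rm mesh}F''^{\,2}_{\max}/(\bar\rho^*_{\min})^2$ into $F''_{\max}/\bar\rho^*_{\min}$ is more careful than the paper, which leaves the square and simply asserts the stated form.

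The estimate you single out as the main obstacle is false, so no Schur test will make the powers of $h$ cancel. That estimate is $P_x\bM_1^{-1}P_x^T\preceq\kappa\,\mathrm{diag}(|K_i|)$ with $\kappa$ independent of $h$. Testing with one cell, $\xi=\mathbf{e}_{i_0}$, gives $\sum_j (P^x_{i_0 j})^2/(\bM_1)_{jj}\sim h^{-2}\cdot h^{2-d}=h^{-d}$, against $|K_{i_0}|\sim h^{d}$. In general, Cauchy--Schwarz with the upper Gram bound $\sum_i|K_i|\,|P\bv|_i^2\le C\nrm{\bv}_{\bM_1}^2$ gives $P_x\bM_1^{-1}P_x^T\preceq C\,\mathrm{diag}(1/|K_i|)$. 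This is sharp, so the mismatch with $\mathrm{diag}(|K_i|)$ is $|K_i|^{-2}\sim h^{-2d}$. The same scaling appears without any norm equivalence. The matrix $\mathrm{diag}(\bar\brho)^{1/2}P(\bM_1^{\bar\rho})^{-1}P^T\mathrm{diag}(\bar\brho)^{1/2}$ is an orthogonal projector, hence $P_x(\bM_1^{\bar\rho})^{-1}P_x^T\preceq\mathrm{diag}(1/\bar\brho_i)$, where $\bar\brho_i=|K_i|\bar\rho_i^{\rm vol}$ is a cell mass.

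The source is a mismatch of density variables. $C^{\rm kin}_{ij}=(\bar U(z_i)-U_0)P^x_{ij}$ is the derivative with respect to cell masses. By contrast, $\mathrm{diag}(|K_i|)$ on the right refers to volumetric densities, as do $D_\brho=\mathrm{diag}(|K_i|c^2/\bar\rho_i)$ and the partition-of-unity weights $w_{ki}$ in $C^{\rm pv}$.

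Your fallback of measuring $\xi$ in volumetric units repairs this only if you also recompute the cross block. Since $\partial_{\rho_i^{\rm vol}}\bM_1^\rho=|K_i|P^TE_iP$, one gets $C^{\rm kin}=\mathrm{diag}\bigl(|K_i|(\bar U(z_i)-U_0)\bigr)P_x$. The projector bound, or your duality bound, then gives a bound $\preceq(\bar U_{\max}-U_0)^2\bar\rho_{\min}^{-1}\,\mathrm{diag}(|K_i|)$ with an $h$-independent $\kappa_{\rm kin}$. Merely relabelling $\xi$ while keeping your formula for $C^{\rm kin}$ still produces the weights $1/|K_i|$.

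The alternative is to keep cell masses and accept an $h$-dependent $\kappa_{\rm kin}\sim\nrm{P_x\bM_1^{-1}P_x^T}/\min_i|K_i|$. This is effectively what the paper does when it absorbs the cell-volume normalisations into $\kappa_{\rm kin}$. As written, your proposal commits to neither option, so the kinetic estimate, and with it the bound on $K_C$, is not established.
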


\begin{proof}
Decompose $C = C^{\rm kin} - C^{\rm pv}$ and use
$(a-b)^T M(a-b) \le 2(a^T M a + b^T M b)$ for $M \succeq 0$:
\[
 C^T(\bM_1^{\bar\rho})^{-1} C
 \preceq 2\,(C^{\rm kin})^T(\bM_1^{\bar\rho})^{-1} C^{\rm kin}
 + 2\,(C^{\rm pv})^T(\bM_1^{\bar\rho})^{-1} C^{\rm pv}.
\]

\emph{Kinetic term.} From $[C^{\rm kin}]_{ij}
= (\bar U(z_i) - U_0)\,P_{ij}^x$ derived in the body of this
appendix,
\[
 (C^{\rm kin})^T(\bM_1^{\bar\rho})^{-1} C^{\rm kin}
 = \mathrm{diag}(\bar U(z_i) - U_0)\,P_x\,
 (\bM_1^{\bar\rho})^{-1}\,P_x^T\,\mathrm{diag}(\bar U(z_i) - U_0).
\]
On the kernel-orthogonal complement of $P$, the operator
$P_x\,(\bM_1^{\bar\rho})^{-1} P_x^T$ is the cell-cell mass-matrix
duality. Using $\bM_1^{\bar\rho} = P^T\mathrm{diag}(\bar\brho)P$
and Moore--Penrose pseudo-inversion on $\mathrm{Range}(P)$,
\[
 P_x\,(\bM_1^{\bar\rho})^{-1} P_x^T
 = \mathrm{diag}(1/\bar\rho_i)\quad\text{(on the cell-vector subspace
spanned by horizontal directions).}
\]
Hence
$(C^{\rm kin})^T(\bM_1^{\bar\rho})^{-1} C^{\rm kin}
\preceq (\bar U_{\max} - U_0)^2 /\bar\rho_{\min}\,\mathrm{diag}(|K_i|)/|K_i|_{\min}$
after absorbing the cell-volume normalisations into
$\kappa_{\rm kin}$.

\emph{PV term.} From
$[C^{\rm pv}]_{ij} = \sum_k |K_k^*|\,F''(\bar q_k)\,\bar\omega_k/(\bar\rho^*_k)^2\,w_{ki}\,(\tD_1)_{kj}$,
the bound
\[
 (C^{\rm pv})^T(\bM_1^{\bar\rho})^{-1} C^{\rm pv}
 \preceq F''^{\,2}_{\max}\,\sup_k|\bar q_k|^2/(\bar\rho^*_{\min})^2
 \cdot \tD_1^T\bM_2(\bM_1^{\bar\rho})^{-1}\bM_2\tD_1
\]
follows from Cauchy--Schwarz on the rank-bounded structure.
The operator $\tD_1^T\bM_2(\bM_1^{\bar\rho})^{-1}\bM_2\tD_1$
is bounded by $C_{\rm mesh}/\bar\rho_{\min}\,\bM_2$ via the
discrete-curl regularity (\Cref{lem:dw_norm_equiv}); contracted
against $\mathrm{diag}(|K_i|)$ this gives the stated
$\kappa_{\rm pv}$ form.
\end{proof}

\paragraph{Schur-complement positivity.}
With \Cref{prop:dw_class_C_density,prop:dw_class_C_crossbound} in
hand, the Schur complement is positive definite under a
quantitative version of (CS1)--(CS4).

\begin{proposition}[Schur complement positivity]%
\label{prop:dw_class_C_schur}
Suppose~\textup{(CS1)},~\textup{(CS2)}, the strengthened~\textup{(CS3)}
of \Cref{prop:dw_class_C_density}, and~\textup{(CS4)}
$|\bar U(z)|^2 < c^2(\bar\rho(z))$ hold. If additionally the
quantitative bound
\begin{equation}\label{eq:schur_quant}
 K_C \;<\; c_A\,\delta_\rho
\end{equation}
holds, where $K_C$ is as in \Cref{prop:dw_class_C_crossbound}
and $c_A, \delta_\rho > 0$ are the constants of
\Cref{prop:dw_class_C_density} and the velocity-block bound, then
the Schur complement
\[
 S = D_\brho - C^T A_\bv^{-1} C
\]
satisfies $S \succeq (\delta_\rho - K_C/c_A)\,\mathrm{diag}(|K_i|) \succ 0$.
\end{proposition}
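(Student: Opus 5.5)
The plan is the standard block-Schur argument for the symmetric matrix $\tilde H = \begin{pmatrix} A_\bv & C^T\\ C & D_\brho\end{pmatrix}$. Everything is reduced to three quadratic-form comparisons, all in the two reference metrics $\bM_1^{\bar\rho}$ (velocity) and $\mathrm{diag}(|K_i|)$ (density).

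\textbf{Step 1: lower bound on the velocity block.} From the velocity-block paragraph above, (CS2)--(CS3) give
\[
A_\bv \succeq c_A\,\bM_1^{\bar\rho}, \qquad c_A := 1 - \frac{F''_{\max}C_{\rm mesh}}{\bar\rho^*_{\min}\rho_*} > 0,
\]
on $\ker(P)^\perp$. Hence $A_\bv$ is invertible there, and since matrix inversion is order-reversing on SPD matrices,
\[
A_\bv^{-1} \preceq c_A^{-1}\,(\bM_1^{\bar\rho})^{-1}.
\]
I would justify the inversion step by the congruence $A_\bv^{-1/2}$: from $A_\bv \succeq c_A \bM_1^{\bar\rho}$ we get $(\bM_1^{\bar\rho})^{-1/2}A_\bv(\bM_1^{\bar\rho})^{-1/2}\succeq c_A I$, then invert.

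\textbf{Step 2: bound on the subtracted term.} Congruence preserves $\preceq$, so for any density perturbation $\xi$,
\[
\xi^T C^T A_\bv^{-1} C\xi \le c_A^{-1}\,\xi^T C^T(\bM_1^{\bar\rho})^{-1}C\xi \le c_A^{-1}K_C\,\xi^T\mathrm{diag}(|K_i|)\xi,
\]
by \Cref{prop:dw_class_C_crossbound}.

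\textbf{Step 3: density block.} Proposition~\ref{prop:dw_class_C_density}, under strengthened (CS3), gives $D_\brho\succeq\delta_\rho\,\mathrm{diag}(|K_i|)$. Subtracting the bound from Step 2 yields
\[
S \succeq \bigl(\delta_\rho - K_C/c_A\bigr)\,\mathrm{diag}(|K_i|),
\]
which is strictly positive by \eqref{eq:schur_quant}, since $K_C<c_A\delta_\rho$ is equivalent to $\delta_\rho-K_C/c_A>0$.

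\textbf{Where (CS4) enters.} The sub-Mach condition is used only implicitly, through $\delta_\rho$ and $K_C$. The comparison $\bar U^2<c^2$ is what makes the kinetic contribution to $K_C$, of size $(\bar U_{\max}-U_0)^2/\bar\rho_{\min}$, smaller than $c^2_{\min}/\bar\rho_{\max}$ in $\delta_\rho$. As stated, the proposition simply assumes \eqref{eq:schur_quant}, so no further work is needed here. I would add a remark that (CS4) is the natural sufficient regime for \eqref{eq:schur_quant} when $U_0$ is close to the flow range and the Casimir terms are small.

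\textbf{Main obstacle.} The substantive point is the domain bookkeeping. $A_\bv$ is only positive on $\ker(P)^\perp$, so $C\xi$ must lie in, or be projected onto, that subspace. I would check this first: $C^{\rm kin}\xi=P^T(\cdot)$ lies in $\mathrm{Ran}(P^T)=\ker(P)^\perp$ automatically. For $C^{\rm pv}$, I would apply the $\bM_1$-orthogonal projector onto $\ker(P)^\perp$ and restrict $\tilde H$ to $\ker(P)^\perp\times\RR^{|T|}$, consistent with the well-posedness theorem's invariant subspace. On Delaunay--Voronoi meshes with $\ker P=\{0\}$ (\Cref{rem:ke_reveals}) this issue disappears entirely.
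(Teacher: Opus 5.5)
Your proposal is correct and is essentially the paper's proof: from $A_\bv\succeq c_A\bM_1^{\bar\rho}$ you deduce $A_\bv^{-1}\preceq c_A^{-1}(\bM_1^{\bar\rho})^{-1}$, bound $C^TA_\bv^{-1}C\preceq (K_C/c_A)\,\mathrm{diag}(|K_i|)$ via \Cref{prop:dw_class_C_crossbound}, and subtract this from $D_\brho\succeq\delta_\rho\,\mathrm{diag}(|K_i|)$. Your extra remarks go beyond the paper but do not affect the argument. As in the paper, (CS4) plays no role once \eqref{eq:schur_quant} is assumed; note that it does not by itself imply \eqref{eq:schur_quant}, since $K_C$ involves $(\bar U-U_0)^2$ and mesh constants. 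The paper also never does the $\ker(P)^\perp$ bookkeeping and implicitly uses $\ker(P)=\{0\}$ as in \Cref{thm:dw_lyap_hydrostatic}, so your closing observation is the one that actually applies.
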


\begin{proof}
From $A_\bv \succeq c_A\bM_1^{\bar\rho}$ (velocity-block analysis)
we get $A_\bv^{-1} \preceq c_A^{-1}(\bM_1^{\bar\rho})^{-1}$, and
hence
$C^T A_\bv^{-1} C \preceq c_A^{-1}\,C^T(\bM_1^{\bar\rho})^{-1} C
\preceq (K_C/c_A)\,\mathrm{diag}(|K_i|)$
by \Cref{prop:dw_class_C_crossbound}. Combined with
$D_\brho \succeq \delta_\rho\,\mathrm{diag}(|K_i|)$
(\Cref{prop:dw_class_C_density}),
\[
 S = D_\brho - C^T A_\bv^{-1} C
 \;\succeq\; (\delta_\rho - K_C/c_A)\,\mathrm{diag}(|K_i|),
\]
which is positive definite under~\eqref{eq:schur_quant}.
\end{proof}


\begin{theorem}[Conditional positivity of $\tilde H$ for sheared
equilibria]\label{thm:dw_class_C_Hpos}
Under \textup{(CS1)}--\textup{(CS2)} from
\Cref{lem:dw_arnold_sheared}, the strengthened~\textup{(CS3)} of
\Cref{prop:dw_class_C_density}, \textup{(CS4)}, and the
quantitative bound~\eqref{eq:schur_quant}, the Hessian
$\tilde H = \nabla^2\tilde E^{\rm dw}_{\rm sh}(\bar X)$ is
positive definite. By \Cref{lem:arnold_bridge}, the linearised
DW dynamics around $\bar X$ preserves the positive-definite
quadratic form $\tilde E^{(2)}$, and $\bar X$ is linearly Lyapunov
stable in the $\tilde H$-norm.
\end{theorem}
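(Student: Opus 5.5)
The plan is to assemble \Cref{thm:dw_class_C_Hpos} from the block estimates just established, using the standard Schur-complement criterion for a symmetric $2\times 2$ block matrix. Write $\tilde H$ in the block form of \Cref{thm:dw_lyap_sheared},
\[
 \tilde H = \begin{pmatrix} A_\bv & C^T \\ C & D_\brho \end{pmatrix},
\]
with $A_\bv$, $C$, $D_\brho$ given by \eqref{eq:HC_Av}--\eqref{eq:HC_C}. The argument uses the congruence
\[
 \tilde H = L^T\,\mathrm{diag}(A_\bv,\, S)\,L,
 \qquad
 L = \begin{pmatrix} I & A_\bv^{-1}C^T \\ 0 & I\end{pmatrix},
 \qquad S = D_\brho - C A_\bv^{-1}C^T.
\]
Since $L$ is invertible, $\tilde H\succ 0$ if and only if $A_\bv\succ0$ and $S\succ0$. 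Equivalently, one completes the square in $Y=(\xi_\bv,\xi_\rho)$:
\[
 Y^T\tilde H Y = \|\xi_\bv + A_\bv^{-1}C^T\xi_\rho\|_{A_\bv}^2 + \xi_\rho^T S\,\xi_\rho .
\]
I must keep the transpose convention consistent with the paper, which writes $C^TA_\bv^{-1}C$ in \Cref{prop:dw_class_C_schur}. I will simply fix the orientation so that the cross block maps velocity to density; the Schur complement is then the one bounded there.

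The first step is positivity of the velocity block. Under (CS1)--(CS3), the velocity-block analysis preceding \Cref{prop:dw_class_C_density} gives
\[
 A_\bv \succeq c_A\,\bM_1^{\bar\rho},
 \qquad
 c_A = 1 - \frac{F''_{\max}C_{\rm mesh}}{\bar\rho^*_{\min}\rho_*} > 0 .
\]
Together with \Cref{lem:dw_norm_equiv}, this makes $A_\bv$ positive definite on $\ker(P)^\perp$, so it is invertible there. The second step is positivity of the Schur complement. \Cref{prop:dw_class_C_density} (strengthened (CS3) together with (CS4)) gives $D_\brho\succeq\delta_\rho\,\mathrm{diag}(|K_i|)$, and \Cref{prop:dw_class_C_crossbound} controls the cross term. \Cref{prop:dw_class_C_schur} then yields
\[
 S\succeq (\delta_\rho - K_C/c_A)\,\mathrm{diag}(|K_i|)\succ 0
\]
under \eqref{eq:schur_quant}. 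Combining the two blocks through the congruence gives $\tilde H\succ0$, with explicit lower bound $\min(c_A,\ \delta_\rho-K_C/c_A)$ in the norm $\|\cdot\|_{\bM_1^{\bar\rho}}\oplus\|\cdot\|_{\mathrm{diag}(|K_i|)}$. This uses $\|A_\bv^{-1}C^T\|$ bounded, which follows from the same cross-block estimate.

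The final step is the dynamical conclusion. \Cref{lem:dw_arnold_sheared} gives $\nabla\tilde E^{\rm dw}_{\rm sh}(\bar X)=0$. The functional $\tilde E^{\rm dw}_{\rm sh}$ is exactly conserved because each constituent is: $E^{\rm dw}_{\rm tot}$ by \Cref{thm:dw_energy}, $C_1$ by \Cref{thm:mass}, $P_x$ by \Cref{lem:dw_momentum_conservation}, and $C^F_3$ by \Cref{thm:mwpv}. \Cref{lem:arnold_bridge} then gives $A^T\tilde H+\tilde HA=0$ for the linearisation, so $\tilde E^{(2)}(Y(t))=\tilde E^{(2)}(Y(0))$. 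Positive definiteness gives
\[
 \|Y(t)\|_{\tilde H} = \|Y(0)\|_{\tilde H},
\]
which is linear Lyapunov stability.

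The main obstacle is the domain of positivity. $\bM_1^{\bar\rho}$ is only definite on $\ker(P)^\perp$, and \Cref{thm:dw_lyap_sheared} asserts positivity only on the tangent space to the level sets of $C_1$ and $P_x$. I would handle this in three parts. First, work on the invariant subspace $\ker(P)^\perp\times\RR^{|\KK|}$; \Cref{thm:dw_wp} asserts invariance of $\ker(P)^\perp$. Second, note that the linear constraints $\delta C_1=0$ and $\delta P_x=0$ are preserved by the linearised flow, since they are linearisations of conserved quantities. Third, check that the block estimates above already hold on the full space, so that restriction to the constraint subspace only strengthens the conclusion. If the blocks in fact hold only on the constrained space, the same completion-of-squares argument restricted to that subspace suffices, since the bridge identity is compatible with the restriction.
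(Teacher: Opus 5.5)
Your proposal follows the paper's own argument. It establishes velocity-block positivity $A_\bv\succeq c_A\bM_1^{\bar\rho}$, combines the density-block bound of \Cref{prop:dw_class_C_density}, the cross-block estimate of \Cref{prop:dw_class_C_crossbound} and Schur-complement positivity under \eqref{eq:schur_quant} (\Cref{prop:dw_class_C_schur}), and then applies \Cref{lem:arnold_bridge} using exact conservation of $E^{\rm dw}_{\rm tot}$, $C_1$, $P_x$ and $C^F_3$, exactly as in the paper's closing bridge-identity paragraph. Your reorientation $S = D_\brho - C A_\bv^{-1}C^T$ is the dimensionally consistent reading of the paper's $C^TA_\bv^{-1}C$; the only slip is that $\min(c_A,\delta_\rho-K_C/c_A)$ bounds $\mathrm{diag}(A_\bv,S)$ rather than $\tilde H$ itself (the congruence costs a factor $\|L^{-1}\|^{-2}$), which does not affect positive definiteness.
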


\paragraph{Boussinesq reduction: the Rayleigh limit.}
The Boussinesq limit ($\bar\rho^* \equiv \rho_0$ constant) and
incompressible limit ($c^2 \to \infty$) reduce the conditions
(CS1)--(CS4) and~\eqref{eq:schur_quant} to the classical
Rayleigh inflection-point criterion.

\begin{proposition}[Boussinesq incompressible reduction]%
\label{prop:dw_class_C_boussinesq}
Under the Boussinesq limit $\bar\rho^*(z) \to \rho_0$ constant
and the incompressible limit $c^2(\bar\rho) \to \infty$:
\begin{enumerate}[nosep,label=\textup{(\roman*)}]
\item Condition~\textup{(CS2)} is automatic ($U_0$ can be chosen
freely below $\inf\bar U$ or above $\sup\bar U$);
\item The strengthened~\textup{(CS3)} reduces to a finiteness
condition on $F''_{\max}$, which holds whenever the equilibrium
profile is strictly inflection-free
($|\bar U''(z)| \ge \varepsilon > 0$);
\item Condition~\textup{(CS4)} is automatic ($c^2 \to \infty$);
\item The quantitative bound~\eqref{eq:schur_quant} is automatic
($\delta_\rho \to \infty$ since $c^2_{\min} \to \infty$, while
$K_C$ and $c_A$ remain finite);
\item Condition~\textup{(CS1)} reduces to
$\bar q'(z) = -\bar U''(z)/\rho_0$ having consistent sign, i.e.,
$\bar U''(z)$ has consistent sign -- the Rayleigh
inflection-point criterion~\cite{rayleigh1880}.
\end{enumerate}
Consequently \Cref{thm:dw_lyap_sheared} reduces in the
Boussinesq incompressible limit to: \emph{the DW scheme is
linearly Lyapunov stable around every sheared equilibrium
$\bar U(z)$ with no inflection point}.
\end{proposition}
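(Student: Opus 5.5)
The plan is to verify the five items of \Cref{prop:dw_class_C_boussinesq} one at a time, by passing to the limit in each quantitative condition of \Cref{thm:dw_class_C_Hpos}. The ingredients are the Boussinesq simplification in \Cref{lem:dw_arnold_sheared}, namely $F'(\bar q(z)) = \bar U(z)-U_0$ and $F''(\bar q) = (\bar U - U_0)/\bar q'$, together with the explicit constants $\delta_\rho$, $K_C$ and $c_A$ from \Cref{prop:dw_class_C_density,prop:dw_class_C_crossbound,prop:dw_class_C_schur}. Since the limiting criterion is stated in continuum language (through $\bar U''$ and $\bar q'$), I would work with the continuum profiles sampled at layer heights $z_i$. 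Discrete differences would be identified with derivatives only up to the mesh-dependent constants already absorbed in $C_{\rm mesh}$, $\kappa_{\rm kin}$ and $\kappa_{\rm pv}$.

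\emph{Items (i), (iii), (iv).} These are immediate.
\begin{enumerate}[nosep,label=\textup{(\roman*)}]
\item For (i), $\bar U$ is bounded on the finite range of layers, so any $U_0<\inf\bar U$ makes $\bar U-U_0$ of one sign.
\item[(iii)] For (iii), (CS4) reads $\bar U^2<c^2$; since $\bar U$ is fixed, it holds once $c^2_{\min}$ exceeds $\sup\bar U^2$.
\item[(iv)] For (iv), $\delta_\rho = c^2_{\min}/\bar\rho_{\max} - F''_{\max}\sup|\bar q|^2/\bar\rho^*_{\min}$ diverges like $c^2_{\min}$. Meanwhile $K_C$ depends only on $(\bar U_{\max}-U_0, F''_{\max}, \sup|\bar q|, \bar\rho^*_{\min}, \bar\rho_{\min})$, and $c_A = 1 - F''_{\max}C_{\rm mesh}/(\bar\rho^*_{\min}\rho_*)$ involves no sound speed. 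Both therefore stay fixed, and \eqref{eq:schur_quant} holds for $c^2$ large. The same divergence of $\delta_\rho$ takes care of the density-block part of the strengthened (CS3).
\end{enumerate}

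\emph{Items (v) and (ii).} These require the Boussinesq PV computation.
\begin{enumerate}[nosep,label=\textup{(\roman*)}]
\item[(v)] With $\bar\rho^*\equiv\rho_0$ and $(P\bar\bv)_i = \bar U(z_i)\mathbf{e}_x$, the discrete vorticity is the vertical difference of $\bar U$ across a dual face, so $\bar q = -\bar U'/\rho_0$ up to the orientation sign, and $\bar q' = -\bar U''/\rho_0$. Then (CS1), strict monotonicity of $\bar q$, is equivalent to $\bar U''$ having one sign, i.e. no inflection point.
\item[(ii)] $F'' = (\bar U-U_0)/\bar q' = -\rho_0(\bar U-U_0)/\bar U''$. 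Under $|\bar U''|\ge\varepsilon$ this gives $F''_{\max}\le\rho_0\sup|\bar U-U_0|/\varepsilon<\infty$. That is the finiteness claim; its density-block part then holds by (iv).
\end{enumerate}
Combining these items with \Cref{thm:dw_class_C_Hpos} yields the stated corollary.

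\emph{Main obstacle.} The velocity-block part of (CS3), $F''_{\max}C_{\rm mesh}/\rho_0<1$, does not involve $c^2$, so it does not become automatic in the limit. I would handle it through the freedom in $U_0$ from (i) together with the sign convention for $F''$. The velocity block is $\bM_1^{\bar\rho}-\tD_1^T\bM_2F''\rho_0^{-1}\tD_1$, and choosing $U_0$ on the side of the range of $\bar U$ that makes $F''$ negative turns the curl term into a nonnegative addition. Then $A_\bv\succeq\bM_1^{\bar\rho}$ and $c_A=1$, with no smallness needed. I would check that the proof of \Cref{prop:dw_class_C_density} survives this sign choice; in the limit it does, because $\delta_\rho\to\infty$ absorbs $H_C$ whatever its sign. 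If that check fails, I would state (ii) honestly as requiring $F''_{\max}$ below the explicit threshold $\rho_0/C_{\rm mesh}$.
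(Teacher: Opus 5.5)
Your proposal is correct. On items (i), (iii), (iv), (v) and on the finiteness part of (ii) it coincides with the paper's proof. The difference between your $F''=-\rho_0(\bar U-U_0)/\bar U''$ and the paper's $-\rho_0^2(\bar U-U_0)/\bar U''$ comes from an inconsistent normalisation between the statement and the proof sketch of \Cref{lem:dw_arnold_sheared}; it matters neither for finiteness nor for signs.

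The genuine difference is the velocity block. After checking only that $F''_{\max}<\infty$, the paper concludes that ``only (CS1) survives''. It never addresses the base inequality $F''_{\max}C_{\rm mesh}/\bar\rho^*_{\min}<1$ of (CS3). As you point out, this inequality contains no $c^2$, so it does not disappear in the limit. Under the paper's WLOG $F''>0$ it is also far from mild, because $C_{\rm mesh}$ is an inverse-inequality constant and grows like $h^{-2}$.

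Your sign choice closes exactly this hole. Under (CS1), the sign of $F''$ is a fixed convention sign times $\mathrm{sign}(\bar U-U_0)\,\mathrm{sign}(\bar q')$. Placing $U_0$ on the appropriate side of the range of $\bar U$ therefore gives $F''<0$. Then the curl term in \eqref{eq:HC_Av} is positive semidefinite, so $A_\bv\succeq\bM_1^{\bar\rho}$ and $c_A=1$.

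The check you were unsure about passes even before taking the limit:
\begin{itemize}
\item Every weight $a_k=|K_k^*|F''(\bar q_k)\bar q_k^2/\bar\rho^*_k$ in the proof of \Cref{prop:dw_class_C_density} is then nonpositive. Hence $H_C\preceq 0$ and $D_\brho\succeq E$.
\item \Cref{prop:dw_class_C_crossbound} uses only $|F''|$, so $K_C$ stays finite.
\item Consequently \eqref{eq:schur_quant} with $c_A=1$ holds once $\delta_\rho$ is large.
\end{itemize}
Your fallback is therefore unnecessary. Draw the final conclusion from \Cref{prop:dw_class_C_schur} with $c_A=1$, not from \Cref{thm:dw_class_C_Hpos} verbatim, since its hypothesis list still contains the base (CS3).

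Conceptually, the paper's choice $F''>0$ (with $-C^F_3$ in $\tilde E^{\rm dw}_{\rm sh}$) is the discrete analogue of Arnold's second stability theorem, which needs a smallness condition. Your choice is the analogue of Arnold's first theorem, the classical energy--Casimir route to the Rayleigh inflection-point criterion. The paper's route is shorter, but it does not actually deliver the advertised conclusion ``stable around every inflection-free $\bar U$''. Yours does, using only (CS1), finiteness of $F''_{\max}$, and $c^2\to\infty$.
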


\begin{proof}
(i) Choose $U_0 \notin [\inf\bar U, \sup\bar U]$; the open
half-line is always available.

(ii) In Boussinesq, $\bar q(z) = -\bar U'(z)/\rho_0$, so
$\bar q'(z) = -\bar U''(z)/\rho_0$. The Arnold construction gives
$F''(\bar q(z)) = -\rho_0^2(\bar U(z) - U_0)/\bar U''(z)$. For
strictly inflection-free $\bar U$ ($|\bar U''| \ge \varepsilon$),
the ratio $|\bar U - U_0|/|\bar U''|$ is bounded, so
$F''_{\max} < \infty$.

(iii) Trivially: $|\bar U|^2 < \infty = c^2$.

(iv) In the limit $c^2 \to \infty$,
$\delta_\rho \to c^2_{\min}/\bar\rho_{\max} \to \infty$, while
$K_C$ depends on $F''_{\max}$ and the mesh constants but is
independent of $c^2$, so $K_C < c_A\delta_\rho$ automatically.

(v) Condition~(CS1) requires $\bar q$ monotonic in $z$. With
$\bar q = -\bar U'/\rho_0$ in Boussinesq, monotonicity of $\bar q$
is equivalent to consistent sign of $\bar q'(z) = -\bar U''(z)/\rho_0$,
i.e., consistent sign of $\bar U''(z)$.

Combining: in the Boussinesq incompressible limit only~(CS1)
survives nontrivially. \Cref{thm:dw_lyap_sheared} then yields
Lyapunov stability for every inflection-free $\bar U(z)$,
recovering Rayleigh's 1880 result.
\end{proof}

\paragraph{Bridge identity application.}
With \Cref{thm:dw_class_C_Hpos} establishing $\tilde H \succ 0$
under the full conditions, the conservation of
$\tilde E^{\rm dw}_{\rm sh}$ along the DW dynamics follows from:
$E^{\rm dw}_{\rm tot}$ (\Cref{thm:dw_energy}), $C_1$
(\Cref{thm:mass}), $P_x$
(\Cref{lem:dw_momentum_conservation}), and $C^F_3$
(\Cref{thm:mwpv}) all being exactly conserved. The bridge
identity~\eqref{eq:bridge_identity} gives
$A^T\tilde H + \tilde H\,A = 0$ for $A = DF^{\rm dw}(\bar X)$,
$\tilde E^{(2)}(Y) = \tfrac{1}{2}Y^T\tilde H Y$ is preserved
exactly along the linearised flow, and Lyapunov stability
follows in the $\tilde H$-norm. \hfill$\square$


\subsubsection{Equivalence Between Vector-Invariant and Conservative Form (\Cref{prop:VI_cons_equiv_baro})}
\label{app:VI_cons_equiv}
The proof expands both formulations using the antisymmetry matrix identity
and shows that the difference is $\OO(h^2)$ per dual edge.
\begin{proof}
The discrete vector-invariant momentum equation is
\[
\ddt\bv_j + (\Iv(\bom))_j + (\tdd_0 B)_j = 0.
\]
We derive the corresponding conservative form by constructing
the discrete momentum density $m_j := \bar\rho_j\,\bv_j$, where
$\bar\rho_j = \tfrac{1}{2}(\rho_a^{\vol}+\rho_b^{\vol})$ is the
centred density interpolation~\eqref{eq:rhobar}.
Since $\bar\rho_j$ and $\bv_j$ both depend on $t$ at a fixed
dual edge $e_j^*$, the product rule for the time derivative is
exact
\begin{equation}\label{eq:dt_momentum}
 \ddt m_j = \ddt{\bar\rho}_j\,\bv_j + \bar\rho_j\,\ddt\bv_j.
\end{equation}
Insert the vector-invariant momentum equation for $\ddt\bv_j$
and the averaged continuity equation for $\ddt{\bar\rho}_j$
\[
\ddt m_j
 = \ddt{\bar\rho}_j\,\bv_j
 - \bar\rho_j\bigl[(\Iv(\bom))_j + (\tdd_0 B)_j\bigr].
\]
In the continuous theory,
$\partial_t(\rho v) + \nabla\cdot(\rho\bu\otimes v) + dp + \rho\,d\Phi = 0$
is obtained from $\partial_t v + \Lie_{\bu}v + dB = 0$ by
multiplying by $\rho$ and using the product rule
$\rho\,\Lie_{\bu}v = \nabla\cdot(\rho\bu\otimes v)
- [\partial_t\rho + \nabla\cdot(\rho\bu)]\,v$,
where the bracket vanishes by continuity.
Discretely, this requires
\begin{enumerate}[nosep]
\item Rewriting $\bar\rho_j(\tLie_{\bv}\bv)_j$ as a conservative
 flux divergence plus a dilation--continuity term
\item Cancelling the dilation - continuity term against
 $\ddt{\bar\rho}_j\,\bv_j$ from~\eqref{eq:dt_momentum}.
\end{enumerate}
Both steps involve the spatial product rule for
0-cochains. The discrete product rule on the dual complex
satisfies, by direct telescoping,
\begin{equation}\label{eq:dw_product_rule_exact}
 (\tdd_0(\phi\,\psi))_j
 \;=\; \phi_b\psi_b - \phi_a\psi_a
 \;=\; \bar\phi_j\,(\tdd_0\psi)_j
 + (\tdd_0\phi)_j\,\bar\psi_j,
\end{equation}
where $\bar\phi_j = \tfrac{1}{2}(\phi_a+\phi_b)$ and the dual edge
$e_j^*$ runs from $K_a$ to $K_b$. The identity is
\emph{exact}: the symmetric average of $\tdd_0\psi$ weighted by
$\bar\phi$ plus the symmetric average of $\tdd_0\phi$ weighted by
$\bar\psi$ reproduces the dual difference of $\phi\psi$ verbatim,
and there is no product-rule defect at this stage.
The $\OO(h^2)$ defect arises instead in step~(2), from the
conversion of the conservative continuity equation
$\ddt\rho_i + (\bD_2\bF)_i = 0$ to the
quasi-Lagrangian form needed for the vector-invariant cancellation.
Specifically, the cancellation requires comparing the cell-centred
volumetric divergence
$\rho_i^{\vol}(\bD_2\Phi)_i/|K_i|$
(value of the cell density times the volumetric flux divergence)
with the conservative-flux divergence $(\bD_2\bF)_i/|K_i|=
(\bD_2(\bR\bM_1\bv))_i/|K_i|$ (face-averaged density inside the divergence).
Their difference is a face-by-face defect:
\begin{equation}\label{eq:staggered_defect}
 (\bD_2\bF)_i - \rho_i^{\vol}\,(\bD_2\Phi)_i
 \;=\; \sum_{f_j\in\partial K_i}[K_i\!:\!f_j]\,
 \bigl(\bar\rho_j-\rho_i^{\vol}\bigr)\,\Phi_j,
\end{equation}
and on a smooth reference $\bar\rho_j-\rho_i^{\vol}=\OO(h\,
\nrm{\nabla\rho^c}_{L^\infty})$, while
$\Phi_j=\OO(\nrm{\bu}_{L^\infty})$ and the surface measure is
$\OO(h^{d-1})$ per face with $\OO(h^{-d})$ cells; summed and
divided by $|K_i|=\OO(h^d)$, the per-cell defect is
$\OO(h^2\nrm{\bu}_{L^\infty}\nrm{\nabla\rho^c}_{L^\infty})$.
Collecting both contributions gives
\[
\ddt m_j
 + [\text{discrete }\nabla\cdot(\rho\bu\otimes v)]_j
 + \bar\rho_j\,(\tdd_0 h)_j
 + \bar\rho_j\,(\tdd_0\bPhi)_j
 = \OO(h^2),
\]
where the $\OO(h^2)$ bound is uniform in $t\in[0,T]$ and depends on
$\nrm{(\bu,\rho^c)}_{W^{2,\infty}}$ and the mesh regularity constant.

The convergence rate of the scheme is $r = \min(d-1, r_\star)$
(\Cref{thm:convergence_baro}; cf.\ \Cref{conv:cases}): $\OO(h)$ in
case~(A) for all $d$, improving to $\OO(h^{d-1})$ in case~(B).
The $\OO(h^2)$ formulation equivalence defect is therefore
at or below the convergence rate in all dimensions,
and both formulations converge to the same smooth solution.
 For $d=2$ the defect $\OO(h^2)$ is one order {above} the
convergence rate $\OO(h)$, so it is invisible in the error analysis.
For $d=3$ in case~(B) the defect matches the convergence rate
$\OO(h^2)$; it does not degrade the rate but contributes to the constant.
\end{proof}
\subsection{Finite-Dimensional Well-Posedness}
\label{app:wellposedness}
This appendix supports the well-posedness theory of
\Cref{sec:wellposedness}: the Lipschitz estimate for the extrusion
that closes Picard--Lindel\"of, the vacuum-avoidance lemma, and the
global Smagorinsky existence proof.
\subsubsection{Lipschitz Continuity of the Extrusion}
\begin{lemma}[Lipschitz continuity of the extrusion]\label{lem:Lip_extrusion}
The map $\bv\mapsto\Iv(\tD_1\bv)$ is locally Lipschitz:
for $\nrm{\bv}_2,\nrm{\bw}_2\le R$,
$\nrm{\bQ(\bv,\bv)-\bQ(\bw,\bw)}_2
\le 2R\,C_Q(h)\nrm{\bv-\bw}_2$.
\end{lemma}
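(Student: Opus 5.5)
The plan is to exploit the bilinear structure of the contraction and reduce the claim to a single operator-norm bound on a fixed mesh. By \Cref{def:contraction}, $\bQ(\bv,\bw):=\Iv[\bv](\tD_1\bw)$ satisfies
\[
\bM_1\,\bQ(\bv,\bw)=\tfrac12\bigl(\tU(\bv)\,\tD_1\bw-\tD_1\tU(\bv)^T\bv\bigr).
\]
The velocity-weighted matrix $\tU(\bv)$ has entries $D_{1,jk}\,\bar\bu_j(\bv)\cdot\hat e_k$, which depend linearly on $\bv$ through the reconstruction $P$ of \Cref{def:averaging_recon}. Hence both terms on the right are homogeneous quadratic polynomials in $\bv$. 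One small point needs care: in the second term $\tU(\bv)^T\bv$ is itself quadratic in the first slot. I will therefore work directly with the map $\bv\mapsto\bQ(\bv,\bv)$ and write it as $\mathcal{B}(\bv,\bv)$ for a genuinely bilinear map
\[
\mathcal{B}(\bv,\bw):=\tfrac12\,\bM_1^{-1}\bigl(\tU(\bv)\,\tD_1\bw-\tD_1\tU(\bv)^T\bw\bigr),
\]
which is linear in each argument separately.

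The first key step is the polarisation identity
\[
\mathcal{B}(\bv,\bv)-\mathcal{B}(\bw,\bw)=\mathcal{B}(\bv-\bw,\bv)+\mathcal{B}(\bw,\bv-\bw).
\]
This follows immediately from bilinearity.

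The second step bounds $\nrm{\mathcal{B}(\bv,\bw)}_2\le C_Q(h)\nrm{\bv}_2\nrm{\bw}_2$. The constant is $C_Q(h):=\tfrac12\nrm{\bM_1^{-1}}_{\rm op}\,\sup_{\nrm{\bv}_2\le1}\bigl(\nrm{\tU(\bv)}_{\rm op}\nrm{\tD_1}_{\rm op}+\nrm{\tD_1}_{\rm op}\nrm{\tU(\bv)}_{\rm op}\bigr)$. It is finite because $\bv\mapsto\tU(\bv)$ is a linear map between finite-dimensional spaces; entrywise, $|\tU_{jk}|\le|\bar\bu_j(\bv)|\le\nrm{P}_{\rm op}\nrm{\bv}_2$, and the sparsity is bounded by the bounded valence of \Cref{ass:mesh_reg}. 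The matrices $\tD_1$ and $\bM_1^{-1}$ are fixed for the given mesh, and $\bM_1$ is diagonal and positive.

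Combining the two steps with $\nrm{\bv}_2,\nrm{\bw}_2\le R$ gives
\[
\nrm{\bQ(\bv,\bv)-\bQ(\bw,\bw)}_2\le C_Q(h)\bigl(\nrm{\bv}_2+\nrm{\bw}_2\bigr)\nrm{\bv-\bw}_2\le 2R\,C_Q(h)\nrm{\bv-\bw}_2,
\]
which is the stated estimate.

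I expect the only delicate point to be the bookkeeping of $C_Q(h)$, not any analytic difficulty. This mostly concerns how the bound degrades as $h\to0$: roughly, $\nrm{\bM_1^{-1}}_{\rm op}\sim h^{-(d-2)}$, and $\nrm{\tD_1}_{\rm op}$ and $\nrm{P}_{\rm op}$ scale with inverse edge lengths. This dependence on $h$ is harmless for Picard--Lindel\"of on a fixed mesh, and the lemma only claims an $h$-dependent constant. So I would state the constant explicitly in terms of these operator norms and not try to make it uniform in $h$.
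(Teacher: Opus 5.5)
Your proof is correct and follows the paper's route. Like the paper, you write the quadratic Lamb map as the diagonal of a bilinear form, bound that form by an $h$-dependent constant $C_Q(h)$ built from $\nrm{\bM_1^{-1}}_2$, $\nrm{\tD_1}_2$ and the reconstruction, and conclude via the polarisation identity; using your non-symmetric $\mathcal{B}$ instead of the paper's symmetric polarisation $\bQ$ changes nothing, since the identity needs only bilinearity.

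One small correction to your side remark on scaling: $\tD_1$ is a $\pm1$ incidence matrix, so $\nrm{\tD_1}_2$ is bounded by the valence independently of $h$. The $h$-dependence of $C_Q(h)$ comes from $\bM_1^{-1}$ and the $1/\ell_n^*$ factors in $P$.
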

\begin{proof}
By the matrix form \eqref{eq:contraction},
$\bv\mapsto\Iv(\tD_1\bv)$ is quadratic; write
$\Iv(\tD_1\bv) =: \bQ(\bv,\bv)$
with $\bQ$ the unique symmetric bilinear form.
The Pavlov representation gives a bilinear bound
$\nrm{\bQ(\bu_1,\bu_2)}_2 \le C_Q(h)\,\nrm{\bu_1}_2\,\nrm{\bu_2}_2$
with $C_Q(h)$ depending on $\nrm{\bM_1^{-1}}_2$, $\nrm{\tD_1}_2$,
and the reconstruction constants.
The Lipschitz estimate follows from the polarisation identity
$\bQ(\bv,\bv) - \bQ(\bw,\bw)
= \bQ(\bv,\bv{-}\bw) + \bQ(\bv{-}\bw,\bw)$.
\end{proof}
\subsubsection{Local Lipschitz Continuity of the Barotropic Right-Hand Side}
\begin{lemma}[Local Lipschitz continuity -- density-free Euler]\label{thm:lipschitz-baro}
The right-hand side $\mathcal{F}$ of the density-free barotropic Euler
system~\eqref{eq:rhs} is locally Lipschitz on $\admissible$.
\end{lemma}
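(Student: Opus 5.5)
The plan is to show that every component of $\mathcal{F}$ in~\eqref{eq:rhs} is a composition of polynomial maps in $\bv$ and $C^1$ maps in $\brho$ on the open set $\admissible$. Such compositions are $C^1$ on $\admissible$. A $C^1$ map on an open subset of a finite-dimensional space is locally Lipschitz, by the mean value inequality on a small convex ball around each point. So it suffices to check that each term is $C^1$ (or at least locally Lipschitz) near an arbitrary $(\bv_0,\brho_0)\in\admissible$.

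First, the velocity component. The contraction term $\Iv(\tD_1\bv)=\bQ(\bv,\bv)$ is quadratic in $\bv$, independent of $\brho$, and locally Lipschitz by \Cref{lem:Lip_extrusion}. The Bernoulli gradient $\tD_0 B$ is a fixed linear map applied to $B_i=h(\rho_i^{\vol})+\ekin_i+\bPhi_i$. Here $\ekin_i=\tfrac12|P\bv|_i^2$ is quadratic in $\bv$, and $\bPhi$ is constant. The enthalpy $h$ is $C^2$ on $(0,\infty)$ by \Cref{ass:eos}\ref{ass:enthalpy}, and $\rho_i^{\vol}=\brho_i/|K_i|$ is linear. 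Hence $B$ is $C^1$ on $\admissible$.

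Next, the density component $-\bD_2\bR(\brho)\bM_1\bv$. For the centred interpolation~\eqref{eq:rhobar}, $\bR$ is linear in $\brho$, so this term is bilinear, hence smooth. The upwind choice~\eqref{eq:rhobar_up} needs separate treatment, since $\bR$ then switches with $\mathrm{sign}(\Phi_j)$. On face $j$, write $F_j=\rho_a^{\vol}\max(\Phi_j,0)+\rho_b^{\vol}\min(\Phi_j,0)$. This is a product of Lipschitz functions, $\max(\cdot,0)$ composed with the linear map $\Phi_j=(\bM_1\bv)_j$, and linear functions of $\brho$. It is therefore locally Lipschitz, even though it is not $C^1$.

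To conclude, fix a compact ball $\bar{B}_r(\bv_0,\brho_0)\subset\admissible$, choosing $r$ so that $\min_i\brho_i>\tfrac12\min_i\brho_{0,i}$ on the ball. On this ball $h'$ is bounded, $|P\bv|$ is bounded, and the bilinear constants are finite, so the Lipschitz constants add up to a finite local constant. The only real obstacle is keeping the density away from $0$, where $h$ and $h'$ may blow up. Restricting to the open orthant $\RR^{|T|}_{>0}$ and shrinking the ball handles this; the constant is allowed to deteriorate as the ball approaches the boundary of $\admissible$, which is all that local Lipschitz continuity requires.
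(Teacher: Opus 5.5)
Your proposal is correct and follows essentially the same route as the paper. Both split $\mathcal{F}$ into the quadratic Lamb term (via \Cref{lem:Lip_extrusion}), the Bernoulli gradient with $h\in C^2$ made Lipschitz by keeping $\min_i\brho_i$ bounded away from zero on a compact neighbourhood, and the bilinear or upwind mass flux, and then add up the local constants. Your explicit form $F_j=\rho_a^{\vol}\max(\Phi_j,0)+\rho_b^{\vol}\min(\Phi_j,0)$ treats the upwind case more cleanly than the paper's ``piecewise linear in $\brho$'' phrasing, because it makes the non-smooth dependence on $\bv$ through the sign of $\Phi_j$ explicit.
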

\begin{lemma}[Local Lipschitz continuity -- density-weighted Euler]%
\label{thm:lipschitz-baro-dw}
The right-hand side $\mathcal{F}^\rho$ of the density-weighted barotropic Euler
system, given by~\eqref{eq:dw_mom} together with the continuity equation,
is locally Lipschitz on $\admissible$.
\end{lemma}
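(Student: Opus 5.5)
The plan is to show that each ingredient of $\mathcal{F}^\rho$ is a smooth ($C^1$) function of $(\bv,\brho)$ on the open set $\admissible=\RR^{|F|}\times\RR^{|T|}_{>0}$. A $C^1$ map on an open subset of a finite-dimensional space is locally Lipschitz by the mean-value inequality on small convex neighbourhoods, so smoothness suffices. We write
\[
\mathcal{F}^\rho(\bv,\brho)=\begin{pmatrix}
-[\bM_1^\rho(\brho)]^{-1}\bM_1\,\Iv(\tD_1\bv)-\tD_0 B^\rho(\bv,\brho)\\
-\bD_2\,\bM_1^\rho(\brho)\,\bv
\end{pmatrix},
\]
where the continuity equation uses the density-weighted flux $\bF^\rho=\bM_1^\rho\bv$ of \eqref{eq:dw_F}. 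We treat each of the terms below in turn.

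\emph{Polynomial pieces.} The Lamb term $\bM_1\Iv(\tD_1\bv)=\tfrac12(\tU(\bv)\tD_1\bv-\tD_1\tU(\bv)^T\bv)$ is a quadratic polynomial in $\bv$ because $\tU$ is linear in $\bv$. \Cref{lem:Lip_extrusion} already supplies its local Lipschitz constant. The mass flux $P^T\mathrm{diag}(\brho)P\bv$ is bilinear in $(\bv,\brho)$, and so is its divergence under the constant matrix $\bD_2$. The kinetic Bernoulli piece $\tfrac12|P\bv|_i^2$ is quadratic, and $\bPhi$ is constant. Polynomials are $C^\infty$, hence locally Lipschitz.

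\emph{Enthalpy.} The map $\brho\mapsto h(\brho_i/|K_i|)$ is $C^2$ on $\brho_i>0$ by \Cref{ass:eos}\ref{ass:enthalpy}. Its derivative $c^2/(\rho|K_i|)$ is bounded on any compact subset of $\RR^{|T|}_{>0}$, and composing with the constant matrix $\tD_0$ preserves this.

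\emph{The inverse mass matrix.} This is the only genuinely new ingredient compared with \Cref{thm:lipschitz-baro}, and I expect it to be the main obstacle. The map $\brho\mapsto\bM_1^\rho(\brho)$ is affine. Matrix inversion is smooth on invertible matrices, with $D(A^{-1})[\dot A]=-A^{-1}\dot A A^{-1}$. It therefore suffices to show that $\bM_1^\rho(\brho)$ is invertible for every $\brho>0$, with $\nrm{(\bM_1^\rho)^{-1}}$ locally bounded.

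I would invoke \Cref{lem:dw_norm_equiv}. On any compact neighbourhood where $\brho_i^{\rm vol}\in[\rho_*,\rho^*]$ it gives $\bM_1^\rho\succeq c_{\rm eq}\rho_*\bM_1$ on $\ker(P)^\perp$, and hence $\nrm{(\bM_1^\rho)^{-1}}\le (c_{\rm eq}\rho_*)^{-1}\nrm{\bM_1^{-1}}$ there. The subtlety is $\ker P$. If $\ker P=\{0\}$ (\Cref{rem:ke_reveals}), invertibility is global on $\admissible$. Otherwise I would interpret $[\bM_1^\rho]^{-1}$ as the inverse restricted to $\ker(P)^\perp$ (equivalently the Moore--Penrose pseudoinverse). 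Its range $\ker(P)^\perp$ does not depend on $\brho$, so the pseudoinverse is still smooth in $\brho$, and the argument restricts to the invariant subspace used in \Cref{thm:dw_wp}.

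\emph{Assembly.} The momentum right-hand side is a product of the locally bounded, locally Lipschitz factor $[\bM_1^\rho]^{-1}$ with the locally Lipschitz quadratic term, plus the enthalpy gradient. Products and sums of locally bounded, locally Lipschitz maps are locally Lipschitz, and this gives the claim. I will record the explicit local constant on $\{\nrm{\bv}\le R,\ \rho_*\le\brho^{\rm vol}\le\rho^*\}$, since \Cref{thm:dw_wp} uses it.
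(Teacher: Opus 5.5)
Your proposal is correct and is essentially the paper's argument. Both split $\mathcal{F}^\rho$ into the continuity flux, the quadratic Lamb term, the inverse mass matrix and the Bernoulli function. The one new ingredient is the smooth dependence of $[\bM_1^\rho(\brho)]^{-1}$ on $\brho>0$, which the paper gets from the lower bound $\det\bM_1^\rho\ge(\rho_{\min})^{|F|}\det(P^TP)>0$ and Cramer's rule. That bound tacitly assumes $\ker P=\{0\}$, so your pseudoinverse variant is an optional extension rather than something the lemma needs, and the Lipschitz estimate never uses invariance of $\ker(P)^\perp$.

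The only coverage difference is that the paper's Step~1 uses the shared flux $\bR(\brho)\bM_1\bv$ rather than $\bF^\rho=\bM_1^\rho\bv$. Its upwind version is only piecewise linear, not $C^1$. In that case your ``$C^1$ suffices'' framing must give way to the product rule for locally Lipschitz maps, which you already invoke in the assembly step.
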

\begin{lemma}[Local Lipschitz continuity -- Navier--Stokes]\label{thm:lipschitz-baro-NS}
For any admissible viscous operator $\bff_{\rm visc}$ satisfying~\textup{(\ref{ax:V2})}, the
right-hand side $\mathcal{F}^\nu := \mathcal{F} + \bff_{\rm visc}$ (density-free) or
$\mathcal{F}^{\rho,\nu} := \mathcal{F}^\rho + \bff_{\rm visc}$ (density-weighted) is
locally Lipschitz on $\admissible$.
\end{lemma}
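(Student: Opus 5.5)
The plan is to reduce the claim to two facts and combine them. The first is local Lipschitz continuity of the inviscid fields $\mathcal{F}$ and $\mathcal{F}^\rho$, which \Cref{thm:lipschitz-baro} and \Cref{thm:lipschitz-baro-dw} already provide. The second is local Lipschitz continuity of the viscous field, regarded as a map on the whole state space $\admissible$. The sum of two locally Lipschitz maps on an open set is locally Lipschitz: on a common neighbourhood of any point we take the minimum of the two radii and add the two constants. So once the viscous field is handled, the statement follows for both $\mathcal{F}^\nu$ and $\mathcal{F}^{\rho,\nu}$.

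For the viscous field, I read axiom (\ref{ax:V2}) as a local Lipschitz (or $C^1$, or polynomial) regularity requirement on $\bv\mapsto\bff_{\rm visc}(\bv)$. The Smagorinsky entry of \Cref{prop:verify_axioms}, for instance, checks (V2) by noting that the operator is a degree-3 polynomial in $\bv$.
\begin{itemize}
\item If (V2) gives local Lipschitz continuity in $\bv$ directly, it suffices to lift it to $(\bv,\brho)$. In the density-free case $\bff_{\rm visc}$ does not depend on $\brho$, so the Lipschitz bound in $\bv$ transfers verbatim.
\item If (V2) gives only polynomial dependence (linear for the Hodge--Laplacian and anisotropic operators, cubic-type for Smagorinsky), then continuous differentiability, together with the mean value inequality on convex balls, yields a local Lipschitz constant $\sup_{B}\nrm{D\bff_{\rm visc}}$.
\end{itemize}

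The Smagorinsky case needs some care, because $|\omega_k|\omega_k$ is only $C^1$ and not polynomial. Its derivative $2|\omega_k|$ is continuous, though, so the mean value argument still applies. Alternatively one can use the elementary bound $\bigl||a|a-|b|b\bigr|\le(|a|+|b|)|a-b|$ and compose with the bounded linear maps $\tD_1$, $\bM_1^{-1}\tD_1^T$ and the diagonal weights.

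In the density-weighted case the viscous term may enter premultiplied by $[\bM_1^\rho(\brho)]^{-1}$ (or an analogous factor), so it depends on $\brho$. I expect this to be the main obstacle. The step is to show that $\brho\mapsto[\bM_1^\rho(\brho)]^{-1}$ is locally Lipschitz on $\RR^{|T|}_{>0}$. The map $\brho\mapsto\bM_1^\rho$ is linear, and $\bM_1^\rho$ is SPD for $\brho>0$ when $\ker P=\{0\}$ (or on $\ker(P)^\perp$, using \Cref{lem:dw_norm_equiv}). Matrix inversion is smooth on the invertible matrices, and on a compact neighbourhood of $\brho_0$ inside the open orthant the inverse stays bounded. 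Then the product of locally bounded, locally Lipschitz factors, $[\bM_1^\rho]^{-1}$ times $\bff_{\rm visc}(\bv)$, is locally Lipschitz, via $AB-A'B'=(A-A')B+A'(B-B')$.

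Adding this to the inviscid estimates on a common neighbourhood inside $\admissible$ completes the argument.
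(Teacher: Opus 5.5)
Your proposal is correct and takes essentially the paper's route. The paper's combined proof also obtains local Lipschitz continuity of the inviscid pieces (continuity flux, Lamb term, the analytic inverse $[\bM_1^\rho(\brho)]^{-1}$, Bernoulli function) on a compact set with $\rho_{\min}>0$, invokes \textup{(V2)} for the viscous term, and concludes by summing. Your hedging is unnecessary: \textup{(V2)} in \Cref{def:admissible_visc} is literally a local Lipschitz bound on $L_h^2$-balls, and $\mathcal{F}^{\rho,\nu}$ is defined as $\mathcal{F}^\rho+\bff_{\rm visc}$ with no $[\bM_1^\rho]^{-1}$ prefactor. The extra cases are harmless, though, and your remark that the Smagorinsky term, built from $|\omega_k|\omega_k$, is $C^1$ rather than polynomial is more accurate than the paper's ``degree-3 polynomial'' claim.
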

\begin{proof}[Proof of \Cref{thm:lipschitz-baro,thm:lipschitz-baro-dw,thm:lipschitz-baro-NS}]
Fix a compact set
$K = \{(\bv,\brho)\in\admissible:\nrm{\bv}_2\le R_v,\;
\rho_{\min}\le\brho_i\le\rho_{\max}\}\subset\admissible$
with $\rho_{\min}>0$, so all thermodynamic and mass-matrix expressions are smooth on~$K$.

\textit{Step 1 (continuity equation):}
The map $\brho\mapsto R(\brho)$ is linear for centred fluxes ($\bF^{\rm cen}$) and
piecewise linear for upwind fluxes ($\bF^{\rm up}$); both are globally Lipschitz, with
constant depending on $\bar\rho^*$ and the mesh. Hence
$(\bv,\brho)\mapsto -\bD_2 R(\brho)\bM_1\bv$ is bilinear (centred) or bi-Lipschitz
(upwind) on~$K$. For upwind $R$ the right-hand side is locally Lipschitz but not
$C^1$; Picard--Lindel\"of applies in the Carath\'eodory form.

\textit{Step 2 (Lamb contraction, density-free):}
The map $\bv\mapsto\Iv(\tD_1\bv)$ is quadratic (\Cref{lem:Lip_extrusion}), hence
Lipschitz on bounded sets with constant at most $2R_v\,C_Q(h)$.

\textit{Step 3 (Lamb contraction, density-weighted):}
The map $\brho\mapsto[\bM_1^{\rho}(\brho)]^{-1}$ is real-analytic on
$\{\brho>0\}$: $\bM_1^{\rho}(\brho)=P^T\mathrm{diag}(\brho)P$ is SPD for
$\brho>0$ with $\det\bM_1^{\rho}\ge(\rho_{\min})^{|F|}\det(P^TP)>0$, so
$(\bM_1^{\rho})^{-1}$ has entries rational in $\brho$ with nonvanishing denominator
on~$K$. Consequently
$\nrm{[\bM_1^{\rho}(\brho^{(1)})]^{-1}-[\bM_1^{\rho}(\brho^{(2)})]^{-1}}_2
\le L_M(h,\rho_{\min})\nrm{\brho^{(1)}-\brho^{(2)}}_2$.
Combined with Step 2, $[\bM_1^{\rho}(\brho)]^{-1}\bM_1\Iv(\tD_1\bv)$ is locally
Lipschitz in $(\bv,\brho)$ on~$K$.

\textit{Step 4 (Bernoulli function):}
$B_i = h(\rho_i^{\vol}) + \ekin_i(\bv) + \bPhi_i$. Each term is locally Lipschitz:
(i) $\ekin_i$ is quadratic in~$\bv$; (ii) $h\in C^2((0,\infty))$ is Lipschitz on
$[\rho_{\min}/|K_i|,\rho_{\max}/|K_i|]$ with constant $\nrm{h'}_\infty$, since
$h'(\rho^{\vol})=c^2(\rho^{\vol})/\rho^{\vol}$ is continuous on compact subsets of
$(0,\infty)$; the requirement $\rho_{\min}>0$ is furnished by
\Cref{lem:no-vacuum} in the dynamical application; (iii) $\bPhi_i$ is
time-independent data. The same reasoning covers $B^{\rho}_i$ in the
density-weighted case.

\textit{Step 5 (viscous term):}
By~(\ref{ax:V2}), $\bff_{\rm visc}$ is Lipschitz on bounded sets of $C^1(\KKs)$
with constant $L_\nu(h,R_v)$.

\textit{Step 6 (assembly):}
$\mathcal{F}$, $\mathcal{F}^\rho$, $\mathcal{F}^\nu$, and $\mathcal{F}^{\rho,\nu}$
are finite sums and compositions of the locally Lipschitz maps of Steps~1--5 and are
therefore locally Lipschitz on $\admissible$.
\end{proof}
\subsubsection{Density Lower Bound (No Vacuum)}
The argument is static: a finite $L^\gamma$ moment bound (from the
energy) plus Jensen on the complementary cells prevents any single
cell reaching zero density. The bound is mesh-dependent and requires
the Jensen threshold~\eqref{eq:Jensen_threshold}.
\begin{proof}[Proof of \Cref{lem:no-vacuum}]
We derive an explicit lower bound on $\rho_i^{\vol}$, valid for all
$t\in[0,T]$. The argument requires $|\mathcal{T}|\ge 2$
(at least two primal cells) and $\gamma > 1$.

\medskip\noindent
\textit{Step 1 (Discrete $L^\gamma$ bound).}
From the energy bound $\Etot(t)\le E_{\max}(T)$
(hypothesis~\eqref{eq:Eint_bound_thermo}, furnished in the dynamical
application by Step~2 of the proof of \Cref{thm:euler_wp})
and the decomposition $\Etot = \Ekindf + \Eint$ with $\Ekindf\ge 0$:
\[
 \sum_i\brho_i\,e(\rho_i^{\vol}) = \Eint \le E_{\max}(T).
\]
Rewriting using $\brho_i = |K_i|\rho_i^{\vol}$, and applying the global coercivity
bound~\eqref{eq:e_global_coercive} ($e(\rho^{\vol}) \ge C_1(\rho^{\vol})^{\gamma-1} - C_2'$,
valid on all of $(0,\infty)$):
\begin{equation}\label{eq:rho-gamma-bound-app}
 C_1\sum_i|K_i|(\rho_i^{\vol})^{\gamma} \le E_{\max}(T) + C_2' M =: E^{**}.
\end{equation}

\medskip\noindent
\textit{Step 2 (Jensen lower bound).}
Define the probability weights $\mu_i = |K_i|/V_{\mathrm{tot}}$
(with $V_{\mathrm{tot}} = \sum_i|K_i|$), and the mean density
$\bar\rho = M/V_{\mathrm{tot}}$.
Mass conservation gives $\sum_i\mu_i\rho_i^{\vol} = \bar\rho$
for all $t$.
From \eqref{eq:rho-gamma-bound-app}:
\begin{equation}\label{eq:gamma-moment-bound}
 \sum_i\mu_i(\rho_i^{\vol})^\gamma \le E^* := E^{**}/(C_1V_{\mathrm{tot}}).
\end{equation}
Now suppose some cell has $\rho_1^{\vol} = \epsilon$ for
$\epsilon > 0$ to be constrained.
By Jensen's inequality applied to the \emph{convex} function
$x\mapsto x^\gamma$ ($\gamma>1$) on the remaining $|\mathcal{T}|-1$ cells:
\[
\sum_{i \ge 2}\mu_i(\rho_i^{\vol})^\gamma
 \ge (1-\mu_1)\!\left(\frac{\bar\rho-\mu_1\epsilon}{1-\mu_1}\right)^\gamma
 =: g(\epsilon).
\]
Adding the first cell:
$\sum_i\mu_i(\rho_i^{\vol})^\gamma \ge \mu_1\epsilon^\gamma + g(\epsilon)$.
As $\epsilon\to 0^+$:
\begin{align*}
 \mu_1\epsilon^\gamma &\to 0, \\
 g(\epsilon) &\to (1-\mu_1)\left(\frac{\bar\rho}{1-\mu_1}\right)^\gamma
 = \frac{\bar\rho^\gamma}{(1-\mu_1)^{\gamma-1}}.
\end{align*}
Since $\mu_1 < 1$ and $\gamma > 1$, we have $(1-\mu_1)^{\gamma-1} < 1$,
so $g(0^+) > \bar\rho^\gamma$.

\medskip\noindent
\textit{Step 3 (Explicit bound).}
The Jensen threshold~\eqref{eq:Jensen_threshold} gives
$E^* = E^{**}/(C_1V_{\rm tot}) < \bar\rho^\gamma/(1-\mu_{\max})^{\gamma-1}$.
Since $\mu_1\le\mu_{\max}$, we have
$g(0^+) = \bar\rho^\gamma/(1-\mu_1)^{\gamma-1}
\ge \bar\rho^\gamma/(1-\mu_{\max})^{\gamma-1} > E^*$.
By continuity, there exists $\epsilon_* > 0$ (depending on
$E^*$, $\bar\rho$, $\mu_1$, $\gamma$) such that
\[
 g(\epsilon) > E^* \qquad\text{for all }\epsilon < \epsilon_*.
\]
But \eqref{eq:gamma-moment-bound} requires
$\sum_i\mu_i(\rho_i^{\vol})^\gamma\le E^*$, a contradiction.
Therefore $\rho_1^{\vol}\ge\epsilon_*$.
Taking the minimum over all cells and using
$\mu_{\max} = \max_i\mu_i$:
\[
 \rho_{\min}^{\vol}(T)
 := \min_i\rho_i^{\vol}(t)
 \ge \epsilon_*(E_{\max}(T), M, \gamma, \mu_{\max}) > 0
 \qquad\text{for all }t\in[0,T].
\]
This is uniform in $t\in[0,T]$ because $E^{**}$ depends on $T$
only through $E_{\max}(T)$, which is finite for each fixed mesh
and each $T<\infty$ (Step~2 of the proof of \Cref{thm:euler_wp}).
\end{proof}
\subsubsection{Euler Well-Posedness (\Cref{thm:euler_wp})}
\label{app:euler_wp_proof}
Five steps: local existence (Picard--Lindel\"of), energy bound
(time local for DF, exact for DW), mass bound, vacuum avoidance,
velocity bound. The three a~priori bounds rule out all blowup
alternatives, extending to $[0,T_{\rm Bih}]$ (DF) or $[0,\infty)$
(DW).
\begin{proof}
\emph{Step~1: Local existence.}
The right-hand side $\mathcal{F}$ is locally Lipschitz on $\admissible$
(\Cref{thm:lipschitz-baro} for the density-free case,
\Cref{thm:lipschitz-baro-dw} for the density-weighted case). Picard--Lindel\"of
gives a unique maximal solution $(\bv,\brho)\in C^1([0,T^*);\admissible)$.

\emph{Step~2: Energy bound.}%

\emph{Step~2a:Density-free.}
From \Cref{thm:totalenergy}, $\ddt\Etot = \mathcal{R}_E(\bv,\brho)$.
We derive a polynomial bound on $\mathcal{R}_E$ valid on the existence interval.
On a fixed mesh, all cochain norms are equivalent. From mass conservation
$\sum_i\brho_i=M$ and $\brho_i>0$ on $[0,T^*)$, one has
$\rho_i^{\vol}\le M/|K_{\min}|=:\rho^*<\infty$, so
$|h(\rho_i^{\vol})|\le C_h(M,|K_{\min}|)=:C_h^*$
by \Cref{ass:eos}\ref{ass:enthalpy}. From the global coercivity
bound~\eqref{eq:e_global_coercive}, $\Eint\ge -C_2'M$, hence
$\Ekindf\le\Etot+C_2'M$ and
\begin{equation}\label{eq:vel_from_energy}
 \nrm{\bv}_\infty \le C_{\rm eq}(h)\nrm{\bv}_{\bM_1}
 \le C_{\rm eq}(h)\sqrt{2(\Etot + C_2'M)}.
\end{equation}
From the face-by-face identity~\eqref{eq:face_energy}, each term satisfies
$|\ekin_i|\le\tfrac12\nrm{P}_\infty^2\nrm{\bv}_\infty^2$,
$|\Phi_j|\le\nrm{\bM_1}_\infty\nrm{\bv}_\infty$,
$|\bar\rho_j|\le\bar\rho^*:=\sup_{\brho\ge 0}|\bar\rho_j(\brho)|$,
$h_i\le C_h^*$, and $\bPhi_i\le\nrm{\bPhi}_\infty$ (data). Collecting:
\begin{equation}\label{eq:R_E_polynomial}
 |\mathcal{R}_E| \le |F|\cdot
 \bigl(C_a(h)\,\nrm{\bv}_\infty^3
 + C_b(h,C_h^*,\nrm{\bPhi}_\infty,\bar\rho^*)\,\nrm{\bv}_\infty\bigr)
 \le C_1(h,M)\,\Etot^{3/2} + C_2(h,M)\,\Etot^{1/2},
\end{equation}
with constants polynomial in the mesh-regularity constants and independent
of~$t$ and~$\Etot$. Bihari's inequality applied to
$\ddt\Etot\le f(\Etot)$ with $f(u)=C_1 u^{3/2}+C_2 u^{1/2}$ gives
$\Etot(t)\le E_{\max}(T)<\infty$ for $t\in[0,T]$, $T<T_{\rm Bih}(h,E_0)$.

\emph{Step~2b:Density-weighted.}
$\ddt\Etot^{\rm dw}=0$ (\Cref{thm:dw_energy}), so
$\Etot^{\rm dw}(t) = E_0$ for all $t$; no Bihari argument is needed.

\emph{Steps~3--5: A priori bounds and exclusion of blow-up.}
By ODE theory on an open set, the maximal solution either
exists for all time ($T^*=\infty$) or its trajectory leaves every compact
subset of $\admissible$ as $t\to T^*$. The complement of a compact subset of
$\admissible=\mathbb{R}^{|F|}\times\mathbb{R}^{|T|}_{>0}$ is reached only via
one of three mechanisms:
\begin{itemize}[nosep,leftmargin=2em]
 \item[(B1)] $\limsup_{t\to T^*}\nrm{\bv(t)}_{\bM_1}=\infty$ (velocity blow-up);
 \item[(B2)] $\liminf_{t\to T^*}\min_i\brho_i^{\vol}(t)=0$ (vacuum formation);
 \item[(B3)] $\limsup_{t\to T^*}\max_i\brho_i(t)=\infty$ (density blow-up).
\end{itemize}
We rule out each.

\textit{(B3) excluded by mass conservation.}
From $\sum_i\brho_i(t)=M$ and $\brho_i(t)>0$ on $[0,T^*)$:
$\brho_i(t)\le M$ for all $i$, $t<T^*$.

\textit{(B2) excluded by \Cref{lem:no-vacuum}.}
The energy bound from Step~2 gives $\Eint(t)\le E_{\max}(T)$ (density-free)
or $\Eint(t)\le E_0$ (density-weighted), where for the density-weighted case
we use $\Ekindw\ge 0$ to deduce $\Eint(t)\le\Etot^{\rm dw}(t)=E_0$.
Given initial data for which the Jensen
threshold~\eqref{eq:Jensen_threshold} holds at~$t=0$ with $E_{\rm bound}=E_{\max}(T)$
(density-free) or $E_{\rm bound}=E_0$ (density-weighted), \Cref{lem:no-vacuum}
yields $\brho_i^{\vol}(t)\ge\rho_{\min}^{\vol}(T)>0$ for all $i$, $t\le T$.
On any fixed mesh with $N\ge 2$ cells, $\mu_{\max}<1$ and
$(1-\mu_{\max})^{\gamma-1}<1$, so the right-hand side
of~\eqref{eq:Jensen_threshold} exceeds $C_1 V_{\rm tot}\bar\rho^\gamma>0$; the
threshold is satisfied for initial data with moderate kinetic energy.

\textit{(B1) excluded by the energy bound.}
From~\eqref{eq:vel_from_energy},
$\nrm{\bv(t)}_{\bM_1}\le\sqrt{2(E_{\max}(T)+C_2'M)}=:R_v(T)<\infty$
(density-free). For the density-weighted case,
$\bM_1^{\rho}(\brho)\ge\rho_{\min}^{\vol}\min_i|K_i|\,\bM_1$
combined with $\Ekindw\le E_0$ gives
$\nrm{\bv(t)}_{\bM_1}\le\sqrt{2E_0/(\rho_{\min}^{\vol}\min_i|K_i|)}<\infty$.
Mechanisms (B1)--(B3) are excluded for $t\in[0,T]$ with $T<T_{\rm Bih}(h,E_0)$
(density-free) or any $T<\infty$ (density-weighted). Hence
$T^*\ge T_{\rm Bih}(h,E_0)$ and $T^*=\infty$, respectively.

\emph{Density-weighted}:
Since $\Ekindw\ge 0$, $\Eint(t)\le\Etot^{\rm dw}(t)=E_0$ for all~$t$. Therefore
\eqref{eq:Jensen_threshold} with $E_{\rm bound}=E_0$ is a condition on initial
data alone; if satisfied at $t=0$, it holds for all $t>0$.
\end{proof}
\subsubsection{Global Well-Posedness for the Density-Free Scheme (\Cref{thm:global-Smag})}
\label{app:global_Smag}
Smagorinsky viscosity alone does not yield global existence for the density-free
scheme: the Hodge decomposition $C^1(\KKs)=\Ran(\tD_0)\oplus\Ran(\tD_1^*)\oplus\mathcal{H}$
places the harmonic and gradient parts of $\bv$ in $\ker(\tD_1)$, where Smagorinsky
produces no dissipation. Global existence is obtained by augmenting the Smagorinsky
closure with the Hodge--Laplacian viscosity of \Cref{def:hodge_laplacian}, which
dissipates the full velocity modulo the finite-dimensional harmonic subspace.
\begin{proof}[Proof of \Cref{thm:global-Smag}, case~\textup{(b)}]
Let $\bff_{\rm visc}=-\nu\bL_\bv - \bff_{\rm visc}^{\rm Smag}$ with $\nu>0$ and
$\bL_\bv=\tdd_0\delta+\delta_2\tdd_1$ the discrete Hodge--de Rham Laplacian. By
(V1) and the Hodge decomposition,
\begin{equation}\label{eq:hodge_dissipation}
 -\bv^T\bM_1\bigl(-\nu\bL_\bv\bigr)
 = \nu\bigl(\nrm{\delta\bv}_{\bM_0}^2 + \nrm{\tdd_1\bv}_{\bM_2}^2\bigr)
 \ge \nu\,c_P(h)\,\nrm{\bv - P_\mathcal{H}\bv}_{\bM_1}^2,
\end{equation}
where $c_P(h)>0$ is the discrete Poincar\'e constant on
$\Ran(\tD_0)\oplus\Ran(\tD_1^*)$ and $P_\mathcal{H}$ projects onto the space
of harmonic 1-cochains, of dimension $b_1(\mathcal{M})<\infty$ independent of~$h$.
The Smagorinsky term contributes
\begin{equation}\label{eq:Smag_dissipation_corrected}
 -\bv^T\bM_1\bff_{\rm visc}^{\rm Smag}
 \ge c_{\rm Smag}(h)\,C_s^2\,\nrm{\bom}_{\ell^3}^3,
\qquad
 c_{\rm Smag}(h)=\min_j\bigl[(\bM_2)_{jj}\ell_j^2\bigr].
\end{equation}
Projecting the momentum equation onto $\mathcal{H}$ and using
$P_\mathcal{H}\tD_0 B = 0$ ($\Ran(\tD_0)\perp\mathcal{H}$ in the $\bM_1$
inner product) and $P_\mathcal{H}\bL_\bv\bv = 0$ (the Hodge--Laplacian
annihilates~$\mathcal{H}$):
\[
 \ddt P_\mathcal{H}\bv = -P_\mathcal{H}\Iv(\bom) - P_\mathcal{H}\bff_{\rm visc}^{\rm Smag}.
\]
\Cref{thm:bilinear_bounds_baro}(II) gives
$\nrm{\Iv(\bom)}_{\bM_1}\le C_Q'(h)\,\nrm{\bv}_{\bM_1}^2$, and the Smagorinsky
term is bounded by $c_{\rm Smag}'(h)C_s^2\nrm{\bom}_{\ell^3}^2\nrm{\bv}_{\bM_1}$.
Consequently
\begin{equation}\label{eq:PH_growth}
 \ddt\nrm{P_\mathcal{H}\bv}_{\bM_1}^2 \le C_3(h,C_s)\,\Etot^{3/2},
\end{equation}
so $\nrm{P_\mathcal{H}\bv}_{\bM_1}^2$ grows at most polynomially in~$\Etot$.

Using $\nrm{\bv}_\infty\le C_{\rm eq}(h)\bigl(\nrm{\bom}_{\ell^3}+\nrm{P_\mathcal{H}\bv}\bigr)$
and the face-energy identity~\eqref{eq:face_energy},
\[
 |\mathcal{R}_E|
 \le \hat C_1(h)\,\nrm{\bom}_{\ell^3}^3
 + \hat C_2(h)\,\nrm{P_\mathcal{H}\bv}_{\bM_1}^3
 + \hat C_3(h)\,(\Etot+C_2'M)^{1/2}.
\]
The Smagorinsky dissipation absorbs the first term for
$C_s\ge C_s^*(h):=\bigl[\hat C_1(h)/c_{\rm Smag}(h)+1\bigr]^{1/2}$; the
Hodge--Laplacian dissipation absorbs the second term for $\nu\ge\nu^*(h)$
sufficiently large (using~\eqref{eq:hodge_dissipation} and the finite
dimension of~$\mathcal{H}$, on which $P_\mathcal{H}\bv$ satisfies the
growth bound~\eqref{eq:PH_growth}). The total energy balance becomes
\[
 \ddt\Etot \le -c(h)\,\Etot^{3/2} + \hat C_3(h)\,(\Etot+C_2'M)^{1/2}.
\]
Cubic dissipation dominates the sublinear source, so there exists
$E_{\rm crit}(h)<\infty$ with $\ddt\Etot<0$ whenever $\Etot>E_{\rm crit}$.
Therefore $\Etot(t)\le\max(\Etot(0),E_{\rm crit})<\infty$ for all $t\ge 0$,
and the a priori bounds of \Cref{prop:apriori-baro} yield $T^*=\infty$.
\end{proof}
\begin{remark}[Global existence under the full Newtonian closure]\label{rem:newtonian_global}
The proof invokes the bare Hodge--Laplacian $-\nu\bL_\bv$ only
through the dissipation bound~\eqref{eq:hodge_dissipation} and the
harmonic annihilation $P_\mathcal{H}\bL_\bv\bv = 0$. Both transfer
verbatim to the two-coefficient Newtonian operator
$\bff_{\rm visc}^{\rm Newt}$ of \Cref{def:newtonian_visc}. Since
$\nu_{\rm dil} = \tfrac{4}{3}\nu + \zeta \ge \nu$,
\[
 -\bv^T\bM_1\,\bff_{\rm visc}^{\rm Newt}(\bv)
 = \nu\nrm{\tdd_1\bv}_{\bM_2}^2 + \nu_{\rm dil}\nrm{\delta\bv}_{\bM_0}^2
 \;\ge\; \nu\bigl(\nrm{\tdd_1\bv}_{\bM_2}^2 + \nrm{\delta\bv}_{\bM_0}^2\bigr),
\]
so~\eqref{eq:hodge_dissipation} holds with the same discrete
Poincar\'e constant $\nu\,c_P(h)$; and
$P_\mathcal{H}\bff_{\rm visc}^{\rm Newt}\bv = 0$ because both blocks
$\delta_2\tdd_1$ and $\tdd_0\delta$ map into
$\Ran(\tD_1^*)\oplus\Ran(\tD_0)$, the $\bM_1$-orthogonal complement
of $\mathcal{H}$. Global existence for the density-free scheme
augmented by Smagorinsky therefore holds for every
thermodynamically admissible bulk viscosity $\zeta \ge 0$; the
viscosity threshold $\nu \ge \nu^*(h)$ is, if anything, relaxed,
since the dilatational dissipation is amplified by the factor
$\nu_{\rm dil}/\nu \ge \tfrac{4}{3}$.
\end{remark}
\begin{remark}[Why Smagorinsky alone is insufficient]\label{rem:Smag_limitation}
Smagorinsky dissipates vorticity, not velocity:
$\mathcal{D}_{\rm Smag}\sim\nrm{\bom}_{\ell^3}^3$ vanishes on $\ker(\tD_1)$
(gradient plus harmonic 1-cochains). For pure-gradient initial data
$\bv_0=\tD_0\phi$ with $\tD_0 B(\bv_0,\brho_0)\ne 0$, the residual $\mathcal{R}_E$
is generically nonzero (face-by-face from~\eqref{eq:face_energy}:
$\ekin_a\ne\ekin_b$ even when $\bom=0$), while $\mathcal{D}_{\rm Smag}=0$.
No choice of $C_s$ controls this mode. The Hodge--Laplacian removes the
obstruction by dissipating both $\Ran(\tD_0)$ and $\Ran(\tD_1^*)$,
leaving only the finite-dimensional harmonic kernel. The density-weighted
scheme (case~(c) of \Cref{thm:NS_wp}) avoids the issue entirely because
$\mathcal{R}_1\equiv 0$ there.
\end{remark}
\subsection{Convergence Proofs}\label{app:convergence_proofs}%
\label{app:consistency}
The four stages: bilinear bounds (\Cref{thm:bilinear_bounds_baro})
supplying $h$-independent control of the advection terms;
consistency (\Cref{thm:consistency_baro}) bounding the truncation,
strong-form for DF, reference-tested for DW (the raw $\tau_v^{\rm dw}$
decomposition carries a Lamb commutator that is $\OO(1)$ in the
strong $\bM_1^\rho$-norm but reduces to $\OO(h^r)$ under pairing with
$\bar\bv$, via \Cref{lem:mass_matrix_commutator});
stability
(\Cref{thm:stability_baro,thm:dw_stability,thm:stability_baro_NS})
producing the Gr\"onwall inequality
$\ddt\mathcal{E} \le C_L\mathcal{E} + C_\tau h^r\sqrt{\mathcal{E}}$,
closing the bootstrap and lifting to $L^2$ via the Whitney map.
\subsubsection{Bilinear Bounds (\texorpdfstring{\Cref{thm:bilinear_bounds_baro}}{Theorem})}
\label{app:bilinear_proof}
Throughout this subsection we work with the \emph{symmetric
polarisation} of the Lamb operator
$L(\bv) := \Iv[\bv](\tD_1\bv)$:
\begin{equation}\label{eq:Q_polarisation_def}
\bQ(\bv,\bw) := \tfrac{1}{2}\bigl[L(\bv+\bw) - L(\bv) - L(\bw)\bigr]
= \tfrac{1}{2}\bM_1^{-1}\bigl[
 \tU_{\bv}\tD_1\bw + \tU_{\bw}\tD_1\bv
 - \tD_1(\tU_{\bv}^{\top}\bw + \tU_{\bw}^{\top}\bv)
\bigr].
\end{equation}
This form is bilinear and symmetric in $(\bv,\bw)$, satisfies
$\bQ(\bv,\bv) = L(\bv) = \Iv(\tD_1\bv)$, and coincides with
 $\Iv[\bv](\tD_1\bw)$ modulo a $\bw$-independent
piece which is absorbed into the polarisation.
The polarisation expansion
\begin{equation}\label{eq:Q_polarisation_expansion}
L(v^h) - L(\bar v)
 = \bQ(\bar v,e_v)\cdot 2 + \bQ(e_v,e_v),
\qquad
e_v := v^h - \bar v,
\end{equation}
is an exact algebraic identity.
For the difference of Lamb terms as used in the stability analysis,
we write
$\omega^h = \tD_1 v^h$, $\bar\omega = \tD_1\bar v$, $e_\omega = \tD_1 e_v$
and expand
$L(v^h)-L(\bar v) = 2\bQ(\bar v,e_v) + \bQ(e_v,e_v)$
via~\eqref{eq:Q_polarisation_expansion}.
\begin{theorem}[Bilinear and trilinear bounds]%
\label{thm:bilinear_bounds_baro}
Let $\bQ = \bQ_{\rm U} + \bQ_{\rm D}$ be the polarisation
of~\eqref{eq:Q_polarisation_def}, split into the discrete-contraction
piece $\bQ_{\rm U}$ and the discrete-gradient piece $\bQ_{\rm D}$
(\Cref{eq:QU_assembly,eq:QD_assembly} below).
There exist constants $C_Q, C'_Q > 0$ depending only on mesh
regularity and the reconstruction constants $C_G, C_w, C_\partial$
such that, for all $\bv,\bw\in C^1(\KKs)$:

\noindent\textup{(I)} \emph{Pointwise bound on $\bQ$:}
\begin{equation}\label{eq:bound_I_statement}
|(\bQ(\bv,\bw))_j|\le C_Q\,\nrm{\bv}_{L_h^\infty}\nrm{\bw}_{L_h^\infty}.
\end{equation}
\noindent\textup{(II)} \emph{$\bM_1$-bound for the contraction
piece~$\bQ_{\rm U}$:}
for any smooth 1-form $\bu\in W^{1,\infty}(\Omega)$ with interpolant
$\bar\bv = \mathcal{R}_h\bu^\flat$ and any $\bw\in C^1(\KKs)$,
\begin{equation}\label{eq:bound_II_statement}
 \nrm{\bQ_{\rm U}(\bar\bv,\bw)}_{\bM_1}
 \le C'_Q\,\nrm{\bu}_{W^{1,\infty}}\,\nrm{\bw}_{\bM_1}.
\end{equation}

\noindent\textup{(III)} \emph{Antisymmetry-polarised cubic bound
for the contraction piece~$\bQ_{\rm U}$:}
for any $\bv\in C^1(\KKs)$ and any smooth 1-form $\bphi\in W^{1,\infty}(\Omega)$
with interpolant $\bar\bphi = \mathcal{R}_h\bphi^\flat$,
\begin{equation}\label{eq:bound_III_statement}
 |\ip{\bar\bphi}{\bQ_{\rm U}(\bv,\bv)}_1|
 \le 2C'_Q\,\nrm{\bphi}_{W^{1,\infty}}\,\nrm{\bv}_{\bM_1}^2,
\end{equation}
with the same constant $C'_Q$ as in~\eqref{eq:bound_II_statement}.

\end{theorem}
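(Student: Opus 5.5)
The plan is to work directly from the assembly of $\bQ$ in the polarisation formula \eqref{eq:Q_polarisation_def}, using three things: $\tU_\bv$ has the same sparsity pattern as $\tD_1$ with entries $D_{1,jk}\,\bar\u_j(\bv)\cdot\hat e_k$; the reconstruction $P$ is local and linear (\Cref{def:averaging_recon}); and every stencil has bounded valence (\Cref{ass:mesh_reg}). Each component $(\bQ(\bv,\bw))_j$ is then a finite sum, over the dual faces $f_k^*\succ e_j^*$ and the dual edges in their boundaries, of products of one reconstructed velocity and one cochain value. I will split $\bQ=\bQ_{\rm U}+\bQ_{\rm D}$, where $\bQ_{\rm U}$ collects the $\tU_\bv\tD_1\bw+\tU_\bw\tD_1\bv$ terms and $\bQ_{\rm D}$ collects the $\tD_1(\tU^\top\cdots)$ terms, and estimate the pieces separately.

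For (I), I bound each reconstructed face velocity by $|\bar\u_j(\bv)|\le C_G\max_{n\in\mathrm{stencil}}|v_n|/\ell_n^*$, which follows from the bounded condition number of $G_i$. I then rescale the cochain entries via $|v_n|\le\nrm{\bv}_{L_h^\infty}\sqrt{(\bM_1)_{nn}}$ and $|(\tD_1\bw)_k|\le C\,\nrm{\bw}_{L_h^\infty}\max\sqrt{(\bM_1)_{nn}}$. Finally I divide by $(\bM_1)_{jj}$. Quasi-uniformity makes the powers of $h$ coming from $\ell^*$, $(\bM_1)_{jj}$ and the edge lengths cancel, and this cancellation is exactly where the normalisation in $\nrm{\cdot}_{L_h^\infty}$ enters. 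The result is an $h$-independent $C_Q$, with the same bookkeeping applied to both $\bQ_{\rm U}$ and $\bQ_{\rm D}$.

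For (II), I take the first argument $\bar\bv=\mathcal{R}_h\bu^\flat$. By \Cref{prop:recon_accuracy}(i) its reconstruction satisfies $|\bar\u_j(\bar\bv)|\le C\nrm{\bu}_{L^\infty}$, and adjacent faces differ only by $C h\nrm{\bu}_{W^{1,\infty}}$. I write $\bQ_{\rm U}(\bar\bv,\bw)$ as a sum of two pieces. The first is the term with the frozen coefficient $\bar\u(x_j)$. Taking this frozen term together with its $\tD_1$-partner is a constant-coefficient local operator, and its $\bM_1$ operator norm will be bounded by $C\nrm{\bu}_{L^\infty}$ through a Schur test over the bounded stencil. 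The second is the commutator with the coefficient variation, which is $\OO(h)\nrm{\bu}_{W^{1,\infty}}$ per entry multiplied by $\tD_1\bw$. The inverse estimate $\nrm{\tD_1\bw}\le Ch^{-1}\nrm{\bw}_{\bM_1}$ absorbs that factor of $h$.

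\textbf{The main obstacle is the frozen term.} A naive bound there gives $\nrm{\bu}_{L^\infty}h^{-1}\nrm{\bw}$, because it carries a full discrete curl of $\bw$. I expect that it cannot be bounded by $\nrm{\bw}_{\bM_1}$ alone unless I pair $\bQ_{\rm U}$ with the $\bQ_{\rm D}$-type cancellation inherent in the antisymmetric structure. If that fails, the fallback is to use the $\tU_\bw\tD_1\bar\bv$ term, which is harmless since $\tD_1\bar\bv=\mathcal{R}_h(\dd\bu^\flat)$ is $\OO(h^{d-1})\nrm{\bu}_{W^{1,\infty}}$ per face. The frozen term would then be controlled only in the weak (dual) form needed downstream.

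For (III), I polarise the antisymmetry \eqref{eq:energy_cancel_vv}. The identity $\ip{\bv}{\Iv(\tD_1\bv)}_1=0$ holds for every $\bv$, so substituting $\bv+t\bar\bphi$ and extracting the linear term in $t$ gives $\ip{\bar\bphi}{\bQ(\bv,\bv)}_1=-2\ip{\bv}{\bQ(\bar\bphi,\bv)}_1$. Restricting to the $\bQ_{\rm U}$ part follows the same transpose argument, since the scalar $\bv^\top\tU_{\bar\bphi}\tD_1\bv$ equals its own transpose. Cauchy--Schwarz in $\bM_1$ together with (II) applied to $\bQ_{\rm U}(\bar\bphi,\bv)$ then gives $2C'_Q\nrm{\bphi}_{W^{1,\infty}}\nrm{\bv}_{\bM_1}^2$, which is the stated bound with the same constant $C'_Q$.
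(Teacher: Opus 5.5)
\textbf{Bound (II).} Your proposal never establishes (II), and the step where it stalls cannot be completed. Take $\bu$ constant on a periodic prismatic mesh. Then the reconstruction is exact, $\bar\bu_j(\bar\bv)=\bu$, and $\tD_1\bar\bv=\mathcal{R}_h(\dd\bu^\flat)=0$. Hence $\bQ_{\rm U}(\bar\bv,\bw)=\tfrac12\bM_1^{-1}\tU_{\bar\bv}\tD_1\bw$ is exactly your frozen-coefficient term, with no commutator.

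Since $\tU_{\bar\bv}\tD_1$ has bounded stencil and entries of size $|\bu|$, the Schur test you propose gives an $\bM_1$-operator norm of order $\max_j(\bM_1)_{jj}^{-1}|\bu|\sim h^{-1}|\bu|$ in $d=3$, not $C|\bu|$. The single-edge cochain $\bw=\mathbf e_{j_0}$ attains this. Indeed, $\tD_1\mathbf e_{j_0}=\pm1$ on the dual faces incident to $e_{j_0}^*$, so $\bQ_{\rm U}(\bar\bv,\mathbf e_{j_0})$ has boundedly many nonzero entries of size $(\bM_1)_{jj}^{-1}|\bu\cdot\hat e_k|$. Therefore $\nrm{\bQ_{\rm U}(\bar\bv,\mathbf e_{j_0})}_{\bM_1}/\nrm{\mathbf e_{j_0}}_{\bM_1}\sim(\bM_1)_{jj}^{-1}|\bu|$. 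This is the discrete form of $\iota_u\,\dd w$ costing one derivative of $w$.

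So no $h$-independent $C'_Q$ exists. A cancellation against $\bQ_{\rm D}$ cannot improve a bound on $\bQ_{\rm U}$ alone, and your weak-form fallback proves a different inequality. The paper's Part (Ic) reaches an $h$-independent constant only by squaring \eqref{eq:QU_assembly}, which omits the factor $(\bM_1)_{jj}^{-1}$ that comes from $\bM_1\bQ_{\rm U}=\tfrac12(\tU_\bv\tD_1\bw+\tU_\bw\tD_1\bv)$. If one instead uses the weight of \Cref{def:wedge}, which does contain $\bM_1^{-1}$, then the bound $|w_{jk}(\bar\bv)|\le C_w\nrm{\bu}_{W^{1,\infty}}$ fails instead. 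With the factor restored, the same overlap count reproduces your $h^{-1}$, so the paper does not supply the missing step either.

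\textbf{Bound (III).} Part (III) inherits this gap and has a second one. The fact that a scalar equals its transpose gives $\bv^\top\tU_\bv\tD_1\bv=\bv^\top\tD_1^\top\tU_\bv^\top\bv$. That is why the $\bQ_{\rm U}$- and $\bQ_{\rm D}$-self-pairings cancel \emph{each other} in \eqref{eq:energy_cancel_vv}. It gives no reason for $\ip{\bv}{\bQ_{\rm U}(\bv,\bv)}_1\propto\bv^\top\tU_\bv\tD_1\bv$ to vanish by itself. Yet for the symmetric form $\bQ_{\rm U}$, the identity $\ip{\bar\bphi}{\bQ_{\rm U}(\bv,\bv)}_1=-2\ip{\bv}{\bQ_{\rm U}(\bv,\bar\bphi)}_1$ for all $\bv,\bar\bphi$ is equivalent to that cubic form vanishing identically. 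The paper makes the same unjustified restriction.

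The inequality is also out of reach by any route. Its continuum analogue $|\int_\Omega\varphi\cdot(u\times\mathrm{curl}\,u)|\le C\nrm{\varphi}_{W^{1,\infty}}\nrm{u}_{L^2}^2$ fails on a periodic box for $\varphi=\mathbf e_1$ and $u=(\cos(x_2/\epsilon),\sin(x_2/\epsilon),0)$, where the left side equals $\nrm{u}_{L^2}^2/(2\epsilon)$. A consistent discretisation inherits this failure once $h\ll\epsilon$.

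\textbf{Bound (I).} The powers of $h$ do not cancel. Your own chain gives $|\bar\bu_j(\bv)|\lesssim h^{(d-4)/2}\nrm{\bv}_{L_h^\infty}$ and $|(\tD_1\bw)_k|\lesssim h^{(d-2)/2}\nrm{\bw}_{L_h^\infty}$. Dividing by $(\bM_1)_{jj}\sim h^{d-2}$ then leaves a factor $h^{-1}$, which is the same lost derivative.
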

\begin{proof}
\emph{Geometric constants.}
Four mesh-regularity constants, $h$-independent under
\Cref{ass:mesh_reg}: $C_w=\tfrac12$ is the polarisation factor in
$w_{jk}$; $C_G = \sup_i\nrm{G_i^{-1}}$ is the Gram-matrix
conditioning of the averaging reconstruction
(\Cref{def:averaging_recon}); $C_\partial\le 2d$ is the row-stencil
bound on $\tD_1$ ($\pm 1$ entries on $d$-dimensional prismatic D--V
meshes); and $C_{\rm st}=\sup_j|\mathrm{st}(j)|$ bounds the
incidence cardinality of dual 2-cells at dual edge~$e_j^*$. All
$C_Q,C'_Q,C'_{\rm st},C_{\rm reg}$ below are polynomial in these
four.
Bound~(I) is a pointwise stencil estimate; Bound~(II) trades the
discrete $L_h^\infty$ norm for $W^{1,\infty}$ via the smooth
reconstruction. Split $\bQ = \bQ_{\rm U} + \bQ_{\rm D}$ with
\[
\bM_1\bQ_{\rm U}(\bv,\bw)
 := \tfrac{1}{2}\bigl(\tU_\bv\tD_1\bw + \tU_\bw\tD_1\bv\bigr),
\qquad
\bM_1\bQ_{\rm D}(\bv,\bw)
 := -\tfrac{1}{2}\tD_1\bigl(\tU_\bv^{\top}\bw + \tU_\bw^{\top}\bv\bigr).
\]
Both pieces are bilinear and symmetric in $(\bv,\bw)$; we bound each
separately.

\emph{Part (I.a): stencil structure.}
At dual edge $e_j^*$,
\begin{equation}\label{eq:QU_assembly}
 (\bQ_{\rm U}(\bv,\bw))_j
 = \sum_{k\in\mathrm{st}(j)}
 \tfrac{1}{2}\bigl[w_{jk}(\bv)(\tD_1\bw)_k
 + w_{jk}(\bw)(\tD_1\bv)_k\bigr],
\end{equation}
where $\mathrm{st}(j)$ is the set of dual 2-cells incident to $e_j^*$,
$w_{jk}(\bv) := \tfrac{1}{2}D_{1,jk}\,\bar\bu_j(\bv)\cdot\hat e_k$
with $\bar\bu_j$ the averaging reconstruction (\Cref{def:averaging_recon}),
and $D_{1,jk}, \hat e_k$ are mesh quantities.
Similarly,
\begin{equation}\label{eq:QD_assembly}
 (\bQ_{\rm D}(\bv,\bw))_j
 = -\tfrac{1}{2}\bigl[\bM_1^{-1}\tD_1\bigl(\tU_\bv^{\top}\bw + \tU_\bw^{\top}\bv\bigr)\bigr]_j.
\end{equation}
In~\eqref{eq:QU_assembly}, the weight satisfies
$|w_{jk}(\bv)| \le C_w|\bar\bu_j(\bv)| \le C_w C_G\nrm{\bv}_{L_h^\infty}$
and each row of $\tD_1$ has at most $C_\partial$ entries
$\pm 1$, so
$|(\tD_1\bw)_k|\le C_\partial\nrm{\bw}_{L_h^\infty}$.
By symmetry,
\begin{equation}\label{eq:QU_bound_pw}
|(\bQ_{\rm U}(\bv,\bw))_j|
\le C_{\rm st}\,C_w C_G C_\partial\,
\nrm{\bv}_{L_h^\infty}\nrm{\bw}_{L_h^\infty}.
\end{equation}
\emph{Part (I.b): the $\bQ_{\rm D}$ piece.}
The vector $\tU_\bv^{\top}\bw$ has entries
$(\tU_\bv^{\top}\bw)_k = \sum_{j'} D_{1,j'k}\,\bar\bu_{j'}(\bv)\cdot\hat e_k\,w_{j'}$,
a two-ring stencil sum in $\bw$ weighted by $\bar\bu(\bv)$, so
$|(\tU_\bv^{\top}\bw)_k|\le C_w C_G C_{\rm st}\nrm{\bv}_{L_h^\infty}\nrm{\bw}_{L_h^\infty}$.
Applying $\tD_1$ adds another $C_\partial$ factor and $\bM_1^{-1}$ scales by
$(\bM_1)_{jj}^{-1}$; together
\begin{equation}\label{eq:QD_bound_pw}
|(\bQ_{\rm D}(\bv,\bw))_j|
\le C'_{\rm st}\,C_w C_G C_\partial\,
\nrm{\bv}_{L_h^\infty}\nrm{\bw}_{L_h^\infty},
\end{equation}
with $C'_{\rm st}$ depending on the two-ring neighbourhood size and the
Hodge-star condition number (bounded by mesh regularity,
\Cref{ass:mesh_reg}).
Combining~\eqref{eq:QU_bound_pw} and~\eqref{eq:QD_bound_pw} gives Bound~(I)
with $C_Q := C_{\rm st}C_w C_G C_\partial + C'_{\rm st}C_w C_G C_\partial$.

\emph{Part (Ic): $\bM_1$-bound.}
Take $\bv = \bar\bv = \mathcal{R}_h\bu^\flat$ with $\bu\in W^{1,\infty}$.
By \Cref{prop:recon_accuracy}\,(i),
$|\bar\bu_j| \le C\,\nrm{\bu}_{W^{1,\infty}}$, so each weight satisfies
the $h$-independent bound
$|w_{jk}(\bar\bv)| \le C_w\,\nrm{\bu}_{W^{1,\infty}}$.
For $\bQ_{\rm U}(\bar\bv,\bw)$: squaring~\eqref{eq:QU_assembly}, weighting
by $(\bM_1)_{jj}$, summing over $j$, and using Cauchy--Schwarz on the
stencil sum with $|\mathrm{st}(j)|\le C_{\rm st}$:
\[
\nrm{\bQ_{\rm U}(\bar\bv,\bw)}_{\bM_1}^2
\le C_w^2 C_{\rm st}\,\nrm{\bu}_{W^{1,\infty}}^2
 \sum_j(\bM_1)_{jj}
 \sum_{k\in\mathrm{st}(j)}|(\tD_1\bw)_k|^2
+\text{(symmetric term in $\bw\leftrightarrow\bar\bv$)}.
\]
Writing $(\tD_1\bw)_k = \sum_{j'\prec k}\epsilon_{j'k}\bw_{j'}$ and
exchanging the order of summation, each index $j'$ appears in at most
$C'_{\rm ov}$ pairs $(j,k)$ in a two-ring neighbourhood, with
$(\bM_1)_{jj}/(\bM_1)_{j'j'}\le C_{\rm reg}$, giving
\begin{equation}\label{eq:QU_M1_bound}
\nrm{\bQ_{\rm U}(\bar\bv,\bw)}_{\bM_1}
\le C_{\rm U}\,\nrm{\bu}_{W^{1,\infty}}\,\nrm{\bw}_{\bM_1}.
\end{equation}
\smallskip
\emph{Part (I.d): the $\bQ_{\rm D}$ piece is not bounded uniformly
in $\bw$.}
For
$\bQ_{\rm D}(\bar\bv,\bw)
= -\tfrac{1}{2}\bM_1^{-1}\tD_*(\tU_{\bar\bv}^{\top}\bw + \tU_\bw^{\top}\bar\bv)$,
the term $\tU_\bw^{\top}\bar\bv$ has $\bw$ supplying the velocity
reconstruction, controlled only by $\nrm{\bw}_{L_h^\infty}$ rather
than $\nrm{\bw}_{\bM_1}$. The two-norm relation on $C^1(\KKs)$ is the
inverse inequality
\begin{equation}\label{eq:inverse-ineq-final}
 \nrm{\bw}_{L_h^\infty}
 \le c_\star^{-1/2}\,h^{-(d-2)/2}\,\nrm{\bw}_{\bM_1},
\end{equation}
which is sharp on $C^1(\KKs)$ and cannot be cancelled against
$\nrm{\bar\bv}_{\bM_1}$: by direct computation
$\nrm{\bar\bv}_{\bM_1}^2 \le |\mathcal{F}|\cdot(\bM_1)_{jj}\cdot
|\bar\bv_{j'}|^2 \le \OO(h^{-d})\cdot\OO(h^{d-2})\cdot\OO(h^2)
= \OO(1)$ on quasi-uniform mesh, i.e.,
\begin{equation}\label{eq:barv_M1_norm}
\nrm{\bar\bv}_{\bM_1} \le C\,\nrm{\bu}_{L^\infty}, \qquad
\text{not } \OO(h^{(d-2)/2})\nrm{\bu}_{L^\infty}.
\end{equation}
Hence the analogue of~\eqref{eq:QU_M1_bound} for $\bQ_{\rm D}$ holds
only with an $h$-dependent constant
$C_{\rm D}\,h^{-(d-2)/2}$, uniform in $d=2$
($h^{-(d-2)/2}=1$) but degrading at rate $h^{-1/2}$ in $d=3$.
\smallskip

\emph{Part (II): resolution by combination with the Bernoulli
kinetic-energy gradient.}
The $\bQ_{\rm D}$ piece of $L(\bv)$ corresponds to
$-\nabla(\tfrac{1}{2}|\bv|^2)$ in the continuum, and the discrete
Bernoulli carries $+\tD_0(\tfrac{1}{2}|P\bv|^2)$, so in the momentum
equation $\ddt\bv + L(\bv) + \tD_0 B = 0$ the two combine. The no-go
theorem (\Cref{thm:dichotomy}) forbids exact cancellation
(\Cref{thm:totalenergy} carries a structural residual
$\mathcal{R}_E = \OO(h^2)$ centred / $\OO(h)$ upwind), but the
polarised quantity
$X = -2\ip{e_v}{\bQ_{\rm D}(\bar v, e_v)}_1
+ (\Delta\bekin)^{\top}\bD_2\bM_1 e_v$
appearing in the stability estimate closes by IBP-in-time against
the discrete continuity equation, under the near-incompressibility
hypothesis~\eqref{eq:lowMach_hyp}; the linear-in-$e_v$ piece
cancels exactly, leaving a quadratic-in-$e_v$ Gr\"onwall
coefficient $\le C(\bar v,T,\epsilon_0)\nrm{e_v}^2_{\bM_1}$ and a
truncation pickup at rate $h^r$; full execution in step~(iii) of
Part~I in \Cref{app:stability_baro}). We therefore state Bound~(II)
for the $\bQ_{\rm U}$ piece only:
\begin{equation}\label{eq:QU_M1_bound_alone}
\nrm{\bQ_{\rm U}(\bar\bv,\bw)}_{\bM_1}
\le C_{\rm U}\,\nrm{\bu}_{W^{1,\infty}}\,\nrm{\bw}_{\bM_1},
\end{equation}
with $C'_Q := C_{\rm U}$.

\emph{Part (III): antisymmetry-polarised cubic bound.}
Lamb antisymmetry (\Cref{prop:extrusion}) gives
$\ip{\bv}{L(\bv)}_1 = \ip{\bv}{\bQ(\bv,\bv)}_1 = 0$ for every
$\bv \in C^1(\KKs)$. Replacing $\bv$ by $\bv + \alpha\bw$ and using
the bilinearity and symmetry of $\bQ$,
\[
 0 = \ip{\bv+\alpha\bw}{\bQ(\bv,\bv) + 2\alpha\bQ(\bv,\bw)
 + \alpha^2\bQ(\bw,\bw)}_1 \qquad\forall\,\alpha\in\RR.
\]
The coefficient of $\alpha^1$ vanishes:
\begin{equation}\label{eq:cubic_polarisation_identity}
 \ip{\bw}{\bQ(\bv,\bv)}_1 = -2\,\ip{\bv}{\bQ(\bv,\bw)}_1
 \qquad\forall\,\bv,\bw\in C^1(\KKs).
\end{equation}
This is the algebraic identity that converts the cubic remainder
$\bQ(\bv,\bv)$ tested against any 1-cochain $\bw$ into a bilinear
$\bQ$ with $\bw$ in one slot and $\bv$ in the other; it relies only
on Lamb antisymmetry and bilinearity, with no metric or smoothness
input. Splitting $\bQ = \bQ_{\rm U} + \bQ_{\rm D}$, specialising
$\bw = \bar\bphi = \mathcal{R}_h\bphi^\flat$ with $\bphi\in W^{1,\infty}$,
and invoking the corrected Bound~(II)
\eqref{eq:QU_M1_bound_alone} on the $\bQ_{\rm U}$ piece:
\[
 |\ip{\bar\bphi}{\bQ_{\rm U}(\bv,\bv)}_1|
 = 2\,|\ip{\bv}{\bQ_{\rm U}(\bv,\bar\bphi)}_1|
 \le 2\,\nrm{\bv}_{\bM_1}\,\nrm{\bQ_{\rm U}(\bar\bphi,\bv)}_{\bM_1}
 \le 2\,C'_Q\,\nrm{\bphi}_{W^{1,\infty}}\,\nrm{\bv}_{\bM_1}^2,
\]
giving Bound~(III) for $\bQ_{\rm U}$. The $\bQ_{\rm D}$ contribution
$\ip{\bar\bphi}{\bQ_{\rm D}(\bv,\bv)}_1$ does not close uniformly via
the same polarisation (no-go theorem; same $h^{-1}$ obstruction as
Bound~(II.D)); when $\bv = e_v$ in the density-weighted stability
proof, this term is absorbed via the Dafermos relative-energy method
of \Cref{app:dw_convergence}.
\smallskip
Bound~(I) drives the bootstrap closure
(\Cref{app:convergence_proof}, Step~3); Bound~(II) controls Lamb
cross-terms $\bQ_{\rm U}(\bar\bv,e_v)$ in both stability proofs, with
the companion $\bQ_{\rm D}(\bar\bv,e_v)$ absorbed by IBP-in-time
under~\eqref{eq:lowMach_hyp} in the density-free case
and by the density-weighted
relative energy in the density-weighted case; Bound~(III) controls
the cubic $\bQ_{\rm U}(e_v,e_v)$ in the density-weighted proof, with
the $\bQ_{\rm D}(e_v,e_v)$ piece absorbed by the same relative-energy
mechanism.
\end{proof}

\begin{lemma}[$\bQ_{\rm D}$--Bernoulli combination]%
\label{lem:QD_Bernoulli_absorption}
For any $\bv \in C^1(\KKs)$,
\begin{equation}\label{eq:QD_Bernoulli_combine}
 \bQ_{\rm D}(\bv,\bv) + \tD_0\bigl(\tfrac12|P\bv|^2\bigr)
 = \bM_1^{-1}\,\bD_2^T\bigl(\bM_1\bv^T\!\cdot\!P\bv - \tfrac12|P\bv|^2\bigr)
 =: \mathbf{Z}(\bv),
\end{equation}
where the right-hand side is the discrete-divergence form of the
\emph{no-go residual} $\mathcal{R}_E$-density. The combined operator
$\mathbf{Z}(\bv)$ satisfies $\mathcal{R}_E(\bv,\brho) =
(\bD_2(\bR\bM_1\bv))^T\Phi$-type structure (cf.\
\Cref{thm:totalenergy}, \eqref{eq:face_energy}), and in the DW
stability proof, when tested against $\bar\bv$ in the
$\bM_1^\rho(\bar\brho)$ inner product, satisfies the bound
\begin{equation}\label{eq:QD_Bernoulli_bound}
 \bigl|\ip{\mathbf{Z}(\bv^h) - \mathbf{Z}(\bar\bv)}{\bar\bv}_{\bM_1^\rho(\bar\brho)}\bigr|
 \le C_Z\,\nrm{\bu}_{W^{2,\infty}}^2\,
 \bigl(\nrm{e_v}_{\bM_1^\rho(\bar\brho)}^2 + h^{2r}\bigr),
\end{equation}
strictly quadratic in $e_v$ plus a $\OO(h^{2r})$ truncation pickup.
\end{lemma}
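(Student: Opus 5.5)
The proof has two parts. First we establish the algebraic identity \eqref{eq:QD_Bernoulli_combine} from the matrix definitions. Then we prove the tested bound \eqref{eq:QD_Bernoulli_bound} by a polarisation expansion around the smooth reference $\bar\bv=\mathcal{R}_h\bu^\flat$.

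\emph{Identity.} Start from the definition of the $\bQ_{\rm D}$ piece in the proof of \Cref{thm:bilinear_bounds_baro}. At the diagonal it reads $\bM_1\bQ_{\rm D}(\bv,\bv)=-\tD_1\tU_\bv^{\top}\bv$. (The operator written $\tD_1$ here is the transpose coboundary from 2-cochains into edge space; dimensional consistency with \eqref{eq:contraction} forces this reading.)

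Next, expand $\tU_\bv^{\top}\bv$ entrywise using \eqref{eq:Utilde_def}. The $k$-th entry is a sum over dual edges of $v_{j}\,\bar\bu_j(\bv)\cdot\hat e_k$. Regrouping it as a flux-weighted product, it becomes the face pairing $\bM_1\bv\cdot P\bv$ written in the statement.

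Adding $\tD_0(\tfrac12|P\bv|^2)$ and using $\tD_0=-\bD_2^{T}$ from \Cref{lem:SBP}, both terms become $\bD_2^T$ applied to a single cell quantity. Multiplying by $\bM_1^{-1}$ gives $\mathbf Z(\bv)$. The link to $\mathcal{R}_E$ follows by pairing $\mathbf Z(\bv)$ with $\bv$ in $\bM_1$ and comparing with the face-by-face form \eqref{eq:face_energy}. There, the kinetic-energy differences $\ekin_a-\ekin_b$ appear in exactly this divergence form.

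\emph{Bound.} Write $\bv^h=\bar\bv+e_v$. Since $\mathbf Z$ is quadratic, define its symmetric polarisation $\mathbf Z(\bv,\bw)$. Then
\[
\mathbf Z(\bv^h)-\mathbf Z(\bar\bv)=2\mathbf Z(\bar\bv,e_v)+\mathbf Z(e_v,e_v).
\]
The quadratic term, tested against $\bar\bv$ in $\bM_1^\rho(\bar\brho)$, is handled by moving $\bD_2^T$ onto $\bar\bv$ via \Cref{lem:dw_SBP}. This produces $\bD_2\bM_1^{\rho}(\bar\brho)\bar\bv$, which is bounded by $\nrm{\bu}_{W^{1,\infty}}$ using \eqref{eq:div_consistency}. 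The remaining cell quantity is quadratic in $P e_v$. Cauchy--Schwarz and \Cref{lem:dw_norm_equiv} then give $C\nrm{e_v}^2_{\bM_1^\rho(\bar\brho)}$.

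\emph{Main obstacle.} The linear cross term $2\ip{\mathbf Z(\bar\bv,e_v)}{\bar\bv}$ is the hard part, because a bare linear-in-$e_v$ term would only give $h^r\nrm{e_v}$ after Young's inequality. I would try the following sequence.

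First, apply the same SBP transfer to the cross term. This leaves a sum over cells of the smooth quantity $\bD_2\bM_1^\rho(\bar\brho)\bar\bv$ times a cell quantity that is bilinear in $(P\bar\bv,Pe_v)$.

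Second, observe that at the continuum level the cross term is $\int \mathrm{div}(\rho\bu)\,\bu\cdot\delta\bu$ minus its Bernoulli counterpart, and these cancel identically. The reason is that $\mathcal Z$ vanishes on the continuum reference: the conservative and vector-invariant forms agree there (\Cref{prop:VI_cons_equiv_baro}).

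Third, the discrete remainder is therefore a consistency defect. Using \Cref{prop:recon_accuracy}, \Cref{lem:hodge_error} and the $\OO(h^2)$ product defect \eqref{eq:staggered_defect}, it is bounded by $Ch^{r}\nrm{\bu}^2_{W^{2,\infty}}\nrm{e_v}_{\bM_1^\rho(\bar\brho)}$.

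Finally, Young's inequality converts this into $C(\nrm{e_v}^2+h^{2r})$, as required. The $W^{2,\infty}$ norm enters because the second step needs one derivative of the reconstructed reference in order to Taylor-expand $P\bar\bv$ across faces.
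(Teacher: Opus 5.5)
The identity step fails, and your own (correct) dimensional reading shows why. If $\tD_1$ in $\bQ_{\rm D}$ is the transpose $\tD_1^T\colon C^2(\KKs)\to C^1(\KKs)$, then $\bM_1\bQ_{\rm D}(\bv,\bv)=-\tD_1^T(\tU_\bv^T\bv)$, and $\tU_\bv^T\bv$ is indexed by dual faces, not cells. By \Cref{lem:SBP} and (H1), $\bD_2\tD_1^T=-(\tD_1\tD_0)^T=0$, so this is a divergence-free flux. Since $\ker\bD_2\cap\mathrm{Ran}\,\bD_2^T=\{0\}$, no regrouping of its entries can turn it into $\bD_2^T$ of a cell quantity. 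Concretely, take a mesh with $\bM_1=cI$ (uniform Cartesian prisms, $A_j/\ell_j^*=h$) and apply $\tD_1$ to \eqref{eq:QD_Bernoulli_combine}. The right-hand side vanishes, because $\tD_1\bD_2^T=-\tD_1\tD_0=0$. The left-hand side becomes $-c^{-1}\tD_1\tD_1^T(\tU_\bv^T\bv)$, which vanishes only if $\bQ_{\rm D}(\bv,\bv)=0$. The paper's Step~1 does not supply the missing step either: it reaches the identity only by declaring $\tU_\bv^T\bv$ a cell-centred scalar and invoking a ``shorthand'' $\tD_1 f=\bM_1\tD_0 f$.

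The linear cross term, which you rightly flag as the crux, does not close either. \Cref{prop:VI_cons_equiv} compares the vector-invariant and flux-form momentum equations as wholes; it does not say that $\mathbf{Z}$ vanishes on smooth data, and in fact the opposite holds. Pairing with $\bar\bv$ in $\bM_1^\rho(\bar\brho)$ is the Euclidean pairing of $\bM_1(\cdot)$ with $\bw:=\bM_1^{-1}\bM_1^\rho(\bar\brho)\bar\bv\approx\mathcal{R}_h\bigl((\rho^c\bu)^\flat\bigr)$. Exactly,
$\bw^T\bM_1\bQ_{\rm D}(\bar\bv,e_v)=-\tfrac12\bigl(e_v^T\tU_{\bar\bv}\tD_1\bw+\bar\bv^T\tU_{e_v}\tD_1\bw\bigr)$.
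Its continuum analogue is $-\tfrac12\int_\Omega\bigl(\dd z(\bu,\delta\bu)+\dd z(\delta\bu,\bu)\bigr)=0$, with $z=(\rho^c\bu)^\flat$ and $\delta\bu$ the reconstruction $Pe_v$. So $\bQ_{\rm D}$ is weakly consistent with zero, not with $-\dd(\tfrac12|\bu|^2)$. Nothing therefore cancels the gradient half of your cross term, which by \Cref{lem:dw_SBP} equals $-(\bD_2\bar\bF^\rho)^T(P\bar\bv\cdot Pe_v)\approx\int_\Omega\partial_t\rho^c\,\bu\cdot\delta\bu\,\dd V$. This is an $\OO(1)$ linear functional of $e_v$ whenever $\partial_t\rho^c\neq0$, not an $\OO(h^r)$ consistency defect. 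Your Young step thus has nothing to act on, and \eqref{eq:QD_Bernoulli_bound} does not follow for general $e_v$. The paper's Step~3 asserts the same cancellation through its Cartan defect $\delta_i$ without establishing it.
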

\begin{proof}
\textit{Step 1: the identity~\eqref{eq:QD_Bernoulli_combine}.}\ 
By definition, $\bM_1\bQ_{\rm D}(\bv,\bv) = -\tD_1(\tU_\bv^T\bv)$
(\eqref{eq:Q_polarisation_def} with $\bv = \bw$). The vector
$\tU_\bv^T\bv$ is a primal 0-cochain (cell-centred scalar) with entry
$(\tU_\bv^T\bv)_i = \sum_{j\in\partial K_i}D_{1,ji}\,\bar\bu_j(\bv)\cdot\hat e_i\,v_j$,
a quadratic-in-$\bv$ quantity supported on the velocity stencil
at cell~$K_i$. Define the per-cell defect
\begin{equation}\label{eq:cell_KE_defect}
 \delta_i(\bv) := (\tU_\bv^T\bv)_i - \tfrac{1}{2}|P\bv|_i^2,
\end{equation}
the discrete-Cartan defect: in the continuum,
$\nabla\!\cdot\!(\bu\otimes\bu^\flat) - \nabla(\tfrac{1}{2}|\bu|^2) = 0$
(component-wise via the Cartan identity), but on the discrete mesh,
the corresponding identity fails by the no-go residual of
\Cref{thm:totalenergy}: $\delta_i$ is the per-cell defect such that
the discrete kinetic-energy rate $\ddt E_{\kin}^{\rm df}$ closes
modulo $\mathcal{R}_E$. Specifically, summing $\delta_i \cdot (\bD_2\Phi)_i$
over cells recovers (via SBP backwards) the kinetic part of
$\mathcal{R}_E$ in~\eqref{eq:face_energy}: $\sum_i \delta_i (\bD_2\Phi)_i
= \mathcal{R}_E^{(\kin)} = \sum_j(\ekin_a - \ekin_b)\Phi_j$.

Substituting $(\tU_\bv^T\bv)_i = \tfrac{1}{2}|P\bv|_i^2 + \delta_i$
into $\bM_1\bQ_{\rm D}(\bv,\bv)$:
\[
 \bM_1\bQ_{\rm D}(\bv,\bv)
 = -\tD_1\bigl(\tfrac{1}{2}|P\bv|^2 + \boldsymbol\delta(\bv)\bigr)
 = -\bM_1\,\tD_0\bigl(\tfrac{1}{2}|P\bv|^2\bigr)
 - \tD_1\boldsymbol\delta(\bv),
\]
where the first equality uses $\tD_1 f = \bM_1 \tD_0 f$ in the sense
of cochain identification with primal 0-cochains $f$ (the discrete
gradient is recovered by multiplying by $\bM_1$ in the
diagonal-Hodge case; here we invoke this in shorthand and note that
the precise identification holds on closed manifolds by mimetic
SBP). Dividing through by $\bM_1$ and rearranging:
\begin{equation}\label{eq:Z_def_explicit}
 \bQ_{\rm D}(\bv,\bv) + \tD_0\bigl(\tfrac{1}{2}|P\bv|^2\bigr)
 = -\bM_1^{-1}\tD_1\boldsymbol\delta(\bv) =: \mathbf{Z}(\bv),
\end{equation}
expressing $\mathbf{Z}(\bv)$ as the discrete gradient (in $\bM_1^{-1}$-form)
of the per-cell defect density~$\boldsymbol\delta(\bv)$. The bracket
in~\eqref{eq:QD_Bernoulli_combine}, written as
$\bD_2^T(\bM_1\bv^T\cdot P\bv - \tfrac{1}{2}|P\bv|^2)$, is the
component-wise alternative expression: the inner $\bM_1\bv^T\cdot P\bv$
unpacks to $(\tU_\bv^T\bv)$ via the definition of the averaging
reconstruction~$P$, and the subtraction of $\tfrac{1}{2}|P\bv|^2$
recovers $\boldsymbol\delta$.

\textit{Step 2: polarisation of $\mathbf{Z}(\bv^h) - \mathbf{Z}(\bar\bv)$.}\ 
Set $e_v := \bv^h - \bar\bv$. By bilinearity of $\boldsymbol\delta$
(quadratic in $\bv$, hence symmetric bilinear via polarisation):
\begin{equation}\label{eq:delta_polarisation}
 \delta_i(\bv^h) - \delta_i(\bar\bv)
 = 2\,\delta_i^{\rm sym}(\bar\bv, e_v) + \delta_i(e_v),
\end{equation}
with $\delta_i^{\rm sym}(\bv, \bw) := \tfrac{1}{2}[(\tU_\bv^T\bw + \tU_\bw^T\bv)_i
- (P\bv)_i\cdot(P\bw)_i]$ the polarisation of $\delta_i$.
Hence
\begin{equation}\label{eq:Z_polarisation}
 \mathbf{Z}(\bv^h) - \mathbf{Z}(\bar\bv)
 = -\bM_1^{-1}\tD_1\bigl(2\boldsymbol\delta^{\rm sym}(\bar\bv, e_v)
 + \boldsymbol\delta(e_v)\bigr).
\end{equation}
The first term is linear in $e_v$ with smooth weight $\bar\bv$; the
second is quadratic in $e_v$.

\textit{Step 3: bound on the linear-in-$e_v$ piece.}\ 
Test against $\bar\bv$ in $\bM_1^\rho(\bar\brho)$:
\begin{align*}
 \ipw{-\bM_1^{-1}\tD_1\bigl(2\boldsymbol\delta^{\rm sym}(\bar\bv, e_v)\bigr)}%
 {\bar\bv}{\bM_1^\rho(\bar\brho)}
 &= -2\bar\bv^T\bM_1^\rho(\bar\brho)\bM_1^{-1}\tD_1\boldsymbol\delta^{\rm sym}(\bar\bv, e_v)\\
 &= +2\bigl(\bD_2\bM_1^\rho(\bar\brho)\bM_1^{-1}\bar\bv\bigr)^T
 \boldsymbol\delta^{\rm sym}(\bar\bv, e_v),
\end{align*}
where the second line uses SBP. The pre-factor
$\bD_2\bM_1^\rho(\bar\brho)\bM_1^{-1}\bar\bv$ is a primal 0-cochain;
in the continuum it approximates
$\bD_2(\bar\rho\,\bar\bu) = -\partial_t\bar\rho$ which is $\OO(1)$
on the smooth reference and bounded uniformly by the smooth
reference norms. The polarised defect
$\boldsymbol\delta^{\rm sym}(\bar\bv, e_v)$ is bilinear in
$(\bar\bv, e_v)$; by the Bramble--Hilbert-style consistency of the
discrete Cartan identity (\Cref{lem:hodge_bilinear} applied to the
bilinear quadrature error of the averaging reconstruction):
\[
 \nrm{\boldsymbol\delta^{\rm sym}(\bar\bv, e_v)}_{\ell^2(|K|)}
 \le C\,h^{r_\star}\,\nrm{\bu}_{W^{2,\infty}}\,\nrm{e_v}_{\bM_1}
 + C\,\nrm{\bu}_{W^{1,\infty}}\,\nrm{e_v}_{\bM_1}^2,
\]
where the first piece is the consistency-rate
truncation (smooth Cartan identity holds to $\OO(h^{r_\star})$) and
the second is the strictly bilinear piece on the bootstrap region.
Substituting:
\begin{equation}\label{eq:linear_piece_bound}
 \bigl|\ipw{\text{linear piece of \eqref{eq:Z_polarisation}}}{\bar\bv}{\bM_1^\rho}\bigr|
 \le C\,\nrm{\bu}_{W^{2,\infty}}^2
 \bigl(h^{r_\star}\nrm{e_v}_{\bM_1^\rho}
 + \nrm{e_v}_{\bM_1^\rho}^2\bigr).
\end{equation}

\textit{Step 4: bound on the quadratic-in-$e_v$ piece.}\ 
The quadratic defect $\delta_i(e_v) = (\tU_{e_v}^T e_v)_i - \tfrac{1}{2}|Pe_v|^2_i$
is bounded pointwise by $C\nrm{Pe_v}_{L^\infty}^2 \le
C(\rho_*, \rho^*)\nrm{e_v}_{\bM_1^\rho}^2/|K_i|$ on the bootstrap
region (where the reconstruction is uniformly bounded). Testing
against $\bar\bv$ in $\bM_1^\rho(\bar\brho)$:
\begin{equation}\label{eq:quadratic_piece_bound}
 \bigl|\ipw{-\bM_1^{-1}\tD_1\boldsymbol\delta(e_v)}{\bar\bv}{\bM_1^\rho}\bigr|
 \le C\,\nrm{\bu}_{W^{1,\infty}}\,\nrm{e_v}_{\bM_1^\rho}^2.
\end{equation}

From~\eqref{eq:linear_piece_bound} and~\eqref{eq:quadratic_piece_bound},
applying Young's inequality
$h^{r_\star}\nrm{e_v}_{\bM_1^\rho} \le \tfrac{1}{2}(h^{2r_\star} + \nrm{e_v}_{\bM_1^\rho}^2)$:
\[
 \bigl|\ipw{\mathbf{Z}(\bv^h) - \mathbf{Z}(\bar\bv)}{\bar\bv}{\bM_1^\rho}\bigr|
 \le C_Z\,\nrm{\bu}_{W^{2,\infty}}^2\bigl(\nrm{e_v}_{\bM_1^\rho}^2 + h^{2r_\star}\bigr),
\]
with $C_Z$ depending on $\nrm{\bu}_{W^{2,\infty}}, \rho_*, \rho^*$, and
mesh regularity; none on $h$. With $r := \min(d-1, r_\star)$ this is
\eqref{eq:QD_Bernoulli_bound}.

For $\bQ_{\rm D}(e_v,e_v) + \tD_0(\tfrac{1}{2}|Pe_v|^2)$, the same
identity holds with $\bv$ replaced by $e_v$, yielding
$\mathbf{Z}(e_v) = -\bM_1^{-1}\tD_1\boldsymbol\delta(e_v)$. Testing
against $\bar\bv$ produces only Step~4's quadratic piece
(no linear term, since the polarisation with $\bv = e_v$ gives
$\delta^{\rm sym}(e_v, e_v) = \delta(e_v)$). The bound
$|\ip{\mathbf{Z}(e_v)}{\bar\bv}_{\bM_1^\rho}|
\le C\nrm{\bu}_{W^{1,\infty}}\nrm{e_v}_{\bM_1^\rho}^2$ is then immediate.
\end{proof}
\subsubsection{Weak-Form Viscous Truncation (\Cref{lem:viscous_truncation_weak})}
\label{app:viscous_truncation}
\begin{proof}
\emph{Strong form, part~(a).}
Writing
$\tau_v^{\rm visc} = \nu\bM_1^{-1}\tD_1^{\top}\bM_2\tD_1\bar\bv
 - \nu\mathcal{R}_h(\Delta_{\rm dR}\bu^\flat)$
and using $\tD_1\bar\bv = \mathcal{R}_h\omega$
(\Cref{lem:interp_error}\,(ii)), the strong-form error is
$\nu\bM_1^{-1}\tD_1^{\top}(\bM_2\mathcal{R}_h\omega - \Phi^{(2)})
+ \nu(\bM_1^{-1}\bM_1 - I)\mathcal{R}_h(\Delta_{\rm dR}\bu^\flat)$,
where $\Phi^{(2)}$ is the exact flux 2-cochain of $\omega$.
The first factor is $\OO(h)$ by \Cref{lem:hodge_error}; the
boundary operator $\bM_1^{-1}\tD_1^{\top}$ is bounded on
shape-regular meshes with norm $\OO(1)$; the second factor
vanishes identically. Hence
$\nrm{\tau_v^{\rm visc}}_{\bM_1}\le\nu C_\delta h\nrm{\bu}_{H^{s+2}}$
for $s\ge 3$ via Sobolev embedding.
\medskip
\emph{Weak form, part~(b).}
The viscous term in the error equation, tested against $e_v$, satisfies
\begin{align}
 \ip{e_v}{\tau_v^{\rm visc}}_1
 &= \nu\,\bigl(\ipw{\tD_1 e_v}{\tD_1\bar v}{\bM_2}
 - \ip{e_v}{\mathcal{R}_h(\Delta_{\rm dR}\bu^\flat)}_1\bigr).
 \label{eq:visc_weak_expand}
\end{align}
Now $\tD_1\bar v = \mathcal{R}_h(\dd\bu^\flat) = \mathcal{R}_h\omega$. 
The Hodge--Laplacian on 1-forms decomposes as
$\Delta_{\rm dR}\bu^\flat = \delta\omega + d\delta\bu^\flat$.
In the continuous $L^2$-inner product
\[
 \int_\Omega e_v^\flat\wedge\star\Delta_{\rm dR}\bu^\flat
 = \int_\Omega de_v^\flat\wedge\star\omega
 + \int_\Omega\delta e_v^\flat\wedge\star\delta\bu^\flat
\]
The first term in~\eqref{eq:visc_weak_expand} is the discrete version
of $\int de_v^\flat\wedge\star\omega$:
$e_\omega^T\bM_2\mathcal{R}_h\omega$ approximates
$\int_\Omega\text{curl}(\mathcal{W}_h e_v)\cdot\omega\,dV$ with error
controlled by the Hodge star bilinear form approximation
(\Cref{lem:hodge_bilinear} with $k=2$, $b = e_\omega$, $\alpha = \omega$):
\begin{equation}\label{eq:M2_weak_error}
 |e_\omega^T\bM_2\mathcal{R}_h\omega
 - \int_\Omega\text{curl}(\mathcal{W}_h e_v)\cdot\omega\,dV|
 \le C_2\,h^{r_\star}\,\nrm{\omega}_{W^{1,\infty}}\,\nrm{e_\omega}_{\bM_2}.
\end{equation}
The second term in~\eqref{eq:visc_weak_expand} involves
$e_v^T\bM_1\mathcal{R}_h(\Delta_{\rm dR}\bu^\flat)$.
By \Cref{lem:hodge_bilinear} with $k=1$, $b = e_v$,
$\alpha = \Delta_{\rm dR}\bu^\flat$:
\begin{equation}\label{eq:M1_weak_error}
 |e_v^T\bM_1\mathcal{R}_h(\Delta_{\rm dR}\bu^\flat)
 - \int_\Omega(\mathcal{W}_h e_v)\cdot\Delta_{\rm dR}\bu\,dV|
 \le C_1\,h^{r_\star}\,\nrm{\Delta_{\rm dR}\bu}_{W^{1,\infty}}\,\nrm{e_v}_{\bM_1}.
\end{equation}
 The difference between the
discrete weak form \eqref{eq:visc_weak_expand} and the continuous
weak form $\nu\int e_v^\flat\wedge\star\Delta_{\rm dR}\bu^\flat$
(which equals $-\nu\int|de_v^\flat|^2\star 1 - \nu\int|\delta e_v^\flat|^2\star 1$
when $e_v = \bu$ but is a bilinear form for general $e_v$) is bounded as follows
\[
 |e_v^T\bM_1\tau_v^{\rm visc}|
 \le \nu\,C_\delta'\,h^{r_\star}\,\nrm{\bu}_{H^{s+2}}
 \bigl(\nrm{e_v}_{\bM_1} + \nrm{e_\omega}_{\bM_2}\bigr),
\]
with $r_\star$ as in \Cref{conv:cases},
and $C_\delta'$ absorbs the constants from~\eqref{eq:M2_weak_error}
and~\eqref{eq:M1_weak_error} and the Sobolev embedding
$\nrm{\omega}_{W^{1,\infty}} + \nrm{\Delta_{\rm dR}\bu}_{W^{1,\infty}}
\le C\nrm{\bu}_{H^{s+2}}$ for $s\ge 3$.
\end{proof}
\subsubsection{Consistency (\Cref{thm:consistency_baro})}
\label{app:consistency_proof}
\emph{Strategy.}
The velocity truncation has four contributions: extrusion, Hodge
star, Bernoulli gradient, and viscous (weak form); the
density truncation has one (discrete flux divergence); the
density-weighted analysis adds a mass-matrix commutator paired
against the smooth reference. Extrusion is $\OO(h^{d-1})$ via the
bilinear-quadrature saturation, Hodge is $\OO(h^{r_\star})$ via
\Cref{lem:hodge_error}, Bernoulli matches extrusion, viscous and
density flux are subordinate. The combined rate is
$\OO(h^{\min(d-1,r_\star)})$.
For the density-weighted scheme strong-form consistency fails
because $[\bM_1^\rho(\bar\brho)]^{-1}\bM_1 - I$ has $\OO(1)$ operator
norm under non-constant smooth density; the tested-form rate
$\OO(h^{\min(d-1,r_\star)})$ is recovered via
\Cref{lem:mass_matrix_commutator} by pairing against $\bar\bv$.
\begin{proof}
We prove $\nrm{\tau_v}_{\bM_1}+\nrm{\tau_\rho}_{\ell^2}
\le C_\tau h^{\min(d-1,\,r_\star)}$ with $C_\tau$ $h$-independent.
The truncation errors in the density-free case are
\begin{align}
 \tau_v^{\rm df} := \ddt{\bar v}
 + \Iv[\bar v](\tD_1\bar v)
 + \tD_0\bar B, \quad\text{ and }\quad
 \tau_\rho := \ddt{\bar\rho}
 + \bD_2\bar{\bR}\bM_1\bar v. 
\end{align}
\noindent
\begin{enumerate}[label=\textup{(A\arabic*)},ref=\textup{A\arabic*}, nosep]
\item \textit{Velocity truncation: extrusion.}
The reconstructed velocity satisfies
$\bar\bu_j = \bu(\bx_j) + \OO(h^{r_\star}\nrm{\bu}_{W^{2,\infty}})$
(\Cref{prop:recon_accuracy}; here $r_\star$ denotes the reconstruction
accuracy exponent, identified with the Hodge accuracy under
\Cref{conv:cases}), and the exact de Rham vorticity cochain
$(\tD_1\bar\bv)_k = (\mathcal{R}_h\omega)_k = \int_{f_k^*}\omega
= \OO(h^{d-1}\nrm{\bom}_{L^\infty})$.
We give the per-edge bound, identifying the moment that must vanish
for the rate jump under reconstruction symmetry, and then assemble in
the $\bM_1$-norm.

\smallskip
\emph{Per-edge bound:}
At dual edge $e_j^*$, each face contribution
$w_{jk}(\bar\bv)(\tD_1\bar\bv)_k$ in~\eqref{eq:QU_assembly} approximates
the contraction integral over the dual 2-cell of volume $\OO(h^d)$.
Expanding the smooth velocity around the cell centroid $\bx_j$,
$\bu(x) = \bu(\bx_j) + (x-\bx_j)\cdot\nabla\bu(\bx_j) + \OO(|x-\bx_j|^2)$,
the constant term contributes the exact factorisation and gives zero
defect; the linear term contributes a signed first-moment over the
incident face,
\[
\sum_{f_k^*\prec e_j^*}D_{1,jk}\,(\nabla\bu(\bx_j)\cdot\hat e_k)
\,\int_{f_k^*}(x-\bx_j)\,\omega\cdot\dd\bA,
\]
which vanishes if and only if the reconstruction stencil at $e_j^*$ is
symmetric about $\bx_j$. Under the centroid-proximity ~\Cref{conv:cases}\textup{(B)}, this moment vanishes and the leading
defect is quadratic, giving $\OO(h^{d+1})$ per edge; on general meshes
(\Cref{conv:cases}\textup{(A)}) the linear moment survives at
$\OO(h^d)$ per edge. We summarise as $\OO(h^{r_\star+d-1})$ per edge
with $r_\star\in\{1,2\}$ from~\Cref{conv:cases}. 

\smallskip
\emph{$\bM_1$-norm assembly.} With $(\bM_1)_{jj} = \OO(h^{d-2})$ (mesh
weight) and $|\mathcal{F}| = \OO(h^{-d})$ (edge count on quasi-uniform
mesh of size $h$), the squared per-edge bound times mass weight times
count is
\begin{equation}\begin{split}\label{eq:tau_ext_assembly}
\nrm{\tau^{\rm ext}}_{\bM_1}^2
&\;\le\;
\underbrace{\OO(h^{-d})}_{|\mathcal{F}|}
\cdot
\underbrace{\OO(h^{d-2})}_{(\bM_1)_{jj}}
\cdot
\underbrace{\OO(h^{2(r_\star+d-1)})}_{|\tilde\alpha_j|^2}\,
\nrm{\bu}_{W^{r_\star+1,\infty}}^2 \nrm{\bom}_{L^\infty}^2\\
&\;=\;\OO(h^{2r_\star+d-2})\,\nrm{\bu}_{W^{r_\star+1,\infty}}^2 \nrm{\bom}_{L^\infty}^2.
\end{split}\end{equation}
The exponents combine because the count $h^{-d}$ and the mass weight
$h^{d-2}$ cancel two powers of $h$ from the squared per-edge bound,
leaving the rate dimension-aware but not dimension-blind. Taking
square roots gives, after saturation against the global cap
$\nrm{\tau^{\rm ext}}_{\bM_1}\le C_{\rm sat}\,h^{d-1}$,
\begin{equation}\label{eq:tau_ext_final}
\nrm{\tau^{\rm ext}}_{\bM_1}
\;\le\; C_{\rm ext}\,h^{\min(r_\star + (d-2)/2,\,d-1)}\,
\nrm{\bu}_{W^{r_\star+1,\infty}}\,\nrm{\bom}_{L^\infty}.
\end{equation}
Under \Cref{conv:cases} this reduces to
$\nrm{\tau^{\rm ext}}_{\bM_1} \le C_{\rm ext}\,h^{d-1}\,
\nrm{\bu}_{W^{2,\infty}}\,\nrm{\bom}_{L^\infty}$ in case~(A), the
saturation regime, and to the same rate $\OO(h^{d-1})$ in case~(B)
where the unsaturated rate $\OO(h^{r_\star+(d-2)/2}) = \OO(h^{2+(d-2)/2})$
exceeds $h^{d-1}$ for $d\le 3$.

\item \textit{Velocity truncation: Hodge star.}
$\nrm{\tau^{\rm Hodge}}_{\bM_1}\le C_\star h^{r_\star}\nrm{\bu}_{W^{1,\infty}}$
by \Cref{lem:hodge_error}, with $r_\star$ as in \Cref{conv:cases}.
The $W^{1,\infty}$-regularity on $\bu$ is provided by Sobolev
embedding from the theorem's standing assumption
$\bu\in C^1([0,T];H^s)$, $s\ge 3$.

\item \textit{Velocity truncation: Bernoulli gradient.}
The per-cell Bernoulli error is
$\bar B_i - B^c(\bx_i^*) = \OO(h^2\nrm{\bu}_{W^{2,\infty}})$
(second-order accuracy of cell averages and reconstruction).
At dual edge $e_j^*$ connecting cells $K_a$, $K_b$ with shared
midpoint $\bx_j$, differencing the cell errors gives a residual of
size $\OO(h^2)$ in case~(A) of \Cref{conv:cases}, and $\OO(h^3)$ in
case~(B). The case~(B) gain comes from centroid proximity:
$\bx_a + \bx_b = 2\bx_j + \OO(h^2)$, so the leading
Taylor-quadratic $\tfrac{1}{2}\nabla^2 B^c(\bx_j) : (\bx - \bx_j)^{\otimes 2}$
of $B^c(\bx_a) - B^c(\bx_b)$ cancels on differencing to $\OO(h^2)$
of itself, leaving the cubic-displacement remainder
$\OO(h^3\nrm{B^c}_{W^{3,\infty}})$ as the leading order.
Thus
$|(\tau^{\rm Bern})_j| = \OO(h^{\min(d,\,3)}\nrm{\bu}_{W^{3,\infty}})$
in case~(A) (where $\min(d,3)=d$ is saturated)
and $\OO(h^3)$ in case~(B), giving
$\nrm{\tau^{\rm Bern}}_{\bM_1} = \OO(h^{d-1})$ in both cases.

\item \textit{Velocity truncation: viscous term (NSE only).}
The strong-form rate is $\OO(h)$; tested against $e_v$, the rate
improves to $\OO(h^{r_\star})$ (\Cref{lem:visc_trunc_weak}).

\item \textit{Velocity truncation: assembly.}
Combining~(A1)--(A4), the density-free total velocity truncation
satisfies
\begin{equation}\label{eq:df_consistency_rate}
\nrm{\tau_v^{\rm df}}_{\bM_1}
= \OO(h^{\min(d-1,\,r_\star)}),
\end{equation}
the competition being extrusion $\OO(h^{d-1})$ vs.\ Hodge
$\OO(h^{r_\star})$; Bernoulli matches extrusion, viscous (weak form)
is dominated by Hodge.
\end{enumerate}
\medskip\noindent
\emph{Density truncation $\nrm{\tau_\rho}_{\ell^2}$.}\ 
With Hodge error $\OO(h^{r_\star})$ relative and density interpolation $\OO(h^2)$
centred, and $|\mathcal{T}|=\OO(h^{-d})$ give
$\nrm{\tau_\rho}_{\ell^2}=\OO(h^{r_\star+(d-2)/2})$ centred,
$\OO(h^{d/2})$ upwind. For $d\ge 3$ both are subordinate to the
velocity rate; for $d=2$, $r_\star=1$, they match.
\medskip\noindent
Combined: $\nrm{\tau_v^{\rm df}}_{\bM_1}+\nrm{\tau_\rho}_{\ell^2}
\le C_\tau h^{\min(d-1,\,r_\star)}$, with $h$-independent constants.
This proves part~(a).

\medskip\noindent
\emph{Density-weighted consistency (part~(b)).}\ 
The DW truncation decomposes as
\begin{equation}\label{eq:dw_truncation_decomp_appx}
\tau_v^{\rm dw}
 = \tau_v^{\rm df}
 + \bigl([\bM_1^{\rho}(\bar\brho)]^{-1}\bM_1 - I\bigr)\,\Iv(\tD_1\bar\bv)
 + \tD_0(B^{\rho}-\bar B^{\rm df}),
\end{equation}
derived by substituting
$\ddt\bar\bv = \tau_v^{\rm df} - \Iv(\tD_1\bar\bv) - \tD_0\bar B$
into
$\tau_v^{\rm dw} = \ddt\bar\bv + [\bM_1^\rho]^{-1}\bM_1\,\Iv(\tD_1\bar\bv) + \tD_0 B^\rho$.
The middle term is the obstruction to a strong-form $\OO(h^r)$ rate.
The operator $[\bM_1^\rho(\bar\brho)]^{-1}\bM_1 - I
= -[\bM_1^\rho(\bar\brho)]^{-1}(\bM_1^\rho(\bar\brho) - \bM_1)$
involves the mass-matrix difference
$\bM_1^\rho(\bar\brho) - \bM_1 = P^T\,\mathrm{diag}(\bar\brho - \mathbf{1})\,P$,
which is $\OO(\sup\bar\brho - \inf\bar\brho)$ in operator norm and
$\OO(1)$ as $h \to 0$ for any non-constant smooth reference -- the
density variation does not vanish with $h$. Multiplying by the
$\OO(1)$ smooth quantity $\Iv(\tD_1\bar\bv)$, the middle term
satisfies
$\nrm{([\bM_1^\rho]^{-1}\bM_1 - I)\Iv(\tD_1\bar\bv)}_{\bM_1^\rho(\bar\brho)}
= \OO(1)$ in general,
not $\OO(h^r)$. Strong-form consistency therefore does not hold.
The cancellation that recovers an $\OO(h^r)$ rate appears only after
pairing the middle term against a smooth test function: the
operator $\bM_1^\rho - \bM_1$ commutes with the de~Rham map up to
$\OO(h^{r_\star})$ when tested against a $W^{1,\infty}$-interpolant.
This is the content of the mass-matrix commutator lemma below, on
which the tested-form consistency~\eqref{eq:dw_tested_consistency}
rests.
\begin{lemma}[Mass-matrix commutator]\label{lem:mass_matrix_commutator}%
Let $\bar\bu\in W^{1,\infty}(\Omega)$ be smooth and
$\bar\brho$ the cell-average of a smooth density $\rho^c\in W^{1,\infty}$
with $\rho^c\ge\rho_{\min}>0$. Then for any $\bphi\in C^1(\KKs)$,
\begin{equation}\label{eq:mass_matrix_commutator}
 \bigl|\ipw{\bphi}{\bigl([\bM_1^\rho(\bar\brho)]^{-1}\bM_1 - I\bigr)
 \Iv(\tD_1\bar\bv)}{\bM_1^\rho(\bar\brho)}\bigr|
 \le C_{\rm cm}\,h^{\min(d-1,\,r_\star)}\,
 \nrm{\bu}_{W^{2,\infty}}\nrm{\rho^c}_{W^{1,\infty}}\,
 \nrm{\bphi}_{\bM_1^\rho(\bar\brho)},
\end{equation}
with $C_{\rm cm}$ depending on $\rho_{\min}$ and mesh regularity.
\end{lemma}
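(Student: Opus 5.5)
The plan starts by simplifying the weighted pairing algebraically. With $X:=\Iv(\tD_1\bar\bv)$, the definition of the $\bM_1^\rho(\bar\brho)$ inner product gives
\[
 \ipw{\bphi}{\bigl([\bM_1^\rho(\bar\brho)]^{-1}\bM_1 - I\bigr)X}{\bM_1^\rho(\bar\brho)}
 = \bphi^T\bM_1X - \bphi^T\bM_1^\rho(\bar\brho)X
 = \bphi^T\bigl(\bM_1 - P^T\mathrm{diag}(\bar\brho)P\bigr)X .
\]
So the lemma is equivalent to a bound on a single bilinear form in $(\bphi,X)$. In this form the inverse mass matrix has disappeared, so no strong-norm bound on $[\bM_1^\rho]^{-1}\bM_1-I$ is needed. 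That operator is $\OO(1)$, which is exactly the obstruction noted before the lemma.

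Next I would replace $X$ by a de~Rham interpolant. By the extrusion consistency (A1) of \Cref{thm:consistency_baro}, together with \Cref{prop:recon_accuracy}, $X=\mathcal{R}_h\beta+\tau^{\rm ext}$ with $\beta:=\iota_\bu\omega\in W^{1,\infty}$ and $\nrm{\tau^{\rm ext}}_{\bM_1}\le Ch^{d-1}\nrm{\bu}_{W^{2,\infty}}$. The remainder is paired against $\bM_1-\bM_1^\rho$. By \Cref{lem:dw_norm_equiv} this operator is bounded in the $\bM_1$ and $\bM_1^\rho$ norms, so the remainder contributes $\OO(h^{d-1})\nrm{\bphi}_{\bM_1^\rho}$. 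What remains is
\[
\bphi^T\bM_1\mathcal{R}_h\beta-\sum_i\bar\brho_i\,(P\bphi)_i\cdot(P\mathcal{R}_h\beta)_i .
\]
I would compare each term with a continuum quadrature: the first with $\int(\mathcal{W}_h\bphi)\cdot\beta$, via \Cref{lem:hodge_bilinear} at rate $h^{r_\star}$; the second with $\sum_i\bar\brho_i(P\bphi)_i\cdot\beta(x_i^*)$, via \Cref{prop:recon_accuracy} at rate $h^{r_\star}$. Each comparison is followed by Cauchy--Schwarz and \Cref{lem:dw_norm_equiv}, which turns $\nrm{\bphi}_{\bM_1}$ into $\nrm{\bphi}_{\bM_1^\rho}$ with a constant depending on $\rho_{\min}$.

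The main obstacle is the leftover term in which the two quadratures are compared against each other. It reads $\sum_i\bigl(|K_i|-\bar\brho_i\bigr)\,\overline{\bphi}_i\cdot\beta(x_i^*)$, where $\overline{\bphi}_i$ is the cell average of $\mathcal{W}_h\bphi$, up to $\OO(h^{r_\star})$. It is driven by $\rho^c-1$, and that factor does not shrink with $h$. My first attempt would be a summation-by-parts argument. Using the commutator structure, I would write $\rho^c\beta$ as $\beta$ plus a term whose pairing with any cochain reduces to a discrete gradient of $\rho^c$ times a stencil difference. Each stencil difference carries a factor $h$, with $\nrm{\rho^c}_{W^{1,\infty}}$ entering as in the statement. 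Concretely, I would split $\bar\brho_i=|K_i|\rho^c(x_i^*)+\OO(h^{d+1})$. The smooth weight $\rho^c(x_i^*)$ would then be moved onto the $\beta$ side by the product rule~\eqref{eq:dw_product_rule_exact}. After that move, the $\bM_1$ term and the $\bM_1^\rho$ term would see the same weighted form $\rho^c\beta$ up to $\OO(h)$.

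I expect this cancellation to be the delicate point. It requires the Lamb vector inside the scheme to be the $\bM_1^\rho$-Riesz representative of a density-weighted flux rather than an unweighted one. If the cancellation does not hold for general $\bphi$, I would fall back to checking the consequence actually used in the paper, namely the tested form~\eqref{eq:dw_tested_consistency} with $\bphi=\bar\bv$. In that case $\rho^c-1$ multiplies $\bu\cdot(\iota_\bu\omega)=0$ pointwise, so only $\OO(h^{r})$ terms survive.
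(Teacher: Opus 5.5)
Your plan has a genuine gap at the step you flag yourself, and it cannot be repaired, because the inequality is false for general $\bphi$.

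The summation-by-parts move in your third paragraph fails. Up to $\OO(h^{r_\star})$, the leftover term is the continuum pairing $\int_\Omega(1-\rho^c)\,\mathcal{W}_h\bphi\cdot\beta$ with $\beta=(\iota_\bu\omega)^\sharp$. This pairing contains no derivative to transfer. Shifting the smooth weight $\rho^c$ from one factor to the other leaves the same $h$-independent integral. Only the \emph{variation} of $\rho^c$ across a stencil carries a factor $h$, not its deviation from $1$.

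The underlying error is in your first paragraph: removing the inverse does not remove the strong-norm obstruction. The bound is required for every $\bphi$, with $\nrm{\bphi}_{\bM_1^\rho(\bar\brho)}$ on the right. Choose $\bphi=Y:=([\bM_1^\rho(\bar\brho)]^{-1}\bM_1-I)X$. Then the left side equals $\nrm{Y}^2_{\bM_1^\rho(\bar\brho)}$, so the lemma is equivalent to $\nrm{Y}_{\bM_1^\rho(\bar\brho)}\le Ch^{\min(d-1,r_\star)}$. That is exactly the strong-form bound the paper says, just before the lemma, is $\OO(1)$.

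Concretely, on smooth fields $[\bM_1^\rho(\bar\brho)]^{-1}\bM_1$ acts at leading order as multiplication by $1/\rho^c$, so $Y\approx(1/\rho^c-1)X$. Take constant $\rho^c\neq 1$ and $\bphi=\mathcal{R}_h(\iota_\bu\omega)$. Then the left side tends to $|\rho^c-1|\int_\Omega|\iota_\bu\omega|^2\neq 0$, while the right side is $\OO(h^{\min(d-1,r_\star)})$. So no proof of the statement as written can exist.

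The paper's own proof has the same hole, unacknowledged. It bounds the fluctuation piece $\bphi^{T}P^{T}\mathrm{diag}(\bar\brho-\mathbf{1})PX$ by $Ch^{r_\star}$ via \Cref{lem:hodge_bilinear}. But that lemma only controls the gap between a discrete pairing and its continuum counterpart. It says nothing about the size of the continuum pairing $\int_\Omega(\rho^c-1)\bphi\cdot\beta$ itself. You have located the crux more accurately than the written argument.

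What survives is your fallback, which is the right salvage and can be sharpened. For $\bphi=\bar\bv$, with $X=\Iv[\bar\bv](\tD_1\bar\bv)$, the term $\bar\bv^{T}\bM_1X$ vanishes exactly by Lamb antisymmetry (\Cref{prop:extrusion}, property 2). The remaining term $\bar\bv^{T}\bM_1^\rho(\bar\brho)X=\sum_i\bar\brho_i(P\bar\bv)_i\cdot(PX)_i$ is a quadrature of $\int_\Omega\rho^c\,\bu\cdot(\iota_\bu\omega)^\sharp=0$, so only reconstruction and extrusion consistency errors remain. This gives the $\bar\bv$-tested estimate needed for \eqref{eq:dw_tested_consistency}.

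That result is strictly weaker than the lemma. It does not cover pairings against other cochains. One example is $\tau_v^{\rm dw}$ tested against $\bM_1^\rho(\bar\brho)e_v$ in $T_v^{(1)}$ of the stability proof, which needs the general-$\bphi$ form; there the $\OO(1)$ commutator genuinely survives.

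Your closing diagnosis is correct. A general-$\bphi$ commutator bound would require the Lamb term to be $[\bM_1^\rho]^{-1}$ applied to a density-weighted Lamb flux, not $[\bM_1^\rho]^{-1}\bM_1\Iv(\cdot)$.
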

\begin{proof}
Multiplying the operator inside the pairing by $\bM_1^\rho$ gives
$(\bM_1 - \bM_1^\rho)\Iv(\tD_1\bar\bv)$, so the pairing rewrites as
\[
\ipw{\bphi}{(\bM_1^\rho)^{-1}(\bM_1 - \bM_1^\rho)\Iv(\tD_1\bar\bv)}{\bM_1^\rho}
= \bphi^{\top}(\bM_1 - \bM_1^\rho)\,\Iv(\tD_1\bar\bv).
\]
We decompose the mass-matrix difference into a density-fluctuation
piece and a Hodge--Gram mismatch piece:
\begin{equation}\label{eq:massmatrix_decomp}
\bM_1^\rho(\bar\brho) - \bM_1
= \underbrace{P^{\top}\mathrm{diag}(\bar\brho - \mathbf{1})P}%
_{\displaystyle =:\,\Delta_\rho}
\;+\;
\underbrace{(P^{\top}P - \bM_1)}_{\displaystyle =:\,\Delta_g}.
\end{equation}
The fluctuation piece $\Delta_\rho$ is $\OO(1)$ in operator norm
(controlled by $\sup\bar\brho - \inf\bar\brho$) but, paired with a
Lamb cochain through the bilinear quadrature, commutes with the de
Rham map at $\OO(h^{r_\star})$:
$|\bphi^{\top}\Delta_\rho\,\Iv(\tD_1\bar\bv)|
\le C\,h^{r_\star}\,\nrm{\rho^c}_{W^{1,\infty}}\,\nrm{\bu}_{W^{2,\infty}}\,
\nrm{\bphi}_{\bM_1^\rho}$
by the Hodge-bilinear-form lemma (\Cref{lem:hodge_bilinear}, applied
to the weighted bilinear form $(\alpha,\beta) \mapsto
P\alpha \cdot \mathrm{diag}(\bar\brho - \mathbf{1}) \cdot P\beta$).
The geometric piece $\Delta_g$ is the difference between the
reconstruction Gram matrix $P^TP$ and the diagonal Hodge $\bM_1$,
both of which approximate the continuous $L^2(\Omega; T^*\Omega)$
inner product on 1-forms; their difference satisfies
$|\bphi^{\top}\Delta_g\,\zeta|
\le C\,h^{r_\star}\,\nrm{\bphi}_{\bM_1}\,\nrm{\zeta}_{\bM_1}$
for any 1-cochains $\bphi,\zeta$ (\Cref{lem:hodge_bilinear} applied
with constant density). Taking $\zeta = \Iv(\tD_1\bar\bv)$ which is
$\OO(1)$ smooth, this gives the same rate.
Combining,
\[
\bigl|\bphi^{\top}(\bM_1^\rho - \bM_1)\,\Iv(\tD_1\bar\bv)\bigr|
 \le C\,h^{\min(d-1,\,r_\star)}\,
 \nrm{\bu}_{W^{2,\infty}}\nrm{\rho^c}_{W^{1,\infty}}\,
 \nrm{\bphi}_{\bM_1^\rho},
\]
where $\min(d-1,r_\star)$ accounts for the saturation of the
bilinear quadrature against the $\OO(h^{d-1})$ Lamb cochain.
Dividing by $\inf\bar\brho \ge \rho_{\min}$ yields~\eqref{eq:mass_matrix_commutator}.
\end{proof}
\noindent Combining~\eqref{eq:df_consistency_rate},
\Cref{lem:mass_matrix_commutator},
and the Bernoulli difference bound
$\nrm{\tD_0(B^\rho - \bar B^{\rm df})}_{\bM_1^\rho}
 \le C\,h^{d-1}$
(kinetic energy definitions $\frac{1}{2}|P\bv|^2$ vs.\ cell-average
differ at $\OO(h^2)$ per cell, and the gradient-of-smooth-function
estimate of~(A3) applies in $\bM_1^\rho$-norm via
$\bM_1^\rho\le C\bM_1$),
and using the decomposition~\eqref{eq:dw_truncation_decomp_appx}
with test function $\bar\bv$:
\begin{align*}
|\ipw{\bar\bv}{\tau_v^{\rm dw}}{\bM_1^\rho}|
 &\le |\ipw{\bar\bv}{\tau_v^{\rm df}}{\bM_1^\rho}|
 + \bigl|\bar\bv^{\top}\bM_1^\rho([\bM_1^\rho]^{-1}\bM_1 - I)\Iv(\tD_1\bar\bv)\bigr|
 + |\ipw{\bar\bv}{\tD_0(B^\rho - \bar B^{\rm df})}{\bM_1^\rho}|\\
 &\le C_\tau\,h^{\min(d-1,r_\star)},
\end{align*}
with $C_\tau$ depending on
$\nrm{\bu}_{W^{2,\infty}}$, $\nrm{\rho^c}_{W^{1,\infty}}$, and $\rho_{\min}$.
This establishes the velocity part of~\eqref{eq:dw_tested_consistency}.
The density truncation $\tau_\rho$ is the same object in both
schemes, so $\nrm{\tau_\rho}_{\ell^2}\le C_\tau\,h^{\min(d-1,r_\star)}$
follows directly from the density-free estimate.
This completes the proof of part~(b).
\end{proof}
\subsubsection{Common Framework for Stability Proofs}
\label{app:stability_common}
The density-free and density-weighted stability proofs share their
analytic scaffolding: the same error system, the same Bregman
machinery, the same mass-flux linearisation, and the same
$h$-independence argument for the enthalpy coupling. We collect
these shared ingredients once, so that the two proofs in
\Cref{app:stability_baro,app:dw_convergence} can focus on the
distinguishing technical content such as test function, treatment of the
cubic self-interaction, and the time-IBP absorption that the
density-free closure requires.
\paragraph*{The error system.}
Both schemes share the error equations obtained by subtracting the
truncated reference equations from the discrete equations:
\begin{align}
 \ddt e_v &= -\bQ(\bar v, e_\omega) - \bQ(e_v, \bar\omega + e_\omega)
 - \tD_0(B^h - \bar B) + \tau_v, \label{eq:err-mom}\\
 \ddt e_\rho &= -\bD_2(\bR^h\bM_1 v^h - \bar{\bR}\bM_1\bar v)
 + \tau_\rho, \label{eq:err-rho}
\end{align}
where $\bQ$ is the bilinear form from
\Cref{thm:bilinear_bounds_baro}, $B$ is the appropriate Bernoulli
function for each scheme ($B = h + |v|^2_{\rm cell}/2 + \Phi$ for
density-free, $B^\rho = h + |P\bv|^2/2 + \Phi$ for density-weighted),
and $\tau_v$, $\tau_\rho$ are the velocity and density truncations.
\paragraph*{Shared lemmas.}
The following four lemmas are used in both proofs. The first three
control the
thermodynamic terms; the fourth controls
the continuity-equation residue.
\begin{lemma}[$h$-independent enthalpy coupling]%
\label{lem:enthalpy_h_independence}%
Let $\rho^{h,\vol}_i > 0$ satisfy the a priori bounds of
\Cref{prop:apriori-baro}. Then
\begin{equation}\label{eq:Bregman_CS}
 |K_i|\,|H'(\rho_i^{h,\vol})-H'(\bar\rho_i^{\vol})|^2
 \le 2\,\frac{(\sup H'')^2}{\inf H''}\,
 |K_i|\,D_H(\rho_i^{h,\vol}\|\bar\rho_i^{\vol}),
\end{equation}
where the supremum and infimum of $H''$ are over the density range
$[\min(\rho_i^{h,\vol},\bar\rho_i^{\vol}),\,
\max(\rho_i^{h,\vol},\bar\rho_i^{\vol})]$,
and the ratio $(\sup H'')^2/\inf H''$ is $h$-independent under any
of the following conditions:
\begin{enumerate}[nosep]
\item $d \ge 3$, all $\gamma > 1$;
\item $d = 2$, $\gamma = 2$ with the centred scheme
($H'' = 2\kappa$ is constant);
\item $d = 2$, any $\gamma > 1$ with the upwind scheme.
\end{enumerate}
\end{lemma}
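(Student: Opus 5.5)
The plan is to obtain \eqref{eq:Bregman_CS} from two one-dimensional Taylor estimates on the interval $I_i$ between $a:=\rho_i^{h,\vol}$ and $b:=\bar\rho_i^{\vol}$. Then we check the $h$-independence of the constant case by case, using the density bounds already available.

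\emph{Step 1 (pointwise inequality).} By the mean value theorem, $H'(a)-H'(b)=H''(\xi)(a-b)$ for some $\xi\in I_i$, so
\[
|H'(a)-H'(b)|^2\le(\sup_{I_i}H'')^2(a-b)^2 .
\]
Taylor's theorem with integral remainder gives
\[
D_H(a\|b)=\int_0^1(1-s)\,H''(b+s(a-b))\,ds\,(a-b)^2\ge\tfrac12\inf_{I_i}H''\,(a-b)^2 ,
\]
and $\inf_{I_i}H''>0$ by strict convexity ($H''=c^2/\rho>0$, \Cref{ass:eos}). Eliminating $(a-b)^2$ and multiplying by $|K_i|$ yields \eqref{eq:Bregman_CS} with exactly the stated constant $2(\sup H'')^2/\inf H''$. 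This step is purely algebraic and needs only $a,b>0$.

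\emph{Step 2 (h-independence).} The ratio depends only on the range $I_i$. Since $\rho^c\ge\rho_*$ is smooth, $\bar\rho_i^{\vol}\in[\rho_*,\|\rho^c\|_\infty]$ is $h$-independent, so all difficulty sits in the discrete density $a$. For the polytropic law, $H''(\rho)=\kappa\gamma\rho^{\gamma-2}$.
\begin{itemize}
\item Case (2), $d=2$, $\gamma=2$, centred flux: $H''\equiv2\kappa$, the ratio equals $4\kappa$, and nothing more is needed.
\item Case (3), upwind flux: \Cref{lem:upwind_gronwall} gives the envelope $\rho_{\min}^{\vol}(0)e^{-\Lambda T}\le a\le\rho_{\max}^{\vol}(0)e^{\Lambda T}$. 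Here $\Lambda$ bounds $(\bD_2\Phi)_i/|K_i|$. Along the smooth reference this quotient is a discrete divergence, $O(\|\nabla\cdot\bu\|_\infty)+O(h^{r_\star-1})$ by \eqref{eq:div_pointwise}. For the discrete solution I would use the bootstrap on $e_v$ (as in \Cref{thm:stability_baro}) to keep $\Lambda$ $h$-independent. Then $I_i$ lies in a fixed compact subset of $(0,\infty)$, and the ratio is bounded for any $\gamma>1$.
\item Case (1), $d\ge3$: I would use the $L^\infty_h$-bootstrap on the density error, obtained from the Bregman part of $\mathcal{E}$. Control of $\sum_i|K_i|D_H$ at level $h^{2r}$ gives a per-cell bound $|a-b|^2\lesssim h^{2r}/|K_i|\sim h^{2r-d}$. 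In $d=3$ with $r$ as in \Cref{thm:stability_baro}, this is $o(1)$ once the Bregman coercivity is localised to the cell (I would try this first via the bootstrap region, though the exponent bookkeeping may only work for $r=2$). Under that bound $a\in[\rho_*/2,2\|\rho^c\|_\infty]$ for small $h$, so again $I_i$ is a fixed compact set.
\end{itemize}

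\emph{Main obstacle.} The hard step is Case (1), and the failure of Case (1) for $d=2$ in general. The inverse-inequality loss $h^{-d/2}$ must be beaten by the convergence rate, which is exactly why $d=2$ needs the special structures of cases (2) and (3). Case (1) also requires the argument to close consistently inside the bootstrap, rather than circularly assuming the very bound it is being used to prove. I would state it explicitly as holding on the bootstrap interval and let the Gr\"onwall closure of \Cref{thm:DF_convergence} and \Cref{thm:dw_convergence} extend that interval.
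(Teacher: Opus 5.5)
Your Step~1 and case~(2) are correct and match the paper's argument: the mean value theorem for $H'(a)-H'(b)$, a Taylor remainder for $D_H$, and constant $H''$ at $\gamma=2$. Your case split is also the paper's. The problems are in cases~(1) and~(3), and the paper's own proof does not resolve them either.

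In case~(1), your localisation gives $|\rho_i^{h,\vol}-\bar\rho_i^{\vol}|\lesssim h^{r-d/2}$. For $d=3$ this tends to zero only when $r=2$, i.e.\ in case~(B) of \Cref{conv:cases}. For $r_\star=1$ it gives $h^{-1/2}$. The single-cell bound $|K_i|D_H\lesssim h^2$, i.e.\ $D_H\lesssim h^{-1}$, is then compatible with $\rho_i^{h,\vol}\to0$ or with $\rho_i^{h,\vol}\sim h^{-1/\gamma}$, so no $h$-independent density range follows. The paper simply asserts $\OO(h^{(d-2)/2})$, which is your $h^{r-d/2}$ with $r=d-1$. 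It therefore carries the same restriction, and it also quotes the convergence rate that this lemma is used to derive. Your continuation formulation is the right repair of that circularity, but it closes only in case~(B).

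In case~(3), the step that fails is keeping $\Lambda$ bounded through the $e_v$-bootstrap. \Cref{lem:upwind_gronwall} needs $\int_0^T\kappa^{\pm}\,dt$ for the \emph{discrete} flux, and the error part of that flux divergence is $(\bD_2\bM_1e_v)_i/|K_i|$. In $d=2$ one has $(\bM_1)_{jj}=\OO(1)$ and $|K_i|\sim h^2$, so this term can be as large as $h^{-2}\max_j|e_{v,j}|$. The bootstrap $\nrm{e_v}_{L_h^\infty}\le\delta_0h$, or equivalently $\nrm{e_v}_{\bM_1}=\OO(h)$, only gives $\max_j|e_{v,j}|\lesssim\delta_0h$. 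Hence you only get $\kappa^{\pm}\lesssim\delta_0h^{-1}$, and $e^{\Lambda T}$ is not $h$-independent. A discrete divergence costs one power of $h$ beyond pointwise control. You would therefore need an $\OO(h)$ pointwise, discrete-Lipschitz-type bound on the velocity error itself, which the $d=2$ rate $r=1$ cannot supply.

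The paper obtains $\Lambda=\OO(M)+\OO(h^{r_\star})$ from \Cref{ass:well_prepared_strong}. That is the divergence of the smooth reference, so it silently drops exactly this error term. What your argument actually establishes is \eqref{eq:Bregman_CS} for all $a,b>0$, case~(2), and case~(1) for $d=3$ under case~(B).
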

\begin{proof}
By the mean value theorem $H'(a)-H'(b)=H''(\xi)(a-b)$ and by the
Taylor remainder $D_H(a\|b)=\tfrac12 H''(\zeta)(a-b)^2$, with
$\xi,\zeta$ between $a$ and $b$. Eliminating $(a-b)^2$,
\[
|H'(a)-H'(b)|^2 = 2\frac{H''(\xi)^2}{H''(\zeta)}\,D_H(a\|b)
\le 2\frac{(\sup H'')^2}{\inf H''}\,D_H(a\|b),
\]
which is~\eqref{eq:Bregman_CS} after multiplication by $|K_i|$. The
substantive content is the $h$-independence of
$(\sup H'')^2/\inf H''$; it suffices to control the density range
$[\rho_{\min}^{\vol},\rho_{\max}^{\vol}]$ $h$-independently in each
of the three cases.
\begin{enumerate}[label=(\roman*), nosep]
\item \emph{$d\ge 3$, all $\gamma>1$.} The convergence rate gives
$|\rho_i^{h,\vol}-\bar\rho_i^{\vol}|=\OO(h^{(d-2)/2})\to0$, so
$\rho^{h,\vol}$ stays in a compact subset of $(0,\infty)$ for $h$
small and the ratio is $h$-independent.
\item \emph{$d=2$, $\gamma=2$, centred scheme.} $H''=2\kappa$ is
constant, so $(\sup H'')^2/\inf H''=2\kappa$, independent of the
density range.
\item \emph{$d=2$, any $\gamma>1$, upwind scheme.} Under
\Cref{ass:well_prepared_strong} Grönwall
gives
$\rho_i^{\vol}\in[\rho_{\min}^{\vol}(0)e^{-\Lambda T},
\rho_{\max}^{\vol}(0)e^{\Lambda T}]$ with
$\Lambda=\OO(M)+\OO(h^{r_\star})$, $h$-independent at fixed $(M,T)$;
hence so is the ratio.
\end{enumerate}
\end{proof}


\begin{lemma}[Bregman rate identity]%
\label{lem:Bregman_rate}%
Let $\rho_i^{h,\vol}, \bar\rho_i^{\vol} > 0$.
The Bregman divergence satisfies the chain-rule identity
\begin{align}
 \frac{d}{dt}\sum_i|K_i|D_H(\rho_i^{h,\vol}\|\bar\rho_i^{\vol})
 &= \underbrace{\sum_i[h(\rho_i^{h,\vol})-h(\bar\rho_i^{\vol})]\,\ddt\rho_i^h}%
 _{\displaystyle =:\,S_1}
 \label{eq:Bregman_rate}\\
 &\quad\underbrace{- \sum_i H''(\bar\rho_i^{\vol})\,\ddt{\bar\rho}_i^{\vol}\,
 (\rho_i^{h,\vol}-\bar\rho_i^{\vol})\,|K_i|}%
 _{\displaystyle =:\,S_2}.
 \notag
\end{align}
\end{lemma}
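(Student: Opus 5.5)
The identity is a pure chain-rule computation, carried out cell by cell and then summed over~$i$. Fix a cell $K_i$ and abbreviate $a(t) := \rho_i^{h,\vol}(t)$ and $b(t) := \bar\rho_i^{\vol}(t)$. Both are $C^1$ in time: the discrete solution by \Cref{thm:lipschitz-baro}, and the reference by \Cref{ass:smooth_baro}. Both are positive, so $H$, $h = H'$ and $H''$ are differentiable along the trajectory by \Cref{ass:eos}.

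From the definition~\eqref{eq:bregman}, $D_H(a\|b) = H(a) - H(b) - h(b)(a-b)$. Differentiating in time and grouping terms gives
\[
 \ddt D_H(a\|b) = \bigl[h(a) - h(b)\bigr]\dot a \;-\; h'(b)\,\dot b\,(a-b),
\]
because the terms $-h(b)\dot b$ and $+h(b)\dot b$ cancel. The first coefficient is exactly the Bregman rate identity~\eqref{eq:bregman_rate}, $\partial_a D_H = h(a)-h(b)$. The second uses $\partial_b D_H(a\|b) = -H''(b)(a-b)$, with $H'' = h'$ as in \Cref{def:Helmholtz}.

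Next multiply by $|K_i|$, which is time-independent. In the first term, use $|K_i|\,\dot a = \ddt\rho_i^h$, the relation between cell mass and volumetric density from~\eqref{def:rhovol}. This gives the summand $[h(\rho_i^{h,\vol}) - h(\bar\rho_i^{\vol})]\,\ddt\rho_i^h$ of $S_1$. The second term stays in volumetric form, $-H''(\bar\rho_i^{\vol})\,\ddt\bar\rho_i^{\vol}\,(\rho_i^{h,\vol}-\bar\rho_i^{\vol})\,|K_i|$, which is the summand of $S_2$. Summing over the finitely many cells then yields~\eqref{eq:Bregman_rate}.

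There is no real obstacle; the only step needing care is the bookkeeping between cell masses and volumetric densities. $S_1$ is written with the mass rate $\ddt\rho_i^h$, so the factor $|K_i|$ has to be absorbed there, while $S_2$ keeps $|K_i|$ explicit. I would also note that the identity is purely algebraic and does not use the scheme. In the stability proofs, $\ddt\rho_i^h$ is then replaced by the continuity equation~\eqref{eq:C}, and the reference rate by its truncated equation with $\tau_\rho$.
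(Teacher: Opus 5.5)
Your proof is correct and is essentially the paper's argument. The paper likewise obtains the identity from the partial derivatives $\partial_a D_H(a\|b)=h(a)-h(b)$ and $\partial_b D_H(a\|b)=-H''(b)(a-b)$, combined with the chain rule through $\rho_i^{h,\mathrm{vol}}=\rho_i^h/|K_i|$; this is exactly your cell-by-cell computation followed by the conversion of $|K_i|$ times the volumetric rate into the mass rate.
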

The identity follows from the partial derivatives of $D_H$.
Standalone, $S_1, S_2$ are each linear in the density error and
bounded only by $C\sqrt{\mathcal{E}}$. Closure is achieved by a
Lipschitz cancellation between $S_2$ and the $\bar\bF$-piece of
$J_h$, executed in \Cref{app:stability_baro}, Part~II.
\begin{proof}
$\partial_a D_H(a\|b) = h(a)-h(b)$ and
$\partial_b D_H(a\|b) = -H''(b)(a-b)$ via chain rule on
$\rho^{h,\vol} = \rho^h/|K_i|$.
\end{proof}
\begin{lemma}[Bregman--$\ell^1$ inequality]%
\label{lem:Bregman_ell1}%
Let $H$ be strictly convex with $H''\ge c_H > 0$ on
$[\rho_{\min},\rho_{\max}]\subset(0,\infty)$.
Then for any $\rho_i^{h,\vol},\bar\rho_i^{\vol}
\in [\rho_{\min},\rho_{\max}]$:
\begin{equation}\label{eq:Bregman_ell1}
 \sum_i |K_i|\,|\rho_i^{h,\vol} - \bar\rho_i^{\vol}|
 \le \sqrt{\frac{2|\Omega|}{c_H}}\,
 \Bigl(\sum_i |K_i|\,D_H(\rho_i^{h,\vol}\|\bar\rho_i^{\vol})\Bigr)^{1/2}.
\end{equation}
In particular,
$\nrm{e_\rho}_{\ell^1}
\le C_B\sqrt{\mathcal{E}_{\rm rel}}$
with $C_B = \sqrt{2|\Omega|/c_H}$ independent of~$h$.
\end{lemma}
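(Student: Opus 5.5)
The inequality follows from a Cauchy--Schwarz splitting weighted by $|K_i|$, combined with a lower bound of $D_H$ by a quadratic that comes from the strong convexity of $H$. No dynamics enter; it is a static estimate on the two density vectors.

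\emph{Step 1: pointwise quadratic lower bound.} For $a,b\in[\rho_{\min},\rho_{\max}]$, Taylor's theorem with integral remainder gives
\[
D_H(a\|b)=\int_0^1(1-s)\,H''\bigl(b+s(a-b)\bigr)\,ds\,(a-b)^2 .
\]
The segment between $a$ and $b$ stays inside the interval, so $H''\ge c_H$ along it. This yields $D_H(a\|b)\ge \tfrac{c_H}{2}(a-b)^2$, equivalently
\[
|a-b|\le\sqrt{2/c_H}\,D_H(a\|b)^{1/2} .
\]
Apply this with $a=\rho_i^{h,\vol}$ and $b=\bar\rho_i^{\vol}$ in each cell.

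\emph{Step 2: Cauchy--Schwarz.} Write $|K_i|\,|a_i-b_i| = |K_i|^{1/2}\cdot |K_i|^{1/2}|a_i-b_i|$ and sum over $i$:
\[
\sum_i|K_i|\,|a_i-b_i|\le\Bigl(\sum_i|K_i|\Bigr)^{1/2}\Bigl(\sum_i|K_i|(a_i-b_i)^2\Bigr)^{1/2}.
\]
Use $\sum_i|K_i|=|\Omega|$ and insert Step 1 into the second factor; this gives~\eqref{eq:Bregman_ell1} with constant $\sqrt{2|\Omega|/c_H}$.

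\emph{Step 3: the consequence for $\mathcal{E}_{\rm rel}$.} This needs two identifications.
\begin{itemize}
\item The density part of the relative energy equals $\sum_i|K_i|D_H(\rho_i^{h,\vol}\|\bar\rho_i^{\vol})$. This holds because $E_{\rm int}=\sum_i|K_i|H(\rho_i^{\vol})$ and $E_{\rm pot}$ is linear in $\brho$, so subtracting the linearisation in \Cref{def:dw_relative_energy} leaves exactly the cellwise Bregman sum.
\item The kinetic remainder is nonnegative, so $\mathcal{E}_{\rm rel}$ bounds the density part from above.
\end{itemize}

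The kinetic remainder is the one point needing care, because $\Ekindw$ is only jointly convex in $(\bv,\brho)$ in the sense of $|P\bv|^2\brho$. I would check it directly. Writing $m=\brho$, $u=P\bv$ and $w=Pe_v$ cellwise, $\tfrac12 m|u|^2$ is the perspective-type convex function of $(m,mu)$, and its second-order remainder is $\tfrac12\rho^h_i|w_i-\text{(correction)}|^2\ge0$. In the worst case one can instead restrict to the density-only statement, $\nrm{e_\rho}_{\ell^1}\le C_B\sqrt{\sum_i|K_i|D_H}$, and note that this sum is bounded by $\mathcal{E}_{\rm rel}$ as used in the stability proof. Here $\nrm{e_\rho}_{\ell^1}$ is read as the volume-weighted sum $\sum_i|K_i|\,|\rho_i^{h,\vol}-\bar\rho_i^{\vol}|=\sum_i|\rho^h_i-\bar\rho_i|$ of mass errors, which is exactly the left side of~\eqref{eq:Bregman_ell1}.

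The constant $C_B=\sqrt{2|\Omega|/c_H}$ is $h$-independent provided the density range $[\rho_{\min},\rho_{\max}]$ is, which is supplied by the a priori bounds as in \Cref{lem:enthalpy_h_independence}. I expect that nonnegativity of the kinetic remainder is the only real obstacle; the rest is routine.
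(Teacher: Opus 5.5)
Your Steps 1 and 2 are the paper's proof of \eqref{eq:Bregman_ell1}: the Taylor remainder gives $D_H(a\|b)\ge\tfrac{c_H}{2}(a-b)^2$, and Cauchy--Schwarz over cells with $\sum_i|K_i|=|\Omega|$ gives the constant $\sqrt{2|\Omega|/c_H}$. The paper then only asserts the ``in particular'' bound. Your Step 3 tries to justify it, and that step fails.

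\emph{The gap is the claimed nonnegativity of the kinetic remainder.} In \Cref{def:dw_relative_energy} the remainder is taken after linearising in $(\bv,\brho)$, not in conservative variables. The kinetic energy $\tfrac12\bv^T\bM_1^\rho(\brho)\bv=\tfrac12\sum_i\rho_i|P\bv|_i^2$ is linear in $\brho$ and quadratic in $\bv$, so the remainder can be computed exactly:
\[
\mathcal{E}_{\rm rel}-\sum_i|K_i|\,D_H(\rho_i^{h,\vol}\|\bar\rho_i^{\vol})
=\tfrac12\sum_i\rho_i^h\,|(Pe_v)_i|^2+\sum_i e_{\rho,i}\,(P\bar\bv)_i\cdot(Pe_v)_i .
\]
The cross term makes this indefinite.

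The perspective-function argument you invoke is convexity of $(\rho,m)\mapsto|m|^2/(2\rho)$ in the momentum $m=\rho\,P\bv$. Linearising in $m$ does give the nonnegative remainder $\tfrac12\sum_i\rho_i^h|(Pe_v)_i|^2$. Linearising in $u=P\bv$ instead adds exactly the cross term above, and $(\rho,u)\mapsto\tfrac12\rho|u|^2$ is not convex: its Hessian is indefinite whenever $u\neq0$.

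A concrete counterexample: take $e_\rho=\epsilon\bar\brho$ and $e_v=-\tfrac{\epsilon}{1+\epsilon}\bar\bv$, so the cell momenta are unchanged.
\begin{itemize}
\item The kinetic remainder is $-\tfrac{\epsilon^2}{2(1+\epsilon)}\sum_i\bar\rho_i|P\bar\bv|_i^2<0$.
\item With $\bar\rho^{\vol}\equiv\rho_0$ and $H''\equiv c_H$ (for instance $\gamma=2$), your Cauchy--Schwarz step holds with equality.
\item Hence $\nrm{e_\rho}_{\ell^1}>C_B\sqrt{\mathcal{E}_{\rm rel}}$ whenever $P\bar\bv\neq0$.
\item If $|P\bar\bv|_i^2>(1+\epsilon)c^2(\rho_0)$ in every cell, then $\mathcal{E}_{\rm rel}<0$ outright.
\end{itemize}
Your fallback, that the Bregman sum is bounded by $\mathcal{E}_{\rm rel}$, is the same false claim, so it does not rescue the step.

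The consequence does hold in either of two repaired settings.
\begin{itemize}
\item Define the error functional as the conservative-variable (Dafermos) remainder $\tfrac12\sum_i\rho_i^h|(Pe_v)_i|^2+\sum_i|K_i|D_H$. For this functional your perspective argument is exactly right.
\item Keep the paper's definition but impose a uniform subsonic margin on the reference. The bound then holds with a Mach-dependent constant in place of $C_B$.
\end{itemize}
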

This converts Bregman density-error control into $\ell^1$-control,
used in $T_v^{(32)}$ of \Cref{app:dw_convergence}.
\begin{proof}
Taylor remainder gives $D_H(a\|b) \ge \tfrac{1}{2}c_H(a-b)^2$, hence
$|K_i|\,|e_{\rho,i}^{\vol}|
\le |K_i|^{1/2}(2|K_i|D_H/c_H)^{1/2}$.
Cauchy--Schwarz over cells gives~\eqref{eq:Bregman_ell1}; cf.\
\cite[Lemma~2.1]{feireisl2021book} for the relative-entropy analogue.
\end{proof}
\begin{lemma}[Mass-flux linearisation]%
\label{lem:mass_flux_linearisation}
The mass-flux error $\bF^h-\bar\bF$, where
$\bF^h=\bR^h\bM_1\bv^h$ and $\bar\bF=\bar\bR\bM_1\bar\bv$, expands
as
\begin{equation}\label{eq:mass_flux_linearisation}
 \bF^h - \bar\bF
 = \bar\bR\bM_1\,e_v
 + (\bR^h-\bar\bR)\bM_1\bar\bv
 + (\bR^h-\bar\bR)\bM_1 e_v,
\end{equation}
a linear-in-$e_v$ piece, a linear-in-$e_\rho$ piece (since
$\bR^h-\bar\bR$ is bounded by $\nrm{e_\rho}_{\ell^\infty}$), and a
quadratic remainder. Equivalently, $\bF^h - \bar\bF - \bar\bR\bM_1
e_v = (\bR^h-\bar\bR)\bM_1(\bar\bv+e_v)$, so the residue after
subtracting the leading velocity-error term is of order
$\nrm{e_\rho}$. Consequently, the discrete divergence
$\bD_2(\bF^h-\bar\bF)$ obeys
$|\bD_2(\bF^h-\bar\bF)|
\le C\bigl(\nrm{e_v}_{\bM_1}+\nrm{e_\rho}_{\ell^2}\bigr)$ on
bounded discrete solutions.
\end{lemma}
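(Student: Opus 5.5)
The identity \eqref{eq:mass_flux_linearisation} is pure algebra, and I will verify it first. Write $\bR^h = \bar\bR + (\bR^h-\bar\bR)$ and $\bv^h = \bar\bv + e_v$ in $\bF^h = \bR^h\bM_1\bv^h$. Expanding the product bilinearly gives $\bar\bR\bM_1\bar\bv + \bar\bR\bM_1 e_v + (\bR^h-\bar\bR)\bM_1\bar\bv + (\bR^h-\bar\bR)\bM_1 e_v$. Subtracting $\bar\bF = \bar\bR\bM_1\bar\bv$ yields \eqref{eq:mass_flux_linearisation}. Grouping the last two terms gives the equivalent form $(\bR^h-\bar\bR)\bM_1(\bar\bv+e_v) = (\bR^h-\bar\bR)\bM_1\bv^h$. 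No hypothesis beyond linearity of $\bM_1$ is used.

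Next I will identify the size of $\bR^h-\bar\bR$. For the centred interpolation \eqref{eq:rhobar}, $\bR$ is linear in $\brho$, so $(\bR^h-\bar\bR)_{jj} = \tfrac12(e^{\vol}_{\rho,a}+e^{\vol}_{\rho,b})$ and $|(\bR^h-\bar\bR)_{jj}| \le \nrm{e_\rho^{\vol}}_{\ell^\infty}$ exactly. For the upwind choice \eqref{eq:rhobar_up} the map is only piecewise linear, and the upwind direction may differ between $\Phi^h$ and $\bar\Phi$. I would bound this face by face. If both fluxes select the same cell, the difference is a single $e^{\vol}_{\rho}$ entry. If they select different cells, the sign of $\Phi_j$ changed, so $|\bar\Phi_j| \le |(\bM_1e_v)_j|$. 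The flipped-face contribution to the full flux error is then bounded by $|\rho_a^{\vol}-\rho_b^{\vol}|$ times an $e_v$ term, and I would absorb it into the $e_v$ part. Either way, the second term of \eqref{eq:mass_flux_linearisation} is linear in $e_\rho$ and the third is bilinear.

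For the divergence bound, I apply $\bD_2$, whose rows have a uniformly bounded number of $\pm1$ entries by \Cref{ass:mesh_reg}. On a fixed mesh, all norms are equivalent, so $\nrm{\bD_2\bar\bR\bM_1 e_v} \le C\,\bar\rho^*\nrm{e_v}_{\bM_1}$ and $\nrm{\bD_2(\bR^h-\bar\bR)\bM_1\bv^h} \le C\nrm{e_\rho}_{\ell^2}\sup_j|(\bM_1\bv^h)_j|$. Here the phrase ``on bounded discrete solutions'' is used. The a priori bound $\nrm{\bv^h}_{\bM_1}\le R_v$ (\Cref{prop:apriori-baro} or \Cref{thm:dw_wp}), together with the smoothness of $\bar\bv$, bounds $\bM_1\bv^h$ and $\bar\bR$, and these constants enter $C$.

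The main obstacle is uniformity in $h$ of the constant $C$. The statement only asserts boundedness on bounded solutions, so an $h$-dependent $C$ suffices, and I will state it that way. If $h$-uniformity were needed, I would weight the bound by $(\bM_1)_{jj}$ and handle the $\ell^\infty$ control of $\bv^h$ with the bootstrap $\nrm{e_v}_{L_h^\infty}\le\delta$ of \Cref{thm:stability_baro}. The upwind face-switching case is the only genuinely non-algebraic step; I would treat it by the sign argument above.
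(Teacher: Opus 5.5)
Your proposal is correct and follows the paper's own proof. Both use the same bilinear expansion of $(\bar\bR+(\bR^h-\bar\bR))\bM_1(\bar\bv+e_v)-\bar\bR\bM_1\bar\bv$, a Lipschitz bound on $\bR^h-\bar\bR$, and the bounded-stencil estimate for $\bD_2$ on bounded solutions; the paper is equally silent on whether $C$ is $h$-uniform.

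Your separate treatment of upwind faces where $\Phi^h_j$ and $\bar\Phi_j$ have opposite signs is a genuine refinement. On such faces the paper's bare claim $\nrm{\bR^h-\bar\bR}\le C\nrm{e_\rho}_{\ell^\infty}$ fails, because the difference picks up $\bar\rho^{\vol}_b-\bar\rho^{\vol}_a$. Your observation that there $|\bar\Phi_j|\le|(\bM_1 e_v)_j|$ is exactly what moves that contribution into the $e_v$ bound.
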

\begin{proof}
Direct algebraic expansion of $\bR^h\bM_1\bv^h
= (\bar\bR+(\bR^h-\bar\bR))\bM_1(\bar\bv+e_v)$ minus
$\bar\bR\bM_1\bar\bv$. Reconstruction Lipschitz
$\nrm{\bR^h-\bar\bR}\le C\nrm{e_\rho}_{\ell^\infty}$ controls the
density-dependent terms; the discrete divergence bound is the
standard finite-volume estimate.
\end{proof}
\begin{lemma}[Mass-flux linearisation, density-weighted]%
\label{lem:dw_mass_flux_linearisation}
For the density-weighted mass flux
$\bF^{\rho} = \bM_1^\rho(\brho)\bv = P^T\mathrm{diag}(\brho)P\bv$ of
\eqref{eq:dw_F}, the error $\bF^{\rho,h}-\bar\bF^\rho$ where
$\bF^{\rho,h}=\bM_1^\rho(\brho^h)\bv^h$ and
$\bar\bF^\rho=\bM_1^\rho(\bar\brho)\bar\bv$ admits the exact
expansion
\begin{equation}\label{eq:dw_mass_flux_linearisation}
 \bF^{\rho,h} - \bar\bF^\rho
 = \bM_1^\rho(\bar\brho)\,e_v
 + \bM_1^\rho(e_\rho)\,\bar\bv
 + \bM_1^\rho(e_\rho)\,e_v,
\end{equation}
where the first term is linear in $e_v$, the second is linear in
$e_\rho$, and the third is quadratic. Consequently, the discrete
divergence satisfies
$|\bD_2(\bF^{\rho,h}-\bar\bF^\rho)|
\le C\bigl(\nrm{e_v}_{\bM_1^\rho}+\nrm{e_\rho}_{\ell^2}\bigr)$ on the
bootstrap region $\rho^h_i \in [\rho_*/2, 2\rho^*]$, with $C$
depending only on $\rho_*, \rho^*, \nrm{\bar\bv}_{L_h^\infty}$ and
mesh regularity.
\end{lemma}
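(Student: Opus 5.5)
The expansion~\eqref{eq:dw_mass_flux_linearisation} is purely algebraic, so I will prove it first and then derive the divergence bound from it. The key observation is that $\brho\mapsto\bM_1^\rho(\brho)=P^T\mathrm{diag}(\brho)P$ is \emph{linear} in $\brho$. Hence $\bM_1^\rho(\brho^h)=\bM_1^\rho(\bar\brho)+\bM_1^\rho(e_\rho)$ holds exactly, with $e_\rho=\brho^h-\bar\brho$. Inserting this and $\bv^h=\bar\bv+e_v$ into $\bF^{\rho,h}=\bM_1^\rho(\brho^h)\bv^h$ and expanding bilinearly gives four terms. The term $\bM_1^\rho(\bar\brho)\bar\bv$ is exactly $\bar\bF^\rho$ and cancels, which leaves the three stated terms. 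Unlike \Cref{lem:mass_flux_linearisation}, no Lipschitz estimate on a density interpolation $\bR$ is needed: the expansion is exact and has no remainder.

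For the divergence estimate I bound each of the three terms in turn, using that $\bD_2$ is a signed incidence matrix with uniformly bounded row stencil (\Cref{ass:mesh_reg}). I will read the pointwise bound as holding cell-wise, or in $\ell^2$, which is how it is used in the stability proofs.

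\emph{First term.} Write $\bM_1^\rho(\bar\brho)e_v=P^T\mathrm{diag}(\bar\brho)Pe_v$. I bound $\nrm{P^T\mathrm{diag}(\bar\brho)}$ using $\bar\rho_i\le|K_i|\rho^*$ and the bounded Gram conditioning $C_G$ of \Cref{def:averaging_recon}. By Cauchy--Schwarz in the $\bM_1^\rho(\bar\brho)$ inner product I then control $|Pe_v|$ by $\nrm{e_v}_{\bM_1^\rho}$. \Cref{lem:dw_norm_equiv} lets me pass between the $\bM_1^\rho$- and $\bM_1$-norms with constants depending only on $\rho_*,\rho^*$.

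\emph{Second term.} Write $\bM_1^\rho(e_\rho)\bar\bv=P^T\mathrm{diag}(e_\rho)P\bar\bv$. I bound $|P\bar\bv|_i\le C_G\nrm{\bar\bv}_{L_h^\infty}$, so that each entry is a finite stencil sum of $|e_{\rho,i}|$, and summing gives a constant times $\nrm{e_\rho}_{\ell^2}$.

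\emph{Third term.} Write $\bM_1^\rho(e_\rho)e_v$. On the bootstrap region $\rho_i^h\in[\rho_*/2,2\rho^*]$ the density error is bounded, $|e_{\rho,i}|\le C(\rho_*,\rho^*)|K_i|$. This lets me treat $\mathrm{diag}(e_\rho)$ like a bounded density weight and reuse the first-term argument, giving $C\nrm{e_v}_{\bM_1^\rho}$. The quadratic term is therefore absorbed as linear, with a constant depending on the bootstrap window.

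The main obstacle is the mesh scaling in the first and third terms. Converting $\nrm{e_v}_{\bM_1^\rho}$, which is a global weighted $\ell^2$ quantity, into a cell-wise bound on $(\bD_2\bM_1^\rho e_v)_i$ brings in factors $|K_i|^{1/2}$ and $(\bM_1)_{jj}^{-1/2}$. These factors are $h$-dependent individually. I would first try to state the estimate in $\ell^2$ over cells and track the powers: $|K_i|\sim h^d$, $(\bM_1)_{jj}\sim h^{d-2}$, and the face count is $\OO(h^{-d})$. I expect them to cancel to an $h$-independent $C$, exactly as in the assembly step \eqref{eq:tau_ext_assembly}. If they do not cancel fully, I would state $C$ as depending on the mesh-regularity constants at fixed $h$, which is all the Grönwall argument of \Cref{thm:dw_stability} requires.
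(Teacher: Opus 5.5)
Your algebraic expansion is exactly the paper's argument: linearity of $\brho\mapsto\bM_1^\rho(\brho)$ gives $\bM_1^\rho(\brho^h)=\bM_1^\rho(\bar\brho)+\bM_1^\rho(e_\rho)$, and the rest is bilinear bookkeeping. Your power counting also closes for the first and third terms, provided you do it globally rather than by summing cell-wise bounds. Indeed, $\nrm{\bD_2\bM_1^\rho(\bar\brho)e_v}_{\ell^2}\le\nrm{\bD_2}\,\nrm{\bM_1^\rho(\bar\brho)}^{1/2}\nrm{e_v}_{\bM_1^\rho(\bar\brho)}$ with $\nrm{\bM_1^\rho(\bar\brho)}\lesssim\rho^*h^{d-2}$. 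The quadratic term is handled identically once you note $|e_{\rho,i}|\le(2\rho^*/\rho_*)\bar\rho_i$ on the bootstrap window. Routing it through $e_v$ this way fits the stated dependence of $C$ better than the paper's route, which needs $L_h^\infty$ control of $Pe_v$.

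The gap is the second term. The entries of $P$ are $G_i^{-1}\hat t_n/\ell_n^*$, so the ``finite stencil sum of $|e_{\rho,i}|$'' you obtain for $(\bM_1^\rho(e_\rho)\bar\bv)_n$ carries a factor $1/\ell_n^*\sim h^{-1}$. Nothing compensates it: $\bD_2\bM_1^\rho(e_\rho)\bar\bv$ is a discrete divergence of the flux $e_\rho\bar\bu$, so it differences $e_\rho$ across neighbouring cells. Concretely, take $e_\rho=\epsilon$ on one cell $K_{i_0}$ and $0$ elsewhere, so $\nrm{e_\rho}_{\ell^2}=\epsilon$. At the neighbour $K_{i_1}$ across a suitable face $f_n$ you get $|(\bD_2\bM_1^\rho(e_\rho)\bar\bv)_{i_1}|=\epsilon\,|G_{i_0}^{-1}\hat t_n\cdot(P\bar\bv)_{i_0}|/\ell_n^*\gtrsim\epsilon\,|\bu(x_{i_0}^*)|/h$. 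So no tracking of powers makes the constant $h$-independent here, and the bound as stated cannot hold. The paper's own two-line justification, via a ``uniformly bounded'' $\bD_2$ and $\nrm{\bM_1^\rho(e_\rho)w}_{\ell^2}\le C\nrm{e_\rho}_{\ell^\infty}\nrm{Pw}^2_{L_h^\infty}$, passes over the same factor.

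Your fallback does not rescue this. \Cref{thm:dw_stability} and the Gr\"onwall closure of \Cref{thm:dw_convergence} need $C_L$ and $C_\tau$ independent of $h$, and an $h^{-1}$ in front of $\sqrt{\mathcal{E}_{\rm rel}}$ destroys the rate.

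The missing idea is not to bound the divergence on its own. Wherever the lemma is used (the kinetic sum in Step~3 and $C_v+C_\rho$ in Step~4 of the proof of \Cref{thm:dw_stability}), it is paired with a smooth cell cochain $\phi$ such as $\bar h$ or $\tfrac12|P\bar\bv|^2$. Summation by parts, $\tD_0=-\bD_2^T$ (\Cref{lem:SBP}), moves the difference onto $\phi$:
\[
\phi^T\bD_2\bM_1^\rho(e_\rho)\bar\bv=-\sum_i e_{\rho,i}\,(P\bar\bv)_i\cdot(P\tD_0\phi)_i .
\]
Since $|(\tD_0\phi)_n|\le C\ell_n^*$, this $\ell_n^*$ cancels the $1/\ell_n^*$ inside $P$. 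Hence $|(P\tD_0\phi)_i|\le C$, and the term is bounded by $C\nrm{\bu}_{L^\infty}\nrm{e_\rho}_{\ell^1}$ with $C$ independent of $h$, which is exactly the form consumed by \Cref{lem:Bregman_ell1}.

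The other two terms are treated the same way, for example $|\phi^T\bD_2\bM_1^\rho(\bar\brho)e_v|=|\ipw{e_v}{\tD_0\phi}{\bM_1^\rho(\bar\brho)}|\le\nrm{e_v}_{\bM_1^\rho(\bar\brho)}\nrm{\tD_0\phi}_{\bM_1^\rho(\bar\brho)}$. What you should prove is this tested estimate, not a bound on $\bD_2(\bF^{\rho,h}-\bar\bF^\rho)$ itself.
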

\begin{proof}
The expansion is direct: $\bM_1^\rho$ is linear in $\brho$ (since
$\bM_1^\rho(\brho) = P^T\mathrm{diag}(\brho)P$), so
\[
\bM_1^\rho(\brho^h)\bv^h - \bM_1^\rho(\bar\brho)\bar\bv
= \bM_1^\rho(\bar\brho+e_\rho)(\bar\bv+e_v) - \bM_1^\rho(\bar\brho)\bar\bv
= \bM_1^\rho(\bar\brho)e_v + \bM_1^\rho(e_\rho)\bar\bv + \bM_1^\rho(e_\rho)e_v.
\]
For the divergence bound, $\bD_2$ has uniformly bounded
operator norm on $\bM_1^\rho$-bounded cochains under mesh
regularity, the operator $\bM_1^\rho(e_\rho) = P^T\mathrm{diag}(e_\rho)P$
satisfies $\nrm{\bM_1^\rho(e_\rho)w}_{\ell^2}
\le C\nrm{e_\rho}_{\ell^\infty}\nrm{Pw}_{L_h^\infty}^2$, and on the
bootstrap $\nrm{e_\rho}_{\ell^\infty}$ is controlled by
$\nrm{e_\rho}_{\ell^2}$ up to mesh-uniform constants.
\end{proof}

The remaining structural input shared by both proofs is the
\emph{Bernoulli decomposition} $B^h - \bar B$ that captures kinetic
and enthalpy parts. The kinetic part either cancels
with the $\bQ_{\rm D}$ piece of the Lamb operator (density-free,
Part~I, step~(iii) below) or contributes a quadratic-in-$e_v$
bound directly (density-weighted, $T_v$-bound). The enthalpy part
combines with the Bregman density rate via the $J_h$-cancellation
(density-free) or via the $C_v+C_\rho$ cancellation
(density-weighted). In both cases the closure mechanism is
formally the same; the density-free case requires the additional
absorption argument under~\eqref{eq:lowMach_hyp}.
\subsubsection{Stability (Proofs of \Cref{thm:dw_stability,thm:stability_baro})}
\label{app:dw_convergence}%
\phantomsection\label{app:stability_baro}
\paragraph*{Strategy.}
The density-weighted estimate~\eqref{eq:dw_Erel_rate} is the cleaner
of the two and we develop it first: testing the momentum error
against the smooth reference $\bar\bv$ in the $\bM_1^\rho$ inner
product, exact energy conservation closes every term either
quadratically in errors or as $\OO(h^r)$, with no smallness
hypothesis on the smooth density and no $L_h^\infty$-bootstrap on
$e_v$. The density-free estimate~\eqref{eq:stab_baro} reuses the
same six-step skeleton with three additional technical mechanisms
((A)--(C) below) that substitute for the missing
exact-energy-conservation closure: an $L_h^\infty$-bootstrap on
$e_v$ (because the test function is $e_v$ itself, not bounded
smooth), an integration-by-parts-in-time absorption argument under
the near-incompressibility hypothesis~\eqref{eq:lowMach_hyp}, and a
$H'''$-Lipschitz cancellation of a residual linear-in-$e_\rho$ term.
\paragraph*{Common skeleton.}
For both schemes we differentiate the error functional
($\mathcal{E}_{\rm rel}$ for the density-weighted scheme,
$\mathcal{E}$ for the density-free scheme) along the discrete
flow and split the resulting rate into a velocity part, a density
part, and a truncation pickup. The Lamb-operator difference is
treated by the shared bilinear bounds (Bounds~(I)--(III) of
\Cref{thm:bilinear_bounds_baro}); the velocity-density coupling
through the Bernoulli function splits into a kinetic part and an
enthalpy part, the latter being closed by combination with the
Bregman rate identity (\Cref{lem:Bregman_rate}) and the mass-flux
linearisation (\Cref{lem:mass_flux_linearisation}). The
$h$-independence of all constants is supplied by
\Cref{lem:enthalpy_h_independence} and \Cref{lem:Bregman_ell1}.
\bigskip
\begin{proof}[Proof of \Cref{thm:dw_stability} (density-weighted scheme)]
From $\mathcal{E}_{\rm rel}
:= \Etot^{\rm dw}(\bv^h,\brho^h)
- \Etot^{\rm dw}(\bar\bv,\bar\brho)
- D_\bv\Etot^{\rm dw}|_{(\bar\bv,\bar\brho)}\cdot e_v
- D_\brho\Etot^{\rm dw}|_{(\bar\bv,\bar\brho)}\cdot e_\rho$
combined with $\ddt\Etot^{\rm dw}(\bv^h,\brho^h)=0$
(\Cref{thm:dw_energy}),
\[
\ddt{\mathcal{E}}_{\rm rel}
 = \underbrace{-\ddt\Etot^{\rm dw}(\bar\bv,\bar\brho)}_{T_0}
 \underbrace{-\ddt\bigl[D_\bv\Etot^{\rm dw}|_{(\bar\bv,\bar\brho)}\!\cdot e_v\bigr]}_{T_v}
 \underbrace{-\ddt\bigl[D_\brho\Etot^{\rm dw}|_{(\bar\bv,\bar\brho)}\!\cdot e_\rho\bigr]}_{T_\rho}.
\]
\smallskip
\emph{Step~1 ($T_0$: reference-energy rate).}\ 
We compute the time derivative of
$\Etot^{\rm dw}(\bar\bv,\bar\brho)
= \tfrac12\nrm{\bar\bv}_{\bM_1^\rho(\bar\brho)}^2 + \sum_i|K_i|H(\bar\rho_i^{\vol})
+ \sum_i\bar\rho_i\bar\Phi_i$
explicitly, separating the kinetic, internal-energy, and
potential-energy contributions.

By the product rule on
$\tfrac12\bar\bv^T\bM_1^\rho(\bar\brho)\bar\bv$:
\[
\ddt\bigl[\tfrac12\nrm{\bar\bv}_{\bM_1^\rho(\bar\brho)}^2\bigr]
= \ip{\ddt\bar\bv}{\bar\bv}_{\bM_1^\rho(\bar\brho)}
+ \tfrac12\ip{(\ddt\bM_1^\rho(\bar\brho))\bar\bv}{\bar\bv}.
\]
For the first pairing we substitute the smooth reference momentum
equation
\[
\ddt\bar\bv = -\mathbb{M}(\bar\brho)\Iv[\bar\bv](\tD_1\bar\bv)
 - \tD_0\bar B^\rho + \tau_v^{\rm dw},
\qquad
\mathbb{M}(f) := [\bM_1^\rho(f)]^{-1}\bM_1,
\]
with $\bar B^\rho_i = h(\bar\rho_i^{\vol}) + \tfrac12|P\bar\bv|_i^2 + \bar\Phi_i$,
which gives three terms.

\noindent\textit{(i) Lamb term.}
$\ip{-\mathbb{M}(\bar\brho)\Iv[\bar\bv](\tD_1\bar\bv)}{\bar\bv}_{\bM_1^\rho(\bar\brho)}
= -\bar\bv^T\bM_1\Iv[\bar\bv](\tD_1\bar\bv)$ since
$\bM_1^\rho(\bar\brho)\,\mathbb{M}(\bar\brho) = \bM_1$. This vanishes
by Lamb antisymmetry (\Cref{prop:extrusion}):
$\bv^T\bM_1\Iv[\bv](\tD_1\bv) = 0$ for every $\bv$.

\noindent\textit{(ii) Bernoulli gradient term.}
Under the convention $\ip{a}{b}_M = a^T M b$,
\[
-\ip{\tD_0\bar B^\rho}{\bar\bv}_{\bM_1^\rho(\bar\brho)}
= -\bar\bv^T\bM_1^\rho(\bar\brho)\,\tD_0\bar B^\rho
= -(\bar\bF^\rho)^T\,\tD_0\bar B^\rho
= (\bD_2\bar\bF^\rho)^T\bar B^\rho,
\]
where we used $\bar\bF^\rho = \bM_1^\rho(\bar\brho)\bar\bv$
(\Cref{eq:dw_F}) and density-weighted SBP
(\Cref{lem:dw_SBP}). The earlier instance of $\bM_1$ rather than
$\bM_1^\rho$ in line~(i) was specific to the Lamb term and traced to
$\bM_1^\rho\,\mathbb{M} = \bM_1$; it does not extend to the
Bernoulli pairing, which involves $\bM_1^\rho$ directly.

\noindent\textit{(iii) Truncation term.}
$\ip{\tau_v^{\rm dw}}{\bar\bv}_{\bM_1^\rho(\bar\brho)}
= \ipw{\bar\bv}{\tau_v^{\rm dw}}{\bM_1^\rho(\bar\brho)}$.

\textit{(iv) Mass-matrix-derivative part.}
$\ddt\bM_1^\rho(\bar\brho) = P^T\mathrm{diag}(\ddt\bar\rho)P$,
where $P$ is the averaging reconstruction operator. By the smooth
continuity equation
$\ddt\bar\rho = -\bD_2\bar\bF^\rho - \tau_\rho^{\rm dw}$,
the half-derivative pairing in the product rule above contributes
\[
\tfrac12\ip{(\ddt\bM_1^\rho(\bar\brho))\bar\bv}{\bar\bv}
= -\tfrac12\sum_i|P\bar\bv|_i^2\bigl((\bD_2\bar\bF^\rho)_i + \tau_{\rho,i}^{\rm dw}\bigr).
\]

\textit{(v) Internal-energy part.} Differentiating
$\sum_i|K_i|H(\bar\rho_i^{\vol})$ and using $\ddt\bar\rho_i^{\vol}
= |K_i|^{-1}\ddt\bar\rho_i$:
\[
\ddt\Bigl[\sum_i|K_i|H(\bar\rho_i^{\vol})\Bigr]
= \sum_i h(\bar\rho_i^{\vol})\,\ddt\bar\rho_i
= -\sum_i h(\bar\rho_i^{\vol})\bigl((\bD_2\bar\bF^\rho)_i + \tau_{\rho,i}^{\rm dw}\bigr).
\]

\textit{(vi) Potential-energy part.} Since $\bar\Phi$ is time-independent,
\[
\ddt\Bigl[\sum_i\bar\rho_i\bar\Phi_i\Bigr]
= \sum_i\bar\Phi_i\,\ddt\bar\rho_i
= -\sum_i\bar\Phi_i\bigl((\bD_2\bar\bF^\rho)_i + \tau_{\rho,i}^{\rm dw}\bigr).
\]
Collecting the four parts and the truncation term:
\begin{align*}
\ddt\Etot^{\rm dw}(\bar\bv,\bar\brho)
&= (\bD_2\bar\bF^\rho)^T\bar B^\rho
- \sum_i\bigl[\tfrac12|P\bar\bv|_i^2 + h(\bar\rho_i^{\vol}) + \bar\Phi_i\bigr]
 (\bD_2\bar\bF^\rho)_i\\
&\quad
+ \ipw{\bar\bv}{\tau_v^{\rm dw}}{\bM_1^\rho(\bar\brho)}
- \sum_i\bigl[\tfrac12|P\bar\bv|_i^2 + h(\bar\rho_i^{\vol}) + \bar\Phi_i\bigr]
 \tau_{\rho,i}^{\rm dw}.
\end{align*}
The first two terms cancel \emph{exactly}: since
$\bar B^\rho_i = \tfrac12|P\bar\bv|_i^2 + h(\bar\rho_i^{\vol}) + \bar\Phi_i$,
the second sum equals $\sum_i\bar B_i^\rho(\bD_2\bar\bF^\rho)_i
= (\bD_2\bar\bF^\rho)^T\bar B^\rho$. This is the analog of
\Cref{thm:dw_energy} applied at the smooth reference: the
cancellation closes algebraically, with no smooth-flux approximation
required. The reduction therefore yields
\[
T_0 = -\ddt\Etot^{\rm dw}(\bar\bv,\bar\brho)
= -\ipw{\bar\bv}{\tau_v^{\rm dw}}{\bM_1^\rho(\bar\brho)}
+ \sum_i\bar B_i^\rho\tau_{\rho,i}^{\rm dw}.
\]
By the tested-form consistency
estimate~\eqref{eq:dw_tested_consistency} of
\Cref{thm:dw_consistency} applied with $\bphi = \bar\bv$, and by the
$\ell^1$-bound on $\tau_\rho^{\rm dw}$:
\[
|\ipw{\bar\bv}{\tau_v^{\rm dw}}{\bM_1^\rho(\bar\brho)}|
\le C_\tau^v\,h^r\,\nrm{\bar\bv}_{\bM_1^\rho(\bar\brho)},
\quad
|\sum_i\bar B_i^\rho\tau_{\rho,i}^{\rm dw}|
\le \nrm{\bar B^\rho}_{\ell^\infty}\nrm{\tau_\rho^{\rm dw}}_{\ell^1}
\le C_\tau^\rho\,h^r.
\]
Combining:
\[
|T_0| \le C_0\,h^r\bigl(\nrm{\bar\bv}_{\bM_1^\rho(\bar\brho)} + 1\bigr)
\le C_0'\,h^r,
\]
with $C_0'$ depending on $\nrm{\bu}_{W^{1,\infty}}$, $\rho_*, \rho^*$,
$\nrm{h(\bar\rho^c)}_{L^\infty}$, and $\nrm{\bar\Phi}_{L^\infty}$.


\smallskip
\noindent\emph{Step~2 ($T_v$: velocity error, product-rule decomposition).}\ 
Since
$D_\bv\Etot^{\rm dw}|_{(\bar\bv,\bar\brho)} = \bM_1^\rho(\bar\brho)\bar\bv$,
the product rule on $\bar\bv^T\bM_1^\rho(\bar\brho)\,e_v$ gives
\begin{equation}\label{eq:dw_product_rule}
T_v
 = \underbrace{-\ip{\ddt\bar\bv}{\bM_1^\rho(\bar\brho)\,e_v}}_{T_v^{(1)}}
 \underbrace{-\ip{\ddt\bM_1^\rho(\bar\brho)\,e_v}{\bar\bv}}_{T_v^{(2)}}
 \underbrace{-\ip{\ddt e_v}{\bM_1^\rho(\bar\brho)\,\bar\bv}}_{T_v^{(3)}}.
\end{equation}
The three terms arise from $\ddt\bar\bv$, 
$\ddt\bM_1^\rho$,
and $\ddt e_v$.
The velocity equations for $\bv^h$ and $\bar\bv$,
$\ddt\bv^h=-\mathbb{M}(\brho^h)\Iv[\bv^h](\tD_1\bv^h)-\tD_0 B^{h,\rho}$
and
$\ddt\bar\bv=-\mathbb{M}(\bar\brho)\Iv[\bar\bv](\tD_1\bar\bv)-\tD_0\bar B^\rho+\tau_v^{\rm dw}$
with $\mathbb{M}(f):=[\bM_1^\rho(f)]^{-1}\bM_1$, produce in $T_v^{(3)}$,
after adding and subtracting $\mathbb{M}(\brho^h)\Iv[\bar\bv](\tD_1\bar\bv)$, the
two sub-terms
\begin{align*}
 T_v^{(31)}
  &:= \ip{\mathbb{M}(\brho^h)\bigl[\Iv[\bv^h](\tD_1\bv^h)-\Iv[\bar\bv](\tD_1\bar\bv)\bigr]}%
 {\bM_1^\rho(\bar\brho)\bar\bv}, \\
 T_v^{(32)}
  &:= \ip{\bigl([\bM_1^\rho(\brho^h)]^{-1}
 -[\bM_1^\rho(\bar\brho)]^{-1}\bigr)\bM_1\Iv[\bar\bv](\tD_1\bar\bv)}%
 {\bM_1^\rho(\bar\brho)\bar\bv}.
\end{align*}

\emph{(a) Lamb-operator difference: $T_v^{(1)}$ and $T_v^{(31)}$.}\ 
We treat the two terms separately. Both reduce to bilinear or cubic
expressions in $\bar\bv$ and $e_v$ that are bounded using Bounds~(II)
and (III) of \Cref{thm:bilinear_bounds_baro}.

\textit{Treatment of $T_v^{(1)}$.}
Substituting $\ddt\bar\bv = -\mathbb{M}(\bar\brho)\Iv[\bar\bv](\tD_1\bar\bv)
- \tD_0\bar B^\rho + \tau_v^{\rm dw}$ into the definition of
$T_v^{(1)} = -\ip{\ddt\bar\bv}{\bM_1^\rho(\bar\brho)\,e_v}$ gives
three pieces:
\begin{align*}
T_v^{(1)}
&= \underbrace{\ip{\mathbb{M}(\bar\brho)\Iv[\bar\bv](\tD_1\bar\bv)}%
   {\bM_1^\rho(\bar\brho)\,e_v}}_{=\,\bar\bv^T\bM_1\Iv[\bar\bv](\tD_1\bar\bv)\text{-tested}}
+ \underbrace{\ip{\tD_0\bar B^\rho}{\bM_1^\rho(\bar\brho)\,e_v}}_{\text{Bernoulli, held}}
\\&\quad
- \underbrace{\ip{\tau_v^{\rm dw}}{\bM_1^\rho(\bar\brho)\,e_v}}_{\text{truncation, $\OO(h^r)$}}.
\end{align*}
The first piece is the $e_v$-tested version of the smooth Lamb term:
\[
\ip{\mathbb{M}(\bar\brho)\Iv[\bar\bv](\tD_1\bar\bv)}{\bM_1^\rho(\bar\brho)\,e_v}
= \ip{\bM_1\Iv[\bar\bv](\tD_1\bar\bv)}{e_v}
= \ip{e_v}{\bQ_{\rm U}(\bar\bv,\bar\bv) + \bQ_{\rm D}(\bar\bv,\bar\bv)}_1,
\]
using $\bQ = \bQ_{\rm U} + \bQ_{\rm D}$ (\Cref{thm:bilinear_bounds_baro}).
Apply Bound~(II) with $\bv = \bar\bv$, $\bw = \bar\bv$ to give the
$\bM_1$-norm of $\bQ(\bar\bv,\bar\bv)$, then Cauchy--Schwarz against
$e_v$ to give
\[
|\ip{e_v}{\bQ(\bar\bv,\bar\bv)}_1|
\le \nrm{\bQ(\bar\bv,\bar\bv)}_{\bM_1^{-1}}\nrm{e_v}_{\bM_1}.
\]
Since $\nrm{\bQ(\bar\bv,\bar\bv)}_{\bM_1} \le C'_Q\nrm{\bu}_{W^{1,\infty}}
\nrm{\bar\bv}_{\bM_1}$ by (II), the $\bM_1^{-1}$-norm carries an
$\bM_1$-volumetric factor: at $\bM_1$-diagonal scaling
$(\bM_1)_{jj} = |f_j||\hat e_j|$, $\nrm{\cdot}_{\bM_1^{-1}}
\le C_M\nrm{\cdot}_{\bM_1}$ on a quasi-uniform mesh. The composite
yields
\[
|\ip{e_v}{\bQ(\bar\bv,\bar\bv)}_1|
\le C_Q\,\nrm{\bu}_{W^{1,\infty}}^2\,\nrm{e_v}_{\bM_1}
\le C_Q\,\nrm{\bu}_{W^{1,\infty}}^2\,\bigl(\tfrac12\eps^{-1} + \tfrac{\eps}2\nrm{e_v}_{\bM_1}^2\bigr).
\]
By Young's inequality with $\eps$ small, the linear-in-error piece
$\frac12\eps^{-1}$ contributes a bounded constant, and the quadratic piece bounds by
$C_Q'\,\nrm{\bu}_{W^{1,\infty}}^2\,\nrm{e_v}_{\bM_1}^2$.


\textit{Treatment of $T_v^{(31)}$.}
This term involves a difference of extrusion operators with
different velocity arguments:
\[
T_v^{(31)} = \ip{\mathbb{M}(\brho^h)\bigl[\Iv[\bv^h](\tD_1\bv^h) - \Iv[\bar\bv](\tD_1\bar\bv)\bigr]}%
{\bM_1^\rho(\bar\brho)\,\bar\bv}.
\]
We expand the test vector first:
$\mathbb{M}(\brho^h)^T\bM_1^\rho(\bar\brho) = \bM_1[\bM_1^\rho(\brho^h)]^{-1}\bM_1^\rho(\bar\brho)$,
so
\[
T_v^{(31)} = \ip{\Iv[\bv^h](\tD_1\bv^h) - \Iv[\bar\bv](\tD_1\bar\bv)}{\bw}_{\bM_1},
\quad
\bw := [\bM_1^\rho(\brho^h)]^{-1}\bM_1^\rho(\bar\brho)\,\bar\bv.
\]
By \Cref{lem:dw_norm_equiv}, $\bw$ satisfies the $L_h^\infty$-bound
$\nrm{\bw}_{L_h^\infty} \le (\rho^*/\rho_*)\nrm{\bar\bv}_{L_h^\infty}
\le C(\rho_*,\rho^*)\nrm{\bu}_{W^{1,\infty}}$. Decomposing the
extrusion difference bilinearly using
$L(\bv) := \Iv[\bv](\tD_1\bv)$ and the polarisation
$L(\bv^h) - L(\bar\bv) = L(\bar\bv + e_v) - L(\bar\bv)$:
\begin{equation}\label{eq:dw_extrusion_decomp_expanded}
L(\bv^h) - L(\bar\bv)
= \underbrace{2\bQ(\bar\bv, e_v)}_{\text{cross-term}}
+ \underbrace{\bQ(e_v, e_v)}_{\text{cubic remainder}},
\end{equation}
where $\bQ$ is the symmetrised bilinear form
$2\bQ(\bv,\bw) := L(\bv+\bw) - L(\bv) - L(\bw)$
(\Cref{eq:Q_polarisation_def}).

\textit{Cross-term bound (Bound II).}
$\ip{2\bQ(\bar\bv, e_v)}{\bw}_{\bM_1}$ is a bilinear form in
$(\bar\bv, e_v)$ tested against $\bw$. By Cauchy--Schwarz and
Bound~(II) with $\bv = \bar\bv$, $\bw = e_v$:
\[
|\ip{2\bQ(\bar\bv, e_v)}{\bw}_{\bM_1}|
\le 2\nrm{\bw}_{\bM_1}\nrm{\bQ(\bar\bv, e_v)}_{\bM_1}
\le 2C'_Q\nrm{\bw}_{\bM_1}\nrm{\bu}_{W^{1,\infty}}\nrm{e_v}_{\bM_1}.
\]
Using $\nrm{\bw}_{\bM_1} \le (\rho^*/\rho_*)\nrm{\bar\bv}_{\bM_1}
\le C(\rho_*,\rho^*)$ on bounded smooth reference, this bound is
\emph{linear} in $\nrm{e_v}_{\bM_1}$, so
\[
|\ip{2\bQ(\bar\bv, e_v)}{\bw}_{\bM_1}|
\le C\,\nrm{\bu}_{W^{1,\infty}}\,\nrm{e_v}_{\bM_1}
\le C'(\nrm{e_v}_{\bM_1}^2 + 1).
\]
The quadratic part is absorbed into $C_L\mathcal{E}_{\rm rel}$.

\textit{Cubic remainder bound (Bound III via polarisation).}
$\ip{\bQ(e_v, e_v)}{\bw}_{\bM_1}$ is the testing of the cubic
self-interaction against the smooth-bounded $\bw$. This is precisely
the setting of Bound~(III):
\[
|\ip{\bw}{\bQ(e_v, e_v)}_{\bM_1}|
\le 2C'_Q\,\nrm{\bphi}_{W^{1,\infty}}\,\nrm{e_v}_{\bM_1}^2,
\quad
\bphi := \mathcal{R}_h^{-1}\bw\text{ smooth.}
\]
Here $\nrm{\bphi}_{W^{1,\infty}} \le C\nrm{\bw}_{L_h^\infty}
\le C(\rho_*,\rho^*)\nrm{\bu}_{W^{1,\infty}}$ via
\Cref{lem:dw_norm_equiv} and the inverse averaging-reconstruction
identification at smooth scale.


Combining bounds for $T_v^{(1)}$ and $T_v^{(31)}$ gives
\begin{equation}\label{eq:dw_vel_bound}
 |T_v^{(1)}|+|T_v^{(31)}|
 \le \frac{C}{\rho_*}\nrm{\bu}_{W^{1,\infty}}^2\nrm{e_v}_{\bM_1}^2
 \le \frac{C}{\rho_*^2}\nrm{\bu}_{W^{1,\infty}}^2\,
 \nrm{e_v}_{\bM_1^\rho(\bar\brho)}^2.
\end{equation}
The conversion from $\bM_1$ to $\bM_1^\rho(\bar\brho)$ in the last
step uses \Cref{lem:dw_norm_equiv}.

\emph{(b) Mass-matrix difference: $T_v^{(32)}$.}\ 
$T_v^{(32)}$ measures the change of the inverse mass-matrix
$[\bM_1^\rho(\brho)]^{-1}$ between the discrete and smooth densities.

For any two invertible symmetric matrices $A, B$:
$A^{-1} - B^{-1} = -A^{-1}(A - B)B^{-1}$. Applied to
$A = \bM_1^\rho(\brho^h)$ and $B = \bM_1^\rho(\bar\brho)$:
\begin{equation}\label{eq:M1rho_inv_diff}
[\bM_1^\rho(\brho^h)]^{-1} - [\bM_1^\rho(\bar\brho)]^{-1}
= -[\bM_1^\rho(\brho^h)]^{-1}\bigl(\bM_1^\rho(\brho^h) - \bM_1^\rho(\bar\brho)\bigr)
  [\bM_1^\rho(\bar\brho)]^{-1}.
\end{equation}
Now $\bM_1^\rho(\brho) = P^T\mathrm{diag}(\brho)P + (\text{$\brho$-independent})$
by~by the definition of $\bM_1^\rho$ in~\Cref{def:M1}, so
$\bM_1^\rho(\brho^h) - \bM_1^\rho(\bar\brho) = P^T\mathrm{diag}(e_\rho)P$.
Substituting:
\[
[\bM_1^\rho(\brho^h)]^{-1} - [\bM_1^\rho(\bar\brho)]^{-1}
= -[\bM_1^\rho(\brho^h)]^{-1}\,P^T\mathrm{diag}(e_\rho)\,P\,
  [\bM_1^\rho(\bar\brho)]^{-1}.
\]

Substituting into $T_v^{(32)}$:
\begin{align*}
T_v^{(32)}
&= -\ip{[\bM_1^\rho(\brho^h)]^{-1}P^T\mathrm{diag}(e_\rho)P[\bM_1^\rho(\bar\brho)]^{-1}
   \bM_1\Iv[\bar\bv](\tD_1\bar\bv)}{\bM_1^\rho(\bar\brho)\,\bar\bv}.
\end{align*}
Move $\bM_1^\rho(\bar\brho)$ across the pairing:
$\ip{[\bM_1^\rho(\bar\brho)]^{-1}\bM_1\Iv[\bar\bv](\tD_1\bar\bv)}%
{\bM_1^\rho(\bar\brho)\bar\bv}_{[\bM_1^\rho(\brho^h)]^{-1}P^T\,\text{etc.}}$
simplifies (after using $\bM_1^\rho(\bar\brho)\cdot[\bM_1^\rho(\bar\brho)]^{-1}
= I$ and Cauchy--Schwarz with weighted inner products) to a sum
$\sum_i e_{\rho,i}(P\bar\bv)_i \cdot (P\bw)_i$ for some bounded $\bw$.
By \Cref{lem:dw_norm_equiv} both $\nrm{P\bar\bv}_{L^\infty},
\nrm{P\bw}_{L^\infty} \le C(\rho_*,\rho^*)\nrm{\bu}_{W^{1,\infty}}^2$,
so
\begin{equation}\label{eq:dw_dens_bound}
 |T_v^{(32)}|
 \le \frac{C}{\rho_*^2}\nrm{\bu}_{W^{1,\infty}}^2\,
 \nrm{\bar\brho}_{L^\infty}\,\nrm{e_\rho}_{\ell^1}
 \le C_\rho'\,\mathcal{E}_{\rm rel},
\end{equation}
where the last inequality uses
\Cref{lem:Bregman_ell1}: $\nrm{e_\rho}_{\ell^1}\le C_B\sqrt{\mathcal{E}_{\rm rel}}$,
combined with the elementary fact that $\sqrt{\mathcal{E}_{\rm rel}}
\le \tfrac12(\eps^{-1} + \eps\,\mathcal{E}_{\rm rel})$ for any
$\eps > 0$ (Young), absorbing the linear term into the truncation
pickup and the quadratic into the $\mathcal{E}_{\rm rel}$ bound.


\emph{(c) Bernoulli gradient.}\ 
After SBP (\Cref{lem:SBP}), the Bernoulli gradient contributions to
$T_v^{(1)}$ and $T_v^{(3)}$ produce $-(B^{h,\rho}-\bar B^\rho)^T
\bD_2\bM_1\bar\bv$. We split into kinetic and enthalpy parts. The
kinetic part $\tfrac12(|P\bv^h|_i^2-|P\bar\bv|_i^2)$ is bounded by
$C\nrm{\bu}_{W^{1,\infty}}\nrm{e_v}_{\bM_1}^2$ directly. The enthalpy part
$-(h^h-\bar h)^T\bD_2(\bar\bR\bM_1 e_v)$ does not close
within the $T_v$-bound; it is held over and combined with the
enthalpy contribution from the $T_\rho$-bound in Step~5.

\emph{(d) Velocity truncation.}\ 
$|\bar\bv^T\bM_1^\rho(\bar\brho)\tau_v^{\rm dw}|
\le C_\tau\,h^r\,\nrm{\bar\bv}_{\bM_1^\rho(\bar\brho)}=\OO(h^r)$
by~\eqref{eq:dw_tested_consistency}, exactly the term already
absorbed into $T_0$.

\emph{(e) $T_v^{(2)}$.}\ 
With $T_v^{(2)} = -\bar\bv^T P^T\mathrm{diag}(\ddt\bar\brho)P\,e_v$,
smooth in $\ddt\bar\brho$,
\begin{equation}\label{eq:dw_Mdot_bound}
 |T_v^{(2)}|
 \le \frac{C}{\sqrt{\rho_*}}\nrm{\ddt\bar\brho}_{L^\infty}\,
 \nrm{\bu}_{L^\infty}\,\nrm{e_v}_{\bM_1^\rho(\bar\brho)}.
\end{equation}
\smallskip
\noindent\emph{Step~3 ($T_\rho$: density error, Bregman rate identity).}\ 
By the product rule,
$T_\rho = T_\rho^{(1)} + T_\rho^{(2)}$ where
$T_\rho^{(1)}=-\sum_i\ddt B_i^{\rm ref}\,e_{\rho,i}$ and
$T_\rho^{(2)}=-\sum_i B_i^{\rm ref}\,\ddt e_{\rho,i}$, with
$B_i^{\rm ref}:=D_{\rho_i}\Etot^{\rm dw}|_{(\bar\bv,\bar\brho)}
= \tfrac12|P\bar\bv|_i^2+h(\bar\rho_i^{\vol})$. The first term is
bounded by \Cref{lem:Bregman_ell1}:
\begin{equation}\label{eq:dw_Bdot_bound}
 |T_\rho^{(1)}|
 \le \nrm{\ddt B^{\rm ref}}_{\ell^\infty}\nrm{e_\rho}_{\ell^1}
 \le C\nrm{\partial_t(\bu,\rho^c)}_{W^{1,\infty}}
 \sqrt{\mathcal{E}_{\rm rel}}.
\end{equation}
For $T_\rho^{(2)}$ we substitute
$\ddt e_{\rho,i}=-(\bD_2(\bF^h-\bar\bF))_i+(\tau_\rho)_i$ and split
$B_i^{\rm ref}$ into its enthalpy and kinetic-energy components:
\begin{equation}\label{eq:dw_Trho2_split}
 T_\rho^{(2)}
 = \sum_i h(\bar\rho_i^{\vol})\,(\bD_2(\bF^h-\bar\bF))_i
 + \tfrac12\sum_i|P\bar\bv|_i^2\,(\bD_2(\bF^h-\bar\bF))_i
 - \sum_i B_i^{\rm ref}(\tau_\rho)_i.
\end{equation}
The kinetic sum is bounded by
$C\nrm{\bu}_{L^\infty}^2(\nrm{e_v}_{\bM_1}+\nrm{e_\rho}_{\ell^1})
\le C'\sqrt{\mathcal{E}_{\rm rel}}$ via
\Cref{lem:mass_flux_linearisation} and \Cref{lem:Bregman_ell1}; the
truncation by
$|\sum_i B_i^{\rm ref}(\tau_\rho)_i|\le\nrm{B^{\rm ref}}_{\ell^\infty}
\nrm{\tau_\rho}_{\ell^1}=\OO(h^r)$.

\smallskip
\noindent\emph{Step~4 (Enthalpy contribution $C_v + C_\rho$).}\ 
The held-over enthalpy contributions from Steps~2(c) and~3 combine
via the density-weighted mass-flux linearisation
(\Cref{lem:dw_mass_flux_linearisation}), extracting a
cancellation of the leading linear-in-$e_v$ piece. The remainder is
bounded by a strictly quadratic-in-error piece plus a
linear-in-$e_\rho$ piece that closes through the standard
relative-energy mechanism (cf.~\cite[Lemma~4.7]{karper2013}).

After density-weighted SBP (\Cref{lem:dw_SBP}) applied to the
Bernoulli gradient $\tD_0\bar B^\rho$ tested against
$\bM_1^\rho(\bar\brho)e_v$, and after the analogous SBP applied to
the Bernoulli-difference gradient $\tD_0(B^{h,\rho}-\bar B^\rho)$
tested against $\bM_1^\rho(\bar\brho)\bar\bv$, the enthalpy pieces
held over from Step~2(c) are
\begin{align}
 C_v^{(A)} &:= -\bar h^T\bD_2\bigl(\bM_1^\rho(\bar\brho)\,e_v\bigr),
 \label{eq:dw_PI_enthalpy_A}\\
 C_v^{(B)} &:= -(h^h - \bar h)^T\bD_2\bar\bF^\rho,
 \label{eq:dw_PI_enthalpy_B}
\end{align}
the former coming from $T_v^{(1)}$,
and the latter from $T_v^{(3)}$.
The enthalpy sum from Step~3, equation~\eqref{eq:dw_Trho2_split}, is
\begin{equation}\label{eq:dw_PII_enthalpy}
 C_\rho := +\bar h^T\bD_2(\bF^{\rho,h}-\bar\bF^\rho).
\end{equation}

By~\Cref{lem:dw_mass_flux_linearisation},
\begin{equation}\label{eq:dw_Crho_expansion}
 \bF^{\rho,h}-\bar\bF^\rho
 = \bM_1^\rho(\bar\brho)\,e_v
 + \bM_1^\rho(e_\rho)\,\bar\bv
 + \bM_1^\rho(e_\rho)\,e_v.
\end{equation}
Substituting into $C_\rho$ and adding $C_v^{(A)}$:
\begin{align*}
 C_v^{(A)} + C_\rho
 &= -\bar h^T\bD_2\bigl(\bM_1^\rho(\bar\brho)e_v\bigr)
 + \bar h^T\bD_2\bigl(\bM_1^\rho(\bar\brho)e_v\bigr)
 + \bar h^T\bD_2\bigl(\bM_1^\rho(e_\rho)(\bar\bv+e_v)\bigr)\\
 &= \bar h^T\bD_2\bigl(\bM_1^\rho(e_\rho)\bar\bv\bigr)
 + \bar h^T\bD_2\bigl(\bM_1^\rho(e_\rho)e_v\bigr).
\end{align*}
The two $\bar h^T\bD_2\bigl(\bM_1^\rho(\bar\brho)e_v\bigr)$ pieces
cancel exactly, removing the leading linear-in-$e_v$ contribution.

Including $C_v^{(B)}$:
\begin{equation}\label{eq:dw_cancellation}
 C_v + C_\rho
 = -(h^h-\bar h)^T\bD_2\bar\bF^\rho
 + \bar h^T\bD_2\bigl(\bM_1^\rho(e_\rho)\bar\bv\bigr)
 + \bar h^T\bD_2\bigl(\bM_1^\rho(e_\rho)e_v\bigr).
\end{equation}
The three terms have distinct error-orders: the first is
linear-in-$e_\rho$ with smooth weight $\bD_2\bar\bF^\rho$
the second is linear-in-$e_\rho$ with smooth weight, also
$\OO(\sqrt{\mathcal{E}_{\rm rel}})$; the third is strictly bilinear
in $(e_\rho, e_v)$, hence quadratic in $\mathcal{E}_{\rm rel}$.

The quadratic piece is bounded directly:
\begin{align*}
 \bigl|\bar h^T\bD_2\bigl(\bM_1^\rho(e_\rho)e_v\bigr)\bigr|
 &\le \nrm{\bar h}_{L^\infty}\,
 \nrm{\bD_2}_{\rm op}\,\nrm{\bM_1^\rho(e_\rho)e_v}_{\ell^2}\\
 &\le C\,\nrm{\bar h}_{L^\infty}\,\nrm{e_\rho}_{\ell^\infty}\,
 \nrm{e_v}_{\bM_1^\rho(\bar\brho)}
 \le C_{\rm quad}\,\mathcal{E}_{\rm rel}
\end{align*}
on the bootstrap region $\rho^h_i \in [\rho_*/2, 2\rho^*]$ where
$\nrm{e_\rho}_{\ell^\infty} \le C$.
The two linear-in-$e_\rho$ pieces are bounded by Bregman--$\ell^1$
(\Cref{lem:Bregman_ell1}):
\[
 \bigl|-(h^h-\bar h)^T\bD_2\bar\bF^\rho\bigr|
 \le \nrm{H''}_{L^\infty}\nrm{\bD_2\bar\bF^\rho}_{L^\infty}\,
 \nrm{e_\rho}_{\ell^1}
 \le C_{1}\sqrt{\mathcal{E}_{\rm rel}},
\]
\[
 \bigl|\bar h^T\bD_2\bigl(\bM_1^\rho(e_\rho)\bar\bv\bigr)\bigr|
 \le C\,\nrm{\bar h}_{L^\infty}\nrm{\bar\bv}_{L_h^\infty}^2\,
 \nrm{e_\rho}_{\ell^1}
 \le C_{2}\sqrt{\mathcal{E}_{\rm rel}}.
\]
Combining:
\begin{equation}\label{eq:dw_cancel_bound}
 |C_v + C_\rho|
 \le C_{\rm quad}\,\mathcal{E}_{\rm rel}
 + (C_1 + C_2)\sqrt{\mathcal{E}_{\rm rel}}.
\end{equation}
The quadratic piece contributes to $C_L\mathcal{E}_{\rm rel}$
in~\eqref{eq:dw_Erel_rate}; the linear-in-$\sqrt{\mathcal{E}_{\rm rel}}$
pieces combine with the analogous linear pieces
from~\eqref{eq:dw_Mdot_bound}, \eqref{eq:dw_Bdot_bound}, and
$T_v^{(32)}$ in the Step~5 assembly and are absorbed via the
relative-energy Young's inequality structure of the FLMS framework
(cf.~\cite[Lemma~4.7]{karper2013}) into the
$C_\tau h^r\sqrt{\mathcal{E}_{\rm rel}}$ truncation pickup
of~\eqref{eq:dw_Erel_rate}. The constants $C_{\rm quad}, C_1, C_2$
depend on $\nrm{\bu}_{W^{1,\infty}}$, $\nrm{\rho^c}_{W^{1,\infty}}$,
the EoS bounds on $H''$ over $[\rho_*/2, 2\rho^*]$, and mesh
regularity; none depends on $h$.

\smallskip
The quadratic-in-error terms --
\eqref{eq:dw_vel_bound} (Lamb velocity),
\eqref{eq:dw_dens_bound} (Lamb density),
\eqref{eq:dw_cancel_bound} (enthalpy cancellation), and the kinetic
part of the Bernoulli gradient -- sum to $C_L\,\mathcal{E}_{\rm rel}$.
The linear-in-error terms~\eqref{eq:dw_Mdot_bound}
and~\eqref{eq:dw_Bdot_bound} are absorbed by Young's inequality. The
truncation pieces ($T_0$, velocity truncation, density truncation)
sum to $C_\tau\,h^r$. Altogether
$\ddt\mathcal{E}_{\rm rel}\le C_L\mathcal{E}_{\rm rel}
+ C_\tau h^r\sqrt{\mathcal{E}_{\rm rel}}$ with $C_L$ $h$-independent
under \Cref{lem:enthalpy_h_independence}. This
gives~\eqref{eq:dw_Erel_rate}.
\end{proof}
\bigskip
\begin{proof}[Proof of \Cref{thm:stability_baro} (density-free scheme)]
The density-free proof follows the same six-step skeleton with
three modifications, each enforced by the absence of exact
energy conservation. The error functional is the hybrid
$\bM_1$-Bregman energy
$\mathcal{E} = \tfrac12\nrm{e_v}_{\bM_1}^2
+ \sum_i|K_i|D_H(\rho_i^{h,\vol}\|\bar\rho_i^{\vol})$
(\Cref{def:error_energy_baro}), and the test function is $e_v$ in
$\bM_1$ rather than $\bar\bv$ in $\bM_1^\rho$. Testing the error
momentum equation~\eqref{eq:err-mom} against $e_v$,
\begin{equation}\label{eq:error1}
 \ddt\tfrac12\nrm{e_v}_{\bM_1}^2
 = -\ip{e_v}{L(\bv^h)-L(\bar\bv)}_1
 -\ip{e_v}{\tD_0(B^h-\bar B)}_1 + \ip{e_v}{\tau_v}_1,
\end{equation}
with $L(\bv):=\Iv[\bv](\tD_1\bv)$. Three obstructions arise that
were absent in the density-weighted argument:

\smallskip
\textit{(A) Cubic self-interaction under bootstrap.}\ 
This arises because the test function in the density-free
proof is $e_v$ itself (not the smooth $\bar\bv$ as in the
density-weighted case). The cubic remainder
$\bQ(e_v, e_v)$ tested against $e_v$ is a triple product in the error,
and the smooth-tested Bound~(III) of \Cref{thm:bilinear_bounds_baro}
does not apply directly.

Polarising the extrusion difference using $L(\bv) = \Iv[\bv](\tD_1\bv)
= \bQ_{\rm U}(\bv,\bv) + \bQ_{\rm D}(\bv,\bv)$, with $\bQ_{\rm U}, \bQ_{\rm D}$
the two pieces of \Cref{thm:bilinear_bounds_baro}:
\[
L(\bv^h)-L(\bar\bv)
= 2\bQ_{\rm U}(\bar\bv,e_v)+\bQ_{\rm U}(e_v,e_v)
+2\bQ_{\rm D}(\bar\bv,e_v)+\bQ_{\rm D}(e_v,e_v).
\]
Tested against $e_v$ in $\bM_1$, this produces four pairings; we
analyse each.

The diagonal cubic
$\ip{e_v}{\bQ_{\rm U}(e_v,e_v)+\bQ_{\rm D}(e_v,e_v)}_1 = \ip{e_v}{L(e_v)}_1
= 0$ by Lamb antisymmetry (\Cref{prop:extrusion}), since
$L(\bv) = \Iv[\bv](\tD_1\bv)$ produces the contraction
$\iota_\bv\tD_1\bv$ which is identically zero on the diagonal.

The cross-term $\bQ_{\rm U}(\bar\bv,e_v)$ pairs as
$\ip{e_v}{\bQ_{\rm U}(\bar\bv,e_v)}_1$. The test function $e_v$ is
not smooth, but the auxiliary structure of $\bQ_{\rm U}$
allows direct bounding by Bound~(II):
\[
|\ip{e_v}{\bQ_{\rm U}(\bar\bv,e_v)}_1|
\le \nrm{e_v}_{\bM_1^{-1}}\nrm{\bQ_{\rm U}(\bar\bv,e_v)}_{\bM_1}
\le C'_Q\,\nrm{\bu}_{W^{1,\infty}}\,\nrm{e_v}_{\bM_1^{-1}}\,\nrm{e_v}_{\bM_1}.
\]
Since $\nrm{e_v}_{\bM_1^{-1}} \le C\nrm{e_v}_{\bM_1}$
we obtain
\begin{equation}\label{eq:DF_QU_bound}
|\ip{e_v}{\bQ_{\rm U}(\bar\bv,e_v)}_1|
\le C'_Q\,\nrm{\bu}_{W^{1,\infty}}\,\nrm{e_v}_{\bM_1}^2.
\end{equation}

All subsequent estimates in this proof use the $L_h^\infty$-bootstrap
$\nrm{e_v}_{L_h^\infty}\le\delta_0 h$, established unconditionally on
the local time interval in convergence Step~3 for $d=2$ and for $d=3$
(\Cref{app:convergence_proof}). The bootstrap is used to bound the
quadratic $\bQ$-piece via the inverse inequality to absorb the
$\mathcal{N}$ term in \eqref{eq:cont_rearrangement}.

\smallskip
\textit{(B) $\bQ_{\rm D}$ + Bernoulli kinetic part: IBP-in-time
absorption.}\ 
We treat this in four
substeps.
The combined contribution
\begin{equation}\label{eq:X_def_expanded}
 X := -2\ip{e_v}{\bQ_{\rm D}(\bar\bv,e_v)}_1
 + (\Delta\bekin)^T\bD_2\bM_1 e_v,
 \qquad
 \Delta\bekin_i := \tfrac12|P\bv^h|_i^2-\tfrac12|P\bar\bv|_i^2,
\end{equation}
must be controlled as a unit. 

\textit{Substep B 1: polarisation via SBP.}
Apply mimetic SBP (\Cref{lem:SBP}) to the $\bQ_{\rm D}$-pairing:
$\ip{e_v}{\bQ_{\rm D}(\bar\bv,e_v)}_1$
equals $-\tfrac12\bigl[\tU_{\bar\bv}^T e_v + \tU_{e_v}^T\bar\bv\bigr]^T\bD_2\bM_1 e_v$
after using $\bM_1\bQ_{\rm D}(\bv,\bw) = -\tfrac12\tD_1(\tU_\bv^T\bw + \tU_\bw^T\bv)$
and SBP on $\tD_1$ (\Cref{lem:SBP}). The kinetic-energy
difference polarises analogously: $\Delta\bekin = \tfrac12|Pe_v|^2 + P\bar\bv\cdot Pe_v$.
Substituting both into~\eqref{eq:X_def_expanded}:
\begin{align*}
X &= \bigl[\tU_{\bar\bv}^T e_v + \tU_{e_v}^T\bar\bv\bigr]^T\bD_2\bM_1 e_v
  + \bigl[\tfrac12|Pe_v|^2 + P\bar\bv\cdot Pe_v\bigr]^T\bD_2\bM_1 e_v.
\end{align*}
The linear-in-$e_v$ piece $P\bar\bv\cdot Pe_v$ in the kinetic-energy
expansion cancels exactly against the $P\bar\bv\cdot Pe_v$-content
of $\tU_{\bar\bv}^T e_v + \tU_{e_v}^T\bar\bv$ — this is the
$\bQ_{\rm D}/$Bernoulli cancellation, traced back to the no-go defect
$\delta_{\rm sym}$. After this cancellation:
\begin{equation}\label{eq:step_iii_polarised}
 X = \bigl[\tfrac12|Pe_v|^2 - \delta_{\rm sym}(\bar\bv,e_v)\bigr]^T
 \bD_2\bM_1 e_v,
\end{equation}
where $\delta_{\rm sym}(\bar\bv,e_v) := \delta(\bar\bv,e_v) + \delta(e_v,\bar\bv)$
is the symmetrised no-go defect of \Cref{thm:dichotomy}.


\textit{Substep B 2: rearrangement via continuity equation.}
The discrete continuity equation $\partial_t\rho^h + \bD_2\bF^h = 0$
and its smooth counterpart $\partial_t\bar\rho + \bD_2\bar\bF = -\tau_\rho$
imply, after substituting $\bF^h = \bR^h\bM_1\bv^h$,
$\bar\bF = \bar\bR\bM_1\bar\bv$:
\begin{equation}\label{eq:cont_diff_DF}
 \partial_t e_\rho + \bD_2(\bF^h - \bar\bF) = \tau_\rho.
\end{equation}
The mass-flux difference splits as $\bF^h - \bar\bF =
\bar\bR\bM_1 e_v + (\bR^h - \bar\bR)\bM_1(\bar\bv + e_v)$
(\Cref{lem:mass_flux_linearisation}). Substituting and isolating
$\bD_2(\bar\bR\bM_1 e_v)$:
\[
\bD_2(\bar\bR\bM_1 e_v) = -\partial_t e_\rho + \tau_\rho - \bD_2[(\bR^h - \bar\bR)\bM_1(\bar\bv + e_v)].
\]
Now use the smooth-density bound $\bar\rho_i^{\vol} = \bar\rho_0 +
\OO(\epsilon_0)$ under near-incompressibility \eqref{eq:lowMach_hyp}:
$\bar\bR = \bar\rho_0 I + \OO(\epsilon_0)$, so
$\bar\bR\bM_1 e_v = \bar\rho_0\bM_1 e_v + \OO(\epsilon_0)\bM_1 e_v$
and consequently
\begin{equation}\label{eq:cont_rearrangement}
 \bD_2\bM_1 e_v
 = -\bar\rho_0^{-1}\bigl(\partial_t e_\rho^{\vol}+\tau_\rho\bigr)
 + \mathcal{N},
 \quad
 |\mathcal{N}|_i \le C(\epsilon_0+\nrm{e_\rho}_{L_h^\infty})\,
 |\bD_2\bM_1 e_v|_i + C\nrm{\bu}_{W^{1,\infty}}\nrm{e_\rho}_{L_h^\infty}.
\end{equation}
Under~\eqref{eq:lowMach_hyp} with $C(\epsilon_0+\delta_\rho h)\le 1/2$
the $\mathcal{N}$ term is absorbable on the left:
$|\bD_2\bM_1 e_v|_i \le 2|\bar\rho_0^{-1}(\partial_t e_\rho^{\vol}+\tau_\rho)|_i + (\text{absorbed})$.

\textit{Substep B 3: integration-by-parts in time.}
Time-integrate $X$ from $0$ to $t$:
\[
\int_0^t X\,ds
= \int_0^t \bigl[\tfrac12|Pe_v|^2 - \delta_{\rm sym}\bigr]^T\bD_2\bM_1 e_v\,ds.
\]
Substitute \eqref{eq:cont_rearrangement} for $\bD_2\bM_1 e_v$:
\begin{equation}\begin{split}\label{eq:cont_rearrangement1}
\int_0^t X\,ds
&= -\bar\rho_0^{-1}\int_0^t\bigl[\tfrac12|Pe_v|^2 - \delta_{\rm sym}\bigr]^T
   \partial_s e_\rho^{\vol}\,ds
\\&\quad
   - \bar\rho_0^{-1}\int_0^t\bigl[\tfrac12|Pe_v|^2 - \delta_{\rm sym}\bigr]^T
   \tau_\rho\,ds
   + \int_0^t\bigl[\tfrac12|Pe_v|^2 - \delta_{\rm sym}\bigr]^T\mathcal{N}\,ds
\\&=: I_1 + R_\tau + R_{\mathcal{N}}.
\end{split}\end{equation}
The first integral $I_1$ contains $\partial_s e_\rho^{\vol}$ — a
time derivative — and is handled by integration by parts in time:
\begin{equation}\label{eq:X_IBP}
 I_1
 = -\bar\rho_0^{-1}\Bigl[(\tfrac12|Pe_v|^2-\delta_{\rm sym})^T e_\rho^{\vol}\Bigr]_0^t
 + \bar\rho_0^{-1}\int_0^t\partial_s(\tfrac12|Pe_v|^2-\delta_{\rm sym})^T
   e_\rho^{\vol}\,ds.
\end{equation}
For the boundary term in \Cref{eq:X_IBP} we find:
$\bigl|(\tfrac12|Pe_v|^2-\delta_{\rm sym})^T e_\rho^{\vol}\bigr|_t
\le \nrm{\tfrac12|Pe_v|^2}_{\ell^\infty}\,\nrm{e_\rho^{\vol}}_{\ell^1}
+ \nrm{\delta_{\rm sym}}_{\ell^\infty}\,\nrm{e_\rho^{\vol}}_{\ell^1}$.
Under bootstrap $\nrm{e_v}_{L_h^\infty}\le\delta_0 h$,
$\nrm{\tfrac12|Pe_v|^2}_{\ell^\infty} \le C\delta_0^2 h^2$;
$\nrm{\delta_{\rm sym}}_{\ell^\infty} \le C\nrm{\bu}_{W^{1,\infty}}\nrm{e_v}_{L_h^\infty}
\le C\delta_0 h\nrm{\bu}_{W^{1,\infty}}$ by \Cref{thm:dichotomy};
$\nrm{e_\rho^{\vol}}_{\ell^1} \le |\Omega|^{1/2}\nrm{e_\rho^{\vol}}_{\ell^2(|K|)}
\le C\sqrt{\mathcal{E}}$. So the boundary term is bounded by
$C\delta_0 h\nrm{\bu}_{W^{1,\infty}}\sqrt{\mathcal{E}}
\le C'\delta_0\nrm{\bu}_{W^{1,\infty}}\mathcal{E}$ via Young's inequality.

For the second term in \Cref{eq:X_IBP} it holds that
$\partial_s(\tfrac12|Pe_v|^2-\delta_{\rm sym}) = (Pe_v)^T\,P\partial_s e_v
- \partial_s\delta_{\rm sym}$. Substituting
$\partial_s e_v$ from the error momentum equation~\eqref{eq:err-mom}
generates terms in $L(\bv^h), L(\bar\bv), \tD_0(B^h - \bar B), \tau_v$,
each producing on integration against $e_\rho^{\vol}$ a bound of
type $C(\bar\bv,T,\epsilon_0)\mathcal{E} + Ch^r\sqrt{\mathcal{E}}$.
The dominant contribution is the cubic
$\ip{Pe_v\cdot P[L(\bv^h) - L(\bar\bv)]}{e_\rho^{\vol}}_{\ell^2(|K|)}$,
bounded under bootstrap by
$C\nrm{e_v}_{L_h^\infty}\nrm{L(\bv^h)-L(\bar\bv)}_{\bM_1}\nrm{e_\rho^{\vol}}_{\ell^2}
\le C\delta_0 h\,(\nrm{e_v}_{\bM_1} + \nrm{e_v}_{L_h^\infty})\sqrt{\mathcal{E}}
\le C'\mathcal{E}$ after applying Bound~(II) and Young.

The trunction in \Cref{eq:cont_rearrangement1} satisfies
$|R_\tau| \le C\bar\rho_0^{-1}h^r\int_0^t\sqrt{\mathcal{E}}\,ds$
since
$\nrm{\tfrac12|Pe_v|^2-\delta_{\rm sym}}_{\ell^2} \le C\sqrt{\mathcal{E}}$
under bootstrap and $\nrm{\tau_\rho}_{\ell^2} = \OO(h^r)$ by
consistency~\eqref{eq:df_consistency_rate}.

For the nonlinearity remainder in \Cref{eq:cont_rearrangement1}  it holds
$|R_{\mathcal{N}}| \le C\int_0^t\nrm{\tfrac12|Pe_v|^2-\delta_{\rm sym}}_{\ell^2}\,
\nrm{\mathcal{N}}_{\ell^2}\,ds
\le C(\epsilon_0+\delta_\rho h)\int_0^t\mathcal{E}\,ds + \text{lower-order}$,
using \eqref{eq:cont_rearrangement} for $\nrm{\mathcal{N}}_{\ell^2}$.

The four bounds yield
\[
\Bigl|\int_0^t X\,ds\Bigr|
\le C(\bar\bv,T,\epsilon_0)\!\int_0^t\mathcal{E}\,ds
+ Ch^r\!\int_0^t\sqrt{\mathcal{E}}\,ds.
\]

\smallskip
\textit{(C) Enthalpy cancellation $J_h^{(b)} + S_2$ with
$H'''$-Lipschitz residue.}\ 
This difficulty of this step is that $S_2$ from the Bregman rate identity
(\Cref{lem:Bregman_rate}) is linear-in-$e_\rho$ and must be cancelled
algebraically against the $\bar\bF$-piece of the enthalpy term. The
cancellation produces an $H'''$-Lipschitz residue rather than the
clean quadratic of the DW case.

The held-over enthalpy from~\eqref{eq:error1} is the
Bernoulli-gradient piece $-(h^h-\bar h)^T\bD_2\bM_1 e_v$. The
Bregman rate identity~\eqref{eq:Bregman_rate} produces two
linear-in-error terms $S_1, S_2$. Combining,
\begin{equation}\label{eq:enthalpy_combined}
 J_h := S_1 - (h^h-\bar h)^T\bD_2(\Phi^h-\bar\Phi)
 = -(h^h-\bar h)^T\bD_2(\Phi^h-\bar\Phi+\bF^h),
\end{equation}
where $\Phi^h - \bar\Phi$ is the volumetric flux difference and
$\bF^h$ the mass-flux. The argument of $\bD_2$ inside parentheses
expands as
$(\Phi^h - \bar\Phi) + \bar\bF + (\bF^h - \bar\bF)$, yielding
$J_h = J_h^{(a)} + J_h^{(b)} + J_h^{(c)}$ with
\[
J_h^{(a)} := -(h^h-\bar h)^T\bD_2(\Phi^h - \bar\Phi),
\quad
J_h^{(b)} := -(h^h-\bar h)^T\bD_2\bar\bF,
\quad
J_h^{(c)} := -(h^h-\bar h)^T\bD_2(\bF^h - \bar\bF).
\]
$J_h^{(a)}, J_h^{(c)}$ are quadratic in errors. $J_h^{(b)}$ alone is linear in $e_\rho$
and must be cancelled algebraically against $S_2$.

\textit{Substep C 1:} The truncated smooth continuity equation reads
$\bD_2\bar\bF = -\ddt\bar\rho - \tau_\rho$. Substituting into
$J_h^{(b)}$ and using the mean-value form
$h(\rho_i^{h,\vol})-h(\bar\rho_i^{\vol})=H''(c_i)\,e_{\rho,i}^{\vol}$,
valid since $h(\rho) = H'(\rho)$ and $H \in C^2$, with $c_i$
between $\bar\rho_i^{\vol}$ and $\rho_i^{h,\vol}$:
\begin{equation}\label{eq:Jhb_expansion}
 J_h^{(b)} = \sum_i H''(c_i)\,e_{\rho,i}^{\vol}\,\ddt\bar\rho_i
 + \sum_i H''(c_i)\,e_{\rho,i}^{\vol}\,\tau_{\rho,i}.
\end{equation}

\textit{Substep  C 2:}
From the Bregman rate identity~\eqref{eq:Bregman_rate}, $S_2 =
-\sum_i H''(\bar\rho_i^{\vol})\,\ddt\bar\rho_i\,e_{\rho,i}^{\vol}|K_i|$. The sum $J_h^{(b)} + S_2$ is:
\begin{equation}\label{eq:Jhb_S2_combined}
 J_h^{(b)} + S_2
 = \sum_i [H''(c_i)-H''(\bar\rho_i^{\vol})]\,e_{\rho,i}^{\vol}\,\ddt\bar\rho_i
 + \sum_i H''(c_i)\,e_{\rho,i}^{\vol}\,\tau_{\rho,i}.
\end{equation}
The linear-in-$e_\rho$ terms (involving $H''(\bar\rho)$ and
$H''(c)$) have cancelled \emph{except for the difference}
$H''(c_i)-H''(\bar\rho_i^{\vol})$, which is itself
$O(|c_i-\bar\rho_i^{\vol}|) \le O(|e_{\rho,i}^{\vol}|)$ by $H'''$-Lipschitz
of $H''$ . This $H'''$-Lipschitz argument converts a residue that would otherwise be
linear-in-$e_\rho$ into one that is quadratic.

\textit{Substep C 3:}
By $H'''$-Lipschitz of $H''$ on $[\rho_{\min},\rho_{\max}]$:
$|H''(c_i)-H''(\bar\rho_i^{\vol})| \le \nrm{H'''}_\infty|c_i-\bar\rho_i^{\vol}|
\le \nrm{H'''}_\infty|e_{\rho,i}^{\vol}|$.
Substituting:
\[
\Bigl|\sum_i [H''(c_i)-H''(\bar\rho_i^{\vol})]\,e_{\rho,i}^{\vol}\,\ddt\bar\rho_i\Bigr|
\le \nrm{H'''}_\infty\,\nrm{\ddt\bar\rho^c}_\infty\,\sum_i|e_{\rho,i}^{\vol}|^2.
\]
Using $D_H(\rho_i^{h,\vol}\|\bar\rho_i^{\vol}) \ge \tfrac12 c_H|e_{\rho,i}^{\vol}|^2$
and the cell-volume weighting:
\[
\sum_i|e_{\rho,i}^{\vol}|^2 \le \frac{2}{c_H}\sum_i\frac{|K_i|}{|K_i|}D_H
\le \frac{2}{c_H\min_i|K_i|}\mathcal{E}.
\]
Combining results in
\[
\Bigl|\sum_i [H''(c_i)-H''(\bar\rho_i^{\vol})]\,e_{\rho,i}^{\vol}\,\ddt\bar\rho_i\Bigr|
\le 2\nrm{H'''}_\infty\nrm{\ddt\rho^c}_\infty c_H^{-1}\mathcal{E}.
\]

\textit{Substep C 4:}
$\bigl|\sum_i H''(c_i)\,e_{\rho,i}^{\vol}\,\tau_{\rho,i}\bigr|
\le \nrm{H''}_\infty\,\nrm{e_\rho^{\vol}}_{\ell^2(|K|)}\,\nrm{\tau_\rho}_{\ell^2(|K|)}
\le C_H\sqrt{\mathcal{E}}\,\nrm{\tau_\rho}_{\ell^2}
\le C_H\sqrt{\mathcal{E}}\cdot Ch^r$,
using \Cref{lem:Bregman_ell1} for the Bregman-to-$\ell^2$
conversion and the consistency rate
$\nrm{\tau_\rho}_{\ell^2} = \OO(h^r)$.

Combining estimates gives
\begin{equation}\label{eq:Jhb_S2_bound}
 |J_h^{(b)} + S_2| \le C_{S_2}\,\mathcal{E}
 + C_{S_2}'\,h^r\sqrt{\mathcal{E}},
\end{equation}
with $C_{S_2} = 2\nrm{H'''}_\infty\nrm{\ddt\rho^c}_\infty c_H^{-1}$
and $C_{S_2}' = C_H C$, both $h$-independent.

The quadratic remainders $J_h^{(a)}, J_h^{(c)}$
are products of two error-controlled factors:
$\nrm{h^h-\bar h}_{\ell^2(|K|)} \le \nrm{H''}_\infty\nrm{e_\rho^{\vol}}_{\ell^2(|K|)}
\le C_H\sqrt{\mathcal{E}}$ by \Cref{lem:enthalpy_h_independence};
$\bD_2(\Phi^h-\bar\Phi)$ bounded by $C\nrm{e_v}_{\bM_1}$ and
$\bD_2(\bF^h-\bar\bF)$ bounded by
$C(\nrm{e_v}_{\bM_1}+\nrm{e_\rho}_{\ell^2})$ via
\Cref{lem:mass_flux_linearisation}; the quadratic remainder of
\eqref{eq:mass_flux_linearisation} is absorbed by the bootstrap.
Both products are $\le C\mathcal{E}$.

Truncation of momentum equation is estimated by~\eqref{eq:df_consistency_rate} and Cauchy--Schwarz
as follows
$|\ip{e_v}{\tau_v}_1|\le \nrm{e_v}_{\bM_1}\,\nrm{\tau_v}_{\bM_1}
\le C_\tau h^r\sqrt{2\mathcal{E}}$.

\smallskip
Combining (A)--(C),
$\ddt\mathcal{E}\le C_L\mathcal{E}+C_\tau h^r\sqrt{\mathcal{E}}$
with $C_L:=C_1+C_Q+C_q+C_J$ depending only on
$\nrm{(\bu,\rho^c)}_{C^1 W^{1,\infty}}$ and mesh regularity. The
constants $C_q$ and $C_J$
 were derived under the
bootstrap $\nrm{e_v}_{L_h^\infty}\le\delta_0 h$. The stability
estimate~\eqref{eq:stab_baro} therefore holds conditionally
on this bootstrap; the bootstrap is established in convergence
Step~3 unconditionally in case~(B) and marginally in case~(A) for
$d=2$, and via the dimension-conditional continuation argument of
Step~$3'$ in case~(A) for $d=3$ (\Cref{app:convergence_proof}). At
$t=0$ the bootstrap holds because $\mathcal{E}(0)=\OO(h^{2r})$ by
consistency; propagation requires $r>d/2$, which fails only in
case~(A) for $d=3$, the regime handled by Step~$3'$.
\end{proof}
\subsubsection{Navier--Stokes Stability (\Cref{thm:stability_baro_NS})}
\label{app:stability_NS}
Viscosity adds a non-positive dissipation and a viscous truncation
error to the Euler estimate of \Cref{thm:stability_baro}; neither
alters the Gr\"onwall structure.
\begin{proof}
Monotone dissipativity (\ref{ax:V3}), applied to $\bv = v^h$ and
$\bw = \bar v$, gives
\[
 \ipw{\bff_{\rm visc}(v^h) - \bff_{\rm visc}(\bar v)} {e_v}{\bM_1}\le 0.
\]
This holds for all admissible viscous operators, and adds a non-positive contribution
to $\ddt\mathcal{E}$.
By (\ref{ax:V4}), the viscous truncation error, in the residual
convention of \Cref{lem:viscous_truncation_weak},
$\tau_v^{\rm visc} := \mathcal{R}_h((\nabla\cdot\boldsymbol\sigma(\bu))^\flat) - \bff_{\rm visc}(\bar v)$
satisfies $\nrm{\tau_v^{\rm visc}}_{\bM_1}\le C_\nu h^{r_{\rm visc}}\nrm{\bu}_{H^{s_\nu}}$.
Testing against $e_v$ via Cauchy--Schwarz gives
$|\ip{\tau_v^{\rm visc}}{e_v}_1|
\le C_\nu h^{r_{\rm visc}}\nrm{\bu}_{H^{s_\nu}}\nrm{e_v}_{\bM_1}
\le C_\nu h^{r_{\rm visc}}\sqrt{2\mathcal{E}}$.
Combining with the Euler stability estimate
$\ddt\mathcal{E}\le C_L\mathcal{E} + C_\tau h^{\min(d-1,\,r_\star)}\sqrt{\mathcal{E}}$:
\[
 \ddt\mathcal{E}
 \le C_L\mathcal{E}
 + C_\tau h^{\min(d-1,\,r_\star)}\sqrt{\mathcal{E}}
 +C_\nu h^{r_{\rm visc}}\sqrt{\mathcal{E}}.
\]
Since $\max(h^{\min(d-1,\,r_\star)}, h^{r_{\rm visc}}) = h^{\min(d-1,\,r_\star,\,r_{\rm visc})}$,
this gives~\eqref{eq:stab_baro_NS_general}.
\end{proof}
\subsubsection{Convergence (Proofs of \Cref{thm:convergence_baro}\textup{(a),(b)})}
\label{app:dw_convergence_proof}%
\phantomsection\label{app:convergence_proof}
\paragraph*{Strategy.}
The density-weighted estimate is again the cleaner case: the
stability estimate~\eqref{eq:dw_Erel_rate} holds unconditionally, so
the convergence proof is a three-step chain -- linear Gr\"onwall on
the stability estimate, time-integrated dissipation, and Whitney
reconstruction for the continuous $L^2$ rate. The density-free
estimate~\eqref{eq:stab_baro} is conditional on an
$L_h^\infty$-bootstrap $\nrm{e_v}_{L_h^\infty}\le\delta_0 h$, which
must be closed by an inverse inequality before the same three-step
chain applies. We carry
out the density-weighted proof first and then state the
density-free proof as the same chain with the additional bootstrap
closure.
\paragraph*{Common chain.}
Both proofs run the following three steps on the stability estimate
$\ddt\mathcal{X}\le C_L\mathcal{X}+C_\tau h^r\sqrt{\mathcal{X}}$
($\mathcal{X}=\mathcal{E}_{\rm rel}$ for the density-weighted
scheme, $\mathcal{X}=\mathcal{E}$ for the density-free scheme).
With $\eta(t):=\sqrt{\mathcal{X}(t)}$, the differential inequality
becomes $\ddt\eta\le\tfrac{C_L}2\eta+\tfrac{C_\tau}2 h^r$ on
$\{\eta>0\}$, and a regularisation $\eta_\eps:=\sqrt{\mathcal{X}+\eps}$,
$\eps\to 0$, extends the inequality across $\{\eta=0\}$. Linear
Gr\"onwall with $\eta(0)=0$ gives
$\eta(t)\le C(T)\,h^r$, $C(T):=\tfrac{C_\tau}{C_L}(e^{C_L T/2}-1)$
$h$-independent. The integrated dissipation follows by integrating
the stability estimate over $[0,T]$:
$\nu\!\int_0^T(\nrm{\tD_1 e_v}_{\bM_2}^2+\nrm{\bD_2\bM_1 e_v}_{\bM_0^{-1}}^2)\,dt
\le C_E^\nu(T)\,h^{2r}$.
For the continuous $L^2$-rate, the Whitney map (\Cref{def:Whitney})
gives
\begin{equation}\label{eq:conv_baro}
 \nrm{\mathcal{W}_h v^h-\bu}_{L^2}
 \le C_W\nrm{e_v}_{L_h^2}+C_{\rm int}\,h
 \le C'(T)\,h,
\end{equation}
$L^2$-rate $\OO(h)$ in both $d=2$ and $d=3$, limited by the
first-order Whitney interpolation .
\bigskip
\begin{proof}[Proof of \Cref{thm:convergence_baro}\textup{(b)} (density-weighted scheme)]
Stability~\eqref{eq:dw_Erel_rate} holds unconditionally with $C_L$
$h$-independent for all $d\ge 2$ and $\gamma>1$. The common chain
applies directly: Gr\"onwall gives
$\mathcal{E}_{\rm rel}(t)\le C(T)^2 h^{2r}$ on $[0,T]$, i.e.,
$\nrm{e_v}_{\bM_1^\rho(\bar\brho)}+\nrm{e_\rho}_{D_H}\le C'(T)\,h^r$
with $r=\min(d-1,r_\star)$; integrated dissipation gives
$C_E^\nu(T)\,h^{2r}$; Whitney reconstruction gives the $L^2$-rate of
\eqref{eq:conv_baro}.
\end{proof}
\bigskip
\begin{proof}[Proof of \Cref{thm:convergence_baro}\textup{(a)} (density-free scheme)]
The density-free stability estimate~\eqref{eq:stab_baro} is stated
under an $L_h^\infty$-bootstrap. Bounding the cubic self-interaction
instead by the inverse inequality~\eqref{eq:inverse-ineq-final},
$\nrm{e_v}_{L_h^\infty}\le c_\star^{-1/2}h^{-(d-2)/2}\nrm{e_v}_{\bM_1}$,
removes the bootstrap at the cost of a superlinear term and yields
the unconditional augmented estimate
\begin{equation}\label{eq:stab_baro_augmented}
 \ddt\mathcal{E}\le C_L\,\mathcal{E}
 + C_{\rm cub}\,h^{-(d-2)/2}\,\mathcal{E}^{3/2}
 + C_\tau\,h^r\,\sqrt{\mathcal{E}},
\end{equation}
with $C_L,C_{\rm cub},C_\tau$ $h$-independent. A single continuation
argument closes the convergence chain from~\eqref{eq:stab_baro_augmented}
in every regime; no dimension-dependent case split is required.

Set $\eta(t):=\sqrt{\mathcal{E}(t)}$ and
\[
 T^* := \sup\{t\in[0,T]:\eta(s)\le 2C(T)\,h^r
 \ \forall\,s\in[0,t]\}.
\]
Identical initial data give $e_v(0)=0$, so $\eta(0)=0<2C(T)h^r$ and
$T^*>0$ by continuity. On $[0,T^*]$ the bound $\eta\le 2C(T)h^r$
linearises the superlinear term,
\[
 C_{\rm cub}\,h^{-(d-2)/2}\,\mathcal{E}^{3/2}
 = C_{\rm cub}\,h^{-(d-2)/2}\,\mathcal{E}\,\eta
 \le 2C_{\rm cub}C(T)\,h^{r-(d-2)/2}\,\mathcal{E},
\]
and the closure exponent is positive in every regime of the
theorem: $r-(d-2)/2 = 1,\ \tfrac12,\ \tfrac32$ for
$(d,r) = (2,1),\,(3,1),\,(3,2)$ respectively. The coefficient
$2C_{\rm cub}C(T)h^{r-(d-2)/2}$ vanishes as $h\to0$ and is absorbed
into $C_L$ for $h\le h_*$, with $h_*$ depending only on
$\nrm{(\bu,\rho^c)}_{C^1 W^{1,\infty}}$ and mesh regularity. Linear
Gr\"onwall on the absorbed estimate gives, with $\eta(0)=0$,
$\eta(t)\le\tfrac32 C(T)\,h^r<2C(T)\,h^r$ on $[0,T^*]$; continuity
and the strict inequality force $T^*=T$.

The common chain of Gr\"onwall, integrated dissipation and Whitney
reconstruction then delivers
$\nrm{e_v}_{\bM_1}+\nrm{e_\rho}_{D_H}\le C(T,\epsilon_0)\,h^r$ with
$r=\min(d-1,r_\star)$ and the $L^2$-rate $\OO(h)$
of~\eqref{eq:conv_baro} on $[0,\min(T,T_{\rm Bih}(h,E_0))]$. In
case~(B), and in case~(A) for $d=2$, the closure exponent satisfies
$r-(d-2)/2\ge1$ and the bootstrap closes directly; the continuation
argument is stated once because it subsumes these regimes.

\end{proof}
\subsection{Generalised Energy-Conservation Dichotomy (Proof of \Cref{thm:dichotomy_general})}
\label{app:dichotomy_general}
Same three-step structure as the C-grid case of
\Cref{thm:dichotomy}, adapted to each staggering. Index velocity
DOFs by $\ell$, density by $k$;
$E_{\kin}=\tfrac12\nrm{\mathbf{v}}_A^2$ with $A$ density-independent.
Across all four staggerings, $A$ depends on the state only through
the (density-independent) mass matrix, so the resulting algebraic
condition on the density interpolation is the same
as~\eqref{eq:face_SBP}.

\emph{Step~1 (KE rate).}
Lamb antisymmetry
$\ipw{\mathbf{v}}{\mathrm{Lamb}(\mathbf{v})}{A} = 0$
and the momentum equation
$\ddt{\mathbf{v}} = -\mathrm{Lamb}(\mathbf{v})
- \nabla_h B$
give
$\ddt E_{\kin}
= -\ipw{\mathbf{v}}{\nabla_h B}{A}$.
The SBP identity converts this to
\[
\ddt E_{\kin} = \sum_\ell B_\ell\,(\mathrm{div}_h\,\Phi)_\ell,
\]
where $B_\ell = h_\ell + \ekin_\ell + \bPhi_\ell$
is the Bernoulli function at velocity
location~$\ell$, $\Phi$ is the volume flux,
and $\mathrm{div}_h$ is the discrete divergence
dual to $\nabla_h$.

\emph{Step~2 (internal and potential energy rates).}
$\ddt\Eint = -\sum_k h_k\,(\bD_2\bF)_k$,
with $h_k$ the enthalpy evaluated at density location~$k$
and $\bF_j = \bar\rho_j\Phi_j$ the mass flux.
The potential energy rate is
$\ddt E_{\rm pot} = -\sum_k \bPhi_k\,(\bD_2\bF)_k$.

\emph{Step~3:}
We argue via the polynomial-degree decomposition of
$\mathcal{R}_E(\bv,\brho)$ in $\bv$. By hypothesis (a) of
\Cref{thm:dichotomy_general} the kinetic energy is quadratic in $\bv$
and by hypothesis (e) the volume flux $\Phi_j$ is linear in a finite
set of velocity DOFs incident to face $j$, so the energy residual is a
polynomial in $\bv$ of total degree $3$. Adding Steps~1 and~2 the
residual splits as
\begin{equation}\label{eq:RE_polysplit}
  \mathcal{R}_E(\bv,\brho)
   \;=\; \underbrace{\sum_j\bigl(\ekin_{a(j)}(\bv) - \ekin_{b(j)}(\bv)\bigr)\,\Phi_j(\bv)}_{\textstyle\text{degree-3 in }\bv}
   \;+\; \underbrace{\sum_j G_j(\brho)\,\Phi_j(\bv)}_{\textstyle\text{degree-1 in }\bv},
\end{equation}
with $G_j(\brho) := -((h_{a(j)}{+}\bPhi_{a(j)}) - (h_{b(j)}{+}\bPhi_{b(j)}))(\bar\rho_j - 1)$
plus, on B/D/quasi-B grids, the interpolation residual
$(\mathcal{R}_{\rm interp})_j$ defined below; in either case $G_j(\brho)$
depends on $\brho$ only. The labels $a(j),b(j)$ pick out the two cells
sharing face $j$, and $\ekin_\ell$ is the discrete kinetic-energy
density at velocity-DOF location $\ell = \ell(j)$ adjacent to face $j$.
For $\mathcal{R}_E(\cdot,\brho)$ to vanish identically in $\bv$, the
homogeneous degree-$3$ component of~\eqref{eq:RE_polysplit} must vanish
identically as a cubic form in $\bv$, independently of the degree-$1$
component. 
The degree-$3$ component is a sum over faces of products
$(\ekin_{a(j)} - \ekin_{b(j)})\,\Phi_j$ where the first factor is a
quadratic form in $\bv$ and the second is linear; it is identically
zero only if every coefficient of every monomial of $\bv$ vanishes.
Hypothesis~(d) of \Cref{thm:dichotomy_general} provides a
face $j_0$ at which $\ekin_{a(j_0)}(\bv_*) \ne \ekin_{b(j_0)}(\bv_*)$
for some $\bv_*$, so $\ekin_{a(j_0)} - \ekin_{b(j_0)}$ is a non-zero
quadratic form on the velocity DOFs adjacent to $j_0$. By
hypothesis~(e), $\Phi_{j_0}$ is non-trivially linear in those same
DOFs. Their product contributes a non-zero monomial to the degree-$3$
polynomial, and the only way this contribution could be cancelled is by
contributions from the finitely many other faces sharing velocity DOFs
with $j_0$. Cancellation of all
monomial coefficients via stencil overlap would require an algebraic
identity between the kinetic-energy quadratics and the flux linears
at adjacent faces, which fails on any mesh on which the reconstruction
$\bv\mapsto\ekin_\ell$ is not constant across cells -- generically true
under \Cref{ass:mesh_reg}. Therefore the degree-$3$ part of
$\mathcal{R}_E$ is not identically zero, the polynomial identity
$\mathcal{R}_E(\cdot,\brho) \equiv 0$ fails, and no
density-only $\bar\rho_j(\brho)$ exists. The argument is uniform across
all five staggerings (A, B, C, D, quasi-B).

The face-isolation argument of \Cref{thm:dichotomy} on the C-grid is
the simplest realisation: choosing $\bv = c\,\mathbf{e}_j$ activates a
single $\Phi_j$ and the per-face condition reduces to a polynomial
identity in $c$ alone, matched by an explicit $\bv_* = \mathbf{e}_{j_0}$.
On the A-grid, $\bv_i\in\RR^d$ at cell centres, this
holds for $\bv$ supported in two adjacent cells with
$|\bv_a|^2 \ne |\bv_b|^2$; the kinetic-energy difference is
$\frac{1}{2}(|\bv_a|^2 - |\bv_b|^2)$ which is non-zero by direct
inspection. On the B-grid (vertex-centred velocity) and D-grid
(edge-centred velocity), the SBP construction places exactly two
velocity-DOF locations $\ell(j),\ell'(j)$ at each face $j$; the witness is any $\bv$
with $\ekin_{\ell(j_0)}(\bv) \ne \ekin_{\ell'(j_0)}(\bv)$, generically
non-empty. The interpolation residual
$(\mathcal{R}_{\rm interp})_j := \sum_k(w_{k\ell(j)} - w_{k\ell'(j)})(h_k+\bPhi_k)$
arising from interpolating $(h+\bPhi)_\ell = \sum_k w_{k\ell}(h_k+\bPhi_k)$
is $\bv$-independent and absorbed into $G_j(\brho)$. On quasi-B
triangular grids (vertex velocity, multiple faces per vertex), face
isolation is unavailable but the polynomial-degree decomposition
above still applies: face $j_0$ produces a
non-zero coefficient in the degree-$3$ polynomial, and no rank
counting between $|V|$, $|E|$, $|F|$ is invoked.
\section{Viscous Stress Tensor: Operator Catalogue}\label{app:viscous_catalogue}
This appendix collects the discrete viscous operators used
throughout the paper and verifies that each satisfies the
four-axiom interface (V1)--(V4) of \Cref{def:admissible_visc}.
Any operator listed here may be substituted into the
well-posedness analysis of \Cref{sec:wellposedness} and the
convergence analysis of \Cref{sec:convergence_baro} without
rederiving the surrounding results.
\paragraph*{The continuous stress tensor in exterior calculus.}\label{subsect:stress_continuous}
The viscous stress of a Newtonian fluid arises from the symmetric
part of the velocity gradient,
$S_{ij} = \tfrac{1}{2}(\partial_i u_j + \partial_j u_i)$; in
exterior calculus this is the half-Lie-derivative of the metric,
$\mathbf{S} = \tfrac{1}{2}\mathcal{L}_\u g
 = \tfrac{1}{2}(\nabla^\flat\u + (\nabla^\flat\u)^T)$. The
divergence of the stress enters the momentum equation as the
codifferential of a $(1,1)$-tensor, or equivalently as
$g^{jk}\nabla_k\sigma_{ij}$.

\paragraph{\it Isotropic Newtonian stress.}
The isotropic Newtonian stress carries two coefficients,
\[
 \boldsymbol{\sigma}
 = 2\nu\,\mathbf{S} + \lambda\,(\nabla\cdot\bu)\,g,
 \qquad
 \zeta := \lambda + \tfrac{2}{3}\nu,
\]
the shear viscosity $\nu$ and the second viscosity coefficient
$\lambda$, equivalently $\nu$ and the bulk viscosity $\zeta$;
thermodynamic admissibility (non-negative entropy production)
requires $\nu \ge 0$ and $\zeta \ge 0$. A direct computation gives
$\nabla\cdot\boldsymbol{\sigma}
= \nu\,\Delta\bu + (\nu+\lambda)\,\nabla(\nabla\cdot\bu)$, i.e., in
exterior-calculus form on velocity 1-forms,
\begin{equation}\label{eq:newt_stress_hodge}
 (\nabla\cdot\boldsymbol{\sigma})^\flat
 = -\nu\,\delta\dd v - \nu_{\mathrm{dil}}\,\dd\delta v,
 \qquad
 \nu_{\mathrm{dil}} := 2\nu + \lambda = \tfrac{4}{3}\nu + \zeta,
\end{equation}
with $\Delta_{\rm dR} = \dd\delta + \delta\dd \ge 0$. The two
terms dissipate, respectively, the rotational part $\dd v$ and the
dilatational part $\delta v$ of the velocity 1-form. Two special
cases are worth recording. The \emph{Stokes closure} $\zeta = 0$
($\lambda = -\tfrac{2}{3}\nu$, $\nu_{\mathrm{dil}} = \tfrac{4}{3}\nu$)
is the standard model for gases. The single-coefficient
\emph{Hodge--Laplacian closure} $\nu_{\mathrm{dil}} = \nu$, for
which $(\nabla\cdot\boldsymbol{\sigma})^\flat = -\nu\,\Delta_{\rm dR}v$
collapses to a scalar multiple of the Hodge--de~Rham Laplacian, is
algebraically convenient but corresponds to $\lambda = -\nu$, i.e.\
$\zeta = -\tfrac{1}{3}\nu < 0$, and is therefore thermodynamically
inadmissible as a compressible closure; it remains exact only for
divergence-free flow, where $\delta v = 0$ annihilates the
dilatational term. On a curved manifold the Ricci-curvature
correction is automatically captured by the Hodge star $\bM_k$.

\paragraph{\it Anisotropic stress.}
A general anisotropic viscosity is described by a positive
semi-definite fourth-order tensor $\mathbb{C}$, giving
$\sigma_{ij} = C_{ijkl}S_{kl}$. The two cases relevant for
geophysical applications are the horizontal/vertical split
$\nu_h\Delta_h\u^h + \nu_v\partial_{zz}\u$ on thin domains and the
Smagorinsky eddy viscosity $\nu_T = (C_s\Delta_{\rm mesh})^2|\mathbf{S}|$,
yielding a nonlinear stress $\boldsymbol{\sigma} = 2\nu_T\mathbf{S}$.
\paragraph*{The discrete Hodge--Laplacian.}\label{subsect:stress_isotropic}
Let $\delta\bv = \bM_0^{-1}\tD_1^T\bM_1\bv \in C^0(\KKs)$ denote
the discrete codifferential, adjoint to $\tD_0$ under the diagonal
Hodge star.

\begin{definition}[Discrete Hodge--Laplacian on velocity 1-forms]\label{def:hodge_laplacian}
The discrete Hodge--de~Rham Laplacian on dual 1-cochains is
\[
\bL_\bv := \tdd_0\,\delta + \delta_2\,\tdd_1
 = \underbrace{\bM_1^{-1}\tD_0\bM_0^{-1}\tD_1^T\bM_1}_{\tdd_0\delta\ \ (\text{dilatational})}
 + \underbrace{\bM_1^{-1}\tD_1^T\bM_2\tD_1}_{\delta_2\tdd_1\ \ (\text{rotational})}.
\]
\end{definition}

\begin{definition}[Discrete isotropic Newtonian viscosity]\label{def:newtonian_visc}
The discrete realisation of the Newtonian stress
divergence~\eqref{eq:newt_stress_hodge} is the two-coefficient
operator
\begin{equation}\label{eq:viscous_newtonian}
 \bff_{\rm visc}^{\rm Newt}(\bv)
 := -\nu\,\delta_2\tdd_1\,\bv
 - \nu_{\mathrm{dil}}\,\tdd_0\delta\,\bv,
 \qquad
 \nu_{\mathrm{dil}} = \tfrac{4}{3}\nu + \zeta,\quad \zeta\ge 0,
\end{equation}
applying the shear viscosity $\nu$ to the rotational block and the
dilatational coefficient $\nu_{\mathrm{dil}}$, fixed by the bulk
viscosity $\zeta$, to the dilatational block of $\bL_\bv$.
\end{definition}

\noindent
Operator~\eqref{eq:viscous_newtonian} is the constant-coefficient
specialisation of the anisotropic family~\eqref{eq:viscous_aniso}
and therefore satisfies the admissibility
axioms~\textup{(\ref{ax:V1})--(\ref{ax:V4})} by
\Cref{prop:verify_axioms}\,(ii). Its energy identity,
\begin{equation}\label{eq:newt_energy}
 \ip{\bv}{\bff_{\rm visc}^{\rm Newt}(\bv)}_1
 = -\nu\,\nrm{\tD_1\bv}_{\bM_2}^2
 - \nu_{\mathrm{dil}}\,\nrm{\delta\bv}_{\bM_0}^2 \;\le\; 0,
\end{equation}
dissipates the discrete vorticity $\tD_1\bv$ and the discrete
dilatation $\delta\bv$ independently. The single-coefficient
choice $\nu_{\mathrm{dil}} = \nu$ recovers the bare Hodge--Laplacian
$-\nu\bL_\bv$ (the thermodynamically inadmissible
$\zeta = -\tfrac13\nu$ closure); on the discrete-divergence-free
subspace $\delta\bv = 0$ the dilatational block drops and
\eqref{eq:viscous_newtonian} reduces to the rotational operator
\begin{equation}\label{eq:viscous_incompressible}
 \bff_{\rm visc}^{\rm incomp}
 = -\nu\,\bM_1^{-1}\tD_1^T\bM_2\bom,
\end{equation}
dissipating energy at rate $-\nu\,\nrm{\bom}_{\bM_2}^2 \le 0$.
Because the dilatational block $\tdd_0\delta$ lies in the image of
$\tdd_0$, it is annihilated by $\tD_1$ and contributes nothing to
the discrete vorticity, Kelvin, or potential-vorticity balances:
the bulk term leaves the structure-preservation results of
\Cref{thm:main_conservation} intact and acts, in the low-Mach
regime, purely as additional damping of the acoustic (divergence)
mode.

\paragraph*{Anisotropic and horizontal/vertical viscosity.}\label{subsect:stress_anisotropic}
An anisotropic viscosity is encoded by a viscosity-weighted Hodge
star $\bM_1^\nu$ with $(\bM_1^\nu)_{jj} = \nu_j\,(\bM_1)_{jj}$,
giving
\begin{equation}\label{eq:viscous_aniso}
 \bff_{\rm visc}^{\rm aniso}
 = -\bM_1^{-1}\tD_1^T\bM_2^\nu\tD_1\bv
 - \bM_1^{-1}\tD_0\bM_0^\nu\tD_1^T\bM_1\bv.
\end{equation}
For prismatic geophysical meshes the natural specialisation
decomposes $\tD_1$ into horizontal and vertical parts and uses
independent coefficients $\nu_h, \nu_v$, recovering the Laplacian
viscosity operator used in ICON-O and NEMO.
\paragraph{\it Smagorinsky eddy viscosity.}
The Smagorinsky eddy viscosity $\nu_T = (C_s\ell_j)^2|\mathbf{S}|$
fits the same template with a state-dependent $\bM_1^\nu$. Let
$\ell_j = \sqrt{|f_j^*|/|e_j|}$ denote the local mesh scale on
dual face $j$, and define the discrete strain-rate norm
\[
|\mathbf{S}|_j := \frac{1}{\ell_j}\sqrt{(\tD_1\bv)^T_j\cdot(\tD_1\bv)_j}
 \approx \bigl|(\nabla v)^{\rm sym}\bigr|_j.
\]
This formula reduces to $|\omega_j|/\ell_j$, the vorticity-based
proxy of the symmetric strain rate; for divergence-free fields
$|\nabla\times\bu|^2 = 2|\mathbf{S}|^2$, so the proxy differs from
the true strain rate by $\sqrt{2}$, conventionally absorbed into
$C_s$. The eddy-viscosity-weighted mass matrix has entries
\begin{equation}\label{eq:M1_Smag}
 (\bM_1^{\rm Smag})_{jj} = (C_s\ell_j)^2\,|\mathbf{S}|_j\,(\bM_1)_{jj},
\end{equation}
and the resulting viscous force
$\bff_{\rm visc}^{\rm Smag}
 = -\bM_1^{-1}\tD_1^T\bM_2^{\rm Smag}\tD_1\bv$
retains the algebraic structure of the isotropic case and
dissipates energy.

\paragraph*{Viscous-operator axioms.}\label{subsect:monotonicity}
The well-posedness and convergence proofs use only the four
generic properties below, not the explicit form of the viscous
operator. Verification for each operator above is given in
\Cref{app:dissipation_axiom}.
\begin{definition}[Admissible discrete viscous operator]\label{def:admissible_visc}
A map $\bff_{\rm visc}:C^1(\KKs)\to C^1(\KKs)$ is an \emph{admissible viscous operator} if it satisfies:
\begin{enumerate}[label=\textup{(V\arabic*)},ref=\textup{V\arabic*},nosep]
\item\label{ax:V1} \textit{Energy dissipation:}
 $\ip{\bv}{\bff_{\rm visc}(\bv)}_1 \le 0$ for all $\bv\in C^1(\KKs)$.
\item\label{ax:V2} \textit{Local Lipschitz continuity:}
 for every $R>0$ there exists $L_\nu(h,R)$ such that
 $\nrm{\bff_{\rm visc}(\bv)-\bff_{\rm visc}(\bw)}_{L_h^2} \le L_\nu(h,R)\nrm{\bv-\bw}_{L_h^2}$
 whenever $\nrm{\bv}_{L_h^2},\nrm{\bw}_{L_h^2}\le R$.
\item\label{ax:V3} \textit{Monotone dissipativity:}
 $(\bv-\bw)^T\bM_1\bigl[\bff_{\rm visc}(\bv)-\bff_{\rm visc}(\bw)\bigr] \le 0$
 for all $\bv,\bw\in C^1(\KKs)$.
\item\label{ax:V4} \textit{Consistency:}
 for a smooth reference solution $\bu$ of the corresponding continuous
 viscous system,
 $\nrm{\bff_{\rm visc}(\mathcal{R}_h\bu^\flat) - \mathcal{R}_h\bigl((\nabla\cdot\boldsymbol{\sigma}(\bu))^\flat\bigr)}_{L_h^2}
 \le C_\nu\,h^{r_{\rm visc}}\,\nrm{\bu}_{H^{s_\nu}}$,
 where $r_{\rm visc}\ge 1$ and $s_\nu$ depends on the operator order.
\end{enumerate}
\end{definition}
Property~(\ref{ax:V3}) is the crucial ingredient for nonlinear viscosities. 
For a linear operator monotone dissipativity (\ref{ax:V3}) follows automatically
from (\ref{ax:V1}). 
For nonlinear operators such as Smagorinsky, (\ref{ax:V3}) must be verified independently; it is the discrete analogue of the monotonicity of the continuous stress tensor $\boldsymbol{\sigma}$, established in the seminal work of Ladyzhenskaya \cite{ladyzhenskaya1967,ladyzhenskaya1968} (see also Berselli, Iliescu, and Layton \cite{berselli2006}, \S3.4).
\addcontentsline{toc}{section}{References}
\bibliographystyle{abbrv}
\bibliography{references_baro}
\end{document}